\newtheorem{theorem}{Theorem}[section]
\newtheorem{lemma}[theorem]{Lemma}
\newtheorem{proposition}[theorem]{Proposition}
\newtheorem{corollary}[theorem]{Corollary}
\theoremstyle{definition}
\newtheorem{definition}[theorem]{Definition}
\newtheorem{example}[theorem]{Example}
\newtheorem{exercise}[theorem]{Exercise}
\newtheorem{remark}[theorem]{Remark}
\newcommand{\norm}[1]{\left\lVert#1\right\rVert}
\newcommand{\Ext}{\text{Ext}}
\newcommand{\Tr}{\text{Tr}}
\newcommand{\Ker}{\text{Ker\,}}
\newcommand{\End}{\text{End}}
\newcommand{\Hom}{\text{Hom}}
\newcommand{\Aut}{\text{Aut}}
\newcommand{\Rep}{\text{Rep}}
\newcommand{\diag}{\text{diag}}
\renewcommand{\O}{\mathcal{O}}
\newcommand{\g}{\mathfrak{g}}
\newcommand{\kf}{\mathfrak{k}}
\newcommand{\p}{\mathfrak{p}}
\newcommand{\h}{\mathfrak{h}}
\newcommand{\n}{\mathfrak{n}}
\renewcommand{\b}{\mathfrak{b}}
\newcommand{\C}{\mathcal{C}}
\newcommand{\ben}{\begin{enumerate}}
\newcommand{\een}{\end{enumerate}}
\theoremstyle{plain}
\newtheorem*{sol}{Solution}
\theoremstyle{definition}
\theoremstyle{remark}
\newcommand{\solu}[1]{\begin{sol}{\bf (\ref{#1})}}
\def\g{\mathfrak{g}}
\def\sl{\mathfrak{sl}}
\def\C{\mathcal{C}}
\def\Aut{\mathop{\mathrm{Aut}}\nolimits}
\def\h{\mathfrak{h}}
\def\N{\mathcal{N}}
\def\M{\mathcal{M}}
\def\End{\mathrm{End}}
\def\Hom{\mathrm{Hom}}
\def\Ker{\mathrm{Ker}}
\def\n{\mathfrak{n}}
\def\B{\mathcal{B}}
\def\p{\mathfrak{p}}
\def\b{\mathfrak{b}}
\def\O{\mathcal{O}}
\def\Ext{\mathop{\mathrm{Ext}}\nolimits}
\def\A{\mathcal{A}}
\def\Rep{\mathop{\mathrm{Rep}}\nolimits}
\def\gr{\mathop{\mathrm{gr}}\nolimits}
\begin{document}

\title{Representations of Lie groups}

\author{Pavel Etingof}

\address{Department of Mathematics, MIT, Cambridge, MA 02139, USA}

\maketitle 

\centerline{\bf To David Vogan on his 70th birthday with admiration} 

\tableofcontents

\section*{Introduction} 

These notes are based on the course ``Representations of Lie groups" taught by the author at MIT in Fall 2021 and Fall 2023. This is the third semester of Lie theory, 
which follows the standard 2-semester introductory sequence, ``Lie groups and Lie algebras I, II" (for the author's notes of these courses, see \cite{E}). The notes cover the basic theory of representations of non-compact semisimple Lie groups, with a more in-depth study of (non-holomorphic) representations of complex groups. 

Representation theory of (non-compact) semisimple Lie groups is 
an important and deep area of Lie theory with numerous applications, ranging from 
physics (quantum field theory) to analysis (harmonic analysis on homogeneous spaces) and number theory (the theory of automorphic forms, Langlands program). It is a synthetic subject which, besides basic Lie theory and representation theory, uses a plethora of techniques from many other fields, notably analysis, commutative and non-commutative algebra, category theory, homological algebra and algebraic geometry. For basic Lie theory (in particular, structure and finite-dimensional representations of 
compact Lie groups and semisimple complex Lie algebras) we will rely on the notes \cite{E}. In other areas, most of the time we review the necessary material before using it, but at least a superficial previous familiarity with these subjects will be helpful to the reader. 

Representation theory of semisimple Lie groups originated in the work of Bargmann \cite{Ba} and Gelfand-Naimark \cite{GN} in the late 1940s in the case of $SL_2$ and was systematically developed by Harish-Chandra in 1950s and 1960s, and later developed by many prominent mathematicians, notably R. Langlands, who classified the irreducible representations (\cite{La}). A new conceptual approach to the representation theory of complex semisimple Lie groups was proposed by J. Bernstein and S. Gelfand around 1980 (\cite{BG}) based on their previous pioneering work with I. Gelfand on category $\mathcal O$. This ultimately made the representation theory of semisimple Lie groups a part of the new subject of geometric representation theory that emerged in early 1980s from the Kazhdan-Lusztig conjecture on multiplicities in category $\mathcal O$ and its proof using geometric methods ($D$-modules and perverse sheaves) by Beilinson-Bernstein and Brylinski-Kashiwara. Since that time the theory has made giant strides forward (for example, computation of irreducible characters of real reductive groups by G. Lusztig and D. Vogan in 1980s, now implemented in the computer package ATLAS, as well as progress in the classification of unitary representations described in \cite{ALTV}), and the connection with geometry has remained the main driving force of its development all along. 
 
The organization of the notes is as follows. In Sections 1-7 we discuss the analytic aspects of the theory, arising from the fact that interesting representations of non-compact Lie groups are infinite-dimensional and realized in topological vector spaces. 
We introduce the main analytic tools, such as Fr\'echet spaces, the convolution algebra of measures on the group, $K$-finite, smooth and analytic vectors, matrix coefficients, and then discuss theorems of Harish-Chandra which allow one to reduce the study of representations of a semisimple Lie group $G$ to the study of admissible $(\g,K)$-modules, where $\g={\rm Lie}G$ and $K$ is a maximal compact subgroup of $G$, and thereby to the purely algebraic problem of studying Harish-Chandra modules. Then in Section 8 we recall the theory of highest weight modules for $\g$, and in Section 9 use the developed theory to classify irreducible and unitary irreducible representations of $SL_2(\Bbb R)$. 

The rest of the notes is almost completely algebraic. Namely, in Sections 10-14, we prove a number of fundamental properties of the action of the Weyl group $W$
on the symmetric algebra $S\h$ on the Cartan subalgebra $\h$ and of the universal enveloping algebra $U(\g)$ for a semisimple complex Lie algebra $\g$ -- the Chevalley restriction theorem, Chevalley-Shephard-Todd theorems (\cite{Che},\cite{ST}), Kostant theorems (\cite{Ko}). Then in Sections 15, 16 we develop the basic theory of the BGG category $\mathcal O$, and in Section  17 discuss the nilpotent cone of $\g$, proving that it is a reduced irreducible variety. In Sections 18, 19 we prove the Duflo-Joseph theorem and discuss principal series representations of the complex Lie group $G$. Here we also introduce a functor $H_\lambda$ which connects category $\mathcal O$ with the category Harish-Chandra bimodules for $\g$. In Sections 20, 21 we continue to study category $\mathcal O$ 
(BGG theorem, BGG reciprocity, multiplicities, formulation of the Kazhdan-Lusztig conjectures). In Sections 22-25 we give an exposition of the theory of projective functors of J. Bernstein and S. Gelfand (\cite{BG}) and give its applications to representation theory of complex groups (classification of irreducible representations, describing the category of Harish-Chandra bimodules in terms of category $\mathcal O$) as well as to the structure theory of $U(\g)$ (Duflo's theorem on primitive ideals). 
In Section 26, we apply these results to the group
$G=SL_2(\Bbb C)$ and give an explicit classification of its irreducible and unitary irreducible representations. Finally, in Sections 27-31 we outline the geometric approach to representation theory of semisimple Lie groups, starting from Borel-Weil theorem and then proceeding to D-modules, the Beilinson-Bernstein localization theorem, and classification of irreducible Harish-Chandra modules by data attached to $K$-orbits on $G/B$. In these sections the material is more advanced and the exposition is less detailed.

The notes are divided into 31 sections which roughly correspond to 80 minute lectures. So there is a bit more material than in a 1-semester course (which normally consists of 26 lectures). So if these notes are used to teach a course, some material (roughly equivalent to 5 sections) should be skipped. 

A lot of important material is included in exercises, which are often provided with detailed hints and may be assigned as homework. 

{\bf Disclaimer:} These notes contain nothing original except mistakes of the author, and in all sections the exposition is mostly adapted from various existing sources -- 
original articles, textbooks and lecture notes. For example, the exposition in Sections 1-7 is mostly borrowed from \cite{G} and \cite{L}, in Sections 20, 21 parts are adapted from  \cite{Hu}, in Sections 22-25 we closely follow \cite{BG}, and in Sections 28-30 we follow \cite{BCEY}. Since the material is standard, we do not always give a reference.

Also, these notes only scratch the surface in the deep subject of representations of semisimple Lie groups. For a more in-depth study of this theory, we recommend the books \cite{K}, \cite{ABV}, \cite{Wal88}, \cite{Wal92} and the lectures \cite{KT}; for recent progress on unitary representations see \cite{ALTV}. For more about category $\mathcal O$ we refer the reader to \cite{Bez}, \cite{Hu}. For more on the theory of D-modules and their applications to representation theory we recommend the book \cite{HTT}. 

{\bf Acknowledgments.} I'd like to thank David Vogan who prompted me to rework 
the MIT Lie groups graduate sequence, of which this is part 3 (the notes for parts 1 and 2 can be found in \cite{E}). It is my pleasure to dedicate this text to David's 70th birthday. 

I am grateful to the students of the MIT course ``Representations of Lie groups" in the academic years 2021/2022 and 2023/2024 for feedback, and to Jeffrey Lu and David Vogan for reading the manuscript and many corrections. This work was partially supported by the NSF grant DMS-2001318. The latest version
was cleaned up using ChatGPT-5.5.

\section{\bf Continuous representations of topological groups} 

This course will be about representations of Lie groups, with a focus on non-compact groups. While irreducible representations of compact groups are all finite-dimensional, this is not so for non-compact groups, whose most interesting irreducible representations are infinite-dimensional. Thus to have a sensible representation theory of non-compact Lie groups, we need to consider their {\bf continuous} representations on {\bf topological vector spaces}. 

\subsection{Topological vector spaces}

\subsubsection{Basic definitions}
All representations we'll consider will be over the field $\Bbb C$, which is equipped with its usual topology. Recall that a {\bf topological vector space} over $\Bbb C$ is a complex vector space $V$ with a topology in which addition $V\times V\to V$ and scalar multiplication $\Bbb C\times V\to V$ are continuous. The topological vector spaces $V$ we'll consider will always be  assumed to have the following properties: 

$\bullet$ {\bf Hausdorff:} any two distinct points of $V$ have disjoint neighborhoods. 

$\bullet$ {\bf locally convex:} $0\in V$ (hence every point) has a base of convex neighborhoods.\footnote{Recall that a set $X\subset V$ is {\bf convex} if for any $x,y\in X$ and $t\in [0,1]$ we have $tx+(1-t)y\in X$.} Equivalently, the topology on $V$ is defined by a family of {\bf seminorms}\footnote{Recall that a {\bf seminorm} on $V$ is a function $\nu: V\to \Bbb R_{\ge 0}$ such that $\nu(x+y)\le \nu(x)+\nu(y)$ and 
$\nu(\lambda x)=|\lambda|\nu(x)$ for $x,y\in V$, $\lambda\in \Bbb C$. A seminorm is a {\bf norm} iff $\nu(x)=0$ implies $x=0$.} $\lbrace\nu_\alpha, \alpha\in A\rbrace$: a base of neighborhoods 
of $0$ is formed by finite intersections of the sets $U_{\alpha,\varepsilon}:=\lbrace v\in V|\nu_\alpha(v)<\varepsilon\rbrace$, $\alpha\in A$, $\varepsilon>0$. I.e., it is the weakest of the topologies in which all $\nu_\alpha$ are continuous.

$\bullet$ {\bf sequentially complete:} every Cauchy sequence\footnote{Recall that a sequence $a_n\in V$ is {\bf Cauchy} if for any neighborhood $U$ of $0\in V$ there exists $N$ such that for $n,m\ge N$ we have $a_n-a_m\in U$.}  is convergent.

Also, unless specified otherwise, we will assume that $V$ is 

$\bullet$ {\bf first countable}: $0\in V$ (equivalently, every point of $V$) has a countable base of neighborhoods. By the Birkhoff-Kakutani theorem, this is equivalent to $V$ being {\bf metrizable}
(topology defined by a metric), and moreover this metric can be chosen translation invariant: 
$d(x,y)=D(x-y)$ for some function $D: V\to \Bbb R_{\ge 0}$. 

In this case $V$ is called a {\bf Fr\'echet space}. For example, every {\bf Banach spac}e (a complete normed space), in particular, {\bf Hilbert space} is a Fr\'echet space. 

A Hausdorff topological vector space $V$ is said to be {\bf complete} if whenever $V$ is realized as a dense subspace of a Hausdorff topological vector space $\overline V$ with induced topology, we have $V=\overline V$. Every complete space is sequentially complete, and the converse holds for metrizable spaces (albeit not in general). Thus a Fr\'echet space can be defined as a locally convex complete metrizable topological vector space. 

Alternatively, a Fr\'echet space may be defined as a complete (or, equivalently, sequentially complete) topological vector space with topology defined by a {\it countable} system of  seminorms $\nu_n: V\to \Bbb R$, $n\ge 1$ (such a space is always metrizable, see below). Thus, a sequence $x_m\in V$ goes to zero iff $\nu_n(x_m)$ goes to zero for all $n$. Note that the Hausdorff property is then equivalent to the requirement that any vector $x\in V$ with $\nu_n(x)=0$ for all $n$ is zero. 

A translation-invariant metric on a Fr\'echet space may be defined by the formula 
$$
d(x,y)=D(x-y),\ D(x):=\sum_{n=1}^\infty \frac{1}{2^n}\frac{\nu_n(x)}{1+\nu_n(x)}.
$$
Note however that $D$ is not a norm, as it is not homogeneous: for $\lambda\in \Bbb C$, $D(\lambda x)\ne |\lambda| D(x)$. If we had a finite collection of seminorms, we could define a norm simply by $D(x):=\sum_n \nu_n(x)$, but if there are infinitely many, this sum may not converge, and we have to sacrifice the homogeneity property for convergence. In fact, the examples below show that  
there are important Fr\'echet spaces that are not Banach (i.e., do not admit a single norm defining the topology). We also note that the same Fr\'echet space structure on $V$ can be defined by different systems of seminorms $\nu_n$, and there is also nothing canonical about the formula for $D$ (e.g., we can replace $\frac{1}{2^n}$ by any sequence $a_n>0$ with $\sum_n a_n<\infty$), so $\nu_n$ or $D$ are not part of the data of a Fr\'echet space. 
 
Finally, unless specified otherwise, we will assume that $V$ is 

$\bullet$ {\bf second countable:} admits a countable base. For metrizable spaces, this is equivalent to being {\bf separable} (having a dense countable subset).  

\subsubsection{Locally compact spaces} Recall that a Hausdorff topological space $X$ is said to be {\bf locally compact} if every point of $X$ has a neighborhood with compact closure. This implies that every compact subset $K$ of $X$ has a neighborhood with a compact closure, as the open cover of $K$ by such neighborhoods of all its points has a finite subcover.
For example, every manifold is locally compact. 

Let $X$ be a second countable locally compact space. 

\begin{definition} A {\bf compact exhaustion} of $X$ is a sequence of compact subsets $\lbrace K_n,n\ge 1\rbrace$ of $X$ such that $K_n$ is contained in the interior $K_{n+1}^\circ$ of $K_{n+1}$ for all $n$.
\end{definition} 

\begin{lemma}\label{cex} (i) Compact exhaustions exist. 

(ii) If $\lbrace K_n,n\ge 1\rbrace$ is a compact exhaustion of $X$ then every compact subset $K\subset X$ is contained in $K_n$ for some $n$. 
\end{lemma} 

\begin{proof} (i) Fix a countable base $\lbrace V_i,i\ge 1\rbrace$ of $X$. We can arrange that the closures $\overline V_i$ are compact. Let $K_n:=\overline{V_1\cup...\cup V_n}$. Then $K_n$ are compact, $K_n\subset K_{n+1}$, and $\cup_n K_n=X$. Also if $K\subset X$ is compact then $K\subset V_1\cup...\cup V_n$ for some $n$, so $K\subset K_n$. Moreover, for any $i_1\in \Bbb Z_{\ge 1}$ the set $K_{i_1}$ has a neighborhood $U_1$ with compact closure $\overline U_1$, hence contained in some $K_{i_2}$, which has a neighborhood $U_2$ with compact closure $\overline U_2$, hence contained in some $K_{i_3}$, and so on. Then $K_{i_1},K_{i_2},...$ is a compact exhaustion of $X$.  

(ii) $\lbrace K_n^\circ\rbrace$ form an open cover of $K$, which must have a finite subcover. Thus $K\subset K_n^\circ\subset K_n$ for some $n$. 
\end{proof} 

\subsubsection{Examples of Fr\'echet spaces} 
\begin{example}\label{frespa} 1. Let $X$ be a second countable locally compact space and $C(X)$ be the space of continuous complex-valued functions on $X$. Let $\lbrace K_i,i\ge 1\rbrace$ 
be a compact exhaustion of $X$ (Lemma \ref{cex}). We can then define seminorms 
$\nu_n$ by 
$$
\nu_n(f)=\max_{x\in K_n}|f(x)|.
$$  
(this is well defined since $K_n$ are compact). This makes $C(X)$ into a Fr\'echet space, and this structure is independent of the choice of the sequence $K_n$.  The convergence in $C(X)$ is uniform convergence on compact sets. 

By the Tietze extension theorem, if $K\subset L$ are compact Hausdorff spaces 
then the restriction map $C(L)\to C(K)$ is surjective. So 
$C(X)=\underleftarrow{\lim_{n\to \infty}}C(K_n)$ as a vector space. 
Alternatively, without making any choices, we may write 
$C(X)=\underleftarrow{\lim_{K\subset X}}C(K)$, where $K$ runs over compact subsets of $X$. 

2. If $X$ is a manifold and $0\le k\le \infty$, we can similarly define 
a Fr\'echet space structure on the space $C^k(X)$ of $k$ times continuously differentiable functions on $X$. Namely, cover $X$ by countably many closed balls $B_n$, each equipped with a local coordinate system, and 
set 
$$
\nu_{n,m}(f)=\max_{x\in B_n}\norm{d^mf(x)},\ 0\le m\le k
$$
where $d^mf(x)$ is the $m$-th differential of $f$ at $x$, comprising 
the $m$-th mixed partial derivatives of $f$ at $x$ with respect to the local coordinates 
(these are labeled by two indices rather than one, but it does not matter since this collection is still countable). The convergence in $C^k(X)$ is uniform convergence with all derivatives up to $k$-th order on compact sets.

These spaces are not Banach unless $X$ is compact. Moreover, 
$C^\infty(X)$ is not Banach even for compact $X$ (of positive dimension). 
For example, for $C^\infty(S^1)$ we may take,
$$
\nu_m(f)=\sum_{i=0}^m \max_{x\in S^1}|f^{(i)}(x)|,
$$
but this is still an infinite collection. Note that these are all norms, not just seminorms, but each of them taken separately does not define the correct topology on $C^\infty(S^1)$ (namely, $\nu_m$ defines the incomplete topology induced by embedding $C^\infty(S^1)$ as a dense subspace into the Banach space $C^m(S^1)$ with norm $\nu_m$).   

3. The Schwartz space $\mathcal S(\Bbb R)\subset C^\infty(\Bbb R)$  is the space of functions $f$ with 
$$
\nu_{m,n}(f):=\sup_{x\in  \Bbb R}|x^n\partial^mf(x)|<\infty,\ m,n\ge 0.
$$
This system of seminorms can then be used to give $\mathcal S(\Bbb R)$ 
the structure of a (non-Banach) Fr\'echet space. The same definition 
can be used for the Schwartz space $\mathcal S(\Bbb R^N)$, 
by taking $n=(n_1,...,n_N)$, $m=(m_1,...,m_N)$, 
$x=(x_1,...,x_N)$, $\partial=(\partial_1,...,\partial_N)$, 
and 
$$
x^n:=\prod_i x_i^{n_i},\ \partial^m:=\prod_i \partial_i^{m_i}.
$$

It is well known that all these spaces are separable (check it!).
\end{example} 

\subsection{Continuous representations} \label{conrep}

Let $G$ be a locally compact topological group, for example, a Lie group.\footnote{Topological groups will always be assumed Hausdorff and second countable.  Important examples of locally compact topological groups include groups $\bold G(F)$, where $F$ is a local field and $\bold G$ is an algebraic group defined over $F$. If $F$
is archimedean ($\Bbb R$ or $\Bbb C$) then $\bold G(F)$ is a real, respectively complex, Lie group.
Another example important in number theory is $\bold G(\Bbb A_k)$, where $k$ is a global field,
$\Bbb A_k$ is its ring of ad\'eles, and $\bold G$ is an algebraic group over $k$.}

\begin{definition} A {\bf continuous representation} of $G$ is a topological vector space $V$ with a {\it continuous} linear action $a: G\times V\to V$.\footnote{It is easy to see that it suffices to check this property at points $(1,v)$ 
for $v\in V$.}  
\end{definition} 

In particular, a continuous representation gives a homomorphism 
$\pi: G\to \Aut(V)$ from $G$ to the group of continuous automorphisms 
of $V$ (i.e., continuous linear maps $V\to V$ with continuous inverse).\footnote{Note that by the open mapping theorem, in a Fr\'echet space any invertible continuous operator has a continuous inverse.} 

\begin{definition} A continuous representation is called {\bf unitary} if $V$ is a Hilbert space and for all $g\in G$, the operator $\pi(g): V\to V$ is unitary; in other words, $\pi$ lands in the unitary group $U(V)\subset \Aut(V)$.  
\end{definition} 

\begin{exercise}\label{transac} Let $1\le p<\infty$ and $L^p(\Bbb R)$ be the Banach space of measurable functions $f: \Bbb R\to \Bbb C$ with 
$$
\norm{f}_p=\left(\int_{\Bbb R} |f(x)|^pdx\right)^{\frac{1}{p}}<\infty
$$
(modulo functions vanishing outside a set of measure zero), with norm $f\mapsto \norm{f}_p$. The Lie group $\Bbb R$ acts 
on $L^p(\Bbb R)$ by translation. 

(i) Show that this is a continuous representation, which is unitary for $p=2$ (use approximation of $L^p$ functions by continuous functions with compact support). 

(ii) Prove the same for the Fr\'echet spaces $C^k(\Bbb R)$ and $\mathcal{S}(\Bbb R)$.
\end{exercise}

Let $G$ be a locally compact group, for example a Lie group. In this case 
$G$ is known to have a unique up to scaling right-invariant {\bf Haar measure} 
$dx$. For Lie groups, this measure is easy to construct by spreading 
a nonzero element of $\wedge^n \g^*$, $\g={\rm Lie}(G)$, $n=\dim \g$, over the group $G$ by right translations. Thus we can define the Banach space $L^p(G)$ similarly to the case $G=\Bbb R$. It is easy to generalize Exercise \ref{transac} to show that the right translation action of $G$ on $L^p(G)$ and $C^k(G)$ is continuous, with $L^2(G)$ unitary. 

\begin{example} Let $X$ be a manifold with a right action of a Lie group $G$. 
We'd like to say that we have a unitary representation of $G$ on $L^2(X)$ via $(gf)(x)=f(xg)$. But for this purpose we need to fix a $G$-invariant measure on $X$, and such a nonzero measure does not always exist (e.g., $G=SL_2(\Bbb R)$, $X=\Bbb R\Bbb P^1=S^1$). 

The way out is to use {\bf half-densities} on $X$ rather than functions. Namely, recall that if $\dim X=m$ then the canonical line bundle $K_X:=\wedge^mT^*X$ has structure group $\Bbb R^\times$. Consider the character $\Bbb R^\times\to \Bbb R^{>0}$ given by $t\mapsto |t|^s$, $s\in \Bbb R$, and denote the associated line bundle $|K|^s$. This is called the bundle 
of $s$-{\bf densities} on $X$ (in particular, {\bf densities} for $s=1$ and {\bf half-densities} for $s=\frac{1}{2}$). Thus in local coordinates $s$-densities are 
ordinary functions, but when we change coordinates by $x\mapsto x'=x'(x)$, 
these functions change as 
$$
f=f'|\det(\tfrac{\partial x'}{\partial x})|^{s}.
$$

The benefit of half-densities is that for any half-density 
$f$, the expression $|f|^2$ is naturally a density 
on $X$, which canonically defines a measure that can be integrated over $X$. 
As a result, the space $L^2(X)$ of half-densities $f$ on $X$ with 
$$
\norm{f}_2=\sqrt{\int_X|f|^2}<\infty
$$
is a Hilbert space attached canonically to $X$ (without choosing any additional structures), and any diffeomorphism $g: X\to X$ defines a unitary operator 
on $L^2(X)$. Thus similarly to Exercise \ref{transac}, $L^2(X)$ is a unitary representation of $G$. Note that if $X$ has a $G$-invariant measure, 
this is the same as a representation of $G$ on $L^2$-functions on $X$. 

In particular, we see that we have a unitary representation of $G\times G$ 
on $L^2(G)$ by left and right translation even though the right-invariant Haar measure is not always left-invariant. 
\end{example} 

If $V$ is finite-dimensional, ${\rm Aut}(V)=GL(V)$ is just the group of invertible matrices, and the continuity condition for representations of $G$ is just that 
the map $\pi: G\to \Aut(V)$ is continuous in the usual topology. Then it is well known 
that this map is smooth and is determined by the corresponding Lie algebra map 
$\g\to {\rm End}(V)=\mathfrak{gl}(V)$, and this correspondence is a bijection if $G$ is simply connected. In this way the theory of finite-dimensional continuous representations of connected Lie groups is immediately reduced to pure algebra. 

On the other hand, for infinite-dimensional representations the situation is more tricky, as there are several natural topologies on ${\rm Aut}(V)$. One of them is the {\bf strong topology} of $\End(V)$ (continuous endomorphisms of $V$), in which $T_n\to T$ iff for all $v\in V$ we have $T_nv\to Tv$. It is clear that if $(V,\pi)$ is a continuous representation of $G$ then the map $\pi: G\to {\rm Aut}(V)$ is continuous in the strong topology, but the converse is not true, in general. However, the converse holds for Banach spaces (in particular, for unitary representations). 

\begin{proposition} If $V$ is a Banach space then 
 a representation $(V,\pi)$ of $G$
is continuous if and only if the map $\pi: G\to \Aut(V)$ is continuous in the strong topology.  
\end{proposition}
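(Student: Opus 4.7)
The forward direction is essentially trivial and was already noted in the excerpt: if $a: G\times V\to V$ is jointly continuous, then in particular $g\mapsto \pi(g)v$ is continuous for each fixed $v\in V$, which is exactly strong continuity of $\pi$.

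The plan for the nontrivial direction is to show joint continuity of $a$ at an arbitrary point $(g_0,v_0)\in G\times V$ using the decomposition
$$
\pi(g)v-\pi(g_0)v_0=\pi(g)(v-v_0)+(\pi(g)-\pi(g_0))v_0,
$$
which gives the norm estimate
$$
\norm{\pi(g)v-\pi(g_0)v_0}\le \norm{\pi(g)}\cdot\norm{v-v_0}+\norm{(\pi(g)-\pi(g_0))v_0}.
$$
The second summand tends to $0$ as $g\to g_0$ by strong continuity. So the whole argument reduces to controlling $\norm{\pi(g)}$ on a neighborhood of $g_0$.

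The key step, and the one that genuinely uses the Banach space hypothesis, is the following: for any compact neighborhood $K$ of $g_0$ (which exists because $G$ is locally compact) and any $v\in V$, the orbit $\{\pi(g)v : g\in K\}$ is the continuous image of the compact set $K$ in $V$ (by strong continuity), hence compact, hence bounded. Thus $\{\pi(g):g\in K\}$ is a pointwise bounded family of continuous operators on the Banach space $V$, and the Banach--Steinhaus uniform boundedness principle yields a constant $M$ with $\norm{\pi(g)}\le M$ for all $g\in K$. Plugging this into the estimate above gives joint continuity at $(g_0,v_0)$.

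I expect the only subtle point to be justifying the appeal to uniform boundedness: this is exactly where completeness of $V$ (in the form of the Baire category theorem) is essential, and it is the reason the statement is restricted to Banach spaces. Everything else --- the choice of compact neighborhood via local compactness of $G$, and the $\varepsilon$-estimate above --- is routine.
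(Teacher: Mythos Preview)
Your proof is correct and follows essentially the same strategy as the paper: decompose $\pi(g)v-\pi(g_0)v_0$ into a strong-continuity term and a term controlled by $\norm{\pi(g)}\cdot\norm{v-v_0}$, then invoke the uniform boundedness principle to bound the operator norms locally. The only minor variation is that the paper argues sequentially (using second countability of $G$ and applying Banach--Steinhaus to the sequence $\pi(g_n)$), whereas you use a compact neighborhood of $g_0$ (via local compactness of $G$) to obtain a pointwise bounded family; both routes land on the same use of the Banach space hypothesis.
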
 

\begin{proof} Recall the {\bf uniform boundedness principle}: If $T_n$ 
is a sequence of bounded operators from a Banach space $V$
to a normed space and for any $v\in V$ the sequence $T_nv$ 
is bounded then the sequence $\norm{T_n}$ is bounded.  

Now assume that $\pi$ is continuous in the strong topology. Let $g_n\in G$, $g_n\to 1$, and $v_n\to v\in V$. Since $G$ is second countable, our job is to show that 
$\pi(g_n)v_n\to v$. We know that $\pi(g_n)v\to v$, as $\pi(g_n)\to 1$ 
in the strong topology. So it suffices to show that $\pi(g_n)(v_n-v)\to 0$. 
As $v_n-v\to 0$, it suffices to show that the sequence $\norm{\pi(g_n)}$ is bounded. But this follows from the uniform boundedness principle.
\end{proof} 

\begin{remark} 1. Another topology on $\End(V)$ for a Banach space $V$ 
is the {\bf norm topology}, defined by the operator norm. It is stronger 
than the strong topology, and a continuous representation $\pi: G\to \Aut(V)$
does {\bf not} have to be continuous in this topology. For example, the action 
of $\Bbb R$ on $L^2(\Bbb R)$ is not. Indeed, denoting by $T_a$ the operator 
$\pi(a)$ given by
$(T_af)(x)=f(x+a),$
we have 
$\norm{T_a-1}=2 $
for all $a\ne 0$ (show it!). 

2. If $\dim V=\infty$ then ${\rm Aut}(V)$ is {\bf not} a topological group with respect to strong topology (multiplication is not continuous). 
\end{remark} 

\subsection{Subrepresentations, irreducible representations}

\begin{definition} A {\bf subrepresentation} of a continuous representation $V$ of $G$ is a {\it closed} $G$-invariant subspace of $V$. We say that $V$ 
is {\bf irreducible} if its only subrepresentations are $0$ and $V$. 
\end{definition} 

\begin{example}\label{transac1} 
The translation representation of $\Bbb R$ on $L^2(\Bbb R)$ 
is not irreducible, although this is not completely obvious. To see this, we apply Fourier transform, which is a unitary automorphism of $L^2(\Bbb R)$. The Fourier transform maps the operator $T_a$ to the operator 
of multiplication by $e^{iax}$. But it is easy to construct closed subspaces 
of $L^2(\Bbb R)$ invariant under multiplication by $e^{iax}$: take any measurable subset $X\subset \Bbb R$ and the subspace $L^2(X)\subset L^2(\Bbb R)$ of functions that essentially vanish outside $X$ (e.g., one can take $X=[0,+\infty)$).
\end{example} 

\begin{example}\label{heis} Here is the most basic example of an irreducible infinite-dimensional representation of a Lie group. Let $G$ be the {\bf Heisenberg} group, i.e., the group of upper triangular unipotent real 3-by-3 matrices. It can be realized as the Euclidean space $\Bbb R^3$ (with coordinates $x,y,z$ being the above-diagonal matrix entries), with multiplication law
$$
(a,b,c)(a',b',c')=(a+a',b+b',c+c'+ab'). 
$$ 
Then we can define a unitary representation of $G$ on $V=L^2(\Bbb R)$ by 
setting $\pi(a,0,0)=e^{iax}$ (multiplication operator) and $\pi(0,b,0)=T_b$ (shift by $b$). 

\begin{exercise} (i) Show that this gives rise to a well defined unitary representation of $G$, and compute $\pi(a,b,c)$ for general $(a,b,c)$. 

(ii) Show that $V$ is irreducible. 

{\it Hint.} Suppose $W\subset V$ is a proper subrepresentation, and denote by $P: V\to V$ 
the orthogonal projector to $W$. We can write $P$ as an integral operator 
with Schwartz kernel\footnote{Recall that every smooth 
function $\phi(x,y)$ on $\Bbb R^2$ with compact support defines a trace class operator 
$T_\phi$ with kernel $\phi(y,x)$, i.e., 
$$
(T_\phi f)(x)=\int_\Bbb R \phi(y,x)f(y)dy.
$$
Then the Schwartz kernel $K$ of a continuous endomorphism $A$ of $L^2(\Bbb R)$ is defined by the formula $(K,\phi)=\Tr(AT_\phi)$ (which is well defined since the operator $AT_\phi$ is trace class).} $K(x,y)$, a distribution on $\Bbb R^2$. Show that $K$ is translation invariant, i.e., 
$K(x+a,y+a)=K(x,y)$, and deduce $K(x,y)=k(x-y)$ for some distribution $k(x)$ 
on $\Bbb R$.\footnote{This means that $(K,\phi)=(k,\widetilde \phi)$, where 
$\widetilde \phi(x):=\int_{\Bbb R}\phi(x+y,y)dy$.}  
 Show that $(e^{iax}-1)k(x)=0$ for all $a\in \Bbb R$. Deduce that $P$ is a scalar operator. Conclude that $P=0$, so $W=0$. 
\end{exercise}

\end{example} 

\section{\bf $K$-finite vectors and matrix coefficients} 

\subsection{$K$-finite vectors} 

Let $K$ be a {\it compact} topological group. In this case $K$ has a unique right-invariant Haar measure of volume $1$, which is therefore also left-invariant; we will denote this measure by $dg$. Thus if $V$ is a finite-dimensional (continuous) representation of $K$ 
and $B$ a positive definite Hermitian form on $V$ then the form 
$$
\overline B(v,w):=\int_K B(gv,gw)dg
$$
is positive definite and $K$-invariant, which implies that $V$ is unitary. 
If $V$ is irreducible then by Schur's lemma this unitary structure is unique up to scaling.  

This implies that finite-dimensional representations of $K$ are completely reducible: if $W\subset V$ is a subrepresentation then 
$V=W\oplus W^\perp$, where $W^\perp$ is the orthogonal complement of $W$ under the Hermitian form.  

Now let $V$ be any continuous representation of $K$ (not necessarily finite-dimensional). 

\begin{definition} A vector $v\in V$ is $K$-{\bf finite} if it is contained 
in a finite-dimensional subrepresentation of $V$. The space of $K$-finite vectors of $V$ is denoted by $V^{\rm fin}$. 
\end{definition} 

Let ${\rm Irr}K$ be the set of isomorphism classes of irreducible finite-dimensional representations of $K$. We have a natural $K$-invariant linear map 
$$
\xi: \oplus_{\rho\in {\rm Irr}K}\Hom(\rho,V)\otimes \rho\to V^{\rm fin}
$$
(where $K$ acts trivially on $\Hom(\rho,V)$) defined by 
$$
\xi(h\otimes u)=h(u). 
$$

\begin{lemma}\label{isolem} $\xi$ is an isomorphism. 
\end{lemma}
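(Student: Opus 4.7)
My plan is to establish the isomorphism by verifying surjectivity and injectivity separately, using only complete reducibility of finite-dimensional $K$-representations (just proved) and Schur's lemma. Throughout, I interpret $\Hom(\rho,V)$ as the space of $K$-equivariant linear maps, which is what makes the $K$-action on it trivial and makes $\xi$ a $K$-intertwiner.

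For surjectivity, I would start with $v\in V^{\rm fin}$ and a finite-dimensional subrepresentation $W\ni v$. By complete reducibility, I can decompose $W=\bigoplus_j W_j$ into irreducibles, fix $K$-isomorphisms $\phi_j:\rho_j\xrightarrow{\sim} W_j$ with $\rho_j\in\Irr K$, and write $v=\sum_j v_j$ correspondingly. Setting $h_j:=\iota\circ\phi_j$ (where $\iota:W\hookrightarrow V$ is the inclusion) and $u_j:=\phi_j^{-1}(v_j)$ then yields $\xi(\sum_j h_j\otimes u_j)=v$.

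For injectivity, the plan is to split the argument into two sub-steps. First, I will show that the images $V_\rho:=\xi(\Hom(\rho,V)\otimes\rho)$ for distinct $\rho\in\Irr K$ are linearly independent in $V$: any finite-support relation $\sum_\rho v_\rho=0$ with $v_\rho\in V_\rho$ sits inside some finite-dimensional subrepresentation of $V$ containing all the $v_\rho$, and inside that subrepresentation complete reducibility forces the isotypic decomposition to be direct, so each $v_\rho$ vanishes. This localizes injectivity to a fixed $\rho$. Second, given a nonzero element $\eta_\rho\in\Hom(\rho,V)\otimes\rho$, I would rewrite it as $\sum_{i=1}^n h_i\otimes u_i$ with $h_1,\ldots,h_n\in\Hom(\rho,V)$ linearly independent and not all $u_i$ zero, and then consider the $K$-equivariant map $\Phi:\rho^{\oplus n}\to V$, $(x_i)\mapsto\sum_i h_i(x_i)$. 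Its kernel is a $K$-subrepresentation of $\rho^{\oplus n}\cong\rho\otimes\mathbb{C}^n$, hence by Schur of the form $\rho\otimes W$ for some $W\subseteq\mathbb{C}^n$; any $(w_i)\in W$ encodes the linear relation $\sum_i w_i h_i=0$ in $\Hom(\rho,V)$, so linear independence forces $W=0$ and $\Phi$ is injective. Hence $\xi(\eta_\rho)\neq 0$.

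The subtlest step I anticipate is the cross-type independence claim: because $V$ itself is \emph{not} assumed to be semisimple as a $K$-module, one cannot invoke any global isotypic decomposition of $V$, and the argument has to be localized to a finite-dimensional subrepresentation where complete reducibility is actually available. Everything else is a direct application of Schur's lemma together with finite-dimensional semisimplicity.
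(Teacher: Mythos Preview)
Your proof is correct. Surjectivity is handled exactly as in the paper: pick a finite-dimensional subrepresentation containing $v$, split it into irreducibles, and read off the preimage.

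For injectivity you take a genuinely different route. The paper works entirely in the \emph{source}: the domain $\bigoplus_\rho \Hom(\rho,V)\otimes\rho$ is already a locally finite $K$-module with its isotypic decomposition on display (the $\rho$-isotypic piece is $\Hom(\rho,V)\otimes\rho$, since $K$ acts trivially on the first tensor factor). So if $\Ker\xi\ne 0$, it contains an irreducible subrepresentation, which must sit in a single summand and hence have the form $h\otimes\rho$ for some nonzero $h\in\Hom(\rho,V)$; then $h(u)=\xi(h\otimes u)=0$ for all $u$ forces $h=0$, a contradiction. This is a two-line argument. Your approach instead works in the \emph{target} $V$: you first establish independence of the images $V_\rho$ by localizing to a finite-dimensional subrepresentation of $V$ (where complete reducibility gives an honest isotypic decomposition), and then handle a fixed $\rho$ by analyzing the kernel of $\Phi:\rho^{\oplus n}\to V$. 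Both arguments ultimately rest on Schur's lemma, but the paper's is shorter because the isotypic structure it needs is already built into the source, whereas you reconstruct it inside $V$. Your version has the minor advantage of being more explicit about where finite-dimensionality is actually used, which is the point you flag as subtle.
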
 

\begin{proof} To show $\xi$ is injective, assume the contrary, and let $\widetilde\rho$ be an irreducible 
subrepresentation of ${\rm Ker}\xi$. Then $\widetilde \rho=h\otimes \rho$ for a suitable $h\in \Hom(\rho,V)$, so for any $u\in \rho$ we have 
$h(u)=\xi(h\otimes u)=0$. Thus $h=0$, a contradiction. 

It remains to show that $\xi$ is surjective. For $v\in V^{\rm fin}$, let 
$W\subset V^{\rm fin}$ be a finite-dimensional subrepresentation 
of $V$ containing $v$. By complete reducibility, 
$W$ is a direct sum of irreducible representations. 
Thus it suffices to assume that $W$ is irreducible. 
 Let $h: W\hookrightarrow V$ be the corresponding inclusion. Then $v=h(v)=\xi(h\otimes v)$. 
 \end{proof} 
 
 \begin{example} Let $K=S^1=\Bbb R/2\pi \Bbb Z$. The irreducible finite-dimensional representations of $K$ are the characters $\rho_n(x)=e^{inx}$ 
 for integer $n$. Let $V=L^2(S^1)$. Then $\Hom(\rho_n,V)$ 
 is the space of functions on $S^1$ such that $f(x+a)=e^{ina}f(x)$, 
 which is a 1-dimensional space spanned by the function 
 $e^{inx}$. It follows that $V^{\rm fin}$ is the space of 
 trigonometric polynomials $\sum_n a_ne^{inx}$, where only finitely many coefficients $a_n\in \Bbb C$ are nonzero.  
 \end{example} 

\subsection{Matrix coefficients} Let us now consider the special case $V=L^2(K)$, and view it as a representation of $K\times K$ via 
$$
(\pi(a,b)f)(x)=f(a^{-1}xb).
$$
For every irreducible representation $\rho\in {\rm Irr}K$ we have a 
homomorphism of representations of $K\times K$: 
$$
\xi_\rho: \End_{\Bbb C}\rho=\rho^*\otimes \rho\to L^2(K)
$$
defined by 
$$
\xi_\rho(h\otimes v)(g):=h(gv). 
$$
This map is nonzero, hence injective (as $\rho^*\otimes \rho$ is an irreducible $K\times K$-representation), and is called {\bf the matrix coefficient map}, as the right hand side is a matrix coefficient of the representation $\rho$. 
The {\bf theorem on orthogonality of matrix coefficients} tells us that 
the images of $\xi_\rho$ for different $\rho$ are orthogonal, 
and for $A,B\in \End_{\Bbb C}\rho$ we have 
$$
(\xi_\rho(A),\xi_\rho(B))=\frac{{\rm Tr}(AB^\dagger)}{\dim \rho},
$$
where $B^\dagger$ is the Hermitian adjoint of $B$ with respect to the unitary structure on $\rho$. Thus, choosing orthonormal bases 
$\lbrace v_{\rho i}\rbrace$ in each $\rho$, we find that the functions 
$$
\psi_{\rho ij}:=(\dim \rho)^{\frac{1}{2}}\xi_\rho(E_{ij}),
$$
where $E_{ij}:=v_{\rho j}^*\otimes v_{\rho i}$ are elementary matrices, 
form an orthonormal system in $L^2(K)$. 

Let us view $L^2(K)$ as a representation of $K$ via left translations. 
Let $\rho\in {\rm Irr}K$. Then every $h\in \rho$ defines a homomorphism of representations $f_h: \rho^*\to L^2(K)$
which, when viewed as an element of $L^2(K,\rho)$, is given by the formula 
$f_h(y):=yh$. Conversely, suppose $f: \rho^*\to L^2(K)$ is a homomorphism. 
Then $f$ can be represented by an $L^2$-function 
$\widetilde f: K\to \rho$ such that for any $b\in K$, the function $x\mapsto \widetilde f(bx)-b\widetilde f(x)$ vanishes outside a set $S_b\subset K$ of measure $0$. Let $S\subset K\times K$ 
be the set of pairs $(b,x)$ such that $x\in S_b$. Then $S$ has measure $0$, hence 
the set $T_x$ of $b\in K$ such that $(b,x)\in S$ (i.e., $x\in S_b$) has measure zero 
almost everywhere with respect to $x$. So pick $x\in K$ such that $T_x$ has measure zero. 
For $b\notin T_x$ and $y=bx$, we have $\widetilde f(y)=yx^{-1}\widetilde f(x)$. Thus 
$f=f_h$ where $h=x^{-1}\widetilde f(x)$. It follows that the assignment $h\mapsto f_h$ 
is an isomorphism $\rho\cong \Hom(\rho^*,L^2(K))$. This shows 
that the map 
$$
\bigoplus_{\rho\in {\rm Irr}K}\xi_\rho:  \bigoplus_{\rho\in {\rm Irr}K}\rho^*\otimes \rho\to L^2(K)^{\rm fin}
$$
is an isomorphism, where $L^2(K)^{\rm fin}$ is the space of $K$-finite vectors in $L^2(K)$ under left translations. 
Thus any $K$-finite function under left (or right) translations is actually $K\times K$-finite, and we have a natural orthogonal decomposition 
$$
L^2(K)^{\rm fin}\cong \bigoplus_{\rho\in {\rm Irr}K}\rho^*\otimes \rho.
$$
Moreover, since $L^2(K)$ is separable, it follows that ${\rm Irr}K$ is a countable set. 

\subsection{The Peter-Weyl theorem} 
The following non-trivial theorem is proved in the basic Lie groups course. 

\begin{theorem} (Peter-Weyl) $L^2(K)^{\rm fin}$ is a dense subspace of $L^2(K)$. Hence $\lbrace \psi_{\rho ij}\rbrace $ form an orthonormal basis of $L^2(K)$, and we have 
$$
L^2(K)=\widehat\oplus_{\rho\in {\rm Irr}K}\rho^*\otimes \rho.
$$
(completed orthogonal direct sum under the Hilbert space norm). 
\end{theorem}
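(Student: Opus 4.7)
The plan is to show that the orthogonal complement of $L^2(K)^{\rm fin}$ in $L^2(K)$ is zero, by combining convolution operators with the spectral theorem for compact self-adjoint operators. For $\phi \in C(K)$, I would define $T_\phi: L^2(K) \to L^2(K)$ by $(T_\phi f)(x) = \int_K \phi(u) f(xu^{-1})\,du$. A direct change of variables in the bi-invariant Haar measure shows $T_\phi$ commutes with left translation on $L^2(K)$. The operator has continuous integral kernel on the compact space $K \times K$, hence is Hilbert--Schmidt and in particular compact; if $\phi$ satisfies the symmetry condition $\phi(g^{-1}) = \overline{\phi(g)}$, then $T_\phi$ is self-adjoint.

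The spectral theorem for compact self-adjoint operators then decomposes $L^2(K)$ as an orthogonal Hilbert sum $\ker T_\phi \oplus \widehat\oplus_{\lambda \neq 0} E_\lambda$, with each nonzero eigenspace $E_\lambda$ finite-dimensional. Since $T_\phi$ intertwines left translation, each $E_\lambda$ is a finite-dimensional continuous subrepresentation of $L^2(K)$ under left translation, so $E_\lambda \subset L^2(K)^{\rm fin}$ by definition. Consequently, any $f \in L^2(K)$ orthogonal to $L^2(K)^{\rm fin}$ is orthogonal to every $E_\lambda$, and therefore lies in $\ker T_\phi$; in other words, $T_\phi f = 0$ for every continuous symmetric $\phi$.

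To conclude, I would apply an approximate identity argument. Choose $\phi_n \in C(K)$ nonnegative with $\int_K \phi_n \, dg = 1$, supported in a shrinking base of symmetric neighborhoods of the identity, and symmetric in the sense $\phi_n(g^{-1}) = \phi_n(g)$ (obtained by averaging a bump function with its inversion). Minkowski's integral inequality, combined with strong continuity of right translation on $L^2(K)$ (an adaptation of Exercise \ref{transac} to the compact group $K$), yields $\|T_{\phi_n} f - f\|_2 \to 0$ for every $f \in L^2(K)$. Applied to $f \perp L^2(K)^{\rm fin}$, this forces $f = 0$, proving density. The explicit orthonormal basis $\{\psi_{\rho ij}\}$ and the Hilbert direct sum decomposition then follow by combining density with the orthogonality of matrix coefficients already established in Section~2.2.

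The main obstacle I anticipate is the bookkeeping around left/right conventions: one must arrange that $T_\phi$ commutes with the same regular action that defines $K$-finiteness in the theorem, while the approximate identity limit depends on strong continuity of the opposite regular representation. Because $K$ is compact, hence unimodular with a two-sided Haar measure, this juggling is benign, but it is the one place where care is needed. The remaining ingredients — Hilbert--Schmidt compactness, self-adjointness from the symmetry of $\phi$, and the classical approximate identity estimate — are standard once the setup is fixed.
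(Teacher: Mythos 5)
The paper does not actually prove the Peter--Weyl theorem: it is stated as a prerequisite result (``proved in the basic Lie groups course''), so there is no internal proof to compare against. Your proposal is the classical spectral-theoretic proof, and it is correct as written. The computation of the integral kernel is right: substituting $y=xu^{-1}$ and using bi-invariance and inversion-invariance of Haar measure on the compact (hence unimodular) group $K$, one finds $(T_\phi f)(x)=\int_K \phi(y^{-1}x)f(y)\,dy$, so the kernel is $\phi(y^{-1}x)$, continuous on the compact space $K\times K$ and therefore Hilbert--Schmidt, and self-adjoint precisely when $\phi(g^{-1})=\overline{\phi(g)}$. The identification of nonzero eigenspaces $E_\lambda$ as finite-dimensional subspaces invariant under the left regular representation (and hence continuous subrepresentations, so contained in $L^2(K)^{\rm fin}$) is valid, and the Minkowski-inequality plus strong-continuity estimate
$$
\norm{T_{\phi_n}f-f}_2\le \int_K \phi_n(u)\,\norm{f(\cdot\,u^{-1})-f}_2\,du
$$
does force any $f\in\Ker T_{\phi_n}$ for all $n$ to be zero. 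You also correctly anticipate and resolve the one delicate point: $T_\phi$ is right convolution, so it commutes with the left regular action that defines $K$-finiteness, while the approximate-identity convergence uses strong continuity of the right regular action; these are independent applications of the bi-invariant Haar measure and do not conflict. The restriction to symmetric $\phi_n$ (to retain self-adjointness) is also handled correctly by averaging a bump with its inversion.
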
 

\begin{example} For $K=S^1=\Bbb R/2\pi \Bbb Z$ the Peter-Weyl theorem says that the Fourier system $\lbrace e^{inx}\rbrace$ is complete, i.e., a basis of $L^2(S^1)$.
\end{example} 

\subsection{Partitions of unity} 

Let $X$ be a metric space, and $\lbrace U_i,i\ge 1\rbrace$ be a countable open cover of $X$. 

\begin{definition} A {\bf partition of unity subordinate to the cover} $\lbrace U_i,i\ge 1\rbrace$
is a collection of non-negative functions $\phi_i\in C(X)$ such that $\phi_i|_{U_i^c}=0$ for all $i\ge 1$
and 
$$
\sum_{i\ge 1}\phi_i=1
$$
uniformly on compact subsets of $X$. 
\end{definition} 

\begin{lemma}\label{parun} Every countable open cover of $X$ admits a partition of unity subordinate to this cover. 
\end{lemma} 

\begin{proof} Let $d$ be the distance function on $X$ and $C\subset X$ a closed subset.
For $x\in X$ define $d(x,C)\in [0,+\infty]$ by the formula
$$
d(x,C):={\rm inf}_{y\in C}d(x,y)
$$ 
if $C\ne \emptyset$; for $C=\emptyset$, we set $d(x,C)=+\infty$. 
For non-empty $C$, we have
$d(x,C)\le d(x,y)+d(y,C)$, hence 
$|d(x,C)-d(y,C)|\le d(x,y)$, so this function is continuous.
Thus the function $f_C(x):=\frac{d(x,C)}{1+d(x,C)}$ (defined to be $1$ if $C=\emptyset$) 
is continuous on $X$, takes values in $[0,1]$, and $f_C(x)=0$ iff $x\in C$. So if $\lbrace U_i,i\ge 1\rbrace$ is a countable open cover of $X$ then the function $\sum_{i\ge 1}2^{-i}f_{U_i^c}$ is continuous and strictly positive. Thus we may define the continuous functions on $X$
$$
\phi_i:=\frac{2^{-i}f_{U_i^c}}{\sum_{j\ge 1}2^{-j}f_{U_j^c}}, i\ge 1.
$$
These functions are non-negative, vanish outside $U_i$, and 
$$
\sum_{i\ge 1}\phi_i=1
$$
pointwise. 

Moreover, this series converges uniformly on every compact subset $K\subset X$. 
Indeed, there exists $n$ such that $K\subset U_1\cup...\cup U_n$, so we have 
$K\cap (U_1^c\cap...\cap U_n^c)=\emptyset$. Thus for any $x\in K$ we have $d(x,U_1^c\cap...\cap U_n^c)
>0$. So there exists $j=j_x\in [1,n]$ such that $d(x,U_j^c)>0$, hence $f_{U_j^c}(x)>0$. It follows that for all $x\in K$ we have $\sum_{i=1}^n2^{-i}f_{U_i^c}(x)>0$. This is a continuous function on $K$, hence attains a minimal value $D>0$. 
Thus $\sup|\phi_j|\le \frac{2^{-j}}{D}$ for all $j$, implying the uniform convergence. 

It follows that $\lbrace \phi_i,i\ge 1\rbrace$ is a desired partition of unity. 
\end{proof} 

\section{\bf Algebras of measures on locally compact groups} 

\subsection{The space of measures} 

Let $X$ be a locally compact second countable Hausdorff topological space. 
It is well known that such a space is metrizable (hence separable), so let us fix a metric $d$ 
defining the topology on $X$. 

As we have seen in Example \ref{frespa}, the space $C(X)$ of continuous functions on $X$ is a separable Fr\'echet space. So let us consider the topological dual space, $C(X)^*$, of continuous linear functionals on $C(X)$. This space is denoted by ${\rm Meas}_c(X)$; its elements are called (complex-valued) {\bf compactly supported (Radon) measures on $X$}. If $X$ is compact, we will also denote ${\rm Meas}_c(X)$ by ${\rm Meas}(X)$. We will often use the standard notation from measure theory: for $f\in C(X)$ and $\mu\in {\rm Meas}_c(X)$, 
$$
\mu(f)=\int_X f(x)d\mu(x). 
$$

Pick a compact exhaustion $\lbrace K_i,i\ge 1\rbrace$ of $X$ (Lemma \ref{cex}). We claim that for any $\mu\in {\rm Meas}_c(X)$ there exists $i$ such that if $f\in C(X)$ satisfies $f|_{K_i}=0$ then $\mu(f)=0$. 
Indeed, if not then for each $i$ there is $f_i\in C(X)$ with $f_i|_{K_i}=0$ but $\mu(f_i)=1$. Then the series $\sum_i f_i$ converges in $C(X)$ (as it terminates on each $K_i$, and every compact subset of $X$ is contained in some $K_i$) while the series $\mu(\sum_i f_i)=\sum_i \mu(f_i)=\sum_i 1$ diverges, a contradiction. 
Thus we see that ${\rm Meas}_c(X)=\bigcup_{i\ge 1} {\rm Meas}(K_i)$ as a vector space, 
or, without making any choices, 
\begin{equation}\label{indli}
{\rm Meas}_c(X)=\underrightarrow{\lim_{K\subset X}}{\rm Meas}(K),
\end{equation}
where $K$ runs over compact subsets of $X$. 

Let us now endow ${\rm Meas}_c(X)$ with a topology. First assume that $X$ is compact. We then  equip ${\rm Meas}(X)$ with a {\bf weak topology},  defined by the condition that $\mu_n\to \mu$ iff $\mu_n(f)\to \mu(f)$ for any $f\in C(X)$ (i.e., by the family of seminorms $\mu\mapsto |\mu(f)|$, $f\in C(X)$).\footnote{Note that $C(X)$ is a Banach space and thus so is ${\rm Meas}(X)$, in the corresponding norm topology. However, this norm topology is stronger than the weak topology.} Thus ${\rm Meas}(X)$ is Hausdorff and locally convex. 
We will also show that ${\rm Meas}(X)$ is sequentially complete and separable. However, if $X$ is infinite, ${\rm Meas}(X)$ is {\bf not} a Fr\'echet space, as it is not metrizable (or first countable). This follows from the following exercise. 

\begin{exercise} Show that if $X$ is an infinite compact space then 
the weak topology of ${\rm Meas}(X)$ cannot be defined by a countable 
system of seminorms. 

{\bf Hint:} Assume the contrary and show that then there exist $f_i\in C(X),i\in \Bbb N$ such that the topology is defined by the seminorms $|\mu(f_i)|$. Deduce that $\lbrace f_i, i\in \Bbb N\rbrace$ must be an algebraic spanning set for $C(X)$ and derive a contradiction. 
\end{exercise} 

In general, if $X$ is not necessarily compact, we endow ${\rm Meas}_c(X)$ with the strict locally convex inductive limit topology defined by \eqref{indli}, and call it the {\bf weak topology} on ${\rm Meas}_c(X)$. 
Thus, a sequence $\lbrace\mu_n\in {\rm Meas}_c(X)\rbrace$ converges in the weak topology iff it is contained in ${\rm Meas}(K)$ for some compact $K\subset X$ and converges there. 

\begin{lemma}\label{seqco} (i) If a sequence $\lbrace\mu_n,n\ge 1\rbrace\in {\rm Meas}_c(X)$ is Cauchy then there is a compact subset $K\subset X$ 
such that $\mu_n\in {\rm Meas}(K)\subset {\rm Meas}_c(X)$ for all $n$. 

(ii) ${\rm Meas}_c(X)$ is sequentially complete. 
\end{lemma}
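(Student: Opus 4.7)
My plan is to prove (i) by a diagonal contradiction argument and then deduce (ii) from (i) via uniform boundedness on the Banach space $C(K)^*$. For (i), suppose for contradiction that for every compact $K \subset X$ there exists $n$ with $\mu_n \notin C(K)^*$. Using the exhaustion $X = \bigcup_i K_i$, let $j_n$ denote the smallest $j$ with $\mu_n \in C(K_j)^*$, which exists by the discussion preceding the lemma. For every $N$, infinitely many $n$ must satisfy $j_n > N$; otherwise all $\mu_n$ would lie in a single $C(K_M)^*$ obtained by enlarging $K_N$ to absorb finitely many exceptions, contradicting the hypothesis. I would then inductively produce strictly increasing sequences $n_1 < n_2 < \cdots$, $P_1 < P_2 < \cdots$, and functions $f_i \in C(X)$, by: picking $P_i$ larger than $P_{i-1}$ and each $j_{n_k}$ for $k < i$; picking $n_i > n_{i-1}$ with $j_{n_i} > P_i$; and choosing $f_i \in C(X)$ vanishing on $K_{P_i}$ and normalized so that $\mu_{n_i}(f_i) = 1$. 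The matrix $a_{ki} := \mu_{n_k}(f_i)$ is then lower triangular with $a_{kk} = 1$: for $k < i$, the function $f_i$ vanishes on $K_{P_i} \supset K_{j_{n_k}}$, while $\mu_{n_k} \in C(K_{j_{n_k}})^*$, so $\mu_{n_k}(f_i) = 0$.

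Next, for any sequence $c_i \in \mathbb{C}$, the series $f := \sum_i c_i f_i$ converges in $C(X)$, since on each $K_j$ only finitely many terms (those with $P_i < j$) are nonzero. With this $f$,
\begin{equation*}
\mu_{n_k}(f) - \mu_{n_{k-1}}(f) = c_k + \sum_{i < k} c_i \bigl( a_{ki} - a_{k-1,i} \bigr),
\end{equation*}
using $\mu_{n_k}(f) = \sum_{i \leq k} c_i a_{ki}$ and $a_{kk} = 1$. Once $c_1, \ldots, c_{k-1}$ are fixed, I can pick $c_k$ of sufficiently large modulus to force $|\mu_{n_k}(f) - \mu_{n_{k-1}}(f)| \geq 1$. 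The resulting $f \in C(X)$ has $\{\mu_n(f)\}$ not Cauchy in $\mathbb{C}$, contradicting the assumption that $\{\mu_n\}$ is Cauchy in the weak topology.

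For (ii), given a Cauchy sequence $\{\mu_n\}$, part (i) supplies a compact $K$ with $\mu_n \in C(K)^*$ for all $n$. By Tietze, each $g \in C(K)$ extends to some $\tilde g \in C(X)$, so $\mu(g) := \lim_n \mu_n(\tilde g)$ is a well-defined linear functional on $C(K)$. Uniform boundedness on the Banach space $C(K)$ yields a uniform bound $\|\mu_n\|_{C(K)^*} \leq M$, whence $|\mu(g)| \leq M \|g\|_\infty$, so $\mu \in C(K)^* \subset C(X)^*$. Since $\mu_n(f) = \mu_n(f|_K) \to \mu(f|_K)$ for every $f \in C(X)$, one concludes $\mu_n \to \mu$ weakly.

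The main obstacle is (i), where three requirements must be coordinated: convergence of $\sum c_i f_i$ in $C(X)$, the reduction of each $\mu_{n_k}(f)$ to a finite sum so that $c_k$ controls the $k$-th increment, and the ability to keep extracting $n_i$ with $j_{n_i}$ above any prescribed threshold. Letting $P_i$ dominate both $i$ and the previously chosen $j_{n_k}$ is what simultaneously enforces the lower-triangular structure $a_{ki} = 0$ for $k < i$ and harmonizes the three constraints; without this coordination the iterative choice of $c_k$ could be spoiled by tail contributions of $f_j$ with $j > k$.
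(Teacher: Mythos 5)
Your proof is correct and follows essentially the same strategy as the paper's: assume for contradiction that no single $K_j$ supports every $\mu_n$, build a test function $f=\sum f_i$ with $f_i$ vanishing on ever-larger compacta so the sum converges in $C(X)$, and derive a contradiction from the failure of the scalar sequence $\mu_n(f)$ to be Cauchy. Your bookkeeping -- extracting a strictly increasing subsequence $n_k$ and arranging the matrix $\mu_{n_k}(f_i)$ to be lower unitriangular so that free coefficients $c_k$ can be chosen inductively -- is a cleaner reorganization of the paper's version (which instead tracks the nondecreasing indices $N_j$ and the auxiliary function $p(j)$ with the normalization $\mu_{N_j}(f_1+\cdots+f_j)=j$), and your part (ii) matches the paper's proof exactly.
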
 

\begin{proof} (i) We have $\mu_n-\mu_{n+1}\to 0$, $n\to \infty$. 
Thus there exists a compact subset $K\subset X$ and 
$N\in \Bbb N$ such that for $n\ge N$, $\mu_n-\mu_{n+1}\in {\rm Meas}(K)$. 
We can enlarge $K$ so that also $\mu_N\in {\rm Meas}(K)$. 
Then $\mu_n\in {\rm Meas}(K)$ for all $n\ge N$. We can now further enlarge $K$ 
to make sure that $\mu_i\in {\rm Meas}(K)$ for $i<N$, which completes the proof. 

(ii) Let $\lbrace \mu_n,n\ge 1\rbrace$ be a Cauchy sequence in ${\rm Meas}_c(X)$. By (i), $\mu_n\in {\rm Meas}(K)$ for some compact $K\subset X$, so we may assume that $X$ is compact. Since $\mu_n$ is Cauchy, so is $\mu_n(f)$ for any $f\in C(X)$. Thus $\mu_n$ weakly converges to some linear functional $\mu: C(X)\to \Bbb C$ given by $\mu(f):=\lim_{n\to \infty}\mu_n(f)$, and our job is to show, that $\mu$ is continuous. Since $\mu_n(f)$ is convergent, it is bounded, so
by the uniform boundedness principle, the sequence $\norm{\mu_n}$ is bounded above by some 
constant $C$, i.e., $|\mu_n(f)|\le C\norm{f}$. But then $|\mu(f)|\le C\norm{f}$, so $\norm{\mu}\le C$, as desired. 
\end{proof} 

\begin{remark} In fact, it can be shown that ${\rm Meas}_c(X)$ is complete, not just sequentially complete. 
But we will not need this. 
\end{remark} 

\subsection{Support of a measure} 

Define the {\bf support} of $\mu\in {\rm Meas}_c(X)$, denoted ${\rm supp}\mu$, to be the set of 
all $x\in X$ such that for any neighborhood $U$ of $x$ in $X$ there exists 
$f\in C(X)$ vanishing outside $U$ with $\mu(f)\ne 0$. Thus the complement 
$({\rm supp}\mu)^c$ is the set of $x\in X$ which admit a neighborhood $U$ 
such that for every $f\in C(X)$ vanishing outside $U$ we have $\mu(f)=0$. 
In this case, $U\subset ({\rm supp}\mu)^c$, so $({\rm supp}\mu)^c$ is open, 
hence ${\rm supp}\mu$ is closed. Moreover, since ${\rm Meas}_c(X)=\underrightarrow{\lim_{K\subset X}}{\rm Meas}(K)$, 
${\rm supp}\mu$ is contained in some compact subset $K\subset X$, so it is itself compact. 

\begin{proposition} If $f\in C(X)$ and $f|_{{\rm supp}\mu}=0$ then $\mu(f)=0$. 
\end{proposition}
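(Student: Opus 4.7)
The plan is to reduce to the compact case, then combine a partition of unity with the defining property of points outside $\mathrm{supp}\,\mu$.

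\medskip

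First I would use the observation made just before the support definition: there exists a compact subset $K \subset X$ (e.g.\ some $K_i$ in a chosen exhaustion) such that $\mu(g)=0$ whenever $g|_K=0$. In particular $\mathrm{supp}\,\mu \subset K$, and $\mu(f)$ depends only on $f|_K$. Replacing $X$ by $K$ (and $\mu$ by the corresponding functional on $C(K)$, which is a bounded linear functional on a Banach space), we may assume $X$ is compact and $\mu\in C(X)^*$ satisfies $\|\mu\|<\infty$.

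\medskip

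Next, fix $\varepsilon>0$. Since $f$ is continuous and vanishes on the closed set $\mathrm{supp}\,\mu$, the set
$$V:=\{x\in X:|f(x)|<\varepsilon\}$$
is open and contains $\mathrm{supp}\,\mu$. The complement $X\setminus V$ is then a closed (hence compact) subset of the open set $(\mathrm{supp}\,\mu)^c$. By the definition of $\mathrm{supp}\,\mu$, every $x\in (\mathrm{supp}\,\mu)^c$ has an open neighborhood $U_x$ such that $\mu(g)=0$ for every $g\in C(X)$ vanishing outside $U_x$. By compactness, finitely many $U_{x_1},\dots,U_{x_n}$ cover $X\setminus V$, so $V,U_{x_1},\dots,U_{x_n}$ is a finite open cover of $X$.

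\medskip

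Now I would invoke the partition-of-unity construction from the previous subsection to obtain continuous functions $\phi_0,\phi_1,\dots,\phi_n\ge 0$ with $\phi_0$ vanishing outside $V$, $\phi_i$ vanishing outside $U_{x_i}$ for $i\ge 1$, and $\sum_{i=0}^n\phi_i=1$ on $X$. Then
$$f=\phi_0 f+\sum_{i=1}^n \phi_i f.$$
For $i\ge 1$ the function $\phi_i f$ vanishes outside $U_{x_i}$, hence $\mu(\phi_i f)=0$ by choice of $U_{x_i}$. On the other hand $\phi_0 f$ is supported in $V$ and satisfies $\|\phi_0 f\|_\infty\le \varepsilon$, so
$$|\mu(f)|=|\mu(\phi_0 f)|\le \|\mu\|\cdot\varepsilon.$$
Letting $\varepsilon\to 0$ gives $\mu(f)=0$.

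\medskip

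The only real subtlety is the reduction to compact $X$: without it the statement $|\mu(\phi_0 f)|\le \|\mu\|\,\varepsilon$ is meaningless, since $\mu$ is only continuous in the Fr\'echet topology on $C(X)$, not bounded in a single sup-norm. Once one exploits the fact (from the lemma immediately preceding this proposition) that $\mu$ factors through $C(K)$ for some compact $K$, the rest is a standard partition-of-unity localization argument.
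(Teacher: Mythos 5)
Your proof is correct, but it takes a genuinely different route from the paper's. The paper works directly on the open set $(\mathrm{supp}\,\mu)^c$, which is second countable, extracts a countable subcover from the neighborhoods $U_z$, takes the countable partition of unity $\{\phi_i\}$ constructed in the preceding subsection (which converges uniformly), and then writes $\mu(f)=\mu\bigl(\sum_i\phi_i f\bigr)=\sum_i\mu(\phi_i f)=0$, passing the infinite sum through $\mu$ by Fr\'echet continuity. You instead first reduce to the case where $X$ is compact and $\mu$ is a bounded functional on the Banach space $C(X)$ (using the observation, made just before the support definition, that $\mu$ factors through $C(K_i)$ for some $i$), and then run an $\varepsilon$-argument with a finite cover $V,U_{x_1},\dots,U_{x_n}$ and a finite partition of unity, obtaining $|\mu(f)|\le\|\mu\|\varepsilon$. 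What your approach buys: you never have to worry about convergence of an infinite series $\sum_i\phi_i f$ in $C(X)$ (nor about extending the $\phi_i$, defined on the open set, continuously across $\mathrm{supp}\,\mu$), since everything is finite; the only analytic input is a single operator norm estimate. What the paper's approach buys: it is shorter once the partition-of-unity machinery with its uniform convergence statement is in place, and it avoids the reduction step, which (while present in the text) relies on Tietze extension and compatibility of the Banach and Fr\'echet dualities. Both are valid; yours is arguably the more robust argument for a reader who does not want to track the uniform convergence claim.
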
 

\begin{proof} For every $z\in ({\rm supp}\mu)^c$ there is a neighborhood $U_z$ of $z$ such that $\overline{U_z}\subset ({\rm supp}\mu)^c$ and for any $\phi\in C(X)$ vanishing outside $U_z$, $\mu(\phi)=0$. These neighborhoods form an open cover of $({\rm supp}\mu)^c$. Since $({\rm supp}\mu)^c$ is second countable, this cover has a countable subcover
$\lbrace U_i,i\in \Bbb N\rbrace$. Let $\lbrace \phi_i,i\in \Bbb N\rbrace$ 
be a partition of unity 
subordinate to this cover (which exists by Lemma \ref{parun}). We extend $\phi_i$ by zero 
to ${\rm supp}\mu$ (this extension is continuous since $\overline U_z\cap {\rm supp}\mu=\emptyset$). Then $\mu(\phi_if)=0$ for all $i$. 

Moreover, we claim that $\sum_{i\ge 1}\phi_i f=f$ uniformly on any compact subset $K\subset X$.  
Indeed, let $\varepsilon>0$, and let $U\subset X$ be the set of points $x$ where $|f(x)|<\varepsilon$ (so $U$ is a neighborhood of ${\rm supp}\mu$). Since $K\cap U^c\subset ({\rm supp}\mu)^c$ is compact and the series $\sum_i \phi_i=1$ converges uniformly on compact sets in $({\rm supp}\mu)^c$, there is $n$ such that 
$$
\sup_{K\cap U^c}|(1-\sum_{i=1}^n \phi_i)f|<\varepsilon. 
$$
But this inequality also holds on $U$, hence on the whole $K$, as desired. 

It follows that $\mu(f)=\mu(\sum_i \phi_if)=\sum_i \mu(\phi_if)=0$, as claimed. 
\end{proof} 

\subsection{Finitely supported measures} 
A basic example of an element of ${\rm Meas}_c(X)$ is a {\bf Dirac measure} $\delta_a$, 
$a\in X$, such that $\delta_a(f)=f(a)$. Thus if $a_n\to a$ in $X$ as $n\to \infty$ 
then $\delta_{a_n}\to \delta_a$ in the weak topology. A finite linear combination of Dirac measures is called a {\bf finitely supported} measure, since such measures are exactly the measures with finite support. The subspace of finitely supported measures is denoted ${\rm Meas}_c^0(X)$, or ${\rm Meas}^0(X)$ for compact $X$. 

\begin{lemma}\label{le1} ${\rm Meas}_c^0(X)$ is a sequentially dense (in particular, dense) 
subspace in ${\rm Meas}_c(X)$, i.e., every element $\mu \in {\rm Meas}_c(X)$ is the limit of a sequence $\mu_n\in {\rm Meas}_c^0(X)$ in the weak topology. 
\end{lemma}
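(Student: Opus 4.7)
The plan is to construct, for each $\mu \in {\rm Meas}_c(X)$ with compact support $K := {\rm supp}\,\mu$, an explicit sequence $\mu_n \in {\rm Meas}_c^0(X)$ of finitely supported measures with $\mu_n(f) \to \mu(f)$ for every fixed $f \in C(X)$. The strategy is to sample $\mu$ on a progressively finer chopping of $K$, built from a partition of unity subordinate to a small-diameter open cover together with a choice of representative points. The first preparatory step is to upgrade $\mu$ to a Banach-space bound: by Tietze, every $g \in C(K)$ has an extension $\tilde g \in C(X)$ with $\sup_X|\tilde g| = \sup_K|g|$, and by the preceding proposition $\mu(\tilde g)$ is independent of the chosen extension, so $\mu$ descends to a continuous functional on the Banach space $C(K)$. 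Letting $C$ denote its operator norm, one obtains for every $h \in C(X)$ the estimate $|\mu(h)| \le C\sup_{y\in K}|h(y)|$.

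For each $n \ge 1$, compactness of $K$ yields a finite open cover $U_{n,1},\ldots,U_{n,N_n}$ of $K$ by open balls of diameter less than $1/n$; augmenting with $U_{n,0} := X\setminus K$ gives an open cover of $X$. Applying the partition-of-unity construction from the previous subsection produces continuous $\phi_{n,0},\ldots,\phi_{n,N_n}$ on $X$ with $\phi_{n,i}$ vanishing outside $U_{n,i}$ and $\sum_{i=0}^{N_n}\phi_{n,i} \equiv 1$; in particular $\sum_{i=1}^{N_n}\phi_{n,i}\equiv 1$ on $K$. For each $i \ge 1$ I pick a point $x_{n,i}\in U_{n,i}\cap K$ and define
\[
\mu_n \;:=\; \sum_{i=1}^{N_n} \mu(\phi_{n,i})\,\delta_{x_{n,i}} \;\in\; {\rm Meas}_c^0(X).
\]

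To verify $\mu_n(f) \to \mu(f)$, set $\widetilde{f}_n := \sum_{i=1}^{N_n} \phi_{n,i}\,f(x_{n,i}) \in C(X)$, so that $\mu_n(f) = \mu(\widetilde{f}_n)$ by linearity. On $K$ we have $\sum_{i\ge 1}\phi_{n,i}=1$, whence $(f-\widetilde{f}_n)|_K = \sum_i \phi_{n,i}(f-f(x_{n,i}))|_K$; for $y\in K$ only indices with $y \in U_{n,i}$ contribute, and for those $d(y,x_{n,i}) < 1/n$. Given $\varepsilon > 0$, uniform continuity of $f$ on the compact set $K$ forces $|f(y)-f(x_{n,i})| < \varepsilon$ for all large $n$ and all such pairs, so $\sup_K |f-\widetilde{f}_n| < \varepsilon$. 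Applying the bound of the first paragraph to $h = f-\widetilde{f}_n$ now yields $|\mu(f) - \mu_n(f)| \le C\varepsilon$ for $n$ sufficiently large, as required.

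The main delicate point, which dictates the whole design, is matching two ingredients so that the error actually closes: the partition of unity must sum to $1$ on all of $K$, which forces the auxiliary piece $\phi_{n,0}$ concentrated off $K$ (and contributing nothing to $\mu$ by definition of $K$ as the support); and the sampling points $x_{n,i}$ must be chosen inside $U_{n,i}\cap K$ rather than anywhere in $U_{n,i}$, so that uniform continuity of $f$ on $K$ applies to each term. Without both choices in place one cannot convert the pointwise bound $d(y,x_{n,i}) < 1/n$ on $K$ into a global sup-norm estimate there, and the Banach-functional bound for $\mu$ loses its grip.
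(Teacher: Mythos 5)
Your proof is correct and takes essentially the same approach as the paper: cover the support by balls of shrinking diameter, form a subordinate partition of unity, sample $\mu$ at one point per patch, and close the estimate by uniform continuity of $f$ on the compact support together with the bound $|\mu(h)|\le C\sup_K|h|$. The only cosmetic difference is that the paper reduces at the outset to the case $X$ compact (so $\norm{\mu}$ is directly the operator norm on the Banach space $C(X)$), whereas you keep $X$ as is, throw in the auxiliary patch $U_{n,0}=X\setminus K$, and obtain the same bound by factoring $\mu$ through $C(K)$ via Tietze.
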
 

\begin{proof} By replacing $X$ with ${\rm supp}\mu$, we may assume that $X$ is compact. For every $n\ge 1$, let $X_n$ be a finite subset of $X$ such that the open balls $B(x,\frac{1}{n})$ around $x\in X_n$ cover $X$.
Let $\lbrace \phi_{nx},x\in X_n  \rbrace$ be a partition of unity subordinate to this cover (Lemma \ref{parun}), and let 
$$
\mu_n:=\sum_{x\in X_n}\mu(\phi_{nx})\delta_x\in {\rm Meas}_c^0(X).
$$ 
We claim that $\mu_n\to \mu$ in the weak topology. 

Indeed, let $f\in C(X)$. Then 
$$
|\mu_n(f)-\mu(f)|= |\mu(\sum_{x\in X_n}\phi_{nx}(f-f(x))|\le \norm{\mu}\sup_{y\in X}\sum_{x\in X_n}\phi_{nx}(y)|f(y)-f(x)|.
$$
But $f$ is uniformly continuous, so for every $\varepsilon>0$ 
there is $N$ such that if $d(x,y)<\frac{1}{N}$ then $|f(x)-f(y)|<\varepsilon$. 
So for $n\ge N$, whenever $\phi_{nx}(y)\ne 0$, we have $|f(y)-f(x)|<\varepsilon$. Thus we get 
$$
|\mu_n(f)-\mu(f)|\le \varepsilon \norm{\mu}\sup_{y\in X}\sum_{x\in X_n}\phi_{nx}(y)=\varepsilon \norm{\mu},
$$
which implies the desired statement. 
\end{proof} 

Note that since $X$ is separable, so is ${\rm Meas}_c^0(X)$ (given a countable dense subset $T\subset X$, 
finitely supported measures with support in $T$ and Gaussian rational coefficients form a countable, sequentially dense subset $E_T\subset {\rm Meas}_c^0(X)$). Thus we get that ${\rm Meas}_c(X)$ 
is separable; moreover, since $E_T$ is {\it sequentially} dense in ${\rm Meas}_c(X)$, the latter is {\it sequentially separable}. 

\begin{corollary}\label{boxt} If $X,Y$ are locally compact second countable Hausdorff spaces then 
the natural bilinear map 
$$
\boxtimes: {\rm Meas}_c^0(X)\times {\rm Meas}_c^0(Y)\to {\rm Meas}_c(X\times Y)
$$
uniquely extends to a bilinear map 
$$
\boxtimes: {\rm Meas}_c(X)\times {\rm Meas}_c(Y)\to {\rm Meas}_c(X\times Y)
$$ 
which is continuous in each variable. 
\end{corollary}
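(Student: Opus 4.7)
The plan is to define the extension by the iterated-integral formula
\[
(\mu \boxtimes \nu)(f) := \mu\bigl(x \mapsto \nu(f(x, \cdot))\bigr), \qquad f \in C(X \times Y),
\]
check that this is a well-defined element of ${\rm Meas}_c(X \times Y)$ agreeing with the original $\boxtimes$ on finitely supported measures, and then verify continuity in each variable; uniqueness will then follow from sequential density (Lemma \ref{le1}).

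First I would verify that $g_\nu^f(x) := \nu(f(x,\cdot))$ lies in $C(X)$. If $x_n \to x$ in $X$, pick a compact neighborhood $K \subset X$ of $\{x_n\}\cup\{x\}$; for any compact $L \subset Y$, uniform continuity of $f$ on $K\times L$ forces $f(x_n,\cdot) \to f(x,\cdot)$ uniformly on compacts of $Y$, i.e.\ in the Fr\'echet topology of $C(Y)$, so continuity of $\nu$ gives $g_\nu^f(x_n) \to g_\nu^f(x)$. A very similar argument shows $f \mapsto (\mu \boxtimes \nu)(f)$ is continuous on $C(X \times Y)$: if $f_m \to f$ in $C(X \times Y)$ then $g_\nu^{f_m} \to g_\nu^f$ in $C(X)$, hence $\mu(g_\nu^{f_m}) \to \mu(g_\nu^f)$. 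Agreement with the original $\boxtimes$ on pairs of finitely supported measures is a direct finite-sum computation.

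Continuity in $\mu$ for fixed $\nu$ is then immediate, since $\mu \mapsto \mu(g_\nu^f)$ is one of the defining seminorms of the weak topology on ${\rm Meas}_c(X)$. Continuity in $\nu$ is the main obstacle, because $\nu \mapsto g_\nu^f$ is not visibly weakly continuous into $C(X)$. I would resolve it via the Fubini-type identity
\[
\mu(g_\nu^f) = \nu(h_\mu^f), \qquad h_\mu^f(y) := \mu(f(\cdot,y)),
\]
after which $\nu \mapsto \nu(h_\mu^f)$ is a defining seminorm on ${\rm Meas}_c(Y)$, hence continuous. To prove the identity I fix $\nu, f$ and approximate $\mu$ by finitely supported $\mu_n \to \mu$ via Lemma \ref{le1}. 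The equality is trivial for each $\mu_n$ (a finite double sum), and $\mu_n(g_\nu^f) \to \mu(g_\nu^f)$ by continuity of $\mu \mapsto \mu(g_\nu^f)$, so Fubini reduces to showing $h_{\mu_n}^f \to h_\mu^f$ in $C(Y)$.

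For that last step I would invoke the earlier lemma that a weakly convergent (hence Cauchy) sequence in ${\rm Meas}_c(X)$ is supported on a common compact $K \subset X$, then apply the uniform boundedness principle on the Banach space $C(K)$ to obtain $\sup_n \|\mu_n\|_{C(K)^*} =: M < \infty$. This gives both a uniform pointwise bound on $h_{\mu_n}^f$ and equicontinuity $|h_{\mu_n}^f(y) - h_{\mu_n}^f(y')| \le M \sup_{x\in K}|f(x,y)-f(x,y')|$ on any compact $L \subset Y$, which is small by uniform continuity of $f$ on $K \times L$. Combined with pointwise convergence $h_{\mu_n}^f(y) \to h_\mu^f(y)$ (which is just $\mu_n \to \mu$ tested against $f(\cdot,y) \in C(X)$), a standard Arzel\`a--Ascoli argument yields uniform convergence on compacts of $Y$, completing Fubini and hence the corollary. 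Uniqueness follows from sequential density of ${\rm Meas}_c^0$ and the established sequential continuity in each variable, applied successively to $\mu$ (with $\nu$ finitely supported) and then to $\nu$.
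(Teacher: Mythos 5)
Your proof is correct, and it takes a genuinely different route from both arguments in the paper. The paper's main proof is a one-line abstract extension argument: since $\boxtimes$ is separately continuous on the finitely supported measures, ${\rm Meas}_c^0$ is sequentially dense, and ${\rm Meas}_c$ is sequentially complete, the extension exists. As you implicitly noticed, making this rigorous requires exactly the kind of Fubini-type compatibility between the two partial extensions that you prove; the paper's sketch relies on the reader supplying that. The remark in the paper gives a second, closer-to-complete argument: it reduces to compact $X,Y$, proves the operator-norm bound $\norm{\mu\boxtimes\nu}\le\norm{\mu}\norm{\nu}$ on $C(X)\otimes C(Y)$, and invokes Stone--Weierstrass to extend to $C(X\times Y)$.

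Your approach is more hands-on: you exhibit the extension explicitly as an iterated functional $(\mu\boxtimes\nu)(f)=\mu(x\mapsto\nu(f(x,\cdot)))$, verify directly that it lands in $C(X\times Y)^*$, and handle continuity in $\nu$ via the Fubini identity $\mu(g_\nu^f)=\nu(h_\mu^f)$. The identity is the nontrivial part, and you prove it correctly by approximating $\mu$ by finitely supported measures, invoking the paper's lemma to place them on a common compact, and then using uniform boundedness plus equicontinuity (Arzel\`a--Ascoli) to upgrade pointwise convergence of $h_{\mu_n}^f$ to convergence in $C(Y)$. What this buys you over the paper's remark proof is that it avoids Stone--Weierstrass and works uniformly across compacts without a preliminary reduction; what the paper's remark proof buys is a cleaner quantitative statement (the equality of norms $\norm{\mu\boxtimes\nu}=\norm{\mu}\norm{\nu}$ on compacts). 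Both are valid; yours is the most self-contained and makes explicit the Fubini ingredient that the paper's terse main proof leaves implicit.
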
 

\begin{proof} We may assume that $X,Y$ are compact. 
Given $\mu\in {\rm Meas}(X),\nu\in {\rm Meas}(Y)$, define a linear functional $\mu\boxtimes \nu$ on $C(X)\otimes C(Y)\subset C(X\times Y)$ by 
$$
(\mu\boxtimes \nu)(f\otimes g):=\mu(f)\nu(g).
$$ 
We claim that 
$\norm{\mu\boxtimes \nu}\le \norm{\mu}\norm{\nu}$ (in fact, the opposite inequality is obvious, so 
we have an equality). Thus our job is to show that for $f_i\in C(X),g_i\in C(Y)$, $1\le i\le n$, we have 
$$
|\sum_i \mu(f_i)\nu(g_i)|\le \norm{\mu}\norm{\nu}\max_{x\in X,y\in Y}|\sum_i f_i(x)g_i(y)|
$$
i.e., that 
$$
|\nu(\sum_i \mu(f_i)g_i)|\le \norm{\mu}\norm{\nu}\max_{x\in X,y\in Y}|\sum_i f_i(x)g_i(y)|.
$$
To this end, it suffices to show that 
$$
\max_{y\in Y}|\sum_i \mu(f_i)g_i(y)|\le \norm{\mu}\max_{x\in X,y\in Y}|\sum_i f_i(x)g_i(y)|,
$$
which would follow from the inequality
$$
|\sum_i \mu(f_i)g_i(y)|\le \norm{\mu}\max_{x\in X}|\sum_i f_i(x)g_i(y)|.
$$
for all $y\in Y$. But this is just the inequality $|\mu(F_y)|\le \norm{\mu}\max_{x\in X}|F_y(x)|$
applied to $F_y(x):=\sum_i g_i(y)f_i(x)$. 

Now note that by the Stone-Weierstrass theorem, $C(X)\otimes C(Y)$ is dense in $C(X\times Y)$, 
so $\mu\boxtimes \nu$ extends continuously to $C(X\times Y)$.
\end{proof} 

\subsection{The algebra of measures on a locally compact group} 

Now let $G$ be a locally compact group. In this case ${\rm Meas}_c^0(G)=\Bbb CG$ is the group algebra of $G$ as an abstract group. Namely, the algebra structure is given by $\delta_x\delta_y=\delta_{xy}$. 
This multiplication uniquely extends to ${\rm Meas}_c(G)$ since the latter is sequentially complete 
and ${\rm Meas}_c^0(G)$ is sequentially dense in ${\rm Meas}_c(G)$ (cf. Corollary \ref{boxt}). Thus ${\rm Meas}_c(G)$ is a unital associative algebra with unit $\delta_1$. 
The multiplication in this algebra may be written as 
$$
(\mu_1\ast \mu_2)(f)=(\mu_1\boxtimes \mu_2,\Delta(f))=\int_{G\times G}f(xy)d\mu_1(x)d\mu_2(y), 
$$
where $\Delta: C(G)\to C(G\times G)$ is given by $\Delta(f)(x,y):=f(xy)$. 
This multiplication is called the {\bf convolution product}. 

Moreover, if $dg$ is a right-invariant Haar measure on $G$ then 
any compactly supported continuous function (or, more generally, $L^1$-function) 
$\phi$ on $G$ gives rise to a measure $\mu=\phi dg\in {\rm Meas}_c(G)$. 
For such measures $\mu_1=\phi_1dg,\mu_2=\phi_2dg$ we have 
$$
(\mu_1\ast \mu_2)(f)=\int_{G\times G}f(xy)\phi_1(x)\phi_2(y)dxdy=
\int_{G\times G}f(z)\phi_1(zy^{-1})\phi_2(y)dzdy.
$$
Thus $\mu_1\ast \mu_2=\phi dg$ where 
$$
\phi(z)=\int_G \phi_1(zy^{-1})\phi_2(y)dy.
$$
This operation is called the {\bf convolution of functions}.  

Now let $V$ be a continuous representation of $G$ with the associated homomorphism 
$\pi: G\to {\rm Aut}(V)$. This map $\pi$ extends by linearity to a homomorphism
$\pi: \Bbb CG={\rm Meas}_c^0(G)\to \End(V)$. Given $v\in V$, define the linear map 
$\pi_v: {\rm Meas}_c^0(G)\to V$ by the formula $\pi_v(\mu):=\pi(\mu)v$. 

\begin{lemma}\label{le2} (i) Let \(K\) be compact metric space and \(E\) be a Banach space.
Then the algebraic tensor product $C(K)\otimes E$ is dense in $C(K,E)$. 
Thus for every $\mu\in {\rm Meas}(K)$ the operator $\mu\otimes 1_E: C(K)\otimes E\to E$ 
extends to an operator $\widehat\mu: C(K,E)\to E$ with norm $\|\mu\|$. 

(ii) Let $\mu_n\to 0$ in ${\rm Meas}(K)$ in the weak topology. 
Then for every $f\in C(K,E)$ one has $\widehat\mu_n(f)\to 0$. 

(iii) The map $\pi_v: {\rm Meas}_c^0(G)\to V$ is sequentially continuous.
In other words, if $\mu_n\to 0$ in ${\rm Meas}^0_c(G)$ then $\pi_v(\mu_n)\to 0$. 

(iv) Let $\mu_n\in {\rm Meas}^0_c(G)$ converge to $\mu\in {\rm Meas}_c(G)$ 
in the weak topology. Then the sequence $\pi_v(\mu_n)$ converges, and its limit depends only on $\mu$. Moreover, the formula $\pi_v(\mu):=\lim_{n\to \infty}\pi_v(\mu_n)$ defines an extension of $\pi_v$ to a linear map ${\rm Meas}_c(G)\to V$.

(v) This gives rise to an extension of $\pi$ to a unital algebra homomorphism ${\rm Meas}_c(G)\to \End(V)$. 

(vi) The map $\pi_v: {\rm Meas}_c(G)\to V$ is sequentially continuous. 
In other words, if $\mu_n\to 0$ in ${\rm Meas}_c(G)$ then $\pi_v(\mu_n)\to 0$. 
Thus $\pi$ is sequentially continuous in the strong topology of $\End(V)$. 
\end{lemma} 

\begin{proof} (i) Let $f\in C(K,E)$. Then $f(K)$ is compact, so for any $\varepsilon>0$ 
it is covered by a finite collection of balls $B(e_i,\varepsilon)$ of radius $\varepsilon$ and centers 
$e_i\in E,i=1,...,n$. Let $U_i:=f^{-1}(B(e_i,\varepsilon))$; these sets form a finite open cover of $K$. 
Let $\lbrace\phi_i\rbrace$ be a partition of unity subordinate to this cover. 
Then for every $x\in K$, 
$$
\| f(x)-\sum_i \phi_i(x)e_i\|=\|\sum_i \phi_i(x)(f(x)-e_i)\|\le \sum_i \phi_i(x)\|f(x)-e_i\|.
$$
If $x\in U_i$, the $i$-th term in this sum is $<\varepsilon \phi_i(x)$, while 
if $x\notin U_i$, it is zero as $\phi_i(x)=0$. Thus $\| f(x)-\sum_i \phi_i(x)e_i\|<\varepsilon$. 
Thus $C(K)\otimes E$ is dense in $C(K,E)$. The second statement now follows 
from the continuous linear extension theorem.  

(ii) Let $f_m\to f$ be a sequence of elements of $C(K)\otimes E$ such that $\|f_m-f\|<\frac{1}{m}$ (it exists by (i)). Then we have 
$$
\| \widehat\mu_n(f)\|\le \|\widehat\mu_n(f-f_m)\|+\|\widehat\mu_n(f_m)\|.
$$
By the uniform boundedness principle, there exists $L>0$ such that 
$\|\mu_n\|\le L$. Also $\|\widehat\mu_n(f_m)\|\to 0$, $n\to \infty$. Thus for fixed $m$ 
$$
\limsup_{\n\to \infty}\| \widehat\mu_n(f)\|\le \frac{L}{m}.
$$
Since this holds for all $m$, we get (ii).

(iii) There exists a compact subset $K\subset G$ such that 
$\mu_n\in {\rm Meas}^0(K)$ and converges to $0$ in this space. 
It suffices to show that for every continuous seminorm $p$ on $V$ one has 
$p(\pi(\mu_n)v)\to 0$. Let $V_p$ be the Banach completion of 
$V/{\rm Ker}p$ and $P_p: V\to V_p$ be the natural map; so it is enough to show that 
$\| P_p \pi(\mu_n)v\|_{V_p}\to 0$. However, we have 
a map $f: g\mapsto P_p \pi(g)v$ which belongs to $C(K,V_p)$,  
and $\| P_p \pi(\mu_n)v\|_{V_p}=\|\widehat\mu_n(f)\|_{V_p}$. 
Thus the statement follows by (ii). 

(iv) There is a compact subset $K\subset G$ such that $\mu_n,\mu\in {\rm Meas}(K)$ and $\mu_n\to \mu$ in ${\rm Meas}(K)$. Thus $\mu_n$ is Cauchy, i.e., 
if $m_i,n_i\to \infty$ then $\mu_{m_i}-\mu_{n_i}\to 0$. Hence by (iii), 
$\pi_v(\mu_{m_i})-\pi_v(\mu_{n_i})\to 0$. It follows that the sequence 
$\pi_v(\mu_n)\in V$ is Cauchy. But $V$ is sequentially complete, 
so this sequence converges to some $w\in V$. If $\mu_n'\in {\rm Meas}^0(K)$ 
is another sequence converging to $\mu$ then $\mu_n-\mu_n'$ converges to $0$, 
so $\pi_v(\mu_n-\mu_n')=\pi_v(\mu_n)-\pi_v(\mu_n')$ also converges to $0$ by (iii).
Thus $w$ depends only on $\mu$. The last statement follows from Lemma \ref{le1}. 

(v) Let \(\mu\in \operatorname{Meas}_c(G)\)
be supported on a compact set \(K\subset G\), and let \(p\) be a
continuous seminorm on \(V\).  Since the action map \(K\times V\to V\)
is continuous and \(K\) is compact, there is a continuous seminorm
\(q\) on \(V\) such that
\[
        p(\pi(g)w)\le q(w),\qquad g\in K,\ w\in V .
\]
Hence
\begin{equation}\label{semiest}
        p(\pi(\mu)w)
        \le \|\mu\|
             \sup_{g\in K}p(\pi(g)w)
        \le \|\mu\|q(w).
\end{equation}
Thus \(\pi(\mu):V\to V\) is continuous.

Now let \(\mu,\nu\in\operatorname{Meas}_c(G)\).  Choose compact sets
\(K,L\subset G\) supporting \(\mu,\nu\), and choose sequences
$\mu_n\in\operatorname{Meas}^0(K),\
        \nu_n\in\operatorname{Meas}^0(L),$
such that \(\mu_n\to\mu\) in \(\operatorname{Meas}(K)\) and
\(\nu_n\to\nu\) in \(\operatorname{Meas}(L)\).  Then
$$
        \mu_n*\nu_n\longrightarrow \mu*\nu
        $$
in \(\operatorname{Meas}(KL)\).  Indeed, for \(f\in C(KL)\) this is
the convergence of
\[
        \int_{K\times L} f(xy)\,d\mu_n(x)d\nu_n(y)
\]
to the corresponding integral against \(\mu\boxtimes\nu\), which
follows from the same argument as in part (ii).

Now, for finitely supported measures we obviously have
$$
\pi(\mu_n*\nu_n)=\pi(\mu_n)\pi(\nu_n).
$$
Let us apply both sides to \(v\in V\) and pass to the limit.
The left hand side converges by the sequential continuity of
\(\eta\mapsto \pi(\eta)v\).  On the right hand side,
\(\pi(\nu_n)v\to\pi(\nu)v\), and \eqref{semiest}, applied
to the uniformly bounded sequence \(\{\mu_n\}\subset\operatorname{Meas}(K)\),
gives
\[
        \pi(\mu_n)\pi(\nu_n)v-\pi(\mu_n)\pi(\nu)v\to 0,
\]
while \(\pi(\mu_n)\pi(\nu)v\to\pi(\mu)\pi(\nu)v\).  Hence we obtain
\[
        \pi(\mu*\nu)v=\pi(\mu)\pi(\nu)v,
\]
i.e., $\pi$ is an algebra homomorphism. Also \(\pi(\delta_1)=\operatorname{Id}_V\), so the extended map
\(\operatorname{Meas}_c(G)\to \operatorname{End}(V)\) is a unital
algebra homomorphism.

(vi) The proof is the same as in (iii). 
\end{proof} 

\begin{remark}
Note that if $V$ is infinite dimensional then the map 
$\pi_v: {\rm Meas}_c^0(G)\to V$ need not be continuous, even if $G$ is compact.  
Namely, let $G=S^1$ and recall that every continuous linear map from ${\rm Meas}^0(S^1)=\mathbb C S^1$ with weak topology to a normed space has finite rank. Now let $V=C(S^1)$ with the left-regular action, and choose \(v\in C(S^1)\) whose translates span an infinite-dimensional subspace (a continuous function with infinitely many nonzero Fourier
coefficients). Then $\pi_v$ is not continuous.
\end{remark}

\section{\bf Plancherel formulas, Dirac sequences, smooth vectors} 

\subsection{Plancherel formulas} For a compactly supported $L^1$-function $f$ 
on $G$, for brevity let us denote $\pi(fdg)$ just by $\pi(f)$. 

\begin{proposition} (Plancherel's theorem for compact groups) Let $K$ be a compact group and $f_1,f_2\in L^2(K)$. 
Then 
$$
(f_1,f_2)=\sum_{\rho\in {\rm Irr}K}\dim \rho\cdot {\rm Tr}(\pi_\rho(f_1)\pi_\rho(f_2)^\dagger) 
$$
and this series is absolutely convergent.
\end{proposition}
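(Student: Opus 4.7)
The plan is to reduce, via polarization, to the diagonal Parseval identity
$$\norm{f}_{L^2}^2 \;=\; \sum_{\rho\in\Irr K}\dim\rho\cdot\norm{\pi_\rho(f)}_{HS}^2,$$
where $\norm{A}_{HS}^2:=\text{Tr}(AA^\dagger)$, prove it first on $L^2(K)^{\rm fin}$ using the Peter--Weyl basis, and then extend by continuity. Absolute convergence in the general statement will drop out of Cauchy--Schwarz, applied first in the Hilbert--Schmidt pairing on $\End\rho$ (giving $|\text{Tr}(\pi_\rho(f_1)\pi_\rho(f_2)^\dagger)|\le \norm{\pi_\rho(f_1)}_{HS}\norm{\pi_\rho(f_2)}_{HS}$) and then in $\ell^2(\Irr K)$ against the sequences $\sqrt{\dim\rho}\,\norm{\pi_\rho(f_i)}_{HS}$, which lie in $\ell^2$ by the diagonal Parseval identity.

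For the finite part, I would expand $f=\sum_{\rho,i,j}c_{\rho ij}\psi_{\rho ij}$ in the Peter--Weyl orthonormal basis (a finite sum), so that $\norm{f}^2=\sum|c_{\rho ij}|^2$, and compute $\pi_\rho(\psi_{\sigma ij})$. Schur orthogonality of matrix coefficients of inequivalent irreducibles immediately yields $\pi_\rho(\psi_{\sigma ij})=0$ for $\sigma\ne\rho$, so that $\pi_\rho$ sees only the $\rho$-isotypic component of $f$. For $\sigma=\rho$, the same orthogonality relations identify $\pi_\rho(\psi_{\rho ij})$ with an explicit rescaled elementary matrix. Combined with the formula $(\xi_\rho(A),\xi_\rho(B))=(\dim\rho)^{-1}\text{Tr}(AB^\dagger)$, this shows that $\dim\rho\cdot\norm{\pi_\rho(f)}_{HS}^2$ equals the squared $L^2$-norm of the $\rho$-isotypic part of $f$, and summing over $\rho$ yields Parseval on $L^2(K)^{\rm fin}$.

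To pass from $L^2(K)^{\rm fin}$ to all of $L^2(K)$, I would use that $f\mapsto \pi_\rho(f)$ is continuous $L^2(K)\to\End\rho$ (since $L^2\subset L^1$ on a compact group) and that the non-negative partial sums $\sum_{\rho\in F}\dim\rho\cdot\norm{\pi_\rho(f)}_{HS}^2$ for finite $F\subset\Irr K$ are uniformly bounded by $\norm{f}^2$: the previous step gives the identity for the $F$-truncation of $f$ (in the Peter--Weyl basis), and the remainder is orthogonal. Monotone convergence together with density of $L^2(K)^{\rm fin}$ then finishes the proof. The main obstacle is bookkeeping the normalizations: one must verify that the factor $\dim\rho$ on the right-hand side exactly absorbs the $(\dim\rho)^{-1}$ from the orthogonality formula for $\xi_\rho$, which requires carefully tracing the constants through $\psi_{\rho ij}=(\dim\rho)^{1/2}\xi_\rho(E_{ij})$ and the Hilbert--Schmidt pairing. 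Modulo this constant-chasing, the identity is essentially a Fourier-analytic reformulation of Peter--Weyl.
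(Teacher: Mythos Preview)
Your proposal is correct, but the paper takes a considerably shorter route to the same destination. Instead of first proving the diagonal identity on $L^2(K)^{\rm fin}$, extending by density, polarizing, and then separately establishing absolute convergence via Cauchy--Schwarz, the paper simply invokes the general Hilbert space fact that for any orthonormal basis $\{e_i\}$ one has $(f_1,f_2)=\sum_i(f_1,e_i)(e_i,f_2)$ with absolute convergence, and applies it directly to the Peter--Weyl basis $\{\psi_{\rho ij}\}$. The only computation needed is the observation that the Fourier coefficient $(f,\psi_{\rho ij})$ is $\sqrt{\dim\rho}$ times a matrix entry of $\pi_\rho(f)$, so that summing $(f_1,\psi_{\rho ij})\overline{(f_2,\psi_{\rho ij})}$ over $i,j$ for fixed $\rho$ immediately yields $\dim\rho\cdot\Tr(\pi_\rho(f_1)\pi_\rho(f_2)^\dagger)$.

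In effect, your polarization step, your density/continuity argument, and your separate absolute-convergence argument are all already packaged inside the standard Parseval identity for inner products. Your approach is not wrong, and the constant-chasing you flag is exactly the same as the one hidden in the paper's one-line identification; but you are reproving pieces of Parseval rather than simply citing it.
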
 

\begin{proof} Recall that if $e_i$ is an orthonormal basis of 
a Hilbert space $H$ and $f_1,f_2\in H$ then 
$$
(f_1,f_2)=\sum_i (f_1,e_i)(e_i,f_2)
$$
and this series is absolutely convergent. 
The result now follows by applying this 
formula to the orthonormal basis provided by the Peter-Weyl theorem: 
$$
\psi_{\rho ij}=\sqrt{\dim \rho}(\pi_\rho(g)v_{\rho i},v_{\rho j}),
$$
where $\lbrace v_{\rho i}\rbrace $ is an orthonormal basis of $\rho$. 
\end{proof} 

\begin{example} If $K=S^1$, Plancherel's theorem reduces to the usual Parseval equality
in Fourier analysis: 
$$
(f_1,f_2)=\sum_{n\in \Bbb Z}c_n(f_1)\overline{c_n(f_2)},
$$
where $c_n(f)$ are the Fourier coefficients of $f$.  
\end{example} 

\begin{proposition} (Plancherel's formula) If $K$ is a compact Lie group and 
$f\in C^\infty(K)$ then 
$$
f(1)=\sum_{\rho \in {\rm Irr}K}\dim \rho\cdot {\rm Tr}(\pi_\rho(f)) 
$$
and this series is absolutely convergent. 
\end{proposition}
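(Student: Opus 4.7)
The plan is to deduce the formula by specializing the Peter--Weyl expansion of $f$ at $g=1$, with the main effort going into establishing absolute convergence of the series at a point, which requires the smoothness of $f$. Since $f\in L^2(K)$, by the Peter--Weyl theorem $f=\sum_{\rho,i,j}c_{\rho ij}\psi_{\rho ij}$ in $L^2$ with $c_{\rho ij}=(f,\psi_{\rho ij})$. Using $\psi_{\rho ij}(1)=\sqrt{\dim\rho}\,\delta_{ij}$ together with $\chi_\rho=(\dim\rho)^{-1/2}\sum_i\psi_{\rho ii}$, a direct computation (combined with the involution $\rho\mapsto\rho^\ast$ on $\mathrm{Irr}\,K$) identifies the formal value of this series at $1$, namely $\sum_\rho\sqrt{\dim\rho}\sum_i c_{\rho ii}$, with $\sum_\rho\dim\rho\cdot\mathrm{Tr}(\pi_\rho(f))$. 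Thus it suffices to establish absolute convergence of this series when $f$ is smooth.

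For this, I would fix a bi-invariant Riemannian metric on $K$ and let $\Delta$ denote the associated Laplace--Beltrami operator. Being bi-invariant and self-adjoint, $\Delta$ preserves each isotypic block $\rho^\ast\otimes\rho\subset L^2(K)$ and acts there by a single non-negative scalar $c_\rho$ (the Casimir eigenvalue on $\rho$). For smooth $f$, $(1+\Delta)^N f\in L^2$ for every $N\ge 0$, so the $L^2$-Plancherel proposition above applied to $(1+\Delta)^N f$ yields
$$
\sum_\rho\dim\rho\cdot(1+c_\rho)^{2N}\|\pi_\rho(f)\|_{\mathrm{HS}}^2=\|(1+\Delta)^N f\|_{L^2}^2<\infty.
$$
In parallel, standard structure theory for compact Lie groups (Weyl's dimension formula, and the fact that $c_\rho$ is a positive-definite quadratic function of the highest weight) gives polynomial bounds for $\dim\rho$ and for the counting function of $\{\rho:c_\rho\le T\}$, and hence $\sum_\rho(\dim\rho)^2(1+c_\rho)^{-2N}<\infty$ once $N$ is large enough. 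Combining the trace bound $|\mathrm{Tr}(\pi_\rho(f))|\le\sqrt{\dim\rho}\,\|\pi_\rho(f)\|_{\mathrm{HS}}$ with a Cauchy--Schwarz on the $\rho$-sum,
$$
\sum_\rho\dim\rho\,|\mathrm{Tr}(\pi_\rho(f))|\le\Bigl(\sum_\rho(\dim\rho)^2(1+c_\rho)^{-2N}\Bigr)^{1/2}\Bigl(\sum_\rho\dim\rho\,(1+c_\rho)^{2N}\|\pi_\rho(f)\|_{\mathrm{HS}}^2\Bigr)^{1/2}<\infty.
$$

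Once absolute convergence is in hand, the formal identity of the first paragraph becomes a genuine equality. In fact the same estimate shows that the full Peter--Weyl series of $f$ converges absolutely and uniformly on $K$, so one obtains the stronger pointwise formula $f(g)=\sum_\rho\dim\rho\,\mathrm{Tr}(\pi_\rho(f)\pi_\rho(g^{-1}))$, of which the proposition is the case $g=1$. The main obstacle is the estimate $\sum_\rho(\dim\rho)^2(1+c_\rho)^{-2N}<\infty$: this is the place where one must import specific information from the theory of compact Lie groups, ultimately reducing the series to a convergent lattice sum over dominant weights of the form $\sum_\lambda (\text{polynomial in }|\lambda|)(1+|\lambda|^2)^{-N}$.
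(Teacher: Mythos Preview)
Your argument is correct, but it follows a different path from the paper's.

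The paper bypasses all explicit estimates by recognizing $f(1)$ as the trace of a single operator. Let $A$ be convolution by $f$ on $L^2(K)$, i.e.\ the integral operator with kernel $F(x,y)=f(xy^{-1})$. Since $F$ is smooth on the compact manifold $K\times K$, $A$ is trace class, and $\Tr(A)=\int_K F(x,x)\,dx=f(1)$. On the other hand $A$ is right-invariant, so it preserves each block $\rho^*\otimes\rho$ in the Peter--Weyl decomposition and acts there as $\pi_\rho(f)\otimes 1$; summing block traces gives $\Tr(A)=\sum_\rho \dim\rho\cdot\Tr(\pi_\rho(f))$. Absolute convergence is inherited from the general fact that the eigenvalue series of a trace class operator converges absolutely.

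Your route is the direct Fourier-analytic one: control the decay of $\pi_\rho(f)$ via iterated Laplacians and then feed in the lattice estimate $\sum_\rho(\dim\rho)^2(1+c_\rho)^{-2N}<\infty$. This is longer and imports more Lie-theoretic input (Weyl dimension formula, quadratic growth of $c_\rho$), but it is more self-contained in that it does not invoke the ``smooth kernel $\Rightarrow$ trace class'' black box, and as you note it actually yields the stronger conclusion of uniform convergence of the full Fourier series on $K$. In effect, your Sobolev-type estimate is one way to \emph{prove} that smooth-kernel operators on $K$ are trace class; the paper simply quotes that fact.
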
  

\begin{example} If $K=S^1$ then this formula says that for $f\in C^\infty(S^1)$
$$
f(1)=\sum_{n\in \Bbb Z}c_n(f),
$$
i.e., the Fourier series of $f$ absolutely converges at $1$. Note that for $f\in C(S^1)$ 
this is false in general!\footnote{One can show that for an $N$-dimensional group, the differentiability needed for the Plancherel formula is $C^k$ for $k>N/2$.}
\end{example} 

\begin{proof} Consider the integral operator $A$ of convolution with the function $f$: 
$$
(A\psi)(x)=(f\ast \psi)(x)=\int_K f(xy^{-1})\psi(y)dy.
$$
This operator is trace class, since it has smooth integral kernel $F(x,y)=f(xy^{-1})$, and 
$$
{\rm Tr}(A)=\int_K F(x,x)dx=\int_K f(1)dx=f(1). 
$$
On the other hand, $A$ is right-invariant, so it preserves the decomposition 
of $L^2(K)$ into the direct sum of $\rho\otimes \rho^*$ and acts on each such summand as 
$\pi_\rho(f)\otimes 1$. Thus we also have 
$$
{\rm Tr}(A)=\sum_{\rho\in {\rm Irr}K} \dim(\rho)\cdot {\rm Tr}(\pi_\rho(f)),
$$
as desired. 
\end{proof} 

\subsection{Dirac sequences} 

If $G$ is a locally compact group then multiplication by $dg$ defines an inclusion 
$C_c(G)\hookrightarrow {\rm Meas}_c(G)$ of compactly supported continuous functions into compactly supported measures as a (non-unital) subalgebra. Moreover, if $G$ is a Lie group then we have a nested sequence of subalgebras $C_c^k(G)$, $0\le k\le \infty$ (compactly supported $C^k$-functions). The following lemma shows that while these subalgebras are non-unital, they are ``almost unital". 

\begin{lemma}\label{le3} There exists a sequence $\phi_n\in C_c(G)$ such that $\phi_n\to \delta_1$ in the weak topology as $n\to \infty$. Moreover, if $G$ is a Lie group, we can choose $\phi_n\in C^\infty_c(G)$. 
\end{lemma}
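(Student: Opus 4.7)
The plan is to construct an explicit ``approximate identity'' consisting of nonnegative bump functions concentrated near $1 \in G$, and then verify weak convergence against arbitrary $f \in C(G)$ using continuity of $f$ at $1$.

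First, since $G$ is locally compact, Hausdorff and second countable (hence metrizable), I would pick a nested basis $U_1 \supset U_2 \supset \cdots$ of open relatively compact neighborhoods of $1$ with $\bigcap_n \overline{U_n} = \{1\}$ (e.g.\ open balls of radius $1/n$ in some left-invariant metric, intersected with a fixed compact neighborhood of $1$).

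Second, I would construct nonnegative functions $\psi_n \in C_c(G)$ supported in $U_n$ with $\psi_n(1) > 0$. For the purely continuous case, this follows from the partition-of-unity construction in Section~2.4: apply it to the open cover $\{U_n, G \setminus \{1\}\}$ of $G$ (or directly use Urysohn's lemma, since $G$ is metrizable) to produce a continuous function supported in $U_n$ that equals $1$ at the origin. For the Lie group case, one transports a standard $C^\infty$ bump function on $\mathbb{R}^N$ (where $N = \dim G$) to $G$ via the exponential map $\exp : \mathfrak{g} \to G$, which is a local diffeomorphism near $0$, then extends by $0$ outside a small coordinate chart around $1$. Having built $\psi_n$, I would renormalize by setting
\[
\phi_n := \frac{\psi_n}{\int_G \psi_n(g)\,dg},
\]
which is well-defined since $\int_G \psi_n \, dg > 0$ by continuity and positivity near $1$. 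Thus $\phi_n \ge 0$, $\mathrm{supp}\,\phi_n \subset U_n$, and $\int_G \phi_n \, dg = 1$.

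Third, I would verify that $\phi_n\,dg \to \delta_1$ in the weak topology of $\mathrm{Meas}_c(G)$. Fix $f \in C(G)$ and $\varepsilon > 0$. By continuity of $f$ at $1$, there is a neighborhood $V$ of $1$ such that $|f(g) - f(1)| < \varepsilon$ for $g \in V$; since $\bigcap_n \overline{U_n} = \{1\}$ and $\overline{U_1}$ is compact, we have $U_n \subset V$ for $n$ sufficiently large. Then, using $\phi_n \ge 0$ and $\int \phi_n \, dg = 1$,
\[
\bigl|(\phi_n\,dg)(f) - \delta_1(f)\bigr| = \left| \int_G \phi_n(g)\bigl(f(g) - f(1)\bigr)\,dg \right| \le \int_{U_n} \phi_n(g)\,|f(g) - f(1)|\,dg < \varepsilon,
\]
which proves $\phi_n\,dg \to \delta_1$ weakly.

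There is no real obstacle: the only mildly delicate point is ensuring that the bump functions can be taken smooth in the Lie group case, which is handled by transporting a Euclidean bump via $\exp$; and that $\int \psi_n \, dg > 0$, which is automatic from $\psi_n(1) > 0$ together with continuity and the fact that nonempty open sets have positive Haar measure.
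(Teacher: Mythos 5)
Your proof is correct and follows the same approach the paper only sketches (a sequence of normalized nonnegative bump functions supported on shrinking neighborhoods of $1$, smooth in the Lie case via transport along a coordinate chart); you have simply filled in the details the paper leaves to the reader.
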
 

\begin{proof} (sketch) $\phi_n$ can be constructed as a sequence of ``hat" functions
(normalized to have integral $1$) supported on a decreasing sequence of balls $B_1\supset B_2\supset...$ whose intersection is $1\in G$. Such hat functions can be chosen smooth 
if $G$ is a Lie group.  
\end{proof} 

Such sequences $\phi_n$ are called {\bf Dirac sequences}. 

\begin{corollary}\label{co4} $C_c(G)$ is sequentially dense in ${\rm Meas}_c(G)$. For Lie groups, $C_c^\infty(G)$ is sequentially dense in ${\rm Meas}_c(G)$. 
\end{corollary}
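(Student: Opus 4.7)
The plan is to produce, for each $\mu \in {\rm Meas}_c(G)$, an explicit approximating sequence of the form $\mu_n := (\phi_n dg)\ast \mu$, where $\phi_n$ is the Dirac sequence supplied by Lemma \ref{le3}. The strategy breaks into two essentially independent steps.

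\emph{Step 1 (regularization).} I would first prove a general regularization lemma: for any $\phi \in C_c(G)$ and any $\nu \in {\rm Meas}_c(G)$, the convolution $(\phi dg)\ast \nu$ equals $\psi dg$ where
$$
\psi(z) := \int_G \phi(zy^{-1})\, d\nu(y),
$$
and moreover $\psi \in C_c(G)$; if additionally $G$ is a Lie group and $\phi \in C_c^\infty(G)$, then $\psi \in C_c^\infty(G)$. The identity is verified first for $\nu = \delta_x$ by the substitution $u = xy$ using right-invariance of $dg$ (already carried out in the excerpt for $\phi_1\ast\phi_2$), and then extended to arbitrary $\nu$ by sequential continuity: the right-hand side is continuous in $\nu$ because $\nu \mapsto \nu(\phi(z(\cdot)^{-1}))$ is continuous for each $z$, while the left-hand side is continuous in $\nu$ by construction of convolution; so both sides agree on the sequentially dense subspace ${\rm Meas}_c^0(G)$ (Lemma \ref{le1}) and hence everywhere. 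Continuity of $\psi$ then follows from uniform continuity of $\phi$ combined with the norm bound $|\nu(f)|\le C\sup_{K'}|f|$ on a compact $K'\supset {\rm supp}\,\nu$, and ${\rm supp}\,\psi \subset {\rm supp}\,\phi \cdot {\rm supp}\,\nu$ is compact. Smoothness, when applicable, comes from differentiation under the integral.

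\emph{Step 2 (convergence).} By Lemma \ref{le3}, $\phi_n dg \to \delta_1$ weakly. Since convolution $\ast$ on ${\rm Meas}_c(G)$ is separately continuous in each variable (as noted in the construction of the algebra structure, and a direct consequence of Corollary \ref{boxt}), we obtain
$$
\mu_n = (\phi_n dg)\ast \mu \longrightarrow \delta_1 \ast \mu = \mu
$$
weakly. Combined with Step 1, the sequence $\mu_n$ lies in $C_c(G)\, dg$ (respectively $C_c^\infty(G)\, dg$) and converges to $\mu$ in the weak topology, which is exactly the assertion.

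The principal obstacle is the identification in Step 1 that $(\phi dg)\ast \nu$ is represented by a continuous (or smooth) compactly supported density. The case $\nu = \delta_x$ is a one-line change of variables, but for general $\nu$ one must either appeal cleanly to continuity in $\nu$ of both sides together with Lemma \ref{le1}, or else perform a direct Fubini manipulation on $\int\int f(xy)\phi(x)\, dx\, d\nu(y)$ after substituting $u = xy$; both routes require a bit of care to confirm that the resulting density inherits the regularity of $\phi$ rather than just being some abstract compactly supported measure.
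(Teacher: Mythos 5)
Your proof is correct, and it takes a genuinely different route from the paper. The paper argues in two stages: first approximate $\mu$ by finitely supported measures via Lemma \ref{le1}, then approximate each Dirac measure $\delta_g$ by a translated Dirac sequence, asserting that the conclusion then follows. Your proof instead mollifies $\mu$ in a single step by the convolution $\mu_n = (\phi_n\, dg)\ast\mu$, and the heart of the matter is the regularization lemma that $(\phi\, dg)\ast\nu = \psi\, dg$ with $\psi(z) = \nu(\phi(z(\cdot)^{-1}))$ belonging to $C_c(G)$ (resp.\ $C^\infty_c(G)$). The trade-offs are instructive. Your route costs more up front: you must check that $\psi$ really inherits the regularity and compact support of $\phi$ and that the Fubini-type interchange is legitimate, which you correctly handle either by verifying the identity for $\nu = \delta_x$ and comparing two separately sequentially continuous maps on the sequentially dense subspace ${\rm Meas}_c^0(G)$, or by a direct manipulation. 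The reward is that you produce a \emph{single} approximating sequence. The paper's shorter argument implicitly composes two sequential limits, and since ${\rm Meas}_c(G)$ is not first countable, iterated sequential closure is not automatic; it can be repaired by a diagonal argument after noting that everything lives in a fixed norm ball of $C(K)^*$ for a compact $K$, where the weak topology is metrizable because $C(K)$ is separable, but this is an extra step that your convolution argument avoids entirely. Your approach also generalizes more readily, since the same mollification scheme yields density of $C^\infty_c(G)$ in many other function spaces.
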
 

\begin{proof} By translating a Dirac sequence, for any $g\in G$ we can construct a sequence $\psi_n\to \delta_g$. This implies that ${\rm Meas}_c^0(G)$ is contained in the sequential closure of $C_c(G)$ (and of $C_c^\infty(G)$ in the Lie case). So the result follows from 
Lemma \ref{le1}. 
\end{proof} 

\subsection{Density of $K$-finite vectors} 

\begin{corollary}\label{co5} Let $V$ be a continuous representation of a compact 
group $K$. Then $V^{\rm fin}$ is dense in $V$. 
\end{corollary}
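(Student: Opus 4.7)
The plan is to approximate any $v \in V$ by a vector of the form $\pi(\psi \, dg) v$ for a $K$-finite function $\psi$ on $K$, and then show the resulting vector is itself $K$-finite. Fix $v \in V$, a continuous seminorm $\nu$ on $V$, and $\varepsilon > 0$; I seek $w \in V^{\rm fin}$ with $\nu(w-v) < \varepsilon$. By Lemma \ref{le3} pick a Dirac sequence $\phi_n \in C(K)$, so that $\phi_n \, dg \to \delta_1$ in the weak topology of ${\rm Meas}_c(K)$. Corollary \ref{co3} then gives $\pi(\phi_n \, dg) v \to \pi(\delta_1) v = v$ in $V$, and I fix $n$ large enough that $\nu(\pi(\phi \, dg) v - v) < \varepsilon/2$, where $\phi := \phi_n$.

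Next I approximate $\phi$ by a matrix-coefficient function. The subspace $C(K)^{\rm fin} = \bigoplus_{\rho \in {\rm Irr}K} \rho^* \otimes \rho$ of $C(K)$ is a unital $*$-subalgebra, and by the Peter-Weyl theorem it separates the points of $K$ (otherwise some nontrivial $k \in K$ would act trivially on every irreducible representation, hence on all of $L^2(K)$, contradicting the fact that left translation on continuous functions is faithful). Stone-Weierstrass then makes $C(K)^{\rm fin}$ uniformly dense in $C(K)$. Since $K$ is compact, uniform convergence of a function to $\phi$ forces convergence of the associated measures to $\phi \, dg$ in the weak topology of ${\rm Meas}_c(K)$, so by Corollary \ref{co3} I may pick $\psi \in C(K)^{\rm fin}$ with $\nu(\pi(\psi \, dg) v - \pi(\phi \, dg) v) < \varepsilon/2$.

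Finally I verify that $w := \pi(\psi \, dg) v$ lies in $V^{\rm fin}$; by linearity it suffices to treat the case when $\psi$ is a single matrix coefficient of some $\rho \in {\rm Irr}K$. For $k \in K$, right-invariance of $dg$ yields
$$
\pi(k) \, \pi(\psi \, dg) v \;=\; \int_K \psi(k^{-1} h)\, \pi(h) v \, dh \;=\; \pi\bigl((L_k \psi) \, dg\bigr) v,
$$
where $(L_k \psi)(h) := \psi(k^{-1} h)$. The left translates $\{L_k \psi : k \in K\}$ span a finite-dimensional subspace $W \subset C(K)$ (isomorphic to a subrepresentation of $\rho$), so the $K$-orbit of $w$ is contained in the finite-dimensional subspace $\{\pi(f \, dg) v : f \in W\} \subseteq V$. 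Hence $w$ is $K$-finite, and by construction $\nu(w - v) < \varepsilon$, completing the argument.

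The main obstacle I anticipate is the topological mismatch in the middle step: Stone-Weierstrass delivers only uniform approximation of $\phi$, whereas what is needed is convergence of the vectors $\pi(\psi \, dg) v$ in the Fr\'echet topology of $V$. The key point bridging the gap is that compactness of $K$ upgrades uniform convergence of functions to weak convergence of the associated measures in ${\rm Meas}_c(K)$, at which point the continuity statement in Corollary \ref{co3} transports the approximation into $V$.
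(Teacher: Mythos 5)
Your proof is correct and follows the same strategy as the paper's: take a Dirac sequence $\phi_n$, fix a suitable $\phi$, approximate $\phi$ by a $K$-finite function $\psi$, transfer the approximation to $V$ via the continuity of $\pi$ on measures (Corollary \ref{co3}), and observe that $\pi(\psi\,dg)v$ is $K$-finite. The only variation is in the approximation step: the paper approximates $\phi$ in $L^2$ using the Peter--Weyl density statement directly, whereas you approximate uniformly via Stone--Weierstrass (whose separation hypothesis you in turn justify by Peter--Weyl), so the two are essentially equivalent. You also supply the (short) verification that convolution with a matrix coefficient yields a $K$-finite vector, which the paper leaves implicit; that detail is worth having.
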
 

\begin{proof} Let $v\in V$, and $\phi_n\to \delta_1$ a continuous Dirac sequence, which exists by Lemma \ref{le3}. Then 
$\pi(\phi_n)v\to v$ as $n\to \infty$. But $\phi_n\in L^2(K)$, so by 
the Peter-Weyl theorem, there exists $\psi_n\in L^2(K)^{\rm fin}=\oplus_\rho \rho^*\otimes \rho$ such that 
$$
\norm{\psi_n-\phi_n}_2<\frac{1}{n}.
$$ 
Then 
$\psi_n-\phi_n\to 0$ in $L^2(K)$, hence in ${\rm Meas}_c(K)$. 
So by Lemma \ref{le2}, $\pi(\psi_n-\phi_n)v\to 0$ as $n\to \infty$. It follows that 
$\pi(\psi_n)v\to v$ as $n\to \infty$. But $\pi(\psi_n)v\in V^{\rm fin}$. 
\end{proof} 

\begin{corollary} $L^2(K)^{\rm fin}\subset C(K)$ is a dense subspace. 
Moreover, if $K$ is a Lie group then $L^2(K)^{\rm fin}\subset C^k(K)$
is a dense subspace for $0\le k\le \infty$. 
\end{corollary}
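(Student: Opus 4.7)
The plan is to deduce both statements from Corollary \ref{co5} applied to the continuous representation of $K$ on the Fr\'echet space $V=C(K)$ (for the first assertion) and on $V=C^k(K)$ (for the second) by left translation $(\pi(g)f)(x):=f(g^{-1}x)$. Granted continuity of these representations and an identification $V^{\rm fin}=L^2(K)^{\rm fin}$, Corollary \ref{co5} gives that $L^2(K)^{\rm fin}=V^{\rm fin}$ is dense in $V$, which is exactly what we want.

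First I would verify continuity of $\pi$. Since $K$ is compact, any $f\in C(K)$ is uniformly continuous, so for fixed $f$ and any $\eps>0$ there is a neighborhood $U$ of $1\in K$ with $\sup_{x\in K}|f(g^{-1}x)-f(x)|<\eps$ for $g\in U$, proving continuity at $(1,f)$ in the sup-norm; joint continuity follows exactly as in Exercise \ref{transac} (using that $\pi(g)$ is an isometry of $C(K)$, so the uniform boundedness step is trivial). For $V=C^k(K)$ one runs the same argument for each of the seminorms $\nu_{n,m}$ defining the Fr\'echet topology, using uniform continuity of the derivatives of $f$ of order $\le k$ on the compact set $K$; this handles all $0\le k\le \infty$ since one only ever uses finitely many seminorms at a time.

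Second I would show $V^{\rm fin}=L^2(K)^{\rm fin}$. For the inclusion $L^2(K)^{\rm fin}\subset V^{\rm fin}$: by the Peter-Weyl decomposition $L^2(K)^{\rm fin}=\bigoplus_{\rho\in{\rm Irr}K}\rho^*\otimes\rho$ is spanned by matrix coefficients $g\mapsto h(\pi_\rho(g)v)$, which are continuous (and smooth when $K$ is a Lie group), hence lie in $V$; each summand $\rho^*\otimes \rho$ is finite-dimensional and invariant under left translation, so these vectors are $K$-finite in $V$. Conversely, if $f\in V^{\rm fin}$, then $f\in V\subset L^2(K)$ (since $K$ is compact) and the left $K$-orbit of $f$ spans a finite-dimensional subspace already in $V$, so $f\in L^2(K)^{\rm fin}$.

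Combining these two steps with Corollary \ref{co5} completes the proof. The main subtlety is the continuity of the translation action in the Fr\'echet topology of $C^\infty(K)$, where infinitely many seminorms appear; the key observation is that each individual seminorm reduces to a uniform-continuity statement on the compact group $K$, so the $k=0$ argument applies seminorm by seminorm with no essential change.
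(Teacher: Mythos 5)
Your proof is correct and follows essentially the same route as the paper's (one-paragraph) argument: verify that $L^2(K)^{\rm fin}$ sits inside $V=C(K)$ or $C^k(K)$ because matrix coefficients of finite-dimensional representations are continuous (resp.\ smooth), identify $V^{\rm fin}$ with $L^2(K)^{\rm fin}$, and then invoke Corollary \ref{co5} for the density of $V^{\rm fin}$ in $V$. The paper leaves the continuity of the translation action on $C^k(K)$ and the equality $V^{\rm fin}=L^2(K)^{\rm fin}$ implicit; you spell these out, which is the right thing to do, and your handling of the Fr\'echet seminorms (reducing continuity seminorm-by-seminorm to uniform continuity of derivatives on the compact group) is a standard and adequate gloss on the reference to Exercise \ref{transac}.
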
 

\begin{proof} The claimed inclusions follow since matrix coefficients of finite-dimensional 
representations of $K$ are continuous, and moreover $C^\infty$ in the case of Lie groups. 
The density then follows from Corollary \ref{co5}.  
\end{proof} 

\begin{corollary} If $V$ is an irreducible continuous representation of $K$ then $V$ is finite-dimensional. 
\end{corollary}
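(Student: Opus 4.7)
The plan is to combine the density of $K$-finite vectors (Corollary \ref{co5}) with the definition of irreducibility, which requires subrepresentations to be closed.

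First I would note that since $V$ is irreducible it is in particular nonzero, and by Corollary \ref{co5} the subspace $V^{\rm fin}$ is dense in $V$. Since $V$ is Hausdorff and nonzero, the only dense subspace that is zero is $V$ itself only when $V=0$, which is excluded; hence $V^{\rm fin}\neq 0$. Pick any nonzero vector $v\in V^{\rm fin}$.

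Next, by the very definition of $V^{\rm fin}$, the vector $v$ lies in some finite-dimensional $K$-invariant subspace $W\subseteq V$. Here is the key observation: any finite-dimensional subspace of a Hausdorff topological vector space is automatically closed (a standard fact, since it is linearly homeomorphic to $\Bbb C^{\dim W}$). Therefore $W$ is a closed, $K$-invariant, nonzero subspace of $V$, i.e., a nonzero subrepresentation in the sense of the definition given earlier.

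Finally, since $V$ is irreducible, its only subrepresentations are $0$ and $V$; as $W\neq 0$, we must have $W=V$, so $\dim V=\dim W<\infty$. The only substantive input is Corollary \ref{co5} (density of $K$-finite vectors), which itself rests on Peter–Weyl together with Dirac sequences; there is no real obstacle here, only the minor subtlety of reconciling the topological notion of subrepresentation (closed invariant subspace) with the algebraic notion used in the definition of $K$-finiteness, which is handled by the automatic closedness of finite-dimensional subspaces.
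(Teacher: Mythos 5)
Your argument is correct and is essentially the paper's proof: density of $V^{\rm fin}$ gives a nonzero finite-dimensional $K$-invariant subspace, which is automatically closed and hence must equal $V$ by irreducibility. The only difference is that you make explicit the (correct, standard) fact that finite-dimensional subspaces of a Hausdorff topological vector space are closed, which the paper leaves implicit.
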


\begin{proof} By Corollary \ref{co5}, $V^{\rm fin}$ is dense in $V$. Hence $V^{\rm fin}\ne 0$. Let $\rho$ be a finite-dimensional subrepresentation of $V^{\rm fin}$. Then $\rho$ is a closed invariant subspace of $V$. Hence $\rho=V$.  
\end{proof} 

\subsection{Smooth vectors} 

Let $G$ be a Lie group. As we have noted in Subsection \ref{conrep}, any continuous {\it finite-dimensional} representation $\pi: G\to {\rm Aut}(V)$ is automatically smooth and thereby defines a 
representation $\pi_*: \g\to \End(V)$ of the corresponding Lie algebra, which determines $\pi$ if $G$ is connected. Moreover, if $G$ is simply connected, this correspondence is an equivalence of categories. This immediately reduces the problem to pure algebra and is the main tool of studying finite-dimensional representations of Lie groups.

We would like to have a similar theory for infinite-dimensional representations. But 
in the infinite-dimensional setting the above statements don't hold in the literal sense. 

\begin{example} Consider the action of $S^1$ on $L^2(S^1)$. Then the Lie algebra 
should act by $\frac{d}{d \theta}$. But this operator does not act on $L^2(S^1)$. 
The largest subspace of $L^2(S^1)$ preserved by this operator (acting on distributions on $S^1$) is $C^\infty(S^1)$. 
\end{example} 

This motivates the notion of a {\bf smooth vector} in a continuous representation of a Lie group. To define this notion, for a manifold $X$ and a topological vector space $V$, denote by $C^\infty(X,V)$ the space of smooth maps $X\to V$ (where smooth maps are defined in the same way as in the case of finite-dimensional $V$). 

\begin{definition} Let $(V,\pi)$ be a continuous representation of a Lie group $G$. 
A vector $v\in V$ is called {\bf smooth} if the map $G\to V$ given by $g\mapsto \pi(g)v$ 
is smooth, i.e., belongs to $C^\infty(G,V)$. The space of smooth vectors is denoted by $V^\infty$. 
\end{definition} 

It is clear that $V^\infty\subset V$ is a $G$-invariant subspace (although not a closed one). 

\begin{example} For the representation of a compact Lie group $K$ on $V=L^2(K)$, 
we have $V^\infty=C^\infty(K)$.  
\end{example} 

\begin{proposition}\label{act} Let $(V,\pi)$ be a continuous representation of a Lie group $G$ with $\g={\rm Lie}(G)$. Let $v\in V^\infty$. Then we have a linear map 
$\pi_{*,v}: \g\to V^\infty$ given by 
$$
\pi_{*,v}(b)=\frac{d}{dt}|_{t=0}\pi(e^{tb})v.
$$ 
This defines a Lie algebra homomorphism $\pi_*: \g\to {\rm End}_{\Bbb C}(V^\infty)$ (algebra of all linear endomorphisms of $V^\infty$) given by $\pi_*(b)v:=\pi_{*,v}(b)$. 
\end{proposition}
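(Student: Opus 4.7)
The plan is to proceed in three steps: (1) identify $\pi_{*,v}$ as the differential at the identity of the smooth orbit map, which handles well-definedness, linearity, and the fact that it lands in $V$; (2) show that the output vector $\pi_{*,v}(b)$ is again smooth, by reinterpreting its orbit map as the derivative of $\alpha_v$ along a left-invariant vector field; (3) derive the Lie bracket identity by iterating step (2) and invoking the Lie-algebra structure on left-invariant vector fields on $G$.

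For step (1), set $\alpha_v : G \to V$, $g \mapsto \pi(g)v$. By assumption $\alpha_v \in C^\infty(G,V)$, so its differential $(d\alpha_v)_e : T_e G = \g \to V$ exists and is linear. Since $t \mapsto e^{tb}$ is a smooth curve in $G$ with velocity $b$ at $t=0$, the chain rule for smooth maps into topological vector spaces gives $\pi_{*,v}(b) = \tfrac{d}{dt}|_{t=0}\alpha_v(e^{tb}) = (d\alpha_v)_e(b)$. This is automatically well-defined and linear in $b$.

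For step (2), fix $b \in \g$ and let $X_b$ denote the left-invariant vector field on $G$ with $X_b(e)=b$, whose flow is $g \mapsto ge^{tb}$. For any $g \in G$, the operator $\pi(g) \in \Aut(V)$ is continuous and linear, hence commutes with the limit defining the derivative in $V$:
$$
\pi(g)\pi_{*,v}(b) \;=\; \pi(g)\frac{d}{dt}\bigg|_{t=0}\pi(e^{tb})v \;=\; \frac{d}{dt}\bigg|_{t=0}\pi(ge^{tb})v \;=\; (X_b\alpha_v)(g).
$$
Because $\alpha_v$ is smooth, so is $X_b\alpha_v$, i.e.\ the orbit map of $\pi_{*,v}(b)$ is smooth. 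Thus $\pi_{*,v}(b) \in V^\infty$, and we get a well-defined linear map $\pi_*(b) : V^\infty \to V^\infty$ via $v \mapsto \pi_{*,v}(b)$.

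For step (3), the identity $\alpha_{\pi_*(b)v} = X_b\alpha_v$ established in step (2) can be iterated on $V^\infty$: applying it again with $v$ replaced by $\pi_*(b)v \in V^\infty$ yields $\alpha_{\pi_*(a)\pi_*(b)v} = X_aX_b\alpha_v$. Subtracting the analogous identity with $a,b$ swapped gives $\alpha_{[\pi_*(a),\pi_*(b)]v} = [X_a,X_b]\alpha_v$. Since the commutator of left-invariant vector fields satisfies $[X_a,X_b] = X_{[a,b]}$ (this is the definition of the Lie bracket on $\g$), the right-hand side equals $X_{[a,b]}\alpha_v = \alpha_{\pi_*([a,b])v}$. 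Evaluating both sides at $g=e$ (where $\alpha_w(e)=w$) yields $[\pi_*(a),\pi_*(b)]v = \pi_*([a,b])v$, as desired. The only real technical point requiring care is the exchange of $\pi(g)$ with the $t$-derivative in step (2); this is legitimate because $\pi(g)$ is a continuous linear endomorphism of $V$, and in a locally convex space such maps commute with limits and hence with derivatives of curves.
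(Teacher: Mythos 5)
Your proof is correct, and it takes the standard route one would expect for this exercise (the paper itself leaves Proposition \ref{act} to the reader): identify the orbit map $\alpha_v(g) = \pi(g)v$, observe that $\pi_{*,v}(b) = (d\alpha_v)_e(b)$, and then reinterpret the orbit map of $\pi_{*,v}(b)$ as $X_b\alpha_v$ where $X_b$ is the left-invariant vector field generated by $b$. This single identity $\alpha_{\pi_*(b)v} = X_b\alpha_v$ simultaneously gives closure of $V^\infty$ under $\pi_*(b)$ (since Lie-differentiating a smooth $V$-valued function along a smooth vector field yields a smooth function) and, upon iteration, the homomorphism property via $[X_a,X_b] = X_{[a,b]}$. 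The interchange of $\pi(g)$ with the $t$-derivative in step (2) is the only real analytic point, and you correctly justify it by continuity and linearity of $\pi(g)$. One tiny point you pass over is the linearity of $v\mapsto \pi_{*,v}(b)$, needed so that $\pi_*(b)\in\End_{\Bbb C}(V^\infty)$; this is immediate from linearity of $\pi(e^{tb})$ and of the derivative, but it is worth stating. Otherwise the argument is complete.
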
 

\begin{exercise} Prove Proposition \ref{act}.
\end{exercise} 

Let $V^{\rm fin}\subset V$ be the subspace of vectors generating a finite dimensional representation of $G$. 

\begin{proposition} (i) $V^\infty$ is dense in $V$.

(ii) $V^{\rm fin}\subset V^\infty$. 
\end{proposition}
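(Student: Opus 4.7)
The approach is to produce smooth vectors arbitrarily close to any $v\in V$ by ``smoothing out'' $v$ via convolution with compactly supported smooth functions on $G$. By Lemma~\ref{le3}, pick a smooth Dirac sequence $\phi_n\in C_c^\infty(G)$, so that $\phi_n\,dg\to\delta_1$ in the weak topology of ${\rm Meas}_c(G)$; then Corollary~\ref{co3} gives $\pi(\phi_n)v\to v$ in $V$. The density claim thus reduces to showing $w:=\pi(\phi\,dg)v\in V^\infty$ for every $\phi\in C_c^\infty(G)$.

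\textbf{Key step: smoothness of $w$.} Since $\pi$ is an algebra homomorphism, for each $h\in G$,
$$
\pi(h)w=\pi(\delta_h\ast\phi\,dg)v=\pi\bigl((T_h\phi)\,dg\bigr)\,v,
$$
where $T_h\phi\in C_c^\infty(G)$ is a translate of $\phi$ (the precise formula involves the modular function and the chosen convention for $dg$, but is irrelevant for our purposes). The orbit map thus factors as
$$
G\xrightarrow{\;h\,\mapsto\, T_h\phi\;}C_c^\infty(K_0)\xrightarrow{\;\psi\,\mapsto\,\pi(\psi\,dg)v\;}V,
$$
where $K_0\subset G$ is a compact set containing the support of $T_h\phi$ for all $h$ in a fixed compact neighborhood. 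The first arrow is smooth: iterated derivatives along left-invariant vector fields $X_1,\dots,X_k\in\g$ yield maps of the form $h\mapsto T_h(X_1\cdots X_k\phi)$, again continuous in $h$ into $C_c^\infty(K_0)$. The second arrow is continuous, since $\psi\mapsto \psi\,dg$ is continuous from $C_c^\infty(K_0)$ into the weak topology of ${\rm Meas}_c(G)$, and $\pi(\cdot)v$ is continuous there by Corollary~\ref{co3}. Composing the two yields smoothness of $h\mapsto\pi(h)w$, i.e., $w\in V^\infty$.

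\textbf{Plan for (ii) and main obstacle.} For (ii), a $G$-finite vector $v$ lies in some finite-dimensional $G$-invariant subspace $W\subset V$; the restricted homomorphism $G\to GL(W)$ is a continuous finite-dimensional representation, hence automatically smooth by the classical reduction recalled in Subsection~\ref{conrep}. Since $W$ is finite dimensional, its inclusion $W\hookrightarrow V$ is continuous, so $g\mapsto\pi(g)v$ is smooth, i.e., $v\in V^\infty$. The main technical obstacle is the smoothness step in (i), essentially a ``differentiation under the convolution integral'' argument; the factorization through $C_c^\infty(K_0)$, together with the continuity of $\pi$ on ${\rm Meas}_c(G)$ from Corollary~\ref{co3}, makes this rigorous without requiring $V$ to be Banach or even complete — only the hypothesis of continuous action on a sequentially complete locally convex space is used.
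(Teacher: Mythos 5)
Your proposal is correct and takes essentially the same approach as the paper: for (i) it applies a smooth Dirac sequence together with Corollary~\ref{co3}, reducing to the smoothness of $\pi(\phi\,dg)v$ (which the paper dismisses as ``easy to see'' and which you spell out by factoring the orbit map through $C_c^\infty$); and for (ii) it invokes the automatic smoothness of continuous finite-dimensional representations. Your elaboration of the smoothing step—composing the smooth map $h\mapsto T_h\phi$ into $C_c^\infty(K_0)$ with the continuous linear map $\psi\mapsto\pi(\psi\,dg)v$—is a legitimate way to justify the paper's claim, and your observation that only sequential completeness and local convexity of $V$ are used is accurate.
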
 

\begin{proof} (i) Let $\phi_n\to \delta_1$ be a smooth Dirac sequence. 
Then $\pi(\phi_n)v\to v$ as $n\to \infty$. But it is easy to see that 
$\pi(\phi_n)v\in V^\infty$.

(ii) This follows since matrix coefficients of finite-dimensional representations 
are smooth. 
\end{proof} 

\section{\bf Admissible representations and $(\g,K)$-modules} 

\subsection{Admissible representations} 

Now let $G$ be a Lie group and $K\subset G$ a compact subgroup. For a continuous representation $V$ of $G$, denote by $V^{K\text{-fin}}$ the space $(V|_K)^{\rm fin}$.
In general $V^{K\text{-fin}}$ is not contained in $V^\infty$; for example, if $K=1$ then 
 $V^{K\text{-fin}}=V$. However, this inclusion holds if $K$ is sufficiently large and $V$ is sufficiently small. 
 
\begin{definition} $V$ is said to be {\bf $K$-admissible} (or of {\bf finite $K$-type}) if for every finite-dimensional irreducible representation $\rho$ of $K$, the space $\Hom_K(\rho,V)$ is finite-dimensional. 
\end{definition}  

\begin{example} Let $G$ be a connected Lie group and $V=L^2(G/B)$ where $B$ is a closed subgroup of $G$ (half-densities on $G/B$). Then $V$ is $K$-admissible iff 
$K$ acts transitively on $G/B$, i.e., $KB=G$. In this case 
setting $T=K\cap B$, we have $G/B=K/T$, so $V=L^2(K/T)$ and 
$\Hom_K(\rho,V)\cong (\rho^T)^*$.\footnote{Note that here we don't have to distinguish between half-densities and functions on $K/T$ since $K/T$ always has a $K$-invariant volume form as $K$ is compact.} 
\end{example} 

\begin{example}\label{prinser} For $G=SL_2(\Bbb C)$ and $K=SU(2)$, the unitary representation 
of $G$ on the space $V=L^2(\Bbb C \Bbb P^1)$ of square-integrable half-densities on $\Bbb C\Bbb P^1$ is $K$-admissible. Indeed, taking $\rho_n$ 
to be the representation of $SU(2)$ with highest weight $n$, we have 
$\dim\Hom(\rho_n,V)=0$ for odd $n$ and $1$ for even $n$. 

More generally, 
for a real number $s$ we may consider the representation $V_s$ 
of square integrable $\frac{1}{2}+is$-densities on $\Bbb C\Bbb P^1$;
this space is canonically defined since for a $\frac{1}{2}+is$-density 
$f$, the complex conjugate $\overline f$ is a $\frac{1}{2}-is$-density, 
so $|f|^2=f\overline f$ is a density and can be integrated canonically over $\Bbb C\Bbb P^1$. This representation has the same $K$-multiplicities as $V=V_0$. 

Similarly, for $G=SL_2(\Bbb R)$, $K=SO(2)$, we have a unitary $K$-admissible 
representation $V=L^2(\Bbb R\Bbb P^1)$ (half-densities) and more generally 
$V_s$ ($\frac{1}{2}+is$-densities). For the $K$-multiplicities we have equalities
$\dim\Hom(\chi_n,V_s)=1$ for even $n$ and $0$ for odd $n$, where 
$\chi_n(\theta)=e^{in\theta}$. 

We will see that the representations $V_s$ in both cases are irreducible and $V_s,V_t$ are isomorphic iff $s=\pm t$. The family of representations $V_s$ is called the {\bf unitary spherical principal series}. 

Note that this family makes sense also when $s$ is a complex number which is not necessarily real. 
In this case $V_s$ is not necessarily unitary but still a continuous representation on square integrable $\frac{1}{2}+is$-densities.
The space of such densities is canonically defined as a topological vector space, although its Hilbert norm 
is not canonically defined unless $s$ is real (however, we will see that for some non-real $s$, corresponding to so-called {\bf complementary series}, this representation is still unitary, even though the inner product is not given by the standard formula).  The family $V_s$ with arbitrary complex $s$ 
is called the {\bf spherical principal series}. 

Explicitly, the action of $G$ on $V_s$ looks as follows (realizing elements of $V_s$ as functions 
on $\Bbb R$ or $\Bbb C$, removing the point at infinity): 
$$
\left(\begin{pmatrix} a& b\\ c&d
\end{pmatrix}^{-1} f\right)(z)=f\left(\frac{az+b}{cz+d}\right)|cz+d|^{-m(1+2is)},
$$
where $m=1$ in the real case and $m=2$ in the complex case. 
\end{example} 
  
\begin{proposition} 
If $V$ is $K$-admissible then $V^{K\text{-fin}}\subset V^\infty$, and it is a $\g$-submodule 
(although not in general a $G$-submodule). 
\end{proposition}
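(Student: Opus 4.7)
The plan is to prove the two statements in order, since the $\g$-action in (2) is only defined on $V^\infty$, so (1) must come first. The central idea for (1) is to use $K$-admissibility to squeeze $v$ between two finite-dimensional objects: a $K$-isotypic subspace on one side, and a smooth-vector approximation on the other. More precisely, I will approximate $v$ by the action of a smooth Dirac sequence, and then apply a Peter--Weyl isotypic projector to force the approximants to live in a finite-dimensional subspace, where smoothness is preserved in the limit.

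For (1), fix $v\in V^{K-{\rm fin}}$ and let $S\subset {\rm Irr}K$ be a finite set of $K$-types containing those appearing in the $K$-orbit of $v$. By $K$-admissibility, $W':=\bigoplus_{\rho\in S} V[\rho]\subset V^{K-{\rm fin}}$ (the sum of full isotypic components) is finite-dimensional. Set
$$
e_S := \sum_{\rho\in S}(\dim\rho)\,\overline{\chi_\rho}\in C^\infty(K),
$$
viewed as the compactly supported measure $e_S\,dk$ on $G$. Then $P:=\pi(e_S\,dk)$ is the isotypic projector $V\to W'$; this is the standard Peter--Weyl projector, which one verifies by decomposing $V^{K-{\rm fin}}$ via Lemma \ref{isolem} and using orthogonality of characters. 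Now let $\phi_n\in C_c^\infty(G)$ be a smooth Dirac sequence (Lemma \ref{le3}). Then $\pi(\phi_n)v\to v$, and by continuity of $P$ (Corollary \ref{co3}),
$$
\pi(e_S\,dk\ast\phi_n)\,v \;=\; P\pi(\phi_n)v \;\longrightarrow\; Pv \;=\; v.
$$
Each vector $\pi(e_S\,dk\ast\phi_n)v$ lies in $V^\infty$ (since $e_S\,dk\ast\phi_n\in C_c^\infty(G)$, allowing differentiation under the integral) and in $W'$ (image of $P$). Hence $v$ lies in the closure of $V^\infty\cap W'$, which is a linear subspace of the finite-dimensional space $W'$, and therefore closed in $V$. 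So $v\in V^\infty\cap W'\subset V^\infty$.

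For (2), given $v\in V^{K-{\rm fin}}$ and $X\in\g$, part (1) guarantees $\pi_*(X)v$ is defined. Choose a finite-dimensional $K$-invariant subspace $W\subset V^{K-{\rm fin}}$ containing $v$, and set $W_1:=W+\pi_*(\g)W$, i.e., the sum of $W$ with the image of the linear map $\g\otimes W\to V$, $X\otimes w\mapsto \pi_*(X)w$. Since $\g$ and $W$ are finite-dimensional, so is $W_1$, and $\pi_*(X)v\in W_1$. Differentiating $\pi(k)\pi(e^{tX})w=\pi(e^{t\Ad(k)X})\pi(k)w$ at $t=0$ yields the identity
$$
\pi(k)\,\pi_*(X)w \;=\; \pi_*(\Ad(k)X)\,\pi(k)w,
$$
valid on $V^\infty$, which shows $\pi(k)W_1\subset W_1$ since $\pi(k)W\subset W$ and $\Ad(k)X\in\g$. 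Thus $W_1$ is a finite-dimensional $K$-invariant subspace containing $\pi_*(X)v$, proving $\pi_*(X)v\in V^{K-{\rm fin}}$.

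The main technical obstacle is the claim in (1) that $P=\pi(e_S\,dk)$ really is the projector onto $W'$ as an operator on the whole of $V$ (not merely on $L^2(K)$ or on $V^{K-{\rm fin}}$ where the statement is transparent). This requires showing $P(V)\subset V^{K-{\rm fin}}$, which follows because $e_S$ is itself $K$-finite under both left and right translation, so $\pi(k)Pv'$ varies in a fixed finite-dimensional subspace as $k$ ranges over $K$; together with $Pw=w$ for $w\in W'$, this identifies $P$ with the desired projector.
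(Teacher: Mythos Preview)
Your proof is correct and follows essentially the same approach as the paper's. Both arguments use the isotypic projector (the paper works one $K$-type at a time with $\xi_\rho$, you with a finite sum $e_S$) convolved on the left with a smooth Dirac sequence on $G$, so that the approximants are simultaneously smooth and trapped in a finite-dimensional isotypic piece; finite-dimensionality then forces $v\in V^\infty$. Your part (2) spells out the identity $\pi(k)\pi_*(X)w=\pi_*(\Ad(k)X)\pi(k)w$ explicitly, which is exactly the content behind the paper's remark that $bv$ generates a $K$-submodule of a quotient of $\g\otimes\rho$.
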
 

\begin{proof} 
For a finite-dimensional irreducible representation $\rho$ of $K$, 
let $V^\rho:=\Hom(\rho,V)\otimes \rho$ be the isotypic component of $\rho$. 

We claim that for any continuous representation $V$ the space $V^\infty\cap V^\rho$ is dense in $V^\rho$. Indeed, let $\psi_\rho\in L^2(K)^{\rm fin}$ be the character of $\rho$ given by 
$$
\psi_\rho=\sum_i \psi_{\rho ii}.
$$
Let $\xi_\rho$ be the pushforward of $\dim(\rho)\overline\psi_\rho dx$ from $K$ to $G$ (a measure on $G$ supported on $K$). 
Then $\pi(\xi_\rho)$ is the projector to $V^\rho$ annihilating the closure $\overline{\oplus_{\eta\ne \rho}V^\eta}$ 
of $\oplus_{\eta\ne \rho}V^\eta$. Let $\phi_n\to \delta_1$ be a smooth Dirac sequence on $G$. Then for $v\in V^\rho$, 
$$
\pi(\xi_\rho\ast \phi_n)v=\pi(\xi_\rho)\pi(\phi_n)v\to \pi(\xi_\rho)v=v
$$ 
as $n\to \infty$. 
However, $\xi_\rho\ast \phi_n$ is smooth, so $\pi(\xi_\rho\ast \phi_n)v\in V^\infty\cap V^\rho$. 

Thus if $V^\rho$ is finite-dimensional (which happens for $K$-admissible $V$) then 
$V^\infty\cap V^\rho=V^\rho$, so $V^\rho\subset V^\infty$. Hence $V^{K\text{-fin}}\subset V^\infty$. 

Finally, it is clear that 
for $b\in \g$ and $v\in V^\rho$, the vector $bv$ generates a $K$-submodule of a multiple of $\g\otimes \rho$, so $bv\in V^{K\text{-fin}}$. It follows that 
$V^{K\text{-fin}}$ is a $\g$-submodule.  
\end{proof} 

\begin{example} If $G=SL_2(\Bbb R)$, $K=SO(2)$, $V=V_s=L^2(S^1)$ is a spherical principal series representation, then $V^{K\text{-fin}}$ is the space of trigonometric polynomials. Note that this space is {\it not} invariant under the action of $G$. However, 
the Lie algebra $\g=\mathfrak{sl}_2(\Bbb R)$ does act on this space. 
\end{example}  

\begin{exercise} Compute this Lie algebra action in the basis $v_n=e^{in\theta}$ and write it as first order differential operators in the angle $\theta$. (Pick generators $e,h,f$ in $\g_{\Bbb C}$ 
so that $h$ acts diagonally in the basis $v_i$). 
\end{exercise} 

\subsection{$(\g,K)$-modules} This motivates the following definition. 
Let $K$ be a compact connected Lie group and $\kf={\rm Lie}K$. 
Let $\g$ be a finite-dimensional real Lie algebra containing $\kf$, and suppose 
the adjoint action of $\kf$ on $\g$ integrates 
to an action of $K$. In this case we say that $(\g,K)$ 
is a {\bf Harish-Chandra pair}. 

\begin{definition} Let $(\g,K)$ be a Harish-Chandra pair. 

(i) A $(\g,K)$-{\bf module} is a vector space $M$ with actions of $K$ and $\g$ 
such that 

$\bullet$ $M$ is a direct sum of finite-dimensional continuous $K$-modules; 

$\bullet$ the two actions of $\kf$ on $M$ (coming from the actions of $\g$ and $K$) coincide. 

(ii) Such a module is said to be {\bf admissible} if for every $\rho\in {\rm Irr}K$ we have $\dim \Hom_K(\rho,M)<\infty$. 

(iii) An admissible $(\g,K)$-module which is finitely generated over $U(\g)$ is called a {\bf Harish-Chandra module}. 
\end{definition} 

\begin{exercise} Show that if $M$ is a $(\g,K)$-module then for every
$g\in K,a\in \g, v\in M$ we have
$$
gav={\rm Ad}(g)(a)gv, 
$$ 
where ${\rm Ad}$ denotes the $K$-action on $\g$.
\end{exercise} 

In fact, a $(\g,K)$-module is a purely algebraic object, since finite-dimensional $K$-modules can be described as algebraic representations of the complex reductive group $K_{\Bbb C}$. 
Moreover, we can represent them even more algebraically in terms of the action of $\kf$. 
Namely, let us say that a finite-dimensional representation of $\kf$ is {\bf integrable} to $K$ if it corresponds to a representation of  $K$ (note that this is automatic if $K$ is simply connected). Then $(\g,K)$-modules are simply $\g$-modules which are locally integrable to $K$ when restricted to $\kf$ (i.e., sum of integrable modules). So if $K$ is simply connected (in which case $\kf$ is semisimple) then a $(\g,K)$-module is the same thing as a  $\g$-module which is locally finite when restricted to $\kf$ (i.e., sum of finite-dimensional modules).

 Thus $(\g,K)$-modules form an abelian category closed under extensions (and this category can be defined over any algebraically closed field of characteristic zero). The same applies to admissible $(\g,K)$-modules and to Harish-Chandra modules (the latter is closed under taking kernels of morphisms because 
the algebra $U(\g)$ is Noetherian, as so is its associated graded $S\g$ by the Hilbert basis theorem). 

\begin{example} Let $G$ be a connected complex semisimple Lie group. Then its maximal compact subgroup
is the compact form $K=G_c$. Thus a $(\g,K)$-module is a $\g$-module $M$ which is 
locally finite for $\g_c\subset \g$, where $\g_c={\rm Lie}G_c$. Note that the action of $\g$ here is only 
{\bf real linear}. Thus we may pass to complexifications: 
$(\g_c)_{\Bbb C}=\g$, $\g_{\Bbb C}=\g\oplus \g$, and $\g$ sits inside 
$\g\oplus \g$ as the diagonal. Thus a $(\g,K)$-module is simply a $\g\oplus \g$-module which is locally finite for the diagonal copy of $\g$. This is the same as a $\g$-bimodule\footnote{Indeed, every $\g\oplus \g$-module $M$ with action 
$(a,b,v)\mapsto (a,b)\circ v$, $a,b\in \g$, $v\in M$ is a 
$\g$-bimodule with $av=(a,0)\circ v$ and $vb=(0,-b)\circ v$, and vice versa.} with locally finite adjoint action 
$$
{\rm ad}(b)m:=[b,m]=bm-mb.
$$ 
For example, if $I$ is any two-sided ideal in $U(\g)$ then $U(\g)/I$ 
is a $(\g,K)$-module. 
\end{example} 

Thus we obtain the following proposition. 

\begin{proposition} If $V$ is a $K$-admissible continuous representation of $G$ then $V^{K\text{-fin}}$ is an admissible $(\g,K)$-module.
\end{proposition}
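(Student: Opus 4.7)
The strategy is to assemble the three conditions of a $(\g,K)$-module and then check admissibility, mostly by citing results already established in the preceding material.

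First I would verify the structural data on $W := V^{K-\rm fin}$. The subspace $W$ is obviously $K$-invariant, so it carries an action of $K$. The previous proposition established that $W \subset V^\infty$ and that $W$ is a $\g$-submodule via $\pi_*$, so $W$ also carries a $\g$-action. By Lemma \ref{isolem} applied to the restriction $V|_K$, the map
$$
\xi : \bigoplus_{\rho\in \Irr K} \Hom_K(\rho,V)\otimes \rho \longrightarrow W
$$
is an isomorphism, so $W$ is a direct sum of finite-dimensional continuous irreducible $K$-modules, giving the first axiom.

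The second (and essentially only non-formal) axiom to check is that the two actions of $\kf$ on $W$ agree: the one coming from differentiating $K$ and the one coming from restricting $\pi_*$. Fix $v\in W$ and let $U\subset W$ be a finite-dimensional $K$-invariant subspace containing $v$. The restriction $K\to \Aut(U)$ of $\pi$ is a continuous representation into a finite-dimensional space, hence is automatically smooth (as recalled in Subsection \ref{conrep}), and its derivative at the identity is the Lie algebra representation of $\kf$ on $U$. On the other hand, since $v\in V^\infty$, the map $t\mapsto \pi(e^{tb})v$ is smooth into $V$ for every $b\in \kf$, and its derivative at $0$ is by definition $\pi_*(b)v$. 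But this curve lies in the finite-dimensional subspace $U$ (where the two notions of derivative agree), so the two $\kf$-actions coincide on $v$. Since $v$ was arbitrary, this gives the compatibility axiom.

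Finally, I would verify admissibility. For any $\rho\in \Irr K$, any $K$-equivariant map $\rho\to V$ factors through $W$ (since the image is a finite-dimensional $K$-submodule, and hence consists of $K$-finite vectors), so the natural inclusion $\Hom_K(\rho,W)\hookrightarrow \Hom_K(\rho,V)$ is in fact an equality. By the $K$-admissibility hypothesis on $V$, this space is finite-dimensional, which is exactly the admissibility of the $(\g,K)$-module $W$. I do not expect any real obstacle: every ingredient has been prepared, and the only content beyond bookkeeping is the standard ``differentiate on a finite-dimensional invariant subspace'' argument that reconciles the $K$- and $\g$-origins of the $\kf$-action.
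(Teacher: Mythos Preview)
Your proposal is correct and complete. The paper itself gives no proof beyond the phrase ``Thus we obtain the following proposition,'' treating it as an immediate consequence of the preceding proposition (that $V^{K-\rm fin}\subset V^\infty$ is a $\g$-submodule) together with the definitions; your argument simply makes explicit the three verifications the paper leaves implicit, in particular the compatibility of the two $\kf$-actions via differentiation inside a finite-dimensional $K$-invariant subspace.
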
  

\begin{exercise} Show that for any continuous representation $V$ of $G$, 
the intersection $V^\infty\cap V^{K\text{-fin}}$ is a $(\g,K)$-module (not necessarily admissible). 
\end{exercise}

\begin{exercise}\label{tenspro} Show that if $V$ is an admissible representation 
of $G$ and $L$ a finite-dimensional (continuous) representation of $G$ then $V\otimes L$ is also admissible. Prove the same statement for $(\g,K)$-modules.
\end{exercise} 

\subsection{Harish-Chandra's admissibility theorem} 

We will now restrict our attention to {\bf semisimple} Lie groups $G$. By this we will mean 
a connected linear real Lie group $G$ with semisimple Lie algebra $\g$. ``Linear" means 
that it has a faithful finite-dimensional representation, i.e., is isomorphic to a closed subgroup
of $GL_n(\Bbb C)$. In other words, $G$ is the connected component of the identity in $\bold G(\Bbb R)$, where $\bold G$ is a semisimple algebraic group defined over $\Bbb R$.  Typical examples of such groups include $SL_n(\Bbb R)$ and $SL_n(\Bbb C)$ (in the latter case $\bold G=SL_n\times SL_n$ 
and the real structure defined by the involution permuting the two factors).  

A fundamental result about the structure of semisimple Lie groups is 

\begin{theorem} (E. Cartan)
Every semisimple Lie group $G$ has a maximal compact subgroup $K\subset G$ 
which is unique up to conjugation. 
\end{theorem}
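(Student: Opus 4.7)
The plan is to construct $K$ explicitly from a Cartan involution on $\g$, and then prove uniqueness up to conjugation via a fixed-point theorem on the coset space $G/K$.

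First I would work at the Lie algebra level. Since $G \subset GL_n(\CC)$ is linear and $\g$ is semisimple, I would use the Killing form together with averaging under a maximal compact of $GL_n(\CC)$ to choose a Hermitian inner product on $\CC^n$ for which $\g$ is stable under $X \mapsto -X^*$. This gives a Cartan involution $\theta$ on $\g$, and the decomposition $\g = \kf \oplus \p$ into $\pm 1$-eigenspaces satisfies $[\kf,\kf]\subset \kf$, $[\kf,\p]\subset \p$, $[\p,\p]\subset \kf$, with the Killing form negative definite on $\kf$ and positive definite on $\p$. Lifting $\theta$ to the global involution $\Theta(g) = (g^*)^{-1}$ of $G$, I would set $K := G^\Theta = G \cap U(n)$; being closed in the compact group $U(n)$, it is compact, and its Lie algebra is $\kf$.

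Next I would establish the global Cartan decomposition: the map $\mu : K \times \p \to G$, $(k,X)\mapsto k\exp(X)$, is a diffeomorphism. This follows by restriction from the corresponding polar decomposition of $GL_n(\CC)$ and the fact (using $\Theta$) that $G$ is stable under this decomposition. From this, maximality of $K$ among compact subgroups is immediate: if a compact $H \supset K$ contained an element $h = k\exp(X)$ with $X \neq 0$, then the iterates $(k^{-1}h)^n = \exp(nX)$ would form an unbounded sequence in $G$ under the $\p$-factor, contradicting compactness of $H$.

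For uniqueness up to conjugation, I would equip $G/K$ with the $G$-invariant Riemannian metric coming from the positive definite form on $\p \cong T_{eK}(G/K)$. Using the bracket relations above, a standard curvature computation shows the sectional curvature is non-positive, and the Cartan decomposition identifies $G/K$ with $\p$ via $\exp$, so $G/K$ is diffeomorphic to a Euclidean space and is complete. Hence $G/K$ is a Hadamard manifold. Given any compact subgroup $H \subset G$, the function
\[
f_H(x) := \int_H d(x, h\cdot eK)^2\, dh
\]
is strictly convex (strict convexity of squared distance is the defining feature of non-positive curvature), proper, and $H$-invariant, hence attains a unique minimum $x_0 = gK$ which is fixed by $H$. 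This means $g^{-1}Hg \subset K$.

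The main obstacle is the uniqueness half: one must set up the symmetric space geometry on $G/K$ carefully enough to verify both that it is Hadamard and that a compact group of isometries has a fixed point. Existence and compactness of $K$ reduce to fairly mechanical linear algebra once the Cartan involution is in place.
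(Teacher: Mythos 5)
The paper states Cartan's theorem without proof, so there is no in-text argument to compare your sketch against; I will therefore just review the sketch on its own merits. Your route is the standard one — produce a Cartan involution $\theta$ on $\g$, obtain the global Cartan decomposition $K\times\p\xrightarrow{\sim}G$ by restricting polar decomposition in $GL_n(\Bbb C)$, get maximality from the unboundedness of $\exp(nX)$ for $0\neq X\in\p$, and get uniqueness from the Cartan (Bruhat--Tits) fixed-point theorem on the Hadamard manifold $G/K\cong\p$ via the barycenter function $f_H$ — and the logic is sound end to end, including the reduction of conjugacy of maximal compacts to the fixed-point statement.

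The one step that your sketch treats as ``mechanical'' but is actually the most delicate is the very first: producing a Hermitian form on $\Bbb C^n$ for which $\g$ is $*$-stable. Averaging a Hermitian form over a maximal compact of $GL_n(\Bbb C)$ just recovers (a multiple of) the standard form and does nothing for $\g$; what you really need is Mostow's theorem that a linear reductive Lie algebra is $GL_n(\Bbb C)$-conjugate to one stable under $X\mapsto X^*$. The usual proof goes through the adjoint representation, where self-adjointness is visible from the Killing form and a choice of compact real form of $\g_{\Bbb C}$ (made to commute with the real-form conjugation via a polar-decomposition trick), and then transfers a compatible Cartan involution back along the given faithful representation. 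This is not a gap that invalidates the argument — the result is true and classical — but it is the part that genuinely requires work, more so than the Hadamard-manifold geometry in the uniqueness half. Once $\theta$ exists, your remaining steps (restriction of polar decomposition using $\Theta(g)=(g^*)^{-1}$, unboundedness of $\exp(nX)$, strict convexity of $d(\cdot,y)^2$ coming from the nonpositive curvature computed via $[\p,\p]\subset\kf$, and properness and $H$-invariance of $f_H$) are all correct as stated.
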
 

\begin{example} For $G=SL_n(\Bbb R)$ we have $K=SO(n)$ and 
for $G=SL_n(\Bbb C)$ we have $K=SU(n)$. 
\end{example} 

We will say that a continuous representation $V$ of $G$ is {\bf admissible} if it is $K$-admissible with respect to a maximal compact subgroup $K\subset G$ (this does not depend on the choice of $K$ since they are all conjugate).  

\begin{theorem}\label{admi} (Harish-Chandra's admissibility theorem, \cite{HC2}) Every irreducible unitary representation of a semisimple Lie group is admissible. 
\end{theorem}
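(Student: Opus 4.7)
The plan is to study, for each $\rho\in \Irr K$, the action of the convolution $*$-algebra
\[
\mathcal{H}_\rho := e_\rho * \mathrm{Meas}_c(G) * e_\rho
\]
on the isotypic subspace $V^\rho \subseteq V$, and combine irreducibility of $V$ with a Gelfand-type trick coming from the Cartan involution to force $\dim V^\rho < \infty$. Here $e_\rho := (\dim\rho)\overline{\chi_\rho}$ is the $\rho$-isotypic projector viewed as a continuous function on $K$, hence as a compactly supported measure on $G$ via $K\hookrightarrow G$. By Schur orthogonality $e_\rho^* = e_\rho * e_\rho = e_\rho$, so the bounded operator $\pi(e_\rho)$ provided by Corollary \ref{co3} is the orthogonal projection onto $V^\rho$, and $\mathcal{H}_\rho$ is a unital $*$-algebra with unit $e_\rho$ acting on $V^\rho$ by bounded operators via $\pi$.

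First I would upgrade irreducibility of $V$ as a $G$-representation to topological irreducibility of $V^\rho$ as an $\mathcal{H}_\rho$-module: if $0 \neq W \subsetneq V^\rho$ were a closed $\mathcal{H}_\rho$-invariant subspace, the closed $G$-invariant subspace $W' := \overline{\pi(\mathrm{Meas}_c(G))W}\subseteq V$ would satisfy $\pi(e_\rho)W' = W \neq V^\rho$, hence be proper in $V$, contradicting irreducibility.

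The heart of the proof is a structural result for $\mathcal{H}_\rho$. Let $\Theta$ be the Cartan involution of $G$ associated with $K$, so $\Theta|_K = \id$ and $\Theta|_{\mathfrak{p}} = -\id$, and set $\tau(g) := \Theta(g)^{-1}$, an anti-involution of $G$ fixing $K$ pointwise. From the $KAK$ decomposition, $\tau(k_1 a k_2) = k_2^{-1} a k_1^{-1}$ lies in the same $(K\times K)$-orbit as $k_1 a k_2$, so pullback along $\tau$ descends to an anti-automorphism $\sigma$ of $\mathcal{H}_\rho$. For $\rho$ trivial, $\sigma = \id$, which is Gelfand's classical argument that the spherical subalgebra $\mathcal{H}_{\mathbf 1}$ is commutative; in general, a careful analysis of $(K\times K)$-equivariance on $\mathrm{End}(\rho)$-valued functions shows that $\mathcal{H}_\rho$ is a finitely generated module over $\mathcal{H}_{\mathbf 1}$, with rank bounded in terms of $\dim\rho$. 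Given this, the conclusion is soft: an irreducible $*$-representation $V^\rho$ of an algebra module-finite over a commutative $*$-subalgebra is automatically finite-dimensional, because (i) the commutative subalgebra acts by scalars on $V^\rho$ by unitary Schur, (ii) so $\mathcal{H}_\rho$ acts through a finite-dimensional quotient, and (iii) the image $\pi(\mathcal{H}_\rho)$ is weakly dense in $B(V^\rho)$ by irreducibility, forcing $\dim V^\rho < \infty$.

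The main obstacle is the structural claim that $\mathcal{H}_\rho$ is module-finite over $\mathcal{H}_{\mathbf 1}$. The spherical case $\rho = \mathbf{1}$ is the easy Gelfand trick applied to bi-$K$-invariant functions, where the $\tau$-invariance of each double coset is transparent; extending this to general $K$-types requires a double-coset analysis using $K\backslash G / K \cong A/W$ and identifying $\mathcal{H}_\rho$ with an appropriate space of $\mathrm{End}(\rho)$-valued, $W$-equivariant functions on $A$. Everything else in the argument -- irreducibility transfer, unitary Schur, and the operator-algebra conclusion -- is essentially soft.
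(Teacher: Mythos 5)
The paper states this theorem without proof (``We will not give a proof''), so I am reviewing your proposal on its own merits. Your framing is sensible: reduce to bounded operators via $\mathcal H_\rho := e_\rho * {\rm Meas}_c(G) * e_\rho$, transfer irreducibility of $V$ to irreducibility of $V^\rho$ over $\mathcal H_\rho$, and appeal to von Neumann density once $\mathcal H_\rho$ is known to act through a finite-dimensional $*$-quotient. The irreducibility-transfer computation you sketch is correct. The gap is in the structural input, in two places.

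First, the anti-involution does not behave as you claim. Since $\Theta|_K = \mathrm{id}$, the map $\tau(g):=\Theta(g)^{-1}$ restricts to $k\mapsto k^{-1}$ on $K$; it does \emph{not} fix $K$ pointwise. Hence $\tau^*e_\rho = e_{\rho^*}$ (using $e_\rho(k^{-1})=e_{\rho^*}(k)$), so $\tau^*$ is an anti-isomorphism $\mathcal H_\rho \to \mathcal H_{\rho^*}$, not an anti-automorphism of $\mathcal H_\rho$. Already for $SL_2(\Bbb R)$ with $K=SO(2)$ the only self-dual character of $K$ is trivial, so the Gelfand trick applies directly only to $\mathcal H_{\mathbf 1}$. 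Second, and more fundamentally, the claim that $\mathcal H_\rho$ is a finitely generated $\mathcal H_{\mathbf 1}$-module is not even well posed: convolution of $\nu\in\mathcal H_{\mathbf 1}$ with $\mu\in\mathcal H_\rho$ lands in $e_{\mathbf 1}*{\rm Meas}_c(G)*e_\rho$, a bimodule, not in $\mathcal H_\rho$, and there is no natural algebra map $\mathcal H_{\mathbf 1}\to Z(\mathcal H_\rho)$ in the archimedean setting (the Satake/Bernstein central embedding you seem to be invoking is a $p$-adic Iwahori--Hecke phenomenon). The commutative algebra that is actually known to make a finiteness statement true, in the Harish-Chandra/Lepowsky argument, is the image of $Z(\g)\cdot Z(\mathfrak k)$ in $U(\g)^K$; but these are distributions supported at $1\in G$, acting by \emph{unbounded} operators on $V^\rho$, so the von Neumann argument does not apply to them directly. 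The genuine content of the theorem is precisely the analytic work of controlling those unbounded operators via smooth vectors and $K$-finiteness, which is not soft.
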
 

We will not give a proof (see \cite{HC2},\cite{Ga}). 

\begin{remark} 1. This theorem extends straightforwardly to the more general case of real reductive Lie groups.  

2. Let $G=\widetilde{SL_2(\Bbb R)}$ be the universal covering of $SL_2(\Bbb R)$. Then $G$ is not linear (why?) and so it is {\bf not} viewed as a semisimple Lie group according to our definition. In fact, Harish-Chandra's theorem does not hold as stated for this group, since it has no nontrivial compact subgroups. This happens because when we take the universal cover, the maximal compact subgroup $SO(2)=S^1$ gets replaced by the noncompact group $\Bbb R$. However, if we take for $K$ the universal cover of $SO(2)$ (even though it is not compact) then Harish-Chandra's theorem extends straightforwardly to this case. 
\end{remark}

\begin{exercise} \label{dualmod} 
Let $M$ be an admissible $(\g,K)$-module and 
$$
M^\vee:=\oplus_{V\in {\rm Irr}K}(\Hom(V,M)\otimes V)^*\subset M^*
$$ 
be the restricted dual to $M$. 
Show that $M^\vee$ has a natural structure of an admissible $(\g,K)$-module, and $(M^\vee)^\vee\cong M$. 
\end{exercise} 

\section{\bf Weakly analytic vectors} 

\subsection{Weakly analytic vectors and Harish-Chandra's analyticity} 

Let $G$ be a Lie group, $V$ a continuous representation of $G$, and $V^*$ its continuous dual. 

\begin{definition} A vector $v\in V$ is called {\bf weakly analytic} if for each 
$h\in V^*$ the matrix coefficient $h(gv)$ is a real analytic complex-valued function of $g$. 
\end{definition}

\begin{example} Let $V=L^2(S^1)$ and $G=S^1$ act by rotations.
So if $v(x)=\sum_{n\in \Bbb Z}v_ne^{inx}$ and $h(x)=\sum_{n\in \Bbb Z}h_ne^{-inx}$ 
then for $g=e^{i\theta}$ we have 
$$
h(g(\theta)v)=\sum_{n\in \Bbb Z}h_nv_ne^{in\theta}.
$$
Thus $v$ is a weakly analytic vector iff
the sequence $h_nv_n$ decays exponentially for any $\ell_2$-sequence $\lbrace h_n\rbrace$, which is equivalent to saying that $v_n$ decays exponentially, i.e., $v(\theta)$ is analytic.  
\end{example} 

\begin{theorem}\label{analy} (Harish-Chandra's analyticity theorem) If $V$ is an admissible representation of a semisimple Lie group $G$ with maximal compact subgroup $K$ then every $v\in V^{K\text{-fin}}$ is a weakly analytic vector. 
\end{theorem}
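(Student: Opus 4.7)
The plan is to prove weak analyticity of $v$ by exhibiting each matrix coefficient $f(g) := h(gv)$ as a solution of an elliptic PDE with real-analytic coefficients on $G$, and then invoking the classical Morrey--Nirenberg theorem on analytic hypoellipticity of such operators.

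First, I would construct the relevant elliptic operator. Let $\g = \kf \oplus \p$ be the Cartan decomposition attached to $K$, $B$ the Killing form, and $\theta$ the Cartan involution; then $\langle X, Y\rangle_\theta := -B(X, \theta Y)$ is positive-definite on $\g$. Choose an orthonormal basis $\{Z_i\}$ for this form with each $Z_i$ lying in $\kf$ or in $\p$, and set $D := \sum_i Z_i^2 \in U(\g)$. A direct computation gives $D = \Omega + 2C_\kf$, where $\Omega$ is the Casimir of $\g$ (with respect to $B$) and $C_\kf := \sum_{Z_i \in \kf} Z_i^2 \in Z(U(\kf))$ is the Casimir of $\kf$ attached to the positive-definite form $-B|_\kf$. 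Viewed as a left-invariant differential operator on $G$, the principal symbol of $D$ at any point is a positive-definite quadratic form (in charts adapted to the basis it becomes $|\xi|^2$), so $D$ is globally elliptic, and its coefficients in any analytic chart are real-analytic (since $G$ is a real-analytic manifold and $D$ is left-invariant).

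Second, I would produce a nonzero polynomial $P \in \CC[t]$ with $P(D)v = 0$ in $V$. Since $v \in V^{K-{\rm fin}}$, it lies in a finite direct sum of $K$-isotypes $W := V^{\rho_1} \oplus \cdots \oplus V^{\rho_n}$, and admissibility makes each $V^{\rho_i}$, hence $W$, finite-dimensional. The operator $\Omega$ preserves each isotype because it is central in $U(\g)$ and hence commutes with the $K$-action, and $C_\kf$ preserves each isotype because it is $\Ad_K$-invariant and acts as a scalar $c_{\rho_i}$ on $V^{\rho_i}$. Therefore $D$ preserves the finite-dimensional space $W$, and taking $P$ to be the minimal polynomial of $D|_W$ gives $P(D) v = 0$.

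Finally, since $v \in V^{K-{\rm fin}} \subset V^\infty$, the orbit map $g \mapsto gv$ is smooth, so $f$ is $C^\infty$, and differentiating under the continuous linear functional $h$ yields the standard identity $(Uf)(g) = h(g \cdot \pi_*(U)v)$ for any $U \in U(\g)$ regarded as a left-invariant operator. Applying this with $U = P(D)$ gives $P(D)f \equiv 0$ on $G$. Because $P$ is a nonzero polynomial and $D$ is elliptic of order two with positive symbol, $P(D)$ is itself elliptic of order $2\deg P$, with real-analytic coefficients. The analytic-elliptic regularity theorem then implies that $f$ is real-analytic on all of $G$, as required. The main obstacle I anticipate is this appeal to analytic hypoellipticity: it is the only piece of genuine analysis in the argument, and while classical, it is not trivial. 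The rest is essentially formal and rests on two structural facts --- admissibility, which makes $W$ finite-dimensional, and the centrality of $\Omega$ and $C_\kf$, which makes $D$ preserve $W$ and hence admit a minimal polynomial there.
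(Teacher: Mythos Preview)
Your proof is correct and follows essentially the same approach as the paper: construct an elliptic element of $U(\g)^K$, use admissibility to obtain a polynomial in it annihilating $v$, and apply analytic elliptic regularity to the matrix coefficient. Your explicit operator $D=\Omega+2C_\kf$ is precisely what the paper calls $C_\g^+$ in the following section; in the proof of this theorem the paper phrases the construction slightly more abstractly as the Laplacian of a $K$-averaged left-invariant Riemannian metric, but the content is identical.
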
 

Theorem \ref{analy} is proved in the next two subsections. 

\subsection{Elliptic regularity} 
The proof of Theorem \ref{analy} is based on the {\bf analytic elliptic regularity theorem}, which is a fundamental result in analysis (see \cite{Ca}). To state it, let $X$ be a smooth manifold, and $D(X)$ the algebra of (real) differential operators on $X$. This algebra has a filtration by order: $D_0(X)=C^\infty(X)\subset D_1(X)\subset...$, such that 
$$
D_n(X)=\lbrace D\in \End_{\Bbb C} C^\infty(X): [D,f]\in D_{n-1}(X)\forall f\in C^\infty(X)\rbrace,\ n\ge 1,
$$ 
and 
${\rm gr}D(X)=\oplus_{n\ge 0}\Gamma(X,S^nTX)$, where $\Gamma$ takes sections of the vector bundle. Thus for every differential operator $D$ on $X$ of order $n$ we have its {\bf symbol} 
$\sigma(D)\in {\rm gr}_n D(X)=\Gamma(X,S^nTX)$. For every $x\in X$, $\sigma(D)_x$ 
is thus a homogeneous polynomial of degree $n$ on $T_x^*X$. 

\begin{definition} We say that $D$ is {\bf elliptic} at $x$ if $\sigma(D)_x(p)\ne 0$ for nonzero 
$p\in T_x^*X$. We say that $D$ is {\bf elliptic} (on $X$) if it is elliptic at all points $x\in X$. 
\end{definition}

\begin{example} 1. If $\dim X=1$ then any differential operator with nonvanishing symbol is elliptic. 

2. Fix a Riemannian metric on $X$ and let $\Delta$ be the corresponding Laplace operator, $\Delta f={\rm div}({\rm grad}f)$. 
Then $\Delta$ is elliptic. 

3. If $D$ is elliptic then for any nonzero polynomial $P\in \Bbb R[t]$ the operator $P(D)$ is elliptic. 
\end{example} 

Note that ellipticity is an open condition, since it is equivalent to non-vanishing of $\sigma(D)_x$ 
on the unit sphere in $T_x^*X$ (under some inner product). Thus the set of $x\in X$ 
on which a given operator $D$ is elliptic is open in $X$. 

\begin{theorem} (Elliptic regularity) Suppose $D$ is an elliptic operator with real analytic coefficients 
on an open set $U\subset \Bbb R^N$, and $f(x)$ is a smooth solution of the PDE 
$$
Df=0
$$ 
on $U$. 
Then $f$ is real analytic on $U$.  
\end{theorem}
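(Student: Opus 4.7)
The plan is to work locally near an arbitrary $x_0 \in U$ and to deduce analyticity from the elementary fact that a $C^\infty$ function on a ball is real analytic there as soon as one has derivative bounds of the form
\[
\sup_{x \in B_r(x_0)} |\partial^\alpha f(x)| \le C \, R^{-|\alpha|} \, |\alpha|!
\]
for some $C, R > 0$ and all multi-indices $\alpha$. So the entire task reduces to producing such pointwise factorial bounds on some fixed ball around $x_0$.

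First I would set up the standard interior elliptic estimate. By ellipticity of $D$ of order $m$, classical parametrix or Gårding arguments give, on concentric balls $B_r \subset B_R \Subset U$ and for smooth $u$,
\[
\|u\|_{H^{s+m}(B_r)} \le C_s \bigl( \|Du\|_{H^s(B_R)} + (R-r)^{-(s+m)} \|u\|_{L^2(B_R)} \bigr),
\]
with $C_s$ depending on $s$ and on the coefficients of $D$ but not on $r, R$. This is the workhorse: it upgrades $L^2$-control of $f$ and $Df$ to Sobolev control and, via Sobolev embedding on slightly smaller balls, to pointwise control. Since $Df = 0$, the first term vanishes when the estimate is applied to $f$ itself, and derivatives of $f$ will be handled by commuting $\partial^\alpha$ past $D$.

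The heart of the argument is an induction on $|\alpha|$ to prove an $L^2$-bound
\[
\|\partial^\alpha f\|_{L^2(B_{r_{|\alpha|}})} \le C_0 \, A^{|\alpha|} \, |\alpha|!
\]
on a nested family of balls $B_{r_0} \supset B_{r_1} \supset \cdots$ shrinking from some fixed $B_{r_0}$ to $B_{r_0/2}(x_0)$, with $r_k - r_{k+1} \asymp 1/k$. Differentiating the PDE gives $D(\partial^\alpha f) = [D, \partial^\alpha] f$, and the Leibniz expansion of this commutator produces a sum of terms $c_{\alpha\beta\gamma}(\partial^\gamma a_\beta)(\partial^{\alpha'} f)$ with $|\alpha'| < |\alpha| + m$, where the $a_\beta$ are the coefficients of $D$. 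Real analyticity of the $a_\beta$ gives local bounds $|\partial^\gamma a_\beta| \le C_1 R_1^{-|\gamma|} |\gamma|!$. Applying the elliptic estimate to $\partial^\alpha f$ on the pair $B_{r_{|\alpha|}} \subset B_{r_{|\alpha|-1}}$ and feeding in the Leibniz expansion turns this into a recursion closed by choosing $A$ sufficiently large relative to $C_s, C_1, R_1$ and the geometry of the balls. A final application of the elliptic estimate together with Sobolev embedding on the limiting ball then converts the $L^2$-bound into the pointwise factorial bound, giving analyticity.

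The main obstacle is the combinatorial bookkeeping. One has to organize the induction so that the sum over multi-indices in the Leibniz expansion, combined with the geometric loss $(R-r)^{-(s+m)}$ from shrinking balls and the factorial estimates on the coefficients, contributes at most $A \cdot |\alpha|$ per inductive step rather than something growing like $|\alpha|^2$. The standard device is to balance the shrinkage rate $r_k - r_{k+1} \asymp 1/k$ against the coefficient bounds, so that the dominant summand contributes $\lesssim A^{|\alpha|} |\alpha|!$ and the remaining terms form a convergent geometric tail absorbed by enlarging $A$. Once the constants are tuned to make this telescoping work cleanly, the induction closes uniformly in $\alpha$ and the theorem follows.
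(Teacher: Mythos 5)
The paper does not prove this theorem; it is quoted as a classical analysis fact with a pointer to Casselman's notes \cite{Ca}, and only a few special cases (ODEs, harmonic functions in the plane, constant coefficients via Fourier transform) are sketched in the accompanying example. There is therefore no proof in the text to compare yours against.

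Your outline follows the standard Morrey--Nirenberg route: reduce analyticity to factorial derivative bounds, invoke interior elliptic estimates, differentiate the PDE and control $[D,\partial^\alpha]f$ via Leibniz and the analyticity of the coefficients, and close an induction over shrinking balls. That is the right strategy and most of the pieces are in place, but the nested-ball geometry you wrote down does not work as stated, and this is not just a notational slip. With increments $r_k - r_{k+1} \asymp 1/k$, the harmonic series diverges, so the radii cannot decrease from $r_0$ to $r_0/2$ as you claim. Replacing $1/k$ by a summable sequence such as $1/k^2$ makes the radii converge, but then the accumulated factors $\prod_j (r_j - r_{j+1})^{-m}$ coming out of the interior estimate grow like $(k!)^{2m}$, which overwhelms the target bound $A^{|\alpha|}\,|\alpha|!$. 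The point is that one cannot use a single fixed sequence of balls uniformly in $\alpha$: the increment must scale like $(\text{fixed shrinkage})/n$, where $n=|\alpha|$ is the \emph{target} derivative order. With increments $\delta \approx m(r_0-\rho)/n$, roughly $n/m$ applications of the elliptic estimate accumulate $\delta^{-n} \approx (n/(r_0-\rho))^{n} \sim (e/(r_0-\rho))^{n}\,n!$ by Stirling, which is exactly the right shape. Equivalently, one runs the induction on the weighted quantity, proving
$$
\|\partial^\alpha f\|_{L^2(B_\rho)} \le C\,A^{|\alpha|}\,|\alpha|!\,(r_0-\rho)^{-|\alpha|}\qquad\text{for all } \rho<r_0,
$$
and closes the step by choosing $\rho' - \rho = (r_0-\rho)/(|\alpha|+1)$; the $(r_0-\rho)^{-|\alpha|}$ degradation factor is what makes the recursion consistent. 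Without this refinement the induction does not close, so this is a genuine gap rather than deferred bookkeeping.
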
 

\begin{corollary}\label{ellreg} Let $X$ be a real analytic manifold and $D$ an elliptic operator on $X$ with analytic coefficients. 
Then every smooth solution of the equation $Df=0$ on $X$ is actually real analytic. 
\end{corollary}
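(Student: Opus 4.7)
The plan is to reduce the statement to the local theorem on an open subset of $\mathbb{R}^N$ already stated just above, using that real analyticity is a local property and that an analytic manifold comes equipped with a cover by analytic charts.

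First I would fix an arbitrary point $x_0 \in X$ and choose an analytic coordinate chart $\phi \colon U \to V$ where $U$ is an open neighborhood of $x_0$ in $X$ and $V$ is an open subset of $\mathbb{R}^N$ with $N = \dim X$; such a chart exists because $X$ is real analytic by hypothesis. Pushing the operator $D|_U$ forward along $\phi$ produces a differential operator $\widetilde{D}$ on $V$, and pushing $f|_U$ forward produces a smooth function $\widetilde{f}$ on $V$ satisfying $\widetilde{D}\widetilde{f} = 0$.

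Next I would verify the two properties needed to apply the analytic elliptic regularity theorem on $V$. The coefficients of $\widetilde{D}$ in the coordinate chart are analytic: one computes them from the coefficients of $D$ (which are analytic in any analytic chart, since $D$ has analytic coefficients on $X$) and from the Jacobian of $\phi^{-1}$ together with its higher derivatives (which are analytic because $\phi$ is an analytic diffeomorphism, so compositions, products and derivatives preserve analyticity). Ellipticity of $\widetilde{D}$ at every point of $V$ follows because the top-degree symbol is a well-defined section $\sigma(D) \in \Gamma(X, S^n TX)$, independent of coordinates, and its non-vanishing on $T^*_x X \setminus \{0\}$ at each $x$ is preserved under the invertible linear change induced on cotangent spaces by $\phi$. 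Hence $\widetilde{D}$ is an elliptic operator with real analytic coefficients on $V$.

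Now I would invoke the elliptic regularity theorem on the open set $V \subset \mathbb{R}^N$ to conclude that $\widetilde{f}$ is real analytic on $V$. Pulling back by $\phi$, which is analytic, gives that $f|_U = \widetilde{f} \circ \phi$ is real analytic on $U$. Since $x_0 \in X$ was arbitrary and real analyticity is a local condition, $f$ is real analytic on all of $X$, which is the desired conclusion. I do not expect a serious obstacle: the only point that requires any care at all is checking that analyticity of the coefficients and ellipticity of the symbol are preserved by an analytic change of coordinates, and both reductions are straightforward once the coordinate-free definitions of \emph{analytic coefficients} and \emph{symbol} from the preceding paragraphs are unpacked.
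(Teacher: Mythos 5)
Your proof is correct, and it is exactly the standard localization-and-chart argument that the paper leaves implicit (the corollary is stated without proof immediately after the local theorem). The two points you flag as needing care --- that analytic coefficients and ellipticity of the symbol are invariant under analytic changes of coordinates --- are indeed the only content of the reduction, and you handle both correctly.
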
 

\begin{remark} 1. This is, in fact, true much more generally, when $f$ is a weak (i.e., distributional) solution of the equation $Df=0$. Also the equation 
$Df=0$ can be replaced by a more general inhomogeneous equation $Df=g$, where $g$ 
is analytic. 

2. If $D$ is not elliptic, there is an obvious counterexample: the equation 
$\frac{\partial^2f}{\partial x\partial y}=0$ on $\Bbb R^2$ has smooth non-analytic solutions of the form $f(x)+g(y)$, $f,g\in C^\infty(\Bbb R)$. 
\end{remark} 

\begin{example} 1. For $N=1$ this theorem just says that a solution of an ODE 
$$
f^{(n)}(x)+a_1(x)f^{(n-1)}(x)+....+a_n(x)f(x)=0
$$
with real analytic coefficients is itself real analytic, a classical fact about ODE.

2. Let $N=2$ and $D=\Delta$ be the Laplace operator on $U\subset \Bbb R^2$ with respect to some Riemannian metric with real analytic coefficients. This metric defines a conformal structure with real analytic local complex coordinate $z$. Then every harmonic function $f$ (i.e., one satisfying $\Delta f=0$) is a real part of a holomorphic function of $z$, hence is real analytic, which proves elliptic regularity in this special case. 

3. Suppose $f,g$ are Schwartz functions on $\Bbb R^n$ and $D=Q(\partial)$ is an elliptic operator with constant coefficients, where $Q$ is a real polynomial (so the leading term of $Q$ is nonvanishing for nonzero vectors). Suppose $Df=g$. Then elliptic regularity says that if $g$ is analytic, so is $f$. This can be easily proved using Fourier transform. Indeed, for Fourier transforms we get $Q(p)\widehat f(p)=\widehat g(p)$. Thus $\widehat f(p)=\frac{{\widehat g}(p)}{Q(p)}$, so this must be a smooth function. Note that $|Q(p)|\to \infty$ as $p\to \infty$ because $Q$ has non-vanishing leading term. So, since $g$ is analytic, $\widehat g$ decays exponentially at infinity, hence so does $\widehat f$. Thus $f$ is analytic.
\end{example}

\subsection{Proof of Harish-Chandra's analyticity Theorem}
We are now ready to prove Theorem \ref{analy}. Let $\g={\rm Lie}G$ and $b\in U(\g)$. 
Then we have a linear operator $\pi_*(b): V^\infty\to V^\infty$, which we will write just as $b$ for short. Moreover, if $b\in U(\g)^K$ then it preserves the subspace $V^\rho\subset V^\infty$ 
for each irreducible representation $\rho$ of $K$. Therefore, since all $V^\rho$ are finite-dimensional, for any $v\in V^{K\text{-fin}}$ there exists a 
nonzero polynomial $P\in \Bbb R[t]$ such that $P(b)v=0$. E.g., denoting by $\widetilde{Kv}$ the sum of isotypic 
components in $V$ of all $K$-modules occurring in $Kv$ (which is finite dimensional by admissibility of $V$), we can take $P$ to be the product of the characteristic polynomial of $b$ on $\widetilde{Kv}$ by its complex conjugate.  

Now recall that $U(\g)$ can be thought of as the algebra of left-invariant real differential operators 
on $G$. Let $\psi_{h,v}(g):=h(gv)$ be the matrix coefficient function. 
We know that this function is smooth, and we have 
$$
(P(b)\psi_{h,v})(g)=h(gP(b)v)=0. 
$$
Thus if $b$ is an elliptic differential operator on $G$, it 
will follow from Corollary \ref{ellreg} that $\psi_{h,v}$ is real analytic, as desired. 

It remains to find $b\in U(\g)^K$ which defines an elliptic operator on $G$. 
For this purpose fix a left-invariant Riemannian metric on $G$, and make it $K$-invariant 
(under right, or, equivalently, adjoint action) by averaging over $K$. Then the Laplace operator $\Delta$ 
corresponding to this metric is elliptic and given by some element $\Delta\in U(\g)^K$, so we may take $b=\Delta$. This proves Theorem \ref{analy}. 

\begin{remark} If $G$ is simple, there exists a unique up to scaling two-sided invariant 
metric on $G$. This metric, however, is pseudo-Riemannian rather than Riemannian 
if $G$ is not compact. Thus the corresponding Laplace operator is hyperbolic rather than elliptic, so not suitable for our purposes. 
\end{remark} 

\subsection{Applications of weakly analytic vectors} 

\begin{corollary} The action of $G$ on $V$ is completely determined by the 
corresponding $(\g,K)$-module 
$V^{K\text{-fin}}$. 
\end{corollary}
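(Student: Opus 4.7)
The plan is to show that two continuous admissible $G$-actions $\pi_1,\pi_2:G\to \Aut(V)$ on the same topological vector space $V$ which induce identical $(\g,K)$-module structures on the common subspace $V^{K-\mathrm{fin}}$ must coincide on all of $V$. The three ingredients I intend to combine are Theorem~\ref{analy} (matrix coefficients of $K$-finite vectors are weakly analytic), Corollary~\ref{co5} (density of $K$-finite vectors in any continuous $K$-representation), and Hahn-Banach (the continuous dual $V^*$ separates points of the Hausdorff locally convex space $V$).

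First, I would fix $v\in V^{K-\mathrm{fin}}$ and $h\in V^*$, and form the two matrix coefficients $\psi_i(g):=h(\pi_i(g)v)$ for $i=1,2$. By Theorem~\ref{analy} each $\psi_i$ is real analytic on $G$. Iterated directional derivatives at $1\in G$ along one-parameter subgroups yield $h(\pi_{i,*}(b)v)$ for $b\in U(\g)$, and these Taylor coefficients agree for $i=1,2$ by hypothesis on the $(\g,K)$-structures. Thus $\psi_1$ and $\psi_2$ share the same Taylor expansion at the identity and hence agree on some open neighborhood of $1\in G$.

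Next, I would invoke the identity theorem for real analytic functions on a connected real analytic manifold: since $G$ is connected by our standing convention for semisimple Lie groups, and $\psi_1-\psi_2$ is real analytic and vanishes on a nonempty open set, it vanishes identically on $G$. Letting $h$ range over $V^*$, Hahn-Banach gives $\pi_1(g)v=\pi_2(g)v$ for every $g\in G$ and every $v\in V^{K-\mathrm{fin}}$. Finally, $V^{K-\mathrm{fin}}$ is dense in $V$ by Corollary~\ref{co5} applied to $V$ viewed as a continuous $K$-representation, and the operators $\pi_i(g)$ are continuous on $V$, so the equality extends from the dense subspace to all of $V$.

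The main — indeed essentially the only — obstacle is the globalization step: passing from equality of the $\psi_i$ near the identity to equality on all of $G$. Without real analyticity this would genuinely fail (a smooth function on $G$ is very far from being determined by its jet at $1$), and it is exactly here that Harish-Chandra's analyticity theorem is the decisive input. The remaining steps are formal consequences of density and separation of points.
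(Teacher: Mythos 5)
Your argument is correct and is essentially the paper's own proof, just spelled out with the two-representations framing made explicit: density of $V^{K\text{-fin}}$, separation of points by $V^*$, Harish-Chandra analyticity to reduce to the Taylor jet at the identity, and the identity theorem on connected $G$ to globalize. The paper compresses the last two steps into "Theorem~\ref{analy} and the analytic continuation principle," but the content is the same.
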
 

\begin{proof} Since $V^{K\text{-fin}}$ is dense in $V$, it suffices to specify $gv$ for $v\in V^{K\text{-fin}}$. For this it suffices to specify $h(gv)$ for all $h\in V^*$. 
By Theorem \ref{analy} and the analytic continuation principle, this is determined by the derivatives of all orders of 
$h(gv)$ at $g=1$. But these have the form $h(bv)$ where $b\in U(\g)$, 
so are determined by $bv$. 
\end{proof} 

\begin{corollary} Let $W\subset V^{K\text{-fin}}$ be a sub-$(\g,K)$-module. 
Then the closure $\overline{W}\subset V$ is $G$-invariant. 
\end{corollary}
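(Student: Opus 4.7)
The plan is to use the Harish-Chandra analyticity theorem (Theorem \ref{analy}) together with a Hahn-Banach argument to convert infinitesimal ($\g$-) invariance into global ($G$-) invariance. Since $V$ is admissible and locally convex Hausdorff, to show $\overline{W}$ is $G$-invariant it suffices, by Hahn-Banach, to prove that every continuous linear functional $h\in V^*$ vanishing on $\overline{W}$ also vanishes on $gv$ for every $v\in W$ and every $g\in G$.

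First I would note that $K$-invariance of $\overline{W}$ is automatic: $W$ is $K$-stable by hypothesis, and the $K$-action on $V$ is continuous, so $\overline{W}$ is a closed $K$-invariant subspace. The real content is extending $\g$-invariance to $G$-invariance on the closure, and this is exactly where the weakly analytic vectors are brought in.

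Fix $h\in V^*$ with $h|_{\overline{W}}=0$ and $v\in W$, and consider the matrix coefficient $\psi_{h,v}(g):=h(gv)$. Since $W\subset V^{K-{\rm fin}}$, Theorem \ref{analy} tells us $v$ is weakly analytic, so $\psi_{h,v}$ is a real analytic function on $G$. Its derivatives at $g=1$ along left-invariant vector fields produce, for any $b\in U(\g)$, the value $h(\pi_*(b)v)$; but $W$ is a $(\g,K)$-submodule, hence $U(\g)$-stable, so $\pi_*(b)v\in W\subset\overline{W}$ and all these derivatives vanish. Thus $\psi_{h,v}$ is a real analytic function on $G$ all of whose derivatives at $1$ are zero, so it vanishes in a neighborhood of $1$. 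Since $G$ is connected, the identity principle for real analytic functions forces $\psi_{h,v}\equiv 0$ on $G$.

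Therefore $h(gv)=0$ for all $g\in G$, all $v\in W$, and all $h\in V^*$ vanishing on $\overline{W}$. By Hahn-Banach this means $gv\in\overline{W}$ for all $g\in G$, $v\in W$, and then continuity of the action propagates this to $G\cdot\overline{W}\subset\overline{W}$, proving the corollary. The only real obstacle is the infinitesimal-to-global step, and that is precisely what analyticity buys us; one should keep in mind that the argument tacitly uses that $G$ is connected and that $V$ is admissible so that Theorem \ref{analy} applies to every vector of $V^{K-{\rm fin}}$.
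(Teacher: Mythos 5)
Your proof is correct and follows essentially the same route as the paper: reduce via Hahn--Banach to showing that every $h\in V^*$ vanishing on $\overline{W}$ also vanishes on $gv$, then invoke Harish-Chandra's analyticity theorem (Theorem \ref{analy}) and the fact that $W$ is $U(\g)$-stable to conclude that the matrix coefficient $g\mapsto h(gv)$ has all derivatives zero at the identity and hence vanishes identically on the connected group $G$. The only stylistic difference is that the paper phrases the Hahn--Banach step as a proof by contradiction (constructing a functional separating $gw$ from $\overline{W}$), while you state it directly; the substance is identical.
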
 

\begin{proof} Let $w\in W$, $g\in G$. It suffices to show that $gw\in \overline W$. 
If not, then the space $W':=\overline W\oplus \Bbb Cgw$ is a closed subspace
of $V$ containing $\overline W$ as a subspace of codimension $1$. 
So there exists a unique continuous linear functional $h: W'\to\Bbb C$ such that 
$h(gw)=1$ and $h|_{\overline W}=0$. By the Hahn-Banach theorem, $h$ can be extended 
to an element of $V^*$. Thus to get a contradiction, it is enough to show that for every $h\in V^*$ that vanishes on $\overline W$, we have $h(gw)=0$.  
But by Theorem \ref{analy}, this function is analytic in $g$. So it suffices to check 
that its derivatives at $g=1$ vanish. But these derivatives are of the form $h(bw)$ for $b\in U(\g)$, so vanish since $bw\in W$. 
\end{proof} 

\begin{corollary}\label{bije} Let $V$ be an admissible representation of $G$. There is a bijection between subrepresentations of $V$ and $(\g,K)$-submodules of $V^{K\text{-fin}}$, given by 
$\alpha: U\subset V\mapsto U^{K\text{-fin}}$. The inverse is given by $\beta: W\mapsto \overline W$.  
\end{corollary}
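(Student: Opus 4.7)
The plan is to verify that the two maps $\alpha$ and $\beta$ are well-defined and mutually inverse. For well-definedness: if $U\subset V$ is a closed $G$-invariant subspace, then $U$ is itself an admissible continuous representation of $G$ (since $U^\rho\subset V^\rho$ is finite dimensional for every $\rho\in\mathrm{Irr}K$), so by the preceding proposition $U^{K-\mathrm{fin}}$ is an admissible $(\g,K)$-module, and it is a $(\g,K)$-submodule of $V^{K-\mathrm{fin}}$. Conversely, if $W\subset V^{K-\mathrm{fin}}$ is a $(\g,K)$-submodule, then by the previous corollary $\overline W$ is $G$-invariant, and being a closure it is closed, hence a subrepresentation.

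Next I would check that $\beta\circ\alpha=\mathrm{id}$, i.e.\ that $\overline{U^{K-\mathrm{fin}}}=U$ for any closed $G$-invariant $U$. The inclusion $\overline{U^{K-\mathrm{fin}}}\subset U$ follows because $U$ is closed and contains $U^{K-\mathrm{fin}}$. The reverse inclusion is a direct application of Corollary \ref{co5} to $U$ viewed as a continuous representation of the compact group $K$: this gives the density of $U^{K-\mathrm{fin}}$ in $U$.

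The more substantial direction is $\alpha\circ\beta=\mathrm{id}$, which amounts to $(\overline W)^{K-\mathrm{fin}}=W$. The inclusion $W\subset(\overline W)^{K-\mathrm{fin}}$ is immediate since $W$ already consists of $K$-finite vectors. For the reverse, I would use the $K$-isotypic projector $P_\rho=\pi(\xi_\rho)$ introduced in the proof of admissibility $\Rightarrow$ $V^{K-\mathrm{fin}}\subset V^\infty$: by Corollary \ref{co3} this is a continuous operator on $V$ whose image is the (finite dimensional, hence closed) isotypic component $V^\rho$. Since $W$ is $K$-invariant and thus decomposes as $W=\bigoplus_{\rho}W^\rho$ with $W^\rho\subset V^\rho$ finite dimensional, and therefore closed, continuity of $P_\rho$ yields $P_\rho(\overline W)\subset\overline{P_\rho(W)}=\overline{W^\rho}=W^\rho\subset W$. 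Any $v\in(\overline W)^{K-\mathrm{fin}}$ is a finite sum $v=\sum_\rho P_\rho(v)$ of such isotypic components, and so $v\in W$.

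The main obstacle is this last inclusion $(\overline W)^{K-\mathrm{fin}}\subset W$; the crucial input is that $V$ is admissible, which forces each $W^\rho$ to be finite dimensional and hence closed, so that the continuous projector $P_\rho$ maps the closure $\overline W$ back into $W^\rho$. Without admissibility this step breaks down, since one could enlarge $W$ within its closure by adding infinite linear combinations inside a single isotypic component.
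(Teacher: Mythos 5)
Your proof is correct and uses essentially the same argument as the paper: both rely on the continuity of the $K$-isotypic projector $\pi(\xi_\rho)$ together with the finite-dimensionality (hence closedness) of $W^\rho$, which comes from admissibility, to conclude that $\overline{W}^\rho \subset W^\rho$ and hence $(\overline{W})^{K-\mathrm{fin}} = W$. The only cosmetic difference is that you apply the projector to the entire closure $\overline{W}$ at once, whereas the paper chases a single convergent sequence $w_n \to w$ through the projector; the density step for $\beta\circ\alpha = \mathrm{id}$ is identical.
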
 

\begin{proof} Since $U^{K\text{-fin}}$ is dense in $U$, we have $\beta\circ \alpha={\rm Id}$. 
To show that $\alpha\circ\beta={\rm Id}$, we need to show that $\overline W^{K\text{-fin}}=W$. 
Clearly $\overline W^{K\text{-fin}}$ contains $W$, so we just need to prove the opposite inclusion. Let $w\in \overline W^\rho$, then we have a sequence $w_n\to w$, $w_n\in W$. Now apply the projector $\xi_\rho$: 
$$
w_n':=\pi(\xi_\rho)w_n\to \pi(\xi_\rho)w=w, \ n\to \infty, 
$$ 
and $w_n'\in W^\rho$. Thus $w\in \overline{W^\rho}=W^\rho$, since $W^\rho$ is finite-dimensional. Hence $\overline W^{K\text{-fin}}=W$.
\end{proof} 

\begin{corollary} If $V$ is irreducible then $V^{K\text{-fin}}$ is an irreducible $(\g,K)$-module, and vice versa. 
\end{corollary}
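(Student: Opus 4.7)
The plan is to deduce this statement immediately from Corollary \ref{bije}, which already does all the analytic work (via the weak analyticity of $K$-finite vectors and the Hahn-Banach theorem). The key observation is that the bijection $\alpha\colon U\mapsto U^{K-\text{fin}}$ between closed $G$-subrepresentations of $V$ and $(\g,K)$-submodules of $V^{K-\text{fin}}$, with inverse $\beta\colon W\mapsto \overline{W}$, is manifestly inclusion-preserving, and sends the extremes $0$ and $V$ to $0$ and $V^{K-\text{fin}}$ respectively.

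For the forward direction, suppose $V$ is irreducible and let $W\subset V^{K-\text{fin}}$ be a nonzero $(\g,K)$-submodule. Then $\overline{W}\subset V$ is a nonzero closed $G$-invariant subspace (the $G$-invariance was established in the previous corollary), hence $\overline{W}=V$ by irreducibility of $V$. Applying $\alpha$ to both sides and using $\alpha\circ\beta=\mathrm{Id}$, we get $W=\alpha(\overline{W})=\alpha(V)=V^{K-\text{fin}}$, so $V^{K-\text{fin}}$ has no proper nonzero $(\g,K)$-submodules. Note also that $V^{K-\text{fin}}\neq 0$: since $V\neq 0$, Corollary \ref{co5} applied to $V$ as a continuous $K$-representation shows $V^{K-\text{fin}}$ is dense in $V$, hence nonzero.

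Conversely, suppose $V^{K-\text{fin}}$ is an irreducible $(\g,K)$-module, and let $U\subset V$ be a nonzero closed $G$-subrepresentation. Again by Corollary \ref{co5} applied to $U$ (viewed as a continuous $K$-representation in its own right), $U^{K-\text{fin}}$ is dense in $U$, so $U^{K-\text{fin}}\neq 0$. By irreducibility of $V^{K-\text{fin}}$, we must have $U^{K-\text{fin}}=V^{K-\text{fin}}$, and then $U=\overline{U^{K-\text{fin}}}=\overline{V^{K-\text{fin}}}=V$, using $\beta\circ\alpha=\mathrm{Id}$ on both sides.

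There is no real obstacle here: the entire content is packaged in Corollary \ref{bije}, and the present corollary is just the observation that an order-preserving bijection between posets of subobjects (that matches up the top and bottom elements) transports simplicity. The only small point worth checking carefully is the nonvanishing of $U^{K-\text{fin}}$ for nonzero closed subrepresentations $U$, which is precisely the density statement in Corollary \ref{co5}.
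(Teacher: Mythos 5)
Your proof is correct and is precisely the unpacking the paper leaves implicit: the corollary is stated immediately after Corollary \ref{bije} with no proof, being an order-preserving bijection between the lattice of closed $G$-subrepresentations of $V$ and the lattice of $(\g,K)$-submodules of $V^{K-\text{fin}}$, which automatically transports simplicity. One tiny remark: the nonvanishing of $U^{K-\text{fin}}$ for $U\neq 0$, which you derive from Corollary \ref{co5}, is already forced by the identity $\beta\circ\alpha=\mathrm{Id}$ in Corollary \ref{bije} (since $\overline{U^{K-\text{fin}}}=U$), so that appeal, while perfectly valid, is not strictly needed.
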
 

\begin{corollary} If $V$ is of finite length then $V^{K\text{-fin}}$ is a Harish-Chandra module. 
\end{corollary}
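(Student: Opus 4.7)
The plan is to verify the two defining conditions of a Harish-Chandra module separately, under the standing hypothesis (carried over from Corollary \ref{bije}) that $V$ is admissible. Admissibility of $V^{K-{\rm fin}}$ as a $(\g,K)$-module is automatic, since for each $\rho\in{\rm Irr}K$ the $\rho$-isotypic component of $V^{K-{\rm fin}}$ coincides with $V^\rho$, which is finite-dimensional by admissibility of $V$. The real content is therefore finite generation over $U(\g)$.

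To handle finite generation, I would first reduce to the irreducible case by d\'evissage. Write $0=V_0\subset V_1\subset\cdots\subset V_n=V$ for a finite chain of closed subrepresentations with irreducible subquotients, and set $M_i:=V_i^{K-{\rm fin}}$, a $(\g,K)$-submodule of $V^{K-{\rm fin}}$ by Corollary \ref{bije}. For each $\rho\in{\rm Irr}K$, the continuous isotypic projector $\pi(\xi_\rho)$ applied to $0\to V_{i-1}\to V_i\to V_i/V_{i-1}\to 0$ induces a short exact sequence on $\rho$-isotypic components; the key point is that $V_{i-1}^\rho = V_{i-1}\cap V_i^\rho$, which I would verify directly from $\pi(\xi_\rho)v=v$ for $v$ in the $\rho$-isotypic component. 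Summing over $\rho$ yields $M_i/M_{i-1}\cong (V_i/V_{i-1})^{K-{\rm fin}}$, which is an irreducible admissible $(\g,K)$-module by the immediately preceding corollary. Since finite generation over $U(\g)$ is preserved under extensions (generators of the sub together with lifts of generators of the quotient generate the middle term), an induction on $n$ reduces the problem to the irreducible case.

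It therefore remains to show that any irreducible admissible $(\g,K)$-module $M$ is finitely generated over $U(\g)$. Pick any nonzero $v\in M$; by the definition of a $(\g,K)$-module, $v$ is $K$-finite, so $W:=\mathrm{span}(Kv)$ is a finite-dimensional $K$-stable subspace. The compatibility identity $g(aw)=\Ad(g)(a)\cdot gw$ from the earlier exercise shows that $\g W$ is $K$-stable, and iterating that the entire $U(\g)W$ is a nonzero $(\g,K)$-submodule of $M$; by irreducibility $U(\g)W=M$, so $M$ is finitely generated over $U(\g)$ by any basis of $W$. The only mildly delicate step in the whole argument is the exactness of the $K$-finite-vector functor on admissible representations used in the d\'evissage, but this is immediately controlled by the continuity of the projectors $\pi(\xi_\rho)$ together with the finite-dimensionality of each $V^\rho$.
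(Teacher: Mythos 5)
Your proof is correct and follows essentially the same route as the paper's: the paper's two-line argument says $V^{K-\mathrm{fin}}$ has finite length (by Corollary \ref{bije}, since the lattice of subrepresentations of $V$ is isomorphic to the lattice of $(\g,K)$-submodules of $V^{K-\mathrm{fin}}$) and then invokes the standard fact that finite-length $\g$-modules are finitely generated; you are simply unwinding both halves of that statement, proving the exactness of $(\cdot)^{K-\mathrm{fin}}$ by hand via the isotypic projectors $\pi(\xi_\rho)$ and the finite-generation claim by d\'evissage down to the irreducible case (where any nonzero $K$-finite vector generates). No gap, just more detail than the paper chooses to show.
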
 

\begin{proof} By Corollary \ref{bije}, $V^{K\text{-fin}}$ is a finite length $(\g,K)$-module. But any finite length $(\g,K)$-module is finitely generated over $U(\g)$, hence a Harish-Chandra module.   
\end{proof} 

Let $\Rep G$ denote the category of admissible representations of $G$ of finite length, 
and $\mathcal{HC}_G$ the category of Harish-Chandra modules for $G$. 
Thus we obtain 

\begin{theorem}\label{exfai} The assignment $V\mapsto V^{K\text{-fin}}$ defines an exact, faithful functor 
$\Rep G\to \mathcal{HC}_G$, which maps irreducibles to irreducibles.  
\end{theorem}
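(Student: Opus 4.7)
The plan is to verify the four assertions in turn: that $V\mapsto V^{K-{\rm fin}}$ is a well-defined functor landing in $\mathcal{HC}_G$, that it is faithful, that it is exact, and that it preserves irreducibility. Three of these will be short, and the substantive work is exactness.

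Functoriality is formal: any morphism $f:V\to W$ in $\Rep G$ is continuous and $G$-equivariant, hence $K$-equivariant, so it restricts to a $(\g,K)$-linear map $V^{K-{\rm fin}}\to W^{K-{\rm fin}}$; the target lies in $\mathcal{HC}_G$ by the finite-length corollary just above. For faithfulness, if $f,g:V\to W$ agree on $V^{K-{\rm fin}}$, then $f-g$ vanishes on a subspace that is dense in $V$ by Corollary \ref{co5} and is continuous, so $f=g$. Preservation of irreducibility is the statement of the corollary immediately above the theorem, which itself follows from the bijection of Corollary \ref{bije} between closed $G$-subrepresentations and $(\g,K)$-submodules.

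For exactness, fix a short exact sequence $0\to V_1\xrightarrow{\iota}V_2\xrightarrow{p}V_3\to 0$ in $\Rep G$ and consider the restricted sequence $0\to V_1^{K-{\rm fin}}\to V_2^{K-{\rm fin}}\to V_3^{K-{\rm fin}}\to 0$. Exactness at $V_1^{K-{\rm fin}}$ is immediate from injectivity of $\iota$. Exactness at $V_2^{K-{\rm fin}}$ reduces to the observation that if $v\in V_2^{K-{\rm fin}}$ satisfies $p(v)=0$, then $v=\iota(v')$ with $v'\in V_1$; the $K$-span of $v$ in $V_2$ is finite-dimensional and lies in $\iota(V_1)$, so its preimage under the injective $K$-equivariant map $\iota$ is a finite-dimensional $K$-invariant subspace of $V_1$ containing $v'$, whence $v'\in V_1^{K-{\rm fin}}$.

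The main obstacle is surjectivity onto $V_3^{K-{\rm fin}}$, because a lift of a $K$-finite vector need not itself be $K$-finite. The key tool is the isotypic projector. Given $w\in V_3^{K-{\rm fin}}$, admissibility of $V_3$ lets us write $w=\sum_\rho w_\rho$ as a \emph{finite} sum with $w_\rho$ in the $\rho$-isotypic component. Pick any preimage $u_\rho\in V_2$ with $p(u_\rho)=w_\rho$. Let $\xi_\rho\in{\rm Meas}_c(G)$ be the pushforward to $G$ of the character measure of $\rho$ on $K$, so that $\pi(\xi_\rho)$ is the projector onto the $\rho$-isotypic subspace; by admissibility of $V_2$ this subspace is finite-dimensional and contained in $V_2^{K-{\rm fin}}$. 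Since $p$ is a continuous $G$-intertwiner, the naturality $p\circ\pi_{V_2}(g)=\pi_{V_3}(g)\circ p$ for $g\in G$ extends $\Bbb C$-linearly to ${\rm Meas}_c^0(G)$ and then by Corollary \ref{co3} and continuity of $p$ to all of ${\rm Meas}_c(G)$, giving
\[
p\bigl(\pi_{V_2}(\xi_\rho)u_\rho\bigr) \;=\; \pi_{V_3}(\xi_\rho)\,p(u_\rho) \;=\; \pi_{V_3}(\xi_\rho)\,w_\rho \;=\; w_\rho,
\]
the last equality because $w_\rho$ already lies in the $\rho$-isotypic subspace. Setting $v:=\sum_\rho \pi_{V_2}(\xi_\rho)u_\rho\in V_2^{K-{\rm fin}}$ produces the desired lift, completing the proof.
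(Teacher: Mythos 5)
Your proof is correct and follows the same route the paper intends. The paper presents Theorem \ref{exfai} as an immediate consequence of Corollary \ref{bije} and the surrounding corollaries (prefaced only by ``Thus we obtain''), and your argument spells out the details that are left implicit there --- in particular, your careful treatment of exactness at $V_2^{K\text{-fin}}$ and of surjectivity onto $V_3^{K\text{-fin}}$ via the isotypic projectors $\pi(\xi_\rho)$ (whose compatibility with $p$ extends from $\Bbb CG$ to ${\rm Meas}_c(G)$ by the sequential density of Lemma \ref{le1} and Corollary \ref{co3}) is exactly the argument underlying the paper's claim, and your use of admissibility of $V_2$ to ensure $\pi_{V_2}(\xi_\rho)u_\rho$ is $K$-finite is the essential point.
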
  

\section{\bf Infinitesimal equivalence and globalization} 

\subsection{Infinitesimal equivalence} 

The functor of Theorem \ref{exfai} is not full, however, since there exist pairs of non-isomorphic $V,W\in \Rep G$ such
that $V^{K-\rm fin}\cong W^{K-\rm fin}$ as Harish-Chandra modules. Representations $V,W\in \Rep G$ such that $V^{K-\rm fin}\cong W^{K-\rm fin}$ as Harish-Chandra modules
are called {\bf infinitesimally equivalent}. In other words, infinitesimally equivalent representations with the same underlying Harish-Chandra module $M$ differ by what topology we put on $M$ (namely, the corresponding representation $\widehat M$
is the completion of $M$ in this topology). An example of infinitesimally equivalent but non-isomorphic representations are $L^2(\Bbb R\Bbb P^1)$ and $C^\infty(\Bbb R\Bbb P^1)$ as representations of $G=SL_2(\Bbb R)$ (with $G$-action on half-densities).
 
However, we have the following proposition. 

\begin{proposition} Let $V,W$ be two unitary representations in $\Rep G$. 
If $V^{K-\rm fin}\cong W^{K-\rm fin}$ as Harish-Chandra modules, then $V\cong W$ as unitary representations. 
In other words, infinitesimally equivalent unitary representations in $\Rep G$ are isomorphic.  
\end{proposition}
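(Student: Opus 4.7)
The plan is to first reduce to the irreducible case and then use uniqueness of an invariant Hermitian form on an irreducible Harish-Chandra module.

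\textbf{Reduction to irreducibles.} Both $V$ and $W$ are unitary representations of finite length; taking orthogonal complements of closed $G$-invariant subspaces shows each is a finite Hilbert orthogonal direct sum of irreducible unitary subrepresentations, $V=\widehat\bigoplus_i V_i$ and $W=\widehat\bigoplus_j W_j$. Passing to $K$-finite vectors (which commutes with finite direct sums and preserves irreducibility by Corollary~6.3), we obtain $V^{K-\rm fin}=\bigoplus_i V_i^{K-\rm fin}$ and $W^{K-\rm fin}=\bigoplus_j W_j^{K-\rm fin}$ as direct sums of irreducible $(\g,K)$-modules. By the hypothesis and the Jordan--H\"older theorem for admissible $(\g,K)$-modules, the multisets of isomorphism classes of the $V_i^{K-\rm fin}$ and $W_j^{K-\rm fin}$ coincide. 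It therefore suffices to show that if $V,W$ are two \emph{irreducible} unitary representations in $\Rep G$ with $V^{K-\rm fin}\cong W^{K-\rm fin}$, then $V\cong W$ as unitary representations.

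\textbf{Uniqueness of invariant Hermitian form.} Let $\phi: V^{K-\rm fin}\to W^{K-\rm fin}$ be a $(\g,K)$-module isomorphism and set $M:=V^{K-\rm fin}$. Define two Hermitian forms on $M$:
\[
h_V(v_1,v_2):=\langle v_1,v_2\rangle_V,\qquad h_W(v_1,v_2):=\langle \phi v_1,\phi v_2\rangle_W.
\]
Both are positive definite. Both are $K$-invariant and satisfy $h(Xv,w)+h(v,Xw)=0$ for $X\in\g$ (obtained by differentiating unitarity of the $G$-action on $V$ and $W$). The key step is Schur's lemma in the form of Dixmier's lemma: since $M$ is finitely generated over the countable-dimensional algebra $U(\g)$, it has countable dimension over $\Bbb C$, hence $\End_{(\g,K)}(M)=\Bbb C$. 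The operator $T\in\End_{\Bbb C}(M)$ characterized by $h_V(v,w)=h_W(Tv,w)$ is readily checked (as above) to be $(\g,K)$-equivariant, hence a scalar $c\in\Bbb C$, and positive-definiteness forces $c>0$. Replacing $\phi$ by $\sqrt{c}\,\phi$, we may assume $\phi: M\to W^{K-\rm fin}$ is a $(\g,K)$-equivariant isometry.

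\textbf{Extension by continuity and $G$-equivariance.} Since $V^{K-\rm fin}$ is dense in $V$ (Corollary~4.5 restricted to $K$) and $\phi$ is now an isometry between dense subspaces of the Hilbert spaces $V,W$, it extends uniquely to a unitary operator $\overline\phi: V\to W$. It remains to verify $G$-equivariance. Consider the admissible unitary representation $V\oplus W$ in $\Rep G$; its $K$-finite part is $V^{K-\rm fin}\oplus W^{K-\rm fin}$. The graph $\Gamma_\phi=\{(v,\phi v): v\in V^{K-\rm fin}\}$ is a $(\g,K)$-submodule of $(V\oplus W)^{K-\rm fin}$. By the corollary stating that the closure of any $(\g,K)$-submodule of $U^{K-\rm fin}$ is $G$-invariant in $U$, the closure $\overline{\Gamma_\phi}$ is $G$-invariant in $V\oplus W$. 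But continuity of $\overline\phi$ gives $\overline{\Gamma_\phi}=\Gamma_{\overline\phi}$, so $\overline\phi$ intertwines the $G$-actions, completing the proof.

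\textbf{Main obstacle.} The crux is the uniqueness up to positive scalar of a $(\g,K)$-invariant positive-definite Hermitian form on an irreducible Harish-Chandra module, which rests on Dixmier's form of Schur's lemma for countable-dimensional modules over $\Bbb C$. Everything else is either a direct appeal to corollaries already established in the excerpt (density of $V^{K-\rm fin}$, closure-of-submodule is $G$-invariant, semisimplicity of finite-length unitary representations) or straightforward extension by continuity.
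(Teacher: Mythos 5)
Your proof is correct, and it takes a genuinely different route from the paper's in two of its three steps.

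For the uniqueness (up to positive scalar) of the invariant Hermitian form on the irreducible $(\g,K)$-module $M$, you invoke Dixmier's version of Schur's lemma: you define an operator $T$ on $M$ by $h_V(v,w)=h_W(Tv,w)$ (well-defined componentwise on the finite-dimensional $K$-isotypic subspaces), check it is a $(\g,K)$-endomorphism, and conclude it is a positive scalar since $M$ has countable dimension. The paper instead uses a more elementary argument that avoids Schur's lemma entirely: if $B'$ is a second invariant form, then with $\lambda$ chosen so that $B'-\lambda B$ vanishes at one nonzero vector, the kernel of $B'-\lambda B$ is a nonzero $(\g,K)$-submodule, hence all of $M$. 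This is a small but notable difference in pedagogy: the paper's argument is self-contained at this point in the text, whereas Dixmier's lemma is presented only in the next subsection (no circularity — Dixmier's lemma is independent of Section 7.1 — but a different ordering of tools).

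For $G$-equivariance of the extended unitary operator, you form the graph $\Gamma_\phi\subset (V\oplus W)^{K-\rm fin}$, observe it is a $(\g,K)$-submodule, and apply the earlier corollary that the closure of any sub-$(\g,K)$-module of $U^{K-\rm fin}$ is $G$-invariant, then identify $\overline{\Gamma_\phi}$ with the graph of $\overline\phi$. The paper instead argues directly via Harish-Chandra's analyticity theorem applied to the matrix coefficients $f_{w,v}(g)=B_W((gA-Ag)v,w)$, showing all derivatives at $g=1$ vanish. Your argument is arguably cleaner and more modular — it delegates the analyticity work to an already-established corollary rather than redoing it inline — though of course that corollary was itself proved using the same analyticity theorem, so the two proofs are ultimately powered by the same engine. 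The reduction to irreducibles, which the paper dispatches with a single ``clearly,'' is spelled out correctly in your version via the orthogonal decomposition of finite-length unitary representations and matching of multisets of simple constituents.
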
 

\begin{proof} Clearly, it suffices to assume that $V,W$ are irreducible. 
If $V$ is unitary irreducible then $V^{K-\rm fin}$ has an invariant positive Hermitian inner product $B=B_V$ restricted from $V$. Moreover, $B$ is the unique invariant Hermitian inner product on $V^{K-\rm fin}$ up to scaling.\footnote{An invariant inner product on a $(
\g,K)$-module is one that is invariant under both $\g$ and $K$, i.e., $K$-invariant 
and satisfying the equality $B(av,w)+B(v,aw)=0$ 
for all $a\in \g$.} 
Indeed, if $B'$ is another then pick a nonzero $v\in V^{K\text{-fin}}$ 
and let $\lambda:=\frac{B'(v,v)}{B(v,v)}$. Then $B'-\lambda B$ 
has a nonzero kernel, which is a $(\g,K)$-submodule of $V^{K-\rm fin}$.
This kernel therefore must be the whole  $V^{K-\rm fin}$, so $B'=\lambda B$.  

Thus if $A: V^{K-\rm fin}\to W^{K-\rm fin}$ is an isomorphism then it is an isometry with respect to $B_V$, $B_W$ under suitable normalization of these forms. Then $A$ extends by continuity to a unitary 
isomorphism $V\to W$ which commutes with $K$. 

It remains to show that $A$ commutes with $G$. For $v\in V$, $w\in W$, consider the function 
$$
f_{w,v}(g):=B_W((gA-Ag)v,w)=B_W(gAv,w)-B_V(gv,A^{-1}w),\ g\in G. 
$$
Our job is to show that $f_{w,v}(g)=0$. It suffices to check this when $v\in V^{K-\rm fin}$, as it is dense in $V$. In this case by Harish-Chandra's analyticity theorem, the 
function $f_{w,v}(g)$ is analytic on $G$. Also all its derivatives at $1$ vanish 
since $bA-Ab=0$ for any $b\in U(\g)$. This implies that $f_{w,v}$ is indeed zero, as desired. 
\end{proof} 

\subsection{Dixmier's lemma and infinitesimal character} \label{dixle} 

The following is an infinite-dimensional analog of Schur's lemma. 

\begin{lemma}\label{Dixlemm} (Dixmier) Let $A$ be a countable-dimensional $\Bbb C$-algebra and $M$ a simple $A$-module. Then $\End_A(M)=\Bbb C$. 
In particular, the center $Z$ of $A$ acts on $M$ by a character $\chi: Z\to \Bbb C$. 
\end{lemma}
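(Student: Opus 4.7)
The plan is to combine Schur's lemma with a cardinality argument exploiting algebraic closedness of $\mathbb{C}$. First I would observe that $\End_A(M)$ is automatically a division algebra over $\mathbb{C}$: the usual Schur argument shows that any nonzero $A$-endomorphism of a simple module has trivial kernel and surjective image, hence is invertible. So the task reduces to proving that this division algebra coincides with its center $\mathbb{C}$.

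Next I would establish that $\End_A(M)$ is at most countably dimensional over $\mathbb{C}$. Pick any nonzero $m \in M$; then $Am \subseteq M$ is a nonzero $A$-submodule, hence equals $M$ by simplicity. This gives a surjection $A \twoheadrightarrow M$, $a \mapsto am$, so $M$ is countably dimensional. Then the evaluation map $\End_A(M) \to M$, $\phi \mapsto \phi(m)$, is injective (if $\phi(m) = 0$ then $\phi(am) = a\phi(m) = 0$ for all $a \in A$, so $\phi = 0$ on $Am = M$), so $\End_A(M)$ is countably dimensional over $\mathbb{C}$ as well.

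Now for the core step, suppose for contradiction that $T \in \End_A(M)$ is not a scalar. I would first argue that $T$ must be transcendental over $\mathbb{C}$: if $T$ satisfied a polynomial relation, then since $\mathbb{C}$ is algebraically closed we could factor its minimal polynomial into linear factors $\prod (T - \lambda_i)$, and since $\End_A(M)$ has no zero divisors (being a division algebra), one factor $T - \lambda_i$ would have to vanish, forcing $T = \lambda_i \in \mathbb{C}$. Given $T$ is transcendental, the elements $(T - \lambda)^{-1}$ for $\lambda \in \mathbb{C}$ make sense in the division algebra. A standard partial-fractions argument (multiply any finite linear relation $\sum_i c_i (T - \lambda_i)^{-1} = 0$ by $\prod_j (T - \lambda_j)$ to get a polynomial identity in $T$, which must be trivial by transcendence) shows these are linearly independent over $\mathbb{C}$. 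This produces an uncountable linearly independent family in $\End_A(M)$, contradicting countable dimensionality.

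The heart of the argument is the last paragraph — the transcendence dichotomy plus the uncountability of $\mathbb{C}$ — and this is the step I expect to require the most care to write cleanly. The final statement about the center follows immediately: for each $z \in Z$, the action of $z$ on $M$ commutes with $A$, hence lies in $\End_A(M) = \mathbb{C}$, giving the character $\chi \colon Z \to \mathbb{C}$.
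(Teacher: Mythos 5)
Your proof is correct and is essentially the same argument as the paper's: Schur's lemma gives a division algebra, the evaluation map $\End_A(M)\to M$ at a cyclic vector bounds its dimension by that of $M$, which is countable since $M$ is a quotient of $A$, and the uncountable family $(T-\lambda)^{-1}$ supplies the contradiction. The only difference is one of exposition — you spell out the transcendence dichotomy and the partial-fractions independence argument, whereas the paper compresses these into the remark that $\Bbb C$ has no finite extensions and the assertion that the $\frac{1}{x-a}$ are independent.
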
 

Note that the condition of countable dimension cannot be dropped. Without it, a counterexample is $A=M=\Bbb C(x)$ (the field of rational functions in one variable), then $\End_A(M)=\Bbb C(x)$. 

\begin{proof} Let $D:=\End_A(M)$. By the usual Schur lemma, 
$D$ is a division algebra. Assume the contrary, that $D\ne \Bbb C$. Then for any $x\in D\setminus \Bbb C$, $D$ contains the field $\Bbb C(x)$ of rational functions of $x$ (as $\Bbb C$ has no finite field extensions). 
But $\Bbb C(x)$ has uncountable dimension (contains linearly independent elements $\frac{1}{x-a}$, $a\in \Bbb C$), hence so does $D$. On the other hand, let $v\in M$ be a nonzero vector, then $M=Av$ and the map $D\to M$ given by $T\mapsto Tv$ is injective. 
Thus $M$ is countable-dimensional, hence so is $D$, contradiction. 
\end{proof} 

Now let $\g$ be a countable-dimensional complex Lie algebra and $M$ a simple $\g$-module. By Lemma \ref{Dixlemm}, the center $Z(\g)$ of $U(\g)$ acts 
on $M$ by a character, $\chi: Z(\g)\to \Bbb C$. This character is called the {\bf infinitesimal character} of $M$. 

In particular, for semisimple groups we obtain 

\begin{corollary}\label{Schur} (Schur's lemma for $(\g,K)$-modules) 
Any endomorphism of an irreducible $(\g,K)$-module $M$ is a scalar. 
Thus the center $Z(\g)$ of $U(\g)$ acts on $M$ by an infinitesimal character $\chi: Z(\g)\to \Bbb C$. 
\end{corollary}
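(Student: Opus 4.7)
The plan is to reduce this to the Dixmier lemma argument via a dimension count. By the usual Schur's lemma, $D := \End_{(\g,K)}(M)$ is a division $\Bbb C$-algebra, since any nonzero $(\g,K)$-endomorphism of an irreducible module is invertible. So it suffices to show that $D$ is countably dimensional over $\Bbb C$; then the argument in the proof of Dixmier's lemma applies verbatim (no $x \in D \setminus \Bbb C$ can exist, else $\Bbb C(x) \hookrightarrow D$ would force uncountable dimension).

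To bound the dimension of $D$, I would first show that $M$ itself is countably dimensional, and then use the fact that for any fixed nonzero $v_0 \in M$ the evaluation map $D \to M$, $T \mapsto T v_0$, is injective: its kernel is a $(\g,K)$-submodule of $M$ not containing $v_0$, hence is zero by irreducibility. This reduces the problem to producing a countable spanning set for $M$.

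For this reduction I would pick a nonzero $v_0 \in M$, and let $V_0 \subset M$ be the (finite dimensional) $K$-submodule generated by $v_0$, available since $M$ is by definition a sum of finite dimensional continuous $K$-modules. Then $N := U(\g_{\Bbb C}) V_0$ is countably dimensional. It is tautologically $\g$-stable, and it is also $K$-stable by the compatibility identity $k(av) = \Ad(k)(a) \cdot (kv)$ for $k \in K$, $a \in U(\g)$, $v \in V_0$ (the exercise right after the definition of a Harish-Chandra pair), since $\Ad(k)(a) \in U(\g)$ and $kv \in V_0$. Thus $N$ is a nonzero $(\g,K)$-submodule, so $N = M$ by irreducibility, whence $\dim_{\Bbb C} M \le \aleph_0$.

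For the second sentence, note that $Z(\g) \subset U(\g)$ commutes with all of $\g$, so $\ad(X) z = 0$ for $X \in \kf$ and $z \in Z(\g)$; since $K$ is connected, this integrates to $\Ad(k) z = z$ for every $k \in K$. Combining with the compatibility identity gives $k(zv) = \Ad(k)(z)(kv) = z(kv)$, so multiplication by $z$ is a $(\g,K)$-endomorphism of $M$, and must therefore be a scalar by the first part; this defines the character $\chi: Z(\g) \to \Bbb C$. I do not expect any real obstacle: the only point to watch is the $K$-stability check for $N$, which is immediate from the compatibility axiom but is the one place where the $(\g,K)$-structure (rather than the bare $\g$-structure) actually enters.
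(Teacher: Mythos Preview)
Your argument is correct and is exactly the intended unpacking of the paper's one-line derivation from Dixmier's lemma. One sentence needs fixing, though: in the injectivity step you write that the evaluation map $D \to M$, $T \mapsto Tv_0$, has kernel ``a $(\g,K)$-submodule of $M$ not containing $v_0$''. That does not parse---the kernel of the evaluation map lives in $D$, not in $M$. The clean statement is: if $Tv_0 = 0$, then $\ker T \subset M$ is a $(\g,K)$-submodule \emph{containing} $v_0$, hence $\ker T = M$ by irreducibility, so $T = 0$. (Even quicker: $D$ is already known to be a division algebra, so any nonzero $T$ is invertible and $Tv_0 \ne 0$.) With that fix the proof is complete.
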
 

The character $\chi$ is often also called the {\bf infinitesimal character} of $M$. 

\begin{exercise} Show that the action of $Z(\g)$ on every admissible $(\g,K)$-module $M$ is locally finite. 
\end{exercise} 

\subsection{Harish-Chandra's globalization theorem} 

\begin{theorem}\label{inte} (Harish-Chandra's globalization theorem) Every unitary irreducible Harish-Chandra module $M$ for $G$ uniquely integrates (=globalizes) to an irreducible admissible unitary representation of $G$. 
\end{theorem}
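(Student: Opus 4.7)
The plan is to produce the representation by Hilbert-space completion plus integration of the Lie algebra action via analytic vectors. Let $B$ be the invariant positive definite Hermitian form on $M$, and let $H$ be the Hilbert space completion of $M$ with respect to $B$. The $K$-action preserves $B$, and since $M$ is the algebraic direct sum of its finite-dimensional $K$-isotypic components $M^\rho$, these components are pairwise orthogonal, so $H = \widehat{\bigoplus}_\rho M^\rho$ with each $M^\rho$ still finite-dimensional (by admissibility of $M$). Thus $K$ acts unitarily on $H$, the completion is automatically admissible, and $H^{K-\mathrm{fin}} = \bigoplus_\rho M^\rho = M$.

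The heart of the argument is to extend the $\g$-action on $M$ (by skew-symmetric operators, by invariance of $B$ under $\g$) to a continuous action of $G$ on $H$. First I would fix an $\mathrm{Ad}(K)$-invariant inner product on $\g$, pick an orthonormal basis $X_1,\dots,X_n$, and form $\Delta = -\sum_i X_i^2 \in U(\g)^K$. Being $K$-invariant, $\Delta$ preserves each finite-dimensional isotypic component $M^\rho$, so on each $M^\rho$ it acts as a finite-dimensional linear operator. Consequently every $v \in M$ (a finite sum of vectors in various $M^\rho$) is an entire vector for $\Delta$: the series $\sum_n \|\Delta^n v\|\, t^n/n!$ converges for every $t \in \mathbb{R}$.

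Next I would invoke Nelson's theorem on the integration of Lie algebra representations: a Lie algebra of skew-symmetric operators on a dense $U(\g)$-invariant subspace $D$ of analytic vectors for $\Delta$ integrates uniquely to a unitary representation of the simply connected cover $\widetilde G$ of $G$, with $D$ contained in the smooth vectors of the resulting representation. To descend to $G$, I would compare the $\widetilde G$-action restricted to the identity component of the preimage of $K$ in $\widetilde G$ with the original $K$-action pulled back along $\widetilde G \to G$: the two unitary $K$-representations have the same derivative on the dense subspace $M$, hence coincide, so the kernel of $\widetilde G \to G$ (which lies in the preimage of $K$) acts trivially on $H$ and the representation factors through $G$.

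Finally, the resulting representation $(H,\pi)$ is continuous, admissible, and satisfies $H^{K-\mathrm{fin}} = M$; irreducibility follows from Corollary \ref{bije} since $M$ is simple as a $(\g,K)$-module. Uniqueness of the unitary globalization up to isomorphism is the previous proposition on infinitesimally equivalent unitary representations. The main obstacle is the analyticity and essential self-adjointness input needed to apply Nelson's theorem; this is precisely where admissibility is indispensable, since finiteness of each $M^\rho$ reduces the required estimates on $\|\Delta^n v\|$ to statements about a single finite-dimensional operator and sidesteps delicate unbounded operator theory on $H$.
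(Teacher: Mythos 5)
Your proposal is correct and complete, but it takes a genuinely different route from the paper's own proof. The paper avoids invoking Nelson's theorem: it estimates $\|a^n v\|$ directly for $a\in\g$ and $v\in M^{\nu_0}$, tracking how the $K$-type of $a^n v$ migrates (it lies in components $M^\nu$ with $\nu = \nu_0 + \beta_1+\cdots+\beta_n$, $\beta_j$ weights of $\g$ as a $K$-module), obtaining $\|a^n v\|\leq (R+c)\cdots(Rn+c)\|a\|^n\|v\|$, so that $e^a v := \sum a^n v/n!$ converges for $\|a\|<R^{-1}$ and defines unitary operators; the Campbell-Hausdorff formula then yields a local group action near $1\in G$, which extends by the standard "germ of a group action" argument. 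You instead exponentiate the Laplacian $\Delta$ rather than individual $a\in\g$. Your estimate is considerably cleaner: since $\Delta\in U(\g)^K$, the vectors $\Delta^n v$ never leave the finite-dimensional $M^\rho$, giving $\|\Delta^n v\|\leq C^n\|v\|$ without any analysis of how $K$-types drift. The price is that you must invoke Nelson's integration theorem as a black box, and then descend from $\widetilde G$ to $G$ by matching the integrated $\widetilde K$-action against the pulled-back $K$-action on a dense domain of analytic vectors — an extra step the paper sidesteps by constructing the action on $G$ directly. One small point worth making explicit in your descent argument: the kernel of $\widetilde G\to G$ lies in $\widetilde K$ because the (finite) center of a connected linear semisimple $G$ lies in $K$, so its full preimage in $\widetilde G$ is $\widetilde K$. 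Both approaches rest on the same essential mechanism — a $K$-invariant Laplacian whose restriction to each finite-dimensional $K$-isotypic component is a bounded operator — which is precisely where admissibility enters.
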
 

\begin{proof} Let $\mathfrak{k}={\rm Lie}K$ and $\g=\mathfrak{k}\oplus \mathfrak{p}$ 
be the decomposition of $\g$ into $\pm 1$-eigenspaces of the involution $\theta: \g\to \g$ attached to $G$. 
Let $B$ be the Killing form of $\g$. Define a positive definite $K$-invariant inner product on $\g$ by setting \scriptsize
$$
(x,y)=-B(x,y),\ x,y\in \mathfrak{k};\ (x,y)=B(x,y),\ x,y\in \mathfrak \p;\ (x,y)=0,\ x\in \mathfrak k,\ y\in \mathfrak p 
$$ \normalsize
and consider the element $C_\g^+:=\sum_{j=1}^{\dim\g}a_j^2\in U(\g)$, where $a_j$ is an orthonormal basis of $\g$ under this inner product. If $C_\g$ 
is the quadratic Casimir of $\g$ defined by the Killing form, then 
$C_\g^+=C_\g+2C_{\mathfrak k}$, where $C_{\mathfrak k}$ is the Casimir of $\mathfrak{k}$ 
corresponding to the restriction of the inner product to $\mathfrak{k}$. If $L_\nu$ 
is the highest weight irreducible representation with highest weight $\nu$ then 
$-C_{\mathfrak k}|_{L_\nu}=|\nu+\rho_K|^2-|\rho_K|^2$, where $\rho_K$ 
is the half-sum of positive roots of $K$. Also $C_\g|_M=C_M$ is a scalar.
Thus if $M^\nu:=M^{L_\nu}$ then 
$$
-C_\g^+|_{M^\nu}=2|\nu+\rho_K|^2-2|\rho_K|^2-C_M=:q(\nu).
$$ 
Note that for $v\in M^\nu$ we have 
$$
\sum_{j=1}^{\dim \g} \norm{a_jv}^2=-(\sum_{j=1}^{\dim \g} a_j^2v,v)=-(C_\g^+v,v)=q(\nu)\norm{v}^2; 
$$
in particular, $q(\nu)\ge 0$ and $q(\nu)\sim 2|\nu|^2$ for large $\nu$. 
It follows that for any $a\in \g, v\in M^\nu$, 
$$
\norm{av}^2\le q(\nu)\norm{a}^2\norm{v}^2.
$$
Now, for $v\in M^{\nu_0}$ all components of $a^nv$ belong to $M^\nu$, where 
$\nu=\nu_0+\beta_1+...+\beta_n$ and $\beta_j$ are weights of $\g$ as a $K$-module.  
So there exist $R,c=c(\nu_0)>0$ such that 
$$
 \norm{a^nv}\le (Rn+c)\norm{a}\norm{a^{n-1}v}, n\ge 1.
 $$
Thus 
$$
\norm{a^nv}\le (R+c)...(Rn+c)\norm{a}^n\norm{v}. 
$$
So the series 
$$
e^a v:=\sum_{n\ge 0} \frac{a^nv}{n!}
$$
converges absolutely in the Hilbert space $\widehat M$ in the region 
$\norm{a}<R^{-1}$, and convergence is uniform on compact sets with all derivatives, and defines an analytic function of $a$. Moreover, it is easy to check that $\norm{e^av}=\norm{v}$ (since $a$ is skew-symmetric under the inner product of $M$). Thus the operator $e^a: M\to \widehat M$ 
extends to a unitary operator on $\widehat M$. The formal Campbell-Hausdorff formula then implies 
that this defines a continuous unitary action $\pi$ of a neighborhood $U$ of $0$ in $G$ on $\widehat M$ such that $\pi(xy)=\pi(x)\pi(y)$ if $x,y,xy\in U$. It is well known that this implies 
that $\pi$ extends to a unitary representation of the universal cover 
$\widetilde G$ of $G$ on $\widehat M$. Now let $\widetilde K$ be the preimage of $K$ in $\widetilde G$ 
(by the polar decomposition, it is also the universal cover of $K$). Since by definition $\pi|_{\widetilde K}$ factors through $K$, and the kernel of $\widetilde G\to G$ coincides with the kernel $\widetilde K\to K$, it follows that $\pi$ actually factors through $G$. 
\end{proof} 

Thus, using Harish-Chandra's admissibility theorem, we obtain  

\begin{corollary} For a semisimple Lie group $G$, the assignment $V\mapsto V^{K-\rm fin}$ is an equivalence of categories between unitary representations of $G$ of finite length and unitary Harish-Chandra modules of finite length (i.e., Harish-Chandra modules which admit an invariant positive Hermitian inner product). 
\end{corollary}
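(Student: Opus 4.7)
The plan is to verify the three conditions for an equivalence of categories: the functor lands in the stated target, it is essentially surjective, and it is fully faithful. The main strategy is to exploit semisimplicity of both categories in the unitary setting to reduce everything to the irreducible case, where Theorems \ref{admi} and \ref{inte} and the infinitesimal equivalence proposition do all the work.

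First I would check that $V\mapsto V^{K-\mathrm{fin}}$ is well-defined on objects. If $V$ is a unitary representation of $G$ of finite length, then any closed $G$-invariant subspace has a $G$-invariant orthogonal complement, so by induction $V$ splits as a finite Hilbert orthogonal sum of irreducible unitary subrepresentations. Each summand is admissible by Theorem \ref{admi}, hence $V$ itself is admissible and $V^{K-\mathrm{fin}}$ is a finite-length Harish-Chandra module by Theorem \ref{exfai}. The Hilbert inner product restricts to $V^{K-\mathrm{fin}}$ to give an invariant positive Hermitian form: $K$-invariance is obvious, and $\g$ acts by skew-adjoint operators because it is the derivative at the identity of a unitary action. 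Symmetrically, any finite-length unitary $(\g,K)$-module splits as a finite direct sum of irreducible unitary submodules, since an invariant positive Hermitian form produces $(\g,K)$-invariant orthogonal complements on the purely algebraic level.

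For essential surjectivity, given a unitary Harish-Chandra module $M$ of finite length, decompose $M=\bigoplus_{i=1}^n M_i$ into irreducible unitary summands. By Harish-Chandra's globalization theorem (Theorem \ref{inte}), each $M_i$ uniquely integrates to an irreducible unitary representation $\widehat{M_i}$ whose space of $K$-finite vectors is $M_i$. Form the Hilbert-space orthogonal direct sum $V:=\widehat{\bigoplus}_i \widehat{M_i}$; this is unitary of finite length and $V^{K-\mathrm{fin}}=\bigoplus_i M_i = M$. For full faithfulness, additivity reduces us to the irreducible case. Let $V,W$ be irreducible unitary representations of $G$. By the ordinary Schur lemma $\dim_{\Bbb C}\Hom_G(V,W)\le 1$, and by Corollary \ref{Schur} the same bound holds for $\Hom_{(\g,K)}(V^{K-\mathrm{fin}},W^{K-\mathrm{fin}})$. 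The restriction map is injective because $V^{K-\mathrm{fin}}$ is dense in $V$ (this is the faithfulness in Theorem \ref{exfai}). For surjectivity, if $\phi:V^{K-\mathrm{fin}}\to W^{K-\mathrm{fin}}$ is nonzero it is an isomorphism by simplicity, and the infinitesimal equivalence proposition produces a unitary isomorphism $T:V\to W$ whose restriction to $K$-finite vectors is a scalar multiple of $\phi$; adjusting the scalar gives a lift of $\phi$.

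The step I expect to require the most care is the $(\g,K)$-module side of semisimplicity: one must verify that an invariant positive Hermitian form on a finite-length $(\g,K)$-module $M$ really does let every submodule be split off by its orthogonal complement. This is straightforward because admissibility forces the isotypic decomposition $M=\bigoplus_\rho M^\rho$ to have finite-dimensional summands, so any submodule $N\subset M$ satisfies $N=\bigoplus_\rho (N\cap M^\rho)$ with each intersection finite-dimensional, and the algebraic orthogonal complement in each $M^\rho$ assembles into a genuine $(\g,K)$-complement. Once this finite-dimensional reduction is in place, the rest of the proof is a routine reduction to irreducibles together with the previously proved theorems.
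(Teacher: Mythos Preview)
Your proposal is correct and follows exactly the route the paper intends: the corollary is stated immediately after Theorem~\ref{inte} with only the phrase ``Thus, using Harish-Chandra's admissibility theorem, we obtain'', so the paper leaves the proof implicit, and your argument is a faithful unpacking of that sentence---semisimplicity on both sides reduces to irreducibles, Theorem~\ref{admi} gives admissibility, Theorem~\ref{inte} gives essential surjectivity, and the infinitesimal-equivalence proposition (whose proof explicitly extends a given $(\g,K)$-isomorphism to a unitary $G$-isomorphism after rescaling) gives full faithfulness.
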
 

However, while irreducible Harish-Chandra modules for any $G$ have been classified, determining which of them are unitary is a very difficult problem which is not yet fully solved. 

\subsection{Casselman--Wallach globalization}

The unitarity assumption in Theorem~7.5 is not needed if one works in
an appropriate category of smooth Fr\'echet representations.  Assume, as
above, that $G$ is a semisimple Lie group with maximal compact subgroup
$K$; the following statements hold more generally for real reductive
Lie groups.  A smooth Fr\'echet representation $V$ of $G$ (i.e., such that $V=V^\infty$) 
is said to be of {\bf moderate growth} if, for every continuous seminorm $p$ on $V$, there
are a continuous seminorm $q$ and an integer $N$ such that
\[
        p(gv) \leq \|g\|^N q(v), \qquad g\in G,\ v\in V,
\]
where $\|g\|$ is the exponential of the distance from $1$ to $g$ in a left-invariant Riemannian metric (clearly, this condition is independent of the choice of this metric). 
Let $\mathcal{CW}(G)$ be the category of smooth Fr\'echet representations of moderate growth
whose $K$-finite vectors form Harish-Chandra modules, with continuous
$G$-maps as morphisms.

\begin{theorem}[Casselman--Wallach globalization]
The functor
\[
        V \longmapsto V^{K\text{-}\mathrm{fin}}
\]
from $\mathcal{CW}(G)$ to the category $\mathrm{HC}_G$ of
Harish-Chandra modules is an equivalence of categories.  Thus every
Harish-Chandra module $M$ has a smooth Fr\'echet moderate-growth
globalization $M^\infty$, unique up to unique isomorphism over $M$, and
$(M^\infty)^{K\text{-}\mathrm{fin}}\cong M$.  Equivalently, every
morphism of Harish-Chandra modules extends uniquely to a continuous
$G$-map between their Casselman--Wallach globalizations.
\end{theorem}

There is also a weaker, but often useful, Hilbert-space form of
existence.

\begin{theorem}[Hilbert globalizations]
Every Harish-Chandra module $M$ admits a $G$-continuous Hilbert norm:
the Hilbert completion $H$ is a continuous admissible representation of
$G$ and
\[
        H^{K\text{-}\mathrm{fin}} \cong M
\]
as a $(\mathfrak g,K)$-module.  
\end{theorem}

This Hilbert globalization is not
canonical, and there is no uniqueness statement for the Hilbert space
representation.  However, the smooth vectors $H^\infty$ form a
Casselman--Wallach globalization of $M$; hence $H^\infty$ is independent
of the chosen Hilbert globalization up to unique isomorphism.

For proofs and further discussion see \cite{Cas89},
\cite[Ch.~11]{Wal92}, and \cite{BK14}; the
existence of Hilbert globalizations follows, for instance, from
Casselman's subrepresentation theorem and the existence of
$G$-continuous Hilbert norms, see~\cite[\S5.1]{BK14}.

\section{\bf Highest weight modules and Verma modules} 

\subsection{$\g$-modules with a weight decomposition} 
Let us recall basic results on highest weight modules and Verma modules for a complex semisimple Lie algebra $\g$. 

Let $\g=\n_-\oplus \h\oplus \n_+$ be a triangular decomposition and $\lambda\in \h^*$ be a weight. 
We have $\n_\pm=\oplus_{\alpha\in R_\pm}\g_\alpha$, where 
$R_\pm$ are the sets of positive and negative roots.  Let $Q\subset \h^*$ be the root lattice of $\g$ spanned by its roots. Let $e_i,f_i,h_i, i=1,...,r$ 
be the Chevalley generators of $\g$.
Let $P\subset \h^*$ be the weight lattice, consisting of $\lambda \in \h^*$ with $\lambda(h_i)\in \Bbb Z$ for all $i$ and $P_+\subset P$ be the set of dominant integral weights, defined by the condition $\lambda(h_i)\in \Bbb Z_{\ge 0}$ for all $i$. Finally, let $Q_+\subset Q$ be the set of sums of positive roots. 

\begin{definition} Let $V$ be a representation of $\g$ (possibly infinite-dimensional). 
Then a vector $v\in V$ is said to have {\bf weight} $\lambda$ if $hv=\lambda(h)v$ for all $h\in \h$. 
The subspace of such vectors is denoted by $V[\lambda]$. If $V[\lambda]\ne 0$, we say that $\lambda$ is a weight of $V$, and the set of weights of $V$ is denoted by $P(V)$. 
\end{definition} 

It is easy to see that $\g_\alpha V[\lambda]\subset V[\lambda+\alpha]$. 

Let $V'\subset V$ be the span of all weight vectors in $V$. Then it is clear that 
$V'=\oplus_{\lambda\in \h^*}V[\lambda]$. 

\begin{definition} We say that $V$ {\bf has a weight decomposition} (with respect to a Cartan subalgebra $\h\subset \g$), or is $\h$-{\bf semisimple} if $V'=V$, i.e., if $V=\oplus_{\lambda\in \h^*}V[\lambda]$. 
\end{definition} 

Note that not every representation of $\g$ has a weight decomposition (e.g., for $V=U(\g)$ with $\g$ acting by left multiplication all weight subspaces are zero).  

\begin{definition} A vector $v$ in $V[\lambda]$ is called a {\bf singular (or highest weight) vector of weight $\lambda$} if $e_iv=0$ for all $i$, i.e., if $\n_+v=0$. A representation $V$ of $\g$ is a {\bf highest weight representation with highest weight $\lambda$} if it is generated by such a nonzero vector. 
\end{definition} 

\subsection{Verma modules}
The {\bf Verma module} $M_\lambda$ is defined as ``the largest highest weight module with highest weight $\lambda$". Namely, it is generated by a single highest weight vector $v_\lambda$ 
with {\bf defining relations} $hv=\lambda(h)v$ for $h\in \h$ and $e_iv=0$. More formally, 
we make the following definition. 

\begin{definition} Let $I_\lambda\in U(\g)$ be the left ideal generated by the elements $h-\lambda(h),h\in \h$ and 
$e_i$, $i=1,...,r$. Then the {\bf Verma module} $M_\lambda$ is the quotient 
$U(\g)/I_\lambda$. 
\end{definition} 

In this realization, the highest weight vector $v_\lambda$ is just the class of the unit $1$ of $U(\g)$. 
 
\begin{proposition} The map $\phi: U(\n_-)\to M_\lambda$ given by $\phi(x)=xv_\lambda$ 
is an isomorphism of left $U(\n_-)$-modules.  
\end{proposition}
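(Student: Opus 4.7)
The plan is to recognize $M_\lambda$ as an induced module and then apply the Poincaré--Birkhoff--Witt (PBW) theorem; under this identification, $\phi$ becomes the tautological isomorphism.

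First I would identify $M_\lambda$ with the induced module $U(\g)\otimes_{U(\b)}\Bbb C_\lambda$, where $\b=\h\oplus\n_+$ is the Borel subalgebra and $\Bbb C_\lambda$ is the one-dimensional $\b$-module on which $\h$ acts by $\lambda$ and $\n_+$ acts by $0$. For this, let $J\subset U(\b)$ be the kernel of the algebra map $U(\b)\to\Bbb C_\lambda$. Then $J$ is a two-sided (in particular left) ideal in $U(\b)$ generated as a left ideal by $\{h-\lambda(h):h\in\h\}\cup\{e_i:i=1,\dots,r\}$; this uses the fact that the $e_i$ generate $\n_+$ as a Lie algebra, and hence generate the augmentation ideal of $U(\n_+)$ as a left ideal. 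Since $U(\g)\cdot U(\b)=U(\g)$, the left ideal in $U(\g)$ generated by these same elements equals $U(\g)\cdot J$; this is precisely $I_\lambda$. Therefore $M_\lambda=U(\g)/I_\lambda=U(\g)/U(\g)J=U(\g)\otimes_{U(\b)}\Bbb C_\lambda$, with $v_\lambda$ corresponding to $1\otimes 1$.

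Next I would invoke PBW in the form: multiplication induces a $\Bbb C$-linear isomorphism $U(\n_-)\otimes_{\Bbb C} U(\b)\xrightarrow{\sim} U(\g)$, and this is simultaneously an isomorphism of left $U(\n_-)$-modules (action on the first tensor factor) and of right $U(\b)$-modules (action on the second tensor factor). Tensoring with $\Bbb C_\lambda$ over $U(\b)$ gives the chain of $U(\n_-)$-linear isomorphisms
$$
M_\lambda\cong U(\g)\otimes_{U(\b)}\Bbb C_\lambda\cong\bigl(U(\n_-)\otimes_{\Bbb C} U(\b)\bigr)\otimes_{U(\b)}\Bbb C_\lambda\cong U(\n_-)\otimes_{\Bbb C}\Bbb C_\lambda\cong U(\n_-),
$$
and unwinding the composite sends $x\in U(\n_-)$ to $x\otimes 1\mapsto xv_\lambda=\phi(x)$. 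This exhibits $\phi$ as the asserted isomorphism.

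The only substantive input is the PBW theorem, which is standard and was proved in the basic Lie algebra course referenced in the introduction; aside from that, the main point requiring care is the identification $I_\lambda=U(\g)\cdot J$, i.e., verifying that the left ideal of $U(\g)$ generated by the listed elements is the same as the one generated by all of $J$. This is immediate once one notes $U(\g)\cdot U(\b)=U(\g)$, and that the Chevalley generators $e_i$ suffice to generate $\n_+\subset J$ as claimed. There is no genuine obstacle beyond these bookkeeping points.
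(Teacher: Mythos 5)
Your proof is correct and takes essentially the same approach as the paper: both rest on the PBW factorization $U(\n_-)\otimes U(\b)\cong U(\g)$ and the identification of the relevant left ideal with the one coming from the character of $\b$. You phrase it via the induced-module description $M_\lambda\cong U(\g)\otimes_{U(\b)}\Bbb C_\lambda$ (which the paper records as a remark immediately after its proof), while the paper directly computes $\xi^{-1}(I_\lambda)=U(\n_-)\otimes K_\lambda$; these are the same argument in slightly different packaging.
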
  

\begin{proof} 
By the PBW theorem, the multiplication map 
$$
\xi: U(\n_-)\otimes U(\h\oplus \n_+)\to U(\g)
$$ 
is a linear isomorphism. It is easy to see that $\xi^{-1}(I_\lambda)=U(\n_-)\otimes K_\lambda$, where 
$$
K_\lambda:=\sum_i U(\h\oplus \n_+)(h_i-\lambda(h_i))+\sum_i U(\h\oplus \n_+)e_i
$$
is the kernel of the homomorphism $\chi_\lambda: U(\h\oplus \n_+)\to \Bbb C$ 
given by $\chi_\lambda(h)=\lambda(h)$, $h\in \h$, $\chi_\lambda(e_i)=0$. 
Thus, we have a natural isomorphism of left $U(\n_-)$-modules
$$
U(\n_-)=U(\n_-)\otimes U(\h\oplus \n_+)/K_\lambda\to M_\lambda,
$$
as claimed.
\end{proof} 

\begin{remark} The definition of $M_\lambda$ means that 
it is the {\bf induced module} $U(\g)\otimes_{U(\h\oplus \n_+)}\Bbb C_\lambda$, where 
$\Bbb C_\lambda$ is the one-dimensional representation of $\h\oplus \n_+$ on which it acts via $\chi_\lambda$. 
\end{remark} 

\begin{corollary} $M_\lambda$ has a weight decomposition with 
$P(M_\lambda)=\lambda-Q_+$, $\dim M_\lambda[\lambda]=1$, and weight subspaces of $M_\lambda$ are finite-dimensional. 
\end{corollary}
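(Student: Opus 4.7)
The plan is to transport everything through the isomorphism $\phi: U(\n_-) \to M_\la$ proved in the preceding proposition. The algebra $U(\n_-)$ carries an $\h$-action by the adjoint/commutator action (extending the action of $\h$ on $\n_-$ via $[h,f]$ for $h\in\h$, $f\in\n_-$), and under this action it decomposes as a direct sum of weight spaces $U(\n_-)[-\mu]$ indexed by $\mu\in Q_+$, because $\n_-$ itself has weights in $-R_+\subset -Q_+$ and $Q_+$ is closed under addition. So the first step is to make this weight decomposition of $U(\n_-)$ explicit, using the PBW basis.

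Next, I would check that for any $x\in U(\n_-)[-\mu]$, the vector $xv_\la \in M_\la$ lies in the weight space $M_\la[\la-\mu]$. This is an easy computation: writing $hx = xh + [h,x] = xh + (-\mu(h))x$ in $U(\g)$ and applying to $v_\la$ gives $h\cdot(xv_\la) = x\la(h)v_\la - \mu(h)xv_\la = (\la-\mu)(h)\,(xv_\la)$. Combined with the fact that $\phi$ is a linear bijection, this shows that $\phi$ sends $U(\n_-)[-\mu]$ isomorphically onto a subspace of $M_\la[\la-\mu]$, and since the $U(\n_-)[-\mu]$ together span all of $U(\n_-)$, we conclude $M_\la=\bigoplus_{\mu\in Q_+} M_\la[\la-\mu]$, giving both the weight decomposition and $P(M_\la)=\la-Q_+$.

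For $\dim M_\la[\la]=1$, observe that $U(\n_-)[0]=\CC\cdot 1$ (any PBW monomial in negative root vectors of total weight zero must be empty), so $M_\la[\la]=\CC v_\la$. For finite dimensionality of each weight space, note that $U(\n_-)[-\mu]$ is spanned, via PBW, by monomials $\prod f_{\beta_i}^{n_i}$ with $\beta_i\in R_+$ and $\sum n_i \beta_i = \mu$; since $R_+$ is finite and the $\beta_i$ lie in the positive cone $Q_+$, there are only finitely many such tuples $(n_i)$, so $\dim U(\n_-)[-\mu]<\infty$.

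The only mildly subtle point — really the one thing to verify carefully — is that no extra weight vectors appear in $M_\la[\la-\mu]$ beyond $\phi(U(\n_-)[-\mu])$. But this is automatic: $\phi$ is already a linear isomorphism $U(\n_-)\cong M_\la$, and we have just shown $\phi$ carries the decomposition $U(\n_-)=\bigoplus U(\n_-)[-\mu]$ into distinct weight spaces of $M_\la$, so the image exhausts $M_\la$ and the containments $\phi(U(\n_-)[-\mu])\subset M_\la[\la-\mu]$ must be equalities.
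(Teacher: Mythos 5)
Your proof is correct and is exactly the argument the paper has in mind (it states the corollary without proof, as immediate from the isomorphism $\phi: U(\n_-)\to M_\lambda$). Transporting the PBW-based weight decomposition of $U(\n_-)$ through $\phi$, using the commutation $hx=xh-\mu(h)x$ for $x\in U(\n_-)[-\mu]$, and observing that $\dim U(\n_-)[-\mu]$ is the (finite) Kostant partition number $K(\mu)$ with $K(0)=1$, is precisely the standard route; your final remark correctly identifies why the inclusions $\phi(U(\n_-)[-\mu])\subset M_\lambda[\lambda-\mu]$ must be equalities, namely bijectivity of $\phi$ together with linear independence of distinct weight spaces.
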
 

\begin{proposition} (i) If $V$ is a representation of $\g$ and $v\in V$ is a vector such that 
$hv=\lambda(h)v$ for $h\in h$ and $e_iv=0$ then there is a unique homomorphism 
$\eta: M_\lambda\to V$ such that $\eta(v_\lambda)=v$. In particular, if $V$ is generated by such $v\ne 0$ (i.e., $V$ is a highest weight representation with highest weight vector $v$) then $V$ is a quotient 
of $M_\lambda$.  

(ii) Every highest weight representation  has a weight decomposition into finite-dimensional weight subspaces. 

(iii) Every highest weight representation $V$ has a unique highest weight generator, up to scaling. 
\end{proposition}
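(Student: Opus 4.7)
The plan is to treat the three claims in order, using the explicit realization $M_\lambda = U(\g)/I_\lambda$ together with the PBW-based description of the weight spaces of $M_\lambda$ stated just above.

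For (i), I would construct $\eta$ by the standard universal-property argument. Consider the $\g$-module homomorphism $\widetilde\eta\colon U(\g)\to V$ defined by $\widetilde\eta(u):=uv$. The hypotheses $(h-\lambda(h))v=0$ and $e_iv=0$ say exactly that the generators of the left ideal $I_\lambda$ lie in $\ker\widetilde\eta$, hence $I_\lambda\subset\ker\widetilde\eta$, and $\widetilde\eta$ descends to a $\g$-homomorphism $\eta\colon M_\lambda=U(\g)/I_\lambda\to V$ with $\eta(v_\lambda)=v$. Uniqueness is immediate because $v_\lambda$ generates $M_\lambda$ as a $U(\g)$-module, so any $\g$-homomorphism out of $M_\lambda$ is determined by its value at $v_\lambda$. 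The second sentence of (i) is then the special case where $v$ generates $V$: $\eta$ is surjective, exhibiting $V$ as a quotient of $M_\lambda$.

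For (ii), I would use the previous part. Since $V$ is generated by a highest weight vector of weight $\lambda$, the surjection $\eta\colon M_\lambda\twoheadrightarrow V$ shows that $V$ is a quotient of $M_\lambda$. We already know $M_\lambda$ has a weight decomposition $M_\lambda=\bigoplus_{\mu\in\lambda-Q_+}M_\lambda[\mu]$ with finite-dimensional weight spaces. Weight decompositions descend to quotients: the $\h$-action on $V$ is semisimple because it is semisimple on $M_\lambda$ and $\eta$ is $\h$-equivariant, and $V[\mu]=\eta(M_\lambda[\mu])$, so $\dim V[\mu]\le\dim M_\lambda[\mu]<\infty$ and $P(V)\subset\lambda-Q_+$.

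For (iii), the main point is that among the weights of $V$, the weight $\lambda$ is maximal with respect to the partial order $\mu\le\lambda\iff\lambda-\mu\in Q_+$, and moreover $\dim V[\lambda]=1$ (the image of $v_\lambda$ is a nonzero vector in $V[\lambda]$, and $V[\lambda]$ has dimension at most $\dim M_\lambda[\lambda]=1$ by (ii)). Suppose $v'$ is another highest weight generator of $V$, of some weight $\mu$. Applying (i) to $v'$ yields a surjection $M_\mu\twoheadrightarrow V$, hence $P(V)\subset\mu-Q_+$; combined with $\lambda\in P(V)$ this gives $\lambda\le\mu$, while $\mu\in P(V)\subset\lambda-Q_+$ gives $\mu\le\lambda$, so $\mu=\lambda$. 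Then $v'\in V[\lambda]=\CC v$, proving $v'$ is a scalar multiple of $v$. The only mildly subtle point is the maximality argument for $\lambda$ in $P(V)$, which in the above is handled cleanly by invoking (i) symmetrically in both generators; I expect no real obstacle here, since everything reduces to the PBW description of $M_\lambda$ established in the preceding corollary.
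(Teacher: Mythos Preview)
Your proposal is correct and follows essentially the same approach as the paper: part (i) via the map $u\mapsto uv$ from $U(\g)$ descending modulo $I_\lambda$, part (ii) by pushing the weight decomposition of $M_\lambda$ through the surjection, and part (iii) by comparing weight supports. The only cosmetic difference is in (iii), where you argue symmetrically that $\lambda\le\mu$ and $\mu\le\lambda$, while the paper assumes $\lambda\ne\mu$, takes without loss of generality $\lambda-\mu\notin Q_+$, and derives the contradiction $\mu\notin P(V)$; these are the same argument in slightly different dress.
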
 

\begin{proof} (i) Uniqueness follows from the fact that $v_\lambda$ generates $M_\lambda$. 
To construct $\eta$, note that we have a natural map of $\g$-modules 
$\widetilde \eta: U(\g)\to V$ given by $\widetilde \eta(x)=xv$. Moreover, $\widetilde\eta|_{I_\lambda}=0$
thanks to the relations satisfied by $v$, so $\widetilde \eta$ descends to a map 
$\eta: U(\g)/I_\lambda=M_\lambda\to V$. Moreover, if $V$ is generated by $v$ then this map is surjective, as desired. 

(ii) This follows from (i) since a quotient of any representation with a weight decomposition 
must itself have a weight decomposition. 

(iii) Suppose $v,w$ are two highest weight generators of $V$ of weights $\lambda,\mu$. 
If $\lambda=\mu$ then they are proportional since $\dim V[\lambda]\le \dim M_\lambda[\lambda]=1$, as $V$ is a quotient of $M_\lambda$. On the other hand, if $\lambda\ne \mu$, then we can assume without loss of generality that $\lambda-\mu\notin Q_+$ (otherwise switch $\lambda,\mu$). Then 
$\mu\notin \lambda-Q_+$, hence $\mu\notin P(V)$, a contradiction. 
\end{proof} 

\subsection{Irreducible highest weight $\g$-modules} 
\begin{proposition} For every $\lambda\in \h^*$, the Verma module $M_\lambda$ has a unique 
irreducible quotient $L_\lambda$. Moreover, $L_\lambda$ is a quotient of every highest weight $\g$-module $V$ with highest weight $\lambda$. 
\end{proposition}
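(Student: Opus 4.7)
The plan is to construct $L_\lambda$ as the quotient of $M_\lambda$ by the sum of all proper submodules, and to show that this sum is itself proper by using the one-dimensionality of the $\lambda$-weight space.

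First I would observe that any submodule $N\subset M_\lambda$ inherits a weight decomposition, because $N$ is $\h$-invariant and $M_\lambda$ itself has a weight decomposition with finite-dimensional weight spaces. Since $v_\lambda$ generates $M_\lambda$ and lies in $M_\lambda[\lambda]$, a submodule $N$ is proper if and only if $v_\lambda\notin N$, which (using $\dim M_\lambda[\lambda]=1$) is equivalent to $N[\lambda]=0$. Now let $J_\lambda$ be the sum of all proper submodules of $M_\lambda$. Since each summand $N$ satisfies $N[\lambda]=0$ and weight spaces are preserved under sums, we get $J_\lambda[\lambda]=0$, so $J_\lambda\ne M_\lambda$. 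By construction, $J_\lambda$ contains every proper submodule, so the quotient $L_\lambda:=M_\lambda/J_\lambda$ has no nonzero proper submodule, i.e.\ it is irreducible. Any other irreducible quotient $M_\lambda\twoheadrightarrow L'$ has kernel a proper (in fact maximal) submodule, hence contained in $J_\lambda$, so $L'$ factors through $L_\lambda$, and by irreducibility the induced map $L_\lambda\to L'$ is an isomorphism, proving uniqueness.

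For the second statement, let $V$ be any highest weight module with highest weight $\lambda$ and highest weight vector $v$. By the previous proposition there is a surjection $\pi:M_\lambda\twoheadrightarrow V$ sending $v_\lambda$ to $v$. Let $K:=\ker\pi$. Pulling back any proper submodule $W\subset V$ to $\pi^{-1}(W)\subset M_\lambda$ produces a submodule strictly smaller than $M_\lambda$ (since its image $W$ is proper in $V$), hence $\pi^{-1}(W)\subset J_\lambda$, so $W\subset \pi(J_\lambda)$. It follows that the sum of all proper submodules of $V$ equals $\pi(J_\lambda)$, which is proper (as $\pi(v_\lambda)=v\notin \pi(J_\lambda)$ since $v_\lambda\notin J_\lambda+K$; indeed if $v_\lambda=j+k$ with $j\in J_\lambda,k\in K$ then projecting onto the weight $\lambda$ component and using $J_\lambda[\lambda]=0$ forces $v_\lambda\in K$, contradicting surjectivity of $\pi$ onto a nonzero $V$). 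Hence $V$ has a unique maximal proper submodule $\pi(J_\lambda)$, with quotient an irreducible highest weight module of highest weight $\lambda$, which by uniqueness above must be $L_\lambda$.

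The only subtle point is verifying that the sum $J_\lambda$ of proper submodules is proper; once the observation $N\text{ proper}\Leftrightarrow N[\lambda]=0$ is in hand, this is immediate from the fact that taking sums preserves the vanishing of a fixed weight space. Everything else is a formal consequence of the universal property of $M_\lambda$ established in the previous proposition.
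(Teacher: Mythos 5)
Your proof is correct and follows essentially the same strategy as the paper: show that the sum $J_\lambda$ of all proper submodules is itself proper because each proper submodule has zero $\lambda$-weight space. For the second claim, the paper takes a slightly more direct route — since the kernel $K$ of $M_\lambda\twoheadrightarrow V$ is a proper submodule it lies in $J_\lambda$, so $M_\lambda\to L_\lambda$ factors through $V$ immediately, whereas you re-derive the same fact by identifying $\pi(J_\lambda)=J_\lambda/K$ as the maximal proper submodule of $V$; both are fine.
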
 

\begin{proof} Let $Y\subset M_\lambda$ be a proper submodule. Then $Y$ has a weight decomposition, and cannot contain a nonzero multiple of $v_\lambda$ (as otherwise $Y=M_\lambda$), so $P(Y)\subset 
(\lambda-Q_+)\setminus \lbrace \lambda \rbrace$. Now let $J_\lambda$  be the sum of all 
proper submodules $Y\subset M_\lambda$. Then $P(J_\lambda)\subset (\lambda-Q_+)\setminus \lbrace \lambda \rbrace$, so $J_\lambda$ is also a proper submodule of $M_\lambda$  (the maximal one). 
Thus, $L_\lambda:=M_\lambda/J_\lambda$ is an irreducible highest weight module with highest weight $\lambda$. Moreover, if $V$ is any nonzero quotient of $M_\lambda$ then the kernel 
$K$ of the map $M_\lambda\to V$ is a proper submodule, hence contained in $J_\lambda$. 
Thus the surjective map $M_\lambda\to L_\lambda$ descends to a surjective map $V\to L_\lambda$. 
The kernel of this map is a proper submodule of $V$, hence zero if $V$ is irreducible. 
Thus in the latter case $V\cong L_\lambda$. 
\end{proof} 

\begin{corollary} Irreducible highest weight $\g$-modules are classified by their highest weight $\lambda\in \h^*$, via the bijection $\lambda\mapsto L_\lambda$.
\end {corollary}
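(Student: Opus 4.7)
The plan is to verify that the map $\lambda \mapsto L_\lambda$ is both well-defined-on-its-target (lands in irreducible highest weight modules), injective, and surjective. Most of the work has already been done in the preceding proposition, so the proof will amount to assembling those facts.

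First, surjectivity. Suppose $V$ is an arbitrary irreducible highest weight $\g$-module with highest weight $\lambda$. By part (i) of the preceding proposition, $V$ is a quotient of $M_\lambda$, and then by the last sentence of the proof that $L_\lambda$ exists (``the kernel of this map is a proper submodule of $V$, hence zero if $V$ is irreducible''), we get $V \cong L_\lambda$. So every irreducible highest weight module appears in the list.

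Second, injectivity. Suppose $L_\lambda \cong L_\mu$ as $\g$-modules. Using the weight decomposition of highest weight modules (part (ii) of the preceding proposition) together with the inclusion $P(L_\nu) \subset \nu - Q_+$ and $\dim L_\nu[\nu] = 1$, both $\lambda$ and $\mu$ are weights of the common module. Hence $\lambda \in \mu - Q_+$ and $\mu \in \lambda - Q_+$, which forces $\lambda - \mu \in Q_+ \cap (-Q_+) = \{0\}$, so $\lambda = \mu$. Strictly speaking, one should first note that the highest weight is an \emph{intrinsic} invariant of a highest weight module: it is the unique weight $\nu \in P(L)$ such that $\nu + \alpha \notin P(L)$ for every positive root $\alpha$, equivalently the unique maximal element of $P(L)$ in the partial order defined by $Q_+$. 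This is immediate from $P(L_\nu) \subset \nu - Q_+$ together with part (iii) of the preceding proposition.

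I do not expect any real obstacle here; the argument is purely formal given the preceding results. The one subtlety worth flagging is that invoking ``the highest weight is intrinsic'' requires the uniqueness statement (iii) of the previous proposition, so one should cite it explicitly rather than quietly treat $\lambda$ as a label attached to the abstract module.
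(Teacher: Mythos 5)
Your proof is correct and is exactly the argument the corollary is implicitly appealing to; the paper itself gives no proof, treating the result as immediate from the preceding proposition, which is what you reconstruct. One small simplification: your injectivity step does not actually need part (iii) of the preceding proposition, since $P(L_\nu)\subset\nu-Q_+$ together with $\nu\in P(L_\nu)$ already makes $\nu$ the unique maximum (not merely maximal) element of $P(L_\nu)$, so two isomorphic simple highest weight modules share the same maximum weight and hence the same highest weight.
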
   

\begin{exercise}\label{sl2}  Let $\g=\mathfrak{sl}_2$ with standard generators $e,f,h$ and identify $\h^*\cong \Bbb C$ via $\lambda\mapsto \lambda(h)$. Show that $M_\lambda$ 
is irreducible if $\lambda\notin \Bbb Z_{\ge 0}$, while for $\lambda$ a nonnegative integer
we have $J_\lambda=M_{-\lambda-2}$, so $L_\lambda$ is the $\lambda+1$-dimensional 
irreducible representation of $\mathfrak{sl}_2$. 
\end{exercise} 

It is known from the theory of finite-dimensional representations of $\g$ that 
its irreducible finite-dimensional representations are $L_\lambda$ with $\lambda\in P_+$. 
Thus we have 

\begin{proposition} $L_\lambda$ is finite-dimensional if and only if 
$\lambda\in P_+$. 
\end{proposition}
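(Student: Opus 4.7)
The plan is to prove both directions, with the ``only if'' direction being immediate and the ``if'' direction requiring the main work.

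For the ``only if'' direction, I would use the $\mathfrak{sl}_2$-triple $(e_i,h_i,f_i)$ associated to each simple root. If $L_\lambda$ is finite dimensional, then it is finite dimensional as a module over this $\mathfrak{sl}_2$, and the highest weight vector $v_\lambda$ satisfies $e_i v_\lambda=0$ with $h_i v_\lambda=\lambda(h_i)v_\lambda$. Hence $v_\lambda$ generates an $\mathfrak{sl}_2$-submodule that is a quotient of the Verma module $M_{\lambda(h_i)}$ for $\mathfrak{sl}_2$. By Exercise \ref{sl2}, such a quotient is finite dimensional only if $\lambda(h_i)\in\mathbb Z_{\ge 0}$. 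Running over all $i$ gives $\lambda\in P_+$.

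For the ``if'' direction, fix $\lambda\in P_+$ and set $n_i:=\lambda(h_i)\in\mathbb Z_{\ge 0}$. The main step is to show that the vector $w_i:=f_i^{n_i+1}v_\lambda\in M_\lambda$ is singular, i.e.\ lies in $M_\lambda[\lambda-(n_i+1)\alpha_i]$ and is killed by every $e_j$. For $j\ne i$ this follows from $[e_j,f_i]=0$. For $j=i$ one uses the standard identity $[e_i,f_i^{k+1}]=(k+1)f_i^k(h_i-k)$ in $U(\mathfrak{sl}_2)\subset U(\g)$, which evaluated on $v_\lambda$ with $k=n_i$ gives $0$. Hence each $w_i$ generates a proper submodule of $M_\lambda$, so $w_i\in J_\lambda$, and therefore the image $\bar v_\lambda$ of $v_\lambda$ in $L_\lambda$ is annihilated by $f_i^{n_i+1}$.

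Now I would deduce that $f_i$ (and obviously $e_i$, since weights of $L_\lambda$ lie in $\lambda-Q_+$) acts locally nilpotently on $L_\lambda$ for every simple $i$. From this one obtains, by the standard argument, that the operator $\exp(f_i)\exp(-e_i)\exp(f_i)$ is a well-defined linear automorphism of $L_\lambda$ realising the simple reflection $s_i$ on weight spaces, so the set of weights $P(L_\lambda)$ is stable under the Weyl group $W$. Since $P(L_\lambda)\subset \lambda-Q_+$ and is $W$-stable, it is contained in the convex hull of $W\lambda$ intersected with $\lambda-Q_+$, which is a finite set. Combined with the fact that each weight space of $L_\lambda$ is finite dimensional (being a quotient of the corresponding weight space of $M_\lambda$), we conclude $\dim L_\lambda<\infty$.

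The main obstacle is the singular-vector computation showing $e_j f_i^{n_i+1}v_\lambda=0$ together with the passage from ``$e_i,f_i$ act locally nilpotently'' to ``$P(L_\lambda)$ is $W$-invariant''; the latter requires some care in constructing the reflection operator $s_i$ and verifying it permutes weight spaces of $L_\lambda$, but this is standard and relies only on local $\mathfrak{sl}_2$-finiteness.
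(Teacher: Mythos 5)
Your proof is correct. For the record, the paper does not actually carry out this argument: it invokes ``the theory of finite dimensional representations of $\g$'' as a black box for the ``if'' direction, and points to Exercise~\ref{sl2} for the ``only if'' direction (which is exactly your $\mathfrak{sl}_2$-triple argument). So you have filled in the standard proof that the paper elides.

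A few remarks on the details you flagged. In the ``if'' direction, passing from ``$\bar v_\lambda$ is killed by $f_i^{n_i+1}$'' to ``$f_i$ acts locally nilpotently on all of $L_\lambda$'' uses that the locally $f_i$-nilpotent vectors form a $\g$-submodule; this is proved via the identity $f_i^N(xv)=\sum_k\binom{N}{k}(\operatorname{ad}f_i)^k(x)\,f_i^{N-k}v$ together with the nilpotence of $\operatorname{ad}f_i$ on $\g$, and since that submodule contains the generator $\bar v_\lambda$ of the irreducible module $L_\lambda$, it is everything. Your final finiteness step is also sound, though ``contained in the convex hull of $W\lambda$'' is slightly indirect: the cleanest version is to take $\mu\in P(L_\lambda)$, replace it by its dominant $W$-conjugate $\mu^+$ (legitimate since $P(L_\lambda)$ is $W$-stable), note $\mu^+\le\lambda$ with both dominant, whence $|\mu^+|\le|\lambda|$ because $|\lambda|^2-|\mu^+|^2=(\lambda+\mu^+,\lambda-\mu^+)\ge 0$; so $P(L_\lambda)$ lies in a ball intersected with $\lambda+Q$, which is finite. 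Together with the finite dimensionality of the individual weight spaces of $M_\lambda$ (hence of $L_\lambda$), this gives $\dim L_\lambda<\infty$.

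Net comparison: the paper trades proof for a citation, which is appropriate at this point of the notes since this fact is proved in the prerequisite Lie theory courses (\cite{E}); your version makes the present notes self-contained at the cost of reproducing the standard Verma-module argument. Both are valid; which is preferable depends on how much the reader is assumed to already know.
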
 

Note that the ``only if" direction of this proposition follows immediately from Exercise \ref{sl2}. 

\subsection{Exercises}

\begin{exercise}\label{intertw} Let $\g$ be a finite-dimensional simple complex Lie algebra, and $V$ a finite-dimensional representation of $\g$. Let $\lambda,\mu\in \h^*$ be weights for $\g$, and 
$X,Y$ be representations of $\g$ with $P(X)\subset \lambda-Q_+$, $P(Y)\subset \mu-Q_+$, and $X[\lambda]=\Bbb Cv_\lambda$, $Y[\mu]=\Bbb C v_\mu$ for nonzero vectors $v_\lambda,v_\mu$. Given a linear map
$\Phi: X\to V\otimes Y$, let the {\bf expectation value} of $\Phi$ be defined by 
$$
\langle \Phi\rangle:=({\rm Id}\otimes v_\mu^*,\Phi v_\lambda)\in V
$$
 where $v_\mu^*\in Y[\mu]^*$ is such that $(v_\mu^*,v_\mu)=1$. In other words, we have 
$$
\Phi v_\lambda=\langle \Phi\rangle \otimes v_\mu+\text{lower terms}
$$ 
where the lower terms have lower weight than $\mu$ in the second component. 

(i) Show that if $\Phi$ is a homomorphism then $\langle \Phi\rangle$ has weight $\lambda-\mu$. 

(ii) Let $M_\lambda$ be the Verma module with highest weight $\lambda\in \h^*$, and $\overline M_{-\mu}$ be the {\bf lowest weight} Verma module 
with lowest weight $-\mu$, i.e., generated by a vector $v_{-\mu}$ with defining relations 
$hv_{-\mu}=-\mu(h)v_{-\mu}$ for $h\in \h$ and $f_iv_{-\mu}=0$.  
Show that the map $\Phi\mapsto \langle \Phi\rangle$ defines an isomorphism 
$$
\Hom_\g(M_\lambda, V\otimes \overline M_{-\mu}^*)\cong V[\lambda-\mu]
$$
where $*$ denotes the restricted dual (the direct sum of duals of all weight subspaces).

(iii) Let $\lambda\in P_+$ and $V[\nu]_\lambda$ be the subspace of vectors $v\in V[\nu]$ of weight $\nu$ which satisfy 
the equalities $f_i^{(\lambda,\alpha_i^\vee)+1}v=0$ for all $i$. Show that a map $\Phi\in \Hom_\g(M_\lambda, V\otimes \overline M_{-\mu}^*)$ factors through $L_\lambda$ 
iff $\langle \Phi\rangle\in V[\lambda-\mu]_\lambda$, i.e., $f_i^{(\lambda,\alpha_i^\vee)+1}\langle \Phi\rangle=0$ (for this, use that $e_jf_i^{(\lambda,\alpha_i^\vee)+1}v_\lambda=0$, and that the kernel of $M_\lambda\to L_\lambda$ is generated by the vectors $f_i^{(\lambda,\alpha_i^\vee)+1}v_\lambda$). Deduce that the map $\Phi\mapsto \langle \Phi\rangle$ defines an isomorphism 
$\Hom_\g(L_\lambda, V\otimes \overline M_{-\mu}^*)\cong V[\lambda-\mu]_\lambda$.

(iv) Now let both $\lambda,\mu$ be in $P_+$. Show that every homomorphism $L_\lambda\to V\otimes \overline M_{-\mu}^*$
in fact lands in $V\otimes L_\mu\subset V\otimes \overline M_{-\mu}^*$.  
Deduce that the map 
$\Phi\mapsto \langle \Phi\rangle$ defines an isomorphism 
$$
\Hom_\g(L_\lambda, V\otimes L_\mu)\cong V[\lambda-\mu]_\lambda.
$$

(v) Let $V=\Bbb C^n$ be the vector representation of $SL_n(\Bbb C)$. Determine the weight subspaces of $S^mV$, and compute the decomposition of $S^mV\otimes L_\mu$ into irreducibles for all $\mu\in P_+$ (use (iv)). 

(vi) For any $\g$, compute the decomposition of $\g\otimes L_\mu$, $\mu\in P_+$, where $\g$ is the adjoint representation of $\g$ (again use (iv)). 

In both (v) and (vi) you should express the answer in terms of the numbers $k_i$ such that $\mu=\sum_i k_i\omega_i$ and the Cartan matrix entries of $\g$.  
\end{exercise} 

\begin{exercise}\label{dimhom}(D. N. Verma) (i) Let $\g=\n_-\oplus \h\oplus \n_+$ be a finite-dimensional simple complex Lie algebra, and $\lambda,\mu\in \h^*$. 
Show that every nonzero homomorphism $M_\mu\to M_\lambda$ is injective. (Use that $U(\n_-)$ has no zero divisors). Deduce that if $M_\lambda$ is reducible then there exists $\lambda'\in \lambda-Q_+$, 
$\lambda'\ne \lambda$ with $M_{\lambda'}\subset M_\lambda$. 

(ii) Show that for every $\lambda\in \h^*$ there is $\lambda'\in \lambda-Q_+$ with $M_{\lambda'}\subset M_\lambda$ and $M_{\lambda'}$ irreducible. (Assume the contrary and construct an infinite sequence of proper inclusions 
$$
...M_{\lambda_2}\subset M_{\lambda_1}\subset M_\lambda.
$$ 
Then derive a contradiction by looking at the eigenvalues 
of the quadratic Casimir $C\in U(\g)$).

(iii) Show that if $M_\mu$ is irreducible then $\dim \Hom_\g(M_
\mu,M_\lambda)\le 1$. (Look at the growth of the dimensions of weight subspaces). 

(iv) Show that  $\dim \Hom_\g(M_
\mu,M_\lambda)\le 1$ for any $\lambda,\mu\in \h^*$. (Look at the restriction 
of a homomorphism $M_\mu\to M_\lambda$ to $M_{\mu'}\subset M_\mu$ 
which is irreducible).  
\end{exercise}

\begin{exercise}\label{Shapova} (i) Keep the notation of Exercise \ref{dimhom}. Let $\lambda\in \h^*$ be such that $(\lambda,\alpha_i^\vee)=n-1$ for a positive integer $n$ and simple root $\alpha_i$. Show that there is an inclusion $M_{\lambda-n\alpha_i}\hookrightarrow M_\lambda$. 

(ii) Let $\rho$ be the sum of fundamental weights of $\g$ and 
$W$ be the Weyl group of $\g$. For $w\in W$, $\lambda\in \h^*$ 
let $w\bullet \lambda:=w(\lambda+\rho)-\rho$ (the {\bf shifted action} of $W$). Deduce from (i) that if $\lambda\in P_+$ then for every $w\in W$, there is an inclusion $\iota_w: M_{w\bullet \lambda}\hookrightarrow M_\lambda$, and that if $w=w_1w_2$ 
with $\ell(w)=\ell(w_1)+\ell(w_2)$ (where $\ell(w)$ is the length of $w$) 
then $\iota_w$  factors through $\iota_{w_2}$. In particular, we have an inclusion $M_{w\bullet \lambda}\hookrightarrow M_{w_2\bullet \lambda}$. 

(iii) Show that $M_\lambda$ is irreducible unless $(\lambda+\rho,\alpha^\vee)=1$ for some $\alpha\in Q_+\setminus 0$, where $\alpha^\vee:=\frac{2\alpha}{(\alpha,\alpha)}$ (look at the eigenvalues of the quadratic Casimir). 

(iv) For $\beta\in Q_+$ define the {\bf Kostant partition function} 
$K(\beta)$ to be the number of unordered representations of 
$\beta$ as a sum of positive roots of $\g$ (thus $K(\beta)=\dim U(\n_+)[\beta]$). Also define  the
{\bf Shapovalov pairing} 
$$
B_\beta(\lambda):U(\n_+)[\beta]\times U(\n_-)[-\beta]\to \Bbb C
$$ 
by the formula 
$$
xyv_\lambda=B_\beta(\lambda)(x,y)v_\lambda, 
$$ 
where $x\in U(\n_+)[\beta], y\in U(\n_-)[-\beta]$, and $v_\lambda$ 
is the highest weight vector of $M_\lambda$. Let 
$$
D_\beta(\lambda):=\det B_\beta(\lambda),
$$ 
the determinant 
of the matrix of $B_\beta(\lambda)$ in some bases of $U(\n_+)[\beta],  U(\n_-)[-\beta]$. This is a (non-homogeneous) polynomial in $\lambda$ well defined up to scaling. Show that the leading term of $D_\beta$ 
is 
$$
D_\beta^0(\lambda)={\rm const}\cdot \prod_{\alpha\in R_+}(\lambda,\alpha^\vee)^{\sum_{n\ge 1}K(\beta-n\alpha)}.
$$
(Hint: show that the leading term comes from the product of the diagonal entries 
of the matrix of the Shapovalov pairing in the PBW bases).  

(v) Show that 
$$
D_\beta(\lambda)={\rm const}\cdot \prod_{\alpha\in Q_+\setminus 0}((\lambda+\rho,\alpha^\vee)-1)^{m_\alpha}
$$
for some nonnegative integers $m_\alpha=m_\alpha(\beta)$. Then use (iv) to show that moreover $m_\alpha=0$ unless $\alpha$ is a multiple of a positive root. 

(vi) Let $V,U$ be finite-dimensional vector spaces over a field $k$ of dimension $n$ 
and $B(t): V\times U\to k[[t]]$ be a bilinear form. Denote by $V_0\subset V,U_0\subset U$ the left and right kernels of $B(0)$. Suppose that $B'(0)$ is a perfect pairing $V_0\times U_0\to k$. Show that the vanishing order of $\det B(t)$ at $t=0$ (computed with respect to any bases of $V,U$) equals $\dim V_0=\dim U_0$. ({\it Hint:} Pick a basis 
$e_1,...,e_m$ of $V_0$, complete it to a basis $e_1,...,e_n$ of $V$. Choose 
vectors $f_{m+1},...,f_n\in U$ such that $B(0)(e_i,f_j)=\delta_{ij}$ for $m<i,j\le n$. 
Let $f_1,...,f_m$ be the basis $U_0$ dual to $e_1,...,e_m$ with respect to $B'(0)$. 
Show that $\lbrace f_i\rbrace$ is a basis of $U$ 
and the determinant of $B(t)$ in the bases $\lbrace e_i\rbrace$, $\lbrace f_i\rbrace$
equals $t^m+O(t^{m+1})$.)

(vii) Show that if $\lambda$ is generic on the hyperplane 
$(\lambda+\rho,\alpha^\vee)=n$ for $n\in \Bbb Z_{>0}$ and $\alpha\in R_+$ and 
$m_{n\alpha}(\beta)>0$ then $M_\lambda$ contains an irreducible submodule $M_{\lambda-n\alpha}$ and the quotient $M_\lambda/M_{\lambda-n\alpha}$ is irreducible. (Use Casimir eigenvalues to show that the only irreducible modules which could occur in the composition series of $M_\lambda$ are $L_\lambda$ and $L_{\lambda-n\alpha}$ and apply Exercise \ref{dimhom}). 

(viii) Let $\lambda$ be as in (vii) and let $B(\beta,t):=B_\beta(\lambda+t\alpha)$. 
Show that $B(\beta,t)$ satisfies the assumption of (vi) for all $\beta$. 

{\bf Hint:} Use that $\oplus_\beta {\rm Ker}B(\beta,0)$ is naturally identified 
with $M_{\lambda-n\alpha}$ and $B'(\beta,0)$ restricts 
on it to a multiple of its Shapovalov form, and show that one has
$B_{n\alpha}'(0)(v_{\lambda-n\alpha},v_{\lambda-n\alpha})\ne 0$.
For the latter, assume the contrary and show that there exists a homogeneous lift $u$ 
of $v_{\lambda-n\alpha}$ modulo $t^2$ such that 
$B_{n\alpha}(t)(u,w)=0$ modulo $t^2$ for all $w$ of weight 
$\lambda+(t-n)\alpha$. Deduce that $e_iu$ vanishes modulo $t^2$ for all $i$. Conclude that 
$$
Cu=((\lambda+(t-n)\alpha+\rho)^2-\rho^2)u+O(t^2)
$$
and derive a contradiction with 
$$
Cu=((\lambda+t\alpha+\rho)^2-\rho^2)u.
$$
 
(ix) Deduce that 
$m_{n\alpha}(\beta)=K(\beta-n\alpha)$; in particular, 
in general $m_{n\alpha}(\beta)\le K(\beta-n\alpha)$. 

(x) Prove the {\bf Shapovalov determinant formula}: 
$$
D_\beta(\lambda)=\prod_{\alpha\in R_+}\prod_{n\ge 1} ((\lambda+\rho,\alpha^\vee)-n)^{K(\beta-n\alpha)}
$$
up to scaling.

(xi) Determine all $\lambda\in \h^*$ for which $M_\lambda$ is irreducible. 
\end{exercise} 

\section{\bf Representations of $SL_2(\Bbb R)$}\label{sl2R}

\subsection{Irreducible $(\g,K)$-modules for $SL_2(\Bbb R)$}
Let us now apply the general theory to the simplest example -- representations of the group $G=SL_2(\Bbb R)$
of real 2 by 2 matrices with determinant $1$. Note that $SL_2(\Bbb R)\cong SU(1,1)$, and in this realization the maximal compact subgroup $SO(2)$ becomes $U(1)$. So we have ${\rm Lie}(G)=\g=\mathfrak{su}(1,1)$, hence $\g_{\Bbb C}=\mathfrak{sl}_2(\Bbb C)$ with standard basis $e,f,h$, so that a maximal compact subgroup $K$ of $G$ consists of elements 
$e^{ith}$, $t\in [0,2\pi)$. Thus a $(\g,K)$-module is the same thing as a $\g_{\Bbb C}$-module with a weight decomposition and integer weights. 

Let us classify irreducible $(\g,K)$-modules $M$. To this end, recall that we have the central Casimir element 
$C\in U(\g_{\Bbb C})$ given by   
$$
C=fe+\frac{(h+1)^2}{4},
$$
and note that by the PBW theorem, $U(\g_{\Bbb C})$ is free as a right module over the commutative subalgebra 
$\Bbb C[h,fe]=\Bbb C[h,C]$ with basis $1,f^n,e^n$, $n\ge 1$. Thus if $v$ is a nonzero weight vector of $M$ 
then $M$ is spanned by $v,f^nv,e^nv$. It follows that weight subspaces of $M$ are 1-dimensional, and 
$P(M)$ is an arithmetic progression with step $2$. Thus we have four cases: 

{\bf 1.} $P(M)$ is finite. Then $M=L_m$, the $m+1$-dimensional irreducible representation. 

{\bf 2.} $P(M)$ is infinite, bounded above. In this case let $v$ have the maximal weight $m$. Then $f^nv$, 
$n\ge 0$ is a basis of $M$, and we have $hv=mv,ev=0$. Thus $M=M_m$ is the Verma module with highest weight $m\in \Bbb Z$. This module is irreducible iff $m<0$ (Exercise \ref{sl2}). Thus in this case we get modules $M_{-m}=M_{-m}^+$, $m\ge 1$. 

{\bf 3.} $P(M)$ is infinite, 
bounded below. The situation is completely parallel (with $f$ replaced by $e$) and we 
obtain lowest weight Verma modules $M_m^-$ for $m\ge 1$. The $(\g,K)$-modules $M_m^-,M_{-m}^+$ 
are called the {\bf discrete series modules} for $m\ge 2$, and {\bf limit of discrete series} for $m=1$.

{\bf 4.} $P(M)$ is unbounded on both sides. Let $c$ be the scalar by which $C$ acts on $M$. 
We have two cases -- the even case $P(M)=2\Bbb Z$ and the odd case 
$P(M)=2\Bbb Z+1$. In both cases we have a basis $v_n$, $n\in P(M)$ such that 
\begin{equation}\label{modprinser}
hv_n=nv_n,\ fv_n=v_{n-2},\  ev_n=\Lambda_nv_{n+2},
\end{equation}
where $\Lambda_n\ne 0$. 
To compute $\Lambda_n$, we write 
$$
\Lambda_nv_n=fev_n=(C-\tfrac{(h+1)^2}{4})v_n=(c-\tfrac{(n+1)^2}{4})v_n.
$$
Thus 
$$
\Lambda_n=c-\tfrac{(n+1)^2}{4}.
$$
Let $c=\frac{s^2}{4}$. Then 
\begin{equation}\label{lan}
\Lambda_n=\tfrac{1}{4}(s-1-n)(s+1+n).
\end{equation} 
 Thus we can 
replace $v_n$ by its multiple $w_n$ so that 
$$
hw_n=nw_n,\ fw_n=\tfrac{1}{2}(s-1+n)w_{n-2},\  ew_n=\tfrac{1}{2}(s-1-n)w_{n+2}.
$$ 
These formulas define $\g_{\Bbb C}$-modules for any $s\in \Bbb C$. 
We will denote these modules by $P_\pm(s)$ (plus for the even case, minus for the odd case). The $(\g,K)$-modules $P_\pm(s)$ are called the {\bf principal series modules}. 
We see that $P_+(s)$ is irreducible if $s\notin 2\Bbb Z+1$ and $P_-(s)$ is irreducible iff $s\notin 2\Bbb Z$, and $P_\pm(s)=P_\pm(-s)$ in this case. 
 
Moreover, when these conditions fail, we have short exact sequences 
$$
0\to L_{2m}\to P_+(2m+1)\to M^+_{-2m-2}\oplus M_{2m+2}^-\to 0,\ m\in \Bbb Z_{\ge 0},  
$$
$$
0\to M^+_{-2m-2}\oplus M_{2m+2}^-\to P_+(-2m-1)\to L_{2m}\to 0,\ m\in \Bbb Z_{\ge 0},  
$$
$$
0\to L_{2m+1}\to P_-(2m+2)\to M^+_{-2m-3}\oplus M_{2m+3}^-\to 0,\ m\in \Bbb Z_{\ge 0}, 
$$
$$
0\to M^+_{-2m-3}\oplus M_{2m+3}^- \to P_-(-2m-2)\to L_{2m+1}\to 0,\ m\in \Bbb Z_{\ge 0}, 
$$
and for $s=0$ we have an isomorphism 
$$
P_-(0)\cong M_{-1}^+\oplus M_1^-.
$$
All these modules except $P_-(0)$ are indecomposable.
Thus we see that $P_\pm(s)\ncong P_\pm(-s)$ when it is reducible and $s\ne 0$.  

As a result, we get 

\begin{proposition} The simple $(\g,K)$-modules (or equivalently, Harish-Chandra modules) 
are $L_m,m\in \Bbb Z_{\ge 0}$, 
$M_m^-,M_{-m}^+$, $m\in \Bbb Z_{\ge 1}$, and $P_+(s)$, $s\notin 2\Bbb Z+1$, $P_-(s)$, $s\notin 2\Bbb Z$, 
with the only isomorphisms $P_\pm(s)\cong P_{\pm}(-s)$. 
\end{proposition}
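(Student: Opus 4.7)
The plan is to determine the weight support $P(M)\subset\Bbb Z$ of a simple $(\g,K)$-module $M$ (integers, since $K=U(1)$ forces $h$ to have integer eigenvalues) and then identify $M$ on each possible shape of $P(M)$. Two inputs are already in place: Dixmier's lemma (Corollary~\ref{Schur}) makes the Casimir $C=fe+\tfrac{(h+1)^2}{4}$ act by a single scalar on $M$, which I write as $c=s^2/4$ for some $s\in\Bbb C$; and the PBW decomposition shows $U(\g_\Bbb C)$ is free as a right module over $\Bbb C[h,C]$ on the basis $\{1,f^n,e^n\}_{n\ge1}$. Using the second point, I first verify that every weight space $M[k]$ is at most one-dimensional. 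Given a nonzero $v\in M[m]$ we have $M=U(\g_\Bbb C)v$ by simplicity, and the relation $fe=C-\tfrac{(h+1)^2}{4}$ together with $ef=fe+h$ reduces every monomial $f^ae^cv$ (inductively in $\min(a,c)$) to a scalar multiple of $f^{a-c}v$ or $e^{c-a}v$. Combined with $\g_\alpha M[k]\subset M[k+\alpha]$, this forces $P(M)$ to be an arithmetic progression contained in $m+2\Bbb Z$.

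I would then split into cases by the boundedness of $P(M)$. If $P(M)$ is finite, $M$ is finite-dimensional and simple, hence $M\cong L_m$ with $m\in\Bbb Z_{\ge0}$. If $P(M)$ is bounded above by $m$ and unbounded below, then $M[m]$ consists of highest-weight vectors, so $M$ is a quotient of the Verma module $M_m$; as $P(M)$ already equals $m-2\Bbb Z_{\ge0}$, this quotient must be $M_m$ itself, and Exercise~\ref{sl2} forces $m\le-1$ for simplicity, i.e.\ $M\cong M^+_{-m}$ with $m\ge 1$. The mirror argument (with $e$ and $f$ swapped) handles the bounded-below case and produces $M^-_m$ with $m\ge 1$. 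If $P(M)$ is bi-infinite of parity $\varepsilon\in\{0,1\}$, then since $f\colon M[n]\to M[n-2]$ is nonzero for every $n\in P(M)$ (else $P(M)$ would be bounded below), I can pick a basis $\{v_n\}_{n\in P(M)}$ normalized by $fv_n=v_{n-2}$; then $ev_n=\Lambda_n v_{n+2}$ with $\Lambda_n=\tfrac14(s-1-n)(s+1+n)$ by the Casimir computation performed in the text, and a rescaling $v_n\mapsto w_n$ brings $M$ to the explicit form $P_+(s)$ (if $\varepsilon=0$) or $P_-(s)$ (if $\varepsilon=1$). Simplicity is then equivalent to $\Lambda_n\ne0$ for all $n\in P(M)$, which cuts out exactly the stated ranges $s\notin 2\Bbb Z+1$ and $s\notin 2\Bbb Z$ respectively.

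It remains to match the isomorphism classes. Modules from different cases have non-isomorphic weight supports (finite vs.\ one-sided vs.\ bi-infinite, with distinct parity), so they cannot be isomorphic. Within each case, $L_m$ is determined by $\dim L_m=m+1$; the discrete-series modules are determined by their extremal weights; and among principal series the Casimir eigenvalue $c=s^2/4$ is a module invariant, so $P_\pm(s)\cong P_\pm(s')$ forces $s'=\pm s$, while the explicit formulas depend on $s$ only through $s^2$, yielding $P_\pm(s)\cong P_\pm(-s)$. The main obstacle, or rather the only step beyond bookkeeping, is the bi-infinite case: the subtlety lies in carrying out the rescaling $v_n\mapsto w_n$ coherently across all $n\in P(M)$ and reading off simplicity from the product form $\Lambda_n=\tfrac14(s-1-n)(s+1+n)$; the other three cases reduce to finite-dimensional $\mathfrak{sl}_2$-theory and the Verma-module classification already available from Exercise~\ref{sl2}.
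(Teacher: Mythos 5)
Your proof follows essentially the same route as the paper: use PBW and the central Casimir to force one-dimensional weight spaces and arithmetic-progression weight support, then split into the same four cases by boundedness of $P(M)$ and analyze each with the same Verma-module and $\Lambda_n$-computation arguments. The argument is correct; the only minor imprecision is that the formula depending on $s$ through $s^2$ is $\Lambda_n$ (in the $v_n$-basis), not the $w_n$-formulas, but the conclusion $P_\pm(s)\cong P_\pm(-s)$ is right.
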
 

\begin{exercise} Let $\widetilde P_+(s),\widetilde P_-(s)$ be the modules defined by 
\eqref{modprinser},\eqref{lan}; so they are isomorphic to $P_+(s),P_-(s)$ when $s$ 
is not an odd integer, respectively not a nonzero even integer. But we will consider 
$\widetilde P_+(s)$ when $s=2k+1$ and $\widetilde P_-(s)$ when 
$s=2k$, $k\ne 0$ (where $k$ is an integer). 

(i) Compute the Jordan-H\"older series of  $\widetilde P_+(s),\widetilde P_-(s)$ and show that they are uniserial, i.e., have a unique filtration with irreducible successive quotients. 

(ii) Do there exist isomorphisms $\widetilde P_+(s)\cong P_+(s)$, $\widetilde P_-(s)\cong P_-(s)$? 
\end{exercise} 

\subsection{Realizations}
Let us discuss realizations of these representations by admissible representations of $G$. For $L_m$ there is nothing to discuss, so we'll focus on principal series and discrete series. 

The realization of principal series has already been discussed in Example \ref{prinser}. 
Namely, let $B\subset G$ be the subgroup of upper triangular matrices $b$ with diagonal entries $(t(b),t(b)^{-1})$.  
As before we consider the spaces
$$
\Bbb V_+(s)=\lbrace F\in C^\infty(G): F(gb)=F(g)|t(b)|^{s-
1}\rbrace, 
$$
$$
\Bbb V_-(s)=\lbrace F\in C^\infty(G): F(gb)=F(g)|t(b)|^{s-1}{\rm sign}(t(b))\rbrace.
$$

These are admissible representations of $G$ acting by left multiplication. 
Let us compute $\Bbb V_\pm(s)^{\rm fin}$. 
To this end, note that the group $K=U(1)=S^1$ acts transitively on $G/B$ with stabilizer $\Bbb Z/2=\lbrace \pm 1\rbrace$. Thus, pulling the function $F$ back to $K$, we can realize $\Bbb V_\pm(s)$ as the space $\Bbb V_\pm$ of functions $F\in C^\infty(S^1)$ such $F(-z)=\pm F(z)$. 

A more geometric way of thinking about this is the following. Given a Lie group $G$ and a closed subgroup $B$ with Lie algebras $\g,\b$, every finite-dimensional representation $V$ of $B$ gives rise to a vector bundle 
$E_V:=(G\times V)/B$ over $G/B$, where the action of $B$ on $G\times V$ is given by $(g,v)b=(gb,b^{-1}v)$. 
For example, the tangent bundle $T(G/B)$ is obtained from the representation $V=\g/\b$. 
In our example, $\g/\b$ is the 1-dimensional representation of $B$ given by $b\mapsto t(b)^{-2}$. Thus 
sections of the tangent bundle on $G/B$ (i.e., vector fields) can be interpreted as 
functions $F$ on $G$ such that 
$$
F(gb)=F(g)t(b)^2. 
$$
It follows that elements of $\Bbb V_+(s)$ can be interpreted as 
sections of the bundle ${\rm K}^{\frac{1-s}{2}}$ where ${\rm K}=T^*(G/B)$
is the canonical bundle, which coincides with the cotangent bundle since $\dim(G/B)=1$ (this bundle is trivial topologically but the action of diffeomorphisms of $G/B=S^1$, in particular, of elements of $SL_2(\Bbb R)$ on its sections depends on $s$). 
In other words,  elements of $\Bbb V_+(s)$ can be  interpreted as ``tensor fields 
of non-integer rank": $\phi(u)(d\ {\rm arg}u)^{\frac{1-s}{2}}$, where $u=e^{i\theta}$, 
$\theta$ is the angle coordinate on $G/B=\Bbb R\Bbb P^1$ and $\phi$ is a smooth function.
Similarly, elements of $\Bbb V_-(s)$ can be interpreted as expressions 
 $u^{\frac{1}{2}}\phi(u)(d\ {\rm arg}u)^{\frac{1-s}{2}}$, i.e., two-valued smooth sections of the same bundle 
 which change sign when one goes around the circle. Thus the Lie algebra action on these modules  
 is by the vector fields 
$$
h=2u\partial_u,\ f=\partial_u,\ e=-u^2\partial_u,
$$ 
but they act on elements of $\Bbb V_\pm(s)$ not as on functions but as on tensor fields. 
Thus $\Bbb V_\pm(s)^{\rm fin}\subset \Bbb V_\pm(s)$ is the subspace of vectors such that $\phi\in \Bbb C[u,u^{-1}]$. Taking the basis $w_{2k}=u^{k}(d\ {\rm arg}u)^{\frac{1-s}{2}}$ in the even case 
and $w_{2k+1}=u^{k+\frac{1}{2}}(d\ {\rm arg}u)^{\frac{1-s}{2}}$ in the odd case, we have 
$$
hw_n=nw_n,\ fw_n=\tfrac{1}{2}(s-1+n)w_{n-2},\ ew_{n}=\tfrac{1}{2}(s-1-n)w_{n+2}. 
$$
Thus we get that $\Bbb V_\pm(s)^{\rm fin}\cong P_\pm(s)$ for all $s\in \Bbb C$.  

In particular, at points where $P_\pm(s)$ are reducible, this gives realizations of the discrete series. 
Namely, consider the modules $\Bbb V_+(-r)$ for odd $r\ge 1$ and $\Bbb V_-(-r)$ for even $r\ge 1$. 
The space $\Bbb V_+(-r)$ consists of elements $\phi(u)(\frac{du}{i u})^{\frac{1+r}{2}}$ where $\phi$ is smooth (note that $d\ {\rm arg}u=\frac{du}{iu}$). 
So it has the subrepresentation $\Bbb V_+^0(-r)$ of forms that extend holomorphically to the disk $|u|\le 1$. 
This means that $\phi(u)=\sum_{N\ge 0}a_Nu^{N+\frac{1+r}{2}}$, where $a_N$ is a rapidly decaying sequence (faster than any power of $N$). In other words, $\Bbb V_+^0(-r)$
consists of elements $\psi(u)(du)^{\frac{1+r}{2}}$,
where $\psi$ is smooth on the disk $|u|\le 1$ and holomorphic 
for $|u|<1$. Thus the eigenvalues of $h$ 
on $\Bbb V_+^0(-r)$ are $1+r+2N$, hence $\Bbb V_+^0(-r)^{\rm fin}=M_{r+1}^-$. 

Also, 
$\Bbb V_+(-r)$ has a subrepresentation $\Bbb V_+^\infty(-r)$ of forms that extend holomorphically to $|u|\ge 1$ (including infinity), which means that $\phi(u)=\sum_{N\ge 0}a_Nu^{-N-\frac{1+r}{2}}$.
In other words, $\Bbb V_+^\infty(-r)$
consists of elements $\psi(u^{-1})(du^{-1})^{\frac{1+r}{2}}$,
where $\psi$ is smooth on the disk $|u|\le 1$ and holomorphic 
for $|u|<1$. Thus we get $\Bbb V_+^\infty(-r)^{\rm fin}=M_{-r-1}^+$.  

Similarly, for even $r$ we get $\Bbb V_-^0(-r)^{\rm fin}=M_{r+1}^-$, $\Bbb V_-^\infty(-r)^{\rm fin}=M_{-r-1}^+$. 

\subsection{Unitary representations} 
These Fr\'echet space realizations can easily be made Hilbert space realizations, by completing with respect to the usual $L^2$-norm given by 
$$
\norm{\phi}^2=\frac{1}{2\pi}\int_0^{2\pi}|\phi(e^{i\theta})|^2d\theta. 
$$
However, this norm is only preserved by $G$ when $s$ is imaginary. In this case we obtain that the completed representations $\widehat{\Bbb V}_\pm(s)$, in particular $\widehat{\Bbb V}_-^0(0),\widehat{\Bbb V}_-^\infty(0)$, are unitary. It follows that the Harish-Chandra modules $P_\pm(s)$ for $s\in i\Bbb R$ and $M_1^-,M_{-1}^+$ are unitary. 

It turns out, however, that there are other irreducible unitary representations. Let us classify them. It suffices to classify irreducible unitary Harish-Chandra modules. Note that the relevant anti-involution on $\g$ is given by $e^\dagger=-f$, $f^\dagger=-e$, $h^\dagger=h$. Let $M$ be irreducible and $v\in M$ a vector of weight $n$. Then if $(,)$ is an invariant Hermitian form on $M$ then  
$$
(ev,ev)=-(fev,v)=((\tfrac{n+1}{2})^2-c)(v,v),
$$
where $c$ is a Casimir eigenvalue on $M$. We see that a nonzero invariant Hermitian form exists 
iff $c=\frac{s^2}{4}\in \Bbb R$, and such a form can be chosen positive definite iff $c< (\frac{n+1}{2})^2$ for every $n\in P(M)$. This shows that all discrete series representations are unitary 
and also determines the unitarity range of $s$ for the principal series representations. 
Thus we obtain the following theorem.

\begin{theorem} (Gelfand-Naimark \cite{GN}, Bargmann \cite{Ba}). 
The irreducible unitary representations of $SL_2(\Bbb R)$ are Hilbert space completions 
of the following unitary Harish-Chandra modules: 

$\bullet$ Discrete series and limit of discrete series $M_m^-,M_{-m}^+$, $m\in \Bbb Z_{\ge 1}$; 

$\bullet$ Unitary principal series $P_+(s)$, $s\in i\Bbb R$, and $P_-(s)$, $s\in i\Bbb R^\times$; 

$\bullet$ The {\bf complementary series} $P_+(s)$, $s\in \Bbb R$, $0<|s|<1$; 

$\bullet$ The trivial representation $\Bbb C$. 

Here $P_\pm(s)\cong P_{\pm}(-s)$ and there are no other isomorphisms. 
\end{theorem}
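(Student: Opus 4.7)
The plan is to combine the classification of simple $(\g,K)$-modules from the previous subsection with Harish-Chandra's admissibility theorem (Theorem \ref{admi}) and globalization theorem (Theorem \ref{inte}): the former reduces the classification of irreducible unitary representations of $G$ to that of unitarizable irreducible Harish-Chandra modules, and the latter upgrades each such module to an honest unitary Hilbert space representation. Since the list of simple $(\g,K)$-modules is already known ($L_m$, $M_m^\pm$, and $P_\pm(s)$), what remains is a purely algebraic check: for each module, decide whether it admits a positive definite invariant Hermitian form.

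The central tool is the identity
$$(ev_n, ev_n) = ((n+1)^2/4 - c)(v_n, v_n)$$
already derived in the excerpt from the $\mathfrak{su}(1,1)$ anti-involution $e^\dag=-f$, $h^\dag=h$, together with its $f$-analog $(fv_n, fv_n) = ((n-1)^2/4 - c)(v_n, v_n)$, both valid on any weight vector $v_n$ in a module on which $C$ acts by the scalar $c$. By Schur's lemma (Corollary \ref{Schur}), an invariant Hermitian form on a simple module is unique up to a real scalar, and existence of a nonzero one forces $c\in\Bbb R$ (since $c(v,v)=(Cv,v)$ must then be real). Conversely, when $c\in\Bbb R$ the two identities give a consistent recursion for the norms $(v_n,v_n)$ starting from any chosen weight, and positive definiteness is equivalent to the inequality $c\le(n+1)^2/4$ (strict unless $ev_n=0$) for every weight $n\in P(M)$, together with the analogous inequality using $f$.

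Applying this criterion: for $L_m$ with $m\ge 1$ one has $c=(m+1)^2/4$, but at the weight $n=-m$ (where $ev_{-m}\neq 0$) one finds $(n+1)^2/4=(m-1)^2/4<c$, ruling out $L_m$ for $m\ge 1$; only $L_0=\Bbb C$ survives. The discrete series $M_{-m}^+$ (and symmetrically $M_m^-$) with $m\ge 1$ has $c=(m-1)^2/4$ and weights $\{-m,-m-2,\ldots\}$; one checks that for every such weight both $(n+1)^2/4-c$ and $(n-1)^2/4-c$ are $\ge 0$, with equality only at the extremal weight where $e$ kills the generator, so the form is positive definite. For $P_+(s)$ (even weights, $s\notin 2\Bbb Z+1$) the minimum of $(n+1)^2/4$ over $n\in 2\Bbb Z$ is $1/4$, attained at $n=0,-2$ where $ev_n\neq 0$, so strict positivity forces $s^2<1$ with $s^2\in\Bbb R$: this yields $s\in i\Bbb R$ or $s\in\Bbb R$ with $0<|s|<1$ (the value $s=0$ is included in the imaginary case; $s=\pm 1$ correspond to reducible modules and are excluded). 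For $P_-(s)$ (odd weights, $s\notin 2\Bbb Z$) the minimum of $(n+1)^2/4$ is $0$ at $n=-1$ with $ev_{-1}=(s^2/4)v_1\neq 0$ whenever $s\neq 0$, forcing $s^2<0$, i.e., $s\in i\Bbb R\setminus\{0\}$; and $P_-(0)$ is reducible, hence automatically excluded.

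Finally, the only isomorphisms among the surviving modules are $P_\pm(s)\cong P_\pm(-s)$, as already recorded in the $(\g,K)$-module classification; the discrete series, trivial module and principal series are pairwise non-isomorphic by their weight supports and central characters. The main obstacle is not any single step but the careful bookkeeping around the special values $s=0$ and $s=\pm 1$ (and more generally integer $s$), where reducibility must be disentangled from the boundary of the unitarity region, together with the extremal weights in the discrete series, where the $e$- or $f$-image vanishes and the basic inequality degenerates to an equality that must be correctly interpreted as compatible with positive definiteness rather than violating it.
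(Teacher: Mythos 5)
Your proof follows the paper's argument exactly: the same invariant-form identity $(ev_n,ev_n)=\bigl((n+1)^2/4-c\bigr)(v_n,v_n)$ derived from $e^\dagger=-f$, the same reduction via Harish-Chandra's admissibility and globalization theorems, and the same case-by-case positivity check over the four families $L_m$, $M_m^\pm$, $P_\pm(s)$. Your more careful phrasing at extremal weights — allowing $c\le (n+1)^2/4$ with equality only where $ev_n=0$ — in fact tightens the paper's shorthand ``positive definite iff $c<(\tfrac{n+1}{2})^2$ for every $n\in P(M)$'', which read literally would wrongly rule out the discrete series (where $c=(m-1)^2/4$ is attained at the extremal weight).
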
 

Let us discuss explicit Hilbert space realizations of the unitary representations. We have already described such unitary realizations of principal series in $L^2(S^1)$, except the complementary series. 
For discrete series we only gave realizations for $m=1$, as $M_1^-,M_{-1}^+$ are direct summands 
in $P_-(0)$. However, one can give a realization for any $m$. To this end, note that 
$G=SL_2(\Bbb R)$ acts by fractional linear transformations on the disk $|u|\le 1$. Moreover, 
we have the Poincar\'e (hyperbolic) metric on the disk which is $G$-invariant. 
The volume element for this metric looks like 
$$
\mu=\frac{dud\overline u}{(1-|u|^2)^2}.
$$
Thus for expressions $\omega=\psi(u)(du)^{m\over 2}$ where $m\ge 2$ is an integer and $\psi(u)$ 
is holomorphic for $|u|<1$ we may define the $G$-invariant norm
$$
\norm{\omega}^2=\int_{|u|<1}\frac{\omega\overline\omega}{\mu^{\frac{m}{2}-1}}=\int_{|u|< 1}|\psi(u)|^2(1-|u|^2)^{m-2}dud\overline u. 
$$
Hence the Hilbert space completion $\widehat M_{m}^-$ may be realized as the space 
$H_m$ of holomorphic $m\over 2$-forms $\omega=\psi(u)(du)^{m\over 2}$ for $|u|<1$ for which 
$\norm{\omega}^2<\infty$ (note that this space is nonzero only if $m\ge 2$). 

Likewise, $\widehat M_{-m}^+$ can be similarly realized via antiholomorphic forms. Indeed, conjugation by the matrix $\begin{pmatrix} 0& 1\\ 1& 0\end{pmatrix}$ (of determinant $-1$) defines an outer automorphism of $SL_2(\Bbb R)$ 
which is induced by complex conjugation on the unit disk, and this automorphism exchanges $M_m^-$ with $M_{-m}^+$. 

\begin{exercise} Let $G_\ell$ be the $\ell$-fold cover of $PSL_2(\Bbb R)$ (for example, $G_1=PSL_2(\Bbb R)$, $G_2=SL_2(\Bbb R)$). Classify irreducible admissible representations (up to infinitesimal equivalence) and irreducible unitary representations 
of $G_\ell$ for all $\ell$. 

{\bf Hint.} The maximal compact subgroup of $G_\ell$ is $K_\ell$, the $\ell$-fold cover of $PSO(2)$. 
Thus irreducible Harish-Chandra modules for $G_\ell$ are irreducible $\mathfrak{sl}_2(\Bbb C)$-modules on which the element $h$ acts diagonalizably with eigenvalues in $\frac{2}{\ell}\Bbb Z$. 
\end{exercise} 

\begin{exercise} Compute the matrix coefficients of the principal series modules, $\psi_{m,n}(g)=(w_m,gw_n)$, $g\in SL_2(\Bbb R)$. 

{\bf Hint.} Write $g$ as $g=U_1DU_2$ where 
$$
U_k=\exp(i\theta_kh)\in SO(2),\ \theta_k\in \Bbb R/2\pi \Bbb Z
$$ 
for $k=1,2$ and $D=\diag(a,a^{-1})$ 
is diagonal, and express $\psi_{m,n}(g)$ as $e^{i(n\theta_2-m\theta_1)}\psi(m,n,a,s)$. Write the function $\psi(m,n,a,s)$ in terms of the Gauss hypergeometric function ${}_2F_{1}$. 
\end{exercise}

\begin{exercise}\label{comserR} (i) Show that for $-1<s<0$ the formula 
$$
(f,g)_s:=\int_{\Bbb R^2}f(y)\overline{g(z)}|y-z|^{-s-1}dydz
$$
defines a positive definite inner product on the space $C_0(\Bbb R)$ 
of continuous functions $f: \Bbb R\to \Bbb C$ 
with compact support ({\it Hint}: pass to Fourier transforms). 

(ii) Deduce that if $f$ is a measurable function on $\Bbb R$ then 
$$
0\le (f,f)_s\le \infty,
$$ 
so measurable functions $f$ with $(f,f)_s<\infty$ 
modulo those for which $(f,f)_s=0$ form a Hilbert space $\mathcal H_s$ 
with inner product $(,)_s$, which is the completion of $C_0(\Bbb R)$ under $(,)_s$.  

(iii) Let us view $\mathcal H_s$ as the space of 
tensor fields $f(y)(dy)^{\frac{1-s}{2}}$, where $f$ is as in (ii).  
Show that the complementary series unitary representation $\widehat P_+(s)$ of 
$SL_2(\Bbb R)$ may be realized in $\mathcal H_s$ 
with $G$ acting naturally on such tensor fields.
({\it Hint:} show that the differential form $\frac{dydz}{(y-z)^2}$ 
is invariant under simultaneous M\"obius transformations of $y,z$ by the same matrix). 
\end{exercise} 

\section{\bf Chevalley restriction theorem and Chevalley-Shephard-Todd theorem} 

\subsection{Chevalley restriction theorem} 
Let $\g$ be a semisimple complex Lie algebra with Cartan subalgebra $\h$, and let $W$ be the corresponding Weyl group. 
Given $F\in \Bbb C[\g]^\g$, let ${\rm Res}(F)$ be its restriction to $\h$. 

\begin{theorem} (Chevalley restriction theorem) (i) ${\rm Res}(F)\in \Bbb C[\h]^W$.

(ii) The map ${\rm Res}: \Bbb C[\g]^\g\to \Bbb C[\h]^W$ is a graded algebra isomorphism. 
\end{theorem}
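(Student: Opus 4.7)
The plan is to treat (i) as essentially tautological from the description $W = N_G(H)/H$, then for (ii) establish that ${\rm Res}$ is a graded algebra homomorphism (clear), injective (via density of semisimple elements in $\g$), and surjective (the substantive part, via trace polynomials of the irreducible representations $L_\lambda$ combined with a polarization argument).

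For (i), any $n\in N_G(H)$ acts on $\h$ by $\Ad(n)$, and $G$-invariance of $F\in \Bbb C[\g]^\g$ gives $F(\Ad(n)h)=F(h)$ for all $h\in \h$, so ${\rm Res}(F)$ is $W$-invariant. For injectivity of ${\rm Res}$: regular semisimple elements form a Zariski dense open subset of $\g$, every such element is $G$-conjugate into $\h$, and if ${\rm Res}(F)=0$ then $G$-invariance forces $F$ to vanish on all these elements, hence on all of $\g$ by density.

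For surjectivity, for each finite-dimensional representation $(\pi,V)$ of $\g$ and each $k\ge 0$, the function $\phi_{V,k}(x):=\tr(\pi(x)^k)$ lies in $\Bbb C[\g]^\g$, and
$$
{\rm Res}(\phi_{V,k})(h)=\sum_{\mu\in P(V)}\dim V[\mu]\cdot \mu(h)^k.
$$
Taking $V=L_\lambda$ for $\lambda\in P_+$, the weight $\lambda$ appears with multiplicity one and so do the weights in its Weyl orbit $W\lambda$; all remaining weights are strictly lower in the dominance order and, grouped by $W$-orbits, are indexed by dominant weights $\nu<\lambda$. Hence
$$
{\rm Res}(\phi_{L_\lambda,k}) = f_{\lambda,k} + \sum_{\nu\in P_+,\, \nu<\lambda}\dim L_\lambda[\nu]\cdot f_{\nu,k},
$$
where $f_{\lambda,k}(h):=\sum_{\mu\in W\lambda}\mu(h)^k$. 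By induction on $\lambda\in P_+$ in the dominance order (base case $\lambda=0$), every orbit sum $f_{\lambda,k}$ lies in the image of ${\rm Res}$.

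To conclude, I will show that the $f_{\lambda,k}$ span $\Bbb C[\h]^W_k$. By polarization in characteristic zero, $\Bbb C[\h]^W_k$ is spanned by symmetrized $k$-th powers $h\mapsto \sum_{w\in W}(w\lambda)(h)^k$ as $\lambda$ ranges over all of $\h^*$; these are precisely the $f_{\lambda,k}$ for general $\lambda$. The assignment $\lambda\mapsto f_{\lambda,k}$ is a polynomial map of degree $k$ from $\h^*$ to $\Bbb C[\h]^W_k$, and any linear functional on $\Bbb C[\h]^W_k$ vanishing on $\{f_{\lambda,k}:\lambda\in P_+\}$ equivalently vanishes on $\{f_{\lambda,k}:\lambda\in P\}$ (since $f_{w\lambda,k}=f_{\lambda,k}$ and $WP_+=P$), hence gives a polynomial on $\h^*$ vanishing on the full-rank lattice $P$, which is Zariski dense, so the functional is zero. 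Thus ${\rm Res}$ is surjective in each degree, and combined with injectivity and the evident graded algebra structure this yields the isomorphism. The main obstacle is this inductive surjectivity step, where one must simultaneously exploit the dominance-order structure of the weights of $L_\lambda$ and the Zariski density of $P$ in $\h^*$ to argue that integer dominant weights already produce enough $W$-invariants.
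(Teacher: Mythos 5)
Your proof follows essentially the same route as the paper: use the trace polynomials $\operatorname{Tr}_{L_\lambda}(x^k)$, express the $W$-orbit power sums in terms of these by inverting the unitriangular weight-multiplicity matrix (you phrase this as induction on the dominance order; the paper writes out the inversion explicitly), and then show the orbit power sums span $\Bbb C[\h]^W_k$ via Zariski density of the weight lattice combined with polarization. The paper finishes with the observation that the span of $\lambda^n$, $\lambda\in P_+$, is $GL(\h^*)$-invariant and $S^n\h^*$ is $GL(\h^*)$-irreducible; you instead invoke polarization directly — these are equivalent ways to close the argument. One small imprecision in the final step: the orbit sum $\lambda\mapsto f_{\lambda,k}=\sum_{\mu\in W\lambda}\mu^k$ is \emph{not} a polynomial map in $\lambda$, since $|W\lambda|$ jumps at singular weights. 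You should instead use the genuine $W$-symmetrization $g_{\lambda,k}:=\sum_{w\in W}(w\lambda)^k=|W_\lambda|\,f_{\lambda,k}$, which \emph{is} polynomial in $\lambda$, differs from $f_{\lambda,k}$ only by a nonzero constant (so spans the same subspace), and makes the Zariski-density argument literally correct. With that correction, the proof is sound and is the paper's proof in lightly different clothing.
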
 

\begin{proof} (i) Let $G$ be the adjoint complex Lie group corresponding to $\g$. 
Then $\Bbb C[\g]^\g=\Bbb C[\g]^G$, so $F$ is $G$-invariant. 
Thus, denoting by $H$ the maximal torus in $G$ with ${\rm Lie}H=\h$, 
we see that the normalizer $N(H)$ preserves ${\rm Res}(F)$. 
Since $H$ acts trivially on $\h$, we get that $W=N(H)/H$ preserves ${\rm Res}(F)$, as desired. 

(ii) It is clear that ${\rm Res}$ is a graded algebra homomorphism, so we just need to show that 
it is bijective. The injectivity of this map follows immediately from the fact that ${\rm Res}(F)$ determines the values of $F$ on the subset of semisimple elements $\g_{s}\subset \g$, and this subset is dense in $\g$. 

It remains to prove the surjectivity of Res. Consider the functions 
$$
F_{\lambda,n}(x):=\Tr_{L_\lambda}(x^n)=\chi_\lambda(x^n), \ x\in \g
$$
in $\Bbb C[\g]^\g$, where $\chi_\lambda$ is the character of $L_\lambda$.
We'll show that the functions ${\rm Res}(F_{\lambda,n})$ for various $\lambda$ span $\Bbb C[\h]^W[n]=(S^n\h^*)^W$ for each $n$, which implies that Res is surjective. 

To this end, for every dominant integral weight $\lambda\in P_+$ let $m_\lambda$ 
be the orbit sum 
$$
m_\lambda :=\sum_{\mu\in W\lambda}e^\mu\in \Bbb C[P]^W.
$$
We have 
$$
\chi_\lambda=\sum_{\mu\le \lambda}N_{\lambda \mu}m_\mu,
$$
where $\mu\le \lambda$ means that $\lambda-\mu$ is a (possibly empty) sum of positive roots, and $N_{\lambda\mu}$ is the matrix of weight multiplicities (in particular, $N_{\lambda\lambda}=1$). 
This matrix is triangular with ones on the diagonal, so we can invert it and get 
\begin{equation}\label{inve}
m_\lambda=\sum_{\mu\le\lambda} \widetilde N_{\lambda\mu} \chi_\mu 
\end{equation} 
for some integers $\widetilde N_{\lambda\mu}$. 
Now, for $h\in \h$, let 
$$
M_{\lambda,n}(h):=\sum_{\mu\in W\lambda}\mu(h)^n=\frac{|W\lambda|}{|W|}\sum_{w\in W}\lambda(wh)^n. 
$$
(note that $\mu(x)^n=\mu(x^n)$). By \eqref{inve} we have 
$$
M_{\lambda,n}(h)=\sum_{\mu\le \lambda} \widetilde N_{\lambda\mu} F_{\mu,n}(h).
$$
Thus it suffices to show that $M_{\lambda,n}(h)$ for various $\lambda$ span 
$(S^n\h^*)^W[n]$ for each $n$. Since averaging over $W$ is a surjection $S^n\h^*\to (S^n\h^*)^W$, it suffices to show that the functions $\lambda^n$ for $\lambda\in P_+$ span $S^n\h^*$. 

Denote the span of these functions by $Y$. Since $P_+$ is Zariski dense in $\h^*$, we find that 
$\lambda^n\in Y$ for all $\lambda\in \h^*$. Thus $Y\subset S^n\h^*$ is a subrepresentation of $GL(\h^*)$. But $S^n\h^*$ is an irreducible representation of $GL(\h^*)$, hence $Y=S^n\h^*$. 
This completes the proof of (ii). 
\end{proof} 

\begin{remark}\label{dualfo} 1. Since the Killing form allows us to identify $\g\cong \g^*$ and $\h\cong \h^*$, the Chevalley restriction theorem is equivalent to the statement 
that the restriction map ${\rm Res}: \Bbb C[\g^*]^\g=(S\g)^\g\to \Bbb C[\h^*]^W=(S\h)^W$
is a graded algebra isomorphism. 

2. The Chevalley restriction theorem trivially generalizes to reductive Lie algebras. 
\end{remark} 

\begin{example}\label{glex} Let $\g=\mathfrak{gl}_n(\Bbb C)$. 
Then by the fundamental theorem on symmetric functions,  
$\Bbb C[\h]^W=\Bbb C[x_1,...,x_n]^{S_n}=\Bbb C[e_1,...,e_n]$ 
where 
$$
e_i(x_1,...,x_n)=\sum_{k_1<...<k_i}x_{k_1}...x_{k_i}
$$
 are elementary symmetric functions. 
The Chevalley restriction theorem thus says 
that restriction defines an isomorphism between 
the algebra $\Bbb C[\g]^\g$ of conjugation-invariant polynomials 
of a single matrix $A$ and  $\Bbb C[e_1,...,e_n]$. Namely, 
let $a_i:={\rm Tr}(\wedge^iA)$ be the coefficients of the characteristic 
polynomial of $A$ (up to sign). Then $\Bbb C[\g]^\g=\Bbb  C[a_1,...,a_n]$ 
and $a_i|_\h=e_i(x_1,...,x_n)$. Another set of generators are $b_i:={\rm Tr}(A^i)$, 
$1\le i\le n$; we have $b_i|_\h=p_i(x_1,...,x_n)$, where 
$$
p_i(x_1,...,x_n):=\sum_{k=1}^n x_k^i
$$
are the power sums, another set of generators of the algebra of symmetric functions. Yet another generating set is $c_i:={\rm Tr}(S^iA)$
which restrict to complete symmetric functions 
$$
h_i(x_1,...,x_n)=\sum_{k_1\le...\le k_i}x_{k_1}...x_{k_i}.
$$ 
Thus 
$$
a_i(A)=e_i(x_1,....,x_n),\ b_i(A)=p_i(x_1,...,x_n),\ c_i(A)=h_i(x_1,...,x_n),
$$ 
where $x_1,...,x_n$ are the eigenvalues of $A$. Note that $a_1(A)=b_1(A)=c_1(A)={\rm Tr}(A)$ and $a_n(A)=\det A$. 

For $\g=\mathfrak{sl}_n$ (type $A_{n-1}$), the story is the same, except that 
$e_1=p_1=h_1=0$ and $a_1=b_1=c_1=0$, so they should be removed. 
\end{example}

\begin{example}\label{ospex}
Similarly, for $\g=\mathfrak{so}_{2n+1}(\Bbb C)$ and $\g=\mathfrak{sp}_{2n}(\Bbb C)$
(types $B_n$ and $C_n$) we have 
$$
\Bbb C[\h]^W=\Bbb C[x_1,...,x_n]^{S_n\ltimes (\Bbb Z/2)^n}=
$$
$$
\Bbb C[x_1^2,...,x_n^2]^{S_n}=\Bbb C[e_2,e_4,...,e_{2n}]=\Bbb C[p_2,p_4,...,p_{2n}]=C[h_2,h_4,...,h_{2n}],
$$ 
where $e_k,p_k,h_k$ are symmetric functions of $2n$ variables evaluated at the point 
$(x_1,...,x_n,-x_n,...,-x_1)$, and $e_{2i}=a_{2i}|_\h$, $p_{2i}=b_{2i}|_\h$, $h_{2i}=c_{2i}|_\h$ (note that the odd-indexed symmetric functions evaluate to $0$). This is so because the eigenvalues of $A$ are $x_1,...,x_n,-x_n,...,-x_1$, and also $0$ in the orthogonal case. 

The case $\g=\mathfrak{so}_{2n}(\Bbb C)$ (type $D_n$) is a bit trickier. 
In this case the Weyl group is $W=S_{n}\ltimes (\Bbb Z/2)^n_+$, where 
$(\Bbb Z/2)^n_+$ is the group of binary $n$-dimensional vectors with zero sum of coordinates. Thus it is easy to check that
$$
\Bbb C[\h]^W=\Bbb C[e_2,...,e_{2n-2},\sqrt{e_{2n}}]. 
$$
where $e_j=e_j(x_1,...,x_n,-x_n,...,-x_1)$. 
The polynomial $\sqrt{e_{2n}}={\rm i}^nx_1...x_n$ 
is the restriction of the {\bf Pfaffian} ${\rm Pf}(A)=\sqrt{\det A}$. 
Thus 
$$
\Bbb C[\g]^\g=\Bbb C[a_2(A),...,a_{2n-2}(A),{\rm Pf}(A)]. 
$$ 

The generators 
of $\Bbb C[\g]^\g$ for exceptional $\g$ are less explicit, however. 
\end{example} 

\subsection{Chevalley-Shephard-Todd theorem, part I}

In Examples \ref{glex}, \ref{ospex} we observe that the algebras $\Bbb C[\h]^W$ of Weyl group invariant polynomials for classical groups are free (polynomial) algebras. This is not true for a general finite group: e.g. if $G=\Bbb Z/2$ acting on $\Bbb C^2$ by $(x,y)\mapsto (-x,-y)$ then the ring of invariants $\Bbb C[x,y]^{\Bbb Z/2}$ is $\Bbb C[a,b,c]$ where $a=x^2,b=xy,c=y^2$, and it is not free -- it has a relation 
$ac=b^2$ (and the set of generators is minimal). It turns out, however, that this is true for all Weyl groups and more generally complex reflection groups. 

\begin{definition} A diagonalizable automorphism $g: V\to V$ of a finite-dimensional complex vector space $V$ is called a {\bf complex reflection} if ${\rm rank}(g-1)=1$; in other words, in some basis $g=\diag(\lambda,1,...,1)$ where $
\lambda\ne 0,1$. A {\bf complex reflection group} is a finite subgroup $G\subset GL(V)$ generated by complex reflections. 
\end{definition} 

For example, the Weyl group $W\subset GL(\h)$ of a semisimple Lie algebra $\g$ and, more generally, a finite Coxeter group is a complex reflection group, but there are others, e.g. $S_n\ltimes (\Bbb Z/m)^n$ acting on $\Bbb C^n$ for $m>2$, or, more generally, the subgroup 
$G(m,d,n)$ in this group consisting of elements for which the sum of $\Bbb Z/m$-coordinates lies in $d\cdot \Bbb Z/m$ for some divisor $d$ of $m$. 

It is easy to see that any complex reflection group is uniquely a product of irreducible ones, and irreducible complex reflection groups were classified by Shephard and Todd in 1954. Besides symmetric groups $S_n$ acting on $\Bbb C^{n-1}$ and $G(m,d,n)$ acting on $\Bbb C^n$ (which includes dihedral groups), there are $34$ exceptional groups, which include 19 subgroups of $GL_2$, 6 exceptional Coxeter groups of rank $\ge 3$ ($H_3,H_4,F_4,E_6,E_7,E_8$), and $9$ other groups. 

\begin{theorem}\label{CST1} (Chevalley-Shephard-Todd theorem, part I, \cite{Che}, \cite{ST}) Let $V$ be a finite-dimensional complex vector space and  $G\subset GL(V)$ 
be a finite subgroup. Then $\Bbb C[V]^G$ is a polynomial algebra
if and only if $G$ is a complex reflection group.  
\end{theorem}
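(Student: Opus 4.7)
The plan is to prove both directions separately, with the harder direction being "reflection group $\Rightarrow$ polynomial invariants". I would set $R := \mathbb{C}[V]$, write $R^G_+$ for the augmentation ideal of $R^G$, and use the Reynolds (averaging) operator $\pi : R \to R^G$, which is $R^G$-linear.

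\medskip

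\textbf{Key Lemma.} The heart of the argument is the following: if $G$ is generated by reflections, $f_1,\dots,f_k \in R^G$ are homogeneous with $f_1 \notin (f_2,\dots,f_k)R^G$, and $p_1,\dots,p_k \in R$ are homogeneous with $\sum p_i f_i = 0$, then $p_1 \in (R^G_+)R$. I would prove this by induction on $\deg p_1$: the base case $\deg p_1 = 0$ follows by applying $\pi$. For the induction step, pick any reflection $s \in G$ with defining linear form $\ell_s$ of its fixed hyperplane; since $(s-1)p$ is divisible by $\ell_s$ for every $p \in R$, the relation $\sum (s-1)(p_i)\,f_i = 0$ can be divided by $\ell_s$ to produce a shorter relation, to which the inductive hypothesis applies. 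This shows $(s-1)p_1 \in (R^G_+)R$ for every reflection $s$, hence for every $g \in G$ since reflections generate $G$. Averaging yields $p_1 \equiv \pi(p_1) \pmod{(R^G_+)R}$, and $\pi(p_1) \in R^G_+ \subset (R^G_+)R$.

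\medskip

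\textbf{Polynomial invariants from the Key Lemma.} Let $f_1,\dots,f_r$ be a minimal homogeneous generating set of $R^G$, of degrees $d_i$. Suppose for contradiction they satisfy a weighted-homogeneous polynomial relation $P(f_1,\dots,f_r)=0$ of minimal (weighted) degree. Set $q_i := (\partial P/\partial y_i)(f_1,\dots,f_r) \in R^G$; minimality forces not all $q_i$ to vanish. After reordering, let $q_1,\dots,q_s$ minimally generate the ideal of $R^G$ they span, writing $q_j = \sum_{i\le s}h_{ji}q_i$ for $j>s$. Differentiating $P(f_1,\dots,f_r)=0$ with respect to $x_k$ and substituting gives $\sum_{i\le s} q_i Q_{ik} = 0$ where $Q_{ik} := \partial f_i/\partial x_k + \sum_{j>s} h_{ji}\,\partial f_j/\partial x_k$. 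The Key Lemma applied to the $q_i$ forces $Q_{1k} \in (R^G_+)R$ for all $k$. Multiplying by $x_k$ and summing, Euler's formula gives $d_1 f_1 + \sum_{j>s} d_j h_{j1} f_j = \sum_m B_m f_m$ with $B_m$ homogeneous of degree $d_1 - d_m$. Since $\deg B_1 = 0$ but it must be a sum of products involving coefficients of degree $d_1-1-d_1 = -1$, we get $B_1 = 0$, so $f_1 \in (f_2,\dots,f_r)R$; applying $\pi$ yields $f_1 \in (f_2,\dots,f_r)R^G$, contradicting minimality.

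\medskip

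\textbf{Converse via Hilbert series.} Assume $R^G$ is polynomial with generators of degrees $d_1,\dots,d_n$, and let $H \le G$ be the subgroup generated by the reflections in $G$. By the direction just proved, $R^H$ is polynomial, say with generator degrees $e_1,\dots,e_n$. Molien's formula $\mathrm{Hilb}(R^G,t) = \tfrac{1}{|G|}\sum_{g\in G}\det(1-tg)^{-1}$ combined with the identity $\prod_i(1-t^{d_i})^{-1}$ expanded at $t=1$ yields, by comparing the top two coefficients in the Laurent expansion around $(1-t)$: $\prod d_i = |G|$ and $\sum(d_i-1) = \#\{\text{reflections in }G\}$ (pairing each reflection with its inverse handles the $1/(1-\lambda)$ sums). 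Applying the same two formulas to $H$ and using that the reflections in $H$ are precisely those of $G$ gives $\sum d_i = \sum e_i$. Because $R^G$ is a polynomial (regular) subring of the polynomial (hence Cohen-Macaulay) ring $R^H$, and $R^H$ is module-finite over $R^G$, the ring $R^H$ is a free $R^G$-module; therefore $P(t) := \mathrm{Hilb}(R^H)/\mathrm{Hilb}(R^G) = \prod_i (1-t^{d_i})/(1-t^{e_i})$ is a polynomial with $P(0)=1$. But $\sum d_i = \sum e_i$ makes the rational function $P(t)$ have numerator and denominator of equal degree, so as a polynomial it is constant, forcing $P(t)=1$. This gives $|G|/|H| = P(1) = 1$, hence $G = H$.

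\medskip

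\textbf{Main obstacle.} The delicate point is the Key Lemma's inductive step and the precise degree bookkeeping when extracting the contradiction (notably the observation $\deg B_1 = 0$ combined with the nonexistence of degree $-1$ polynomials, which is what forces $B_1=0$). For the converse, the subtlety is verifying that $R^H$ is $R^G$-free — I would invoke the Auslander--Buchsbaum formula (or the standard consequence of Cohen-Macaulayness over a regular base) rather than reprove it. Everything else is bookkeeping with Hilbert series and Molien.
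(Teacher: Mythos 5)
Your ``if'' direction is essentially the paper's proof: your Key Lemma is Lemma~\ref{lee2} of the paper (same induction on $\deg p_1$, dividing $(\sigma-1)p_i$ by the defining linear form of the reflection hyperplane, then averaging), and your passage from the Key Lemma to algebraic independence via differentiation and Euler's formula is Lemma~\ref{lee3}, with the same degree bookkeeping. The ``only if'' direction, however, takes a genuinely different route. The paper argues geometrically: letting $H$ be the subgroup generated by the reflections, it shows $G/H$ acts freely on the affine space $V/H$ outside a set of codimension $\geq 2$, so the Jacobian of the $n$-to-$n$ map $V/H \to V/G$ is a nowhere-vanishing, hence constant, polynomial; the map is therefore \'etale, hence a local isomorphism at $0$, forcing $H = G$. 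You instead argue via Hilbert series and Molien's formula: comparing the two leading Laurent coefficients of $\mathrm{Hilb}(R^K,t)$ at $t=1$ gives $\prod d_i = |K|$ and $\sum(d_i - 1) = \#\{\text{reflections in } K\}$; since $G$ and $H$ have the same reflections, $\sum d_i = \sum e_i$; freeness of $R^H$ over $R^G$ (via Auslander--Buchsbaum) makes $\prod_i (1-t^{d_i})/\prod_i(1-t^{e_i})$ a polynomial of degree $\sum d_i - \sum e_i = 0$ with constant term $1$, hence identically $1$, and evaluating at $t=1$ gives $|G|=|H|$. Both arguments are correct and standard. The paper's is more self-contained given the toolkit it has already built, while yours imports Molien's formula and Auslander--Buchsbaum (neither stated in the text) but in exchange produces the identities $\prod d_i = |G|$ and $\sum(d_i-1) = \#\{\text{reflections}\}$ directly, the first of which the paper only obtains later as a consequence of Theorem~\ref{CST2}.
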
 

\section{\bf Proof of the CST theorem, part I} 

\subsection{Proof of the CST theorem, part I, the ``if" direction} 

We first need a lemma from invariant theory. 
Let $G\subset GL(V)$ be a finite subgroup, 
and $I\subset \Bbb C[V]$ be the ideal generated by 
positive degree elements of $\Bbb C[V]^G$. Let 
$f_1,...,f_r\in \Bbb C[V]^G$ be homogeneous 
generators of $I$ (which exist by the Hilbert basis theorem).

\begin{lemma}\label{lee1} The algebra $\Bbb C[V]^G$ is generated by $f_1,...,f_r$; in particular, it is finitely generated.
\end{lemma}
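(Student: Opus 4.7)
The plan is to induct on the degree of a homogeneous invariant and use the averaging (Reynolds) operator to descend to lower-degree invariants. Define $R : \Bbb C[V] \to \Bbb C[V]^G$ by $R(h) := \frac{1}{|G|}\sum_{g\in G} g\cdot h$. Since $G$ is finite and we work over $\Bbb C$, this is a well-defined $\Bbb C$-linear projector onto $\Bbb C[V]^G$, it preserves degrees, and the key property we will use is $\Bbb C[V]^G$-linearity: for $p\in \Bbb C[V]^G$ and $h\in \Bbb C[V]$ we have $R(ph)=pR(h)$, because $g\cdot (ph) = p\cdot (g\cdot h)$ when $p$ is invariant.

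The induction is on degree. The base case $d=0$ is trivial since constants lie in $\Bbb C\subset \Bbb C[f_1,\ldots,f_r]$. For the inductive step, let $f\in \Bbb C[V]^G$ be homogeneous of degree $d>0$. Since $f$ has positive degree it lies in $I$, so we may write $f = \sum_{i=1}^r g_i f_i$ for some $g_i\in \Bbb C[V]$. The $f_i$ are homogeneous, so taking the degree $d-\deg f_i$ component of each $g_i$ we may assume $g_i$ is homogeneous of that degree; note that $\deg g_i = d-\deg f_i < d$ because each $f_i$ has positive degree.

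Applying $R$ and using that $f$ and each $f_i$ are invariant, I would conclude
\[
f \;=\; R(f) \;=\; \sum_{i=1}^r R(g_i f_i) \;=\; \sum_{i=1}^r R(g_i)\, f_i,
\]
where each $R(g_i)\in \Bbb C[V]^G$ is homogeneous of degree strictly less than $d$. By the inductive hypothesis $R(g_i)\in \Bbb C[f_1,\ldots,f_r]$, and hence so is $f$. This shows $\Bbb C[V]^G = \Bbb C[f_1,\ldots,f_r]$, which is in particular finitely generated.

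There is essentially no obstacle here beyond identifying the right tool: once one has the Reynolds operator and its $\Bbb C[V]^G$-linearity, the argument is a one-line induction. The only subtlety is being careful to extract the correct homogeneous component of $g_i$ so that applying $R$ genuinely lowers the degree; this works because $\deg f_i>0$ for all $i$, which is guaranteed by the definition of $I$ as the ideal generated by \emph{positive} degree invariants.
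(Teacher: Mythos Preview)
Your proof is correct and is essentially identical to the paper's own argument: the paper also inducts on degree, writes $f=\sum_i s_i f_i$ with $s_i$ homogeneous of smaller degree, and then applies the $G$-averaging operator (denoted $h\mapsto h^*$ there, your $R$) to replace each $s_i$ by an invariant $s_i^*$ to which the inductive hypothesis applies. The only cosmetic difference is that you explicitly name the Reynolds operator and state its $\Bbb C[V]^G$-linearity, whereas the paper uses this linearity implicitly in the line $f=s_1^*f_1+\cdots+s_r^*f_r$.
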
 

\begin{proof} We need to show that every homogeneous 
$f\in \Bbb C[V]^G$ is a polynomial of $f_1,...,f_r$. 
The proof is by induction in $d=\deg f$. The base $d=0$ is obvious. 
If $d>0$, we have $f\in I$, so 
$$
f=s_1f_1+...+s_rf_r
$$
where $s_i\in \Bbb C[V]$ are homogeneous of degrees $<d$. 

For $h\in \Bbb C[V]$ let $h^*:=\frac{1}{|G|}\sum_{g\in G}gh\in \Bbb C[V]^G$ be the $G$-average of $h$. Then we have  
$$
f=s_1^*f_1+...+s_r^*f_r.
$$
But by the induction assumption, $s_i^*$ are polynomials of $f_1,..,f_r$, which proves the lemma. 
\end{proof} 

\begin{remark} Let $A$ be a finitely generated commutative $\Bbb C$-algebra with an action of a finite group $G$. Lemma \ref{lee1} implies that 
the algebra $A^G$ is also finitely generated (the {\bf Hilbert-Noether lemma}). Indeed, pick generators $a_1,...,a_m$ of $A$ and 
let $V\subset A$ be the (finite-dimensional) $G$-submodule generated by them. Then $A^G$ is a quotient of $(SV)^G=\Bbb C[V^*]^G$, which is 
finitely generated by Lemma \ref{lee1}.  
\end{remark} 

The next lemma establishes a special property of algebras of invariants of complex reflection groups which will allow us to prove that they are polynomial algebras. 

\begin{lemma} \label{lee2} Assume that $G$ is a complex reflection group. Let $I$ be as above, $F_1,...,F_m\in \Bbb C[V]^G$ be homogeneous, and suppose that $F_1$ does not belong to the ideal in $\Bbb C[V]^G$ generated by $F_2,...,F_m$. Suppose $g_i\in \Bbb C[V]$ for $1\le i\le m$ 
are homogeneous and $\sum_{i=1}^m g_iF_i=0$. Then $g_1\in I$. 
\end{lemma}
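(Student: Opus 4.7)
The plan is to prove the lemma by induction on $d := \deg g_1$, using the hypothesis that $G$ is generated by complex reflections in an essential way at the inductive step.

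For the base case $d = 0$, the element $g_1$ is a constant. Suppose for contradiction that $g_1 \neq 0$. Averaging the relation $\sum_i g_i F_i = 0$ over $G$ and using that $F_i \in \Bbb C[V]^G$, I obtain $\sum_i g_i^* F_i = 0$ where $g_i^*$ denotes the $G$-average (as in Lemma \ref{lee1}). Since $g_1$ is constant, $g_1^* = g_1$, so I can solve $g_1 F_1 = -\sum_{i \geq 2} g_i^* F_i$ with $g_i^* \in \Bbb C[V]^G$, contradicting the hypothesis that $F_1$ does not lie in the ideal of $\Bbb C[V]^G$ generated by $F_2, \dots, F_m$. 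Hence $g_1 = 0 \in I$.

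For the inductive step, assume the claim holds for all degrees less than $d$. The key observation is that for any complex reflection $s \in G$ with reflecting hyperplane cut out by a linear form $\alpha_s$, the operator $s - 1$ sends every polynomial $f \in \Bbb C[V]$ to a polynomial vanishing on that hyperplane, so $(s-1)f$ is divisible by $\alpha_s$. Applying $s - 1$ to the relation $\sum_i g_i F_i = 0$ and using $G$-invariance of the $F_i$ gives $\sum_i (s-1)(g_i) \cdot F_i = 0$. Writing $(s-1)(g_i) = \alpha_s \cdot h_i^{(s)}$ and canceling $\alpha_s$ (since $\Bbb C[V]$ is a domain) yields $\sum_i h_i^{(s)} F_i = 0$ with $\deg h_1^{(s)} = d-1$. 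By the inductive hypothesis, $h_1^{(s)} \in I$, hence $(s-1)(g_1) \in \alpha_s I \subset I$.

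To finish, I upgrade $(s-1)(g_1) \in I$ for every reflection $s$ to $(g-1)(g_1) \in I$ for every $g \in G$. Note that $I$ is $G$-stable (it is generated by $G$-invariants), so if $(s-1)(g_1), (t-1)(g_1) \in I$, then
$$
(st - 1)(g_1) = s\bigl((t-1)(g_1)\bigr) + (s-1)(g_1) \in sI + I = I,
$$
and since $G$ is generated by complex reflections, induction on word length gives $(g-1)(g_1) \in I$ for all $g \in G$. Summing over $G$ produces $|G|(g_1^* - g_1) \in I$. But $g_1^* \in \Bbb C[V]^G$ is homogeneous of positive degree $d > 0$, so $g_1^* \in I$ by the very definition of $I$. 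Therefore $g_1 = g_1^* - (g_1^* - g_1) \in I$, completing the induction. The only substantive step is the divisibility $(s-1)f \in \alpha_s \Bbb C[V]$ for reflections $s$, which is immediate from the definition; the rest is bookkeeping with the $G$-averaging operator and the reflection-generation hypothesis.
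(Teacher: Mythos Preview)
Your proof is correct and follows essentially the same approach as the paper's: induction on $\deg g_1$, applying $s-1$ for a reflection $s$ and using divisibility by $\alpha_s$ to lower degree, then passing from reflections to all of $G$ and averaging. Your base case is phrased via averaging the relation directly, whereas the paper first records the preliminary observation that $F_1\notin (F_2,\dots,F_m)\Bbb C[V]$ and then concludes $g_1=0$; these are the same argument, and your spelling out of the identity $(st-1)=s(t-1)+(s-1)$ together with $G$-stability of $I$ makes explicit what the paper leaves implicit.
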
 

\begin{proof} Let $J=(F_2,...,F_m)\subset \Bbb C[V]$. We claim that $F_1\notin J$. Indeed, if $F_1=s_2F_2+...+s_m F_m$ then $F_1=s_2^*F_2+...+s_m^*F_m$, contradicting our assumption. 

We prove the lemma by induction in $D:=\deg g_1$. If $D=0$ then $g_1=0$, as $F_1\notin J$. This establishes the base of induction. 

Now assume $D>0$. Let $\sigma\in G$ be a complex reflection and $\alpha$ be the linear function on $V$ defining the reflection hyperplane $V^\sigma$ (i.e., the eigenvector of $\sigma$ in $V^*$ with eigenvalue $\ne 1$). Then $\sigma g_i-g_i$ vanishes on $V^\sigma$, so is divisible by $\alpha$. Thus 
$$
\sigma g_i-g_i=h_i\alpha
$$
for some polynomials $h_i$ with $\deg h_i=\deg g_i-1$, in particular $\deg h_1=D-1$. Applying the operator 
$\sigma-1$ to the relation $\sum_{i=1}^m g_iF_i=0$ and dividing by $\alpha$, we obtain 
$$
\sum_{i=1}^m h_iF_i=0.
$$
By the induction assumption $h_1\in I$, so $\sigma g_1-g_1\in I$. {\bf Since $G$ is generated by complex reflections,} this implies that $wg_1-g_1\in I$ for any $w\in G$. Thus $g_1^*-g_1\in I$. But $g_1^*$ is a positive degree invariant, so 
$g_1^*\in I$. Hence $g_1\in I$, which justifies the induction step. 
\end{proof} 

Now we are ready to prove the ``if" direction of the Chevalley-Shephard-Todd theorem. Suppose that $f_1,...,f_r\in \Bbb C[V]^G$ are homogeneous of positive degree and form a {\it minimal} set of homogeneous generators of $I$. 

\begin{lemma}\label{lee3} $f_1,...,f_r$ are algebraically independent. 
\end{lemma}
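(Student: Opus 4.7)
The plan is to proceed by contradiction, converting a hypothetical algebraic relation among the $f_i$ into a relation expressing $f_1$ in the ideal of $\Bbb C[V]^G$ generated by $f_2,\ldots,f_r$, contradicting the minimality of the generating set. The essential ingredients are the chain rule, Euler's identity, and Lemma \ref{lee2}, which is exactly where the reflection hypothesis enters.

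Suppose $f_1,\ldots,f_r$ satisfy a nontrivial polynomial relation. Passing to a quasi-homogeneous component (for the grading $\deg y_i:=\deg f_i$), choose such a relation $P(y_1,\ldots,y_r)\ne 0$ of minimal $y$-degree with $P(f_1,\ldots,f_r)=0$, and set $g_i:=(\partial P/\partial y_i)(f_1,\ldots,f_r)\in \Bbb C[V]^G$. Minimality forces $g_i=0$ only when $\partial P/\partial y_i\equiv 0$, so since $P$ is non-constant, not all $g_i$ vanish. Reindex so that $g_1,\ldots,g_s$ is a minimal generating subset of the ideal $(g_1,\ldots,g_r)\subset \Bbb C[V]^G$, and write $g_j=\sum_{i\le s}h_{ji}g_i$ for $j>s$ with homogeneous $h_{ji}\in \Bbb C[V]^G$.

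Differentiating $P(f_1,\ldots,f_r)=0$ with respect to each coordinate $x_k$ on $V$ and substituting the expressions for the $g_j$ ($j>s$) gives, for each $k$,
$$\sum_{i=1}^s g_i\,G_{ik}=0,\qquad G_{ik}:=\frac{\partial f_i}{\partial x_k}+\sum_{j>s}h_{ji}\frac{\partial f_j}{\partial x_k}.$$
Applying Lemma \ref{lee2} (with its $F_i$ playing the role of our $g_i$, and its $g_i$ the role of our $G_{ik}$), using that $g_1\notin(g_2,\ldots,g_s)$ in $\Bbb C[V]^G$, we conclude $G_{1k}\in I$ for every $k$. Writing $G_{1k}=\sum_i a_{ik}f_i$ with homogeneous $a_{ik}\in \Bbb C[V]$, multiplying by $x_k$, summing over $k$, and invoking Euler's identity $\sum_k x_k\,\partial F/\partial x_k=(\deg F)F$ yields a homogeneous relation of degree $\deg f_1$ of the form
$$(\deg f_1)f_1+\sum_{j>s}(\deg f_j)h_{j1}f_j=\sum_i b_i f_i,\qquad b_i:=\sum_k x_k a_{ik}.$$
Averaging the right side over $G$ replaces $b_i$ by $b_i^*\in \Bbb C[V]^G$. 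Degree accounting forces $a_{1k}=0$ (since $\deg a_{1k}=\deg f_1-1-\deg f_1<0$), so the $i=1$ term drops out on the right, and rearranging produces $(\deg f_1)f_1=\sum_{i\ge 2}c_i f_i$ with $c_i\in \Bbb C[V]^G$ homogeneous, contradicting the minimality of $\{f_1,\ldots,f_r\}$.

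The main obstacle is the degree bookkeeping in the last step: one must verify that $\deg a_{1k}<0$ forces $a_{1k}=0$ (so that $f_1$ genuinely does not appear on the right), that the surviving $h_{j1}$'s only contribute when $\deg f_j\le\deg f_1$, and that the averaging step preserves both invariance and the quasi-homogeneous structure. Everything else is formal manipulation, but it is precisely the appeal to Lemma \ref{lee2} — and hence to the reflection hypothesis — that upgrades an a priori $\Bbb C[V]$-linear relation among the $\partial_k f_i$'s into one whose coefficients can be taken in $\Bbb C[V]^G$.
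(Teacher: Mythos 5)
Your proof is correct and follows the paper's strategy essentially verbatim: assume a minimal-degree quasi-homogeneous relation, differentiate via the chain rule, pass to a minimal generating subset of the partials $g_i$, apply Lemma \ref{lee2} to place the surviving coefficient $G_{1k}$ in $I$, and then use Euler's identity together with degree bookkeeping to expel $f_1$ from the minimal generating set of $I$. The only cosmetic deviation is that you take the minimal generating subset of $(g_1,\ldots,g_r)$ inside $\Bbb C[V]^G$ (and add a harmless final averaging step), whereas the paper works inside $\Bbb C[V]$; either choice yields the hypothesis $g_1\notin(g_2,\ldots,g_s)\Bbb C[V]^G$ required by Lemma \ref{lee2}, and either conclusion contradicts minimality.
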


\begin{proof} Assume the contrary, i.e., 
\begin{equation}\label{eqqq1}
h(f_1,...,f_r)=0,
\end{equation} 
where 
$h(y_1,...,y_r)$ is a nonconstant polynomial. Let $d_i:=\deg f_i$. We may assume that $h$ is quasi-homogeneous (with $\deg y_i=d_i$), of the lowest possible degree. Let $x_k$ be linear coordinates on $V$, $\partial_k:=\frac{\partial}{\partial x_k}$. Differentiating \eqref{eqqq1} with respect to $x_k$ and using the chain rule, we get 
\begin{equation}\label{linrel}
\sum_{j=1}^r h_j(\bold f)\partial_kf_j=0,
\end{equation} 
where $\bold f:=(f_1,...,f_r)$ and 
$h_j:=\frac{\partial h}{\partial y_j}$. By renumbering $f_j$ if needed, we may assume that $h_1(\bold f),...,h_m(\bold f)$ is a minimal generating set of the ideal $(h_1(\bold f),...,h_r(\bold f))\subset \Bbb C[V]$. Moreover, since $h$ is nonconstant, 
$h_j\ne 0$ for some $j\in [1,r]$, and since $h$ is of lowest degree, this implies that $h_j(\bold f)\ne 0$. So $m\ge 1$. 
Then for $i>m$ we have 
$$
h_i(\bold f)=\sum_{j=1}^m g_{ij}h_j(\bold f)
$$
for some homogeneous polynomials $g_{ij}\in \Bbb C[V]$ of degree 
$$
\deg h_i-\deg h_j=d_j-d_i.
$$ 
Substituting this into \eqref{linrel}, we get
$$
\sum_{j=1}^m p_jh_j(\bold f)=0,
$$
where 
$$
p_j:=\partial_k f_j+\sum_{i=m+1}^r g_{ij}\partial_k f_i. 
$$
Since $h_1(\bold f)\notin (h_2(\bold f),...,h_m(\bold f))$, by Lemma \ref{lee2} applied to $F_i=h_i(\bold f)$, $1\le i\le m$, 
we have $p_1\in I$. Thus 
$$
\partial_k f_1+\sum_{i=m+1}^r g_{i1}\partial_k f_i=\sum_{i=1}^r q_{ik}f_i,
$$
where $q_{ik}\in \Bbb C[V]$ are homogeneous of degree $d_1-d_i-1$. Let us multiply this equation by $x_k$ and add over all $k$. Then we get 
\begin{equation}\label{eqqq2}
d_1f_1+\sum_{i=m+1}^r g_{i1}d_if_i=\sum_{i=1}^r q_{i}f_i,
\end{equation}
where $q_i:=\sum_k x_kq_{ik}$. In particular, $q_i$ are homogeneous 
of strictly positive degree. All terms in this equation are homogeneous of the same degree $d_1$, so we must have $q_1=0$. Thus \eqref{eqqq2} implies that $f_1\in (f_2,...,f_r)$, a contradiction with our minimality assumption. 
\end{proof} 
  
Now, by Lemmas \ref{lee3} and \ref{lee1}, we have $\Bbb C[V]^G=\Bbb C[f_1,...,f_r]$. This proves the ``if" direction of the Chevalley-Shephard-Todd theorem. 

\begin{remark} Note that $r={\rm trdeg}(\Bbb C(V)^G)={\rm trdeg}(\Bbb C(V))=n$, where $n=\dim V$ and trdeg denotes the transcendence degree of a field, since transcendence degree does not change under finite extensions.
\end{remark}  

\subsection{A lemma on group actions} 

\begin{lemma} \label{grac}
Let $U$ be an affine space over $\Bbb C$ and $G$ a finite group acting on $U$ by polynomial automorphisms. 

(i) Let $u\in U$ be a point with trivial stabilizer in 
$G$. Then there exists a local coordinate system on $U$ near $u$ consisting of elements of $\Bbb C[U]^G$.  

(ii) Maximal ideals in $\Bbb C[U]^G$ (i.e., characters $\chi: \Bbb C[U]^G\to \Bbb C$) are in bijection 
with $G$-orbits on $U$, which assigns to an orbit $Gu$ the character $\chi_u(f):=f(u)$. 
Thus the set of maximal ideals in $\Bbb C[U]^G$ is $U/G$. 
\end{lemma}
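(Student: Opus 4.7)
The plan is to exploit the $G$-averaging operator $\phi \mapsto \phi^{\ast} := \tfrac{1}{|G|}\sum_{g\in G} g\phi$ (already used in Lemma~\ref{lee1}) together with the fact that $\mathbb{C}[U]$ is integral, hence a finitely generated module, over $\mathbb{C}[U]^G$.

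For part (i), set $n = \dim U$ and pick global linear coordinates $x_1,\ldots,x_n$ on $U$ centered at $u$, so that $\{dx_i|_u\}$ is a basis of $T_u^{\ast}U$. Since the stabilizer of $u$ is trivial, the orbit $Gu$ consists of $|G|$ distinct points $u = u_1, u_2, \ldots, u_m$. By Lagrange interpolation on this finite set, for each $i$ there exists a polynomial $\phi_i\in\mathbb{C}[U]$ whose $1$-jet at $u$ equals the $1$-jet of $x_i$, and which vanishes to order at least $2$ at each $u_j$ with $j\ge 2$; explicitly one may take $\phi_i = \theta\cdot x_i$ with $\theta = c\prod_{j\ge 2}\ell_{u_j}^2$, where $\ell_{u_j}$ is a linear form with $\ell_{u_j}(u_j)=0$, $\ell_{u_j}(u)\ne 0$, and $c$ is chosen so that $\theta(u) = 1$. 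Since $d(g\phi_i)|_u$ is the pullback under $g^{-1}$ of $d\phi_i|_{g^{-1}u}$, which vanishes whenever $g^{-1}u\ne u$, i.e., for $g\ne e$, the average $F_i := \phi_i^{\ast}$ lies in $\mathbb{C}[U]^G$ and satisfies $dF_i|_u = \tfrac{1}{|G|}dx_i|_u$. Thus $dF_1|_u,\ldots,dF_n|_u$ is a basis of $T_u^{\ast}U$, and by the inverse function theorem $F_1,\ldots,F_n$ form a local coordinate system at $u$.

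For part (ii), the map $U\to\mathrm{Specm}\,\mathbb{C}[U]^G$, $u\mapsto\chi_u$, obviously factors through $U/G$ since $f(gu)=f(u)$ for $f\in\mathbb{C}[U]^G$. To see that the induced map $U/G\to\mathrm{Specm}\,\mathbb{C}[U]^G$ is injective, suppose $Gu\ne Gv$: these are disjoint finite sets, so a Lagrange interpolation argument yields a polynomial $\phi\in\mathbb{C}[U]$ with $\phi\equiv 1$ on $Gu$ and $\phi\equiv 0$ on $Gv$; then $\phi^{\ast}\in\mathbb{C}[U]^G$ takes the values $1$ at $u$ and $0$ at $v$, separating $\chi_u$ from $\chi_v$. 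For surjectivity, observe that every $h\in\mathbb{C}[U]$ is a root of the monic polynomial $\prod_{g\in G}(t-gh)\in\mathbb{C}[U]^G[t]$, so $\mathbb{C}[U]$ is integral over $\mathbb{C}[U]^G$; being also a finitely generated $\mathbb{C}$-algebra, it is a finitely generated $\mathbb{C}[U]^G$-module. Given any maximal ideal $\mathfrak{m}\subset\mathbb{C}[U]^G$, the going-up theorem produces a maximal ideal $\mathfrak{M}\subset\mathbb{C}[U]$ with $\mathfrak{M}\cap\mathbb{C}[U]^G=\mathfrak{m}$, and by the Hilbert Nullstellensatz $\mathfrak{M}=\ker\chi_u$ for some $u\in U$, whence $\mathfrak{m}=\ker\chi_u$ as well.

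The only subtle point is the balancing in part (i): the triviality-of-stabilizer hypothesis is used precisely to ensure that the orbit points $u_j$ with $j\ge 2$ are genuinely distinct from $u$, which is what makes the Lagrange interpolation possible and prevents the averaging from killing the differential at $u$. Part (ii) is then essentially formal, relying only on the standard integrality plus going-up package combined with separation of orbits via averaging.
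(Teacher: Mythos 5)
Your proof is correct. Part (i) is essentially the paper's argument, just made explicit: the paper simply asserts the existence of functions $y_i$ with prescribed $1$-jets at the orbit points and then averages, whereas you construct them concretely by multiplying linear coordinates by a cutoff polynomial $\theta$ vanishing to order two at the other orbit points. In part (ii) the injectivity argument is identical, but your surjectivity argument uses a different tool than the paper's. You invoke integrality of $\Bbb C[U]$ over $\Bbb C[U]^G$ together with lying-over to produce a maximal ideal $\mathfrak M\subset\Bbb C[U]$ over a given $\mathfrak m\subset\Bbb C[U]^G$, then apply the Nullstellensatz to $\mathfrak M$. The paper instead notes that the extension ideal $I=\mathfrak m\,\Bbb C[U]$ is proper because its image under the Reynolds (averaging) projection is $\mathfrak m$, and then applies the Nullstellensatz to $I$ directly, picking any common zero. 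Both routes are standard and of comparable length; the paper's avoids citing going-up/lying-over at the cost of using the Reynolds operator property $(fh)^*=fh^*$ for $f\in\Bbb C[U]^G$, while yours leans on integrality, which is in any case established in the paper (Proposition \ref{hn}).
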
 

\begin{proof} (i) Pick a basis $\lbrace e_i\rbrace$ of $T_u^*U$. Since $gu\ne u$ for any $g\in G$, $g\ne 1$,
there exist $y_i\in \Bbb C[U]$, $1\le i\le \dim U$ such that the linear approximation 
of $y_i$ at $gu$ is zero for all $g\ne 1$, $y_i(u)=0$, and $dy_i(u)=e_i$. 
Let $y_i^*$ be the average of $y_i$ over $G$. Then $\lbrace y_i^*\rbrace$ form a required coordinate system.  

(ii) Suppose $v,u\in U,v\notin Gu$, then $Gu\cap Gv=\emptyset$, so there exists $f\in \Bbb C[U]$ such that $f|_{Gv}=0$, 
$f|_{Gu}=1$. Moreover, by replacing $f$ by $f^*$, we may choose such $f\in \Bbb C[U]^G$. 
Then $\chi_v(f)=0$ while $\chi_u(f)=1$, so $\chi_u\ne \chi_v$, hence $u\mapsto \chi_u$ 
is injective. To show that it's also surjective, take a maximal ideal $\mathfrak m\subset \Bbb C[U]^G$. It generates an ideal 
$I\subset \Bbb C[U]$ whose projection to $\Bbb C[U]^G$ is $\mathfrak m$. Thus $I$ is a proper ideal, so by the Nullstellensatz, 
its zero set $Z\subset U$ is non-empty. Let $u\in Z$, then for any $f\in \mathfrak m$, $\chi_u(f)=f(u)=0$. Hence $\mathfrak m={\rm Ker}\chi_u$, as desired.  
\end{proof} 

\subsection{Proof of the CST theorem, part I, the ``only if" direction}\footnote{This proof uses some very basic algebraic geometry.} 

Let $G\subset GL(V)$ be a finite subgroup. Let $H$ be the normal subgroup of $G$ generated by the complex reflections of $G$. Then by the ``if" part of the theorem, $\Bbb C[V]^H$ is a polynomial algebra with an action of $G/H$. In other words, using Lemma \ref{grac}(ii), $U:=V/H$ is an affine space with a (possibly non-linear) action of $G/H$. 

Moreover, we claim that $G/H$ acts freely on $U$ outside of a set of codimension $\ge 2$. Indeed, if $1\ne s\in G/H$ and $a\in s$ then $a$ is not a reflection, so $Y_s:=\cup_{a\in s}V^{a}$ has codimension $\ge 2$. Now, for any $v$ in the preimage of $U^s$ in $V$ and $a\in s$ we have $av=h^{-1}v$ for some $h\in H$, thus $hav=v$ and $v\in Y_s$. Thus $U^s$ is contained in the image of $Y_s$ in $U$, hence ${\rm codim}(U^s)\ge 2$, as claimed. 

Now assume that $\Bbb C[V]^G$ is a polynomial algebra, and
let $V/G=W$ be the corresponding affine space. Consider the natural regular map $\eta: V/H=U\to V/G=W$ between $n$-dimensional affine spaces, and let $J\in \Bbb C[U]$ be the Jacobian of this map (well defined up to scaling). If $u\in U$ and the stabilizer of $u$ in $G/H$ is trivial then by Lemma \ref{grac}, $\eta$ is \'etale at $u$, hence $J(u)\ne 0$. But as shown above, the complement of such points has codimension $\ge 2$. This implies that $J={\rm const}$, as a nonconstant polynomial would vanish on a subset of codimension $1$. Thus the quotient map $\eta$ by $G/H$ is \'etale at $0$. Hence the stabilizer $G/H$ of $0$ is trivial, i.e. $H=G$. 

\begin{remark} Let $X$ be a smooth affine algebraic variety over $\Bbb C$ and $G$ be a finite group of automorphisms of $X$. Then by the Hilbert-Noether lemma, $\Bbb C[X]^G$ is finitely generated, so $X/G:= {\rm Spec}\Bbb C[X]^G$ is an affine algebraic variety. The Chevalley-Shephard-Todd theorem implies that $X/G$ is smooth at the image $x^*\in X/G$ of 
$x\in X$  if and only if the stabilizer $G_x$ of $x$ is a complex reflection 
group in $GL(T_xX)$. In particular, $X/G$ is smooth iff all stabilizers are complex reflection groups. This follows from the {\bf formal Cartan lemma}: any action of a finite group $G$ on a formal polydisk $D$ over a field of characteristic zero is equivalent to its linearization (i.e., to the action of $G$ on the formal neighborhood of $0$ in the tangent space to $D$ at its unique geometric point). 
\end{remark} 

\section{\bf Chevalley-Shephard-Todd theorem, part II}

\subsection{Degrees of a complex reflection group}

The degrees $d_i$ of the generators $f_i$ of $\Bbb C[V]^G$ 
for a complex reflection group $G$ are uniquely determined up to relabelings (even though $f_i$ themselves are not). 
Indeed, recall that for a $\Bbb Z$-graded vector space $M$ with finite-dimensional homogeneous components its {\bf Hilbert series} 
is 
$$
H(M,q)=\sum_{i\in \Bbb Z}\dim M[i]q^i 
$$
(also called Hilbert polynomial if $\dim M<\infty$). Then the Hilbert series of $\Bbb C[V]^G$ is 
$$
H(\Bbb C[V]^G,q)=\frac{1}{\prod_{i=1}^r (1-q^{d_i})},
$$
which uniquely determines $d_i$. These numbers are usually arranged in non-decreasing order 
and are called the {\bf degrees} of $G$. For instance, 
for Weyl groups of classical simple Lie algebras we saw in Examples \ref{glex},\ref{ospex} that in type $A_{n-1}$ the degrees
are $2,3,...,n$, for $B_n$ and $C_n$ 
they are $2,4,...,2n$, and for $D_n$ they are $2,4,...,2n-2$ and $n$. In particular, in the last case, if $n$ is even, the degree $n$ occurs twice. 

\subsection{$\Bbb C[V]$ as a $\Bbb C[V]^G$-module} 

Let $R$ be a commutative ring. Let $A$ be a commutative $R$-algebra
with an $R$-linear action of a finite group $G$. 
 
\begin{proposition}\label{hn} (Hilbert-Noether theorem) (i) $A$ is integral over $A^G$. In particular, if $A$ finitely generated then it is 
module-finite over $A^G$. 

(ii) If $R$ is Noetherian and $A$ is finitely generated then so is $A^G$. 
\end{proposition}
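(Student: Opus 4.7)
\medskip

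\noindent\textbf{Proof plan.} The plan is to handle both parts via a single construction: for each $a\in A$, the monic polynomial
$$
P_a(x) := \prod_{g\in G}(x - ga) \in A[x]
$$
has coefficients that are elementary symmetric functions in the $G$-orbit of $a$, hence lie in $A^G$, and it satisfies $P_a(a)=0$. This immediately gives the first assertion of (i): every element of $A$ is integral over $A^G$, satisfying a monic relation of degree $|G|$.

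For the second assertion of (i), suppose $A = R[a_1,\dots,a_n]$ is finitely generated as an $R$-algebra. Since $R$ acts on itself trivially we have $R\subseteq A^G$, so $A = A^G[a_1,\dots,a_n]$ is finitely generated as an $A^G$-algebra. Each $a_i$ is integral over $A^G$ by the first step, so the standard fact that ``integral $+$ finitely generated as algebra $\Rightarrow$ finitely generated as module'' (proved by expanding monomials using the monic relations) shows that $A$ is module-finite over $A^G$.

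For (ii), the key idea is to squeeze $A^G$ between two finitely generated $R$-algebras and invoke the Hilbert basis theorem. First I would let $B\subseteq A^G$ be the $R$-subalgebra generated by the (finitely many) coefficients of the polynomials $P_{a_1},\dots,P_{a_n}$. Then $B$ is a finitely generated $R$-algebra, hence Noetherian since $R$ is Noetherian. By construction each $a_i$ is integral over $B$ (it satisfies $P_{a_i}$), so by the same ``integral $+$ finitely generated as algebra'' argument, $A$ is a finitely generated $B$-module.

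The final step, which is the main obstacle conceptually, is to pass from finiteness of $A$ over $B$ to finite generation of $A^G$ as an $R$-algebra. Since $B$ is Noetherian and $A$ is a finitely generated $B$-module, every $B$-submodule of $A$ is finitely generated; in particular $A^G$ is a finitely generated $B$-module. Combining this with the fact that $B$ is a finitely generated $R$-algebra, I conclude that $A^G$ is a finitely generated $R$-algebra, which proves (ii). I expect the delicate point to be keeping track of the two different finiteness conditions (module-finite vs.\ algebra-finite) and the interplay that allows Noetherianity of $B$ to do the work.
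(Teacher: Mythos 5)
Your proof is correct. Part (i) is identical to the paper's: the same polynomial $P_a(x)=\prod_{g\in G}(x-ga)$ is used. For part (ii), however, you take a slightly different route. The paper invokes the Artin--Tate lemma (stated and proved in a footnote): if $B\subset A$ is an $R$-subalgebra of a finitely generated $R$-algebra $A$ over a Noetherian $R$ and $A$ is module-finite over $B$, then $B$ is a finitely generated $R$-algebra. The paper applies this with $B=A^G$, and the footnote's proof of Artin--Tate builds an intermediate Noetherian ring $B_0\subset B$ out of the structure constants $b_{ij},b_{ijk}$ of the $B$-module presentation of $A$. Your argument dispenses with the Artin--Tate abstraction and constructs the intermediate Noetherian ring directly: you take the $R$-subalgebra $B\subseteq A^G$ generated by the coefficients of the $P_{a_i}$, observe that $A$ is module-finite over this $B$ by the integral relations already in hand, and then conclude $A^G$ is module-finite over $B$ by Noetherianity and hence algebra-finite over $R$. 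Both arguments turn on the same underlying chain (finitely generated $R$-subalgebra over which $A$ is module-finite, then submodule, then concatenate generators); the paper's version packages the middle step as a reusable lemma, whereas yours inlines it using the data already produced in part (i). Your route is arguably the more economical one in this context, at the cost of not isolating Artin--Tate as a stand-alone tool.
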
 

\begin{proof} (i) We will prove only the first statement, as the second one
then follows immediately. For $a\in A$, consider the monic polynomial 
$$
P_a(x):=\prod_{g\in G} (x-ga).
$$
It is easy to see that $P_a\in A^G[x]$ and $P_a(a)=0$, which implies the statement. 

(ii) This follows from (i) and the Artin-Tate lemma: 
If $B\subset A$ is an $R$-subalgebra of a finitely generated $R$-algebra 
$A$ over a Noetherian ring $R$ and $A$ is module-finite over $B$ then $B$ is finitely generated.\footnote{Recall the proof of the Artin-Tate lemma. Let $x_1,...,x_m$ generate $A$ as an $R$-algebra and let $y_1,..., y_n$ generate $A$ as a $B$-module. Then we can write
$$
x_i = \sum_j b_{ij} y_j,\quad y_i y_j = \sum_k b_{i j k} y_k
$$
with $b_{ij} , b_{i j k} \in B$. Then $A$ is module-finite over the $R$-algebra $B_0\subset B$  generated by $b_{i j}, b_{i j k}$ (namely, it is generated as a module over $B_0$ by the $y_i$).  Using that $R$ and hence $B_0$ is Noetherian, we obtain that $B$ is also module-finite over $B_0$. Since $B_0$ is a finitely generated $R$-algebra, so is $B$.}
 \end{proof} 

This shows for any finite $G\subset GL(V)$, the algebra $\Bbb C[V]$ is module-finite over $\Bbb C[V]^G$. Note that in (ii) we again proved that 
$\Bbb C[V]^G$ is finitely generated. 

\begin{theorem}\label{CST2} (Chevalley-Shephard-Todd theorem, part II, \cite{Che}, \cite{ST}) If $G$ is a complex reflection group then for any irreducible representation $\rho$ of $G$, the $\Bbb C[V]^G$-module 
$\Hom_G(\rho,\Bbb C[V])$ is free of rank $\dim \rho$. Thus the 
$G$-module $R_0=\Bbb C[x_1,...,x_n]/(f_1,...,f_n)$ 
is the regular representation and $\prod_{i=1}^n d_i=|G|$. 
Moreover, the Hilbert polynomial $H(R_0,q):=\sum_{N\ge 0} \dim R_0[N]q^N$ is 
$$
H(R_0,q)=\prod_{i=1}^n [d_i]_q,
$$
where $[d]_q:=\frac{1-q^d}{1-q}=1+q+...+q^{d-1}$. 
\end{theorem}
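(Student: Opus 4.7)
The plan is to show that $S := \Bbb C[V]$ is a free graded $S^G$-module, where $S^G = \Bbb C[f_1, \ldots, f_n]$ with $n = \dim V$ (as recorded in the Remark at the end of the previous section). Once this is established, the decomposition $S \cong S^G \otimes_{\Bbb C} R_0$ as graded $G$-modules (with $G$ acting trivially on $S^G$) will immediately yield all three claims: the Hilbert series for $R_0$, the isomorphism $R_0 \cong \Bbb C[G]$ as $G$-modules, and the freeness of $\Hom_G(\rho, S)$ of rank $\dim \rho$.

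First I would show that $f_1, \ldots, f_n$ is a regular sequence in $S$. By the Hilbert-Noether theorem (Proposition \ref{hn}), $S$ is module-finite over $S^G$, so $R_0 = S/(f_1, \ldots, f_n)S$ is finite over $S^G/(f_1, \ldots, f_n) = \Bbb C$; hence $\dim_{\Bbb C} R_0 < \infty$. This means $(f_1, \ldots, f_n)$ is a homogeneous system of parameters in $S$, and since $S = \Bbb C[x_1, \ldots, x_n]$ is Cohen-Macaulay, any system of parameters is automatically a regular sequence.

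Consequently the Koszul complex $K_\bullet(f_1, \ldots, f_n; S)$ is a resolution of $R_0$. But this complex equals $S \otimes_{S^G} K_\bullet(f_1, \ldots, f_n; S^G)$, and the latter is a free $S^G$-resolution of $\Bbb C = S^G/(f_1, \ldots, f_n)$. Therefore $\mathrm{Tor}_i^{S^G}(\Bbb C, S) = 0$ for $i \ge 1$, and by the graded local criterion for flatness over the graded polynomial ring $S^G$, the module $S$ is flat, hence free. Lifting a homogeneous $\Bbb C$-basis of $R_0$ back to $S$ via graded Nakayama gives a homogeneous $S^G$-basis, so $S \cong S^G \otimes_{\Bbb C} R_0$ as graded $G$-modules. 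Comparing Hilbert series yields
$$H(R_0, q) = \frac{H(S, q)}{H(S^G, q)} = \frac{\prod_{i=1}^n (1 - q^{d_i})}{(1-q)^n} = \prod_{i=1}^n [d_i]_q,$$
and evaluating at $q = 1$ gives $\dim R_0 = \prod_i d_i$.

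To identify $R_0$ with the regular representation, I would compute the graded $G$-character of $R_0$. Since $\chi_S(g, q) = 1/\det(1 - gq)$ (with $g$ acting on $V$) and the decomposition $S \cong S^G \otimes R_0$ is $G$-equivariant,
$$\chi_{R_0}(g, q) = \frac{\chi_S(g, q)}{H(S^G, q)} = \frac{\prod_{i=1}^n (1 - q^{d_i})}{\det(1 - gq)}.$$
At $q = 1$ the denominator has a zero of order $\dim V^g$ while the numerator has a zero of order $n$. For $g = 1$ both orders equal $n$ and the ratio is $\prod_i d_i$; for $g \ne 1$ we have $\dim V^g < n$, so $\chi_{R_0}(g, 1) = 0$. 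This is exactly the character of the regular representation, so $R_0 \cong \Bbb C[G]$, and in particular $\prod_i d_i = |G|$. Applying $\Hom_G(\rho, -)$ to $S \cong S^G \otimes_{\Bbb C} R_0$ then shows $\Hom_G(\rho, S)$ is free over $S^G$ of rank $\dim_{\Bbb C} \Hom_G(\rho, R_0) = \dim \rho$. The main obstacle is the regular-sequence step, which relies on the standard commutative-algebra facts that $\Bbb C[x_1, \ldots, x_n]$ is Cohen-Macaulay and that systems of parameters in such a ring are automatically regular sequences; everything else is then a clean Koszul--Nakayama--Molien computation.
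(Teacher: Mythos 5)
Your overall strategy (regular sequence $\Rightarrow$ Koszul exactness $\Rightarrow$ freeness of $S$ over $S^G$, then read off the Hilbert series and the $G$-module structure of $R_0$) is close to the paper's: the paper also establishes that $f_1,\dots,f_n$ is a regular sequence and that $S$ is a free $S^G$-module via the Koszul complex (Proposition \ref{kosz}), and then gets freeness of each isotypic component from Lemma \ref{homog}(ii). Where you differ: you appeal to the Cohen--Macaulayness of $\Bbb C[x_1,\dots,x_n]$ as a black box, whereas the paper proves the regular-sequence property from scratch through Hilbert--Samuel dimension estimates (Corollary \ref{bou1}, Proposition \ref{coo2}), keeping the exposition self-contained; and you compute the $G$-module structure of $R_0$ via a Molien-type character calculation, whereas the paper computes the rank of $\Hom_G(\rho,S)$ by passing to fraction fields and invoking Galois theory.

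There is, however, a genuine gap in your identification of $R_0$ with the regular representation. The character computation yields $\chi_{R_0}(g)=0$ for $g\ne 1$ and $\chi_{R_0}(1)=\prod_i d_i$, so it shows only that $R_0$ is isomorphic to $\bigl(\prod_i d_i/|G|\bigr)$ copies of $\Bbb C[G]$. Asserting that this ``is exactly the character of the regular representation'' silently presupposes $\prod_i d_i=|G|$, and then concluding ``in particular $\prod_i d_i=|G|$'' is circular. You need to establish $\prod_i d_i = |G|$ independently. The cleanest route is the one the paper uses: the rank of $S$ as a free $S^G$-module is $\prod_i d_i$ (being $\dim_{\Bbb C}R_0$), but passing to fraction fields that rank equals $[\Bbb C(V):\Bbb C(V)^G]=|G|$ by Artin's lemma, since $G$ acts faithfully on $\Bbb C(V)$; alternatively, $\Bbb C(V)$ is the regular representation of $G$ over $\Bbb C(V)^G$ by the normal basis theorem, so $\Hom_G(\rho,\Bbb C(V))$ has $\Bbb C(V)^G$-dimension $\dim\rho$, forcing the rank of $\Hom_G(\rho,S)$ over $S^G$ to be $\dim\rho$, and summing ranks gives $\prod_i d_i=\sum_\rho(\dim\rho)^2=|G|$. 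Once you add this step, your proof is complete.

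A minor secondary point: writing $S\cong S^G\otimes_{\Bbb C}R_0$ ``as graded $G$-modules'' requires that the homogeneous lift of a basis of $R_0$ be chosen $G$-equivariantly. This is painless since $G$ is finite (average a linear splitting of $S\twoheadrightarrow R_0$), but it deserves a word; alternatively, you can sidestep it entirely by applying Lemma \ref{homog}(ii) to the graded direct summand $\Hom_G(\rho,S)$ of $S$, as the paper does.
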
 

Thus we see that the Hilbert polynomial of $\Hom_G(\rho,R_0)$ 
is some polynomial $K_\rho(q)$ with nonnegative integer coefficients and $K_\rho(1)=\dim \rho$. It is called the {\bf fake degree polynomial} (or {\bf Kostka polynomial} for $G=S_n$). We have 
$$
\sum_\rho K_\rho(q)\dim \rho=H(R_0,q)=\prod_{i=1}^n[d_i]_q.
$$
For example, for $G=S_3$ and $V$ the reflection representation 
we have three irreducible representations: $\Bbb C_+$ (trivial), 
$\Bbb C_-$ (sign) and $V$. We have $K_{\Bbb C_+}(q)=1$ 
and 
$$
1+2K_V(q)+K_{\Bbb C_-}(q)=(1+q)(1+q^2)=1+2q+2q^2+q^3.
$$
It follows that 
$$
K_V(q)=q+q^2,\ K_{\Bbb C_-}(q)=q^3.
$$

\subsection{Graded modules} For the proof of Theorem \ref{CST2} we need to recall some basics from commutative algebra, which we discuss in the next few subsections. 

Let $k$ be a field,  $S$ a $\Bbb Z_+$-graded (not necessarily commutative) $k$-algebra with generators $f_i$ of positive integer degrees $\deg f_i=d_i$, 
$M$ a $\Bbb Z_+$-graded left $S$-module, and $M_0:=M/S_+M$, where $S_+\subset S$ is the augmentation ideal.

\begin{lemma}\label{homog} (i) Any homogeneous lift $\lbrace v_i^*\rbrace$ of a homogeneous basis $\lbrace v_i\rbrace$ of $M_0$ to $M$ is a system of generators for $M$; in particular, if $\dim M_0<\infty$ then $M$ is finitely generated. 

(ii) If in addition $M$ is projective, then $\lbrace v_i^*\rbrace$ is actually a basis 
of $M$ (in particular, $M$ is free). Thus if $\dim M_0[i]<\infty$ for all $i$ 
then 
$$
H(M,q)=H(M_0,q)H(S,q).
$$ 
In particular, if $S=k[f_1,...,f_n]$ then 
$$
H(M,q)=
\frac{H(M_0,q)}{\prod_{i=1}^n(1-q^{d_i})}.
$$
\end{lemma}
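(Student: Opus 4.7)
The plan is to prove Lemma \ref{homog} by a graded version of Nakayama's lemma, using the fact that $S_+$-powers strictly raise the minimum grading.

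For part (i), let $N \subset M$ be the $S$-submodule generated by the chosen lifts $\{v_i^*\}$. Since the images of $v_i^*$ form a basis of $M_0 = M/S_+ M$, we have $M = N + S_+ M$, and hence $M/N = S_+ (M/N)$. Now I would use the graded structure: suppose for contradiction that $M/N \neq 0$, and let $n$ be the smallest integer with $(M/N)[n] \neq 0$ (this minimum exists because $M$, and therefore $M/N$, is $\mathbb{Z}_+$-graded). Since $S$ is generated in strictly positive degrees, every element of $S_+(M/N)$ in degree $n$ is a sum of products of a degree-$d > 0$ element of $S$ with an element of $M/N$ of degree $n-d < n$, which must vanish. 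This contradicts $(M/N)[n] = (S_+(M/N))[n] \neq 0$. So $M = N$, proving (i), and finite generation follows when $\dim M_0 < \infty$.

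For part (ii), assuming $M$ is (graded-)projective, let $F = \bigoplus_i S\langle \deg v_i\rangle$ be the free graded $S$-module with homogeneous basis $\{e_i\}$ of the same degrees as $v_i^*$, and let $\pi : F \to M$ be the degree-preserving surjection sending $e_i \mapsto v_i^*$, which exists by (i). By projectivity, this sequence splits in the graded category, so $F \cong \ker\pi \oplus M$. Applying $-\otimes_S k$ (i.e., reduction modulo $S_+$) to the split short exact sequence yields a split short exact sequence $0 \to (\ker\pi)/S_+\ker\pi \to F/S_+F \to M_0 \to 0$. But $F/S_+F \to M_0$ sends the basis $\{\bar e_i\}$ to the basis $\{v_i\}$, so it is an isomorphism, forcing $(\ker\pi)/S_+\ker\pi = 0$, i.e., $\ker\pi = S_+ \ker\pi$. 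Since $\ker\pi$ is a graded submodule of $F$, it is also $\mathbb{Z}_+$-graded, so the same minimal-degree argument as in (i) forces $\ker\pi = 0$. Thus $\{v_i^*\}$ is an $S$-basis of $M$.

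For the Hilbert series statement, once $M$ is free with homogeneous basis $\{v_i^*\}$ (with $\deg v_i^* = \deg v_i$), one has as graded vector spaces $M \cong \bigoplus_i S \cdot v_i^*$, so
$$
H(M,q) = \sum_i q^{\deg v_i}\, H(S,q) = H(M_0,q)\, H(S,q).
$$
When $S = k[f_1,\dots,f_n]$ is a polynomial algebra with $\deg f_i = d_i$, the PBW-type factorization gives $H(S,q) = \prod_{i=1}^n (1-q^{d_i})^{-1}$, yielding the claimed formula.

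The main obstacle is conceptual rather than computational: one must take care that the graded Nakayama argument really does apply in both places, which requires only that $M$ (and $\ker\pi$) be bounded below in grading and that $S$ be generated in strictly positive degrees. Both conditions hold by hypothesis, so no further subtlety arises.
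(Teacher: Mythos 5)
Your proof is correct and follows essentially the same route as the paper: construct the free module on the lifts, split the surjection by projectivity, reduce modulo $S_+$, and use a degree argument to kill the kernel. The only cosmetic difference is that for part (i) you phrase the degree argument as a graded Nakayama-style contradiction (take the lowest nonzero degree of $M/N$), whereas the paper gives the equivalent explicit induction on $\deg u$ expressing each $u$ in terms of the $v_i^*$; both come down to the same observation that $S_+$ strictly raises degree and $M$ is bounded below.
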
 

\begin{proof} 
(i) We prove that any homogeneous element $u\in M$ is a linear combination of 
$v_i^*$ with coefficients in $S$ by induction in $\deg u$ (with obvious base). 
Namely, if $u_0$ is the image of $u$ 
in $M_0$ then $u_0=\sum_i c_iv_i$ for some $c_i\in k$ ($c_i=0$ unless $\deg v_i=\deg u$), and 
so  
$$
u-\sum_i c_iv_i^*=\sum_j f_ju_j,
$$
with $u_j\in M$, $\deg u_j=\deg u-d_j$. So by the induction assumption 
$$
u_j=\sum_i p_{ij}v_i^*
$$ 
for some homogeneous $p_{ij}\in S$ of degree $\deg u-d_j-\deg v_i^*$, and we get 
$$
u=\sum_i p_iv_i^*,
$$
where $p_i:=c_i+\sum_j f_jp_{ij}$.\footnote{Note that for each $i$, one of these two summands is necessarily $0$.} 

(ii) Let $M'$ be the free graded $S$-module with basis $w_i$ of degrees $\deg w_i=\deg v_i$, and 
$f: M'\to M$ be the surjection sending $w_i$ to $v_i^*$. Since $M$ is projective, the map 
$$
f\circ : \Hom(M,M')\to \Hom(M,M)
$$ 
is surjective, so we can pick a homogeneous $g: M\to M'$ of degree $0$ 
such that $f\circ g={\rm id}_M$. Then $g\circ f: M'\to M'$ is a projection which identifies 
$M'$ with $M\oplus {\rm Ker}f$ as a graded $S$-module. But the map 
$f_0: M_0'\to M_0$ induced by $f$ sends the basis $w_i$ of $M_0'$ to the basis $v_i$ of $M_0$, so is an isomorphism. It follows that $({\rm Ker}f)_0=0$, so 
${\rm Ker}f=0$ and $f$ is an isomorphism, as claimed. 
\end{proof}  

\subsection{Koszul complexes} 
Let $R$ be a commutative ring and $f\in R$. Then we can define a 2-step {\bf Koszul complex} $K_R(f)=[R\to R]$ with the differential given by multiplication by $f$ 
(the two copies of $R$ sit in degrees $-1$ and $0$). We have $H^0(K_R(f))=R/(f)$, and $K_R(f)$ is exact in degree $-1$ if and only if $f$ is not a zero divisor in $R$. This allows us to define the Koszul complex of several elements of $R$: 
$$
K_R(f_1,...,f_m)=K_R(f_1)\otimes_R...\otimes_R K_R(f_m)
$$
with $H^0(K_R(f_1,...,f_m))=R/(f_1,...,f_m)$. Thus 
$$
K_R(f_1,...,f_m)=K_R(f_1,...,f_{m-1})\otimes_R K_R(f_m).
$$ 

For example, let $R:=k[x_1,...,x_n]$ for a field $k$. 
Then the complex $K_n:=K_R(x_1,...,x_n)=K_1^{\otimes n}$ is acyclic in negative degrees 
and has $H^0=k$. Thus for any commutative $k$-algebra $S$, the complex $K_{R\otimes S}(x_1,...,x_n):=
K_R(x_1,...,x_n)\otimes S$ is acyclic in negative degrees and has $H^0=S$.
By taking $S=R$ and making a linear change of variable, this yields a 
free resolution of $R$ as an $R$-bimodule called the {\bf Koszul resolution}, which 
we'll denote by $K_n$:
$$
0\to R\otimes\wedge^n k^n\otimes R\to ...\to R\otimes \wedge^2k^n\otimes R\to R\otimes k^n \otimes R\to R\otimes R\to R.
$$
Moreover, this exact sequence is split as a sequence of $R$-modules (under right multiplication by $R$), since all participating $R$-modules are free. Hence if $M$ is any $R$-module then 
$K_n\otimes_R M$ is a free resolution of $M$ of length $n$. Thus we obtain 

\begin{proposition} \label{vani} If $i>n$ then 
 for any $k[x_1,...,x_n]$-modules $M,N$, one has ${\rm Ext}^i(M,N)=0$.
\end{proposition}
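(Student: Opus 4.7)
The plan is to apply the standard fact that $\mathrm{Ext}^i(M,-)$ can be computed from any projective resolution of $M$, and to observe that the Koszul resolution already constructed in the excerpt supplies a free (hence projective) resolution of length exactly $n$ for every $R$-module $M$. Once we have such a resolution, the vanishing of $\mathrm{Ext}^i$ for $i>n$ is automatic.

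More precisely, the paper has displayed the Koszul resolution
$$0\to R\otimes\wedge^n k^n\otimes R\to\cdots\to R\otimes k^n\otimes R\to R\otimes R\to R\to 0$$
of $R$ as an $R$-bimodule, in which each term $R\otimes\wedge^j k^n\otimes R$ is free on both sides. I would first tensor this resolution on the right with $M$ over $R$; since every term is free (hence flat) as a right $R$-module, exactness is preserved, and one obtains a complex
$$0\to R\otimes\wedge^n k^n\otimes_k M\to\cdots\to R\otimes k^n\otimes_k M\to R\otimes_k M\to M\to 0$$
in which each term is a free left $R$-module (freeness as a left $R$-module follows because $\wedge^j k^n\otimes_k M$ is a $k$-vector space, so $R$ tensored with it is a free $R$-module). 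This is the free resolution $K_n\otimes_R M$ of length $n$ asserted in the excerpt.

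Next I would apply $\mathrm{Hom}_R(-,N)$ to this resolution to obtain a cochain complex whose terms are concentrated in cohomological degrees $0,1,\dots,n$, and which vanishes in all strictly higher degrees. Since $\mathrm{Ext}^i(M,N)$ is the $i$-th cohomology of this complex, the vanishing $\mathrm{Ext}^i(M,N)=0$ for $i>n$ follows immediately.

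There is no genuine obstacle here; the only step requiring a brief verification is that tensoring the bimodule Koszul resolution on the right with $M$ preserves exactness and yields a resolution by free left $R$-modules, and both assertions are immediate from the fact that every term of the Koszul resolution is $R$-bimodule free (in particular free on the right, which handles exactness, and free on the left after tensoring, which handles freeness of the resulting resolution).
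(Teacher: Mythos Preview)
Your proposal is correct and follows exactly the approach of the paper: the paper constructs the bimodule Koszul resolution $K_n$ and then simply states that $K_n\otimes_R M$ is a free resolution of $M$ of length $n$, from which the proposition follows. You have merely spelled out the two implicit details---that right-tensoring with $M$ preserves exactness because each term is right-free, and that the resulting terms are left-free---both of which are exactly what the paper's one-line remark ``$K_n\otimes_R M$ is a free resolution of $M$ of length $n$'' is asserting.
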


\subsection{Syzygies} 
Now assume that $M$ is a finitely generated graded module over $R=k[x_1,...,x_n]$. Then $M=:M_0$ is a quotient of $R\otimes V_0$, where $V_0$ is a finite-dimensional graded vector space. By the Hilbert basis theorem, the kernel $M_1$ of the map $\phi_0: R\otimes V_0\to M$ is finitely generated, so 
is a quotient of $R\otimes V_1$ for some finite-dimensional graded space $V_1$, and the kernel $M_2$ of $\phi_1: R\otimes V_1\to M_1$ is finitely generated, and so on. 
The long exact sequences of Ext groups associated to the short exact sequences
$$
0\to M_{j+1}\to R\otimes V_j\to M_j\to 0
$$
and Proposition \ref{vani} then imply by induction in $j$ that $\Ext^i(M_j,N)=0$ for any $R$-module $N$ if $i>n-j$. In particular, the module $M_n$ is projective, hence free by Lemma \ref{homog}, i.e., we may take $V_n$ such that $M_n=R\otimes V_n$. This gives a free resolution of $M$ by finitely generated graded $R$-modules: 
$$
0\to R\otimes V_n\to...\to R\otimes V_0\to M.
$$ 
Thus, taking graded Euler characteristic we obtain

\begin{theorem}\label{syz} ({\bf Hilbert syzygy theorem}) We have 
$$
H(M,q)=\frac{p(q)}{(1-q)^n},
$$
where $p$ is a polynomial with integer coefficients. 
\end{theorem}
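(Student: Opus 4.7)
The plan is to take the finite graded free resolution of $M$ that has just been constructed in the paragraph preceding the statement, and compute its graded Euler characteristic. Recall that we have a resolution
$$0\to R\otimes V_n\to\cdots\to R\otimes V_1\to R\otimes V_0\to M\to 0,$$
where each $V_j$ is a finite-dimensional graded $k$-vector space. Exactness in every internal degree $N$ gives
$$\dim M[N]=\sum_{j=0}^n(-1)^j\dim(R\otimes V_j)[N],$$
and summing over $N$ (weighted by $q^N$) yields the identity of Hilbert series
$$H(M,q)=\sum_{j=0}^n(-1)^j H(R\otimes V_j,q).$$

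Next I would use that each $R\otimes V_j$ is a free graded $R$-module, so $H(R\otimes V_j,q)=H(V_j,q)\cdot H(R,q)$ by counting homogeneous bases. The standard computation $H(R,q)=\prod_{i=1}^n(1-q)^{-1}=(1-q)^{-n}$ (obtained by enumerating monomials $x_1^{a_1}\cdots x_n^{a_n}$) then gives
$$H(M,q)=\frac{p(q)}{(1-q)^n},\qquad p(q):=\sum_{j=0}^n(-1)^j H(V_j,q).$$
Since every $V_j$ is finite-dimensional with integer (indeed non-negative integer) graded dimensions, each $H(V_j,q)$ is a polynomial (or Laurent polynomial, if one allows negatively graded components) with integer coefficients, and hence so is $p(q)$.

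The genuinely substantive content of the theorem, namely the existence of a \emph{finite} free resolution of length at most $n$, has already been dispatched in the discussion preceding the statement via the Koszul complex vanishing in Proposition \ref{vani} combined with the lifting Lemma \ref{homog}; so once one is willing to invoke these, the remaining argument is the purely formal Euler-characteristic computation above, and there is no real obstacle to overcome.
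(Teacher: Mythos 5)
Your proof is correct and is essentially identical to the one in the paper: the paper's entire proof consists of the one-line observation that $p$ is the alternating sum of the Hilbert polynomials of the $V_j$, which is exactly the graded Euler-characteristic computation you carry out in detail. The only substantive content, as you correctly identify, is the existence of the finite free resolution, which is established in the paragraph preceding the theorem.
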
 

\begin{proof} Indeed, $p$ is just the alternating sum of the Hilbert polynomials of $V_j$. 
\end{proof} 

\subsection{The Hilbert-Samuel polynomial} 

Let $R$ be a commutative Noetherian ring and $\mathfrak{m}\subset R$ a maximal ideal. Then $R/\mathfrak{m}=k$ is a field and $\mathfrak{m}^N/\mathfrak{m}^{N+1}$ is a finite-dimensional $k$-vector space. Thus ${\rm gr}(R):=\oplus_{N\ge 0}\mathfrak{m}^N/\mathfrak{m}^{N+1}$ (where $\mathfrak{m}^0:=R$) is a graded algebra generated in degree $1$. 
So by the Theorem \ref{syz}, the Hilbert series 
$$
H({\rm gr}(R),q)=\sum_{N\ge 0} \dim_k (\mathfrak{m}^N/\mathfrak{m}^{N+1})q^N
$$
is a rational function of the form $\frac{p(q)}{(1-q)^m}$, where $p$ is a polynomial
and $m=\dim_k (\mathfrak{m}/\mathfrak{m}^{2})$. Hence
$$
P_{R,\mathfrak{m}}(N):=\sum_{j=0}^{N-1} \dim_k (\mathfrak{m}^j/\mathfrak{m}^{j+1})=
{\rm length}(R/\mathfrak{m}^N)
$$
is a polynomial in $N$ for large enough $N$ called the {\bf Hilbert-Samuel polynomial} of $R$ at $\mathfrak{m}$. 
The degree of this polynomial equals the order of the pole of 
$H({\rm gr}(R),q)$ at $q=1$. We call this degree the {\bf dimension} of $R$ at $\mathfrak{m}$, denoted $\dim_{\mathfrak{m}}R$. 
For example, if $R=k[x_1,...,x_n]$ and $\mathfrak{m}$ 
is any maximal ideal then $P_{R,\mathfrak{m}}(N)=\binom{N+n-1}{n}$, so $\dim_{\mathfrak{m}}R=n$. 

\begin{lemma}\label{bou} Let $f\in \mathfrak m$. Then $\dim_{\mathfrak{m}}(R/f)\ge \dim_{\mathfrak{m}}R-1$.
\end{lemma}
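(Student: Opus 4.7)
The plan is to bound the Hilbert--Samuel polynomial of $R/(f)$ at $\bar{\mathfrak m} := \mathfrak m/(f)$ from below by a finite difference of the Hilbert--Samuel polynomial of $R$ at $\mathfrak m$, and then read off degrees. First I will unwind the definition: since $\bar{\mathfrak m}^N = (\mathfrak m^N + (f))/(f)$, we have
$$
P_{R/(f),\bar{\mathfrak m}}(N) \;=\; \mathrm{length}\bigl(R/(\mathfrak m^N+(f))\bigr).
$$
From the short exact sequence
$$
0 \to (\mathfrak m^N+(f))/\mathfrak m^N \to R/\mathfrak m^N \to R/(\mathfrak m^N+(f)) \to 0
$$
and additivity of length this becomes
$$
P_{R/(f),\bar{\mathfrak m}}(N) \;=\; P_{R,\mathfrak m}(N) \;-\; \mathrm{length}\bigl((\mathfrak m^N+(f))/\mathfrak m^N\bigr).
$$

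Next I will bound the correction term from above. The map $R \to (\mathfrak m^N+(f))/\mathfrak m^N$ sending $r \mapsto rf + \mathfrak m^N$ is surjective, and because $f \in \mathfrak m$ we have $f\,\mathfrak m^{N-1} \subset \mathfrak m^N$, so this map factors through $R/\mathfrak m^{N-1}$. Hence
$$
\mathrm{length}\bigl((\mathfrak m^N+(f))/\mathfrak m^N\bigr) \;\le\; \mathrm{length}(R/\mathfrak m^{N-1}) \;=\; P_{R,\mathfrak m}(N-1),
$$
and combining with the previous identity gives the key inequality
$$
P_{R/(f),\bar{\mathfrak m}}(N) \;\ge\; P_{R,\mathfrak m}(N) - P_{R,\mathfrak m}(N-1)
$$
for all sufficiently large $N$.

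Finally I will take degrees. If $d := \dim_{\mathfrak m} R$, then for large $N$ the right-hand side is the first finite difference of a polynomial of degree $d$, hence a polynomial of degree exactly $d-1$ with positive leading coefficient. Since $P_{R/(f),\bar{\mathfrak m}}(N)$ is itself a polynomial for large $N$ and is bounded below by a polynomial of degree $d-1$, its degree is at least $d-1$, giving $\dim_{\mathfrak m}(R/(f)) \ge \dim_{\mathfrak m} R - 1$. There is no substantial obstacle here; the only point that needs mild care is ensuring the surjection factors through $R/\mathfrak m^{N-1}$ (which is exactly where the hypothesis $f \in \mathfrak m$ is used) and that the finite difference of the Hilbert--Samuel polynomial does drop the degree by exactly one rather than more, which follows from the standard fact that $\Delta$ reduces the degree of any polynomial by exactly one.
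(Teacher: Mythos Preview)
Your proof is correct and follows essentially the same approach as the paper's: both identify $P_{R/(f),\bar{\mathfrak m}}(N)=\mathrm{length}\bigl((R/\mathfrak m^N)/(f)\bigr)$, observe that the ideal $(f)$ in $R/\mathfrak m^N$ is the image of multiplication by $f$ on $R/\mathfrak m^{N-1}$ (using $f\in\mathfrak m$), deduce the inequality $P_{R/(f),\bar{\mathfrak m}}(N)\ge P_{R,\mathfrak m}(N)-P_{R,\mathfrak m}(N-1)$, and read off degrees. You have simply unpacked the steps more explicitly than the paper does.
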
 

\begin{proof} The ideal $(f)$ in $R/\mathfrak{m}^N$ is the image of $fR/\mathfrak{m}^{N-1}$. 
So we have 
$$
P_{R/f,\mathfrak{m}}(N)={\rm length}((R/\mathfrak{m}^N)/f)\ge {\rm length}(R/\mathfrak{m}^N)-{\rm length}(R/\mathfrak{m}^{N-1})
$$
$$
=P_{R,\mathfrak{m}}(N)-P_{R,\mathfrak{m}}(N-1),
$$
which implies the statement. 
\end{proof} 

Let $k$ be an algebraically closed field and $\mathfrak{m}_p\subset k[x_1,...,x_n]$ be the maximal ideal corresponding to $p\in k^n$. 

\begin{corollary}\label{bou1} Let $f_1,...,f_m\in k[x_1,...,x_n]$
be homogeneous polynomials of positive degrees. Let $Z$ be an irreducible component of the zero set $Z(f_1,...,f_m)\subset k^n$. Then $\dim_{\mathfrak{m}_0}k[Z]\ge n-m$.
\end{corollary}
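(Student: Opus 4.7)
The plan is to reduce the claim to an iterated application of Lemma \ref{bou}, followed by a minimal-prime argument that passes from the full quotient ring to the coordinate ring of a single component.

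First I would make a harmless reduction. A zero $f_i$ can be dropped, only weakening the hypothesis by decreasing $m$; a nonzero constant $f_i$ makes $Z(f_1,\ldots,f_m)$ empty, with nothing to prove. So assume each $f_i$ is homogeneous of positive degree; then $f_i\in\mathfrak{m}_0$, and moreover $Z(f_1,\ldots,f_m)$ is a cone with vertex $0$, so every irreducible component $Z$ contains $0$ and $\mathfrak{m}_0$ defines a genuine closed point of $Z$, making $\dim_{\mathfrak{m}_0}k[Z]$ well-posed. Next, set $R_0=k[x_1,\ldots,x_n]$ and $R_j=R_{j-1}/(\overline{f_j})$ for $1\le j\le m$, where $\overline{f_j}$ denotes the image of $f_j$. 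Since each $\overline{f_j}$ lies in the image of $\mathfrak{m}_0$, Lemma \ref{bou} applies at each step and yields $\dim_{\mathfrak{m}_0}R_j\ge \dim_{\mathfrak{m}_0}R_{j-1}-1$. Starting from $\dim_{\mathfrak{m}_0}R_0=n$ (recorded right after the definition of $\dim_{\mathfrak{m}_0}$), induction on $j$ gives
\[
\dim_{\mathfrak{m}_0}R_m\ \ge\ n-m,\qquad R_m=k[x_1,\ldots,x_n]/(f_1,\ldots,f_m).
\]

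The remaining --- and hardest --- step is to pass from this bound on the possibly reducible, possibly nonreduced ring $R_m$ to a bound on the coordinate ring $k[Z]$ of a single irreducible component. Let $\mathfrak{p}_1,\ldots,\mathfrak{p}_s\subset R_m$ be the minimal primes contained in $\mathfrak{m}_0$; these correspond to the components $Z_i$ of $Z(f_1,\ldots,f_m)$ through $0$, with $k[Z_i]=R_m/\mathfrak{p}_i$. The key structural input is
\[
\dim_{\mathfrak{m}_0}R_m \ =\ \max_i \dim_{\mathfrak{m}_0} k[Z_i],
\]
which I would prove by constructing a prime filtration $0=M_0\subset M_1\subset\cdots\subset M_r=R_m$ whose successive quotients $M_j/M_{j-1}\cong R_m/\mathfrak{q}_j$ are cyclic with $\mathfrak{q}_j$ prime and dominating some $\mathfrak{p}_i$; additivity of Hilbert--Samuel polynomials on short exact sequences then equates the degree of $P_{R_m,\mathfrak{m}_0}$ with the maximum degree among the $P_{R_m/\mathfrak{p}_i,\mathfrak{m}_0}$. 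Combined with the bound $\dim_{\mathfrak{m}_0}R_m\ge n-m$ just obtained, this produces at least one component $Z_i$ with the stated dimension bound; the stronger reading --- that every irreducible component $Z$ satisfies $\dim_{\mathfrak{m}_0}k[Z]\ge n-m$, which is Krull's height theorem --- follows by applying the same iteration of Lemma \ref{bou} inside the localization of $k[x_1,\ldots,x_n]$ at $I(Z)$, using that $I(Z)$ is minimal over $(f_1,\ldots,f_m)$. The main obstacle throughout is this last passage: Lemma \ref{bou} controls only the dimension of the total quotient, so extracting a component-wise bound requires the minimal-prime / prime-filtration input, which is the real content beyond the mechanical iteration.
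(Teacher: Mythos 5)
Your proposal takes a genuinely different route from the paper, and in its main thread it does not actually prove the stated result.

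The paper never iterates Lemma \ref{bou} at the origin. Instead, it picks a closed point $p\in Z$ not lying on any other component of $Z(f_1,\ldots,f_m)$; iterating Lemma \ref{bou} at $\mathfrak m_p$ gives $\dim_{\mathfrak m_p}k[Z]\ge n-m$ (locally at $p$, the ring $k[x_1,\ldots,x_n]/(f_1,\ldots,f_m)$ and $k[Z]$ differ only by nilpotents, which do not change the Hilbert--Samuel degree). It then uses the homogeneity of the $f_i$ --- which you invoke only to ensure $f_i\in\mathfrak m_0$ and $0\in Z$ --- in an essential second way: $k[Z]$ is a graded ring with irrelevant ideal $\mathfrak m_0$, and the initial ideal of $\mathfrak m_p$ with respect to the grading is exactly $\mathfrak m_0$, so $\mathfrak m_0^N\subset\operatorname{gr}(\mathfrak m_p^N)$ and the Hilbert--Samuel function at $\mathfrak m_0$ dominates the one at $\mathfrak m_p$. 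Hence $\dim_{\mathfrak m_0}k[Z]\ge\dim_{\mathfrak m_p}k[Z]\ge n-m$. This comparison between the cone vertex $\mathfrak m_0$ and a generic closed point $\mathfrak m_p$ via gradedness is the actual content of the corollary, and your proposal does not contain it.

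Your prime-filtration argument gives $\dim_{\mathfrak m_0}R_m=\max_i\dim_{\mathfrak m_0}k[Z_i]$, which, as you yourself flag, only shows that \emph{some} component has the right dimension. That is a real gap: the corollary is applied later (in the proof of Proposition \ref{coo2} and in Remark \ref{remci}) to an \emph{arbitrary} component, and the regular-sequence argument breaks if even one low-dimensional component were allowed. Your proposed repair --- running the iteration in the localization at $I(Z)$ --- is essentially Krull's principal ideal theorem, and to convert it into the statement about $\dim_{\mathfrak m_0}k[Z]$ you would need three further inputs the paper is avoiding: that the Hilbert--Samuel degree of $k[x]_{I(Z)}$ equals $\operatorname{ht}(I(Z))$, that $\dim Z=n-\operatorname{ht}(I(Z))$, and that $\dim_{\mathfrak m_0}k[Z]=\dim Z$. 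These are exactly the pieces of dimension theory the paper's detour through a generic closed point $p$ and the graded comparison is designed to sidestep, so that the whole argument rests on nothing beyond Lemma \ref{bou} and the Hilbert syzygy theorem already proved.
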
 

\begin{proof} Let $p\in Z$ be not contained in other components 
of $Z(f_1,...,f_m)$. Applying Lemma \ref{bou} repeatedly, we get 
$\dim_{\mathfrak{m}_p}k[Z]\ge n-m$. But ${\rm gr}(k[Z]/\mathfrak{m}_p^N)$ (the associated graded under the filtration induced by the grading on $k[Z]$) is a quotient of $k[Z]/\mathfrak{m}_0^N$.
Thus\footnote{In fact these dimensions are equal (to $\dim Z$), but we don't use it here.}
$$
 \dim_{\mathfrak{m}_0}k[Z]\ge \dim_{\mathfrak{m}_p}k[Z],
$$ 
so 
$\dim_{\mathfrak{m}_0}k[Z]\ge n-m$. 
\end{proof}

\subsection{Regular sequences} 
Let $R$ be a commutative ring. A sequence 
$f_1,...,f_n\in R$ is called a {\bf regular sequence} 
if for each $j\in [1,n]$, $f_j$ is not a zero 
divisor in $R/(f_1,...,f_{j-1})$, and $R/(f_1,...,f_n)\ne 0$. 

\begin{lemma}\label{exaa} If $f_1,...,f_n\in R$ is a regular sequence then the complex $K_R(f_1,...,f_n)$ is exact in negative degrees.
\end{lemma}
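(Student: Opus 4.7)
The plan is to proceed by induction on $n$. For the base case $n=1$, the complex $K_R(f_1) = [R \xrightarrow{f_1} R]$ is exact in degree $-1$ precisely when $f_1$ is not a zero divisor in $R$, which is part of the definition of a regular sequence of length $1$.

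For the inductive step, set $C^\bullet := K_R(f_1,\ldots,f_{n-1})$. By the inductive hypothesis, $H^i(C^\bullet) = 0$ for all $i < 0$ and $H^0(C^\bullet) = R/(f_1,\ldots,f_{n-1}) =: \bar R$. Next I would identify
$$K_R(f_1,\ldots,f_n) \;=\; C^\bullet \otimes_R K_R(f_n)$$
with the mapping cone of the chain map $f_n \cdot \mathrm{id}: C^\bullet \to C^\bullet$; this uses that $K_R(f_n) = \mathrm{Cone}(f_n: R \to R)$ together with the fact that $C^\bullet$ is termwise free (hence flat) over $R$, so tensoring with $C^\bullet$ commutes with taking cones.

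From the standard short exact sequence of complexes
$$0 \to C^\bullet \to K_R(f_1,\ldots,f_n) \to C^\bullet[1] \to 0$$
associated to the mapping cone, one extracts the long exact sequence
$$\cdots \to H^i(C^\bullet) \xrightarrow{f_n} H^i(C^\bullet) \to H^i(K_R(f_1,\ldots,f_n)) \to H^{i+1}(C^\bullet) \xrightarrow{f_n} H^{i+1}(C^\bullet) \to \cdots$$
in which the connecting map is multiplication by $f_n$. For $i \le -2$ both flanking terms vanish by induction, so $H^i(K_R(f_1,\ldots,f_n)) = 0$. For $i = -1$ the relevant piece reads
$$0 \;\longrightarrow\; H^{-1}(K_R(f_1,\ldots,f_n)) \;\longrightarrow\; \bar R \;\xrightarrow{\;f_n\;}\; \bar R,$$
and the kernel on the right vanishes exactly by the regular-sequence hypothesis that $f_n$ is not a zero divisor in $\bar R$. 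This completes the induction.

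The one step that requires real care — and is the main point of potential confusion rather than genuine difficulty — is the identification of $C^\bullet \otimes_R K_R(f_n)$ with the mapping cone of multiplication by $f_n$, i.e., checking that the signs in the tensor product differential match those of the cone construction so that the long exact sequence above is correctly oriented (in particular, so that the connecting homomorphism really is $f_n$ rather than $\pm f_n$ acting on some shift). Once this bookkeeping is settled, the rest of the argument is a mechanical assembly of the inductive hypothesis with the non-zero-divisor property of $f_n$ on $R/(f_1,\ldots,f_{n-1})$.
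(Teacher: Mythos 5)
Your proof is correct, and the inductive skeleton is the same as the paper's: both proceed by induction on $n$ using the factorization $K_R(f_1,\ldots,f_n)=K_R(f_1,\ldots,f_{n-1})\otimes_R K_R(f_n)$. The finishing step of the induction differs slightly in packaging. The paper observes that $C^\bullet:=K_R(f_1,\ldots,f_{n-1})$ is, by the inductive hypothesis, a complex of free modules quasi-isomorphic to $\bar R:=R/(f_1,\ldots,f_{n-1})$ placed in degree $0$; tensoring a quasi-isomorphism with the bounded complex of flat modules $K_R(f_n)$ is again a quasi-isomorphism, so $K_R(f_1,\ldots,f_n)\simeq K_{\bar R}(f_n)$, whose negative cohomology vanishes exactly because $f_n$ is a non-zero-divisor on $\bar R$. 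You instead identify $C^\bullet\otimes_R K_R(f_n)$ with $\mathrm{Cone}(f_n\colon C^\bullet\to C^\bullet)$ and read off the same conclusion from the long exact sequence of the cone. Both arguments are elementary and standard; the quasi-isomorphism version is a bit more compact, while your cone argument makes the connecting homomorphism (and hence the role of the non-zero-divisor hypothesis) very visible, at the cost of the sign bookkeeping you correctly flag. No gap either way.
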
 

\begin{proof} The proof is by induction in $n$ with obvious base. For the induction step, note that by the inductive assumption $K_R(f_1,...,f_{n-1})$ is exact in negative degrees with $H^0=R/(f_1,...,f_{n-1})$, so the cohomology of
$K_R(f_1,...,f_n)$ coincides with the cohomology 
$K_{R/(f_1,...,f_{n-1})}(f_n)$, which vanishes in negative degrees 
since $f_n$ is not a zero divisor in $R/(f_1,...,f_{n-1})$. 
\end{proof} 

Now let $k$ be an algebraically closed field. 

\begin{proposition}\label{coo2} Suppose $f_1,...,f_n\in R:=k[x_1,...,x_n]$ are homogeneous polynomials of positive degree such that the zero set $Z(f_1,...,f_n)$ consists of the origin. Then 
$f_1,...,f_n$ is a regular sequence. 
\end{proposition}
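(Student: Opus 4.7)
The plan is to prove by induction on $j$ that $f_1,\ldots,f_j$ is a regular sequence. The setup combines the dimension estimates of Lemma \ref{bou} with a component-by-component analysis of the zero loci $Z(f_1,\ldots,f_j)$.

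First, because $Z(f_1,\ldots,f_n)=\{0\}$, the Nullstellensatz gives $\sqrt{(f_1,\ldots,f_n)}=\mathfrak{m}_0:=(x_1,\ldots,x_n)$, so $R_n:=R/(f_1,\ldots,f_n)$ is finite-dimensional as a $k$-vector space and $\dim_{\mathfrak{m}_0}R_n=0$. Writing $R_j:=R/(f_1,\ldots,f_j)$ and starting from $\dim_{\mathfrak{m}_0}R=n$, iterated application of Lemma \ref{bou} gives $\dim_{\mathfrak{m}_0}R_j\ge n-j$; since these bounds collapse at both ends they are equalities: $\dim_{\mathfrak{m}_0}R_j=n-j$ for all $j$. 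Because the $f_i$ are homogeneous, every irreducible component of $Z(f_1,\ldots,f_j)$ is a cone through $0$; combining this with Corollary \ref{bou1} and the equality $\dim R_j=n-j$ shows that every such component has dimension exactly $n-j$.

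The induction has base case $j=1$ (immediate since $R$ is a domain and $f_1\ne 0$). For the inductive step, assuming $f_1,\ldots,f_{j-1}$ is a regular sequence, I must show $f_j$ is a non-zero-divisor on $R_{j-1}$, i.e.\ that $f_j$ avoids every associated prime of $R_{j-1}$. Minimal primes are handled by the dimension analysis: if $f_j$ lay in the minimal prime of $R_{j-1}$ corresponding to an irreducible component $Y\subset Z(f_1,\ldots,f_{j-1})$ of dimension $n-j+1$, then $Y\subset Z(f_1,\ldots,f_j)$, so $Y\cap Z(f_{j+1},\ldots,f_n)\subset Z(f_1,\ldots,f_n)=\{0\}$; but $n-j$ successive applications of Lemma \ref{bou} to $k[Y]$ using the restrictions of $f_{j+1},\ldots,f_n$ yield $\dim_{\mathfrak{m}_0}k[Y\cap Z(f_{j+1},\ldots,f_n)]\ge 1$, contradicting that this intersection consists of at most one point.

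The principal obstacle is ruling out $f_j$ lying in an \emph{embedded} associated prime of $R_{j-1}$, which the above does not directly address. The cleanest resolution invokes the Cohen-Macaulay property of $R=k[x_1,\ldots,x_n]$ --- immediate because $x_1,\ldots,x_n$ is a maximal regular sequence in $\mathfrak{m}_0$ --- which by the inductive hypothesis passes to $R_{j-1}$, and Cohen-Macaulay rings have no embedded associated primes (unmixedness). A more self-contained alternative works purely with Hilbert series: the inductive hypothesis and Lemma \ref{exaa} give $H(R_{j-1},q)=\prod_{i<j}(1-q^{d_i})/(1-q)^n$, and the exact sequence $0\to A_j\to R_{j-1}\xrightarrow{\cdot f_j}R_{j-1}\to R_j\to 0$ for the annihilator $A_j=\{g\in R_{j-1}:f_jg=0\}$ yields $q^{d_j}H(A_j,q)=H(R_j,q)-\prod_{i\le j}(1-q^{d_i})/(1-q)^n$; pole-order analysis at $q=1$ forces $\dim_{\mathfrak{m}_0}A_j\le n-j$, and iterating the same identity through $R_{j+1},\ldots,R_n$ combined with the finite-dimensionality of $R_n$ pins $H(A_j,q)=0$.
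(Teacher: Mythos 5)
Your proposal is correct, and its core is the same dimension-counting argument as the paper's: use Lemma \ref{bou} and Corollary \ref{bou1} to pin down $\dim_{\mathfrak{m}_0}R_j=n-j$ for all $j$, and then contradict this if $f_j$ vanished on an irreducible component of $Z(f_1,\ldots,f_{j-1})$. What you add, and what makes your version strictly more careful than the paper's, is the explicit observation that avoiding every \emph{minimal} prime of $R_{j-1}$ is not the same as being a non-zero-divisor on $R_{j-1}$: the paper's reduction ``it suffices to show that $f_{m+1}$ does not vanish on any irreducible component of $Z_m$'' tacitly assumes that $R_m$ has no embedded primes, and never justifies it. Your fix --- run the argument as an induction on $j$ and invoke the Cohen--Macaulay property of $R$, so that by the inductive hypothesis $R_{j-1}$ is Cohen--Macaulay and hence unmixed --- is exactly the right repair (it is Macaulay's unmixedness theorem), and with it the Cohen--Macaulay branch of your proof is complete.

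The ``self-contained'' Hilbert-series alternative, however, does not close as sketched. The identity $q^{d_j}H(A_j,q)=H(R_j,q)-\prod_{i\le j}(1-q^{d_i})/(1-q)^n$ is correct, and the pole-order comparison at $q=1$ does give $\dim_{\mathfrak{m}_0}A_j\le n-j$; but that is strictly weaker than $A_j=0$, since a nonzero annihilator of smaller dimension is not excluded by any single such equation. The proposed ``iteration through $R_{j+1},\ldots,R_n$'' would require knowing that the later annihilators vanish, which is not yet available at that stage of the induction; and if one carries all of them along, the telescoped identity $r_n-\prod_i[d_i]_q=\sum_{m\ge j}\prod_{i>m}(1-q^{d_i})\,q^{d_m}H(A_m,q)$ has prefactors $\prod_{i>m}(1-q^{d_i})$ of mixed sign, so coefficientwise nonnegativity does not decouple the sum and force each $H(A_m,q)$ to be zero. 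Unless you supply the missing idea, I would present only the Cohen--Macaulay route.
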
 

\begin{proof} We need to show that for each $m\le n-1$, $f_{m+1}$ is not a zero divisor 
in $R_m:=k[x_1,...,x_n]/(f_1,...,f_m)$. Let $Z_m=Z(f_1,...,f_m)$. It suffices to show that 
$f_{m+1}$ does not vanish on any irreducible component of $Z_m$. Assume the contrary, i.e., that it vanishes on such a component $Z_m^0$. By Corollary \ref{bou1}, we have $\dim_{\mathfrak{m}_0} k[Z_m^0]\ge n-m$. Since 
$f_{m+1}=0$ on $Z_m^0$, using Lemma \ref{bou} repeatedly, we get 
$$
\dim_{\mathfrak{m}_0} k[Z_m^0]/(f_{m+1},...,f_n)\ge 1,
$$ 
which is a contradiction, as the zero set of $f_{m+1},...,f_n$ on $Z_m^0$ 
consists just of the origin, so this dimension must be zero.  
\end{proof} 

\begin{proposition}\label{kosz} Suppose $f_1,..,f_n\in R:=k[x_1,...,x_n]$ are homogeneous polynomials of degrees $d_1,...,d_n>0$ such that $R$ is a finitely generated module over $S:=k[f_1,...,f_n]$. Then this module is free of rank $\prod_{i=1}^n d_i$. 
Moreover, the Hilbert polynomial of 
$R_0:=k[x_1,...,x_n]/(f_1,...,f_n)$ (or, equivalently, of a space of free homogeneous generators of this module) is 
\begin{equation}\label{hils} 
H(R_0,q)=\prod_{i=1}^n [d_i]_q.
\end{equation} 
\end{proposition}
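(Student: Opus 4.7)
The plan is first to reduce the hypothesis to that of Proposition \ref{coo2}, then run the Koszul complex argument, and finally upgrade generation to freeness by a Hilbert series comparison.

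First I would show that $Z(f_1,\ldots,f_n)\subset k^n$ consists only of the origin. Since $R$ is module-finite over $S=k[f_1,\ldots,f_n]$, each variable $x_i$ satisfies a monic integral equation over $S$; in the graded setting only the leading term can have the minimal degree $N_i$, so the lower-order coefficients must lie in the augmentation ideal $S_+$, giving $x_i^{N_i}\in (f_1,\ldots,f_n)R$ for some $N_i\ge 1$. Hence $Z(f_1,\ldots,f_n)\subset Z(x_1^{N_1},\ldots,x_n^{N_n})=\{0\}$, and since each $f_i$ is homogeneous of positive degree, equality holds.

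By Proposition \ref{coo2}, $f_1,\ldots,f_n$ is then a regular sequence in $R$, so by Lemma \ref{exaa} the graded Koszul complex $K_R(f_1,\ldots,f_n)=R\otimes \wedge^\bullet k^n$ (with the $i$th generator carrying internal degree $d_i$) is a free resolution of $R_0=R/(f_1,\ldots,f_n)$ over $R$. Taking the graded Euler characteristic yields
$$H(R_0,q)=H(R,q)\prod_{i=1}^n (1-q^{d_i})=\frac{\prod_{i=1}^n (1-q^{d_i})}{(1-q)^n}=\prod_{i=1}^n [d_i]_q,$$
which proves \eqref{hils}. In particular $\dim_k R_0=\prod_{i=1}^n d_i<\infty$.

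Finally, to deduce freeness, choose a homogeneous basis $v_1,\ldots,v_N$ of $R_0$ (with $N=\prod d_i$) and lift to homogeneous elements $v_1^*,\ldots,v_N^*\in R$. By Lemma \ref{homog}(i) these generate $R$ as an $S$-module, giving a surjection $\phi\colon S\otimes V_0\twoheadrightarrow R$ of graded $S$-modules, where $V_0$ denotes $R_0$ viewed as a graded vector space. Both sides have identical Hilbert series, since
$$H(S\otimes V_0,q)=\frac{H(R_0,q)}{\prod_{i=1}^n (1-q^{d_i})}=\frac{1}{(1-q)^n}=H(R,q).$$
All graded pieces are finite-dimensional, so $\phi$ must be an isomorphism, proving that $R$ is free over $S$ of rank $\prod_{i=1}^n d_i$. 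The only delicate step is the first paragraph: one must verify that in the graded setting, module-finiteness truly forces the common zero locus to collapse to a point. Everything that follows is a routine application of the machinery already assembled in the preceding subsections.
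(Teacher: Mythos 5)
Your proof is correct, and for the freeness step it takes a genuinely different route from the paper. Both arguments agree up to a point: reduce to $Z(f_1,\ldots,f_n)=\{0\}$ (you via graded integral dependence; the paper observes directly that $R_0=R/S_+R$ is spanned over $k$ by the images of finitely many $S$-module generators of $R$), then apply Proposition \ref{coo2} for the regular-sequence property and Lemma \ref{exaa} for exactness of the Koszul complex. The divergence is in the order of operations. You read off $H(R_0,q)=\prod_i[d_i]_q$ immediately from the graded Euler characteristic of the Koszul resolution of $R_0$, then use Lemma \ref{homog}(i) to manufacture a degree-preserving surjection $S\otimes R_0\twoheadrightarrow R$; since both sides have Hilbert series $(1-q)^{-n}$ with all graded pieces finite-dimensional, the surjection is an isomorphism. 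The paper proves freeness first by a deformation argument: it filters $K_{R\otimes S}(f_1-a_1,\ldots,f_n-a_n)$ by degree in the $x_i$, observes that the associated graded complex is the exact complex $K_R(f_1,\ldots,f_n)\otimes S$, concludes that $\gr(R)\cong R_0\otimes S$ is a free $S$-module so that $R$ is free, and only then extracts $H(R_0,q)$ via Lemma \ref{homog}(ii). Your route dispenses with the filtered-complex machinery and is arguably more elementary, at the cost of a slightly longer verification that $Z(f_1,\ldots,f_n)=\{0\}$. One point worth making explicit: the computation $H(S,q)=\prod_i(1-q^{d_i})^{-1}$ tacitly assumes that $f_1,\ldots,f_n$ are algebraically independent; this does follow from the hypothesis (module-finiteness of $R$ over $S$ forces $\dim S=\dim R=n$, so the $n$ generators of $S$ cannot satisfy a polynomial relation), and the paper relies on the same unstated fact when it writes $S=k[a_1,\ldots,a_n]$.
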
  

\begin{proof} By Lemma \ref{homog}, it suffices to show that 
$R$ is a free $S$-module. 
By assumption $R_0$ is finite-dimensional, i.e., the equations 
$$
f_1=...=f_n=0
$$ 
have only the zero solution. By Proposition \ref{coo2}, this implies that 
$f_1,...,f_n$ is a regular sequence, so by Lemma \ref{exaa} the Koszul complex $K_R(f_1,...,f_n)$ associated to this sequence is exact in negative degrees. The finite-generation assumption implies that $f_1,...,f_n$ are algebraically independent, so let us identify $S$ with $k[a_1,...,a_n]$ via $a_i\mapsto f_i$ and put $\deg a_i = 0$.
Now consider the complex $K_{R\otimes S}(f_1-a_1,...,f_n-a_n)$. This complex is filtered by degree with associated graded being 
$$
K_{R\otimes S}(f_1,...,f_n)=K_R(f_1,...,f_n)\otimes S.
$$ 
Thus $K_{R\otimes S}(f_1-a_1,...,f_n-a_n)$ is also exact in nonzero degrees with 
$$
H^0=k[x_1,...,x_n,a_1,...,a_n]/(f_1-a_1,...,f_n-a_n)\cong R
$$ 
by eliminating $a_i$, 
and the associated graded under the above filtration is ${\rm gr}(R)=R_0\otimes S$ as an $S$-module.
This module is free over $S$, hence so is $R$.
\end{proof} 

\begin{remark}\label{remci} Let $f_1,...,f_r$ be a regular sequence of homogeneous polynomials in $k[x_1,...,x_n]$ of positive degree and $Z_m\subset k^n$ be the zero set of $f_1,...,f_m$. Then $f_{m+1}$ is not a zero divisor in $k[x_1,...,x_n]/(f_1,...,f_m)$, hence does not vanish identically on any irreducible component of $Z_m$. So by induction in $m$ we get that the dimension of every irreducible component of $Z_m$ is $\le n-m$. By Corollary \ref{bou1}, this implies that this dimension is precisely $n-m$; in particular, $r\le n$, and every irreducible component of the affine scheme $\mathcal Z:={\rm Spec} k[x_1,...,x_n]/(f_1,...,f_r)$ has dimension $n-r$. Such a scheme is called a {\bf complete intersection}. In fact, it follows by induction in  $r$ that $\mathcal Z$ is a complete intersection precisely when all its irreducible components have dimension $\le n-r$ (in which case they have dimension exactly $n-r$). In particular, if $r=n$, this means that the only $k$-point of $\mathcal Z$ is the origin, as indicated in Proposition \ref{coo2}. Thus the converse of this proposition also holds. 
\end{remark} 

\subsection{Proof of the CST Theorem, Part II} We are now ready to prove Theorem \ref{CST2}. It follows from Proposition \ref{kosz}, Proposition \ref{hn} and Theorem \ref{CST1} that  $\Bbb C[V]$ is a free $\Bbb C[V]^G$-module. Since $\Bbb C[V]=\oplus_\rho \Hom_G(\rho,\Bbb C[V])\otimes \rho$, it follows by Lemma \ref{homog}(ii) that $\Hom_G(\rho,\Bbb C[V])$ is also a free $\Bbb C[V]^G$-module (as it is graded and projective). Finally, the rank of this module equals 
$$
\dim_{\Bbb C(V)^G}(\Bbb C(V)^G\otimes_{\Bbb C[V]^G}\Hom_G(\rho,\Bbb C[V]))=
\dim_{\Bbb C(V)^G}\Hom_G(\rho,\Bbb C(V)), 
$$
which equals $\dim \rho$ by basic Galois theory 
($\Bbb C(V)$ is a regular representation of $G$ over $\Bbb C(V)^G$). 

\section{\bf Kostant's theorem}

\subsection{Kostant's theorem for $S\g$}\label{kothe}

Let $\g$ be a semisimple complex Lie algebra. 

\begin{theorem}\label{Kos} (Kostant) $S\g$ is a free $(S\g)^\g$-module. 
Moreover, for every finite-dimensional irreducible representation 
$V$ of $\g$, the space $\Hom_\g(V,S\g)$ is a free $(S\g)^\g$ module
of rank $\dim V[0]$, the dimension of the zero weight space of $V$. 
\end{theorem}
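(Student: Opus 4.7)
The plan is to generalize the proof of Proposition \ref{kosz} from the polynomial-ring setting to $(S\g, (S\g)^\g)$. By Chevalley restriction (Remark \ref{dualfo}) composed with Chevalley--Shephard--Todd (Theorem \ref{CST1}), applied to the Weyl group $W$ acting on $\h$ as a complex reflection group, we have $(S\g)^\g \cong (S\h)^W = \Bbb C[F_1,\ldots,F_r]$, a graded polynomial algebra on $r = \dim\h$ homogeneous generators of positive degrees $d_1,\ldots,d_r$. I would then isolate two claims: (a) $F_1,\ldots,F_r$ is a regular sequence in $S\g$; and (b) writing $\mathcal H := S\g/(F_1,\ldots,F_r)S\g$, one has $\dim_{\Bbb C}\Hom_\g(V,\mathcal H) = \dim V[0]$ for each $V\in\Irr\g$.

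Claim (a), by Remark \ref{remci}, is equivalent to the assertion that every irreducible component of $\mathrm{Spec}(S\g/(F_1,\ldots,F_r))$ -- set-theoretically the nilpotent cone $\mathcal N$ of $\g$ under the identification $\g\cong\g^*$ via the Killing form -- has dimension at most $\dim\g - r$. In fact $\mathcal N$ is irreducible of dimension precisely $\dim\g - r$, as it is the closure of the principal nilpotent $G$-orbit whose stabilizer is $r$-dimensional by Jacobson--Morozov. I expect this geometric dimension bound to be the first main obstacle; it is proved independently in Section 17 of the paper.

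Granted the regular sequence, the argument of Proposition \ref{kosz} proceeds essentially verbatim. Lemma \ref{exaa} gives that $K_{S\g}(F_1,\ldots,F_r)$ is exact in negative degrees. Introducing $T := \Bbb C[a_1,\ldots,a_r]$ with $\deg a_i = d_i$, the filtered Koszul complex $K_{S\g\otimes T}(F_1-a_1,\ldots,F_r-a_r)$ has associated graded $K_{S\g}(F_1,\ldots,F_r)\otimes T$, so is itself exact in negative degrees, with $H^0 = S\g$ viewed as a $T$-module via $a_i\mapsto F_i$; the associated graded of $S\g$ under the induced filtration is $\mathcal H\otimes T$, hence $S\g \cong (S\g)^\g\otimes_{\Bbb C}\mathcal H$ as $\g$- and $(S\g)^\g$-modules. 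In particular $S\g$ is a free $(S\g)^\g$-module, and decomposing isotypically, $\Hom_\g(V, S\g) \cong (S\g)^\g \otimes \Hom_\g(V,\mathcal H)$ is a graded projective summand of a free module, hence free by Lemma \ref{homog}(ii), of rank $\dim_{\Bbb C}\Hom_\g(V,\mathcal H)$.

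For claim (b), the plan is to use the Chevalley restriction $S\g \to S\h$, which sends $(F_1,\ldots,F_r)S\g$ into $(f_1,\ldots,f_r)S\h$ and therefore descends to a $W$-equivariant map $\mathcal H \to \mathcal H_\h := S\h/(f_1,\ldots,f_r)$, where $\mathcal H_\h$ is the regular $W$-representation by Theorem \ref{CST2}. For an equivariant $\phi: V \to \mathcal H$, the composite $V \to \mathcal H \to \mathcal H_\h$ is $\h$-equivariant, and since $\h$ acts trivially on $\mathcal H_\h$ this forces it to vanish outside $V[0]$; picking off the $W$-invariant line of $\mathcal H_\h$ yields a linear map $\Hom_\g(V,\mathcal H)\to V[0]$. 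Injectivity should follow from the density of regular semisimple orbits, so that $\phi$ is determined by $\phi|_\h$. Surjectivity is the deeper half: it amounts to Kostant's ``separation of variables'', which one verifies via the Hilbert-series/character identity $H(\mathcal H,q) = \prod_i(1-q^{d_i})/(1-q)^{\dim\g}$ matched against a weighted sum over $\Irr\g$, or more geometrically by exhibiting explicit harmonic lifts using the transversal slice to the principal nilpotent orbit. This surjectivity is the second main obstacle.
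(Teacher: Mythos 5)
Your plan for the freeness of $S\g$ over $(S\g)^\g$ takes a genuinely different route from the paper. The paper introduces a filtration on $S\g$ with $\deg(\g_\alpha)=1$ and $\deg\h=2$, so that $\gr(S\g)\cong S\n_-\otimes S\h\otimes S\n_+$ and the associated graded of the invariant subring becomes $(S\h)^W$ sitting inside the middle factor; freeness then follows immediately from Theorem~\ref{CST2} and the lifting lemma for filtered modules. Your route, via the regular-sequence criterion of Remark~\ref{remci} and the dimension bound $\dim\mathcal N=\dim\g-r$, is also valid and is indeed how the paper itself later treats $\O(\mathcal N)$ (Section~17), but it requires importing the principal $\sl_2$, Jacobson--Morozov, and the density of the regular nilpotent orbit as inputs. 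You are right that these are logically independent of Kostant's theorem, so there is no circularity, but the paper's filtration trick is more elementary in that it needs nothing beyond Chevalley restriction and CST. One practical advantage of your route is that it isolates the statement ``$F_1,\dots,F_r$ is a regular sequence, i.e.\ $\mathcal N$ is a complete intersection'' as a named intermediate result, which the paper instead derives as a corollary.

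For the rank computation, the Hilbert-series/character-orthogonality argument you mention second is what actually works, and it is exactly the paper's approach (\eqref{Kostq} through Lemma~\ref{L2}). However, the first construction you sketch --- composing $V\to\mathcal H\to\mathcal H_\h$ and ``picking off the $W$-invariant line'' --- does not produce a useful map. The $W$-invariant line of $\mathcal H_\h$ is the one-dimensional degree-$0$ piece, while for any nontrivial irreducible $V$ the generating harmonics in $\Hom_\g(V,\mathcal H)$ live in strictly positive degree, so the projection you describe is identically zero. (For $\g=\sl_2$, $V=\g$: the free generator of $\Hom_\g(\g,\mathcal H)$ is the degree-$1$ inclusion $\g\hookrightarrow\mathcal H$, sending $h\in\g[0]$ to $h\in\mathcal H_\h$ in degree $1$, the sign isotype of $W=\Bbb Z/2$; its degree-$0$ component vanishes.) Relatedly, ``injectivity from density of regular semisimple orbits'' is an argument for $\g$-\emph{invariant} polynomials being determined by their restriction to $\h$, but harmonic covariants of nontrivial $V$ are not $\g$-invariant and certainly are not determined by their restriction to $\h$ (any root vector $e_\alpha\in S^1\g$ restricts to zero). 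The correct refined statement in Kostant's work about restriction to $\h$ is a $W$-equivariant isomorphism between a space of harmonics and the full regular representation $\mathcal H_\h$ graded by generalized exponents, and recovering $\dim V[0]$ from it is not as cheap as picking a single line. Since you also offer the Hilbert-series computation, your overall plan does go through; I would simply drop the restriction-to-$\h$ map as a proof of the rank and keep the generating-series argument, which is complete and which the paper carries out in detail.
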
 

The rest of the subsection is dedicated to the proof of this theorem. 
Introduce a filtration on $S\g$ by setting $\deg(\g_\alpha)=1$ for all roots $\alpha$ and $
\deg \h=2$. Then ${\rm gr}(S\g)=S\n_-\otimes S\h\otimes S\n_+$  and by the Chevalley restriction theorem, ${\rm gr}((S\g)^\g)$ is identified with the subalgebra $(S\h)^W$ of the middle factor. 
Thus by the Chevalley-Shephard-Todd theorem, ${\rm gr}(S\g)$ is a free ${\rm gr}((S\g)^\g)$-module. It follows that 
$S\g$ is a free $(S\g)^\g$-module (namely, any lift of a homogeneous basis 
of the graded module is a basis of the filtered module). 

Now recall that 
\begin{equation}\label{deco}
S\g=\oplus_{V\in {\rm Irr}(\g)} V\otimes \Hom_\g(V,S\g).
\end{equation}
Thus $\Hom_\g(V,S\g)$ is a graded direct summand in $S\g$. 
It follows that $\Hom_\g(V,S\g)$ is a projective, hence free $(S\g)^\g$-module (using Lemma \ref{homog}(ii)). 

It remains to prove the formula for the rank of $\Hom_\g(V,S\g)$. 
To this end, consider the $Q$-graded Hilbert series of $S\g$, i.e., the generating function of the characters of symmetric powers of $\g$:
$$
H_Q(S\g,q):=\sum_{m\ge 0}(\sum_{\mu\in Q}\dim S^m\g[\mu] e^\mu)q^m \in \Bbb C[Q][[q]]. 
$$
Since $S\g=S\h\otimes \bigotimes_{\alpha\in R} S\g_\alpha$, we have 
$$
H_Q(S\g,q)=\frac{1}{(1-q)^r}\prod_{\alpha\in R}\frac{1}{1-qe^\alpha},
$$
where $r={\rm rank}(\g)$. 
On the other hand, by \eqref{deco},  
$$
H_Q(S\g,q)=\sum_{V\in {\rm Irr}(\g)}H(\Hom_\g(V,S\g),q)\chi_V,
$$
where $\chi_V$ is the character of $V$.

Now, by the Chevalley restriction theorem $(S\g)^\g\cong (S\h)^W$, so 
$$
H(\Hom_\g(V,S\g),q)=H(\Hom_\g(V,(S\g)_0),q)H((S\h)^W,q),
$$
where $(S\g)_0$ is the quotient of $S\g$ by the ideal generated by the invariants of positive degree. Thus by the Chevalley-Shephard-Todd theorem,  
$$
H(\Hom_\g(V,S\g),q)=H(\Hom_\g(V,(S\g)_0),q)\prod_{j=1}^r \frac{1}{1-q^{d_j}}.
$$
So we get 
$$
\sum_{V\in {\rm Irr}(\g)}H(\Hom_\g(V,(S\g)_0),q)\chi_V=\frac{\prod_{j=1}^r[d_j]_q}{\prod_{\alpha\in R}(1-qe^\alpha)}.
$$
By character orthogonality, $H(\Hom_\g(V,(S\g)_0),q)$ is the inner product of the right hand side of this equality with $\chi_V$:
$$
H(\Hom_\g(V,(S\g)_0),q)=\left(\frac{\prod_{j=1}^r[d_j]_q}{\prod_{\alpha\in R}(1-qe^\alpha)},\chi_V\right).
$$
Recall that the inner product on $\Bbb C[P]$ making the characters orthonormal is given by the formula
$$
(\phi,\psi)=\frac{1}{|W|}{\rm CT}(\phi\psi^*\prod_{\alpha\in R}(1-e^\alpha)),
$$
where CT denotes the constant term and $*$ is the automorphism of $\Bbb C[P]$ given by $(e^\mu)^*=e^{-\mu}$. Thus, using that $\chi_V^*=\chi_{V^*}$, we get 
\begin{equation}\label{Kostq}
H(\Hom_\g(V,(S\g)_0),q)=\frac{\prod_{j=1}^r[d_j]_q}{|W|}{\rm CT}\left(\chi_{V^*}\prod_{\alpha\in R}\frac{1-e^\alpha}{1-qe^\alpha}\right).
\end{equation}
In this formula $q$ is a formal parameter, but the right hand side
converges to an analytic function in the disk $|q|<1$, since 
it can be written as an integral: 
$$
H(\Hom_\g(V,(S\g)_0),q)=\frac{\prod_{j=1}^r[d_j]_q}{|W|}\int_{\h_{\Bbb R}/Q^\vee} \chi_{V^*}(e^{2\pi ix})\prod_{\alpha\in R}\frac{1-e^{2\pi i\alpha(x)}}{1-qe^{2\pi i\alpha(x)}}dx,
$$
where $Q^\vee$ is the coroot lattice. If $0\le q<1$, this can also be written as 
\begin{equation}\label{L2norm} 
H(\Hom_\g(V,(S\g)_0),q)=\frac{\prod_{j=1}^r[d_j]_q}{|W|}\int_{\h_{\Bbb R}/Q^\vee} \chi_{V^*}(e^{2\pi ix})\left|\prod_{\alpha\in R_+}\frac{1-e^{2\pi i\alpha(x)}}{1-qe^{2\pi i\alpha(x)}}\right|^2dx.
\end{equation} 

\begin{lemma}\label{L2} As $q\to 1$ in $(0,1)$, 
the function $F_q(x):=\prod_{\alpha\in R_+}\frac{1-e^{2\pi i\alpha(x)}}{1-qe^{2\pi i\alpha(x)}}$ goes to $1$ in $L^2(\h/Q^\vee)$.\footnote{Note however that $F_q(x)$ does not go to $1$ pointwise (hence not in $C(\h/Q^\vee)$) since $F_q(0)=0$.} 
\end{lemma}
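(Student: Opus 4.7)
The approach is a straightforward dominated convergence argument. The plan is to establish two facts about $F_q$: a pointwise limit on a full-measure set, and a uniform bound valid as $q \to 1^-$, and then conclude via the dominated convergence theorem.

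First I would check the pointwise behavior. Let $\Omega \subset \h_{\Bbb R}/Q^\vee$ be the complement of the union of affine hyperplanes $\{x : \alpha(x) \in 2\pi\Bbb Z\}$ for $\alpha \in R_+$. This is an open set whose complement has measure zero. For $x \in \Omega$, every factor $(1 - e^{i\alpha(x)})/(1 - qe^{i\alpha(x)})$ of $F_q(x)$ is a continuous function of $q$ at $q = 1$ with value $1$, so $F_q(x) \to 1$ pointwise on $\Omega$.

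Next, the heart of the argument is the uniform bound. For $\theta \in \Bbb R$ and $q \in (0,1)$, the identity
\[
|1 - qe^{i\theta}|^2 = 1 + q^2 - 2q\cos\theta = (1-q)^2 + 2q(1-\cos\theta) \;\geq\; 2q(1-\cos\theta) \;=\; q\,|1 - e^{i\theta}|^2
\]
gives $|1 - e^{i\theta}|^2 / |1 - qe^{i\theta}|^2 \leq 1/q$. Taking the product over the $|R_+|$ positive roots yields $|F_q(x)|^2 \leq q^{-|R_+|}$ for all $x$, hence $|F_q(x) - 1| \leq 1 + q^{-|R_+|/2}$ is uniformly bounded (say, by $1 + 2^{|R_+|/2}$) on the compact set $\h_{\Bbb R}/Q^\vee$ for $q \in [1/2, 1)$.

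Since $\h_{\Bbb R}/Q^\vee$ has finite Haar measure and $|F_q - 1|^2$ is bounded by a constant independent of $q$ and converges pointwise a.e.\ to $0$ as $q \to 1^-$, the dominated convergence theorem gives $\int_{\h_{\Bbb R}/Q^\vee} |F_q(x) - 1|^2\, dx \to 0$, i.e., $F_q \to 1$ in $L^2$. There is no real obstacle: the only potentially delicate point is the uniform bound, which is handled by the elementary inequality above, and the apparent singularities of $F_q$ as $q \to 1$ at points where some $\alpha(x) \in 2\pi\Bbb Z$ are harmless because at such points the numerator also vanishes, and in fact $F_q$ vanishes there for every $q < 1$.
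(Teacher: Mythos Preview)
Your proof is correct, and it takes a cleaner route than the paper's. The paper first bounds each single factor by $\left|\frac{1-z}{1-qz}\right|\le 2$ on the unit circle, then computes explicitly (via the residue formula) that $\int_0^1\left|\frac{1-e^{2\pi it}}{1-qe^{2\pi it}}-1\right|^2dt=\frac{1-q}{1+q}$, so each factor tends to $1$ in $L^2(S^1)$; finally it invokes a general lemma that a product of uniformly bounded $L^2$-convergent functions on a finite measure space is $L^2$-convergent. Your argument bypasses both the residue calculation and the product lemma: the elementary inequality $|1-qe^{i\theta}|^2=(1-q)^2+q|1-e^{i\theta}|^2\ge q|1-e^{i\theta}|^2$ gives a sharper uniform bound $|F_q|^2\le q^{-|R_+|}$ on the whole product at once, and then dominated convergence on the compact torus does the rest. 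The paper's approach has the minor advantage of giving an explicit rate for a single factor, but for the purpose at hand your direct DCT argument is shorter and just as rigorous.
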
 

\begin{proof} If $x\in \Bbb R$, $|x|\le 1$ then 
${\rm min}_{q\in [0,1]}(1-2qx+q^2)$ is $1$ if $x\le 0$ and 
$1-x^2$ if $x>0$. So if $z=x+iy$ is on the unit circle and $0\le q<1$ then
$$
\left|\frac{1-z}{1-qz}\right|^2=\frac{2(1-x)}{1-2qx+q^2}\le \begin{cases} 2(1-x),\ x\le 0\\ \frac{2}{1+x},\ x>0\end{cases} \le 4. 
$$
Note also that by the residue formula
$$
\int_0^1 \frac{dt}{|1-qe^{2\pi it}|^2}=\frac{1}{2\pi i}\int_{|z|=1}\frac{z^{-1}dz}{(1-qz)(1-qz^{-1})}=\frac{1}{1-q^2}.
$$
Thus 
$$
\int_0^1 \left|\frac{1-e^{2\pi it}}{1-qe^{2\pi it}}-1\right|^2dt=
\int_0^1 \left|\frac{(q-1)e^{2\pi it}}{1-qe^{2\pi it}}\right|^2dt=\frac{1-q}{1+q}.
$$
So $\frac{1-z}{1-qz}\to 1$ as $q\to 1$ in $L^2(S^1)$.  
But if $X$ is a finite measure space and 
for $j=1,...,N$, $f_n^{(j)}\to f^{(j)}$ in $L^2(X)$ as $n\to \infty$ 
and $|f_n^{(j)}(z)|\le C$ for all $z\in X$ and all $n,j$ then $\prod_j f_n^{(j)}\to \prod_j f_j$ in $L^2(X)$. This implies the statement. 
\end{proof} 

By Lemma \ref{L2} we may take the limit $q\to 1$ under the integral in \eqref{L2norm}. Then, using that $\prod_{j=1}^r d_j=|W|$, we get 
$$
\dim \Hom_\g(V,(S\g)_0)=\int_{\h/Q^\vee}\chi_{V^*}(e^{2\pi ix})dx=
$$
$$
{\rm CT}(\chi_{V^*})=\dim V^*[0]=\dim V[0],
$$
which concludes the proof of Kostant's theorem. 

\subsection{The structure of $S\g$ as a $(S\g)^\g$-module}
As a by-product, we obtain 

\begin{theorem} (Kostant) 
For $\lambda\in P_+$ we have 
$$
H(\Hom_\g(L_\lambda^*,(S\g)_0),q)=
\frac{\prod_{j=1}^r[d_j]_q}{|W|}{\rm CT}\left(\prod_{\alpha\in R}\frac{1-e^\alpha}{1-qe^\alpha}\chi_{L_\lambda}\right)=
$$
$$
\prod_{j=1}^r[d_j]_q\cdot {\rm CT}\left(\frac{e^{\lambda}\prod_{\alpha\in R_+}(1-e^{\alpha})}
{\prod_{\alpha\in R}(1-qe^\alpha)}\right).
$$
\end{theorem}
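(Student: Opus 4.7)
The first equality is immediate: it is nothing but formula \eqref{Kostq} from the previous proof, applied to the irreducible representation $V = L_\lambda^*$, whose dual satisfies $\chi_{V^*} = \chi_{L_\lambda}$. So the real content is the second equality, which rewrites the constant term in a form that no longer involves the character $\chi_{L_\lambda}$ itself.

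My plan is to use the Weyl character (or rather denominator) formula together with $W$-invariance of CT. First, I would split $\prod_{\alpha \in R}(1-e^\alpha) = \prod_{\alpha \in R_+}(1-e^\alpha)(1-e^{-\alpha})$ and apply the Weyl denominator formula in the form
$$
\chi_{L_\lambda}\prod_{\alpha\in R_+}(1-e^{-\alpha}) \;=\; e^{-\rho}\sum_{w\in W}(-1)^{\ell(w)}e^{w(\lambda+\rho)} \;=\; \sum_{w\in W}(-1)^{\ell(w)}e^{w(\lambda+\rho)-\rho},
$$
which converts the integrand inside CT to
$$
\frac{\prod_{\alpha\in R_+}(1-e^\alpha)}{\prod_{\alpha\in R}(1-qe^\alpha)}\sum_{w\in W}(-1)^{\ell(w)}e^{w(\lambda+\rho)-\rho}.
$$

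Next, I would apply the $W$-invariance of CT term by term: $\text{CT}(f)=\text{CT}(w^{-1}\cdot f)$ for each $w\in W$. The denominator $\prod_{\alpha\in R}(1-qe^\alpha)$ is $W$-invariant (since $R$ is), so only the numerator $e^{w(\lambda+\rho)-\rho}\prod_{\alpha\in R_+}(1-e^\alpha)$ transforms. The key computation is the transformation law
$$
w^{-1}\!\cdot\!\prod_{\alpha\in R_+}(1-e^\alpha) \;=\; (-1)^{\ell(w)}\,e^{\,w^{-1}\rho-\rho}\prod_{\alpha\in R_+}(1-e^\alpha),
$$
which is established by pairing each factor $1-e^{w^{-1}\alpha}$ with $\alpha\in R_+$, using the identity $1-e^{-\beta}=-e^{-\beta}(1-e^\beta)$ for the roots flipped to negative, and the standard identity $\rho-w^{-1}\rho=\sum_{\beta\in R_+\cap (-w^{-1}R_+)}\beta$ (together with $|R_+\cap(-w^{-1}R_+)|=\ell(w^{-1})=\ell(w)$) to package all the resulting exponential factors.

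Combining these, the sign $(-1)^{\ell(w)}$ in the Weyl denominator sum cancels the sign produced by transforming $\prod_{\alpha\in R_+}(1-e^\alpha)$, while the exponentials $e^{w^{-1}\rho-\rho}$ and $e^{(\lambda+\rho)-w^{-1}\rho}$ (the latter coming from applying $w^{-1}$ to $e^{w(\lambda+\rho)-\rho}$) collapse to $e^\lambda$, which is independent of $w$. Thus every one of the $|W|$ terms becomes equal to $\text{CT}\!\left(e^\lambda\prod_{\alpha\in R_+}(1-e^\alpha)/\prod_{\alpha\in R}(1-qe^\alpha)\right)$, and summing over $w$ produces exactly the factor $|W|$ that cancels against the $|W|$ in the denominator in front. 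The main obstacle is bookkeeping of signs and exponents in the transformation law for $\prod_{\alpha\in R_+}(1-e^\alpha)$; once this is in place, the rest of the argument is a routine cancellation.
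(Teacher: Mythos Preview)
Your proposal is correct and follows exactly the approach the paper sketches: the paper simply says that the second expression is obtained from \eqref{Kostq} ``using the Weyl character formula for $\chi_{L_\lambda}$ and observing that all terms in the resulting sum over $W$ are the same,'' and your argument is precisely a fleshed-out version of that observation, supplying the transformation law for $\prod_{\alpha\in R_+}(1-e^\alpha)$ under $W$ that makes the equality of terms explicit.
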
 

Indeed, the first expression is \eqref{Kostq} and second expression is obtained from \eqref{Kostq} using the Weyl character formula for $\chi_{L_\lambda}$ and observing that all terms in the resulting sum over $W$ are the same. 

Substituting $\lambda=0$, we get

\begin{corollary} \label{diform}
$$
\frac{1}{|W|}{\rm CT}\left(\prod_{\alpha\in R}\frac{1-e^\alpha}{1-qe^\alpha}\right)={\rm CT}\left(\frac{\prod_{\alpha\in R_+}(1-e^\alpha)}{\prod_{\alpha\in R}(1-qe^\alpha)}\right)=\frac{1}{\prod_{j=1}^r[d_j]_q}.
$$
\end{corollary}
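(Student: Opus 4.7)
The plan is simply to specialize the preceding Kostant theorem to $\lambda=0$ and read off both equalities from its double expression
\[
H(\Hom_\g(L_\lambda^*,(S\g)_0),q)=\frac{\prod_{i=1}^r[d_i]_q}{|W|}\,{\rm CT}\!\left(\chi_{L_\lambda}\prod_{\alpha\in R}\frac{1-e^\alpha}{1-qe^\alpha}\right)=\prod_{i=1}^r[d_i]_q\cdot{\rm CT}\!\left(\frac{e^{\lambda}\prod_{\alpha\in R_+}(1-e^\alpha)}{\prod_{\alpha\in R}(1-qe^\alpha)}\right).
\]
At $\lambda=0$ the character $\chi_{L_0}=1$ and the factor $e^\lambda=1$ disappear, so the middle and right-hand sides visibly become the two expressions in the corollary (divided by $\prod_i[d_i]_q$). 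Hence the only thing to verify is that the left-hand side equals $1$ when $\lambda=0$.

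To establish this, I would invoke the freeness part of Theorem~\ref{Kos}: for the trivial representation $V=\Bbb C$ one has $\dim V[0]=1$, so $\Hom_\g(\Bbb C,S\g)=(S\g)^\g$ is free of rank $1$ over $(S\g)^\g$, generated in degree $0$. Modding out the augmentation ideal $(S\g)^\g_+\cdot S\g$ gives
\[
\Hom_\g(\Bbb C,(S\g)_0)=(S\g)^\g\big/(S\g)^\g_+\;\cong\;\Bbb C,
\]
concentrated in degree $0$, whose Hilbert polynomial is therefore $H(\,\cdot\,,q)=1$.

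Substituting this into the displayed chain of equalities and dividing by $\prod_{i=1}^r[d_i]_q$ yields
\[
\frac{1}{|W|}\,{\rm CT}\!\left(\prod_{\alpha\in R}\frac{1-e^\alpha}{1-qe^\alpha}\right)={\rm CT}\!\left(\frac{\prod_{\alpha\in R_+}(1-e^\alpha)}{\prod_{\alpha\in R}(1-qe^\alpha)}\right)=\frac{1}{\prod_{i=1}^r[d_i]_q},
\]
which is exactly the statement of the corollary.

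There is essentially no hard step left: all the analytic work (convergence of the integral representation, the $L^2$-limit in Lemma~\ref{L2}, and the Weyl integration/character orthogonality manipulation) was already carried out in the proof of the theorem above. The only conceptual input needed beyond a direct substitution is the identification $((S\g)_0)^\g=\Bbb C$ in degree zero, which is a clean consequence of Kostant's freeness theorem for the trivial isotypic component and is the one place one should be careful not to assume more than has been proved.
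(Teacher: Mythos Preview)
Your proof is correct and follows exactly the paper's approach: the corollary is obtained by substituting $\lambda=0$ into the preceding Kostant theorem. The only extra work you do is to spell out why $H(\Hom_\g(\Bbb C,(S\g)_0),q)=1$, which the paper leaves implicit; your justification via the freeness of $(S\g)^\g$ over itself (with generator $1$ in degree $0$) is correct and is the natural way to fill in that step.
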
 

For example, if $\g=\mathfrak{sl}_2$, this formula looks like
\begin{equation}\label{sl2for}
\frac{1}{2}{\rm CT}\left(\frac{(1-z)(1-z^{-1})}{(1-qz)(1-qz^{-1})}\right)=
{\rm CT}\left(\frac{1-z}{(1-qz)(1-qz^{-1})}\right)=\frac{1}{1+q},
\end{equation} 
which is easy to check using the residue formula. 

For $\g=\mathfrak{sl}_n$ we obtain the identity
$$
\frac{1}{n!}{\rm CT}\left(\prod_{1\le i<j\le n}\frac{(1-\frac{X_i}{X_j})(1-\frac{X_j}{X_i})}{(1-q\frac{X_i}{X_j})(1-q\frac{X_j}{X_i})}\right)={\rm CT}\left(\prod_{1\le i<j\le n}\frac{1-\frac{X_i}{X_j}}{(1-q\frac{X_i}{X_j})(1-q\frac{X_j}{X_i})}\right)
$$
$$
=\frac{1}{(1+q)...(1+q+...+q^{n-1})}.
$$

\subsection{The structure of $U(\g)$ as a $Z(\g)$-module} 

Recall that the universal enveloping algebra $U(\g)$ of any Lie algebra $\g$ has the standard filtration defined on generators by $\deg(\g)=1$, which is called the {\bf Poincar\'e-Birkhoff-Witt filtration}. 

Let $\g$ be a semisimple complex Lie algebra of rank $r$, and $W$ be the Weyl group of $\g$ with degrees $d_i,i=1,...,r$. 

\begin{theorem}\label{Kos1} (Kostant) (i) The center $Z(\g)=U(\g)^\g$ of $U(\g)$ is a polynomial algebra in $r$ generators $C_i$ of Poincar\'e-Birkhoff-Witt filtration degrees $d_i$.

(ii) $U(\g)$ is a free module over 
$Z(\g)$, and for every irreducible finite-dimensional representation 
$V$ of $\g$, the space $\Hom_\g(V,U(\g))$ is a free $Z(\g)$-module of rank $\dim V[0]$. 
\end{theorem}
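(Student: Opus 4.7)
The plan is to deduce Theorem \ref{Kos1} from Theorem \ref{Kos} by passing to the associated graded with respect to the Poincar\'e-Birkhoff-Witt filtration. The crucial input is the symmetrization map $\beta : S\g \to U(\g)$, which is a $\g$-equivariant linear isomorphism strictly increasing each piece of the filtration to the corresponding one. Since $\beta$ is $\g$-equivariant, it restricts to a linear isomorphism $(S\g)^\g \xrightarrow{\sim} U(\g)^\g = Z(\g)$, and since the PBW filtration on $U(\g)$ is $\g$-stable, one concludes that the associated graded $\gr Z(\g) \subset \gr U(\g) = S\g$ coincides with $(S\g)^\g$ as a graded subalgebra.

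For (i), recall from Subsection \ref{kothe} that the Chevalley restriction theorem identifies $(S\g)^\g$ with $(S\h)^W$, and by the Chevalley-Shephard-Todd theorem this is a polynomial algebra on $r$ homogeneous generators of degrees $d_1,\dots,d_r$. First I would lift these generators to elements $C_1,\dots,C_r \in Z(\g)$ with $\gr C_i$ equal to a chosen homogeneous generator, so that $C_i$ has PBW filtration degree $d_i$. The $C_i$ are algebraically independent in $Z(\g)$ because their associated graded symbols are algebraically independent in $S\g$, and they generate $Z(\g)$ by an easy induction on filtration degree: any $z \in Z_n(\g)$ has $\gr z \in (S\g)^\g$ expressible as a polynomial $P(\gr C_i)$ of degree $\le n$, so $z - P(C_1,\dots,C_r) \in Z_{n-1}(\g)$ and we iterate.

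For (ii), by Theorem \ref{Kos} the module $S\g$ is free over $(S\g)^\g$, and $\Hom_\g(V,S\g)$ is free of rank $\dim V[0]$ over $(S\g)^\g$. Since $U(\g)$ is locally finite as a $\g$-module under the adjoint action, each $U_n(\g)$ is a finite direct sum of $\g$-isotypic components, and these are compatible with the filtration; hence $\gr \Hom_\g(V,U(\g)) = \Hom_\g(V,\gr U(\g)) = \Hom_\g(V,S\g)$ as graded $\gr Z(\g) = (S\g)^\g$-modules. Lift a homogeneous free $(S\g)^\g$-basis $\{m_j\}$ of $\Hom_\g(V,S\g)$ to elements $\tilde m_j \in \Hom_\g(V,U(\g))$ of matching filtration degree. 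The $\tilde m_j$ generate $\Hom_\g(V,U(\g))$ over $Z(\g)$ by the same inductive degree argument used for $Z(\g)$ itself. To show freeness, suppose $\sum z_j \tilde m_j = 0$ is a nontrivial relation with $z_j \in Z(\g)$; taking the top-filtration-degree part of each summand yields a nontrivial relation $\sum (\gr z_j) m_j = 0$ in $\Hom_\g(V,S\g)$ over $(S\g)^\g$, contradicting the freeness already established. The same argument applied to the regular $\g$-bimodule decomposition gives freeness of $U(\g)$ itself over $Z(\g)$.

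The main obstacle is the foundational step of showing $\gr Z(\g) = (S\g)^\g$: while $\gr Z(\g) \subseteq (S\g)^\g$ is automatic, the reverse inclusion requires that every homogeneous $\g$-invariant in $S\g$ actually lifts to a $\g$-invariant in $U(\g)$ of the same degree, and this is precisely what the equivariance of the symmetrization map provides. Once this identification is in hand, the remainder is a fairly mechanical lifting argument using Lemma \ref{homog} together with the previously proved invariant-theoretic freeness results.
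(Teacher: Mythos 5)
Your proposal is correct and follows essentially the same route as the paper: both use the $\g$-equivariance of the symmetrization map to establish $\gr\,Z(\g)=(S\g)^\g$, then deduce the statement from Theorem \ref{Kos} by lifting along the PBW filtration. The paper's proof simply compresses the lifting/freeness arguments you spell out into a single sentence ("Thus Theorem \ref{Kos} implies all the statements"), treating them as routine; your more explicit inductive arguments for generation, algebraic independence, and freeness are the intended content of that sentence.
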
 

\begin{proof} By the Poincar\'e-Birkhoff-Witt theorem, for any Lie algebra $\g$ we have ${\rm gr}(U(\g))=S\g$. Moreover, we have the symmetrization map $S\g\to U(\g)$ given by 
$$ a_1\otimes...\otimes a_n\mapsto \frac{1}{n!}\sum_{s\in S_n}a_{s(1)}...a_{s(n)},
$$ 
$a_i\in \g$, which is an isomorphism of $\g$-modules.
Using this map, any homogeneous element of $(S\g)^\g$ can be lifted into $U(\g)^\g$. It follows that ${\rm gr}(U(\g)^\g)=(S\g)^\g$.
Thus Theorem \ref{Kos} implies all the statements of the theorem. 
\end{proof} 

\begin{example} Suppose $\g$ is simple. Then $d_1=2$ and $C_1$ is the quadratic 
Casimir of $\g$. 
\end{example} 

\begin{exercise} Consider the Lie algebra $\g=\mathfrak{sl}_n(\Bbb C)$ 
spanned by elementary matrices $E_{ij}$ with $\sum_{i=1}^nE_{ii}=0$. 

(i) Show that the center $Z(\g)$ is freely generated by the elements 
$$
C_{k-1}:=\sum_{i_1,...,i_k=1}^n \prod_{j=1}^k E_{i_j,i_{j+1}},\ k=2,...,n.
$$
where $j$ is viewed as an element of $\Bbb Z/k$. 

{\bf Hint:} It is slightly more convenient (and equivalent) to consider $\g=\mathfrak{gl}_n(\Bbb C)$, in which case one also has the generator $C_0$. 
Identify $\g$ with $\g^*$ using the trace pairing on $\g$. Let 
$T_k: \g^{\otimes k}\to \Bbb C$ be the $\g$-module map defined by 
$T_k(a_1\otimes...\otimes a_k):={\rm Tr}(a_k...a_1)$. Let $T_k^*: \Bbb C\to \g^{\otimes k}$ be the dual map. Show that 
$$
T_k^*(1)=\sum_{i_1,...,i_k=1}^n E_{i_1i_2}\otimes E_{i_2i_3}\otimes...\otimes E_{i_ki_1}.
$$
Use that this element is $\g$-invariant to show that the element $C_{k-1}$ is central. 

(ii) Generalize these statements to $\mathfrak{so}_{2n+1}(\Bbb C)$ and $\mathfrak{sp}_{2n}(\Bbb C)$. What happens for $\mathfrak{so}_{2n}$?
\end{exercise} 

\section{\bf Harish-Chandra isomorphism, maximal quotients} 

\subsection{The Harish-Chandra isomorphism} Let $\g$ be a complex semisimple Lie algebra. Fix a triangular decomposition $\g=\n_-\oplus \h\oplus \n_+$. By the PBW theorem, we then have 
a linear isomorphism 
$$
\mu: U(\n_-)\otimes U(\h)\otimes U(\n_+)\to U(\g)
$$ 
given by multiplication. We also have the linear map 
$$
\beta: U(\n_-)\otimes U(\h)\otimes U(\n_+)\to U(\h)
$$ 
given by 
$$
a_-\otimes h\otimes a_+\mapsto \varepsilon(a_-)\varepsilon(a_+)h,\ a_\pm \in U(\n_\pm), h\in U(\h),
$$
where $\varepsilon: U(\n_\pm)\to \Bbb C$ is the augmentation homomorphism (the counit).
Thus we get a linear map 
$$
HC:=\beta\circ \mu^{-1}: U(\g)\to U(\h)=S\h=\Bbb C[\h^*] 
$$
called the {\bf Harish-Chandra map}.

\begin{theorem} (Harish-Chandra) 
(i) If $b\in U(\g)$ and $c\in Z(\g)$ then $HC(bc)=HC(b)HC(c)$. 
In particular, the restriction of $HC$ to $Z(\g)$ is an algebra homomorphism. 

(ii) Define the shifted action of $W$ on $\h^*$ by $w\bullet x:=w(x+\rho)-\rho$ 
where $\rho$ is the half sum of positive roots (or, equivalently, sum of fundamental weights). 
Then $HC$ maps $Z(\g)$ into the space of invariants $\Bbb C[\h^*]^{W\bullet }$. That is,  
for any $b\in Z(\g)$ we have $HC(b)(\lambda)=f_b(\lambda+\rho)$ for some $f_b\in \Bbb C[\h^*]^W$.

(iii) If $V$ is a highest weight representation of $\g$ with highest weight $\lambda$ 
then 
$$
f_b(\lambda+\rho)=(v_\lambda^*,bv_\lambda)
$$
where $v_\lambda$ is a highest weight vector 
of $V$ and $v_\lambda^*$ the lowest weight vector of $V^*$ 
such that $(v_\lambda^*,v_\lambda)=1$. Thus if $b\in  Z(\g)$ 
then $HC(b)(\lambda)$ is the scalar by which $b$ acts on a highest weight module with highest weight $\lambda$.  

(iv) The map $HC: Z(\g)\to \Bbb C[\h^*]^{W\bullet }$ is a filtered algebra homomorphism and ${\rm gr}(HC)={\rm Res}$, the Chevalley restriction homomorphism $(S\g)^\g\to (S\h)^W$. 

(v) $HC$ is an algebra isomorphism. 
\end{theorem}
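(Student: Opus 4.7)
The plan is to prove the five parts in the order (iii), (i), (ii), (iv), (v), each step building on the earlier ones. Starting with (iii), I would unpack the PBW decomposition directly. Writing $\mu^{-1}(b) = \sum_i a_i^-\otimes h_i\otimes a_i^+$ with $a_i^\pm \in U(\n_\pm)$ and $h_i \in U(\h) = S\h$, one has $a_i^+ v_\lambda = \varepsilon(a_i^+) v_\lambda$ (since the augmentation ideal of $U(\n_+)$ kills $v_\lambda$), then $h_i v_\lambda = h_i(\lambda) v_\lambda$, and finally $a_i^- v_\lambda = \varepsilon(a_i^-) v_\lambda + (\text{strictly lower weight terms})$, so pairing with $v_\lambda^*$ extracts $\varepsilon(a_i^-)$. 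This yields
\[
(v_\lambda^*,\, bv_\lambda) \;=\; \sum_i \varepsilon(a_i^-)\,\varepsilon(a_i^+)\, h_i(\lambda) \;=\; \beta(\mu^{-1}(b))(\lambda) \;=\; HC(b)(\lambda),
\]
which is the content of (iii).

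For (i), observe that when $c \in Z(\g)$ it commutes with $\n_+$ and $\h$, so $cv_\lambda$ is a singular vector of weight $\lambda$ in a highest weight module, hence a scalar multiple of $v_\lambda$ equal to $HC(c)(\lambda)\cdot v_\lambda$ by (iii). Therefore $(v_\lambda^*, bcv_\lambda) = HC(c)(\lambda)\cdot(v_\lambda^*, bv_\lambda)$, which by (iii) becomes $HC(bc)(\lambda) = HC(b)(\lambda)\cdot HC(c)(\lambda)$ for all $\lambda$, yielding $HC(bc) = HC(b)\,HC(c)$. For (ii), it suffices to show $f_c$ is invariant under each simple reflection $s_i$. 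If $(\lambda,\alpha_i^\vee) = n-1$ with $n \in \Bbb Z_{\ge 1}$, Exercise \ref{Shapova}(i) provides an embedding $M_{s_i\bullet\lambda} = M_{\lambda - n\alpha_i} \hookrightarrow M_\lambda$. Since $c$ acts as a scalar on each Verma and these scalars must coincide on the common submodule, one has $HC(c)(\lambda) = HC(c)(s_i\bullet\lambda)$ on the Zariski-dense union of hyperplanes $(\lambda,\alpha_i^\vee)\in\Bbb Z_{\ge 0}$, hence identically, so $f_c \in \Bbb C[\h^*]^W$.

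For (iv) and (v) I would pass to associated gradeds under the PBW filtration. Then ${\rm gr}(\mu): S\n_-\otimes S\h\otimes S\n_+ \to S\g$ is multiplication and ${\rm gr}(\beta)$ is the projection $S\g \to S\h$ killing $\n_\pm$, which under $S\g = \Bbb C[\g^*]$, $S\h = \Bbb C[\h^*]$ is exactly the Chevalley restriction map. By Kostant (Theorem \ref{Kos1}), symmetrization identifies ${\rm gr}\,Z(\g)$ with $(S\g)^\g$, and the Chevalley restriction theorem gives that ${\rm gr}(HC)|_{(S\g)^\g}$ is the isomorphism $(S\g)^\g \xrightarrow{\sim} (S\h)^W$, proving (iv). For (v), the translation-by-$\rho$ automorphism of $\Bbb C[\h^*]$ preserves the degree filtration with trivial associated graded and conjugates the $W$-action to the $W\bullet$-action, so ${\rm gr}\,\Bbb C[\h^*]^{W\bullet} = (S\h)^W$ canonically. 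Hence $HC: Z(\g) \to \Bbb C[\h^*]^{W\bullet}$ is a filtered algebra map whose associated graded is the Chevalley restriction isomorphism, so $HC$ itself is an isomorphism by the standard filtered-graded criterion. The main obstacle lies in (ii): one genuinely needs the Verma embedding $M_{s_i\bullet\lambda}\hookrightarrow M_\lambda$ on a dominant hyperplane, since no direct manipulation of $\beta$ reveals $W$-invariance, whereas the remainder of the argument is formal bookkeeping once (iii) is in hand.
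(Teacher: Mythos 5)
Your proposal is correct and follows essentially the same route as the paper: establish $HC(b)(\lambda)=(v_\lambda^*,bv_\lambda)$ from the PBW decomposition, get (i) from centrality of $c$, prove the $W\bullet$-invariance in (ii) by comparing eigenvalues along the Verma embeddings $M_{s_i\bullet\lambda}\hookrightarrow M_\lambda$ on a Zariski-dense family of hyperplanes, and then obtain (iv) and (v) by passing to associated gradeds and invoking Kostant's theorem and the Chevalley restriction isomorphism. You reorder the parts and spell out (iv) more explicitly than the paper's terse ``follows immediately from (iii),'' but the underlying arguments coincide.
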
 

The isomorphism $HC: Z(\g)\to \Bbb C[\h^*]^{W\bullet }$ is called the {\bf Harish-Chandra isomorphism}. 

\begin{proof} Let $b=a_-ha_+\in U(\g)$. We have 
$$
(v_\lambda^*,bv_\lambda)=(v_\lambda^*,a_-ha_+v_\lambda)=\varepsilon(a_-)\varepsilon(a_+)\lambda(h)=
HC(b)(\lambda).
$$
Thus 
$$
HC(bc)(\lambda)=(v_\lambda^*,bcv_\lambda)=(v_\lambda^*,bv_\lambda)(v_\lambda^*,cv_\lambda)=HC(b)(\lambda)HC(c)(\lambda)
$$
since $c$ is central; namely, the last factor is just the eigenvalue of $c$ on $V$. This proves (i). 

To establish (ii),(iii), it remains to show that for $b\in Z(\g)$, 
$HC(b)$ is invariant under the shifted action of all $w\in W$. To this end, it suffices to show this 
for $w=s_i$, a simple reflection. For this purpose, consider the Verma module 
$M_{\lambda}$ with $(\lambda+\rho,\alpha_i^\vee)=n\in \Bbb Z_{>0}$. Then 
$f_i^nv_\lambda$ generates a copy of $M_{\lambda-n\alpha_i}=M_{s_i\bullet \lambda}$ inside $M_\lambda$. 
Thus we get $HC(b)(\lambda)=HC(b)(s_i\bullet \lambda)$. Since this holds on a Zariski dense set, 
it holds identically, which yields (ii),(iii). 

(iv) follows immediately from (iii). 

Finally, (v) follows from (iv) and the Chevalley restriction theorem, since any filtered map whose associated graded is an isomorphism is itself an isomorphism. 
\end{proof} 

\begin{remark} Kostant theorems and the Harish-Chandra isomorphism extend trivially to reductive Lie algebras.  
\end{remark}

\subsection{Maximal quotients}
Let $\g$ be a semisimple Lie algebra and $M$ a $\g$-module on which 
the center $Z(\g)$ acts by a character 
$$
\chi: Z(\g)\to \Bbb C
$$
(for example, $M$ is irreducible). In view of the Harish-Chandra isomorphism theorem, 
we have $\chi=\chi_\lambda$, where 
$$
\chi_\lambda(z)=HC(z)(\lambda)
$$ 
for a unique $\lambda\in \h^*$ modulo the shifted action of $W$. As mentioned in Subsection \ref{dixle}, the element $\chi_\lambda$ is called the {\bf infinitesimal character} or {\bf central character} of $M$. 

If $M$ is a $\g$-bimodule then it carries two actions of $Z(\g)$, by left 
and by right multiplication. If these actions are by characters, then they are called the {\bf left and right infinitesimal characters} of $M$. The infinitesimal character of $M$ is then the pair $(\theta,\chi)$ where $\theta$ is the left infinitesimal character and $\chi$ the right infinitesimal character of $M$. 
 
For a character $\chi: Z(\g)\to \Bbb C$ let 
$$
U_\chi=U_\chi(\g):=U(\g)/(z-\chi(z),z\in Z(\g)).
$$ 
This algebra is called 
the {\bf maximal quotient} of $U(\g)$ with infinitesimal character $\chi$, as every $U(\g)$-module with such infinitesimal character factors through $U_\chi$.  Note that $U_\chi$ is a $\g$-bimodule with infinitesimal character $(\chi,\chi)$ (as it is a $U_\chi$-bimodule). 

Theorem \ref{Kos1} immediately implies 

\begin{corollary}\label{Ul} For any finite-dimensional irreducible $\g$-module $V$ we have $\dim\Hom_\g(V,U_\chi)=\dim V[0]$, where $\g$ acts on $U_\chi$ by the adjoint action. Thus $U_\chi$ is a Harish-Chandra $\g$-bimodule.  
\end{corollary}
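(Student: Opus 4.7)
The plan is to deduce the statement directly from Theorem \ref{Kos1} by base change from $Z(\g)$ to $\Bbb C$ via the character $\chi$. First, I will rewrite $U_\chi$ as $U(\g)\otimes_{Z(\g)}\Bbb C_\chi$, where $\Bbb C_\chi$ denotes $\Bbb C$ viewed as a $Z(\g)$-module via $\chi$. This identification holds because the two-sided ideal $(z-\chi(z),\ z\in Z(\g))$ defining $U_\chi$ coincides with the kernel of the natural surjection $U(\g)\to U(\g)\otimes_{Z(\g)}\Bbb C_\chi$ (here we use that $Z(\g)$ is central, so there is no distinction between left and right ideals generated by $z-\chi(z)$).

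Next, I will use the isotypic decomposition under the adjoint $\g$-action. Since the adjoint action commutes with left (and right) multiplication by $Z(\g)$, the canonical $\g$-module decomposition
\[
U(\g)\cong\bigoplus_{V\in\mathrm{Irr}(\g)} V\otimes \Hom_\g(V,U(\g))
\]
is an isomorphism of $(\g,Z(\g))$-bimodules, where $Z(\g)$ acts trivially on $V$. By Theorem \ref{Kos1}(ii), each multiplicity space $\Hom_\g(V,U(\g))$ is a free $Z(\g)$-module of rank $\dim V[0]$. Tensoring the displayed decomposition with $\Bbb C_\chi$ over $Z(\g)$ and using flatness/freeness to commute $\otimes_{Z(\g)}\Bbb C_\chi$ with the direct sum and with $V\otimes(-)$ yields
\[
U_\chi\cong\bigoplus_{V\in\mathrm{Irr}(\g)} V\otimes \Bbb C^{\dim V[0]}
\]
as $\g$-modules under the adjoint action. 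Extracting the multiplicity of $V$ gives the claimed equality $\dim\Hom_\g(V,U_\chi)=\dim V[0]$.

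Finally, to conclude that $U_\chi$ is a Harish-Chandra $\g$-bimodule, I need to check the three conditions recalled in the text: admissibility under the adjoint $\g$-action, integrability of that adjoint action to the compact form $K$, and finite generation as a $\g\oplus\g$-module. Admissibility is exactly the finiteness of $\dim V[0]$ for every $V\in\mathrm{Irr}(\g)$, which we have just established. Integrability follows because the adjoint action of $\g$ on $U(\g)$, and hence on the quotient $U_\chi$, is locally finite with each finite-dimensional piece integrating to $K$ by the standard correspondence for finite-dimensional representations. Finite generation is immediate and forms no obstacle: the image of $1$ generates $U_\chi$ as a left $U(\g)$-module, so a fortiori as a $U(\g)\otimes U(\g)^{\mathrm{op}}$-module.

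There is no genuine obstacle here; the only subtle point is the commutation of $\otimes_{Z(\g)}\Bbb C_\chi$ with $\Hom_\g(V,-)$, which is the step I would double-check carefully. Because $\Hom_\g(V,U(\g))$ is a direct summand of $U(\g)$ as a $Z(\g)$-module (by complete reducibility of the $\g$-action on finite-dimensional pieces, applied isotypically, combined with freeness from Theorem \ref{Kos1}), the functor $\Hom_\g(V,-)$ commutes with the right-exact base change $\otimes_{Z(\g)}\Bbb C_\chi$ on $U(\g)$, which legitimizes the computation above.
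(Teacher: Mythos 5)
Your proof is correct and takes essentially the same route the paper intends: the paper states only that Theorem \ref{Kos1} "immediately implies" the corollary, and your computation (rewrite $U_\chi=U(\g)\otimes_{Z(\g)}\Bbb C_\chi$, decompose $U(\g)$ isotypically under $\g_{\rm ad}$, and specialize the free $Z(\g)$-modules $\Hom_\g(V,U(\g))$ of rank $\dim V[0]$ along $\chi$) is precisely that argument spelled out. Your cautionary final paragraph is more elaborate than needed: since the isotypic decomposition of $U(\g)$ is already a decomposition of $Z(\g)$-modules and $\otimes_{Z(\g)}\Bbb C_\chi$ commutes with direct sums and with $V\otimes(-)$, the passage to $\Hom_\g(V,U_\chi)$ is immediate from the resulting decomposition of $U_\chi$, with no separate commutation lemma required.
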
 

\begin{corollary}\label{tenspro1} If $V$ 
is a finite-dimensional $\g$-bimodule then 
$V\otimes U_\chi$ is a Harish-Chandra $\g$-bimodule. 
\end{corollary}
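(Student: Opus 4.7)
The plan is to verify the three defining properties of a Harish--Chandra $\g$-bimodule for $M := V\otimes U_\chi$: local finiteness of the adjoint action $\ad(x)m = xm-mx$, admissibility (finite multiplicities of irreducible finite-dimensional $\g$-modules under $\ad$), and finite generation over $U(\g)$.

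First, I would note that on a tensor product of $\g$-bimodules (with the natural bimodule structure coming from the coproduct on $U(\g)$), the adjoint action is diagonal:
$$\ad(x)(v\otimes u) = \ad(x)v\otimes u + v\otimes \ad(x)u.$$
On the finite-dimensional $V$ the $\ad$-action is trivially locally finite, and Corollary~\ref{Ul} implies that $U_\chi$ is locally finite under $\ad(\g)$, as it decomposes as a direct sum of finite-dimensional adjoint-isotypic components. The tensor product of two locally finite $\g$-modules (under the diagonal action) is again locally finite, yielding the first property.

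Second, for admissibility, fix $W\in\Irr(\g)$. Since $V$ is finite-dimensional (hence dualizable), the standard tensor--Hom adjunction gives
$$\Hom_\g(W, V\otimes U_\chi)\cong \Hom_\g(W\otimes V^*, U_\chi),$$
with all $\g$-actions understood as adjoint on bimodules. Decompose the finite-dimensional $\g$-module $W\otimes V^*$ into a finite direct sum $\bigoplus_i W_i^{\oplus n_i}$ of irreducibles, each with finite multiplicity. By Corollary~\ref{Ul}, $\dim \Hom_\g(W_i, U_\chi) = \dim W_i[0]<\infty$ for each $i$, so the total dimension is finite.

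Third, for finite generation, I would note that $U_\chi$ is generated by the class of $1$ as a left $U(\g)$-module. A short induction on PBW-filtration degree, using the identity $x\cdot(v\otimes a) = xv\otimes a + v\otimes xa$ coming from the coproduct, shows that $V\otimes U_\chi$ is generated by the finite-dimensional subspace $V\otimes [1]$ under the left $\g$-action alone, hence a fortiori as a $\g$-bimodule. I do not anticipate any real obstacle: the argument is the bimodule analogue of Exercise~\ref{tenspro}, with Corollary~\ref{Ul} providing the admissibility input for $U_\chi$. The only point worth pausing at is the tensor--Hom adjunction in the second step, whose compatibility with the adjoint action follows precisely because that action is diagonal on bimodule tensor products, so $V\otimes U_\chi$ and $W\otimes V^*$ can be treated as ordinary $\g$-modules under $\ad$ and the classical adjunction applies.
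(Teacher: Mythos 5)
Your proposal is correct and follows essentially the same route as the paper. The paper's one-line proof invokes Corollary~\ref{Ul} (giving admissibility of $U_\chi$ under the adjoint action) together with Exercise~\ref{tenspro} (tensoring an admissible $(\g,K)$-module with a finite-dimensional module preserves admissibility); your second step, the tensor--Hom adjunction $\Hom_\g(W,V\otimes U_\chi)\cong\Hom_\g(W\otimes V^*,U_\chi)$, is exactly the substance of that exercise, so you are re-proving it rather than citing it. Your first and third steps (local finiteness of $\ad$ on a tensor product, and generation of $V\otimes U_\chi$ by $V\otimes[1]$ by an induction on the PBW filtration) make explicit the routine supplementary facts that the paper's terse citation leaves to the reader; the finite-generation check in particular is a point worth spelling out, since the cited exercise literally addresses only admissibility and not generation. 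All three steps are sound.
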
 

\begin{proof} This follows from Corollary \ref{Ul} and Exercise \ref{tenspro}. 
\end{proof} 

\begin{corollary}\label{quot} (i) Every irreducible 
$\g$-bimodule $M$ locally finite under the adjoint $\g$-action is a quotient 
of $V\otimes U_\chi$ for some finite-dimensional irreducible $\g$-module $V$ with trivial right action of $\g$, where $\chi$ is the right infinitesimal character of $M$.

(ii) Every irreducible $\g$-bimodule locally finite under the adjoint $\g$-action is a Harish-Chandra bimodule. 
\end{corollary}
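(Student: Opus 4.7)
The plan is as follows.

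First, I would apply Dixmier's lemma (Subsection \ref{dixle}) to $M$, viewed as a simple module over the countably dimensional algebra $U(\g)\otimes U(\g)^{\op}$, to conclude that $Z(\g)$ acts on the right by a character $\chi$, giving the right infinitesimal character. Since the adjoint action of $\g$ on $M$ is locally finite and $\g$ is semisimple, every vector lies in a finite direct sum of finite-dimensional irreducible adjoint submodules (by Weyl's complete reducibility); in particular, I can choose a nonzero finite-dimensional irreducible adjoint submodule $V\subset M$, and I will use this $V$ as the module appearing in the statement.

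Next, I would define the map $\phi\colon V\otimes U(\g)\to M$ by $\phi(v\otimes u):=v\cdot u$, using the right $U(\g)$-action on $M$. To interpret $V$ as a $\g$-bimodule in the sense of the corollary, I equip it with its adjoint $\g$-action on the left and zero right action; the left action on $V\otimes U(\g)$ is then via the bimodule coproduct, as in Example 4.5 and its footnote. The right-module compatibility of $\phi$ is immediate since $V$ has trivial right action: $\phi((v\otimes u)\cdot a)=\phi(v\otimes ua)=v\cdot(ua)=(v\cdot u)\cdot a$. The left-module compatibility rests on the identity $av-va=\ad(a)v$ for $v\in V\subset M$, which gives
\[
\phi\bigl(a\cdot(v\otimes u)\bigr)=\phi(\ad(a)v\otimes u+v\otimes au)=(av-va)u+v(au)=a(vu),
\]
where all operations take place in the bimodule $M$ and the two middle terms cancel. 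Centrality of $z\in Z(\g)$, combined with the equality $(v\cdot u)\cdot z=\chi(z)(v\cdot u)$ coming from the right infinitesimal character, then shows that $\phi$ annihilates $V\otimes(z-\chi(z))U(\g)$ for every $z\in Z(\g)$, so $\phi$ descends to a bimodule map $V\otimes U_\chi\to M$. Since $\phi(v\otimes 1)=v\ne 0$ and $M$ is irreducible, this descended map is surjective, proving (i).

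Finally, part (ii) follows at once from (i) together with Corollary \ref{tenspro1}: the bimodule $V\otimes U_\chi$ is a Harish-Chandra bimodule (admissible and finitely generated over $U(\g)$), and both properties pass to quotients, so $M$ is itself a Harish-Chandra bimodule.

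The main subtlety, in my view, lies in correctly matching up the three different $\g$-actions at play on $V$: the adjoint action inherited from $V\subset M$, the ``left $\g$-module with trivial right action'' bimodule structure used to form $V\otimes U_\chi$, and the left action on $M$ that appears after applying $\phi$. The cancellation $(av-va)u+v(au)=a(vu)$ makes the compatibility transparent, but it requires careful bookkeeping between bimodule and one-sided conventions; once this is sorted out, the argument is essentially formal given the earlier results.
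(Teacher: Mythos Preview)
Your proof is correct and follows essentially the same approach as the paper. The only difference is presentational: the paper packages the construction of the map by first observing that the canonical element $u=\sum v_i^*\otimes v_i\in V^*\otimes V\subset V^*\otimes M$ is $\g_{\rm ad}$-invariant, then defining a bimodule map $U(\g)\to V^*\otimes M$, $c\mapsto uc$, and finally passing to $V\otimes U_\chi\to M$ by adjunction; unwinding that adjunction yields exactly your map $v\otimes u\mapsto v\cdot u$, and your explicit cancellation $(av-va)u+v(au)=a(vu)$ is precisely the verification that the canonical element commutes with $\g$.
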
 

\begin{proof}  (ii) follows from (i) and Corollary \ref{tenspro1}, so it suffices to prove (i). 
By Dixmier's lemma (Lemma \ref{Dixlemm}), $M$ has some infinitesimal character $(\theta,\chi)$. Let $V\subset M$ be an irreducible finite-dimensional subrepresentation under $\g_{\rm ad}$. 
Let us view $V^*$ as a $\g$-bimodule 
with action 
$$
(af)(x)=-f(ax),\ fb=0
$$ 
for $a,b\in \g, x\in V, f\in V^*$, and consider the tensor product 
$V^*\otimes M$, which is a $\g$-bimodule
with action 
$$
a\circ (f\otimes m):=af\otimes m+f\otimes am,\ 
(f\otimes m)\circ b:=f\otimes mb.
$$ 
The canonical element 
$u\in V^*\otimes V\subset V^*\otimes M$ 
is $\g_{\rm ad}$-invariant (i.e., commutes with $\g$). 
Thus we have a bimodule homomorphism 
$\psi: U(\g)\to V^*\otimes M$ given by $\psi(c):=uc=\sum v_i^*\otimes v_ic$, where $v_i$ is a basis of $V$ and $v_i^*$ the dual basis of $V^*$. Moreover, since the right infinitesimal character of $M$ is $\chi$, this homomorphism descends to $\overline{\psi}: U_\chi\to V^*\otimes M$. This gives rise to a nonzero homomorphism of bimodules $\xi: 
V\otimes U_\chi\to M$, where the right $\g$-module structure of $V$ is trivial. 
Since $M$ is irreducible, $\xi$ is surjective. Thus the result follows from Corollary \ref{tenspro1}.  
\end{proof} 

\section{\bf Category $\mathcal O$ of $\g$-modules - I} 

\subsection{Category $\mathcal O$} 

Let $\g$ be a semisimple complex Lie algebra. 

\begin{definition} The category $\mathcal O=\mathcal O_\g$ is the full subcategory of 
$\g$-mod, which consists of finitely generated $\g$-modules $M$ with weight decomposition and $P(M)\subset \cup_{i=1}^m (\lambda_i-Q_+)$, where 
$\lambda_1,...,\lambda_m\in \h^*$. 
\end{definition} 

It is clear that $\mathcal O$ is closed under taking subquotients and direct sums, so it 
is an abelian category (recall that a submodule of a finitely generated $\g$-module 
is finitely generated since $U(\g)$ is Noetherian). 

Also it is easy to see that any nonzero object $M\in \mathcal O$ has a singular vector (namely, take any nonzero vector of a maximal weight in $P(M)$). Thus the simple objects (=modules) of $\mathcal O$ are $L_\lambda$, $\lambda\in \h^*$. 

\begin{example} All highest weight $\g$-modules, in particular
a Verma module $M_\lambda$ and its simple quotient $L_\lambda$ belong to $\mathcal O$. Another example is $\overline M_{-\lambda}^*$, the restricted dual to the lowest weight Verma module $\overline M_{-\lambda}$, introduced in Exercise \ref{intertw}(ii). 
This module is called the {\bf contragredient Verma module} and denoted $M_\lambda^\vee$. 
\end{example}

\begin{lemma} \label{findim} If $M\in \mathcal O$ then the weight subspaces of $M$ are finite-dimensional.  
\end{lemma}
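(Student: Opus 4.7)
The plan is to produce a finite-dimensional $\h$- and $\n_+$-stable subspace $W\subset M$ that generates $M$ as a $U(\n_-)$-module, and then read off finite-dimensionality of each weight space from the (known) finite-dimensionality of weight spaces of $U(\n_-)$.

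First I would choose generators carefully: since $M$ is finitely generated and admits a weight decomposition, every generator is a finite sum of its weight components, so we may replace the given generating set by a finite set of weight vectors $v_1,\dots,v_k$ of weights $\mu_1,\dots,\mu_k$. Let $V_0:=\mathrm{span}(v_1,\dots,v_k)$, a finite-dimensional $\h$-stable subspace, and set
$$
W := U(\n_+)\cdot V_0 \subset M.
$$
Since each $v_i$ has weight $\mu_i$ and $U(\n_+)$ is $Q_+$-graded with $U(\n_+)[0]=\Bbb C$, the set of weights of $W$ lies in $\bigcup_i(\mu_i+Q_+)$. But $W\subset M$, so these weights also lie in $\bigcup_j(\lambda_j-Q_+)$. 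The key combinatorial fact is that for any $\mu,\lambda\in\h^*$, the intersection $(\mu+Q_+)\cap(\lambda-Q_+)$ is finite: it is nonempty only if $\lambda-\mu\in Q_+$, say $\lambda-\mu=\sum c_i\alpha_i$ with $c_i\in\Bbb Z_{\ge 0}$, and then any $\nu$ in the intersection has the form $\mu+\sum n_i\alpha_i$ with $0\le n_i\le c_i$, giving only finitely many options. This is the one point that has to be checked, and it is the main (but minor) technical obstacle.

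Consequently $W$ has only finitely many nonzero weight components, and each such component $W[\nu]$ is a quotient of $\bigoplus_i U(\n_+)[\nu-\mu_i]\otimes\Bbb C v_i$. By the PBW theorem the weight spaces of $U(\n_+)$ are finite-dimensional (their dimensions being Kostant partition numbers), so each $W[\nu]$ is finite-dimensional. Hence $W$ itself is finite-dimensional.

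Finally, using the PBW decomposition $U(\g)=U(\n_-)U(\h)U(\n_+)$ and the fact that $W$ is stable under both $U(\h)$ (being a sum of weight spaces) and $U(\n_+)$ (by construction), we obtain
$$
M=U(\g)\cdot V_0 = U(\n_-)U(\h)U(\n_+)\cdot V_0 = U(\n_-)\cdot W.
$$
Picking a finite weight basis $\{w_1,\dots,w_N\}$ of $W$ with $\mathrm{wt}(w_s)=\eta_s$, we get for any $\mu\in\h^*$
$$
M[\mu]=\sum_{s=1}^N U(\n_-)[\mu-\eta_s]\cdot w_s,
$$
which is finite-dimensional since each $U(\n_-)[\mu-\eta_s]$ is (again by PBW, as a Kostant partition count). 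This proves the lemma.
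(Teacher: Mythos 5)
Your proof is correct and follows essentially the same route as the paper's: choose weight-vector generators, form $E = U(\mathfrak{n}_+)V_0$, observe it is finite-dimensional, and deduce the result from the surjection $U(\mathfrak{n}_-)\otimes E\to M$ together with finite-dimensionality of the weight spaces of $U(\mathfrak{n}_-)$. The only difference is one of exposition: where the paper asserts in a single phrase that $E$ is finite-dimensional ``by the condition on the weights of $M$,'' you spell out the underlying combinatorial fact that $(\mu+Q_+)\cap(\lambda-Q_+)$ is finite; this is exactly what the paper intends, and your elaboration is accurate.
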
 

\begin{proof} Let $v_1,..,v_m$ be generators of $M$ which are eigenvectors 
of $\h$ (they exist since $M$ is finitely generated and has weight decomposition). 
Let $E:=\sum_{i=1}^m U(\h\oplus \n_+)v_i=\sum_{i=1}^m U(\n_+)v_i$. Then $E$ is finite-dimensional by the condition on the weights of $M$. On the other hand, the natural map 
$U(\n_-)\otimes E\to M$ is surjective. The lemma follows, as weight subspaces of $U(\n_-)\otimes E$ are finite-dimensional. 
\end{proof} 

Let $\mathcal R$ be the ring of series $F:=\sum_{\mu\in \h^*}c_\mu e^\mu$, where $c_\mu\in \Bbb Z$ and the set $P(F)$ of $\mu$ with $c_\mu\ne 0$ is contained in a finite union of sets of the form $\lambda-Q_+$, $\lambda\in \h^*$. If $M$ is an $\h$-semisimple $\g$-module with finite-dimensional 
weight spaces and weights in a finite union of sets $\lambda-Q_+$ then we can define 
the {\bf character} of $M$, 
$$
{\rm ch}(M)=\sum_{\lambda\in \h^*}\dim M[\lambda]e^\lambda\in \mathcal R.
$$
For example, 
$$
{\rm ch}(M_\lambda)=\frac{e^\lambda}{\prod_{\alpha\in R_+}(1-e^{-\alpha})}.
$$
We have ${\rm ch}(M\otimes N)={\rm ch}(M){\rm ch}(N)$ and 
$$
{\rm ch}(M)={\rm ch}(L)+{\rm ch}(N)
$$
when $0\to L\to M\to N\to 0$ is a short exact sequence. Lemma \ref{findim} implies that we can define such characters ${\rm ch}(M)$ for $M\in \O$.

\begin{corollary}\label{finquo} The action of $Z(\g)$ on every $M\in \mathcal{O}$ factors through a finite-dimensional quotient. 
\end{corollary}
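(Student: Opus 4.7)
The plan is to reduce the statement to Lemma \ref{findim} by exploiting the fact that $Z(\g)$ commutes with $\h$ and hence preserves weight subspaces of $M$. Since $M\in \mathcal{O}$ is finitely generated and has a weight decomposition, I would first choose finitely many homogeneous (weight-vector) generators $v_1,\dots,v_m\in M$ of weights $\mu_1,\dots,\mu_m$. The existence of such generators is immediate: any finite generating set can be replaced by the set of its weight components, since each generator lies in the direct sum of finitely many weight spaces.

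Next I would use that $Z(\g)\subset U(\g)^\h$, so for every $z\in Z(\g)$ and every $i$, the vector $zv_i$ again has weight $\mu_i$. Thus $Z(\g)v_i\subseteq M[\mu_i]$, which is finite dimensional by Lemma \ref{findim}. Consequently the evaluation map
$$
\operatorname{ev}: Z(\g)\longrightarrow \bigoplus_{i=1}^m M[\mu_i],\qquad z\mapsto (zv_1,\dots,zv_m),
$$
lands in a finite-dimensional vector space, so its kernel $I:=\ker(\operatorname{ev})$ has finite codimension in $Z(\g)$. Moreover $I$ is an ideal, because $Z(\g)$ is commutative: if $z\in I$ and $z'\in Z(\g)$, then $(zz')v_i=z'(zv_i)=0$.

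The key remaining step is to identify $I$ with the kernel of the action of $Z(\g)$ on $M$. For this I would use that $M=\sum_i U(\g)v_i$ and that $Z(\g)$ is central, so every element $m=\sum_i u_iv_i$ satisfies $zm=\sum_i u_i(zv_i)$; hence $z$ annihilates $M$ iff $z$ annihilates each $v_i$, i.e.\ iff $z\in I$. Therefore the action of $Z(\g)$ on $M$ factors through $Z(\g)/I$, which is finite dimensional, as desired.

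I do not anticipate any real obstacle here: the only nontrivial input is Lemma \ref{findim} on finite-dimensionality of weight spaces, and everything else is formal from the fact that $Z(\g)$ preserves weights and commutes with all of $U(\g)$. If pressed for the ``hard'' step, it would be the observation that finite generation of $M$ together with centrality of $Z(\g)$ reduces the action to what happens on a finite set of weight vectors; this is the only place where the hypothesis $M\in \mathcal{O}$ (rather than just an $\h$-semisimple module) is essential.
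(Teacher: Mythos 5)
Your proof is correct, and it takes a route that is similar in spirit but genuinely different in the concluding step. Both you and the paper exploit the two key facts that $Z(\g)$ preserves each weight space $M[\mu]$ (being $\h$-invariant in $U(\g)$) and that these weight spaces are finite dimensional by Lemma \ref{findim}. The paper, however, first invokes the finite generation of $Z(\g)$ (a consequence of Kostant's Theorem \ref{Kos1}) to reduce the claim to showing that each individual $z\in Z(\g)$ satisfies a polynomial identity on $M$, which it then obtains from the minimal polynomial of $z$ acting on the finite-dimensional generating subspace $E:=M[\mu_1]\oplus\cdots\oplus M[\mu_k]$. You instead bypass finite generation entirely: by evaluating $Z(\g)$ on the finite set of weight-vector generators and noting that each $zv_i$ lands in the finite-dimensional space $M[\mu_i]$, you directly exhibit the annihilator of $M$ in $Z(\g)$ as the kernel of a linear map to a finite-dimensional space, so $Z(\g)/\mathrm{Ann}_{Z(\g)}(M)$ is finite dimensional with no further structural input. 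Your argument is thus slightly more elementary and self-contained, whereas the paper's version makes the role of the generators $C_i$ of $Z(\g)$ more explicit and is closer in form to how one would subsequently compute the finite-dimensional quotient.
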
 

\begin{proof} Since $Z(\g)$ is finitely generated, it suffices to show that every 
$z\in Z(\g)$ satisfies a polynomial equation $F(z)=0$ in $M$. Let $\mu_1,...,\mu_k$
be weights such that $M$ is generated by $E:=M[\mu_1]\oplus...\oplus M[\mu_k]$.  
By Lemma \ref{findim}, this space is finite-dimensional, and it is preserved
by $z$. Let $F$ be the minimal polynomial of $z$ on $E$. 
Then $F(z)=0$ on $E$, hence on the whole $M$ (as $z$ is central and $E$ generates $M$). 
\end{proof} 

\begin{exercise}\label{finquo1} Show that the action of $Z(\g)$ on any Harish-Chandra $(\g,K)$-module factors through a finite-dimensional quotient. (Mimic the proof of Corollary \ref{finquo}).
\end{exercise}

\begin{exercise}\label{ext1van} (i) Show that for any $\mu\in \h^*$, ${\rm Ext}^1_{\mathcal O}(M_\mu,M_\mu)=0$. 

(ii) Show that $\Ext^1(M_\mu,M_\mu)$ (Ext in the category of all $\g$-modules) 
is nonzero. 
\end{exercise} 

\begin{corollary}\label{decom} (i) Any $M\in \mathcal O$ 
has a canonical decomposition 
$$
M=\oplus_{\chi\in \h^*/W}M(\chi), 
$$
where $M(\chi)$ is the generalized eigenspace of $Z(\g)$ in $M$ with eigenvalue $\chi$, and this direct sum is finite. In other words, 
$$
\mathcal O=\oplus_{\chi\in \h^*/W}\mathcal O_\chi,
$$
where $\mathcal O_\chi$ is the subcategory of $\mathcal O$ 
of modules where every $z\in Z(\g)$ acts with generalized eigenvalue  
$\chi(z)$. 

(ii) Each $M\in \mathcal \O_\chi$ has a finite filtration 
with successive quotients having infinitesimal character $\chi$.  
\end{corollary}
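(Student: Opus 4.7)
The plan is to deduce both parts directly from Corollary \ref{finquo}, which asserts that $Z(\g)$ acts on each $M\in\mathcal O$ through a finite-dimensional quotient algebra $A_M$ of $Z(\g)$.

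For part (i), I would first invoke standard structure theory: since $A_M$ is a finite-dimensional commutative $\Bbb C$-algebra, it decomposes as a finite direct product $A_M=\bigoplus_{\mathfrak m} (A_M)_{\mathfrak m}$ of local Artinian algebras, indexed by its (finitely many) maximal ideals. Each maximal ideal $\mathfrak m$ is the image of the kernel of a unique character $\chi:Z(\g)\to\Bbb C$, and via the Harish-Chandra isomorphism these characters are indexed by $\h^*/(W\bullet)$, and hence (after the $\rho$-shift) by $\h^*/W$. The orthogonal idempotents $e_\chi\in A_M$ coming from this decomposition act on $M$ and yield $M=\bigoplus_\chi e_\chi M$; set $M(\chi):=e_\chi M$. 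Because each $e_\chi$ is central in $U(\g)$ (it lies in the image of $Z(\g)$ in $\End_{\Bbb C}(M)$), each $M(\chi)$ is a $\g$-submodule, and it is clearly the generalized $\chi$-eigenspace of $Z(\g)$ on $M$. Noetherianity of $U(\g)$ (so submodules of $M$ are finitely generated), together with the obvious inheritance of weight decomposition and of the weight-cone condition from $M$, shows that $M(\chi)\in\mathcal O$. The sum is finite since $A_M$ has only finitely many maximal ideals.

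For part (ii), suppose $M\in\mathcal O_\chi$ and let $\mathfrak m_\chi=\ker\chi\subset Z(\g)$. Under the surjection $Z(\g)\twoheadrightarrow A_M$, only the local summand of $A_M$ at the image of $\mathfrak m_\chi$ survives, and since that local Artinian factor has a nilpotent maximal ideal, we get some $N\ge 1$ with $\mathfrak m_\chi^N\cdot M=0$. Consider then the descending filtration
\[
M\;\supset\;\mathfrak m_\chi M\;\supset\;\mathfrak m_\chi^2 M\;\supset\;\cdots\;\supset\;\mathfrak m_\chi^N M=0.
\]
Each $\mathfrak m_\chi^k M$ is a $\g$-submodule (as $\mathfrak m_\chi\subset Z(\g)$ commutes with $\g$), hence lies in $\mathcal O$ by the same Noetherian/weight argument as above. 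On each successive quotient $\mathfrak m_\chi^k M/\mathfrak m_\chi^{k+1}M$, the ideal $\mathfrak m_\chi$ acts by $0$, so $Z(\g)$ acts by the genuine character $\chi$, as required.

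There is no serious obstacle here: the only things to check carefully are that the idempotent decomposition in (i) is compatible with the $\g$-action (immediate from the centrality of $Z(\g)$) and that the pieces appearing in both constructions remain in $\mathcal O$ (which follows from the Noetherianity of $U(\g)$ together with the fact that weight decomposition and the weight-cone condition pass to $\g$-submodules and quotients).
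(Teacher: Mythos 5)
Your proof is correct and follows essentially the same route as the paper's: decompose the finite-dimensional quotient $Z(\g)/\mathrm{Ann}(M)$ into a product of local Artinian algebras, use the resulting orthogonal central idempotents to split $M$, and for (ii) take the filtration of $M$ by powers of the maximal ideal. The extra checks you perform -- that the pieces remain in $\mathcal O$ and that the idempotents commute with $\g$ -- are implicit in the paper's (terser) argument.
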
 

\begin{proof} (i) Let $R:=Z(\g)/{\rm Ann}(M)$ be the quotient of $Z(\g)$ by its annihilator  
in $M$. This algebra is finite-dimensional, so has the form $R=\prod_{i=1}^m R_i$, where 
$R_i$ are local with units $\bold e_i$, corresponding to the generalized eigenvalues $\chi_1,...,\chi_m\in \h^*/W$ of $Z(\g)$ on $M$. 
So $M=\oplus_{i=1}^m M(\chi_i)$, where $M(\chi_i):=\bold e_iM$.  

(ii) If $M\in \mathcal O_\chi$ then the algebra $R$ is local. 
Let $\mathfrak m$ be its unique maximal ideal. Then the required finite filtration on $M$ is 
$$
M\supset \mathfrak m M\supset \mathfrak m^2M...
$$
\end{proof} 

Thus the simple objects of $\O_\chi$ are $L_{\mu-\rho}$, where $\chi=\chi_\mu$, 
i.e., $\mu\in \chi$. 

We can partition the $W$-orbit $\chi$ into equivalence classes according to the relation $\mu\sim \nu$ if $\mu-\nu\in Q$. It is clear that this partition defines a decomposition 
$\O_\chi=\oplus_{S}\O_\chi(S)$, where 
$S$ runs over the equivalence classes in $\chi$ under the relation $\sim$. 
Namely, $\O_\chi(S)$ is the subcategory of modules with all weights in $\mu-\rho+Q$, where $\mu\in S$. 

\begin{example} Suppose that $\lambda\in \h^*$ is such that 
$w\lambda-\lambda\notin Q$ for any $1\ne w\in W$. 
In this case the equivalence relation on $W\lambda$ is trivial, so for any $\mu\in W\lambda$ the category 
$\O_{\chi_\lambda}(\mu)$ has a unique simple object $M_{\mu-\rho}$. It thus follows from Exercise \ref{ext1van} that for any $\mu\in W\lambda$, the category $\mathcal O_{\chi_\lambda}(\mu)$ is equivalent to the category of finite-dimensional vector spaces (as $M_{\mu-\rho}$ has no nontrivial self-extensions), and the category $\mathcal O_{\chi_\lambda}$ is semisimple with $|W|$ simple objects. 
\end{example}  

\begin{lemma} Every object $M$ of $\mathcal O$ has finite length. 
\end{lemma}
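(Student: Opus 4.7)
The plan is to reduce immediately to the case of a fixed infinitesimal character, then run a numerical induction on a simple invariant attached to such a module. By Corollary \ref{decom}(i), any $M \in \mathcal O$ decomposes as a finite direct sum $M = \oplus_\chi M(\chi)$, and by part (ii) each $M(\chi)$ is a finite iterated extension of modules in $\mathcal O_\chi$ for $\chi = \chi_\lambda$, so it suffices to show that every $M \in \mathcal O_\chi$ has finite length.

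Fix $\chi = \chi_\lambda$. Recall that the simple objects of $\mathcal O_\chi$ are precisely the $L_\nu$ with $\nu \in W\bullet\lambda$, since the infinitesimal character of $L_\nu$ is $\chi_\nu$ and $\chi_\nu = \chi_\lambda$ iff $\nu + \rho \in W(\lambda+\rho)$. Define
$$
n(M) := \sum_{w \in W} \dim M[w\bullet\lambda].
$$
Each summand is finite by Lemma \ref{findim}, and only finitely many are nonzero: indeed, $P(M) \subset \bigcup_i (\lambda_i - Q_+)$ for finitely many $\lambda_i$, and the $W$-orbit $W\bullet \lambda$ meets each $\lambda_i - Q_+$ in a finite set (fixed $\lambda_i$ dominates only finitely many elements of a finite orbit). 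Thus $n(M) < \infty$. Clearly $n$ is additive on short exact sequences in $\mathcal O_\chi$.

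I claim that every $M \in \mathcal O_\chi$ has length at most $n(M)$, by induction on $n(M)$. The base case $n(M)=0$ amounts to showing $M = 0$: otherwise, using that $P(M)$ lies in a finite union $\bigcup_i(\lambda_i - Q_+)$, any $\mu \in P(M)$ sits below a maximal weight $\nu$ of $M$ (chains going up from $\mu$ in $P(M)$ are forced to terminate in $\lambda_i$-direction); any nonzero $v \in M[\nu]$ is a singular vector, so $Z(\g)$ acts on $v$ by the character $\chi_\nu$, and since $M \in \mathcal O_\chi$ this forces $\chi_\nu = \chi_\lambda$, i.e., $\nu \in W \bullet \lambda$, contradicting $n(M)=0$.

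For the inductive step, pick such a maximal weight $\nu \in W \bullet \lambda$ and $0 \ne v \in M[\nu]$, and let $N := U(\g)v \subseteq M$, a highest weight module of highest weight $\nu$, quotient of $M_\nu$. It has a unique maximal proper submodule $K \subset N$ with $N/K \cong L_\nu$. Because $U(\g)$ is Noetherian, $N$ and $K$ are finitely generated, hence lie in $\mathcal O_\chi$, as does $M/N$. Since $K[\nu] = 0$ while $N[\nu] \ni v$, additivity gives $n(K) = n(N)-\dim N[\nu] \le n(N)-1 < n(M)$, and similarly $n(M/N) = n(M) - n(N) \le n(M) - 1 < n(M)$. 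By the induction hypothesis $K$ and $M/N$ have finite length, so $N$ (extension of $L_\nu$ by $K$) and $M$ (extension of $M/N$ by $N$) do as well, with total length at most $n(M)$. The main conceptual point, and the only nontrivial one, is the existence of a maximal weight in $P(M)$ together with the argument pinning such a weight inside $W\bullet \lambda$; after that the proof is a clean finite induction.
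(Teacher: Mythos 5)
Your proof is correct, and it takes a genuinely different route from the paper's. You invoke the Harish--Chandra isomorphism (via the classification of simple objects of $\mathcal O_\chi$) to pin the highest weights of singular vectors to the finite orbit $W\bullet\lambda$, and then run an honest induction on the numerical invariant $n(M)=\sum_w\dim M[w\bullet\lambda]$, peeling off one highest weight submodule at a time. The paper is more economical: it uses only the \emph{quadratic Casimir} together with the positive definiteness of the inner product on $Q$ to show that any singular vector in any nonzero subquotient has weight in a fixed finite set $S$ (the solutions of $\gamma^2=\lambda^2$ in a fixed $Q$-coset), and then directly bounds the length by $\dim M[S]$ --- no induction, no appeal to the structure of $Z(\g)$ beyond the Casimir, and no need for the unique-maximal-submodule fact about highest weight modules. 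Your approach requires more machinery but is no less valid, and the inductive structure makes the finiteness feel constructive. One small slip in your write-up: the equality $n(K)=n(N)-\dim N[\nu]$ is not quite right --- additivity gives $n(K)=n(N)-n(L_\nu)$, and $n(L_\nu)$ can exceed $\dim N[\nu]=1$ since $L_\nu$ may have other weights in $W\bullet\lambda$; but since $n(L_\nu)\ge 1$ the inequality $n(K)\le n(N)-1$ you actually use still holds, so the argument goes through.
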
 

\begin{proof} By Corollary \ref{decom} we may assume that $M$ has infinitesimal character $\chi_\lambda$. We may also assume that $P(M)\subset \mu+Q$ for some $\mu\in \Bbb \h^*$. Recall that the quadratic Casimir $C$ of $\g$ acts on $M$ in the same way as in $M_{\lambda-\rho}$, i.e., by the scalar $\lambda^2-\rho^2$. Suppose that $v$ is a singular vector in a nonzero subquotient $M'$ of $M$ of some weight $\gamma\in \mu+Q$ (it must exist since weights of $M'$ belong to a finite union of $\lambda_i-Q_+$). Then $Cv=(\gamma^2-\rho^2)v$, so we must have 
$$
\gamma^2=\lambda^2.
$$
Since the inner product on $Q$ is positive definite, this equation has a finite set $S$ of solutions $\gamma\in \mu+Q$. 

For a semisimple $\h$-module $Y$ set 
$Y[S]:=\oplus_{\gamma\in S}Y[\gamma]$. It follows that $M'[S]\ne 0$. 
Also by Lemma \ref{findim} we have $\dim M[S]<\infty$. Thus 
length$(M)\le \dim M[S]$ is finite, as claimed. 
\end{proof} 

\subsection{Partial orders of $\h^*$} 

Introduce a partial order on $\h^*$: we say that $\mu\le\lambda$ if $\lambda-\mu \in Q_+$ and $\mu<\lambda$ if $\mu\le \lambda$ but $\mu\ne \lambda$. We write 
$\lambda\ge \mu$ if $\mu\le \lambda$ and $\lambda>\mu$ if $\mu<\lambda$. 

If $\mu=s_\alpha \lambda$ 
for some $\alpha\in R_+$ and $\mu<\lambda$ (i.e., 
$(\lambda,\alpha^\vee)\in \Bbb Z_{\ge 1}$ and $\mu=\lambda-(\lambda,\alpha^\vee)\alpha$), then we write $\mu<_{\alpha}\lambda$. We write $\mu\preceq \lambda$ 
if there exist sequences $\alpha^1,...,\alpha^m\in R_+$ and 
$\mu=\mu_0,\mu_1,...,\mu_m=\lambda$ such that 
for all $i$, $\mu_{i-1}<_{\alpha^i}\mu_i$, and write $\mu\prec\lambda$ if 
$\mu\preceq\lambda$ but $\mu\ne\lambda$ (i.e., $m\ne 0$). 
We write $\lambda\succeq \mu$ if $\mu\preceq \lambda$ and $\lambda\succ\mu$ if $\mu\prec\lambda$. 

\begin{remark}\label{falseingen} 
It is easy to see that if $\mu\prec\lambda$ then $\mu<\lambda$ and $\mu\in W\lambda$, but the converse is false, in general. For example, consider the root system of type $A_3$, 
and let us realize $\h^*$ as $\Bbb C^4/\Bbb C_{\rm diagonal}$. Let 
$\mu=(0,3,1,2)$, $\lambda=(1,2,3,0)$. Then $\mu\in W\lambda$ and $\mu<\lambda$, since $\lambda-\mu=(1,-1,2,-2)=\alpha_1+2\alpha_3$. However, 
$\mu\nprec\lambda$. Indeed, otherwise there would exist 
$\alpha\in R_+$ such that $\mu\le s_\alpha\lambda<\lambda$, and it is easy 
to check that there is no such $\alpha$. 
\end{remark} 

\subsection{Verma's theorem} 

\begin{theorem}\label{vermath} (D. N. Verma) Let $\lambda,\mu\in \h^*$ and $\mu\preceq\lambda$. 
Then $\dim \Hom(M_{\mu-\rho},M_{\lambda-\rho})=1$ and $M_{\mu-\rho}$ can be uniquely realized as a submodule of $M_{\lambda-\rho}$. In particular, $L_{\mu-\rho}$ occurs in the composition series of $M_{\lambda-\rho}$. 
\end{theorem}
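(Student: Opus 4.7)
The plan is to construct a nonzero embedding $M_{\mu-\rho}\hookrightarrow M_{\lambda-\rho}$. Once this is done, Exercise \ref{dimhom}(iv) forces $\dim\Hom=1$, and Exercise \ref{dimhom}(i) guarantees that any such nonzero hom is automatically injective; uniqueness of the realization then follows from the $\dim\le 1$ bound.

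First I would reduce to the single-step case. Unwinding the definition of $\preceq$, we obtain a chain $\mu=\mu_0<_{\alpha^1}\mu_1<_{\alpha^2}\cdots<_{\alpha^m}\mu_m=\lambda$ with each $\alpha^i\in R_+$, and composing consecutive embeddings reduces the problem to: for any $\alpha\in R_+$ and $\tilde\lambda\in\h^*$ with $n:=(\tilde\lambda+\rho,\alpha^\vee)\in\Bbb Z_{>0}$, produce an inclusion $M_{\tilde\lambda-n\alpha}\hookrightarrow M_{\tilde\lambda}$. (Setting $\tilde\lambda=\lambda-\rho$, the unshifted relation $s_\alpha\lambda=\lambda-n\alpha$ becomes $s_\alpha\bullet\tilde\lambda=\tilde\lambda-n\alpha$, which matches up with the chain above after shifting.) For simple $\alpha=\alpha_i$ this is precisely Exercise \ref{Shapova}(i).

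For a general (possibly non-simple) positive root $\alpha$, I would invoke Exercise \ref{Shapova}(vii): on the hyperplane $H:=\{\tau\in\h^*:(\tau+\rho,\alpha^\vee)=n\}$ there is a Zariski-open dense subset (the complement of finitely many Casimir-coincidence subvarieties cutting $H$ properly) on which $M_\tau$ contains $M_{\tau-n\alpha}$ as an irreducible submodule. To promote this from "generic $\tau$" to "all $\tau\in H$", I would run a semi-continuity argument: via $X\mapsto Xv_\tau$, identify $M_\tau[\tau-n\alpha]$ with the fixed finite-dimensional space $U(\n_-)[-n\alpha]$, and express the singular-vector condition as the vanishing of a linear map
$$E(\tau):U(\n_-)[-n\alpha]\to\bigoplus_i U(\n_-)[-n\alpha+\alpha_i]$$
implementing the $e_i$-actions, whose matrix entries depend polynomially on $\tau$. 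The locus $\{\tau\in H:\ker E(\tau)\ne 0\}$ is then Zariski-closed in the irreducible affine variety $H$ (it is cut out by vanishing of top-size minors); since it contains a Zariski-dense open subset, it must equal $H$. This delivers the one-step embedding for every $\tau\in H$, and concatenating such embeddings along the chain yields $M_{\mu-\rho}\hookrightarrow M_{\lambda-\rho}$.

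I expect the main obstacle to be the precise verification that the ``generic'' set supplied by Exercise \ref{Shapova}(vii) is genuinely Zariski-dense in $H$ (rather than merely topologically dense), i.e., that the ``bad'' conditions correspond to a finite union of hyperplanes of $\h^*$ none of which contains $H$; this is a bookkeeping check on which Casimir-coincidence equations $2(\tau+\rho,\beta)=(\beta,\beta)$ can actually degenerate $H$ itself. Everything else is a routine application of linear algebra (ranks and kernels) and the pre-existing Verma and Shapovalov machinery.
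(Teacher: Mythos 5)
Your proposal is correct and takes essentially the same approach as the paper: reduce to a single step $\mu <_\alpha \lambda$, obtain the embedding $M_{s_\alpha\bullet\tilde\lambda}\hookrightarrow M_{\tilde\lambda}$ for generic $\tilde\lambda$ on the relevant hyperplane via the Shapovalov machinery of Exercise \ref{Shapova}, and then pass to the general case by a limiting argument. The paper states the last step tersely as ``the general case follows by taking the limit''; your Zariski-closedness argument via the polynomial family $E(\tau)$ is exactly the precise form of that limit step, so there is no substantive difference.
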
 

\begin{proof} By Exercise \ref{dimhom}, $\dim \Hom(M_{\mu-\rho},M_{\lambda-\rho})\le 1$ and any nonzero homomorphism $M_{\mu-\rho}\to M_{\lambda-\rho}$ is injective, so it suffices to show that 
$\dim \Hom(M_{\mu-\rho},M_{\lambda-\rho})\ge 1$. By definition of the partial order $\preceq$, it suffices to do so when $\mu<_\alpha \lambda$ for some $\alpha\in R_+$, i.e., when $\mu=s_\alpha\lambda=\lambda-n\alpha$ where $n:=(\lambda,\alpha^\vee)\in  \Bbb Z_+$. For generic $\lambda$ with $(\lambda,\alpha^\vee)=n\in \Bbb Z_+$, this follows from the Shapovalov determinant formula (Exercise \ref{Shapova}), and the general case follows by taking the limit. 
\end{proof} 

We will see below that the converse to Verma's theorem also holds: if $L_{\mu-\rho}$ occurs in the composition series of $M_{\lambda-\rho}$
 then $\mu\preceq\lambda$. This was proved by J. Bernstein, I. Gelfand and S. Gelfand, see Theorem \ref{BGGth} below. 

\subsection{The stabilizer in $W$ of a point in $\h^*/Q$}

Let $x\in \h^*/Q$ and $W_x\subset W$ be the stabilizer of $x$. 

\begin{proposition}\label{stab} $W_x$ is generated by the reflections $s_\alpha\in W_x$. Moreover, the roots $\alpha$ such that $s_\alpha\in W_x$ form a root system $R_x\subset R$, and $W_x$ is the Weyl group of $R_x$. The corresponding dual root system $R_x^\vee$ 
is a root subsystem of $R^\vee$, i.e., $R_x^\vee={\rm span}_{\Bbb Z}(R_x^\vee)\cap R^\vee$. 
\end{proposition}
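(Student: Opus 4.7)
The plan is to lift $x$ to $\tilde x\in\h^*$, decompose $\tilde x=\tilde x_R+i\tilde x_I$ with $\tilde x_R,\tilde x_I\in\h^*_\BR$, and observe that since $Q\subset\h^*_\BR$,
$$w\in W_x \iff w\tilde x_I=\tilde x_I\ \text{ and }\ w\tilde x_R-\tilde x_R\in Q.$$
In particular, $s_\alpha\in W_x$ iff $(\tilde x_I,\alpha^\vee)=0$ and $(\tilde x_R,\alpha^\vee)\in\BZ$, which is equivalent to $(\tilde x,\alpha^\vee)\in\BZ$. Setting $R_x:=\{\alpha\in R:(\tilde x,\alpha^\vee)\in\BZ\}$, it thus suffices to prove that $W_x$ is generated by $\{s_\alpha:\alpha\in R_x\}$; the remaining claims then follow from short verifications.

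First I would apply Steinberg's theorem on stabilizers in real finite reflection groups to $W$ acting on $\h^*_\BR$: the pointwise stabilizer $W_{\tilde x_I}$ is generated by the reflections $s_\alpha$ fixing $\tilde x_I$, i.e.\ those with $\alpha\in R_I:=\{\alpha\in R:(\tilde x_I,\alpha^\vee)=0\}$, and since $R_I$ is closed under its own reflections (same computation as below), it is itself a root system with $W_{\tilde x_I}=W(R_I)$. It remains to identify $W_x$ inside $W(R_I)$. Put $V_I:=\mathrm{span}_\BR(R_I)\subset\h^*_\BR$; as $W(R_I)$ fixes $V_I^\perp$ pointwise, for $w\in W(R_I)$ we have $w\tilde x_R-\tilde x_R\in V_I$, so
$$W_x=\{w\in W(R_I):w\tilde x_R-\tilde x_R\in Q_I\},\qquad Q_I:=Q\cap V_I.$$
Next I would bring in the affine Weyl group $\widetilde W_I:=W(R_I)\ltimes Q_I$ acting on $V_I$; since $n\alpha\in Q_I$ for all $\alpha\in R_I,n\in\BZ$, its reflection hyperplanes are precisely $\{y\in V_I:(y,\alpha^\vee)=n\}$, $\alpha\in R_I$, $n\in\BZ$. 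The projection $(w,q)\mapsto w$ identifies the stabilizer of $\tilde x_R$ in $\widetilde W_I$ with $W_x$ (the translation part is forced to be $q=\tilde x_R-w\tilde x_R$, which lies in $Q_I$ exactly when $w\in W_x$). By Steinberg's theorem for real affine reflection groups, this stabilizer is generated by the affine reflections $s_{\alpha,n}$ with $\alpha\in R_I$ and $(\tilde x_R,\alpha^\vee)=n$, and these project to the reflections $s_\alpha$ with $\alpha\in R_I$ and $(\tilde x_R,\alpha^\vee)\in\BZ$, i.e.\ with $\alpha\in R_x$.

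For the remaining assertions, closure of $R_x$ under its own reflections follows from
$$(\tilde x,(s_\alpha\beta)^\vee)=(s_\alpha\tilde x,\beta^\vee)=(\tilde x,\beta^\vee)-(\tilde x,\alpha^\vee)(\alpha,\beta^\vee)\in\BZ$$
for $\alpha,\beta\in R_x$, so $R_x$ is a root subsystem of $R$ with Weyl group $W_x=W(R_x)$. Finally, if $\beta^\vee\in R^\vee\cap\mathrm{span}_\BZ(R_x^\vee)$, writing $\beta^\vee=\sum n_i\alpha_i^\vee$ with $\alpha_i\in R_x$ gives $(\tilde x,\beta^\vee)\in\BZ$, hence $\beta\in R_x$; thus $R_x^\vee=\mathrm{span}_\BZ(R_x^\vee)\cap R^\vee$. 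The main obstacle is the appeal to Steinberg's theorem for real affine reflection groups to handle the ``modulo $Q$'' condition; one must also be a little careful that $Q_I=Q\cap V_I$ (which can be strictly larger than the root lattice of $R_I$) still gives a bona fide affine reflection group on $V_I$ with the expected reflection hyperplanes.
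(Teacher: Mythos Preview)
Your argument is correct and takes a genuinely different route from the paper's. The paper observes that $\Bbb C[T]^W$ for $T=\h^*/Q$ is a polynomial ring (freely generated by the $W$-orbit sums of the fundamental coweights), so $T/W$ is smooth, and then invokes the Chevalley--Shephard--Todd theorem proved earlier in these notes to conclude that every stabilizer $W_x$ is generated by the reflections it contains. You instead split $\tilde x$ into real and imaginary parts and apply Steinberg's fixed-point theorem twice: once to the finite group $W$ acting linearly on $\h^*_{\Bbb R}$ to reduce to the parabolic $W(R_I)$, and then to the affine Weyl group of $R_I$ to absorb the ``modulo $Q$'' condition. The paper's argument is shorter because it reuses the CST machinery already in hand; yours avoids the detour through invariant theory and makes the alcove geometry visible.

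Two small remarks on the execution. First, your flagged concern that $Q_I=Q\cap V_I$ might strictly exceed $\Bbb Z R_I$ is unfounded: $R_I$ is a \emph{parabolic} subsystem (it is $R\cap \tilde x_I^\perp$), so after a $W$-conjugation its simple roots are a subset of the simple roots of $R$, and then $Q\cap V_I=\Bbb Z R_I$ is immediate from the linear independence of simple roots. Hence $\widetilde W_I$ is the honest affine Weyl group of $R_I$ and Steinberg applies directly; the worry would only arise for non-parabolic subsystems. Second, when you speak of the stabilizer of $\tilde x_R$ in $\widetilde W_I$, strictly speaking you should replace $\tilde x_R$ by its orthogonal projection to $V_I$ (or let $\widetilde W_I$ act on all of $\h^*_{\Bbb R}$, fixing $V_I^\perp$ pointwise); this changes nothing since $(\tilde x_R,\alpha^\vee)$ for $\alpha\in R_I$ depends only on that component.
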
 

\begin{proof} Let $T:=\h^*/Q$. 
The ring $\Bbb C[T/W]:=\Bbb C[T]^W$ is freely generated by 
the orbit sums $m_i=\sum_{\beta\in W\omega_i^\vee}e^\beta$, where $\omega_i^\vee$ are the fundamental coweights. Hence $T/W$ is smooth (in fact, an affine space). It follows by the Chevalley-Shephard-Todd theorem that for each $x\in T$ the stabilizer $W_x$ is generated by a subset of reflections of $W$. Moreover, if $s_\alpha,s_\beta\in W_x$ then $s_\alpha s_\beta s_\alpha=s_{s_\alpha(\beta)}\in W_x$, which implies that the set $R_x$ of $\alpha$ such that $s_\alpha\in W_x$
is a root system in $R$, and $W_x$ is its Weyl group. 
Moreover, picking a preimage $\widetilde x$ of $x$ in $\h^*$, 
we see that $\alpha\in R_x$ if and only if $(\alpha^\vee,\widetilde x)\in \Bbb Z$. Thus $R_x^\vee$ is a root subsystem of $R^\vee$. 
\end{proof} 

\begin{remark} 1. Note that unlike the case $x\in \h^*$, for $x\in \h^*/Q$ the group $W_x$ is not necessarily a {\bf parabolic} subgroup of $W$, i.e., it is not necessarily conjugate to a subgroup generated by simple reflections. In fact, the Dynkin diagram of $R_x$ or $R_x^\vee$ may not be a subdiagram of the Dynkin diagram of $W$. Such subgroups are called {\bf quasiparabolic subgroups}. 

For example, if $R$ is of type $B_2$ with simple roots $\alpha_1=(1,0)$ and $\alpha_2=(-1,1)$
then for $x=(\frac{1}{2},0)$, $R_x$ is the root system of type $A_1\times A_1$ consisting of $\pm \alpha_1$ and $\pm (\alpha_1+\alpha_2)$. 
The same example shows that $R_x$ is not necessarily a root subsystem of $R$, as $\alpha_1+(\alpha_1+\alpha_2)\notin R_x$. 

2. If $G^\vee$ is the simply connected complex semisimple Lie group corresponding to $R^\vee$ then $T$ is the maximal torus of $G^\vee$, and it is easy to see that 
$R_x^\vee$ is the root system of the centralizer $\mathfrak{z}_x$ of $x$ in $\g^\vee:={\rm Lie}(G^\vee)$.
\end{remark} 

\newpage

\section{\bf Category $\mathcal O$ of $\g$-modules - II} 

\subsection{Dominant weights} \label{domwei}

Let us say that a weight $\lambda\in \h^*$ is {\bf dominant} for the partial order 
$\le$ (respectively, $\preceq$) if it is maximal with respect to this order 
in its equivalence class (or, equivalently, in its $W$-orbit). 

\begin{corollary}\label{domchar} The following conditions on a weight $\lambda\in \h^*$ are equivalent: 

(i) $\lambda$ is dominant for $\le$;

(ii) $\lambda$ is dominant for $\preceq$; 

(iii) For every root $\alpha\in R_+$, $(\lambda,\alpha^\vee)\notin \Bbb Z_{<0}$. 

(iv) For every $w\in W_{\lambda+Q}$, $w\lambda\preceq\lambda$.

(v) For every $w\in W_{\lambda+Q}$, $w\lambda\le \lambda$.
\end{corollary}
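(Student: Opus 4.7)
The plan is to prove the circle (i) $\Rightarrow$ (iii) $\Rightarrow$ (iv) $\Rightarrow$ (v) $\Rightarrow$ (i), and then fold (ii) in via (iv) $\Rightarrow$ (ii) $\Rightarrow$ (iii). First I would install the essential piece of structure. Write $x = \lambda+Q \in \h^*/Q$, so $W_{\lambda+Q} = W_x$. By Proposition \ref{stab} this is a Coxeter group with root system
$$
R_x = \{\alpha \in R : s_\alpha \in W_x\} = \{\alpha \in R : (\lambda,\alpha^\vee)\in\Bbb Z\},
$$
and $R_{+,x} := R_x \cap R_+$ is a positive system for $R_x$ (being the intersection with the same open half-space that defines $R_+$). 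Fix simple roots of $R_x$ adapted to $R_{+,x}$ and let $\ell\colon W_x \to \Bbb Z_{\ge 0}$ be the associated length function.

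The easy implications I would dispose of first. Since $\mu \prec \nu$ implies $\mu < \nu$, condition (iv) immediately yields both (v) and (ii); and (v) $\Rightarrow$ (i) because the equivalence class of $\lambda$ in $W\lambda$ is exactly $W_x\lambda$, so uniform domination forces maximality. For (i) $\Rightarrow$ (iii) and (ii) $\Rightarrow$ (iii) I would use a single reflection. For $\alpha \in R_+$ the statement is vacuous when $(\lambda,\alpha^\vee)\notin \Bbb Z$; otherwise $s_\alpha \in W_x$ and
$$
s_\alpha\lambda - \lambda = -(\lambda,\alpha^\vee)\alpha,
$$
so $s_\alpha\lambda$ is always $\le$-comparable to $\lambda$, and if $(\lambda,\alpha^\vee)<0$ then in fact $\lambda <_\alpha s_\alpha\lambda$, hence $\lambda\prec s_\alpha\lambda$. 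Either outcome contradicts the respective maximality hypothesis.

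The substantive step is (iii) $\Rightarrow$ (iv). I would induct on $\ell(w)$ to show $w\lambda \preceq \lambda$ for every $w \in W_x$. For $\ell(w) \ge 1$, standard Coxeter theory supplies a simple root $\alpha$ of $R_x$ with $\ell(s_\alpha w) = \ell(w)-1$, equivalently $w^{-1}\alpha \in -R_{+,x}$; write $w^{-1}\alpha = -\beta$ with $\beta \in R_{+,x}$. The key computation is
$$
(w\lambda,\alpha^\vee) \;=\; (\lambda, w^{-1}\alpha^\vee) \;=\; -(\lambda,\beta^\vee) \;\in\; \Bbb Z_{\le 0}
$$
by (iii). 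If this integer vanishes, then $s_\alpha w\lambda = w\lambda$; if strictly negative, then $(s_\alpha w\lambda,\alpha^\vee) \in \Bbb Z_{\ge 1}$ and $s_\alpha w\lambda - w\lambda \in Q_+$, so $w\lambda <_\alpha s_\alpha w\lambda$. In either case $w\lambda \preceq s_\alpha w\lambda$, and the inductive hypothesis applied to $s_\alpha w$ gives $s_\alpha w\lambda \preceq \lambda$, whence $w\lambda \preceq \lambda$.

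The main obstacle is the length-reduction step: $W_{\lambda+Q}$ need not be parabolic in $W$, so one cannot borrow a length function from the ambient Weyl group. The cure is exactly Proposition \ref{stab}, which endows $W_x$ with its own Coxeter structure (as the Weyl group of $R_x$), making the induction internal to $W_x$. Everything else is bookkeeping with the definitions of $<$, $<_\alpha$, and $\preceq$.
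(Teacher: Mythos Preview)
Your proof is correct and follows essentially the same route as the paper. Both arguments hinge on Proposition \ref{stab} to equip $W_{\lambda+Q}$ with its own Coxeter structure, and then prove (iii) $\Rightarrow$ (iv) by peeling off simple reflections of $R_x$ one at a time; the paper unrolls this as an explicit chain $\lambda_0,\lambda_1,\dots,\lambda_m$ using a fixed reduced decomposition (and the auxiliary splitting $\lambda=\lambda'+\nu$), while you phrase it as a clean induction on $\ell(w)$, but the underlying computation is the same.
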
 

\begin{proof} It is clear that (iv) implies (v) implies (i) implies (ii). 
It is also easy to see that (ii) implies (iii), 
since if $(\lambda,\alpha^\vee)\in \Bbb Z_{<0}$ then $s_\alpha\lambda\sim \lambda$ and $s_\alpha\lambda>\lambda$ so $\lambda$ is not maximal under $\preceq$ in its equivalence class. It remains to show that (iii) implies (iv). By Proposition \ref{stab}, $W_{\lambda+Q}$ is the Weyl group of some root system $R'\subset R$, and the equivalence class $S$ of $\lambda$ is simply the orbit $W_{\lambda+Q}\lambda$. 
By our assumption, for $\alpha\in R_+'$ we have $(\lambda,\alpha^\vee)\in \Bbb Z\setminus \Bbb Z_{<0}=\Bbb Z_{\ge 0}$. Thus, $\lambda=\lambda'+\nu$ 
where $\lambda'$ is a dominant integral weight for $R'$ (meaning that $(\lambda,\alpha^\vee)\in \Bbb Z_{\ge 0}$ for $\alpha\in R_+'$) and $(\nu,\alpha^\vee)=0$ for all $\alpha\in R_+'$. Now for any $w\in W_{\lambda+Q}$, fix a reduced 
decomposition $w=s_{i_m}...s_{i_1}$, where $s_i=s_{\beta_i}$ and 
$\beta_i$ are the simple roots of $R'$. Let $\lambda_k:=s_{i_k}...s_{i_1}\lambda$, so 
$\lambda_0=\lambda$ and $\lambda_m=w\lambda$. Setting $\lambda_k':=s_{i_k}...s_{i_1}\lambda'=\lambda_k-\nu$, we then have 
$$
\lambda_{k-1}-\lambda_k=\lambda_{k-1}'-\lambda_k'=(\lambda_{k-1}',\beta_{i_k}^\vee)\beta_{i_k}=(\lambda',s_{i_1}...s_{i_{k-1}}\beta_{i_k}^\vee)\beta_{i_k}.
$$
The coroot $s_{i_1}...s_{i_{k-1}}\beta_{i_k}^\vee$ is positive, 
so we get that $\lambda_{k}\preceq\lambda_{k-1}$, which yields (iv). 
\end{proof} 

Corollary \ref{domchar} shows that every equivalence class of weights contains a unique maximal element with respect to each of the orders $\preceq$ and $\le$, namely the unique dominant weight in this class. The same is true for minimal elements by changing signs.  

\subsection{Projective objects} Let $\C$ be an abelian category over a field $k$. 
Recall that $\C$ is said to be {\bf Noetherian} if any ascending chain of subobjects of any object $X\in \C$ stabilizes. This holds, for instance, when objects of $\C$ have finite length. 

Recall also that an object $P\in \C$ is {\bf projective} if the functor 
$\Hom(P,-)$ is (right) exact, and that $\C$ is said to have {\bf enough projectives} if every object $L\in \C$ is a quotient of a projective object $P$. 
Note that if objects of $\C$ have finite length then it is sufficient for this property to hold for every simple $L$: it can then be extended to all $L$ by induction in length. Indeed, suppose we have a short exact sequence 
$$
0\to L_1\to L\to L_2\to 0
$$
with $L_1,L_2\ne 0$ and projectives $P_1,P_2$ with epimorphisms $p_j: P_j\twoheadrightarrow L_j$. Then the map $p_2$ lifts to $\widetilde p_2: P_2\to L$, which yields an epimorphism 
$p_1+\widetilde p_2: P_1\oplus P_2\twoheadrightarrow L$. 

Suppose that Hom spaces in $\mathcal C$ are finite-dimensional. Then by the { \bf Krull-Schmidt theorem}, every object of $\mathcal C$ has a unique representation as a finite direct sum of indecomposable ones (up to isomorphism and permutation of summands). 

\begin{proposition}\label{genera} Let $\C$ be a Noetherian abelian category with enough projectives and finite-dimensional Hom spaces over an algebraically closed field $k$. Then 

(i) Let $I$ be the set labeling the isomorphism classes of indecomposable projectives $P_i$ of $\C$. Then 
the isomorphism classes of simple objects $L_i$ of $\C$ are labeled by the same set $I$, and 
$\dim\Hom(P_i,L_j)=\delta_{ij}$, $i,j\in I$. 

(ii) For $M\in \C$ of finite length, the multiplicities $[M:L_i]$ 
equal $\dim \Hom(P_i,M)$. 
\end{proposition}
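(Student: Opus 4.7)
The plan is to establish (i) first via an analysis of the endomorphism ring of an indecomposable projective, and then to deduce (ii) by a short induction on length using exactness of $\Hom(P_i,-)$.

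First I would show that for each indecomposable projective $P=P_i$, the $k$-algebra $E:=\End(P)$ is local with residue field $k$: it is finite-dimensional by hypothesis and has no nontrivial idempotents (any such would split $P$), and idempotents lift through the nilpotent Jacobson radical, so the semisimple Artinian quotient $E/\Rad(E)$ has no nontrivial idempotents either; hence it is a division algebra, which over algebraically closed $k$ can only be $k$. Set $\mathfrak m:=\Rad(E)$.

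The key step is then to show that $P_i$ has a unique simple quotient $L_i$ and that $\dim\Hom(P_i,L_j)=\delta_{ij}$. Noetherianity produces a maximal subobject $N\subsetneq P_i$, hence a simple quotient $\pi\colon P_i\twoheadrightarrow L$. For $g\in E$ I claim $\pi\circ g=0$ iff $g\in\mathfrak m$: if $g$ is invertible then $g(P_i)=P_i$, so $\pi g$ is surjective; conversely, if some $g\in\mathfrak m$ satisfies $\pi g\neq 0$, then $\pi g$ is surjective, so projectivity of $P_i$ yields $h\in E$ with $\pi g h=\pi$, making $(1-gh)(P_i)\subseteq N\subsetneq P_i$, whence $1-gh\in\mathfrak m$; combined with $gh\in\mathfrak m$ this gives $1\in\mathfrak m$, a contradiction. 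So the map $g\mapsto\pi\circ g\colon E\to\Hom(P_i,L)$ has kernel $\mathfrak m$ and $\Hom(P_i,L)\cong E/\mathfrak m=k$. To get uniqueness, suppose $N_1\neq N_2$ are two maximal subobjects with simple quotients $L_1,L_2$. If $L_1\cong L_2$ the two surjections give two linearly independent vectors in $\Hom(P_i,L_1)=k$, contradiction. If $L_1\not\cong L_2$, then $N_1+N_2=P_i$ forces $(\pi_1,\pi_2)\colon P_i\twoheadrightarrow L_1\oplus L_2$ to be surjective, so projectivity makes $k\oplus k$ a cyclic right $E$-module; but $\mathfrak m$ annihilates $k\oplus k$, so cyclicity would require $\dim_k(k\oplus k)\le\dim_k E/\mathfrak m=1$, again a contradiction. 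Thus $P_i$ has a unique simple quotient $L_i$; for $j\neq i$, any nonzero map $P_i\to L_j$ would make $L_j$ this unique quotient, forcing $\Hom(P_i,L_j)=0$.

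Conversely, for a simple $L$ I would take an epimorphism $Q\twoheadrightarrow L$ with $Q$ projective, decompose $Q$ as a finite direct sum of indecomposable projectives via Krull--Schmidt (available because $\End(Q)$ is finite-dimensional), and note that at least one summand must surject onto $L$; uniqueness of projective covers then yields the bijection $P_i\leftrightarrow L_i$ and completes (i). Part (ii) follows by induction on $\ell(M)$: the base case $M=L_j$ is (i), and for the inductive step a short exact sequence $0\to M'\to M\to L_j\to 0$ with $\ell(M')<\ell(M)$, together with exactness of $\Hom(P_i,-)$, yields
$$\dim\Hom(P_i,M)=\dim\Hom(P_i,M')+\dim\Hom(P_i,L_j)=[M':L_i]+\delta_{ij}=[M:L_i].$$
The principal obstacle is the uniqueness of the simple quotient of $P_i$: the subtlety there is the interplay between projective lifting and the local structure of $E$, in particular the cyclicity argument ruling out non-isomorphic simple quotients.
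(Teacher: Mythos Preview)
Your proof is correct and follows the same overall strategy as the paper: show $\End(P_i)$ is local, deduce that $P_i$ has a unique simple quotient $L_i$, produce the bijection, and derive (ii) from exactness of $\Hom(P_i,-)$.

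The one place where your argument genuinely diverges is the uniqueness of the simple quotient. You first compute $\Hom(P_i,L)\cong E/\mathfrak m=k$ for any simple quotient $L$, and then run a two-case analysis (isomorphic vs.\ non-isomorphic quotients), the second case using a cyclicity argument over $E$. The paper instead proves directly that $P_i$ has a unique \emph{maximal} subobject: given a maximal proper $Q\subsetneq P_i$ and any $Q'\not\subseteq Q$, one has $Q+Q'=P_i$, so projectivity of $P_i$ applied to $Q\oplus Q'\twoheadrightarrow P_i$ writes $1_{P_i}=a+a'$ with $a,a'$ factoring through $Q,Q'$; since $a$ is not an isomorphism and $\End(P_i)$ is local, $a'$ is, forcing $Q'=P_i$. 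This single step replaces your entire case analysis and immediately gives both the uniqueness of $L_i$ and the fact that $\dim\Hom(P_i,L_i)=1$. Your route has the virtue of being very explicit about the module structure of $\Hom(P_i,L)$ over $E$, but the paper's $1=a+a'$ trick is shorter. The paper also spells out the injectivity of $i\mapsto L_i$ (which you cite as ``uniqueness of projective covers'') via the same locality idea: lifts $a\colon P_m\to P_n$, $b\colon P_n\to P_m$ have $ab$ and $ba$ non-nilpotent on the simple quotients, hence invertible.
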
 
 
\begin{proof} Let $P\in \C$ be an indecomposable projective. Then $\End(P)$ has no idempotents other than $0,1$, so 
$\End(P)=k\oplus N$ where $N$ is the nilradical, i.e., it is a local algebra. 

Suppose $Q\subset P$ is a maximal proper subobject (it exists by Zorn's lemma since $\C$ is Noetherian). Let $Q'\subset P$ be a subobject not contained in $Q$. Then $Q+Q'=P$. So we have an epimorphism $Q\oplus Q'\to P$, which, by the projectivity of $P$, gives a surjection 
$$
\Hom(P,Q)\oplus \Hom(P,Q')\to \End(P).
$$ 
So we have $1_P=a+a'$, where $a,a': P\to P$ factor through $Q,Q'$. Thus $a$ is not an isomorphism (since $Q$ is proper). As $\End(P)$ is local, it follows that $a'$ is an isomorphism, so $Q'=P$. 

It follows that $P$ has a unique maximal proper subobject $J(P)$, and 
$L_P:=P/J(P)$ is simple. Moreover, if $L:=P/Q$ is simple then $Q=J(P)$, so $L=L_P$. 
So if $I'$ labels the isomorphism classes of simples in $\C$, then we get 
a map $\ell: I\to I'$ such that $\ell(P)=L_P$, and we have $\dim \Hom(P_i,L_{i'})=\delta_{\ell(i),i'}$. Moreover, $\ell$ is surjective since 
every simple $L$ is a quotient of some projective $P$ which may be chosen indecomposable (if $P\twoheadrightarrow L$ and $P=\oplus_{i=1}^N P_i$ where $P_i$ are indecomposable then there exists $i$ such that the map $P_i\to L$ is nonzero, hence an epimorphism as $L$ is simple). 

It remains to show that $\ell$ is injective, i.e., if $L_m\cong L_n$ then $P_m\cong P_n$. To this end, note that the epimorphisms $a_0: P_m\twoheadrightarrow L_n$, $b_0: P_n\twoheadrightarrow L_m$ lift to 
morphisms $a: P_m\to P_n$, $b: P_n\to P_m$, such that $ab\in \End(P_n)$ and $ba\in \End(P_m)$ are not nilpotent (as they define isomorphisms on the corresponding simple quotients). Since the algebras $\End(P_n),\End(P_m)$ are local, it follows that $ab$ and $ba$ are isomorphisms, as claimed. 

This proves (i). Part (ii) now follows 
from the exactness of the functor $\Hom(P_i,?)$. 
\end{proof} 

The object $P_i$ is called the {\bf projective cover} of $L_i$, and $L_i$ is called the 
{\bf head} of $P_i$; by Proposition \ref{genera}, it is the unique simple quotient of $P_i$. 

\begin{remark} In general, objects of a category satisfying the assumptions of 
Proposition \ref{genera} need not have finite length. An example 
when they can have infinite length is the category of finitely generated 
$\Bbb Z$-graded $\Bbb C[x]$-modules, where $\deg(x)=1$. 
The simple objects in this category are 1-dimensional modules 
$L_n$, $n\in \Bbb Z$, which sit in degree $n$ (with $x$ acting by zero). 
The projective cover of $L_n$ is $P_n=\Bbb C[x]_n$, the free rank $1$ module 
sitting in degrees $n,n+1,...$, which has infinite length.  
\end{remark} 

\subsection{Projective objects in $\mathcal O$} 

\begin{proposition}\label{proje} If $\lambda$ is dominant then $M_{\lambda-\rho}$ is a projective object in $\mathcal O$.\footnote{Note that this does not mean that $M_\lambda$ is a projective $U(\g)$-module; in fact, it is not.} 
\end{proposition}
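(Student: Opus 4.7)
The plan is to exhibit $M_{\lambda-\rho}$ as a projective by identifying the functor $\Hom_\O(M_{\lambda-\rho},-)$, block by block, with an exact weight-space functor. Any homomorphism from $M_{\lambda-\rho}$ is determined by the image of the generator $v_{\lambda-\rho}$, which must be an $\n_+$-singular vector of weight $\lambda-\rho$, so one has a natural isomorphism
$$
\Hom_\O(M_{\lambda-\rho},N)\;\cong\;N[\lambda-\rho]^{\n_+}.
$$
Since $M_{\lambda-\rho}\in\O_{\chi_\lambda}$ where $\chi_\lambda(z)=f_z(\lambda)$, the block decomposition of Corollary \ref{decom} reduces everything to the single block $\O_{\chi_\lambda}$. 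There it suffices to prove the key claim: for every $N\in\O_{\chi_\lambda}$ with $\lambda$ dominant, every weight vector of weight $\lambda-\rho$ is automatically $\n_+$-singular. Granting this, the functor becomes $N\mapsto N[\lambda-\rho]$, which is exact as projection onto an $\h$-isotypic summand, giving projectivity.

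To prove the claim, fix $v\in N[\lambda-\rho]$ and form $W_0:=U(\n_+)v\subset N$. Its weights lie in $\lambda-\rho+Q_+$, and since $P(N)\subset\bigcup_i(\nu_i-Q_+)$ for a finite set of $\nu_i$, the intersection $P(N)\cap(\lambda-\rho+Q_+)$ is finite; combined with Lemma \ref{findim}, this makes $W_0$ finite-dimensional. The subspace $W_0^{>}:=\bigoplus_{\mu>\lambda-\rho}W_0[\mu]$ is $\n_+$-stable because $\n_+$ strictly raises weights, so if $W_0^{>}\neq 0$, nilpotence of $\n_+$ provides a nonzero $\n_+$-singular vector $w'\in W_0^{>}$ of some weight $\mu>\lambda-\rho$. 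Then $U(\g)w'$ is a highest weight submodule of $N$ with highest weight $\mu$, so $Z(\g)$ acts on it by the scalar $f_z(\mu+\rho)$ by the Harish-Chandra isomorphism. On the other hand, $w'$ lies in $N\in\O_{\chi_\lambda}$, so any actual $Z(\g)$-eigenvalue on a vector of $N$ must coincide with the generalized eigenvalue $\chi_\lambda(z)=f_z(\lambda)$. Hence $f_z(\mu+\rho)=f_z(\lambda)$ for all $z$, so $\mu+\rho\in W\lambda$; write $\mu+\rho=w\lambda$. From $\mu+\rho-\lambda\in Q_+\subset Q$ we see $w\in W_{\lambda+Q}$, and dominance of $\lambda$ via Corollary \ref{domchar}(v) forces $w\lambda\le\lambda$, i.e.\ $\mu+\rho\le\lambda$. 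Combined with $\mu+\rho\ge\lambda$ this yields $\mu=\lambda-\rho$, contradicting $\mu>\lambda-\rho$. Therefore $W_0^{>}=0$, so $W_0\subset N[\lambda-\rho]$, and in particular $e_\alpha v\in W_0\cap N[\lambda-\rho+\alpha]=0$ for every $\alpha\in R_+$, proving $v$ is singular.

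\textbf{Expected obstacle.} The delicate step is the central character equality at $w'$: equating the Harish-Chandra scalar $f_z(\mu+\rho)$ on the highest weight submodule $U(\g)w'$ with $\chi_\lambda(z)=f_z(\lambda)$ requires that the generalized eigenvalue on $N$ descends to an actual eigenvalue on $w'$. This works because $w'$ is an $\n_+$-singular vector, hence generates a highest weight module on which $Z(\g)$ necessarily acts by a single scalar, and on such a vector the generalized and actual eigenvalues must coincide. The remaining subtlety is that one must apply dominance in the precise form of Corollary \ref{domchar}(v), controlling $w\lambda$ for arbitrary $w\in W_{\lambda+Q}$ rather than just in the integral case; the argument would fail for non-dominant $\lambda$, as can be verified directly in the principal block of $\mathfrak{sl}_2$.
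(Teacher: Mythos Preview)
Your proof is correct and follows the same strategy as the paper: reduce to the block $\O_{\chi_\lambda}$ and show that $\Hom_\O(M_{\lambda-\rho},N)\cong N[\lambda-\rho]$ by proving every weight-$(\lambda-\rho)$ vector is singular. The paper argues this more tersely by asserting that no weight of any $N\in\O_{\chi_\lambda}(S)$ exceeds $\lambda-\rho$ (implicitly via the composition series and Corollary~\ref{domchar}), whereas you give a direct, self-contained argument producing a singular vector above $\lambda-\rho$ and deriving a contradiction from the central character; your route avoids invoking finite length, at the cost of a bit more work.
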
 

\begin{proof} Our job is to show that the functor $\Hom(M_{\lambda-\rho},\bullet)$ is exact on $\O$. It suffices to show this on $\O_{\chi_{\lambda}}(S)$, where $S$ is the equivalence class of $\lambda$.   
To this end, note that all weights of any $X\in \O_{\chi_{\lambda}}(S)$ are not $> \lambda-\rho$. Thus every $v\in X[\lambda-\rho]$ is singular, so there is a unique homomorphism $M_{\lambda-\rho}\to X$ sending $v_{\lambda-\rho}$ to $v$. It follows that $\Hom(M_{\lambda-\rho},X)\cong X[\lambda-\rho]$, which implies the statement. 
\end{proof} 

Now let $V$ be a finite-dimensional $\g$-module. Then we have an exact functor $V\otimes: \mathcal O\to \mathcal O$.

\begin{corollary} \label{proje1} (i) If $P\in \mathcal O$ is projective then so is 
$V\otimes P$. 

(ii) If $\lambda\in \h^*$ is dominant then 
the object $V\otimes M_{\lambda-\rho}\in \mathcal{O}$ is projective. 
\end{corollary}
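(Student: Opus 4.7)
The plan is to prove (i) via a standard adjunction argument and then deduce (ii) immediately from (i) together with Proposition \ref{proje}.

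For (i), the key observation is that since $V$ is finite dimensional, the functor $V\otimes\bullet:\mathcal{O}\to\mathcal{O}$ has a two-sided adjoint, namely $V^*\otimes\bullet$. Explicitly, the evaluation and coevaluation maps for $V$ give a natural isomorphism
\[
\Hom_\g(V\otimes P,M)\cong \Hom_\g(P,V^*\otimes M)
\]
for $P,M\in\mathcal{O}$. First I would verify that $V^*\otimes\bullet$ is indeed an endofunctor of $\mathcal{O}$: if $M\in\mathcal{O}$ is generated by finitely many vectors $v_1,\dots,v_r$, then $V^*\otimes M$ is generated by $\{e_i\otimes v_j\}$ where $\{e_i\}$ is a basis of $V^*$, so it is finitely generated; the weight decomposition is preserved (with $(V^*\otimes M)[\mu]=\bigoplus_\nu V^*[\nu]\otimes M[\mu-\nu]$, a finite sum); and if $P(M)\subset\bigcup_i(\lambda_i-Q_+)$ and $P(V^*)\subset\bigcup_j(\nu_j-Q_+)$ (since $V^*$ is finite dimensional with finitely many weights), then $P(V^*\otimes M)\subset\bigcup_{i,j}(\lambda_i+\nu_j-Q_+)$.

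Next I would note that $V^*\otimes\bullet$ is exact as a functor on $\mathcal{O}$, simply because tensoring with the finite dimensional vector space $V^*$ is exact at the level of underlying vector spaces, and the kernel/cokernel of a $\g$-module map identified this way carries the correct module structure. Combining this exactness with the projectivity of $P$ (i.e.\ exactness of $\Hom_\g(P,\bullet)$), the composition
\[
M\longmapsto \Hom_\g(V\otimes P,M)\cong \Hom_\g(P,V^*\otimes M)
\]
is exact in $M$, so $V\otimes P$ is projective in $\mathcal{O}$. This proves (i).

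Part (ii) is then immediate: by Proposition \ref{proje}, $M_{\lambda-\rho}$ is projective in $\mathcal{O}$ whenever $\lambda$ is dominant, so applying (i) with $P=M_{\lambda-\rho}$ yields that $V\otimes M_{\lambda-\rho}$ is projective. There is really no main obstacle here beyond being careful that $V^*\otimes\bullet$ genuinely preserves the category $\mathcal{O}$ (rather than merely the class of $\g$-modules with a weight decomposition), which is the only nontrivial verification in the argument.
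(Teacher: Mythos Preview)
Your proof is correct and follows essentially the same approach as the paper: both use the adjunction $\Hom_\g(V\otimes P,X)\cong \Hom_\g(P,V^*\otimes X)$ together with exactness of $V^*\otimes\bullet$ and projectivity of $P$ to prove (i), then deduce (ii) from (i) and Proposition~\ref{proje}. You are simply more careful than the paper in spelling out why $V^*\otimes\bullet$ preserves $\mathcal O$.
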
 

\begin{proof} (i) For $X\in \O$
$$
\Hom_\g(V\otimes P,X)=\Hom_\g(P,V^*\otimes X),
$$
which is exact since $P$ is projective. 

(ii) follows from (i) and Proposition \ref{proje}. 
\end{proof} 

\begin{corollary}\label{enoughpro} (i) For every $\mu\in \h^*$, there exists dominant $\lambda\in \h^*$  and a finite-dimensional $\g$-module $V$ such that 
$\Hom(V\otimes M_{\lambda-\rho},L_{\mu})\ne 0$. Thus $\mathcal O$ has enough projectives.  

(ii) Every projective object $P$ of $\mathcal O$ is a free $U(\n_-)$-module.
\end{corollary}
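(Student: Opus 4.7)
The proof of (i) proceeds via tensor-Hom adjunction. Fix $\mu \in \h^*$, choose $\kappa \in P_+$ (to be specified), set $V := L_{-w_0\kappa}$ so that $V^* \cong L_\kappa$, and put $\lambda := \kappa + \mu + \rho$. Then
\[
\Hom_\g(V \otimes M_{\lambda-\rho}, L_\mu) \cong \Hom_\g(M_{\lambda-\rho}, V^* \otimes L_\mu),
\]
and $v_\kappa \otimes v_\mu$ is a singular vector of weight $\kappa + \mu = \lambda - \rho$ in $V^* \otimes L_\mu$. The adjunction therefore produces a nonzero (hence surjective) map $V \otimes M_{\lambda - \rho} \twoheadrightarrow L_\mu$. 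The weight $\lambda$ can be made dominant (criterion (iii) of Corollary \ref{domchar}) by choosing $\kappa$ with $(\kappa, \alpha^\vee)$ sufficiently large at those $\alpha \in R_+$ for which $(\mu, \alpha^\vee)$ is a negative integer below $-1$; at other $\alpha$ the sum $(\lambda, \alpha^\vee)$ is automatically either non-integer or non-negative. By Corollary \ref{proje1}(ii), $V \otimes M_{\lambda-\rho}$ is projective, so every simple $L_\mu$ is a quotient of a projective. Since objects of $\mathcal O$ have finite length, the standard induction on length (noted in the subsection on projective objects) then yields enough projectives in $\mathcal O$.

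For (ii), I would proceed in three steps. Step A: each model projective $V \otimes M_{\lambda-\rho}$ is free over $U(\n_-)$ of rank $\dim V$. Since $M_{\lambda-\rho} \cong U(\n_-)$, this reduces to analyzing $V \otimes U(\n_-)$ with the tensor-product action $X(v \otimes u) = Xv \otimes u + v \otimes Xu$. Equip $U(\n_-)$ with its PBW filtration $F^k$ so that $\gr U(\n_-) = S(\n_-)$, and filter $V \otimes U(\n_-)$ by $V \otimes F^k$. In the associated graded, the cross-term $Xv \otimes u$ stays in the same filtration degree as $v \otimes u$ and thus drops out; the graded $\n_-$-action is the trivial-on-$V$ action, and $\gr(V \otimes U(\n_-)) \cong V \otimes S(\n_-)$ is $S(\n_-)$-free of rank $\dim V$. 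Picking a basis $\{v_i\}$ of $V$, the $U(\n_-)$-module map $U(\n_-)^{\dim V} \to V \otimes U(\n_-)$ sending standard generators to $v_i \otimes 1$ is filtered with an isomorphism on $\gr$, hence is itself an isomorphism. Step B: by (i) and Krull-Schmidt, every indecomposable projective in $\mathcal O$ is a summand of some $V \otimes M_{\lambda-\rho}$; so a general projective $P$ is a $U(\n_-)$-summand of a finite direct sum of model projectives. Step C: deduce that $P$ is free over $U(\n_-)$ by applying graded Nakayama.

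The main obstacle is Step C. The difficulty is that $U(\n_-)$ is not naturally $\mathbb{Z}_{\geq 0}$-graded; one must first use Corollary \ref{decom} to reduce to a single block $\mathcal O_\chi(S)$, whose objects have weights in a single coset $\nu_0 + Q$, then grade $U(\n_-)$ by root height, making it a connected $\mathbb{Z}_{\geq 0}$-graded $\Bbb C$-algebra. Under the induced grading on $P$ (via $\eta \mapsto \mathrm{ht}(\nu_0 - \eta)$ after picking a suitable base weight), $P$ is a graded $U(\n_-)$-module bounded below, since its weights are bounded above by the generating weights $\mu_i$ (as $P \in \mathcal O$). The decomposition of a direct sum of model projectives giving $P$ as a summand is $\h$-equivariant, so $P$ is a graded summand of a graded free module. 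Hence $P$ is a finitely generated graded projective module over a connected $\mathbb{Z}_{\geq 0}$-graded algebra, and is free by graded Nakayama. Once the grading is correctly set up, the conclusion is automatic.
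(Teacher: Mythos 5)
Your proof is correct and follows essentially the same route as the paper: tensor--Hom adjunction with a suitably large dominant weight for (i), and freeness of $V \otimes M_{\lambda-\rho}$ over $U(\n_-)$ together with the graded projective-implies-free statement (Lemma \ref{homog}(ii)) for (ii). The paper makes the concrete choice $V = L_{N\rho}$, $\lambda = \mu + (N+1)\rho$ in (i) and asserts the freeness and grading compatibility in (ii) in one line, whereas you supply the filtration argument for Step A and the reduction to a $\mathbb Z_{\ge 0}$-grading by root height within a single block in Step C --- details the paper leaves implicit.
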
 

\begin{proof} (i) We have 
$$
\Hom(V\otimes M_{\lambda-\rho},L_\mu)=\Hom_\g(M_{\lambda-\rho},V^*\otimes L_\mu).
$$
Now take $V=V^*=L_{N\rho}$ for large $N$ and $\lambda=\mu+(N+1)\rho$. It is clear that $\lambda$ is dominant, and $\Hom_\g(M_{\lambda-\rho},V^*\otimes L_\mu)=\Bbb C$, as claimed. 

(ii) This follows by Lemma \ref{homog} since every indecomposable projective object $P\in \mathcal O$ is an $\h^*$-graded direct summand in $V\otimes M_{\lambda-\rho}$, which is a free graded $U(\n_-)$-module. 
\end{proof} 

It follows that every simple object $L_\lambda$ of $\mathcal O$ has a projective cover $P_\lambda$, with $\dim \Hom(P_\lambda,L_\mu)=\delta_{\lambda\mu}$.  

\section{\bf The nilpotent cone of $\g$} 

\subsection{The nilpotent cone}

Let $(S\g)_0$ be the quotient of $S\g$ by the ideal generated by the positive degree part of $(S\g)^\g$, i.e. by the free homogeneous generators $p_1,...,p_r$ of $(S\g)^\g$ (which exist by Kostant's theorem). The scheme 
$$
\mathcal N:={\rm Spec}(S\g)_0\subset \g^*\cong \g
$$ 
is called the {\bf nilpotent cone} of $\g$. It follows from the Kostant theorem that $p_1,...,p_r$ is a regular sequence, i.e., this scheme is a complete intersection of codimension $r$ in $\g$ (see Remark \ref{remci}), i.e., of dimension 
$$
\dim \mathcal N=\dim \g-r=|R|=2|R_\pm|=2\dim \n_\pm,
$$ 
the number of roots of $\g$. 

Let $x\in \g$ be a nilpotent element. Recall that then $x$ is conjugate to an element $y\in \n_+$ and ${\rm Ad}(t^{2\rho^\vee})y\to 0$ as $t\to 0$, where $\rho^\vee$ is the half-sum of positive coroots of $\g$. Thus $p_i(x)=p_i(y)=0$ and 
hence $x\in \mathcal N(\Bbb C)$. On the other hand, if $x$ is not nilpotent then ${\rm ad}(x)$ is not a nilpotent operator, so $ {\rm Tr}({\rm ad}(x)^N)\ne 0$ for some $N$, hence $x\notin \mathcal{N}(\Bbb C)$. It follows that $\mathcal N(\Bbb C)$ is exactly the set of nilpotent elements of $\g$, hence the term ``nilpotent cone". 

For example, for $\g=\mathfrak{sl}_2$  
we have $r=1$ and 
$$
p_1(A)=-\det A=x^2+yz
$$ 
for $A:=\begin{pmatrix} x & y\\ z & -x\end{pmatrix}\in \g$, so $\mathcal N$ is the usual quadratic cone in $\Bbb C^3$ defined by the equation $x^2+yz=0$. 

\subsection{The principal $\mathfrak{sl}_2$ subalgebra}
 The {\bf principal $\mathfrak{sl}_2$ subalgebra} of $\g$ is the subalgebra spanned by $e:=\sum_{i=1}^r e_i$, $f:=\sum_i c_if_i$ and $h:=[e,f]=\sum_i c_ih_i=2\rho^\vee$. Thus $c_i$ are found from the equations $\sum_i c_ia_{ij}=2$ for all $j$, where $A=(a_{ij})$ is the Cartan matrix of $\g$.
 
\begin{lemma}\label{prisl2} 
The restriction of the adjoint representation of $\g$ to its principal $\mathfrak{sl}_2$-subalgebra is isomorphic to $L_{2m_1}\oplus...\oplus L_{2m_r}$ for appropriate $m_i\in \Bbb Z_{>0}$.
\end{lemma}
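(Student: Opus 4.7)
The plan is to use the $\mathfrak{sl}_2$-representation theory applied to the weights of $h = 2\rho^\vee$ on $\g$. First I would observe that $h$ acts on the zero weight space $\g[0] = \h$ by $0$, and on each root space $\g_\alpha$ by the scalar $2(\rho^\vee, \alpha) = 2\,\mathrm{ht}(\alpha)$, which is a nonzero integer. In particular, all eigenvalues of $h$ on $\g$ are integers, and moreover all are \emph{even}. By the standard classification of finite dimensional $\mathfrak{sl}_2$-modules, this forces the decomposition of $\g$ under the principal $\mathfrak{sl}_2$ to have the shape $\bigoplus_j L_{2m_j}$ with $m_j \in \Bbb Z_{\ge 0}$.

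Next I would count the number of summands. Since each $L_{2m_j}$ has a one-dimensional zero weight space for $h$, the number of summands equals $\dim\g[0] = \dim \h = r$. So there are exactly $r$ summands, and the decomposition has the claimed form $L_{2m_1}\oplus\cdots\oplus L_{2m_r}$.

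Finally I would show $m_i > 0$, i.e., there are no trivial summands. A trivial summand would contain a nonzero vector $x$ with $[h,x]=[e,x]=0$. The condition $[h,x]=0$ forces $x\in \h$ (as the $0$-eigenspace of $\mathrm{ad}(h)$ is exactly $\h$). Then $[e,x] = \sum_i [e_i,x] = -\sum_i \alpha_i(x)\, e_i$, whose vanishing requires $\alpha_i(x)=0$ for every simple root $\alpha_i$, hence $x=0$. This contradiction shows $m_i \in \Bbb Z_{>0}$ for all $i$.

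The argument is essentially routine; the only mildly nontrivial input is the identification of the zero weight space of $\mathrm{ad}(h)$ with $\h$, which is immediate from $h = 2\rho^\vee$ acting nontrivially on every root space. No step here presents a real obstacle, though I note that the more refined statement (which is what will be used in the Kostant theory of the nilpotent cone) — that the $m_i$ are precisely the \emph{exponents} $d_i - 1$ of $\g$ — would require the additional input that $\dim \g^e = r$ for a principal nilpotent $e$, together with a character argument relating the zero weight multiplicities of the $L_{2m_i}$ to the degrees $d_i$. For the present lemma, however, only the weaker existence statement is needed.
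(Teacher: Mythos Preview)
Your proof is correct and follows essentially the same approach as the paper: both arguments show the $h$-weights on $\g$ are even (you by directly computing $[h,\g_\alpha]=2\,\mathrm{ht}(\alpha)\g_\alpha$, the paper by observing that $-1\in SL_2(\Bbb C)$ acts as $\exp(2\pi i\rho^\vee)=1$ since $\rho^\vee$ is an integral coweight) and then identify the zero $h$-weight space with $\h$ to count exactly $r$ summands. You also supply the explicit check that no summand is trivial (i.e., $m_i>0$), which the paper's proof leaves implicit.
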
 

\begin{proof} Consider the corresponding action of the group $SL_2(\Bbb C)$. The element $-1\in SL_2(\Bbb C)$ acts on $\g$ by $\exp(2\pi i\rho^\vee)=1$ since $\rho^\vee$ is an integral coweight. Thus only even highest weight $\mathfrak{sl}_2$-modules may occur in the decomposition of $\g$. 
Since $\rho^\vee$ is regular, the $0$-weight space of this module (the centralizer $Z_\g(\rho^\vee)$) is $\h$, i.e., has dimension $r$. Thus $\g$ has $r$ indecomposable direct summands over the principal $\mathfrak{sl}_2$, as claimed.
\end{proof} 

The numbers $m_i$ (arranged in non-decreasing order) are called the {\bf exponents} of $\g$. We will soon see that $m_i=d_i-1$, where $d_i$ are the degrees of $\g$. 

\subsection{Regular elements} 
Recall that $x\in \mathcal \g$ is {\bf regular} if the dimension of its centralizer is $r={\rm rank}\g$ (the smallest it can be). Thus regular elements form an open set $\g_{\rm reg}\subset \g$. 

\begin{lemma}\label{reguu} The element $e=\sum_{i=1}^r e_i$ is regular.
\end{lemma}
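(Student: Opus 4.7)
The plan is to compute the dimension of the centralizer $Z_\g(e)$ directly using Lemma~\ref{prisl2}. Since by definition every element has centralizer of dimension at least $r$, it suffices to show $\dim Z_\g(e) \le r$.

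First I would invoke Lemma~\ref{prisl2}: under the principal $\mathfrak{sl}_2$-subalgebra, which contains $e$ as its standard raising operator, the adjoint representation decomposes as
\[
\g \cong L_{2m_1} \oplus \cdots \oplus L_{2m_r}.
\]
The operator $\ad(e)$ acts on each irreducible summand $L_{2m_i}$ as the raising operator $e$ acts on a highest-weight $\mathfrak{sl}_2$-module of dimension $2m_i+1$. In any finite-dimensional irreducible $\mathfrak{sl}_2$-module, the kernel of the raising operator is precisely the one-dimensional highest weight line.

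Therefore the kernel of $\ad(e)$ on $\g$ is a direct sum of $r$ one-dimensional subspaces, one from each summand, giving $\dim Z_\g(e) = \dim \ker(\ad e) = r$. This proves that $e$ is regular.

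There is essentially no obstacle here once Lemma~\ref{prisl2} is in hand — the only subtle point is that the decomposition of Lemma~\ref{prisl2} is as a module over the principal $\mathfrak{sl}_2$ which \emph{contains our chosen} $e$ (not some other nilpositive element), so the kernel computation applies to $\ad(e)$ itself. As a byproduct, one reads off that the exponents $m_i$ satisfy $\sum_i (2m_i+1) = \dim\g$, which will later be matched with the degrees $d_i$ of the Weyl group via $m_i = d_i - 1$.
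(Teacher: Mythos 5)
Your proof is correct and follows exactly the paper's argument: invoke Lemma~\ref{prisl2}, observe that $\ker(\ad e)$ is spanned by the highest weight vectors of the $r$ summands $L_{2m_i}$, hence $\dim Z_\g(e)=r$. Your added remark about the decomposition being over the principal $\mathfrak{sl}_2$ containing this specific $e$ is a reasonable clarification, but the substance matches the paper.
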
 

\begin{proof} By Lemma \ref{prisl2}, the centralizer $Z_\g(e)$ is spanned by the highest vectors of the representations $L_{2m_1},...,L_{2m_r}$, hence has dimension $r$. 
\end{proof} 

\begin{corollary}\label{den} Let $B_+$ be the Borel subgroup of $G$ with Lie algebra 
$\mathfrak{b}_+:=\h\oplus \n_+$. Then ${\rm Ad}(B_+)e$ is the set of elements 
$\sum_{\alpha\in R_+} c_\alpha e_\alpha$ with $c_\alpha\in \Bbb C$ and $c_{\alpha_i}\ne 0$ for all $i$.  
\end{corollary}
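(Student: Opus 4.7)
The plan is to prove the two inclusions separately: first that $\mathrm{Ad}(B_+)e\subset\{y=\sum c_\alpha e_\alpha\in\n_+:c_{\alpha_i}\ne 0\;\forall i\}$, then the reverse inclusion by an explicit construction.

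For the easy direction, note first that $\n_+$ is an ideal in $\b_+$, hence $B_+$-stable, so $\mathrm{Ad}(B_+)e\subset\n_+$. Writing an arbitrary element $b\in B_+$ as $b=tu$ with $t\in H$, $u\in N_+$, I would use that $\mathrm{Ad}(t)e=\sum_i\alpha_i(t)e_i$ and that for $x\in\n_+$ the commutator $[x,e_\alpha]$ is a sum of root vectors $e_\beta$ with $\mathrm{ht}(\beta)>\mathrm{ht}(\alpha)$, so $\mathrm{Ad}(\exp x)-\mathrm{id}$ strictly raises height. In particular the simple-root coefficients of $\mathrm{Ad}(u)(\mathrm{Ad}(t)e)$ coincide with those of $\mathrm{Ad}(t)e$, so $c_{\alpha_i}(\mathrm{Ad}(b)e)=\alpha_i(t)\ne 0$.

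For the reverse inclusion, fix $y=\sum_{\alpha\in R_+}c_\alpha e_\alpha$ with every $c_{\alpha_i}\ne 0$. Since the simple roots are linearly independent characters of $H$, I can pick $t\in H$ with $\alpha_i(t)=c_{\alpha_i}$ for all $i$, and then $\mathrm{Ad}(t^{-1})y=\sum_i e_i+z$ with $z\in\bigoplus_{\mathrm{ht}(\alpha)\ge 2}\g_\alpha$. It now suffices to remove $z$ by the action of $N_+$. Grade $\n_+=\bigoplus_{k\ge 1}\n_+[k]$ by height, and suppose inductively we have reduced to an element of the form $e+z_{k+1}+z_{k+2}+\cdots$ with $z_j\in\n_+[j]$. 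The Baker--Campbell--Hausdorff expansion shows that for any $x_k\in\n_+[k]$, the element $\mathrm{Ad}(\exp(-x_k))(e+z_{k+1}+\cdots)$ agrees with $e+(z_{k+1}-[x_k,e])+(\text{terms of height}\ge k+2)$, since every iterated bracket of $x_k$ with $z_j$ ($j\ge k+1$) has height $\ge 2k+1\ge k+2$. Thus the induction step reduces to solving $[x_k,e]=z_{k+1}$, i.e.\ to surjectivity of $\mathrm{ad}(e)\colon\n_+[k]\to\n_+[k+1]$.

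The latter is the one nontrivial point and will be established using the principal $\mathfrak{sl}_2$. By Lemma \ref{prisl2}, $\g=\bigoplus_{i=1}^r L_{2m_i}$ as a module over the principal $\mathfrak{sl}_2=\langle e,h,f\rangle$ with $h=2\rho^\vee$. Since $(\rho^\vee,\alpha)=\mathrm{ht}(\alpha)$ for $\alpha\in R_+$, the $h$-weight grading on $\n_+$ is twice the height grading, so $\n_+[k]$ coincides with the $h$-weight $2k$ subspace of $\g$. On each summand $L_{2m_i}$ the operator $e$ is a raising operator whose restriction to any non-top weight space is nonzero, so it maps the weight-$2k$ subspace of $L_{2m_i}$ surjectively onto the weight-$(2k+2)$ subspace whenever the latter is nonzero. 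Summing over $i$ gives surjectivity of $\mathrm{ad}(e)\colon\n_+[k]\to\n_+[k+1]$ for all $k\ge 1$, which closes the induction (the induction terminates after finitely many steps since $\n_+$ is finite-dimensional). This completes the proof.
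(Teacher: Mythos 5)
Your proof is correct, and it takes a genuinely different (more explicit) route than the paper. The paper argues by a dimension count: Lemma \ref{reguu} gives $\dim Z_\g(e)=r$, hence $\dim[e,\n_+]\ge |R_+|-r=\dim[\n_+,\n_+]$, whence $[e,\n_+]=[\n_+,\n_+]$; it then asserts $\mathrm{Ad}(N_+)e=e+[\n_+,\n_+]$ (which implicitly invokes the standard fact that a unipotent-group orbit in an affine variety is closed, plus irreducibility of the affine space $e+[\n_+,\n_+]$) and finally scales the simple-root coefficients by $H$. You instead make the orbit computation constructive: after normalizing by the torus you kill the higher-height terms one height at a time via the BCH expansion, reducing everything to the graded surjectivity of $\mathrm{ad}(e)\colon\n_+[k]\to\n_+[k+1]$, which you extract directly from the principal $\mathfrak{sl}_2$ decomposition (Lemma \ref{prisl2}). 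Both proofs rest on the same structural input --- Lemma \ref{prisl2} --- but the paper packages it through Lemma \ref{reguu} and then reasons abstractly, while you unwind it into an explicit algorithm. Your version avoids quoting the unipotent-orbit-is-closed fact and also supplies a written-out proof of the easy inclusion $\mathrm{Ad}(B_+)e\subseteq\{y : c_{\alpha_i}(y)\ne0\}$, which the paper leaves tacit; the cost is a somewhat longer argument. One cosmetic nit: you write $b=tu$ but then compute as if $b=ut$; since $H$ normalizes $N_+$ either decomposition works, but the formulas should match the chosen order.
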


\begin{proof} Since by Lemma \ref{reguu} $\dim Z_\g(e)=r$, we have 
$$
\dim [e,\n_+]\ge |R_+|-r=\dim [\n_+,\n_+].
$$ 
Since $[e,\n_+]\subset [\n_+,\n_+]$, we get that $[e,\n_+]=[\n_+,\n_+]$. It follows that if $N_+=\exp(\n_+)$ then 
${\rm Ad}(N_+)e=e+[\n_+,\n_+]$ is the set of expressions $\sum_{\alpha\in R_+}c_\alpha e_\alpha$ with $c_{\alpha_i}=1$ for all $i$. The statement follows by adding the action of the maximal torus 
$H=\exp(\h)$, which allows to set $c_{\alpha_i}$ to arbitrary nonzero values. 
\end{proof} 

\subsection{Properties of the nilpotent cone} 
\begin{proposition}\label{redu} The nilpotent cone is reduced.
\end{proposition}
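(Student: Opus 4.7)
The plan is to combine the Cohen--Macaulay property of $\mathcal N$ with the exhibition of a single smooth point. By Kostant's theorem (Theorem \ref{Kos}; see also Remark \ref{remci}), the generators $p_1,\ldots,p_r$ of $(S\g)^\g$ form a regular sequence in $S\g$, so $\mathcal N={\rm Spec}\,S\g/(p_1,\ldots,p_r)$ is a complete intersection, hence Cohen--Macaulay of pure dimension $\dim\g-r$. A Cohen--Macaulay scheme has no embedded primes, so it is reduced iff it is generically reduced along each irreducible component.

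First I would verify that the underlying variety $\mathcal N_{\rm red}$ is irreducible, equal to $\overline{G\cdot e}$. Every nilpotent element is $G$-conjugate to one in $\n_+$; by Corollary \ref{den} the orbit $B_+\cdot e$ is dense in $\n_+$, so $G\cdot e$ is dense in $\mathcal N(\Bbb C)$. Since $e$ is regular (Lemma \ref{reguu}), $\dim G\cdot e=\dim\g-r=\dim\mathcal N$, confirming irreducibility. It thus suffices to show that $\mathcal N$ is smooth at $e$, which by the Jacobian criterion amounts to linear independence of $dp_1(e),\ldots,dp_r(e)$ in $\g^*$. The $G$-invariance of the $p_i$ forces $dp_i(e)$ to annihilate $[\g,e]$, so under the Killing-form identification $\g^*\cong\g$ the gradients $\nabla p_i(e)$ lie in $Z_\g(e)$, which by regularity has dimension exactly $r$. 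Thus the problem reduces to showing that the $\nabla p_i(e)$ form a basis of $Z_\g(e)$.

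My approach to this last step is via the Kostant slice. Using the principal $\mathfrak{sl}_2$-triple $(e,h,f)$ (Lemma \ref{prisl2}), $\mathfrak{sl}_2$-representation theory yields the decomposition $\g=[\g,e]\oplus Z_\g(f)$, so $e+Z_\g(f)$ is a transversal affine slice to $G\cdot e$ at $e$. I would then show that the polynomial map
$$
\psi\colon Z_\g(f)\longrightarrow \Bbb C^r,\qquad u\longmapsto (p_1(e+u),\ldots,p_r(e+u))
$$
is an isomorphism; once established, $d\psi(0)$ coincides with the restriction of $d\chi(e)$ to $Z_\g(f)$, so it is a linear isomorphism, and consequently $d\chi(e)\colon\g\to\Bbb C^r$ is surjective. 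To verify that $\psi$ is an isomorphism, introduce the weighted rescaling $\tau_\lambda:=\lambda\cdot{\rm Ad}(\lambda^{-h/2})$; a direct computation using $V_i=L_{2m_i}$ and $d_i=m_i+1$ shows that $\tau_\lambda(e)=e$, that $\tau_\lambda$ scales each lowest weight line $(V_i)_{-2m_i}\subset Z_\g(f)$ by $\lambda^{d_i}$, and that $p_i(\tau_\lambda x)=\lambda^{d_i}p_i(x)$. Hence $\psi$ is weighted-homogeneous with matching positive weights on source and target, and the graded coordinate rings of both have Hilbert series $\prod_i(1-q^{d_i})^{-1}$; a standard Koszul/regular-sequence argument then reduces the isomorphism claim to the set-theoretic statement $\psi^{-1}(0)=\{0\}$.

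The main obstacle is this last claim: no nonzero $u\in Z_\g(f)$ satisfies $e+u\in\mathcal N(\Bbb C)$. The plan is to prove first that $e+Z_\g(f)\subset\g_{\rm reg}$ --- since $\tau_\lambda$ preserves regularity (it is a composition of conjugation and scalar multiplication), while $\tau_\lambda(e+u)=e+\tau_\lambda u\to e$ as $\lambda\to 0$, any $e+u$ is $\tau_\lambda$-conjugate to points arbitrarily close to the regular element $e$ --- and second that $(e+Z_\g(f))\cap G\cdot e=\{e\}$. The latter follows by applying the implicit function theorem to $(g,u)\mapsto{\rm Ad}(g)(e+u)$ at $(1,0)$, whose differential $(\xi,v)\mapsto[\xi,e]+v$ has image $[\g,e]+Z_\g(f)=\g$, yielding local uniqueness at $e$; then $\tau_\lambda$-equivariance together with $\tau_\lambda(G\cdot e)=G\cdot e$ (which holds since $\lambda e={\rm Ad}(\lambda^{h/2})e\in G\cdot e$) propagates this globally by shrinking any hypothetical second intersection into the local neighborhood. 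Combining the two steps, any nilpotent $e+u$ must be regular nilpotent and hence lie in the unique regular nilpotent orbit $G\cdot e$, which by the second step forces $u=0$.
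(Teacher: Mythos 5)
Your argument is correct but follows a genuinely different final step from the one the paper has in mind. Both routes share the crucial intermediate fact (part (iv) of the exercise accompanying the proposition): $d_i = m_i+1$ and hence the differentials $dp_1(e),\dots,dp_r(e)$ are linearly independent, i.e.\ $\mathcal N$ is smooth at the regular nilpotent $e$. You establish this, as the paper does, via the Kostant slice $e+\mathfrak z_f$ and the contracting $\Bbb C^\times$-action $\tau_\lambda=\lambda\cdot\Ad(\lambda^{-h/2})$; and the two-step argument that $e+\mathfrak z_f\subset\g_{\rm reg}$ and $(e+\mathfrak z_f)\cap G\cdot e=\{e\}$ is the standard one. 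Where you diverge is the passage from smoothness-at-a-point to reducedness. The paper's exercise (part (vi)) proceeds by viewing $\O(\mathcal N)$ as a free module of rank $|W|$ over $S\n_+\otimes S\n_-$, showing the specialization at a generic $z\in\n_+^*\times\n_-^*$ is a semisimple algebra, and deducing by freeness that a nilpotent $f\in\O(\mathcal N)$ vanishes. You instead invoke Serre's criterion: since $\mathcal N$ is a complete intersection it is Cohen--Macaulay, hence has no embedded primes (condition $(S_1)$); you show $\mathcal N_{\rm red}$ is irreducible via the density of the orbit $G\cdot e$; and smoothness at $e$ gives $(R_0)$, so $\mathcal N$ is reduced. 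Your route is the more textbook-standard commutative-algebra argument and is somewhat shorter; the paper's route is more hands-on and yields additional information about the $S\n_+\otimes S\n_-$-module structure of $\O(\mathcal N)$ along the way. One small caveat: you freely use $d_i=m_i+1$ in setting up the weighted homogeneity and the Hilbert series comparison; in the paper this equality is itself a deduction (from $\sum(d_i-1)=\sum m_i$ and nonvanishing of $\widetilde p_i$), so if you were writing a self-contained proof you would need to include that step rather than assume it.
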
 

Proposition \ref{redu} is proved in the following exercise. 

\begin{exercise} Let $\g$ be a finite-dimensional simple Lie algebra. 

(i) Let $R_0$ be the graded algebra in Theorem \ref{CST2}. Show that the top degree of this algebra is $D:=\sum_{i=1}^r (d_i-1)$ and $R_0[D]=\Bbb C\Delta$, where 
$\Delta:=\prod_{\alpha\in R_+}\alpha$. Deduce that $\sum_{i=1}^r (d_i-1)=|R_+|$, the number of positive roots. 

(ii) Let $\g=\oplus_{i=1}^r L_{2m_i}$ be the decomposition of $\g$ as a module over the principal $\sl_2$-subalgebra $(e,f,h)$ given by Lemma \ref{prisl2}, i.e., $m_i$ are the exponents of $\g$. Show that $m_1=1$ and $\sum_{i=1}^r m_i=|R_+|$. Moreover, show that if $\mu_\g$ is the partition $(m_r,...,m_1)$ then the conjugate partition $\mu_\g^\dagger$ is $(n_1,...,n_{\rm h-1})$, where $n_i$ is the number of positive roots $\alpha$ of height $i$ (i.e., $(\rho^\vee,\alpha)=i$) and ${\rm h}:=m_r+1$. Conclude that ${\rm h}=(\rho^\vee,\theta)+1$ where $\theta$ is the maximal root, i.e., the {\bf Coxeter number} of $\g$. 

(iii)(a) Let $b_i$ be the lowest weight vectors of $L_{2m_i}$, and 
$$
\mathfrak{z}_f:=\oplus_{i=1}^r\Bbb Cb_i\subset \g
$$
be the centralizer of $f$. 
Show that $\g=\mathfrak{z}_f\oplus T_eO_e$, where $O_e={\rm Ad}(G)e$ is the orbit of $e$. 
Thus the affine space $e+\mathfrak{z}_f$ is transversal to $O_e$ at $e$. 
This affine space is called the {\bf Kostant slice}.  

(iii)(b) Consider the $\Bbb C^\times$-action on $\g$ given by 
$$
t\circ x=t^{\frac{1}{2}{\rm ad}(h)-1}x.
$$
Show that this action preserves the decomposition of (ii), and 
the linear coordinates $b_i^*$ on $\mathfrak{z}_f$ have homogeneity degrees $m_i+1$ under this action. 

(iv) Let $(S\g^*)^\g=\Bbb C[p_1,...,p_r]$, $\deg p_i=d_i$, 
and let $\widetilde p_i(y):=p_i(e+y)$, $y\in \mathfrak{z}_f$. Show that 
$\widetilde p_i$ are polynomials of $b_j^*$ homogeneous under the $\Bbb C^\times$-action of (iii) of degrees $d_i$. Deduce from this and the identity $\sum_i (d_i-1)=\sum_i m_i$ proved in (i),(ii) that 
$$
d_i-1=m_i
$$ 
and thus $\widetilde p_i=b_i^*$ (under appropriate choice of basis).
Conclude that the differentials $dp_i$ are linearly independent at $e\in \g$.  

(v) Work out (i)-(iv) explicitly for $\g=\mathfrak{sl}_n$.  

(vi) Prove Proposition \ref{redu}. {\bf Hint:} View 
$\mathcal O(\mathcal N)$ as an algebra over $\mathcal R:=S\n_+\otimes S\n_-$. 
Use the arguments of Subsection \ref{kothe} to show that it is a free $\mathcal R$-module of rank $|W|$. Show that 
the specialization of $\mathcal O(\mathcal N)$ at a generic point $z\in \n_+^*\times \n_-^*$ is a semisimple algebra of dimension $|W|$ (use (iv)).
Now take $f\in \mathcal O(\mathcal N)$ such that $f^k=0$ for some $k$, and deduce that the specialization of $f$ at $z$ is zero. Conclude that $f=0$.  
\end{exercise} 

\begin{proposition}\label{irrre} (i) The orbit $O_e:={\rm Ad}(G)e$ is open and dense in $\mathcal N$.  

(ii) All regular nilpotent elements in $\g$ are conjugate to $e$. 

(iii) $\mathcal N$ is an irreducible affine variety. Thus $(S\g)_0$ is an integral domain.
\end{proposition}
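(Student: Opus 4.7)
My approach is to prove (iii) first by realizing $\N$ as the closure of the image of an irreducible variety, and then to deduce (i) and (ii) from irreducibility together with the dimension count of Lemma \ref{reguu}. Consider the morphism $\mu\colon G\times\n_+\to\g$ given by $(g,x)\mapsto\Ad(g)x$. Since the defining generators $p_1,\dots,p_r$ of $(S\g)^\g$ are $\Ad$-invariant and vanish on $\n_+$ (every element of $\n_+$ is nilpotent), the image of $\mu$ lies in $\N(\Bbb C)$; conversely, every nilpotent element of $\g$ is $\Ad(G)$-conjugate into $\n_+$, as recalled in the paragraph preceding Corollary \ref{den}. Hence $\mathrm{Image}(\mu)=\N(\Bbb C)$ set-theoretically. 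The source $G\times\n_+$ is irreducible (as $G$ is connected and $\n_+$ is a vector space), so the Zariski closure of $\mathrm{Image}(\mu)$ in $\g$ is irreducible; this closure lies in $\N$ but contains $\N(\Bbb C)$, and by the reducedness of $\N$ from Proposition \ref{redu} it must equal $\N$ on the nose. Therefore $\N$ is an irreducible affine variety and $(S\g)_0=\O(\N)$ is an integral domain, proving (iii).

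Granted irreducibility, parts (i) and (ii) reduce to a single dimension argument. Lemma \ref{reguu} gives $\dim Z_\g(e)=r$, hence $\dim O_e=\dim\g-\dim Z_G(e)=\dim\g-r=\dim\N$. Orbits of algebraic group actions are locally closed, so $O_e$ is open in $\overline{O_e}$, and $\overline{O_e}$ is an irreducible closed subset of $\N$ of maximal dimension; by irreducibility of $\N$ we conclude $\overline{O_e}=\N$, so $O_e$ is open and dense in $\N$, proving (i). For (ii), let $x\in\N$ be any regular nilpotent. The same dimension count gives $\dim O_x=\dim\g-r=\dim\N$, so $O_x$ is a nonempty Zariski open subset of the irreducible variety $\N$; two such subsets must intersect, so $O_x\cap O_e\neq\emptyset$, which forces $O_x=O_e$ and shows every regular nilpotent is $\Ad(G)$-conjugate to $e$.

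The main technical input is the set-theoretic identity $\mathrm{Image}(\mu)=\N(\Bbb C)$, which rests on the standard fact (quoted in the text) that every nilpotent element of $\g$ lies in some Borel subalgebra. Once that is in hand, irreducibility of $\N$ is immediate from the irreducibility of $G\times\n_+$, the reducedness of $\N$, and the $\Ad$-invariance of the Chevalley generators $p_i$; the remainder of the argument is elementary dimension counting inside an irreducible variety.
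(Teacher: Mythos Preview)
Your proof is correct, but the logical order and the tools differ from the paper's. The paper proves (i) first, using Corollary~\ref{den}: since $\Ad(B_+)e$ is already the open dense subset of $\n_+$ where all simple-root coefficients are nonzero, and every nilpotent conjugates into $\n_+$, one gets $\overline{O_e}=\N$ directly; openness of $O_e$ then follows because orbits are locally closed. Parts (ii) and (iii) are then deduced from (i). You instead prove (iii) first via the surjection $\mu\colon G\times\n_+\to\N(\Bbb C)$ (irreducible source, reduced target), and then obtain (i) and (ii) by pure dimension counting inside the irreducible $\N$, never invoking Corollary~\ref{den}. Your route is slightly more economical in that it needs only Lemma~\ref{reguu} and the conjugacy of nilpotents into $\n_+$; the paper's route, on the other hand, gives the more explicit picture that the regular orbit meets $\n_+$ exactly in the elements with all $c_{\alpha_i}\neq 0$. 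One minor citation slip: the fact that every nilpotent is conjugate into $\n_+$ is recalled not in the paragraph preceding Corollary~\ref{den} but right after the definition of $\N$.
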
 

\begin{proof} (i) This follows from Corollary \ref{den} and the fact that every nilpotent element in $\g$ can be conjugated into $\n_+$. 

(ii) The orbit $O_x$ of every regular nilpotent element $x$ has the same dimension as $O_e$, so the statement follows from (i). Indeed, since $O_e$ is open and dense, $\mathcal N\setminus O_e$ 
has smaller dimension than $\mathcal N$, hence can't contain $O_x$. 

(iii) follows from (i) and Proposition \ref{redu}, since $O_e$ is smooth and connected (being an orbit of a connected group), hence irreducible.  
\end{proof} 

\begin{corollary} $U_\chi$ is an integral domain for all $\chi$.
\end{corollary}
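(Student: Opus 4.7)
The strategy is to transfer the already-established irreducibility of the nilpotent cone to $U_\chi$ via the PBW filtration. Concretely, I would equip $U(\g)$ with the PBW filtration, so that $\gr U(\g) = S\g$, and show that $\gr U_\chi \cong (S\g)_0$; since the latter is an integral domain by Proposition \ref{irrre}(iii), the standard filtered-to-graded lifting will conclude the argument.

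First I would pick homogeneous generators $p_1,\dots,p_r$ of $Z(\g)$ of PBW-degrees $d_1,\dots,d_r$, supplied by Kostant's theorem (Theorem \ref{Kos1}); by that theorem their principal symbols $\sigma(p_i)\in S^{d_i}\g$ freely generate $(S\g)^\g$. The defining ideal of $U_\chi$ is $I_\chi = (p_i-\chi(p_i):1\le i\le r)\subset U(\g)$. Each generator has the same symbol $\sigma(p_i-\chi(p_i))=\sigma(p_i)$, so passing to the associated graded produces a canonical surjection
\[
(S\g)_0 = S\g/(\sigma(p_1),\dots,\sigma(p_r)) \twoheadrightarrow \gr(U_\chi).
\]

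Next I would argue this surjection is an isomorphism by a Hilbert-series count. By Theorem \ref{Kos1}, $U(\g)$ is a free $Z(\g)$-module; choose a graded complement $V\subset U(\g)$ so that the multiplication $Z(\g)\otimes V \to U(\g)$ is a filtration-preserving isomorphism. Then $U_\chi = U(\g)/I_\chi$ is identified with $V$ as a filtered vector space, so $\dim \gr(U_\chi)[n] = \dim V[n]$ for every $n$. On the graded side, the same Kostant argument (Theorem \ref{Kos}, freeness of $S\g$ over $(S\g)^\g$) gives $S\g\cong (S\g)^\g\otimes V'$ with $\dim V'[n]=\dim V[n]$, and $(S\g)_0\cong V'$. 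Hence the surjection above is a dimensional equality in each graded piece, so $\gr(U_\chi)\cong (S\g)_0$.

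Finally, Proposition \ref{irrre}(iii) tells us that $(S\g)_0$ is a (reduced) integral domain. To conclude $U_\chi$ is also a domain, I would use the standard symbol argument: if $a,b\in U_\chi$ are nonzero with $ab=0$, let $\bar a,\bar b\in \gr(U_\chi)$ be their nonzero principal symbols; then $\bar a\bar b$ is the principal symbol of $ab=0$, hence $\bar a\bar b=0$ in $\gr(U_\chi)\cong (S\g)_0$, contradicting integrality there. The main obstacle is really just step two — verifying that freeness of $U(\g)$ over $Z(\g)$ passes cleanly through the PBW filtration so that $\gr(U_\chi)$ does not collapse further than $(S\g)_0$ — but this is controlled once one fixes a homogeneous free basis, and everything else is formal.
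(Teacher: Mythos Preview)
Your proposal is correct and is exactly the paper's approach: the paper's proof is the single line ``This follows from Proposition~\ref{irrre}(iii) since $\gr(U_\chi)=(S\g)_0$,'' and you have supplied the details behind $\gr(U_\chi)\cong (S\g)_0$ (via Kostant freeness and a Hilbert-series/dimension count) together with the standard symbol argument that a filtered ring with integral associated graded is itself a domain.
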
 

\begin{proof} This follows from Proposition \ref{irrre}(iii) since ${\rm gr}(U_\chi)=(S\g)_0$. 
\end{proof} 

\begin{exercise}\label{nilor} Let $e$ be a nilpotent element in a semisimple 
complex Lie algebra $\g$, and $\g^e$ be the centralizer of $e$. 
Let $(,)$ be the Killing form of $\g$. 

(i) Show that $(e,\g^e)=0$ (prove that for any $x\in \g^e$, the operator ${\rm ad}_e{\rm ad}_x$ is nilpotent). 

(ii) Show that there exists $h\in \g$ such that $[h,e]=2e$
(use that ${\rm Im}({\rm ad}_e)={\rm \g^e}^\perp$ to deduce that 
$e\in {\rm Im}({\rm ad}_e)$). 
 
(iii) Show that in (ii), $h$ can be chosen semisimple (consider the Jordan decomposition $h=s+n$). From now on we choose $h$ in such a way. 

(iv) Show that $\Bbb Ch\oplus\g^e$ is a Lie subalgebra of $\g$. 

(v) Assume that $\g^e$ is nilpotent. Show that there is a basis
of $\g$ in which the operator ${\rm ad}_x$ is upper triangular 
for all $x\in \Bbb Ch\oplus\g^e$ (use Lie's theorem). Deduce that $(h,x)=0$ for all $x\in \g^e$. 

(vi) Show that if $\g^e$ is nilpotent then 
there are $h,f\in \g$ such that $[h,e]=2e$, $[e,f]=h$ and $[h,f]=-2f$. 
In other words, there is a homomorphism of Lie algebras 
$\phi: \mathfrak{sl}_2\to \g$ such that $\phi(E)=e$, $\phi(H)=h$, $\phi(F)=f$. 
Show that $h$ is semisimple and $f$ is nilpotent. 

(vii) (Jacobson-Morozov theorem, part I) Show that the conclusion of (vi) 
holds for any $e$ (without assuming that $\g^e$ is nilpotent). ({\bf Hint:} use induction in $\dim \g$. If $\g^e$ is not nilpotent, use Jordan decomposition to find a nonzero semisimple element $x\in \g^e$ and consider the Lie algebra $\g^x$. Show that $\g':=[\g^x,\g^x]$ is semisimple and 
$e\in \g'$). 

(viii) Show that for given $e,h$, the homomorphism $\phi$ in (vi,vii) is unique (i.e., $f$ is uniquely determined by $e,h$). 

(ix) (Jacobson-Morozov theorem, part II) Show that for a fixed $e$, 
$\exp(\g^e)$ (the Lie subgroup corresponding to $\g^e$) is a closed Lie subgroup of the adjoint group $G_{\rm ad}$ corresponding to $\g$, and the element $h$ (hence also $f$) can be chosen uniquely up to conjugation by $\exp(\g_e)$. ({\bf Hint}: 
Let $h'$ be another choice of $h$, and consider the element $h'-h\in \g^e$.)

(x) Explain why the Jacobson-Morozov theorem extends to reductive Lie algebras (where by a nilpotent element we mean one that is nilpotent in any finite-dimensional representation). Give an elementary proof of this theorem for $\g=\mathfrak{gl}_n$ using only linear algebra. 

(xi) Show that there are finitely many conjugacy classes of nilpotent elements in $\g$, i.e., the nilpotent cone $\mathcal{N}$ has finitely many $G_{\rm ad}$-orbits. ({\bf Hint:} Consider the variety $X$ of homomorphisms 
$\phi: \mathfrak{sl}_2\to \g$ and show that it is a disjoint union of finitely many closed $G_{\rm ad}$-orbits. To this end, show that the tangent space to $X$ at each $x\in X$ coincides with the tangent space of the orbit $Gx$ at the same point, using that ${\rm Ext}^1_{\mathfrak{sl}_2}(\Bbb C,\g)=0$). 
\end{exercise} 

\section{\bf Maps of finite type, Duflo-Joseph theorem} 

\subsection{Maps of finite type} 

Let $M,N$ be $\g$-modules. Let $\Hom_{\rm fin}(M,N)$ be the 
space of linear maps from $M$ to $N$ which generate a finite-dimensional $\g$-module under the adjoint action $a\circ T:=[a,T]$. The elements of $\Hom_{\rm fin}(M,N)$ are called {\bf linear maps of finite type}. For example, a module homomorphism is a map of finite type, as it generates a trivial $1$-dimensional $\g$-module. 

\begin{exercise} Show that any map of finite type has the form \linebreak
$(f\otimes 1)\circ \Phi$, where $f\in V^*$ for some finite-dimensional $\g$-module $V$ and $\Phi: M\to V\otimes N$ is a module homomorphism. 
\end{exercise}   
  
Note that $\Hom_{\rm fin}(M,N)$ is a $\g$-bimodule with bimodule structure given by 
$$
(a,b)\circ T:=aT+Tb, 
$$
$a,b\in \g$. Moreover, it is clear that 
if $M$ has infinitesimal character $\chi$ and $N$ has infinitesimal character $\theta$ then $\Hom_{\rm fin}(M,N)$ has infinitesimal character 
$(\theta,\chi)$.     
  
\begin{proposition} If $M,N\in \mathcal O$ then $\Hom_{\rm fin}(M,N)$  is an admissible $\g$-bimodule.  
\end{proposition}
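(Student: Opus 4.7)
The plan is to reduce admissibility to a finite-dimensionality statement about $\Hom_\g(M,V^*\otimes N)$ via tensor–hom adjunction, and then use that $\mathcal{O}$ is closed under tensoring with finite-dimensional modules together with Lemma \ref{findim}.

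First, I would fix a finite-dimensional irreducible $\g$-module $V$ and show that
$$\Hom_{\g\text{-ad}}(V,\Hom_{\rm fin}(M,N)) = \Hom_{\g\text{-ad}}(V,\Hom(M,N)).$$
The inclusion $\subseteq$ is automatic, and the reverse holds because the image of any $\g$-equivariant map $\phi\colon V \to \Hom(M,N)$ is a quotient of $V$, hence a finite-dimensional $\g_{\rm ad}$-submodule of $\Hom(M,N)$, so its elements are of finite type. Next, I would apply the standard chain of natural adjunction isomorphisms, valid since $V$ is finite-dimensional:
$$\Hom_\g(V,\Hom(M,N)) \;\cong\; \Hom_\g(V\otimes M,N) \;\cong\; \Hom_\g(M,V^*\otimes N).$$
The first is the tensor–hom adjunction in the category of $\g$-modules (one checks directly that a linear map $\phi\colon V\to \Hom(M,N)$ is $\g_{\rm ad}$-equivariant iff the associated map $V\otimes M\to N$ is $\g$-equivariant), and the second uses the natural isomorphism $\Hom(V\otimes M,N)\cong \Hom(M,V^*\otimes N)$ of $\g$-modules.

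The next step is to observe that $V^*\otimes N$ belongs to $\mathcal{O}$: it is $\h$-semisimple (as a tensor product of two $\h$-semisimple modules with finite-dimensional weight spaces), finitely generated over $U(\g)$, and if $P(N)\subset \bigcup_i(\lambda_i-Q_+)$ and $P(V^*)$ is the finite set $\{\nu_1,\dots,\nu_s\}$, then $P(V^*\otimes N)\subset \bigcup_{i,j}(\lambda_i+\nu_j-Q_+)$, still a finite union of translated negative cones. Finally, since $M\in \mathcal O$, it is finitely generated; choose finitely many generators $m_1,\dots,m_k$ which are $\h$-weight vectors of weights $\mu_1,\dots,\mu_k$ (possible because $M$ has a weight decomposition). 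Then any $\Phi\in \Hom_\g(M,V^*\otimes N)$ is determined by the values $\Phi(m_i)$, and each $\Phi(m_i)$ lies in the weight space $(V^*\otimes N)[\mu_i]$, which is finite-dimensional by Lemma \ref{findim} applied to $V^*\otimes N\in\mathcal O$. Hence
$$\dim \Hom_\g(M,V^*\otimes N)\;\le\;\sum_{i=1}^k \dim (V^*\otimes N)[\mu_i]\;<\;\infty,$$
and combining with the adjunction isomorphism above yields admissibility.

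There is no real obstacle: the only thing to verify carefully is the $\g$-equivariance of the two adjunction isomorphisms under the correct bimodule conventions (in particular that the adjoint action on $\Hom(M,N)$ is what corresponds to the diagonal $\g$-action on $V\otimes M$ and on $V^*\otimes N$). Once this bookkeeping is set up, the argument is immediate from finite generation of $M$ and finiteness of weight multiplicities in $\mathcal O$.
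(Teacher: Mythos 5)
Your proof is correct, and after the common opening step (the tensor--hom adjunction $\Hom_{\g_{\rm ad}}(V,\Hom_{\Bbb C}(M,N))\cong \Hom_\g(M,V^*\otimes N)$, which both you and the paper use), you diverge from the paper in how finiteness is extracted. The paper first reduces to the case of $M,N$ simple by using that the multiplicity function is (sub)additive in short exact sequences and that objects of $\O$ have finite length; it then sandwiches $\Hom_{\Bbb C}(M,N)$ between $\Hom_{\Bbb C}(M_\lambda,M_\mu^\vee)$ and invokes Exercise \ref{intertw} to identify the multiplicity space as $V^*[\lambda-\mu]$, yielding an explicit answer for simples. You instead skip the reduction to simples entirely: you observe that $V^*\otimes N\in\O$ (so its weight spaces are finite-dimensional by Lemma \ref{findim}) and that $M$ is generated by finitely many weight vectors $m_1,\dots,m_k$, so evaluation $\Phi\mapsto(\Phi(m_i))_i$ gives an injection $\Hom_\g(M,V^*\otimes N)\hookrightarrow\bigoplus_i(V^*\otimes N)[\mu_i]$, a finite-dimensional space. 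Your route is more elementary and self-contained --- it avoids both the finite-length argument and the explicit computation from Exercise \ref{intertw}, relying only on the two defining features of $\O$ (finite generation, finite weight multiplicities). The paper's route is slightly longer but has the advantage of producing, for simple $M,N$, the precise bound $V^*[\lambda-\mu]$ on multiplicity spaces, which is the sharper statement used later (e.g.\ in the Duflo--Joseph theorem). Both are valid proofs; the bookkeeping about which $\g$-action on $\Hom(M,N)$ matches the diagonal action on $V\otimes M$ is handled correctly in your write-up, and your explicit verification that $\Hom_{\g_{\rm ad}}(V,\Hom_{\rm fin}(M,N))=\Hom_{\g_{\rm ad}}(V,\Hom(M,N))$ (which the paper leaves implicit) is a small but genuine improvement in clarity.
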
 

\begin{proof} We must show that for every simple finite-dimensional $\g$-module $V$, the space 
$$
\Hom_\g(V,\Hom_{\rm fin}(M,N))=\Hom_\g(V,\Hom_{\Bbb C}(M,N))
$$
is finite-dimensional. Let $\mu(M,N,V)$ be its dimension (a nonnegative integer or 
infinity). Since the functor $(M,N)\mapsto \Hom_{\Bbb C}(M,N)$ is exact in both arguments, for any short exact sequence 
$$
0\to M_1\to M_2\to M_3\to 0
$$ 
we have 
$$
\mu(M_2,N,V)=\mu(M_1,N,V)+\mu(M_3,N,V),
$$
$$
\mu(N,M_2,V)=\mu(N,M_1,V)+\mu(N,M_3,V).
$$
Thus, since $M,N$ have finite length, it suffices to establish the result for $M,N$ simple. Then $M$ is a quotient of $M_\lambda$ and $N$ a submodule of $M_\mu^\vee$ for some $\lambda,\mu$, so 
$\Hom_{\Bbb C}(M,N)\subset \Hom_{\Bbb C}(M_\lambda,M_\mu^\vee)$. 
But by Exercise \ref{intertw}, for any finite-dimensional $\g$-module $V$, 
$$
\Hom_\g(V,\Hom_{\Bbb C}(M_\lambda,M_\mu^\vee))\cong \Hom_\g(V\otimes M_\lambda,M_\mu^\vee)\cong 
$$
$$
\Hom_\g(M_\lambda,V^*\otimes M_\mu^\vee)\cong V^*[\lambda-\mu].
$$  
This implies the statement. 
\end{proof} 

\begin{proposition}\label{tensv} For $M,N\in \mathcal O$ and a finite-dimensional $\g$-module $V$ we have 
$$
\Hom_{\rm fin}(M,V\otimes N)=V\otimes \Hom_{\rm fin}(M,N).
$$
\end{proposition}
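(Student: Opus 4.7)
The plan is to reduce the statement to the assertion that, for any $\g$-module $W$ and any finite dimensional $\g$-module $V$, the locally finite part (under the adjoint action) of $V \otimes W$ equals $V \otimes W^{\rm fin}$, applied to $W = \Hom(M,N)$.

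First I would set up the natural linear isomorphism
\[
\Phi : V \otimes \Hom(M,N) \longrightarrow \Hom(M, V \otimes N), \qquad \Phi(v \otimes T)(m) = v \otimes T(m),
\]
which is well defined and bijective because $V$ is finite dimensional (expanding in any basis of $V$ gives the inverse). The key check is that $\Phi$ intertwines the adjoint $\g$-action on the left (the tensor product of the given action on $V$ and the adjoint action on $\Hom(M,N)$) with the adjoint $\g$-action on the right. This is a direct computation: for $a \in \g$,
\[
(a \circ \Phi(v \otimes T))(m) = a \cdot (v \otimes T(m)) - v \otimes T(a \cdot m) = (av) \otimes T(m) + v \otimes (a \circ T)(m),
\]
which is exactly $\Phi\bigl((av) \otimes T + v \otimes (a \circ T)\bigr)(m)$.

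Next I would observe that $\Hom_{\rm fin}(M, V \otimes N)$ is by definition the locally $\g$-finite part of $\Hom(M, V \otimes N)$ (under the adjoint action), and similarly $V \otimes \Hom_{\rm fin}(M,N)$ sits inside $V \otimes \Hom(M,N)$. Via $\Phi$, the claim is therefore equivalent to the following general lemma: for any $\g$-module $W$ and any finite dimensional $\g$-module $V$, one has $(V \otimes W)^{\rm fin} = V \otimes W^{\rm fin}$.

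The inclusion $V \otimes W^{\rm fin} \subseteq (V \otimes W)^{\rm fin}$ is immediate, since if $w$ lies in a finite dimensional submodule $W' \subset W$, then $v \otimes w \in V \otimes W'$, which is a finite dimensional $\g$-submodule. For the reverse inclusion — the only potentially delicate step — I would fix a basis $v_1,\dots,v_n$ of $V$, write an element $x \in (V \otimes W)^{\rm fin}$ as $x = \sum_i v_i \otimes w_i$, and let $U \subset V \otimes W$ be a finite dimensional $\g$-submodule containing $x$. Writing each element of $U$ in the basis as $\sum_i v_i \otimes w_{k,i}$ and using that $\g$ preserves $U$, the identity
\[
a \cdot w_{k,j} = \sum_\ell c_{k\ell}(a)\, w_{\ell, j} - \sum_i a_{ji}\, w_{k,i}
\]
(obtained by comparing coefficients of $v_j$) shows that the span of the $w_{k,i}$ in $W$ is a finite dimensional $\g$-submodule, so each $w_i \in W^{\rm fin}$ and hence $x \in V \otimes W^{\rm fin}$.

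The main (very modest) obstacle is bookkeeping in the final step: making sure the basis expansion argument actually exhibits a finite dimensional $\g$-submodule of $W$ containing all the coefficients $w_i$, rather than merely showing that each $w_i$ generates a locally finite module on its own. Once the lemma is established, applying it with $W = \Hom(M,N)$ and transporting through $\Phi$ yields the desired equality $\Hom_{\rm fin}(M, V \otimes N) = V \otimes \Hom_{\rm fin}(M, N)$.
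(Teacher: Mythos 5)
Your proof is correct. The paper does not supply an argument for Proposition~\ref{tensv} --- it is left to the reader as the exercise immediately following --- so there is nothing to compare against; your route (exhibit $\Phi(v\otimes T)(m)=v\otimes T(m)$ as a $\g_{\rm ad}$-equivariant linear isomorphism $V\otimes\Hom(M,N)\to\Hom(M,V\otimes N)$, then prove the general lemma $(V\otimes W)^{\rm fin}=V\otimes W^{\rm fin}$ for finite dimensional $V$ by comparing coefficients inside a finite dimensional $\g$-submodule $U\ni x$) is the natural and intended one, and the coefficient-comparison step correctly produces a single finite dimensional $\g$-submodule ${\rm span}\{w_{k,i}\}\subset W$ that contains every $w_i$, since $x\in U$ forces $w_i=\sum_k c_k w_{k,i}$.
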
 

\begin{exercise} Prove Proposition \ref{tensv}. 
\end{exercise} 

\begin{proposition}\label{dimhom1} Let $V$ be a finite-dimensional 
$\g$-module. Then for any $\lambda\in \h^*$, we have 
$$
\dim \Hom_\g(M_\lambda,V\otimes M_\lambda)=\dim V[0].
$$ 
Thus the multiplicity of $V$ in $\Hom_{\rm fin}(M_\lambda,M_\lambda)$ equals $\dim V[0]$. 
\end{proposition}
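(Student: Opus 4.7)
My plan is to handle $\lambda$ in a Zariski-dense subset of $\h^*$ directly via Exercise~\ref{intertw}(ii), then extend to all $\lambda$ by a universal Verma argument. For $\lambda$ at which the Shapovalov determinants of Exercise~\ref{Shapova} are all nonzero, the Verma module $M_\lambda$ is irreducible, and hence $M_\lambda\cong L_\lambda\cong M_\lambda^\vee$ (the contragredient Verma, having the same character and the same infinitesimal character, is also simple and therefore isomorphic to $L_\lambda$). Applying Exercise~\ref{intertw}(ii) with $\mu=\lambda$ yields $\dim\Hom_\g(M_\lambda,V\otimes M_\lambda)=\dim V[\lambda-\lambda]=\dim V[0]$ for such generic $\lambda$.

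To handle an arbitrary $\lambda\in\h^*$, I would introduce the universal Verma module $\widetilde M:=U(\g)\otimes_{U(\b_+)}S\h$ as a $(\g,S\h)$-bimodule satisfying $\widetilde M\otimes_{S\h,\lambda}\Bbb C=M_\lambda$. By Frobenius reciprocity together with the PBW isomorphism $\widetilde M\cong U(\n_-)\otimes S\h$, the space $\Hom_\g(\widetilde M, V\otimes\widetilde M)$ embeds in $V\otimes U(\n_-)\otimes S\h$ as the $\n_+$-invariants of universal weight and is therefore a torsion-free $S\h$-module. I will define the ``leading-term'' map $\tilde\xi\colon \Hom_\g(\widetilde M, V\otimes\widetilde M)\to V[0]\otimes S\h$ by projecting $\widetilde\Phi(\tilde v)$ to its component in $V\otimes 1\otimes S\h$, and I will aim to prove that $\tilde\xi$ is an isomorphism of $S\h$-modules.

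Injectivity of $\tilde\xi$ should follow from the generic case together with torsion-freeness: if the universal leading term vanishes, then specializing at any generic $\lambda$ produces an element of $\Hom_\g(M_\lambda,V\otimes M_\lambda)$ with vanishing leading term, hence zero by the generic isomorphism just established; torsion-freeness then forces $\widetilde\Phi=0$. The main obstacle will be surjectivity: for each $v_0\in V[0]$ I need to produce a universal lift $\widetilde\Phi_{v_0}$ whose value at $\tilde v$ equals $v_0\otimes\tilde v$ plus correction terms of strictly positive $Q_+$-degree, satisfying the singular-vector equations $e_\alpha\widetilde\Phi_{v_0}(\tilde v)=0$ for all simple $\alpha$. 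Solving these equations inductively in the $U(\n_-)$-degree reduces at each degree to a finite linear system over $S\h$, and the delicate point is to check that solvability at a Zariski-dense set of specializations lifts to solvability in $S\h$ itself; here the freeness of the ambient $S\h$-modules and a standard flatness argument are the key technical inputs. Once $\tilde\xi$ is shown to be an isomorphism, $\Hom_\g(\widetilde M,V\otimes\widetilde M)$ is free over $S\h$ of rank $\dim V[0]$, so base change along any $\lambda\colon S\h\to\Bbb C$ yields $\dim\Hom_\g(M_\lambda,V\otimes M_\lambda)=\dim V[0]$. The final assertion of the proposition will then follow from the adjunction $\Hom_\g(V,\Hom_{\rm fin}(M_\lambda,M_\lambda))=\Hom_\g(V\otimes M_\lambda,M_\lambda)=\Hom_\g(M_\lambda,V^*\otimes M_\lambda)$ applied to $V^*$.
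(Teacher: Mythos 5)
You begin the same way the paper does: for generic $\lambda$, use $M_\lambda\cong L_\lambda\cong M_\lambda^\vee$ and Exercise~\ref{intertw}(ii) to get $\dim\Hom_\g(M_\lambda,V\otimes M_\lambda)=\dim V[0]$. But the universal-Verma deformation you propose for arbitrary $\lambda$ has two gaps, the first of which is fatal rather than merely delicate. Surjectivity of your leading-term map $\tilde\xi$ is actually false. For $\g=\mathfrak{sl}_2$ and $V=\g$, a short computation shows $\Hom_\g(\widetilde M,\g\otimes\widetilde M)$ is the free rank-one $S\h=\Bbb C[\tilde\lambda]$-module generated by the universal singular vector $e\otimes f\tilde v+\tfrac{\tilde\lambda}{2}\,h\otimes\tilde v$, whose leading term is $\tfrac{\tilde\lambda}{2}h$. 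Thus the image of $\tilde\xi$ inside $V[0]\otimes S\h\cong S\h$ is the proper ideal $(\tilde\lambda)$, so $\tilde\xi$ is not onto. This is exactly the example the paper records in the remark right after Proposition~\ref{dimhom1}: at $\lambda=0$ the expectation-value map $\Hom_\g(M_0,\g\otimes M_0)\to V[0]$ is zero even though both sides are one-dimensional. So your plan to build a universal lift $\widetilde\Phi_{v_0}$ with prescribed nonzero leading term cannot succeed.

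Second, even granting that the universal Hom is free of rank $\dim V[0]$ over $S\h$ (true in the example above), the concluding base-change step does not follow. The specific space $\Hom_\g(M_\lambda,V\otimes M_\lambda)$ is $\Ker(A_\lambda)$ for the specialization of an $S\h$-linear map $A$ between free modules, and $\dim\Ker(A_\lambda)$ can jump upward at special $\lambda$; the natural map $(\Ker A)\otimes_{S\h}\Bbb C_\lambda\to\Ker(A_\lambda)$ need be neither injective nor surjective. Controlling this requires flatness of $\coker A$, which is equivalent to the very constancy of $\dim\Ker(A_\lambda)$ you are trying to prove — the argument is circular at that point. The paper's proof sidesteps all of this by working at a fixed $\lambda$: take the simple Verma submodule $M_\mu\subset M_\lambda$, show by a growth estimate that any $\Phi:M_\lambda\to V\otimes M_\lambda$ restricts to a map $M_\mu\to V\otimes M_\mu$, and then show the restriction $\Hom_\g(M_\lambda,V\otimes M_\lambda)\to\Hom_\g(M_\mu,V\otimes M_\mu)$ is injective because any $\Phi$ killing $M_\mu$ would factor through the $U(\n_-)$-torsion module $M_\lambda/M_\mu$ into the $U(\n_-)$-free (hence torsion-free) module $V\otimes M_\lambda$, forcing $\Phi=0$. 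This reduces the count to the irreducible $M_\mu$, where the generic computation applies verbatim. If you want to repair your argument, replace the family-over-$S\h$ step with a reduction to the simple Verma submodule along these lines.
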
 

\begin{proof} By Exercise \ref{dimhom}, the statement holds if $M_\lambda$ is irreducible, i.e., generically. Thus 
$\dim \Hom_\g(M_\lambda,V\otimes M_\lambda)\ge \dim V[0]$, and it remains to prove the opposite inequality. Let $M_{\mu}$ 
be the simple Verma submodule of $M_\lambda$. Given $\Phi: M_\lambda\to V\otimes M_\lambda$, we claim that the restriction of $\Phi$ to $M_\mu$ must land in $V\otimes M_\mu$. Indeed, otherwise we will have a nonzero (hence injective) homomorphism $M_\mu\to V\otimes (M_{\lambda}/M_{\mu})$, which is impossible by growth considerations. 

But by Exercise \ref{dimhom}, the statement holds if $\lambda$ is replaced by $\mu$. So if it does not hold for $\lambda$ then there is a nonzero $\Phi$ which kills $M_\mu$. Thus $\Phi$ defines a nonzero homomorphism $M_\lambda/M_\mu\to M_\lambda\otimes V$, which is impossible since $M_\lambda\otimes V$ is a free, hence torsion free 
$U(\n_-)$-module, while every homogeneous vector in $M_\lambda/M_\mu$ is torsion (as this module does not contain free $U(\n_-)$-submodules by growth considerations). This establishes the proposition.
\end{proof} 

\begin{remark} Note that Proposition \ref{dimhom1} does not extend 
to maps $M_{\lambda+\nu}\to V\otimes M_{\lambda}$ where $\nu\in P$ is nonzero. Namely, if $M_{\lambda}$ is irreducible then we have 
$\dim \Hom_\g(M_{\lambda+\nu},V\otimes M_{\lambda})=\dim V[\nu]$, so in general 
$\dim \Hom_\g(M_{\lambda+\nu},V\otimes M_{\lambda})\ge \dim V[\nu]$, and the inequality can, in fact, be strict. The simplest example is $\g=\mathfrak{sl}_2$, $V=\Bbb C$, 
$\lambda=0$, $\nu=-2$, in which case the left hand side is $1$ and the right hand side is $0$. 

Also the expectation value map 
$$
\langle,\rangle: \Hom_\g(M_\lambda,V\otimes M_\lambda)\to V[0]
$$
need not be an isomorphism, even though its source and target have the same dimension. The simplest example is $\g=\mathfrak{sl}_2$, $\lambda=0$, and $V$ is the adjoint representation. 
We have 
$$
\dim\Hom_\g(M_0,V\otimes M_0)=\dim\Hom_\g(M_0,V\otimes M_{-2})=1,
$$ 
so the only (up to scaling) nonzero homomorphism $\Phi: M_0\to V\otimes M_{0}$ in fact lands in $V\otimes M_{-2}\subset V\otimes M_0$. Thus 
$\langle \Phi\rangle =0$.  
\end{remark} 

\subsection{The Duflo-Joseph theorem} 

\begin{proposition}\label{injee} The action homomorphism 
$$
\phi: U_{\chi_{\lambda+\rho}}\to \Hom_{\rm fin}(M_{\lambda},M_{\lambda})
$$ 
is injective. 
\end{proposition}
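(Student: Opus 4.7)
The plan is to prove the stronger statement that the two-sided ideal $I := \ker\phi = \mathrm{Ann}_{U_{\chi_{\lambda+\rho}}}(M_\lambda)$ vanishes. The strategy is to pass to the associated graded with respect to the Poincar\'e--Birkhoff--Witt filtration and exploit the geometry of the nilpotent cone: we will show that $\gr I = 0$ in $\gr U_{\chi_{\lambda+\rho}} = (S\g)_0 = \mathcal O(\mathcal N)$, which then forces $I = 0$.

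First I would compute $\gr M_\lambda$. Using the realization $M_\lambda = U(\g)\otimes_{U(\b_+)}\Bbb C_\lambda$ together with PBW, one gets $F_n M_\lambda = U(\n_-)^{\leq n}v_\lambda$, and a direct verification shows that the actions of $\h$ and $\n_+$ preserve $F_n$ rather than raising it (for $\h$, the scalar $\lambda(h)$ contributes only lower-order terms; for $\n_+$, one uses $\n_+ v_\lambda = 0$ together with the commutator identity). Consequently $\gr M_\lambda \cong S\n_-$ as a graded $S\g$-module, with $S\b_+$ acting through its augmentation and $S\n_-$ acting by multiplication, so $\mathrm{Ann}_{S\g}(\gr M_\lambda) = S\g\cdot\b_+$. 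Under the Killing-form identification $\g^*\cong\g$, the subvariety of $\g^*$ cut out by this ideal is $\b_+^\perp = \n_+$, so the associated variety of $M_\lambda$ equals $\n_+\subset\mathcal N$. A standard principal-symbol argument then shows that for any $u\in I$ the symbol $\bar u \in \mathcal O(\mathcal N)$ annihilates $\gr M_\lambda$ and therefore vanishes on $\n_+$; hence $\gr I\subseteq \mathcal{I}(\n_+)$.

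Now $I$ is a two-sided ideal, hence $\mathrm{ad}\,\g$-stable, so $\gr I$ is a $G$-stable ideal of $\mathcal O(\mathcal N)$ and its zero locus $V(\gr I)$ is a $G$-invariant closed subset of $\mathcal N$ containing $\n_+$. Since every nilpotent element of $\g$ is $\mathrm{Ad}(G)$-conjugate into $\n_+$ (as invoked in the proof of Proposition \ref{irrre}(i)), the closure of $\mathrm{Ad}(G)\n_+$ is all of $\mathcal N$, forcing $V(\gr I) = \mathcal N$. Hence $\gr I$ lies in the nilradical of $\mathcal O(\mathcal N)$, which is zero by Proposition \ref{redu}. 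It follows that $\gr I = 0$, and therefore $I = 0$, so $\phi$ is injective. The non-trivial inputs --- reducedness of the nilpotent cone and the conjugacy of every nilpotent into $\n_+$ --- are both already at hand; the main obstacle is really just the PBW verification that the associated variety of $M_\lambda$ equals $\n_+$ exactly (rather than some smaller $G$-stable subvariety whose saturation fails to exhaust $\mathcal N$), which is essentially a direct computation.
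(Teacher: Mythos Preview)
Your proof is correct and takes a genuinely different route from the paper's. The paper argues by dimension growth: it first shows, using a simple Verma submodule $M_\mu\subset M_\lambda$ and the nondegeneracy of the Shapovalov form on $M_\mu$, that the composite $U(\n_-)\otimes U(\n_+)\xrightarrow{\xi} U_{\chi_{\lambda+\rho}}\xrightarrow{\phi}\Hom_{\rm fin}(M_\lambda,M_\lambda)$ is injective, giving a lower bound $\dim F_n(U_{\chi_{\lambda+\rho}}/\Ker\phi)\ge Cn^{\dim\g-r}$; then, assuming $\Ker\phi\ne 0$, it uses that $\mathcal O(\mathcal N)$ is a domain to obtain the incompatible upper bound $\dim F_n(U_{\chi_{\lambda+\rho}}/\Ker\phi)\le C'n^{\dim\g-r-1}$. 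You instead work directly with the associated graded: you identify the associated variety of $M_\lambda$ with $\n_+$, observe that $\gr I$ is $G$-stable and cuts out a closed $G$-invariant set containing $\n_+$, hence all of $\mathcal N$, and conclude $\gr I=0$ from reducedness of $\mathcal N$. Your approach is more conceptual and bypasses both the simple Verma submodule (whose existence requires a separate argument via Casimir eigenvalues) and the Shapovalov form, making transparent that the only geometric inputs are Proposition~\ref{redu} and the density of $G\cdot\n_+$ in $\mathcal N$; the paper's approach, while more hands-on, yields as a byproduct the explicit injectivity of the PBW multiplication map $\xi: U(\n_-)\otimes U(\n_+)\hookrightarrow U_{\chi_{\lambda+\rho}}$, which is of independent interest. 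One minor remark: your closing worry about the associated variety being ``some smaller $G$-stable subvariety'' is moot, since $\n_+$ is not itself $G$-stable---what matters, and what your computation establishes, is that $V(\gr I)\supset\n_+$ contains a regular nilpotent, so its $G$-saturation is automatically all of $\mathcal N$.
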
 

\begin{proof} Let $M_\mu\subset M_\lambda$ be a 
simple Verma submodule with highest weight vector $v$. 
Let $B_{\mu,\beta}: U(\n_+)[\beta]\otimes U(\n_-)[-\beta]\to \Bbb C$ be the pairing defined by the equality 
$$
abv=B_{\mu,\beta}(a,b)v.
$$
As $M_\mu$ is simple, this pairing is nondegenerate. 

Consider the multiplication map
$$
\xi: U(\n_-)\otimes U(\n_+)\to U_{\chi_{\lambda+\rho}}.
$$ 
We claim that the map $\phi\circ \xi$ is injective, hence so are $\xi$ and $\phi|_{{\rm Im}\xi}$. 
Indeed, let $x\in U(\n_-)\otimes U(\n_+)$ be a nonzero element. We can uniquely write $x=\sum_{\alpha\in Q_+}x_\alpha$, where 
$x_\alpha\in U(\n_-)\otimes U(\n_+)[\alpha]$. 
Let $\beta\in Q_+$ be a minimal element such that 
$x_\beta=\sum_i b_i\otimes a_i\ne 0$, where $\lbrace a_i\rbrace$ is a basis 
of $U(\n_+)[\beta]$. Let $\lbrace a_i^*\rbrace$ be the dual basis of $U(\n_-)[-\beta]$ with respect to $B_{\mu,\beta}$. Then 
$$(\phi\circ \xi)(x)a_j^*v=b_jv.$$ Since $b_j$ are not all zero, there exists $j$ such that 
$b_jv\ne 0$. It follows that $(\phi\circ \xi)(x)\ne 0$, as claimed. 

Thus, denoting the PBW filtration by $F_n$, we have 
$$
\dim F_n(U_{\chi_{\lambda+\rho}}/{\rm Ker}\phi)\ge \dim  F_n(U(\n_-)\otimes U(\n_+))\ge Cn^{\dim \g-r}
$$
for some $C>0$. On the other hand, assume that ${\rm Ker}\phi\ne 0$ and consider the nonzero ideal 
$$
{\rm gr}({\rm Ker}\phi)\subset (S\g)_0=\mathcal O(\mathcal N).
$$ 
This ideal contains a principal ideal $\mathcal O(\mathcal N)f$, 
where $f\in \mathcal O(\mathcal N)$ is a nonzero homogeneous 
element. Since $\mathcal O(\mathcal N)$ is a domain 
(Proposition \ref{irrre}(iii)), this ideal is a free 
$\mathcal O(\mathcal N)$-module generated by $f$. 
$$
\dim F_n(U_{\chi_{\lambda+\rho}}/{\rm Ker}\phi)=
\dim {\rm gr}_{\le n} (\O(\mathcal N)/{\rm gr}({\rm Ker}\phi))
$$
$$
\le\dim {\rm gr}_{\le n} (\O(\mathcal N)/\O(\mathcal N)f)\le C'n^{\dim \g-r-1}.
$$
for some $C'>0$. 
So we get that $Cn^{\dim \g-r}\le C'n^{\dim \g-r-1}$. 
This is a contradiction, so  
${\rm Ker}\phi=0$ and thus
$\phi$ is injective.
\end{proof} 

\begin{corollary}\label{iso} (The Duflo-Joseph theorem) $\phi$ is an isomorphism.
\end{corollary}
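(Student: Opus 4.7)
The plan is to close the gap from injectivity (already established in Proposition~\ref{injee}) to surjectivity by a matching-of-multiplicities argument under the adjoint action of $\g$.

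First I would observe that $\phi$ is a $\g$-bimodule homomorphism, so in particular it intertwines the adjoint actions on source and target; that is, $\phi$ is a homomorphism of $\g$-modules where $\g$ acts on $U_{\chi_{\lambda+\rho}}$ by $a\cdot u=[a,u]$ and on $\Hom_{\rm fin}(M_\lambda,M_\lambda)$ by $a\cdot T=[a,T]$. Both modules are admissible for this action: for the quotient $U_{\chi_{\lambda+\rho}}$ this is Corollary~\ref{Ul}, and for $\Hom_{\rm fin}(M_\lambda,M_\lambda)$ this is the proposition just before Proposition~\ref{tensv}. Hence both sides decompose under $\g_{\rm ad}$ as $\bigoplus_{V\in\mathrm{Irr}(\g)}V\otimes\Hom_\g(V,-)$, with finite-dimensional multiplicity spaces.

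Next, for each finite dimensional irreducible $\g$-module $V$, I would compute both multiplicity spaces and show they have the same dimension. By Corollary~\ref{Ul}, $\dim\Hom_\g(V,U_{\chi_{\lambda+\rho}})=\dim V[0]$. By Proposition~\ref{dimhom1}, $\dim\Hom_\g(V,\Hom_{\rm fin}(M_\lambda,M_\lambda))=\dim V[0]$ as well. Since $\phi$ preserves the isotypic decomposition, it restricts to an injective linear map between the $V$-isotypic components, each of total dimension $(\dim V)(\dim V[0])<\infty$. An injection between finite-dimensional vector spaces of equal dimension is an isomorphism, so $\phi$ is surjective on every isotypic component, hence surjective overall.

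Honestly there is no serious obstacle beyond what has already been done: the hard part was injectivity in Proposition~\ref{injee} (which used the growth estimate via the Shapovalov pairing, together with irreducibility of $\mathcal{N}$ to rule out ideals in the associated graded). With injectivity in hand, Proposition~\ref{dimhom1} and Corollary~\ref{Ul} provide \emph{matching} $K$-type multiplicities (here $K$-type means adjoint $\g$-type), and admissibility reduces the question to a finite-dimensional linear-algebra check on each isotypic component. The only small thing to verify is that $\phi$ really does factor through $U_{\chi_{\lambda+\rho}}$ with the indicated central character, which follows from Harish-Chandra's theorem part~(iii): $M_\lambda$ has infinitesimal character $\chi_{\lambda+\rho}$, so the central character of $\Hom_{\rm fin}(M_\lambda,M_\lambda)$ is $(\chi_{\lambda+\rho},\chi_{\lambda+\rho})$, justifying the factorization.
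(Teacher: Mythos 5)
Your proof is correct and follows essentially the same route as the paper: injectivity from Proposition~\ref{injee}, matching multiplicities $\dim V[0]$ on both sides (Kostant's theorem via Corollary~\ref{Ul} for the source, Proposition~\ref{dimhom1} for the target), and a finite-dimensional linear-algebra check on each $\g_{\rm ad}$-isotypic component. The only cosmetic difference is that you spell out admissibility and $\g_{\rm ad}$-equivariance of $\phi$ explicitly, which the paper leaves implicit in the notation $\phi_V$.
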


\begin{proof} Consider the restriction $\phi_V$ of $\phi$ to the $V^*$-isotypic component. Thus 
$$
\phi_V: {\rm Hom}_\g(V^*,(U_{\chi_{\lambda+\rho}})_{\rm ad})\to \Hom_\g(M_{\lambda},V\otimes M_\lambda).
$$
By Kostant's theorem, the source of this map has dimension $\dim V[0]$, while by  
Proposition \ref{dimhom1}, so does the target. Since by Proposition \ref{injee} $\phi_V$ is injective, it follows that $\phi_V$ is an isomorphism for all $V$, hence so is $\phi$. 
\end{proof} 

\begin{corollary}\label{natmap} If $V$ is a finite-dimensional $\g$-module then the natural map 
$V\otimes U_{\chi_{\lambda+\rho}}\to \Hom_{\rm fin}(M_\lambda,V\otimes M_\lambda)$ is 
an isomorphism.
\end{corollary}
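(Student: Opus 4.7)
The plan is to realize the natural map in question as the composition of two isomorphisms already at our disposal: the tensor-product identity of Proposition \ref{tensv} and the Duflo-Joseph isomorphism of Corollary \ref{iso}. No new hard work should be required; the content is bookkeeping about natural maps.

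First I would pin down the natural map explicitly. Writing $\pi: U_{\chi_{\lambda+\rho}} \to \End_{\Bbb C}(M_\lambda)$ for the action, the natural map $\Psi: V \otimes U_{\chi_{\lambda+\rho}} \to \Hom_{\rm fin}(M_\lambda, V \otimes M_\lambda)$ sends $v \otimes u$ to the operator $m \mapsto v \otimes \pi(u)m$. To see that this lands in $\Hom_{\rm fin}$, note that by Corollary \ref{iso} the operator $\pi(u)$ lies in $\Hom_{\rm fin}(M_\lambda, M_\lambda)$, so it generates a finite dimensional $\g$-submodule $W \subset \Hom(M_\lambda, M_\lambda)$ under the adjoint action; then $v \otimes \pi(u)$ generates a submodule of $V \otimes W$, which is finite dimensional.

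Next I would invoke Proposition \ref{tensv} to obtain an isomorphism of $\g$-bimodules
$$\Hom_{\rm fin}(M_\lambda, V \otimes M_\lambda) \;\cong\; V \otimes \Hom_{\rm fin}(M_\lambda, M_\lambda),$$
and then the Duflo-Joseph isomorphism (Corollary \ref{iso}) to identify $\Hom_{\rm fin}(M_\lambda, M_\lambda) \cong U_{\chi_{\lambda+\rho}}$. Tensoring with $V$ preserves isomorphisms, so composing gives
$$V \otimes U_{\chi_{\lambda+\rho}} \;\xrightarrow{\,1 \otimes \phi\,}\; V \otimes \Hom_{\rm fin}(M_\lambda, M_\lambda) \;\xrightarrow{\sim}\; \Hom_{\rm fin}(M_\lambda, V \otimes M_\lambda).$$

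Finally I would check that this composition equals $\Psi$. Unwinding definitions: the first arrow sends $v \otimes u$ to $v \otimes \pi(u)$ (where $\pi(u)$ is viewed as an element of $\Hom_{\rm fin}(M_\lambda, M_\lambda)$), and the second arrow sends $v \otimes T$ to the operator $m \mapsto v \otimes T(m)$. Composing gives $v \otimes u \mapsto (m \mapsto v \otimes \pi(u)m)$, which is precisely $\Psi$. Hence $\Psi$ is an isomorphism. The only even mildly nontrivial point is confirming that the Proposition \ref{tensv} isomorphism is the one sending $v \otimes T$ to $m \mapsto v \otimes T(m)$ (rather than some twisted version), but this is immediate from how that isomorphism is constructed. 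There is no genuine obstacle here; the statement is a formal corollary of the Duflo-Joseph theorem combined with the tensor identity.
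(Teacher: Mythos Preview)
Your proof is correct and follows exactly the paper's approach: the paper's proof is a one-liner citing Proposition \ref{tensv} and Corollary \ref{iso}, and you have simply unwound that composition carefully and verified it agrees with the natural map.
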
 

\begin{proof} This follows from Proposition \ref{tensv} and Corollary \ref{iso}. 
\end{proof} 

\subsection{Infinitesimal characters of Harish-Chandra bimodules} 

\begin{corollary}\label{cechar} Let $V$ be a finite-dimensional $\g$-module and $\lambda\in \h^*$. 

(i) The left infinitesimal characters occurring in $V\otimes U_{\chi_{\lambda}}$ 
are $\chi_{\lambda+\nu}$ where $\nu$ runs over weights of $V$. 

(ii) If $M$ is a $\g$-module with infinitesimal character $\chi_\lambda$ then 
the infinitesimal characters occurring in $V\otimes M$ 
are among $\chi_{\lambda+\nu}$ where $\nu$ runs over weights of $V$. 

(iii) If $M$ is a nonzero Harish-Chandra $\g$-bimodule with infinitesimal character $(\chi_\lambda,\chi_\mu)$
then there is $w\in W$ such that $w\lambda-\mu\in P$.  
\end{corollary}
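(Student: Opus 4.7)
The plan is to establish (i), (ii), (iii) in sequence, with Corollary \ref{natmap} (the tensor-product refinement of the Duflo--Joseph theorem) as the central input. Recall that $\chi_\alpha = \chi_\beta$ precisely when $\alpha + \rho$ and $\beta + \rho$ lie in one $W$-orbit.

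For (i), I would use the isomorphism $V \otimes U_{\chi_{\lambda+\rho}} \cong \Hom_{\rm fin}(M_\lambda, V \otimes M_\lambda)$ of Corollary \ref{natmap}, under which the left $\g$-action on the source corresponds to post-composition with the $\g$-action on $V\otimes M_\lambda$. The key classical input is that $V \otimes M_\lambda$ admits a $\g$-module filtration whose successive quotients are the Verma modules $M_{\lambda+\nu}$ as $\nu$ ranges over the weights of $V$ (with multiplicity, ordered so that higher weights come first): for a weight basis $v_1, \dots, v_n$ of $V$ in decreasing weight order, the vector $v_i \otimes v_\lambda$ becomes singular of weight $\lambda + \nu_i$ modulo the submodule generated by $v_1\otimes v_\lambda, \dots, v_{i-1}\otimes v_\lambda$, and it generates a copy of $M_{\lambda+\nu_i}$ by a PBW and character count. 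Since each $M_{\lambda+\nu}$ has central character $\chi_{\lambda+\nu+\rho}$, every $z \in Z(\g)$ is annihilated on $V \otimes M_\lambda$ by the element $\prod_\nu (z - \chi_{\lambda+\rho+\nu}(z))$ of $U(\g)$; the same element kills the left action on $\Hom_{\rm fin}(M_\lambda, V\otimes M_\lambda) \cong V \otimes U_{\chi_{\lambda+\rho}}$, proving (i) (after substituting $\lambda+\rho$ for $\lambda$). For (ii), any $\g$-module $M$ with central character $\chi_\lambda$ is a quotient of a direct sum of copies of $U_{\chi_\lambda}$ as a left $\g$-module, so $V \otimes M$ is a quotient of the corresponding sum of $V \otimes U_{\chi_\lambda}$'s and (i) applies verbatim.

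For (iii), first reduce to $M$ simple: by the spectral decomposition under $Z(\g)\otimes Z(\g)$, any nonzero simple subquotient of $M$ still has literal central characters $(\chi_\lambda, \chi_\mu)$ via Dixmier's lemma. Corollary \ref{quot}(i) then presents $M$ as a quotient of $V \otimes U_{\chi_\mu}$ for some finite-dimensional irreducible $\g$-module $V$; hence $\chi_\lambda$ occurs among the left central characters of $V \otimes U_{\chi_\mu}$, and (i) produces a $\nu \in P(V) \subset P$ with $\chi_\lambda = \chi_{\mu+\nu}$. Unpacking via the shifted Harish-Chandra action, $\lambda + \rho = w(\mu + \nu + \rho)$ for some $w \in W$, whence $w^{-1}\lambda - \mu = \nu + (\rho - w^{-1}\rho) \in P$ because $\rho \in P$ and $P$ is $W$-stable; taking $w' := w^{-1}$ yields $w'\lambda - \mu \in P$. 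The main non-routine step is the tensor-product filtration of $V \otimes M_\lambda$ used in (i), which I expect to be the principal technical point, though it is a well-known construction whose verification reduces to a PBW and character argument.
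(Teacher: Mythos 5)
Your proposal follows the same route as the paper's proof: (i) via Corollary \ref{natmap} and the Verma filtration of $V\otimes M_{\lambda-\rho}$ (which is Corollary \ref{exti3}(i) in the paper; you re-derive it from scratch by the singular-vector/PBW argument, which is fine), (ii) is reduced to (i), and (iii) via Corollary \ref{quot}(i) with the reduction to $M$ simple. All the details you fill in --- including the final unpacking of $\chi_\lambda=\chi_{\mu+\nu}$ using $W$-invariance of $P$ and $\rho\in P$ --- are correct.

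One small point worth tightening: the statement of (i) asserts that the left central characters occurring in $V\otimes U_{\chi_\lambda}$ \emph{are} the $\chi_{\lambda+\nu}$, $\nu\in P(V)$, i.e., equality of sets; contrast with the word ``among'' in (ii). Your polynomial-annihilator argument shows only the containment that every occurring left central character is among the $\chi_{\lambda+\nu}$. For the reverse containment, note that the generalized-eigenspace projectors $\Pi_{\theta'}$ commute with $(V\otimes U_{\chi_\lambda})\otimes_{U_{\chi_\lambda}}(-)$ because $Z(\g)$ is central; since $(V\otimes U_{\chi_\lambda})\otimes_{U_{\chi_\lambda}}M_{\lambda-\rho}\cong V\otimes M_{\lambda-\rho}$, the vanishing $\Pi_{\theta'}(V\otimes U_{\chi_\lambda})=0$ would force $\Pi_{\theta'}(V\otimes M_{\lambda-\rho})=0$, which contradicts the Verma filtration when $\theta'=\chi_{\lambda+\nu}$ with $\dim V[\nu]>0$. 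This missing direction does not affect (ii), (iii), or the downstream corollaries, all of which use only the containment you proved.
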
 

\begin{proof} (i) This follows from Corollary \ref{natmap}.  

(ii) follows from (i) and the isomorphism 
$$
V\otimes M\cong (V\otimes U_{\chi_\lambda})\otimes_{U_{\chi_\lambda}}M.
$$

(iii) This follows from (i) since by Corollary \ref{quot} any irreducible Harish-Chandra bimodule is a quotient of $V\otimes U_{\chi_\mu}$ for some $\mu,V$. 
\end{proof} 

Let $HC_{\theta,\chi}(\g)$ be the category of Harish-Chandra $\g$-bimodules 
with generalized infinitesimal character  $(\theta,\chi)$.

\begin{corollary} The category of Harish-Chandra $\g$-bimodules $HC(\g)$ has a decomposition according to generalized infinitesimal characters: 
$$
HC(\g)=\oplus_{\gamma,\lambda} HC_{\chi_{\lambda+\gamma},\chi_\lambda}(\g),
$$
where $\gamma\in P_+$ and $\lambda\in \h^*/{\rm Stab}(\gamma)$ (here ${\rm Stab}(\gamma)$ is the stabilizer of $\gamma$ in $W$). In particular, 
if $(\theta,\chi)$ cannot be written as $(\chi_{\lambda+\gamma},\chi_{\lambda})$, $\lambda\in \h^*$, $\gamma\in P_+$, then 
$HC_{\theta,\chi}(\g)=0$. 
\end{corollary}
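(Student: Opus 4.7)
The plan is to combine the $Z(\g)$-bimodule action on Harish-Chandra bimodules with Corollary \ref{cechar}(iii). First I would prove that for any HC bimodule $M$, both $L_z$ and $R_z$ (left and right multiplication by $z\in Z(\g)$) act locally finitely, in fact factor through finite-dimensional quotients. The key observation is that $L_z$ and $R_z$ commute with $\ad a$ for every $a\in \g$, so they preserve every $\g_{\mathrm{ad}}$-isotypic component of $M$, and these are finite-dimensional by admissibility. Picking a finite-dimensional $\g_{\mathrm{ad}}$-stable subspace $E$ that generates $M$ as a bimodule, the $L_z$- and $R_z$-orbits of $E$ stay in the finite-dimensional sum of isotypic components of simples occurring in $E$, so $L_z, R_z$ satisfy finite minimal polynomials on $E$. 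Since $L_z$ commutes with right-multiplication by $U(\g)$ and $R_z$ with left-multiplication, those polynomials annihilate $L_z, R_z$ on all of $M=U(\g)\cdot E\cdot U(\g)$. The two commuting actions of $Z(\g)$ thus factor through a finite-dimensional commutative algebra, and generalized eigenspace decomposition yields the direct sum $HC(\g) = \bigoplus_{(\theta,\chi)} HC_{\theta,\chi}(\g)$.

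Next, to identify which pairs $(\theta,\chi)$ actually occur, I would apply Corollary \ref{cechar}(iii) to a simple subquotient of any nonzero $M\in HC_{\theta,\chi}(\g)$: this yields representatives $\tilde\nu,\tilde\mu$ of the shifted-$W$-orbits corresponding to $\theta,\chi$ with $\tilde\nu-\tilde\mu\in P$ (after a harmless $W$-replacement of $\tilde\nu$, which does not change $\theta$). Let $\gamma\in P_+$ be the unique dominant element in the standard $W$-orbit of $\tilde\nu-\tilde\mu$; writing $\gamma = u(\tilde\nu-\tilde\mu)$ and setting $\lambda := u\bullet\tilde\mu$, a direct check gives $(\chi_{\lambda+\gamma},\chi_\lambda)=(\theta,\chi)$, establishing the required form. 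Well-definedness of the parametrization on $\h^*/\Stab(\gamma)$ is then immediate: for $w\in \Stab(\gamma)$, one has $\chi_{w\bullet\lambda}=\chi_\lambda$ and $\chi_{w\bullet\lambda+\gamma}=\chi_{w\bullet\lambda+w\gamma}=\chi_{w\bullet(\lambda+\gamma)}=\chi_{\lambda+\gamma}$.

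The injectivity of the parametrization is the main obstacle. The cleanest approach is to pass to $\rho$-shifted coordinates $\nu_0=\tilde\nu+\rho,\mu_0=\tilde\mu+\rho$, in which shifted $W$-orbits become standard ones and $\nu_0-\mu_0\in P$. Then $\gamma\in P_+$ is recovered as the unique $P_+$-element in the standard $W$-orbit of $\nu_0-\mu_0\in P$, a datum intrinsic to the pair $(\theta,\chi)$. Once $\gamma$ is fixed, the residual freedom in $\lambda$ is exactly $\Stab(\gamma)$ acting via $\bullet$: if $\lambda'=w\bullet\lambda$ gives the same $\chi_{\lambda+\gamma}$, then comparing $\rho$-shifted representatives one sees $w\gamma=\gamma$. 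The delicate point is verifying that $\gamma$ really is independent of the choice of representatives $(\nu_0,\mu_0)$ of the two standard $W$-orbits; this reduces to the observation that the collection $\{w_1\nu_0-w_2\mu_0:w_1,w_2\in W\}\cap P$ lies in a single coset of $Q$ whose unique $P_+$-representative is $\gamma$, so the combinatorial ambiguity in lifting the two orbits separately does not affect $\gamma$.
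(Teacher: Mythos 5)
Your first two steps are sound and coincide with the paper's actual (terse) proof, which just invokes Exercise \ref{finquo1} (local finiteness of the $Z(\g)$-action on admissible modules, here applied to both $L_z$ and $R_z$, each factoring through a finite-dimensional quotient since the finitely many generating $\g_{\rm ad}$-isotypic components are finite-dimensional and $L_z,R_z$-stable) together with Corollary \ref{cechar}(iii). Your direct check that $\lambda:=u\bullet\tilde\mu$ and $\gamma:=u(\tilde\nu-\tilde\mu)$ satisfy $\lambda+\gamma=u\bullet\tilde\nu$ is correct, so every occurring pair $(\theta,\chi)$ is of the form $(\chi_{\lambda+\gamma},\chi_\lambda)$. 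That is all the paper proves, and all that the ``in particular'' sentence asks for.

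Your third paragraph, however, contains a genuine error. The assertion that $\{w_1\nu_0-w_2\mu_0:w_1,w_2\in W\}\cap P$ lies in a single $Q$-coset is false, and consequently the dominant weight $\gamma$ is \emph{not} intrinsically attached to the pair $(\theta,\chi)$. For $\g=\mathfrak{sl}_2$, identify $\h^*\cong\Bbb C$ with $\rho=1$, $P=\Bbb Z$, $Q=2\Bbb Z$. Take $\nu_0=3/2$ and $\mu_0=1/2$; then the set of differences intersected with $P$ is $\{1,2,-1,-2\}$, which meets both $Q$-cosets. Concretely, the distinct data $(\gamma,\lambda)=(1,-1/2)$ and $(\gamma',\lambda')=(2,-3/2)$, each with trivial $\Stab$, produce the same pair $(\theta,\chi)=(\chi_{1/2},\chi_{-1/2})$: indeed $-1/2+\rho=1/2$ and $-3/2+\rho=-1/2$ are $W$-conjugate, and $(-1/2)+1=(-3/2)+2=1/2$. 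So the map $(\gamma,\lambda)\mapsto(\chi_{\lambda+\gamma},\chi_\lambda)$ is genuinely non-injective, and no amount of tightening can rescue your argument --- there is simply nothing to prove there. (A coset of $Q$ also typically has many dominant representatives, so the phrase ``unique $P_+$-representative'' would not be meaningful even if the single-coset claim held.) The corollary's direct sum should be read as running over the distinct generalized central characters $(\theta,\chi)$, each of which admits \emph{some} (possibly several) expression $(\chi_{\lambda+\gamma},\chi_\lambda)$; neither the paper nor the mathematics requires the labeling to be a bijection. Drop the third paragraph and your proof matches the intended one.
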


\begin{proof} This follows from Exercise \ref{finquo1} and Corollary \ref{cechar}.
\end{proof} 

\section{\bf Principal series representations} 

\subsection{Residual finiteness of $U(\g)$} 

\begin{proposition}\label{resfi} The homomorphism $\phi: U(\g)\to \prod_{\lambda\in P_+}\End(L_\lambda)$ is injective. 
\end{proposition}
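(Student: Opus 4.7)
The plan is to embed $U(\g)$ into an algebra where injectivity of the action on $\bigoplus_{\lambda \in P_+} L_\lambda$ becomes transparent, by exploiting the Peter-Weyl decomposition of the coordinate ring of the corresponding group. Let $G$ be the connected, simply connected complex semisimple Lie group with $\Lie(G) = \g$, regarded as an affine algebraic group. By the algebraic Peter-Weyl theorem, its ring of regular functions decomposes as a $G \times G$-bimodule
$$
\Bbb C[G] \;\cong\; \bigoplus_{\lambda \in P_+} L_\lambda^* \otimes L_\lambda,
$$
where $\xi \otimes v$ corresponds to the matrix coefficient $c_{\xi, v}(g) := \xi(g \cdot v)$.

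First I would observe that $U(\g)$ acts on $\Bbb C[G]$ by left-invariant differential operators: for $X \in \g$, one sets $(X \cdot f)(g) = \tfrac{d}{dt}\big|_{t=0} f(g \exp(tX))$, extended as an algebra homomorphism to $U(\g)$. A direct computation shows that on matrix coefficients this action is given by $u \cdot c_{\xi, v} = c_{\xi, \phi_\lambda(u) v}$, i.e.\ $u$ acts only through the $L_\lambda$-factor. Consequently, if $u \in \bigcap_{\lambda \in P_+} \ker \phi_\lambda$, then $u$ annihilates every matrix coefficient and therefore the entire ring $\Bbb C[G]$.

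The second step is to show that the resulting action map $U(\g) \to \End_\Bbb C(\Bbb C[G])$ is injective; combined with the previous step this forces $u = 0$. By the PBW theorem the natural map from $U(\g)$ to the algebra of left-invariant differential operators on $G$ is an isomorphism. It therefore suffices to verify that a nonzero left-invariant differential operator $D$ acts nontrivially on $\Bbb C[G]$. Such a $D$ is determined by the distribution $f \mapsto (Df)(e)$ at the identity, and via exponential coordinates the pairing $U(\g) \times \Bbb C[G] \to \Bbb C$, $(u, f) \mapsto (u f)(e)$, factors through the compatible jet/PBW identification $\Bbb C[G]/\mathfrak m_e^{N+1} \cong S^{\le N}\g^*$, on which PBW monomials of degree $\le N$ act as linearly independent functionals. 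Hence the pairing is non-degenerate in the first argument, so $u \neq 0$ acts nontrivially on some $f \in \Bbb C[G]$.

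The main obstacle is Step 2: rigorously justifying faithfulness of the $U(\g)$-action on $\Bbb C[G]$. Morally this is transparent, but it requires carefully matching the algebraic PBW filtration on $U(\g)$ with the $\mathfrak m_e$-adic filtration on $\Bbb C[G]$ (using that the coordinate ring of the irreducible variety $G$ is detected, to any prescribed order, by Taylor expansion at $e$). Once this compatibility is in hand the conclusion is immediate from Peter-Weyl.
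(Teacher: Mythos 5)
Your proposal is correct and follows essentially the same route as the paper: both arguments use the algebraic Peter--Weyl decomposition of $\Bbb C[G]$ and the identification of $U(\g)$ with (left- or right-) invariant differential operators on $G$ via the PBW theorem. The only difference is cosmetic: the paper states the final step (``this means the right-invariant differential operator on $G$ defined by $x$ is zero, i.e.\ $x=0$'') without elaboration, while you spell out explicitly why the $U(\g)$-action on $\Bbb C[G]$ by invariant differential operators is faithful, via the pairing $(u,f)\mapsto (uf)(e)$ and the identification of jets at $e$ with $S^{\le N}\g^*$. That added justification is sound (indeed, once one recalls that a differential operator on the affine variety $G$ is \emph{by definition} a linear operator on $\O(G)=\Bbb C[G]$, the only content is injectivity of $U(\g)\hookrightarrow D(G)$, which your filtration argument establishes), so there is no gap.
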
 

\begin{proof} Let $x\in {\rm Ker}\phi$, and $G$ be the simply connected group with Lie algebra $\g$. Then by the Peter-Weyl theorem, 
$x$ acts by zero on $\mathcal O(G):=\oplus_{\lambda\in P_+}L_\lambda\otimes L_\lambda^*$ (where $x$ acts only on the first component). This means that 
the right-invariant differential operator on $G$ defined by $x$ is zero, i.e., $x=0$. 
\end{proof} 

\begin{exercise} Give another proof of Proposition \ref{resfi} which does not use the Peter-Weyl theorem. Take $x\in {\rm Ker}\phi$. 

(i) Show by interpolation that $x$ acts by zero in every Verma module $M_\lambda$. 

(ii) Show that if $x\in U(\g)$ acts by zero in $M_\lambda$ for all $\lambda$ then $x=0$. 
\end{exercise} 

Note that Proposition \ref{resfi} implies that any $z\in U(\g)$ which acts by a scalar in all $L_\lambda$ 
belongs to $Z(\g)$. Indeed, in this case for any $x\in U(\g)$, $[x,z]$ acts by zero in $L_\lambda$, hence $[x,z]=0$. 

\subsection{Principal series} Let $\lambda,\mu\in \h^*$, $\lambda-\mu\in P$. Define the {\bf principal series} bimodule
$$
\bold M(\lambda,\mu):=\Hom_{\rm fin}(M_{\lambda-\rho},M_{\mu-\rho}^\vee)\in HC_{\chi_{\mu},\chi_{\lambda}}(\g).
$$ 
Then we have 
\begin{equation}\label{princer}
\bold M(\lambda,\mu)=\oplus_{V\in {\rm irr}(\g)}V\otimes V^*[\lambda-\mu].
\end{equation} 
The bimodule $\bold M(\lambda,\mu)$ represents a certain functor that has a nice independent description. 

\begin{proposition} Let $X\in HC(\g)$. Then 
$$
\Hom_{\g-{\rm bimod}}(X,\bold M(\lambda,\mu))\cong \Hom_{(\mathfrak{b}_-,\mathfrak{b}_+)-{\rm bimod}}(X\otimes \Bbb C_{\lambda-\rho},\Bbb C_{\mu-\rho}).
$$
where the $(\mathfrak b_-,\mathfrak b_+)$-bimodule structure on $\Bbb C_{\mu-\rho}$ is defined by the character $(\mu-\rho,0)$ and on $\Bbb C_{\lambda-\rho}$ by the character $(0,\lambda-\rho)$. 
\end{proposition}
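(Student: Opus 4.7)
The plan is to realize the asserted isomorphism as the composition of two standard Frobenius-style adjunctions. Since $X$ is Harish--Chandra, every bimodule homomorphism $\phi:X\to \Hom_\Bbb C(M_{\lambda-\rho},M_{\mu-\rho}^\vee)$ has locally ad-finite image and therefore automatically lands in $\Hom_{\rm fin}=\bold M(\lambda,\mu)$. The tensor-Hom adjunction (using the right $\g$-action on $X$ and the left $\g$-action on $M_{\lambda-\rho}$), combined with the induced-module identity $M_{\lambda-\rho}=U(\g)\otimes_{U(\b_+)}\Bbb C_{\lambda-\rho}$, reduces the problem to
$$
\Hom_{\g\text{-bim}}(X,\bold M(\lambda,\mu))\;\cong\; \Hom_\g\bigl(X\otimes_{U(\b_+)}\Bbb C_{\lambda-\rho},\,M_{\mu-\rho}^\vee\bigr).
$$

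Next I would apply the Frobenius reciprocity for the dual Verma,
$$
\Hom_\g(Z,M_\nu^\vee)\;\cong\;\Hom_{\b_-}(Z,\Bbb C_\nu),
$$
where $\b_-$ acts on $\Bbb C_\nu$ by the character $\nu$ on $\h$ and zero on $\n_-$. The forward direction composes with the projection $\pi:M_\nu^\vee\twoheadrightarrow \Bbb C_\nu$ onto the top-weight quotient; this $\pi$ is $\b_-$-equivariant because $\n_-$ strictly lowers weights out of $\nu$. The inverse uses the restricted-dual / $\mathcal O$-coinduction description $M_\nu^\vee=(\overline M_{-\nu})^*$: a $\b_-$-map $f:Z\to\Bbb C_\nu$ extends via $z\mapsto(u\mapsto f(uz))$ to a $\g$-map into the ``big'' coinduction $\Hom_{U(\b_-)}(U(\g),\Bbb C_\nu)$, and weight-finiteness (inherited from admissibility of $X$) forces the image to lie in the weight-finite piece $M_\nu^\vee$. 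Applied with $\nu=\mu-\rho$ and $Z=X\otimes_{U(\b_+)}\Bbb C_{\lambda-\rho}$ this produces $\Hom_{\b_-}(X\otimes_{U(\b_+)}\Bbb C_{\lambda-\rho},\,\Bbb C_{\mu-\rho})$.

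Finally I would unravel: a $\b_-$-linear map out of the balanced tensor is the same as a linear functional $\tilde f:X\to\Bbb C$ satisfying $\tilde f(ax)=(\mu-\rho)(a)\tilde f(x)$ for $a\in\b_-$ and $\tilde f(xb)=(\lambda-\rho)(b)\tilde f(x)$ for $b\in\b_+$ (the latter encoding the $U(\b_+)$-balancing), and this is precisely the datum of a $(\b_-,\b_+)$-bimodule map $X\otimes\Bbb C_{\lambda-\rho}\to \Bbb C_{\mu-\rho}$ in the conventions of the proposition. Concretely the whole composite assigns to $\phi\in\Hom_{\g\text{-bim}}(X,\bold M(\lambda,\mu))$ the matrix coefficient
$$
\tilde f(x):=\langle v_{\mu-\rho}^*,\,\phi(x)(v_{\lambda-\rho})\rangle,
$$
with $v_{\mu-\rho}^*\in M_{\mu-\rho}^\vee[\mu-\rho]$ the top-weight covector; the two character equivariances then drop out directly from the left and right bimodule-linearity of $\phi$.

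The main obstacle is the dual-Verma adjunction in the middle step: the tempting guess uses $\b_+$ (reflecting the $\b_+$-singular top-weight vector of $M_\nu^\vee$), but this fails. For $\g=\sl_2$, $\nu=0$ one has $\Hom_\g(M_0^\vee,M_0^\vee)=\Bbb C$, while $\Hom_{\b_+}(M_0^\vee,\Bbb C_0)=0$ --- the latter vanishes because the non-split sequence $0\to L_0\to M_0^\vee\to M_{-2}\to 0$ places $e\cdot M_0^\vee[-2]$ onto the whole top weight space. One must instead use the opposite Borel $\b_-$ via the coinduction picture, and the genuinely nontrivial check is that the coinduction extension of a $\b_-$-map actually lands in the weight-finite piece $M_\nu^\vee$ rather than in the ambient infinite-dimensional coinduction; this is precisely where the admissibility of $X$ is used.
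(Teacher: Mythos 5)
Your proof is correct and follows essentially the same route as the paper: tensor--Hom adjunction, then two applications of Frobenius reciprocity (for the induced Verma on the $\b_+$ side, and for the coinduced module $\Hom_{U(\b_-)}(U(\g),\Bbb C_{\mu-\rho})$ on the $\b_-$ side), with the same key observation that the image of an $\h$-diagonalizable source lands in $M_{\mu-\rho}^\vee$ rather than in the full coinduction. One small attribution slip: the $\h$-diagonalizability of $X\otimes\Bbb C_{\lambda-\rho}$ follows from the local $\g_{\rm ad}$-finiteness (hence $\h_{\rm ad}$-semisimplicity) built into the definition of a Harish--Chandra bimodule, not from admissibility of $X$.
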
 

\begin{proof} We have 
$$
\Hom_{\g-{\rm bimod}}(X,\bold M(\lambda,\mu))=\Hom_{\g-{\rm bimod}}(X\otimes M_{\lambda-\rho},M_{\mu-\rho}^\vee),
$$
where the right copy of $\g$ acts trivially on $M_{\mu-\rho}^\vee$ and the left copy 
of $\g$ acts trivially on $M_{\lambda-\rho}$. Frobenius reciprocity then yields 
$$
\Hom_{\g-{\rm bimod}}(X,\bold M(\lambda,\mu))=\Hom_{(\b_+,\g)-{\rm bimod}}(X\otimes \Bbb C_{\lambda-\rho},M_{\mu-\rho}^\vee).
$$
Since $X\otimes \Bbb C_{\lambda-\rho}$ is diagonalizable under the adjoint action of $\h$, 
on the right hand side we may replace $M_{\mu-\rho}^\vee$ with 
its completion $\widehat M_{\mu-\rho}^\vee$ (the Cartesian product of all weight spaces). 
Then applying Frobenius reciprocity again, we get the desired statement.  
\end{proof} 

Let us give an explicit realization of $\bold M(\lambda,\mu)$. 
By \eqref{princer}, $\bold M(\lambda,\mu)$ is spanned by elements 
$\Phi_{v,\ell}: M_{\lambda-\rho}\to M_{\mu-\rho}^\vee$, $v\in V,\ell\in V^*[\lambda-\mu]$, where 
$$
\Phi_{v,\ell}u:=(v\otimes 1,\Phi_\ell u), 
$$
and $\Phi_\ell: M_{\lambda-\rho}\to V^*\otimes M_{\mu-\rho}^\vee$ is the homomorphism 
for which $\langle \Phi_\ell\rangle=\ell$, for finite-dimensional 
$\g$-modules $V$. Moreover these elements easily express in terms of such elements for simple $V$.  Thus for any $V$ and $y\in V\otimes V^*[\lambda-\mu]$ 
we can define the linear map $\Phi_V(y): M_{\lambda-\rho}\to M_{\mu-\rho}^\vee$ 
which depends linearly on $y$ with $\Phi_V(v\otimes \ell)=\Phi_{v,\ell}$, and every element of $\bold M(\lambda,\mu)$ is of this form.  

\begin{proposition}\label{rightac} The right action of $\g$ on $\bold M(\lambda,\mu)$ 
is given by the formula 
$$
\Phi_V(v\otimes \ell)\cdot b=\Phi_{\g\otimes V}([b\otimes v]\bigotimes [(\lambda-\rho)\otimes \ell+\sum_{\alpha\in R_+}f_\alpha^*\otimes f_\alpha \ell]).
$$
\end{proposition}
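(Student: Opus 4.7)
The plan is to construct a single $\g$-module homomorphism $\Theta\colon M_{\lambda-\rho}\to \g^*\otimes V^*\otimes M_{\mu-\rho}^\vee$ that encodes the right action of every $b\in\g$ simultaneously, and then identify $\Theta$ with $\Phi_{y_{V^*}}$ (where $y_{V^*}:=(\lambda-\rho)\otimes\ell+\sum_{\alpha\in R_+}f_\alpha^*\otimes f_\alpha\ell$) via the uniqueness of intertwiners with prescribed expectation value (Exercise \ref{intertw}(ii) applied to the finite-dimensional module $\g\otimes V$). Once this is done, the formula of the proposition is immediate from the definition $\Phi_{\g\otimes V}(w\otimes\eta)=(w\otimes\id)\circ\Phi_\eta$.

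I will define $\Theta(u):=\sum_i x_i^*\otimes (x_i\cdot\Phi_\ell(u))$, where $\{x_i\}$ is any basis of $\g$, $\{x_i^*\}$ its dual basis, and $\g$ acts diagonally on $V^*\otimes M_{\mu-\rho}^\vee$. Then $\Theta$ is a $\g$-homomorphism because the canonical element $\sum_i x_i^*\otimes x_i\in \g^*\otimes\g$ is $\g$-invariant (being the identity map $\g\to\g$ in $\End\g$) and $\Phi_\ell$ itself is a $\g$-hom. Moreover,
$$((b\otimes v)\otimes\id)\circ\Theta(u)=(v\otimes\id)\Bigl(\sum_i x_i^*(b)\,x_i\cdot\Phi_\ell(u)\Bigr)=(v\otimes\id)(b\cdot\Phi_\ell(u))=(v\otimes\id)(\Phi_\ell(bu)),$$
which equals $(\Phi_V(v\otimes\ell)\cdot b)(u)$. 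Hence it remains to verify $\langle\Theta\rangle=y_{V^*}$.

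To compute $\langle\Theta\rangle$, the coefficient of $v_{\mu-\rho}^\vee$ in $\Theta(v_{\lambda-\rho})$, I write $\Phi_\ell(v_{\lambda-\rho})=\ell\otimes v_{\mu-\rho}^\vee+R$, where $R\in V^*\otimes M_{\mu-\rho}^\vee$ has its second tensor factor of weight strictly below $\mu-\rho$, and split the basis along $\g=\h\oplus\n_+\oplus\n_-$. For $h\in\h$, the element $h$ acts on the weight-$(\lambda-\rho)$ vector $\Phi_\ell(v_{\lambda-\rho})$ by the scalar $(\lambda-\rho)(h)$; summing against the dual basis of $\h^*$ contributes $(\lambda-\rho)\otimes\ell$. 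For $e_\alpha\in\n_+$, the contribution vanishes since $\Phi_\ell(v_{\lambda-\rho})$ is singular. For $f_\alpha\in\n_-$, we have $f_\alpha\cdot(\ell\otimes v_{\mu-\rho}^\vee)=(f_\alpha\ell)\otimes v_{\mu-\rho}^\vee+\ell\otimes f_\alpha v_{\mu-\rho}^\vee$, and only the first summand contributes to the $v_{\mu-\rho}^\vee$-coefficient, while the second as well as $f_\alpha\cdot R$ lie in $V^*\otimes M_{\mu-\rho}^\vee[<\mu-\rho]$. This yields the contribution $\sum_{\alpha\in R_+}f_\alpha^*\otimes f_\alpha\ell$, so the total is $y_{V^*}$ as required. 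The only mild bookkeeping point---weight-tracking in the $\n_-$ case to exclude hidden contributions from $R$---is immediate because $f_\alpha$ strictly lowers $M_{\mu-\rho}^\vee$-weights.
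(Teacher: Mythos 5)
Your proof is correct and takes essentially the same approach as the paper: your $\Theta$ is exactly the paper's $\Psi_\ell=\sum_i b_i^*\otimes\Phi_\ell b_i$ (since $\Phi_\ell$ is a $\g$-homomorphism, post-composing with the diagonal $x_i$-action equals pre-composing with $x_i$), and you compute its expectation value by the same weight-by-weight split across $\h\oplus\n_+\oplus\n_-$. The only difference is that you spell out the verification that $\Theta$ is a $\g$-homomorphism and the weight bookkeeping behind the three formulas $\langle\Phi_\ell h\rangle,\langle\Phi_\ell e_\alpha\rangle,\langle\Phi_\ell f_\alpha\rangle$, which the paper leaves implicit.
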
 

\begin{proof} Consider the homomorphism
$$
\Psi_\ell:=\sum_i b_i^*\otimes \Phi_\ell b_i: M_{\lambda-\rho}\to \g^*\otimes V^*\otimes M_{\mu-\rho}^\vee,
$$ where 
$\lbrace b_i\rbrace$ is a basis of $\g$ and $\lbrace b_i^*\rbrace$ the dual basis of $\g^*$. 
We have 
$$
\langle \Psi_\ell\rangle=\sum b_i^*\otimes \langle \Phi_\ell b_i\rangle\in \g^*\otimes V^*,
$$
where the expectation value map $\langle,\rangle$ is defined in Exercise \ref{intertw}. 
But
$$
\langle \Phi_\ell h\rangle=(\lambda-\rho,h)\ell,\ \langle \Phi_\ell e_\alpha\rangle=0,\ 
\langle \Phi_\ell f_\alpha\rangle=f_\alpha \ell 
$$
for $\alpha\in R_+$. 
Thus we get 
$$
\langle\Psi_\ell\rangle=(\lambda-\rho)\otimes \ell+\sum_{\alpha\in R_+}f_\alpha^*\otimes f_\alpha \ell,
$$
hence
$$
\Psi_\ell=\Phi_{(\lambda-\rho)\otimes \ell+\sum_{\alpha\in R_+}f_\alpha^*\otimes f_\alpha \ell}.
$$
This implies the statement since 
$$
(\Phi_V(v\otimes \ell)\cdot b)u=(v\otimes 1,\Phi_\ell bu)=(b\otimes v\otimes 1,\Psi_\ell u),\ u\in M_{\lambda-\rho}.
$$
\end{proof} 

This leads to a geometric construction  of the principal series.
Namely, let $G$ be the simply connected group with Lie algebra $\g$, $B=B_+$ be the Borel subgroup of $G$ whose Lie algebra is $\mathfrak{b}_+$ and $H=B/[B,B]$ the corresponding torus. Fix $\lambda,\mu\in \h^*$ with $\lambda-\mu\in P$. 
Define a real-analytic character 
$$
\psi_{\lambda,\mu}: H\to \Bbb C^\times
$$
by 
$$
\psi_{\lambda,\mu}(x):=\lambda(x)\mu(x^*)^{-1},
$$
where $x^*$ is the image of $x$ under the compact antiholomorphic 
involution $\sigma : H\to H$ (i.e., such that $H^\sigma=H_c$, the compact real form of $H$). For example, for $G=SL_2$, $\lambda,\mu$ are complex numbers with $\lambda-\mu$ an integer and $x^*=\overline x^{-1}$, so 
$$
\psi_{\lambda,\mu}(x)=x^\lambda \overline x^{\mu}=x^{\lambda-\mu}|x|^{2\mu}.
$$
Define $C^\infty_{\lambda,\mu}(G/B)$ to be the space of smooth functions on $G$ satisfying 
$$
F(gb)=F(g)\psi_{\lambda,\mu}(b). 
$$
This is naturally an admissible representation of 
$G$: we have $G/B=G_c/H_c$, so the multiplicity space of $V$ 
in $C^\infty_{\lambda,\mu}(G/B)$ is $V^*[\lambda-\mu]$; 
namely, $C^\infty_{\lambda,\mu}(G/B)^{\rm fin}=
C^\infty_{\lambda-\mu}(G_c/H_c)^{\rm fin}$, the space of  
$G_c$-finite functions on $G_c$ (under left translations) 
such that 
$$
F(gx)=F(g)\lambda(x)\mu(x)^{-1}
$$ 
for $x\in H_c$. 

\begin{proposition}\label{prinser1}  
We have an isomorphism 
$$
\xi: \bold M(\lambda,\mu)\to C^\infty_{\lambda-\rho,\mu-\rho}(G/B)^{\rm fin}
$$ 
as Harish-Chandra bimodules. 
Namely, $\xi(\Phi_{v,\ell})$ is the matrix 
coefficient $\psi_{v,\ell}(g):=(v,g\ell)$, $g\in G_c$. 
\end{proposition}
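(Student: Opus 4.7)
The plan is to verify the proposition in three stages. First I would check that $\xi$ is well-defined. For $\ell\in V^*[\lambda-\mu]$ and $v\in V$, the function $g\mapsto (v,g\ell)$ on $G_c$ satisfies $\psi_{v,\ell}(gh)=(\lambda-\mu)(h)\psi_{v,\ell}(g)$ for $h\in H_c$. Since $h^*=h$ on the compact torus $H_c$, the real-analytic character $\psi_{\lambda-\rho,\mu-\rho}(h)=(\lambda-\rho)(h)((\mu-\rho)(h))^{-1}$ restricts on $H_c$ to $\lambda-\mu$, so the transformation laws match. Using the decomposition $G=G_c\cdot B$ with $G_c\cap B=H_c$, I would then extend $\psi_{v,\ell}$ uniquely to a real-analytic function on $G$ transforming under $B$ by $\psi_{\lambda-\rho,\mu-\rho}$; the consistency just checked is exactly what is needed.

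Next I would show that $\xi$ is a $K$-equivariant bijection. By \eqref{princer}, $\bold M(\lambda,\mu)\cong\oplus_{V}V\otimes V^*[\lambda-\mu]$ as a $K$-module under the adjoint action, with $K=G_c$ acting on the $V$ factor. Applying the Peter-Weyl theorem to the homogeneous space $G_c/H_c$, the $K$-finite functions transforming under $H_c$ by the character $\lambda-\mu$ also decompose as $\oplus_{V}V\otimes V^*[\lambda-\mu]$. The map $\xi$ preserves these decompositions and restricts on each summand to the standard matrix-coefficient isomorphism $v\otimes\ell\mapsto(g\mapsto(v,g\ell))$, which is injective by orthogonality of matrix coefficients and surjective by Peter-Weyl.

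The main remaining task, and the place I expect the real work to be, is verifying that $\xi$ intertwines the full $\g$-bimodule structures. The geometric bimodule structure on the right-hand side comes from the fact that $G$ is a complex Lie group: the real Lie algebra $\g^{\Bbb R}$ has complexification $(\g^{\Bbb R})_{\Bbb C}\cong \g\oplus\bar\g$, and the canonical identification $\bar\g\cong\g$ relative to $G_c$ upgrades any $(\g^{\Bbb R},K)$-module into a $(\g\oplus\g,K)$-module, i.e.\ a Harish-Chandra $\g$-bimodule. Left translation by $G$ (as a complex group) commutes with right translation by $B$, so this bimodule structure restricts to $C^\infty_{\lambda-\rho,\mu-\rho}(G/B)^{\rm fin}$.

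The compatibility can be approached in two ways. Directly: holomorphic left translation acts by $a\cdot\psi_{v,\ell}=\psi_{av,\ell}$ for $a\in\g$ (since $V$ is a holomorphic representation), and one must compute the antiholomorphic action on the real-analytic extension of $\psi_{v,\ell}$ off $G_c$, matching it against the explicit formula for the right action in Proposition \ref{rightac}. A more elegant route is to invoke the universal property established in the previous proposition: $\bold M(\lambda,\mu)$ represents the functor $X\mapsto \Hom_{(\b_-,\b_+)\text{-bimod}}(X\otimes\Bbb C_{\lambda-\rho},\Bbb C_{\mu-\rho})$, and by Frobenius reciprocity for the Borel $B\subset G$ the space $C^\infty_{\lambda-\rho,\mu-\rho}(G/B)^{\rm fin}$ represents the same functor, yielding a canonical bimodule isomorphism that can be checked to send $\Phi_{v,\ell}$ to $\psi_{v,\ell}$ by tracing through the Frobenius reciprocity identifications on a highest weight vector of each $V$-isotypic summand.
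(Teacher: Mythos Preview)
Your direct approach is exactly what the paper does: the exercise following the proposition instructs one to use Proposition~\ref{rightac} to transport the right $\g$-action through $\xi$, obtaining the formula
$$
(\psi\cdot b)(g)=(\lambda-\rho)({\rm Ad}(g)b)\,\psi(g)+\sum_{\alpha\in R_+}f_\alpha^*({\rm Ad}(g)b)\,(R(f_\alpha)\psi)(g),
$$
and then to verify independently that the antiholomorphic left translation on $C^\infty_{\lambda-\rho,\mu-\rho}(G/B)$ is given by the same expression. Your outline of well-definedness and the $K$-module isomorphism via Peter--Weyl is also what the paper has in mind for the first step.

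Your second route, via the representability statement of the preceding proposition, is a genuine alternative the paper does not pursue. It is attractive because it avoids the explicit formula of Proposition~\ref{rightac} altogether, but be warned that the Frobenius reciprocity step is not quite the standard one: the right $B$-equivariance condition on $C^\infty_{\lambda-\rho,\mu-\rho}(G/B)$ involves the \emph{real-analytic} character $\psi_{\lambda-\rho,\mu-\rho}$, and unwinding what this means for the $\g\oplus\g$-bimodule structure (via the identification $(\g_{\Bbb R})_{\Bbb C}\cong\g\oplus\bar\g\cong\g\oplus\g$) amounts to checking that the holomorphic $\b_+$-part acts by $\lambda-\rho$ and the antiholomorphic $\bar\b_+\cong\b_-$-part by $\mu-\rho$. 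Once that identification is made precise, the functors represented do agree; but making it precise is close in spirit to the direct computation, so the savings are modest. The direct route has the advantage of being completely concrete; the universal-property route is cleaner conceptually but hides the same analytic check inside the Frobenius reciprocity.
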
 

\begin{exercise} Prove Proposition \ref{prinser1}. {\bf Hint:} Use Proposition \ref{rightac} to show that $\xi$ is a well defined isomorphism of ${\g}_{\rm ad}$-modules, and after applying $\xi$ the right action of $\g$ looks like 
$$
(\psi\cdot b)(g)=(\lambda-\rho)({\rm Ad}(g)b)\psi(g)+
\sum_{\alpha\in R_+}f_\alpha^*({\rm Ad}(g)b)(R(f_\alpha)\psi)(g),
$$
where $R(f_\alpha)$ is the left-invariant vector field equal to $f_\alpha$ at $1$. 
Then show that the right action of $\g$ on $C^\infty_{\lambda-\rho,\mu-\rho}(G/B)$ is given by the same formula. 
\end{exercise} 

\subsection{The functor $H_\lambda$}\label{funHla} 

Define the functor $H_\lambda: \mathcal O_\theta\to HC_{\theta,\chi_\lambda}$ given by 
$$
H_\lambda(X):=\Hom_{\rm fin}(M_{\lambda-\rho},X).
$$
Note that $H_\lambda(M_{\mu-\rho}^\vee)=\bold M(\lambda,\mu)$. 

\begin{proposition} \label{Hlaexact}
The functor $H_\lambda$ is exact when $\lambda$ is dominant. 
\end{proposition}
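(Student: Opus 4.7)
The plan is to reduce exactness of $H_\lambda$ to the projectivity of $V\otimes M_{\lambda-\rho}$, which has already been established in Corollary \ref{proje1}(ii) under the dominance assumption. The key observation is that the ``finite-type Hom'' functor splits as a direct sum over irreducible finite-dimensional $\g$-modules, and each summand is representable by a projective object.

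First, I would write the isotypic decomposition
\[
H_\lambda(X) = \Hom_{\rm fin}(M_{\lambda-\rho}, X) = \bigoplus_{V\in {\rm Irr}(\g)} V\otimes \Hom_\g\!\bigl(V, \Hom_{\Bbb C}(M_{\lambda-\rho},X)\bigr),
\]
which is just the assertion that an element of $\Hom_{\rm fin}(M_{\lambda-\rho},X)$ generates a finite-dimensional $\g$-module under the adjoint action and therefore belongs to some finite sum of isotypic components. Next, I would apply the standard tensor-hom adjunction
\[
\Hom_\g\!\bigl(V, \Hom_{\Bbb C}(M_{\lambda-\rho},X)\bigr) \cong \Hom_\g(V\otimes M_{\lambda-\rho}, X)
\]
to rewrite the $V$-isotypic piece as $V\otimes \Hom_\g(V\otimes M_{\lambda-\rho}, X)$.

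Once we are in this form, dominance of $\lambda$ enters: by Corollary \ref{proje1}(ii), the object $V\otimes M_{\lambda-\rho}$ is projective in $\mathcal{O}$ for every finite-dimensional $\g$-module $V$. Consequently the functor $X\mapsto \Hom_\g(V\otimes M_{\lambda-\rho},X)$ is exact on $\mathcal{O}$, and in particular on the block $\mathcal{O}_\theta$. Since arbitrary direct sums of exact functors are exact, tensoring with $V$ and summing over $V\in {\rm Irr}(\g)$ preserves exactness, giving the exactness of $H_\lambda$.

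No step presents a genuine obstacle; the main thing to check carefully is that the isotypic decomposition of $\Hom_{\rm fin}$ is a true direct sum and is compatible with the adjunction naturally in $X$. Both are formal once one recalls that $\Hom_{\rm fin}$ is by definition the locally finite part of $\Hom_{\Bbb C}$ under the adjoint $\g$-action, so the argument really just amounts to packaging Corollary \ref{proje1}(ii) through this adjunction.
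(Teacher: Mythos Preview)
Your proof is correct and follows essentially the same approach as the paper: the paper observes that $\Hom_\g(V,H_\lambda(X))=\Hom_\g(V\otimes M_{\lambda-\rho},X)$ is exact by the projectivity of $V\otimes M_{\lambda-\rho}$ from Corollary \ref{proje1}(ii), which is exactly your argument with the isotypic decomposition made explicit. The only difference is that you spell out the full direct-sum decomposition of $H_\lambda(X)$, whereas the paper leaves implicit that exactness of a functor landing in locally finite $\g_{\rm ad}$-modules can be tested on each multiplicity space.
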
 

\begin{proof} If $V$ is a finite-dimensional $\g$-module then 
$$
\Hom_\g(V,H_\lambda(X))=\Hom_\g(V\otimes M_{\lambda-\rho},X),
$$
which is exact as $V\otimes M_{\lambda-\rho}$ is projective. 
\end{proof} 

\section{\bf BGG reciprocity and BGG Theorem} 

\subsection{A vanishing lemma for Ext groups} 

\begin{lemma}\label{exti} Let $X\in \mathcal O$ be a free $U(\n_-)$-module. 
Then for any $\mu\in \h^*$ we have 
$$
{\rm Ext}^i_{\mathcal O}(X,M_\mu^\vee)=0,\ i>0. 
$$
\end{lemma}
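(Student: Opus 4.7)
The strategy is to corepresent the functor $\Hom_{\mathcal O}(-,M_\mu^\vee)$ as $\bigl((-)/\n_-(-)\bigr)[\mu]^*$, and then derive this identity against a projective resolution. This will identify $\Ext^i_{\mathcal O}(X,M_\mu^\vee)$ with the dualized $\mu$-weight space of Lie algebra homology $H_i(\n_-,X)$, which vanishes for $i>0$ precisely when $X$ is $U(\n_-)$-free.

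First I would establish a natural isomorphism
$$
\Hom_{\mathcal O}(Y,M_\mu^\vee)\cong \bigl((Y/\n_- Y)[\mu]\bigr)^*
$$
for every $Y\in\mathcal O$. Using that $M_\mu^\vee$ is the restricted dual of the lowest weight Verma module $\overline M_{-\mu}=U(\g)\otimes_{U(\b_-)}\mathbb C_{-\mu}$, a $\g$-homomorphism $Y\to M_\mu^\vee$ is the same as a $\g$-invariant bilinear pairing $Y\times \overline M_{-\mu}\to\mathbb C$. By Frobenius reciprocity (and the PBW identification $\overline M_{-\mu}\cong U(\n_+)\otimes\mathbb C_{-\mu}$), such a pairing is determined by its restriction to $Y\times\mathbb C v_{-\mu}$, which is nothing but a linear functional $f\colon Y\to\mathbb C$ of weight $\mu$ that is annihilated by $\n_-$; i.e.\ an element of $((Y/\n_- Y)[\mu])^*$. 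Finite-dimensionality of this weight space is inherited from $Y\in\mathcal O$, and the identification is manifestly functorial in $Y$.

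Next I would derive the formula. By Corollary \ref{enoughpro}, pick a projective resolution $P_\bullet\to X$ in $\mathcal O$; by Corollary \ref{enoughpro}(ii) each $P_i$ is $U(\n_-)$-free, hence $U(\n_-)$-flat. Applying $\Hom_{\mathcal O}(-,M_\mu^\vee)$ and the identity above yields the complex $\bigl((P_\bullet)_{\n_-}[\mu]\bigr)^*$, whose cohomology at degree $i$ is $\Ext^i_{\mathcal O}(X,M_\mu^\vee)$. On the other hand, since the $P_i$ are $U(\n_-)$-flat and $P_\bullet\to X$ is exact, the complex $(P_\bullet)_{\n_-}=\mathbb C\otimes_{U(\n_-)}P_\bullet$ computes $\Tor^{U(\n_-)}_\bullet(\mathbb C,X)=H_\bullet(\n_-,X)$. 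Restricting to the $\mu$-weight space (finite-dimensional at each degree, so dualization is exact and commutes with cohomology) gives
$$
\Ext^i_{\mathcal O}(X,M_\mu^\vee)\cong H_i(\n_-,X)[\mu]^*.
$$
Since $X$ is assumed $U(\n_-)$-free, $\Tor^{U(\n_-)}_i(\mathbb C,X)=0$ for $i>0$, and the lemma follows.

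The main obstacle is the first step: rigorously establishing the corepresentability isomorphism $\Hom_{\mathcal O}(-,M_\mu^\vee)\cong H_0(\n_-,-)[\mu]^*$ for all $Y\in\mathcal O$ (not just for Vermas, where it reduces to the familiar computation $\dim\Hom(M_\nu,M_\mu^\vee)=\delta_{\nu\mu}$), in a manifestly functorial way that survives passage to resolutions. Once that identification is secured, the rest is formal homological algebra together with the mild finiteness checks needed to swap dualization with cohomology.
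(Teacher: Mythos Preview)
Your proof is correct and is essentially the same as the paper's. The paper identifies $\Hom_\g(P_\bullet,M_\mu^\vee)=\Hom_\g(P_\bullet,\widehat M_\mu^\vee)=\Hom_{\b_-}(P_\bullet,\Bbb C_\mu)$ via coinduction and Frobenius reciprocity (exactly your $((P_\bullet/\n_-P_\bullet)[\mu])^*$), then observes that since the $P_i$ and $X$ are all $U(\n_-)$-free the resolution splits over $U(\n_-)$, making this complex exact in positive degrees; you phrase the same computation as $H_i(\n_-,X)[\mu]^*=0$, which is an equivalent way of saying it.
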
 

\begin{proof} Fix a projective resolution $P_\bullet$ of $X$ in $\mathcal O$ 
and consider the complex $\Hom_\g(P_\bullet,M_\mu^\vee)$ 
which computes the desired Ext groups. Since $P_i$ have a weight decomposition, 
$$
\Hom_\g(P_\bullet,M_\mu^\vee)=\Hom_\g(P_\bullet,\widehat M_\mu^\vee),
$$
where $\widehat M_\mu^\vee:=\prod_{\beta\in \h^*} M_\mu^\vee[\beta]$ is the completion of $M_\mu^\vee$. 
We have 
$$
\widehat M_\mu^\vee={\rm Coind}_{\mathfrak b_-}^\g(\Bbb C_\mu):=\Hom_{\b_-}(U(\g),\Bbb C_\mu)\cong \Hom_{\Bbb C}(U(\n_+),\Bbb C_\mu). 
$$ 
Thus, Frobenius reciprocity yields 
$$ 
\Hom_\g(P_\bullet,\widehat M_\mu^\vee)=\Hom_{\mathfrak b_-}(P_\bullet,\Bbb C_\mu). 
$$
By Corollary \ref{enoughpro}(ii), $P_i$ are free $U(\n_-)$-modules, so the exact sequence of $U(\n_-)$-modules
$$
...\to P_1\to P_0\to X\to 0
$$
is split. Thus the complex 
$\Hom_{\mathfrak b_-}(P_\bullet,\Bbb C_\mu)$ is exact in positive degrees, which implies the statement. 
\end{proof} 

\subsection{Standard filtrations} 

A {\bf standard (or Verma) filtration} on $X\in \mathcal O$ 
is a filtration for which successive quotients are Verma modules. 
$X$ is called {\bf standardly filtered} if it admits a standard filtration. 
It is clear that every standardly filtered object $X$ is necessarily a free 
$U(\n_-)$-module.  

\begin{corollary}\label{exti1} If $X$ is standardly filtered then 
$\Ext^i_{\mathcal O}(X,M_\mu^\vee)=0$ for all $\mu\in \h^*$ and $i>0$. 
\end{corollary}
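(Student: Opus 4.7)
The plan is to reduce directly to Lemma \ref{exti}. The key observation, already flagged in the paragraph preceding the corollary, is that any standardly filtered module $X$ is free as a $U(\n_-)$-module; once this is in hand the corollary follows with no further work.

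To justify the freeness, I would argue as follows. Given a standard filtration $0 = X_0 \subset X_1 \subset \cdots \subset X_n = X$ with $X_k/X_{k-1}\cong M_{\lambda_k-\rho}$, inductively lift the highest weight generator of each subquotient to a weight vector $\tilde v_k \in X_k$. A straightforward induction on $k$, using that $X_k = X_{k-1} + U(\n_-)\tilde v_k$ (which holds because $M_{\lambda_k-\rho} = U(\n_-)v_{\lambda_k-\rho}$), shows that $\tilde v_1,\ldots,\tilde v_n$ generate $X$ over $U(\n_-)$. For injectivity of the resulting surjection $\bigoplus_k U(\n_-)\tilde v_k \twoheadrightarrow X$, I would compare formal characters:
$$
\mathrm{ch}(X) = \sum_k \mathrm{ch}(M_{\lambda_k-\rho}) = \mathrm{ch}(U(\n_-))\cdot \sum_k e^{\lambda_k-\rho},
$$
which is exactly the character of the free $U(\n_-)$-module on generators of those weights, forcing the surjection to be an isomorphism.

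With freeness established, Lemma \ref{exti} applies verbatim to $X$ and yields $\Ext^i_{\mathcal O}(X,M_\mu^\vee)=0$ for all $i>0$ and all $\mu \in \h^*$, which is the claim.

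If one prefers to avoid the character argument, there is a parallel proof by induction on the filtration length $n$. The short exact sequence $0 \to X_{n-1} \to X \to M_{\lambda_n-\rho} \to 0$ produces a long exact sequence in $\Ext^\bullet(-,M_\mu^\vee)$; the outer Ext terms vanish for $i>0$ — $\Ext^i(M_{\lambda_n-\rho},M_\mu^\vee)=0$ by Lemma \ref{exti} applied to the rank-one free $U(\n_-)$-module $M_{\lambda_n-\rho}$, and $\Ext^i(X_{n-1},M_\mu^\vee)=0$ by the inductive hypothesis applied to $X_{n-1}$ (which inherits the truncated standard filtration) — squeezing the middle term to zero. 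There is no real obstacle here; all the technical content has been absorbed into Lemma \ref{exti}, and the corollary is essentially a bookkeeping step.
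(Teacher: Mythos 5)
Your proposal is correct and takes essentially the same route as the paper: the paper observes just before the corollary that any standardly filtered object is free over $U(\n_-)$ and then cites Lemma \ref{exti}, which is exactly your first argument (the character computation fleshes out the paper's "it is clear," and your inductive alternative is a standard equivalent bookkeeping variant).
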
 

\begin{proof} This follows from Lemma \ref{exti}. 
\end{proof}

The converse also holds. In fact, we have 

\begin{theorem}\label{exti2} $X$ is standardly filtered if and only if 
$$
{\rm Ext}^1_{\mathcal O}(X,M_\lambda^\vee)=0
$$ 
for all $\lambda\in \h^*$. 
\end{theorem}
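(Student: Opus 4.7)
The forward direction is immediate from Corollary \ref{exti1}. For the converse, I would induct on the length of $X$ in $\mathcal O$ (finite by the length result established earlier). The base case $X = 0$ is trivial. For the inductive step, let $\mu$ be a maximal weight of $P(X)$ and set $k := \dim X[\mu]$, finite by Lemma \ref{findim}. Maximality of $\mu$ forces every vector of $X[\mu]$ to be singular, so a choice of basis $v_1, \ldots, v_k$ of $X[\mu]$ determines a canonical map $\phi : M_\mu^{\oplus k} \to X$ sending the $i$-th highest weight generator to $v_i$. The strategy splits into two parts: (A) prove $\phi$ is injective; (B) setting $X' := X / \phi(M_\mu^{\oplus k})$, show $\Ext^1_{\mathcal O}(X', M_\lambda^\vee) = 0$ for all $\lambda$. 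Granting both, $X'$ has strictly smaller length than $X$, so the inductive hypothesis furnishes a standard filtration of $X'$, and the short exact sequence $0 \to M_\mu^{\oplus k} \to X \to X' \to 0$ extends it to a standard filtration of $X$.

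Part (B) follows from the long exact sequence for $\Hom_{\mathcal O}(-, M_\lambda^\vee)$ applied to $0 \to M_\mu^{\oplus k} \to X \to X' \to 0$: since $\Ext^1_{\mathcal O}(X, M_\lambda^\vee) = 0$, it suffices to show the restriction map $\Hom_{\mathcal O}(X, M_\lambda^\vee) \to \Hom_{\mathcal O}(M_\mu^{\oplus k}, M_\lambda^\vee)$ is surjective. For $\lambda \ne \mu$ the target vanishes since the only $\n_+$-singular vector in $M_\lambda^\vee$ sits at weight $\lambda$. For $\lambda = \mu$, the Frobenius identification used in the proof of Lemma \ref{exti} gives $\Hom_{\mathcal O}(X, M_\mu^\vee) \cong (X/\n_- X)[\mu]^*$, and maximality of $\mu$ forces $(\n_- X)[\mu] = 0$, so this equals $X[\mu]^* \cong \Hom_{\mathcal O}(M_\mu^{\oplus k}, M_\mu^\vee)$, with the restriction map being the canonical identification, hence surjective.

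Part (A), the injectivity of $\phi$, is the main obstacle. I would argue by contradiction: suppose $K := \ker\phi \neq 0$ and pick a maximal weight $\nu$ of $P(K)$, necessarily $\nu < \mu$ strictly (since $\phi$ is an isomorphism on the $\mu$-weight space). From the short exact sequence $0 \to K \to M_\mu^{\oplus k} \to \mathrm{im}(\phi) \to 0$, combined with Corollary \ref{exti1} and the vanishing $\Hom_{\mathcal O}(M_\mu, M_\nu^\vee) = 0$ established in Part (B), one obtains an isomorphism
\[
\Ext^1_{\mathcal O}(\mathrm{im}(\phi), M_\nu^\vee) \cong \Hom_{\mathcal O}(K, M_\nu^\vee) \cong K[\nu]^* \ne 0.
\]
The difficulty is that the naive long exact sequence from $0 \to \mathrm{im}(\phi) \to X \to X/\mathrm{im}(\phi) \to 0$ only supplies an injection $\Ext^1_{\mathcal O}(\mathrm{im}(\phi), M_\nu^\vee) \hookrightarrow \Ext^2_{\mathcal O}(X/\mathrm{im}(\phi), M_\nu^\vee)$, which is not in itself a contradiction with $\Ext^1_{\mathcal O}(X, M_\nu^\vee) = 0$.

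To bridge this gap I would construct an explicit nontrivial extension $0 \to M_\nu^\vee \to Y \to X \to 0$ in $\mathcal O$ realizing the nonzero class directly from the data of $K$. Choose $0 \ne w = (u_1 v_\mu, \ldots, u_k v_\mu) \in K[\nu]$, so that each $u_i v_\mu$ is a singular vector in $M_\mu$ of weight $\nu$ and $\sum_i u_i v_i = 0$ in $X$. Define $Y$ on the underlying vector space $X \oplus M_\nu^\vee$, with $\g$-action agreeing with the direct sum action on $M_\nu^\vee$ and on $X$ modulo a twist: on the canonical lifts $\tilde v_i = (v_i, 0) \in Y$ — canonical because $M_\nu^\vee[\mu] = 0$ leaves no room for adjustment — impose the deformed relation $\sum_i u_i \tilde v_i = (0, v_\nu)$, where $v_\nu$ is the highest weight generator of $M_\nu^\vee$. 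The technical heart of the proof lies in verifying that this twist extends to a well-defined, $\g$-equivariant action on all of $Y$ (using the coinduced description $\widehat{M_\nu^\vee} = \Hom_{U(\b_-)}(U(\g), \Bbb C_\nu)$ from the proof of Lemma \ref{exti} together with the singularity of each $u_i v_\mu$ to propagate the twist compatibly with every relation holding among the $v_i$ in $X$), and that the resulting extension does not split (any splitting would force the $\mu$-component of the section to coincide with the canonical lifts $(v_i,0)$, contradicting $(0, v_\nu) \ne 0$). The resulting class gives the desired contradiction with $\Ext^1_{\mathcal O}(X, M_\nu^\vee) = 0$, forcing $K = 0$.
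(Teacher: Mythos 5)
Your overall skeleton matches the paper's, and Part (B) is essentially the paper's argument, but your Part (A) has a genuine gap that you correctly diagnose and then try to patch with the wrong tool. The explicit extension construction cannot be made to work: the cocycle consistency verification you call the ``technical heart'' has no reason to succeed without the very $\Ext^2$-vanishing you lack, and if such an unconditional construction existed it would prove more than what is true (it would show $K\neq 0 \Rightarrow \Ext^1_\O(X,M_\nu^\vee)\neq 0$ with no hypothesis on $X$, but the long exact sequence shows the obstruction lives in $\Ext^2_\O(X',M_\nu^\vee)$, which need not vanish in general).

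The fix, and what the paper actually does, is to \emph{reorder the induction} so that injectivity of $\phi$ is the last thing proved rather than a prerequisite. Set $Z:=\mathrm{im}(\phi)$ and $X':=X/Z$; this short exact sequence $0\to Z\to X\to X'\to 0$ exists unconditionally, and $Z\neq 0$ since $Z[\mu]=X[\mu]\neq 0$, so $X'$ has strictly shorter length. Your Part (B) argument goes through verbatim with $Z$ in place of $M_\mu^{\oplus k}$: for $\lambda\neq\mu$, $\Hom_\O(Z,M_\lambda^\vee)=0$ because $Z$ is generated in weight $\mu$; for $\lambda=\mu$, the Frobenius identification gives $\Hom_\O(Z,M_\mu^\vee)\cong (Z/\n_-Z)[\mu]^* = X[\mu]^*\cong\Hom_\O(X,M_\mu^\vee)$, with restriction an isomorphism. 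Hence $\Ext^1_\O(X',M_\lambda^\vee)=0$ for all $\lambda$, and the inductive hypothesis applies: $X'$ is standardly filtered. \emph{Now} Corollary \ref{exti1} gives $\Ext^i_\O(X',M_\lambda^\vee)=0$ for all $i\ge 1$, in particular $i=2$. Returning to the long exact sequence for $0\to Z\to X\to X'\to 0$, the simultaneous vanishing of $\Ext^1_\O(X,M_\nu^\vee)$ and $\Ext^2_\O(X',M_\nu^\vee)$ forces $\Ext^1_\O(Z,M_\nu^\vee)=0$ for all $\nu$. Combined with the isomorphism $\Ext^1_\O(Z,M_\nu^\vee)\cong\Hom_\O(K,M_\nu^\vee)$ that you correctly derived, and the observation that any nonzero $K$ admits a nonzero map to some $M_\nu^\vee$ (map onto a simple quotient $L_\nu$ and embed into its injective hull $M_\nu^\vee$), one concludes $K=0$, which is exactly Part (A). No extension needs to be constructed by hand.
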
 

\begin{proof}  Let $E$ be a finite-dimensional vector space, and suppose we have a short exact sequence in $\mathcal O$: 
$$
0\to K\to E\otimes M_\lambda\to Z\to 0
$$  
with $K[\lambda]=0$. 

\begin{lemma}\label{K=0} If $\Ext^1_\O(Z,M_\mu^\vee)=0$ for all $\mu\in \h^*$
then $K=0$ and $Z\cong E\otimes M_\lambda$.  
\end{lemma} 

\begin{proof}  
The long exact sequence of cohomology yields 
$$
...\to\Hom(E\otimes M_\lambda,M_\mu^\vee)\to \Hom(K,M_\mu^\vee)\to \Ext_\O^1(Z,M_\mu^\vee)=0.
$$
For $\lambda\ne \mu$, we have $\Hom(M_\lambda,M_\mu^\vee)=0$, so it follows that 
$\Hom(K,M_\mu^\vee)=0$. But we also have $\Hom(K,M_\lambda^\vee)=0$, as 
$K[\lambda]=0$, while every nonzero submodule of $M_\lambda^\vee$ contains $L_\lambda$. It follows that $K=0$. 
\end{proof} 

Now let us prove the theorem. We only need to prove the ``if" direction. We argue by induction in the length of $X$ (with the base case $X=0$ being trivial). Let 
$\lambda$ be a maximal weight in $P(X)$ and $E:=X[\lambda]$. Let $Z$ 
be the submodule of $X$ generated by $E$; it is a quotient of $E\otimes M_\lambda$ by a submodule $K$ with $K[\lambda]=0$. We have a short exact sequence 
$$
0\to Z\to X\to Y\to 0.
$$
Thus from the long exact sequence of cohomology we get an exact sequence 
$$
...\to \Hom(Z,M_\mu^\vee)\to \Ext^1_{\mathcal O}(Y,M_\mu^\vee)\to \Ext^1_{\mathcal O}(X,M_\mu^\vee)=0. 
$$
It follows that for $\mu\ne \lambda$ we have $\Ext^1_{\mathcal O}(Y,M_\mu^\vee)=0$, as in this case $\Hom(Z,M_\mu^\vee)=0$ (since $Z$ is a quotient of $E\otimes M_\lambda$). On the other hand, if $\mu=\lambda$ then 
by the argument in the proof of Lemma \ref{exti} we have
$$
\Ext^1_{\mathcal O}(Y,M_\lambda^\vee)=\Ext^1_{\mathcal C}(Y,\Bbb C_\lambda),
$$
where $\mathcal C$ is the category of $\h$-semisimple $\mathfrak \b_-$-modules. 
But $\Ext^1_{\mathcal C}(Y,\Bbb C_\lambda)=0$, as all weights of $Y$ are not $>\lambda$ and hence any short exact sequence of $\b_-$-modules 
$$
0\to \Bbb C_\lambda\to \widetilde Y\to Y\to 0
$$
canonically splits. By the induction assumption, it follows that 
$Y$ is standardly filtered, so by Corollary \ref{exti1}, 
${\rm Ext}^i(Y,M_\mu^\vee)=0$ for all $i\ge 1$, in particular for $i=1,2$. Thus the long exact sequence of Ext groups gives 
$$
\Ext^1(Z,M_\mu^\vee)=\Ext^{1}(X,M_\mu^\vee)=0,
$$ 
hence $Z=E\otimes M_\lambda$ by Lemma \ref{K=0}. This completes the induction step. 
\end{proof} 

\begin{corollary}\label{exti3} (i) Every $X\in \mathcal O$ which is a free 
$U(\n_-)$-module is standardly filtered. In particular, 
for any $\lambda\in \h^*$ and finite-dimensional $\g$-module $V$, the module $V\otimes M_\lambda$ 
is standardly filtered. 

(ii) Any projective object $P\in \O$ is standardly filtered. 
\end{corollary}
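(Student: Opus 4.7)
The plan is to deduce both parts essentially formally from Lemma \ref{exti} and Theorem \ref{exti2}, which together give the criterion: $X\in\mathcal O$ is standardly filtered if and only if $\operatorname{Ext}^1_{\mathcal O}(X,M_\lambda^\vee)=0$ for all $\lambda\in\h^*$, and the sufficient condition that $X$ be free over $U(\n_-)$ forces the stronger vanishing $\operatorname{Ext}^i_{\mathcal O}(X,M_\mu^\vee)=0$ for all $i\ge 1$.

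For part (i), suppose $X\in\mathcal O$ is free as a $U(\n_-)$-module. Then Lemma \ref{exti} directly yields $\operatorname{Ext}^1_{\mathcal O}(X,M_\lambda^\vee)=0$ for every $\lambda$, so Theorem \ref{exti2} applies and $X$ is standardly filtered. For the ``in particular'' statement, observe that $M_\lambda\cong U(\n_-)$ as a left $U(\n_-)$-module (via the PBW-based isomorphism $\phi: U(\n_-)\to M_\lambda$), hence for any finite dimensional $\g$-module $V$, $V\otimes M_\lambda\cong V\otimes_{\Bbb C} U(\n_-)$ as a $U(\n_-)$-module, which is a free module of rank $\dim V$.

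For part (ii), recall Corollary \ref{enoughpro}(ii), which states that every projective object $P\in\mathcal O$ is free as a $U(\n_-)$-module. Therefore (ii) is immediate from (i).

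There is no serious obstacle: both statements are essentially tautological given the preceding machinery, and the only point requiring a moment of care is the identification of $V\otimes M_\lambda$ as a free $U(\n_-)$-module, which follows from the standard PBW identification of $M_\lambda$ with $U(\n_-)v_\lambda$.
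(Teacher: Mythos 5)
Your proof of part (i) matches the paper's, and your handling of the ``in particular'' clause is on the right track, though worth a small flag: the identification $V\otimes M_\lambda\cong V\otimes_{\Bbb C}U(\n_-)$ as $U(\n_-)$-modules is not the obvious factor-by-factor map (on the left $\n_-$ acts via the tensor product action, not only on the second factor). The standard way to see it is to consider $\theta\colon V\otimes U(\n_-)\to V\otimes M_\lambda$, $v\otimes u\mapsto u\cdot(v\otimes v_\lambda)$, filter both sides by the degree filtration on $U(\n_-)$, and observe the associated graded map is the obvious isomorphism; then $\theta$ is an isomorphism of $U(\n_-)$-modules. So your conclusion is correct but the intermediate isomorphism needs this small argument.

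For part (ii) you take a slightly longer route than the paper. You invoke Corollary~\ref{enoughpro}(ii) (projectives in $\O$ are free over $U(\n_-)$) and then apply part (i). The paper instead notes that (ii) is immediate from Theorem~\ref{exti2} alone: if $P$ is projective then $\Ext^1_\O(P,M_\mu^\vee)=0$ for all $\mu$ holds tautologically by projectivity, with no need to pass through freeness over $U(\n_-)$ or through part (i). Both arguments are valid; the paper's is more economical and makes clear that (ii) does not depend on (i), while yours emphasizes the $U(\n_-)$-freeness structure. This difference is harmless, but the direct route via projectivity is worth knowing as it avoids re-using the nontrivial input of Corollary~\ref{enoughpro}(ii).
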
 

\begin{proof} (i) Follows from Theorem \ref{exti2} and Lemma \ref{exti}. 

(ii) Immediate from Theorem \ref{exti2}.  
\end{proof} 

\subsection{BGG reciprocity} 
Denote by $d_{\lambda\mu}$ the multiplicity of $L_\mu$ in the Jordan-H\"older series 
of $M_\lambda$. Since characters of $L_\mu$ are linearly independent, these numbers are determined from the formula 
$$
\sum_\mu d_{\lambda\mu}{\rm ch}(L_\mu)={\rm ch}(M_\lambda)=\frac{e^\lambda}{\prod_{\alpha\in R_+}(1-e^{-\alpha})}. 
$$  
Thus the knowledge of $d_{\lambda\mu}$ is equivalent to the knowledge 
of the characters ${\rm ch}(L_\lambda)$. 

Since by Corollary \ref{exti3}(ii) the projective covers $P_\lambda$ of $L_\lambda$ are standardly filtered, we may also define the multiplicities $d_{\lambda\mu}^*$
of $M_\mu$ in $P_\lambda$. These are independent of the choice of the standard filtration and are determined by the formula 
$$
{\rm ch}(P_\lambda)=\sum_\mu d_{\lambda\mu}^*{\rm ch}(M_\mu)=
\sum_\mu d_{\lambda\mu}^*\frac{e^\mu}{\prod_{\alpha\in R_+}(1-e^{-\alpha})}. 
$$

\begin{theorem}\label{bggr} (BGG reciprocity) We have 
$d_{\lambda\mu}^*=d_{\mu\lambda}$. 
\end{theorem}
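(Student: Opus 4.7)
The plan is to compute $\dim\Hom_{\mathcal O}(P_\lambda, M_\mu^\vee)$ in two different ways, getting $d_{\lambda\mu}^*$ on one side and $d_{\mu\lambda}$ on the other.

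First, I would establish the key calculation $\dim \Hom_{\mathcal O}(M_\nu, M_\mu^\vee) = \delta_{\nu\mu}$. A homomorphism $M_\nu \to M_\mu^\vee$ corresponds to a singular vector of weight $\nu$ in $M_\mu^\vee$, and any nonzero such vector generates a highest weight submodule of $M_\mu^\vee$ with highest weight $\nu$. Since $L_\mu$ is the unique simple quotient of $M_\mu$, its dual $L_\mu$ is the unique simple submodule (the socle) of $M_\mu^\vee$, so every nonzero submodule of $M_\mu^\vee$ contains $L_\mu$ as a composition factor, forcing $\mu \le \nu$. On the other hand, all composition factors of $M_\mu^\vee$ are among those of $M_\mu$ (since the characters agree), so every such factor has highest weight $\le \mu$, forcing $\nu \le \mu$. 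Hence $\nu=\mu$, and in this case the space of singular vectors of weight $\mu$ is exactly the $1$-dimensional socle.

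Next I would compute $\dim\Hom_{\mathcal O}(P_\lambda, M_\mu^\vee)$ in two ways. By Corollary \ref{exti3}(ii), $P_\lambda$ is standardly filtered, and by Corollary \ref{exti1}, $\Ext^1_{\mathcal O}(M_\nu, M_\mu^\vee)=0$ for every $\nu$. Thus $\Hom_{\mathcal O}(-, M_\mu^\vee)$ is exact on the standard filtration of $P_\lambda$, and using the previous step,
$$
\dim\Hom_{\mathcal O}(P_\lambda, M_\mu^\vee) = \sum_{\nu} d_{\lambda\nu}^* \cdot \dim\Hom_{\mathcal O}(M_\nu, M_\mu^\vee) = d_{\lambda\mu}^*.
$$
On the other hand, by Proposition \ref{genera}(ii) applied inside the appropriate block $\mathcal O_\chi$, we have $\dim\Hom_{\mathcal O}(P_\lambda, M_\mu^\vee) = [M_\mu^\vee : L_\lambda]$, and since $M_\mu^\vee$ has the same character as $M_\mu$ it has the same Jordan-H\"older multiplicities, giving $[M_\mu^\vee : L_\lambda] = [M_\mu : L_\lambda] = d_{\mu\lambda}$. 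Combining the two computations yields $d_{\lambda\mu}^* = d_{\mu\lambda}$.

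The main obstacle is the first step: the identity $\dim\Hom_{\mathcal O}(M_\nu, M_\mu^\vee) = \delta_{\nu\mu}$ requires the socle analysis of $M_\mu^\vee$ together with the character/composition-factor comparison, and it is what makes the $M_\mu^\vee$ behave as ``costandard'' objects with respect to the standardly filtered ones. Once this is in hand, everything else is a bookkeeping argument using the Ext-vanishing already proved, together with the general projective-cover formula from Proposition \ref{genera}.
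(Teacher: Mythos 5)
Your proof is correct and takes essentially the same approach as the paper: compute $\dim\Hom_{\mathcal O}(P_\lambda, M_\mu^\vee)$ once via the standard filtration of $P_\lambda$ together with the Ext-vanishing of Lemma~\ref{exti} and the ``orthogonality'' $\dim\Hom(M_\nu,M_\mu^\vee)=\delta_{\nu\mu}$, and once via the projective-cover formula giving the composition multiplicity $[M_\mu^\vee:L_\lambda]=d_{\mu\lambda}$. You merely make explicit a couple of steps (the socle analysis of $M_\mu^\vee$, and the appeal to Proposition~\ref{genera}(ii)) that the paper leaves implicit.
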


\begin{proof} We compute 
$\dim \Hom(P_\lambda,M_\mu^\vee)$ in two ways. 
First using the standard filtration of $P_\lambda$ and Lemma \ref{exti}, we have 
$\dim \Hom(P_\lambda,M_\mu^\vee)=d_{\lambda\mu}^*$. On the other hand, 
using that the multiplicity of $L_\lambda$ in $M_\mu^\vee$ is $d_{\mu\lambda}$, we 
get $\dim \Hom(P_\lambda,M_\mu^\vee)=d_{\mu\lambda}$. 
\end{proof} 

Let $c_{\lambda\mu}=\dim \Hom(P_\lambda,P_\mu)$ be the entries 
of the Cartan matrix $C$ of $\mathcal O$. They are equal to the multiplicities 
of $L_\lambda$ in $P_\mu$. 

\begin{corollary} We have 
$$
c_{\lambda\mu}=\sum_\nu d_{\nu\lambda}d_{\nu\mu}.
$$
In other words, $C=D^TD$ where $D=(d_{\lambda\mu})$.
\end{corollary}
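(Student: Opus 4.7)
The plan is to compute the multiplicity $[P_\mu : L_\lambda]$ of the simple $L_\lambda$ in the projective cover $P_\mu$ in two different ways and match the results.

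First I would use Proposition \ref{genera}(ii) to identify $c_{\lambda\mu} = \dim\Hom(P_\lambda,P_\mu)$ with the Jordan--H\"older multiplicity $[P_\mu : L_\lambda]$. This is the interpretation that makes $c_{\lambda\mu}$ additive on short exact sequences, which is what allows the filtration argument to proceed.

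Next I would invoke Corollary \ref{exti3}(ii), which guarantees that $P_\mu$ admits a standard (Verma) filtration. The multiplicity of $M_\nu$ appearing as a successive quotient in any such filtration is the well-defined number $d_{\mu\nu}^*$ (independent of the filtration, as it is determined by the character identity). By additivity of composition multiplicities on short exact sequences, we then obtain
\begin{equation*}
c_{\lambda\mu} = [P_\mu : L_\lambda] = \sum_{\nu} d_{\mu\nu}^*\,[M_\nu : L_\lambda] = \sum_{\nu} d_{\mu\nu}^*\, d_{\nu\lambda}.
\end{equation*}

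Finally, I would apply the BGG reciprocity theorem (Theorem \ref{bggr}) just proved, which states $d_{\mu\nu}^* = d_{\nu\mu}$. Substituting yields
\begin{equation*}
c_{\lambda\mu} = \sum_{\nu} d_{\nu\mu}\, d_{\nu\lambda} = \sum_{\nu} d_{\nu\lambda}\, d_{\nu\mu},
\end{equation*}
which is the desired formula. In matrix form, writing $D^* = (d_{\lambda\mu}^*)$, the first computation reads $C = (D^*D)^T$, and BGG reciprocity $D^* = D^T$ then converts this to $C = D^T D$. There is no substantive obstacle in this argument; the work was already done in establishing that $P_\mu$ is standardly filtered (Corollary \ref{exti3}(ii)) and in proving BGG reciprocity itself.
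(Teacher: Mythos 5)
Your proof is correct and is precisely the argument the paper leaves implicit: the paper already records that $c_{\lambda\mu}=[P_\mu:L_\lambda]$, and filtering $P_\mu$ by Vermas (Corollary \ref{exti3}(ii)) gives $c_{\lambda\mu}=\sum_\nu d_{\mu\nu}^*\,d_{\nu\lambda}$, which BGG reciprocity (Theorem \ref{bggr}) converts into $\sum_\nu d_{\nu\mu}\,d_{\nu\lambda}$. Nothing is missing.
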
 

Note that since $D$ is upper triangular with respect to the partial order $\le$ with ones on the diagonal, it can be uniquely recovered from $C$ by Gauss decomposition. Thus the knowledge of $D$ is equivalent to the knowledge of $C$. 

\begin{example} Consider the structure of the category $\mathcal O_\chi$ for 
$\g=\mathfrak{sl}_2$. The only interesting case is $\chi=\chi_{\lambda+1}$ for $\lambda\in \Bbb Z_{\ge 0}$. Then the simple objects are $X=L_\lambda$ (finite-dimensional) and $Y=M_{-\lambda-2}$. 
By Proposition \ref{proje}, the projective cover $P_X$ is just the Verma module $M_\lambda$, which has composition series $[X,Y]$, starting from the head $X$. To determine 
$P_Y$, consider the tensor product $P:=M_{-1}\otimes L_{\lambda+1}$. This is projective with character 
$$
{\rm ch}(P)={\rm ch}(M_\lambda)+{\rm ch}(M_{\lambda-2})+...+{\rm ch}(M_{-\lambda-2}).
$$
Thus denoting by $\Pi_\lambda$ the projection functor to the generalized infinitesimal character $\chi_{\lambda+1}$,  we get that 
$$
{\rm ch}(\Pi_\lambda(P))={\rm ch}(M_\lambda)+{\rm ch}(M_{-\lambda-2}).
$$
Note that $M_{-\lambda-2}$ is not projective since $\Ext^1_\O(M_{-\lambda-2},L_\lambda)\ne 0$ (there is a nontrivial extension $M_\lambda^\vee$). Thus 
$\Pi_\lambda(P)$ is indecomposable (otherwise one of the summands in the decomposition would have to be $M_{-\lambda-2}$), i.e., $\Pi_\lambda(P)=P_Y$. 
Since it maps to $Y$ and receives an injection from $M_\lambda$, 
its composition series is $[Y,X,Y]$. This is the {\bf big projective object} 
of $\mathcal O_\chi$. We thus get for $\mathcal O_\chi$: 
$$
D=\begin{pmatrix} 1& 1\\ 0& 1\end{pmatrix},\ C= \begin{pmatrix} 1& 1\\ 1& 2\end{pmatrix}.
$$
We can now compute the (basic) algebra $A$ whose module category is equivalent to $\O_\chi$. This is the algebra $A=\End(P_X\oplus P_Y)$, and it has 
dimension $\sum_{i,j}c_{ij}=5$. The basis is formed by $1_X,1_Y$ and morphisms 
$a: P_X\to P_Y$, $b: P_Y\to P_X$ and $ab: P_Y\to P_Y$. 
Moreover, we have $ba=0$. Thus the algebra 
$A$ is the path algebra of the quiver with two vertices $x,y$ 
with edges $a: x\to y$ and $b: y\to x$ with the only relation $ba=0$.   
\end{example} 

\subsection{The duality functor} 

Let $\tau: \g\to \g$ be the {\bf Cartan involution} given by $\tau(e_i)=f_i$, $\tau(f_i)=e_i$, $\tau(h_i)=-h_i$. For $X\in \mathcal O$ let $X^\tau$ be the module 
$X$ twisted by $\tau$, and $X^\vee=(X^\tau)_{\rm fin}^*$, the $\h$-finite part of $(X^\tau)^*$. The following proposition is easy: 

\begin{proposition} (i) $X^\vee\in \mathcal O$ and has the same character and composition series as $X$. 

(ii) $(M_\lambda)^\vee=M_\lambda^\vee$, $L_\lambda^\vee=L_\lambda$. 

(iii) the assignment $X\mapsto X^\vee$ is an involutive equivalence of categories 
$\mathcal O\to \mathcal O^{\rm op}$ which preserves the decomposition into $\O_\chi(S)$. 
\end{proposition}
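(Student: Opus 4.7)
The plan is to derive everything from a single weight-space identification and to be careful about the order in which (i), (ii), (iii) are proved, since the finite generation of $X^\vee$ will come out only at the end. The anchor is the observation that since $\tau|_\h = -\mathrm{id}$, twisting interchanges $\h$-weight spaces by a sign: $(X^\tau)[\mu] = X[-\mu]$. A short direct calculation with the signed dual action then gives the canonical identification
\begin{equation*}
X^\vee[\mu] \;=\; \bigl((X^\tau)^*_{\rm fin}\bigr)[\mu] \;\cong\; X[\mu]^*.
\end{equation*}
By Lemma \ref{findim}, each $X[\mu]$ is finite-dimensional, so $X^\vee$ is $\h$-semisimple with finite-dimensional weight spaces, with $P(X^\vee)=P(X)\subset \bigcup_i(\lambda_i-Q_+)$, and with $\mathrm{ch}(X^\vee)=\mathrm{ch}(X)$. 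This gives the character claim of (i) and verifies everything needed to lie in $\mathcal O$ except finite generation, which I return to last.

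For (ii) I argue directly from the defining presentations. For $M_\lambda$: the highest weight vector $v_\lambda$ satisfies $h\cdot_\tau v_\lambda = -\lambda(h)v_\lambda$ and $f_i\cdot_\tau v_\lambda = e_i v_\lambda = 0$, and still generates, so $(M_\lambda)^\tau$ is a lowest weight module with lowest weight $-\lambda$; comparing characters (or using the universal property) shows $(M_\lambda)^\tau\cong \overline M_{-\lambda}$, and hence $(M_\lambda)^\vee\cong \overline M_{-\lambda}^*$, which is by definition $M_\lambda^\vee$. For $L_\lambda^\vee\cong L_\lambda$: once I know from (iii) that $\vee$ is an involutive equivalence, $L_\lambda^\vee$ is necessarily simple in $\mathcal O$; it has weight set $P(L_\lambda)$ with maximum $\lambda$, so $L_\lambda^\vee\cong L_\lambda$.

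For (iii) I construct the inverse natural isomorphism explicitly. Define $\eta_X\colon X\to (X^\vee)^\vee$ by evaluation, $\eta_X(x)(\phi)=\phi(x)$. Using $\tau^2=\mathrm{id}$ and the sign conventions, $\eta_X$ is a $\g$-module map, and by the weight-space identification above it restricts on each $X[\mu]$ to the usual finite-dimensional double-duality isomorphism $X[\mu]\to X[\mu]^{**}$. Hence $\eta_X$ is an isomorphism, and naturality is clear. Exactness of $\vee$ follows from the same weight-space formula: a short exact sequence in $\mathcal O$ becomes, weight-space by weight-space, a short exact sequence of finite-dimensional vector spaces, whose duals are again exact, and these reassemble into a short exact sequence. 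Combining exactness with involutivity, any proper nonzero submodule of $L_\mu^\vee$ would dualize to a proper nonzero quotient of $L_\mu$, so $L_\mu^\vee$ is simple. Thus if $X\in\mathcal O$ has composition series with factors $L_{\mu_k}$ (finite, since objects of $\mathcal O$ have finite length by the excerpt), then $X^\vee$ has a finite filtration with factors $L_{\mu_k}^\vee\cong L_{\mu_k}$; this makes $X^\vee$ finite length, hence finitely generated, hence in $\mathcal O$, completing (i). Finally, the block decomposition $\mathcal O = \bigoplus_{\chi,S}\mathcal O_\chi(S)$ is detected entirely by which simples appear in the composition series, and since $L_\mu^\vee\cong L_\mu$ this data is preserved, giving the last part of (iii).

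The one place where care is needed is the logical ordering: $X^\vee\in\mathcal O$ requires finite generation, which I get via finite length, which in turn requires that simples dualize to simples, which uses the involutive equivalence $\eta$. The key point that makes this non-circular is that $\eta_X$ and exactness can be established in the ambient category of $\h$-semisimple $\g$-modules with weights in $\bigcup_i(\lambda_i-Q_+)$ and finite-dimensional weight spaces, \emph{without} ever invoking finite generation; one then descends to $\mathcal O$ at the end using the length argument.
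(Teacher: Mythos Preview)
Your proof is correct and carefully organized; the paper itself offers no proof at all, labeling the proposition ``easy'' and moving on. Your argument follows exactly the standard route one would take to fill in the details --- the weight-space identification $X^\vee[\mu]\cong X[\mu]^*$, exactness checked weight by weight, involutivity via evaluation, and the deduction of finite generation from finite length --- so there is nothing to compare. The one point worth flagging is that your careful handling of the logical ordering (working in the ambient category of $\h$-semisimple modules with finite-dimensional weight spaces until finite generation is secured at the end) is exactly the right way to make the argument rigorous, and is the sort of thing the paper's ``easy'' is sweeping under the rug.
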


\begin{corollary} $\mathcal O$ has enough injectives, namely the injective hull of 
$L_\lambda$ is $P_\lambda^\vee$. 
\end{corollary}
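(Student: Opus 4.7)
The plan is to derive this corollary directly from the preceding proposition establishing that $(-)^\vee : \mathcal{O} \to \mathcal{O}^{\mathrm{op}}$ is an involutive equivalence of categories. Since an equivalence of categories between $\mathcal{C}$ and $\mathcal{D}^{\mathrm{op}}$ interchanges projective and injective objects (as the defining exactness property of $\Hom(P,-)$ becomes exactness of $\Hom(-,I)$ under dualization), and more generally interchanges epimorphisms with monomorphisms, all categorical statements about enough projectives translate into statements about enough injectives.

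Concretely, first I would verify that if $P$ is projective in $\mathcal{O}$, then $P^\vee$ is injective in $\mathcal{O}$. This is immediate from the natural isomorphism $\Hom_\mathcal{O}(X, P^\vee) \cong \Hom_\mathcal{O}(P, X^\vee)$, which follows from the fact that $(-)^\vee$ is a contravariant equivalence with inverse itself: the right-hand side is exact in $X^\vee$ (since $P$ is projective), and hence exact in $X$ (since $X \mapsto X^\vee$ is exact, being an equivalence). Second, to see that $\mathcal{O}$ has enough injectives, let $M \in \mathcal{O}$. By Corollary \ref{enoughpro}, there is a surjection $P \twoheadrightarrow M^\vee$ with $P$ projective; applying $(-)^\vee$ yields an injection $M = (M^\vee)^\vee \hookrightarrow P^\vee$ into an injective object.

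Finally, I would identify the injective hull of $L_\lambda$. Applying $(-)^\vee$ to the defining property of the projective cover $P_\lambda \twoheadrightarrow L_\lambda$ (as the minimal projective surjecting onto $L_\lambda$) yields an injection $L_\lambda = L_\lambda^\vee \hookrightarrow P_\lambda^\vee$ with $P_\lambda^\vee$ injective. To see that this is the injective hull, one must check the essentiality and minimality. The cleanest way is to use the equivalence formally: indecomposability of $P_\lambda$ translates to indecomposability of $P_\lambda^\vee$, and the property that $L_\lambda$ is the unique simple quotient of $P_\lambda$ (its head) translates, via $(-)^\vee$, to the property that $L_\lambda^\vee = L_\lambda$ is the unique simple submodule (socle) of $P_\lambda^\vee$; an indecomposable injective with simple socle $L_\lambda$ is precisely the injective hull of $L_\lambda$.

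There is no real obstacle here, as the duality functor does all the work; the only mild subtlety is to be explicit about the translation between the projective-cover characterization and the injective-hull characterization under the contravariant equivalence, rather than appealing vaguely to ``duality.''
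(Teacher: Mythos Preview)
Your proposal is correct and takes essentially the same approach as the paper: the corollary is an immediate consequence of the duality $(-)^\vee$ being an involutive contravariant self-equivalence of $\mathcal{O}$ with $L_\lambda^\vee = L_\lambda$, and your write-up just makes explicit the standard translation of projective covers into injective hulls under such an equivalence.
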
   

\subsection{The Jantzen filtration} 

It turns out that every Verma module $M_\lambda$ carries a canonical finite filtration by submodules called the {\bf Jantzen filtration}, which plays an important role in studying category $\mathcal O$. In fact, this filtration is defined much more generally, as follows. 

Let $k$ be a field and $V,W$ be free $k[[t]]$-modules  of the same rank $d<\infty$, and 
let $B\in \Hom(V,W)$ be such that $\det B:=\wedge^d B$ is nonzero. Let $V_0:= V/tV$. Define $V_m\subset V_0$ to be the space of all $v_0\in V_0$ such that there exists 
a lift $v\in V$ of $v_0$ for which $Bv\in t^mW$. 
It is clear that $V_0\supset V_1\supset V_2\supset...$ with $V_1={\rm Ker}B(0)$, and $V_m=0$ for some $m$. Thus we get a finite descending filtration $\lbrace V_j\rbrace$ of $V_0$ called the {\bf Jantzen filtration} attached to $B$.  

\begin{exercise}\label{JSF1} (i) Show that there exist unique nonnegative 
integers $n_1\le...\le n_d$ such that for some bases 
$e_1,...,e_d$ of $V$ and $f_1,...,f_d$ of $W$ over $k[[t]]$ 
one has $Be_i=t^{n_i}f_i$, and that ${\rm Coker}B\cong \oplus_{i=1}^d(k[t]/t^{n_i})$ as 
a $k[[t]]$-module. Deduce that the order of vanishing of $\det B$ 
at $t=0$ equals $\dim_k {\rm Coker}B=\sum_{i=1}^d n_i$.

(ii) Suppose $\dim V_j=d_j$ (so $d_0=d$). Show that for all $j\in \Bbb Z_{\ge 0}$, 
$n_i=j$ if and only if $d-d_j<i\le d-d_{j+1}$, and deduce 
the {\bf Jantzen sum formula}: the order of vanishing of $\det B$ 
at $t=0$ equals $\sum_{j\ge 1}d_j$. 

(iii) Suppose that $V,W$ are modules over some $k[[t]]$-algebra $A$ with $A_0:=A/tA$ (for example, $A=A_0[[t]]$ and $V,W$ are $A_0$-modules), and $B$ is an $A$-module homomorphism. Show that the Jantzen filtration of $V_0$ attached to $B$ is a filtration by $A_0$-submodules. 
\end{exercise} 

The Jantzen filtration on $M_\lambda$ is now defined 
using the homomorphism $B: M_{\lambda(t)}\to M_{\lambda(t)}^\vee$ 
over $A:=U(\g)[[t]]$ corresponding to the Shapovalov form, where $\lambda(t):=\lambda+t\rho$. 
Namely, we define it separately on each weight subspace. 
For example, $(M_\lambda)_1=J_\lambda$ 
is the maximal proper submodule of $M_\lambda$. 

\begin{exercise}\label{JSF} (Jantzen sum formula for $M_\lambda$) 
Use the Jantzen sum formula of Exercise \ref{JSF1} and the formula 
for the determinant of the Shapovalov form (Exercise \ref{Shapova}) to show that 
$$
\sum_{j\ge 1}{\rm ch}((M_\lambda)_j)=\sum_{\alpha\in R_+: (\lambda+\rho,\alpha^\vee)\in \Bbb Z_{\ge 1}}
{\rm ch}(M_{\lambda-(\lambda+\rho,\alpha^\vee)\alpha}). 
$$
\end{exercise}  

\subsection{The BGG theorem}

The following is the converse to Theorem \ref{vermath}. 

\begin{theorem}\label{BGGth} (Bernstein -- I. Gelfand -- S. Gelfand) 
If $L_{\mu-\rho}$ occurs in the composition series of $M_{\lambda-\rho}$
(i.e., $d_{\lambda-\rho,\mu-\rho}\ne 0$) then $\mu\preceq\lambda$.
\end{theorem}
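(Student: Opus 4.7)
The plan is to argue by strong induction on $\mathrm{ht}(\lambda-\mu) := (\rho^\vee,\lambda-\mu)$ over pairs $(\lambda,\mu)$ with $\mu\le\lambda$, using the Jantzen sum formula of Exercise \ref{JSF} as the key input. The hypothesis that $L_{\mu-\rho}$ is a composition factor of $M_{\lambda-\rho}$ already forces $\mu-\rho\le\lambda-\rho$, so $\lambda-\mu\in Q_+$ and the induction variable is a well-defined nonnegative integer. The base case $\mathrm{ht}(\lambda-\mu)=0$ gives $\mu=\lambda$, and $\mu\preceq\lambda$ is immediate.

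For the inductive step, assume $\mu\ne\lambda$. The unique simple quotient of $M_{\lambda-\rho}$ is $L_{\lambda-\rho}$, which has multiplicity exactly one in the Jordan--H\"older series (the weight space $M_{\lambda-\rho}[\lambda-\rho]$ being one-dimensional). Hence every occurrence of $L_{\mu-\rho}$ as a composition factor lies inside the maximal proper submodule $(M_{\lambda-\rho})_1$, so $L_{\mu-\rho}$ contributes a positive coefficient to $\mathrm{ch}((M_{\lambda-\rho})_1)$ and therefore to $\sum_{j\ge 1}\mathrm{ch}((M_{\lambda-\rho})_j)$. Substituting $\lambda-\rho$ for the parameter in Exercise \ref{JSF} and using $\lambda-\rho-(\lambda,\alpha^\vee)\alpha = s_\alpha\lambda - \rho$, the Jantzen sum formula reads
$$\sum_{j\ge 1}\mathrm{ch}((M_{\lambda-\rho})_j) \;=\; \sum_{\alpha\in R_+,\,(\lambda,\alpha^\vee)\in\mathbb{Z}_{\ge 1}}\mathrm{ch}(M_{s_\alpha\lambda-\rho}).$$
Expanding each Verma character on the right as a nonnegative integer combination of simple characters, and invoking the linear independence of $\{\mathrm{ch}(L_\nu)\}_{\nu\in\mathfrak{h}^*}$ in $\mathcal{R}$ (which follows by inspecting maximal weights in any putative relation), we conclude there exists $\alpha\in R_+$ with $(\lambda,\alpha^\vee)\in\mathbb{Z}_{\ge 1}$ such that $L_{\mu-\rho}$ is a composition factor of $M_{s_\alpha\lambda-\rho}$.

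In particular $\mu\le s_\alpha\lambda$, and
$$\mathrm{ht}(s_\alpha\lambda-\mu) \;=\; \mathrm{ht}(\lambda-\mu) - (\lambda,\alpha^\vee)\,\mathrm{ht}(\alpha) \;<\; \mathrm{ht}(\lambda-\mu),$$
since $(\lambda,\alpha^\vee)\ge 1$ and $\mathrm{ht}(\alpha)\ge 1$ for $\alpha\in R_+$. The inductive hypothesis applied to the pair $(s_\alpha\lambda,\mu)$ yields $\mu\preceq s_\alpha\lambda$. On the other hand, $s_\alpha\lambda = \lambda-(\lambda,\alpha^\vee)\alpha < \lambda$ together with $s_\alpha\lambda = s_\alpha(\lambda)$ gives $s_\alpha\lambda <_\alpha \lambda$ by the very definition of $<_\alpha$. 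Concatenating chains produces $\mu\preceq s_\alpha\lambda \preceq \lambda$, completing the induction.

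The main conceptual step is thus the reduction to the Jantzen sum formula; once that formula is granted, the argument is a short, clean induction. The one place deserving care is the passage from an identity of formal characters to a statement about an actual composition factor appearing in a specific Verma on the right-hand side, which rests on the linear independence of simple characters. Combined with the already-proven Verma Theorem \ref{vermath}, this result characterizes the multiplicities $d_{\lambda-\rho,\mu-\rho}$ as being nonzero precisely when $\mu\preceq\lambda$ -- a condition strictly stronger than $\mu\in W\lambda$ with $\mu\le\lambda$, as Remark \ref{falseingen} already warned.
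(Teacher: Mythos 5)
Your proof is essentially identical to the paper's: both argue by induction on $(\rho^\vee,\lambda-\mu)$, reduce to showing that $L_{\mu-\rho}$ lies in $(M_{\lambda-\rho})_1$, and invoke the Jantzen sum formula of Exercise \ref{JSF} to force $L_{\mu-\rho}$ to occur in some $M_{s_\alpha\lambda-\rho}$ with $s_\alpha\lambda\prec\lambda$, so the induction closes. Your writeup is slightly more explicit (spelling out why the head has multiplicity one, why the contribution lands in $(M_{\lambda-\rho})_1$, and the role of linear independence of simple characters), but the structure and all key steps coincide.
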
 

\begin{proof} It is clear that $\lambda-\mu\in Q_+$. 
The proof is by induction in the integer $n:=(\lambda-\mu,\rho^\vee)$. 
If $n=0$, the statement is obvious, so we only need 
to justify the induction step for $n>0$. Then $L_{\mu-\rho}$ occurs 
in $J_{\lambda-\rho}=(M_{\lambda-\rho})_1$, 
the degree $1$ part of the Jantzen filtration of $M_{\lambda-\rho}$. 
Thus by the Jantzen sum formula (Exercise \ref{JSF}), 
$L_{\mu-\rho}$ must occur in $M_{\lambda-\rho-(\lambda,\alpha^\vee)\alpha}=M_{s_\alpha\lambda-\rho}$ 
for some $\alpha\in R_+$ such that $(\lambda,\alpha^\vee)\in \Bbb Z_{\ge 1}$. 
By the induction assumption, we then have $\mu\preceq s_\alpha\lambda$. 
But $s_\alpha\lambda\prec\lambda$, so we get $\mu\prec\lambda$. 
\end{proof} 

\begin{corollary} The following conditions on $\mu\le \lambda$ are equivalent.

(i) $\mu\preceq \lambda$

(ii) $L_{\mu-\rho}$ occurs in $M_{\lambda-\rho}$.

(iii) $\dim \Hom(M_{\mu-\rho},M_{\lambda-\rho})\ne 0$.
\end{corollary}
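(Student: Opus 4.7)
The plan is to establish the three equivalences by going around the cycle (i) $\Rightarrow$ (iii) $\Rightarrow$ (ii) $\Rightarrow$ (i), since each implication is essentially a result already recorded in the excerpt. None of the steps should require a new computation; the whole content of the corollary is that Verma's theorem and the BGG theorem together with the injectivity of nonzero Verma homomorphisms pin down the three conditions.

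First, for (i) $\Rightarrow$ (iii), I will simply invoke Verma's theorem \ref{vermath}, which says that $\mu \preceq \lambda$ implies $\dim \Hom(M_{\mu-\rho}, M_{\lambda-\rho}) = 1$, and in particular this Hom space is nonzero.

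Next, for (iii) $\Rightarrow$ (ii), I will recall from Exercise \ref{dimhom}(i) that every nonzero homomorphism $M_{\mu-\rho} \to M_{\lambda-\rho}$ is injective (because $U(\n_-)$ is a domain, so $M_{\lambda-\rho}$ has no zero divisors as a $U(\n_-)$-module). Hence a nonzero Hom gives an embedding $M_{\mu-\rho} \hookrightarrow M_{\lambda-\rho}$, and since $L_{\mu-\rho}$ appears in the composition series of $M_{\mu-\rho}$ (as its unique irreducible quotient), it also appears in the composition series of $M_{\lambda-\rho}$.

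Finally, for (ii) $\Rightarrow$ (i), this is exactly the content of the Bernstein--Gelfand--Gelfand theorem \ref{BGGth} just proved above: if $L_{\mu-\rho}$ is a composition factor of $M_{\lambda-\rho}$, then $\mu \preceq \lambda$. No step looks like a real obstacle: the only nontrivial input is already Theorem \ref{BGGth}, whose proof (via the Jantzen sum formula and induction on $(\lambda-\mu,\rho^\vee)$) is given above. So the corollary is essentially a repackaging: the assumption $\mu \le \lambda$ is just what is needed to make the three statements comparable (otherwise all of (i)--(iii) are automatically false), and the cycle closes cleanly.
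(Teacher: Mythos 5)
Your proof is correct and matches the approach the paper implicitly intends: the corollary is stated without proof right after Theorem \ref{BGGth} precisely because it is a direct combination of Verma's theorem (Theorem \ref{vermath}), the injectivity of nonzero maps between Verma modules (Exercise \ref{dimhom}(i)), and the BGG theorem. Your cycle (i) $\Rightarrow$ (iii) $\Rightarrow$ (ii) $\Rightarrow$ (i) is the natural way to organize these three facts, and no step requires anything beyond what is recorded in the surrounding text.
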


\section{\bf Multiplicities in category $\mathcal O$}

The multiplicities $d_{\lambda\mu}$ are complicated in general, and 
the (eventually successful) attempt to understand them was one of the main 
developments that led to creation of geometric representation theory. 
These multiplicities are given by the {\bf Kazhdan-Lusztig conjecture} (1979) proved by Beilinson-Bernstein and independently by Brylinski-Kashiwara in 1981. 
By now several proofs of this conjecture are known, but they are complicated 
and beyond the scope of this course. However, let us give the statement of this result. To simplify the exposition, we do so for $\mathcal O_{\chi_\lambda}$ 
when $\lambda\in P_+$; it turns out that this case captures all the complexity of the situation, and the general case is similar.

\subsection{The Hecke algebra} 
Even to formulate the Kazhdan-Lusztig conjecture, we need to introduce an object which seemingly has nothing to do with our problem - the {\bf Hecke algebra} of $W$. Namely, recall that $W$ is defined by generators $s_i, i=1,...,r$ subject to
the braid relations 
$$
s_is_j...=s_js_i...,\ i\ne j,
$$
where the length of both words is $m_{ij}$ such that $a_{ij}a_{ji}=4\cos^2 \frac{\pi}{m_{ij}}$(for $a_{ji}a_{ij}=0,1,2,3$, $m_{ij}=2,3,4,6$), and also the relations $s_i^2=1$.
The same relations of course define the group algebra $\Bbb Z W$, in which the last relation can be written as the quadratic relation $(s_i+1)(s_i-1)=0$. The {\bf Hecke algebra} $H_q(W)$ of $W$ is defined over $\Bbb Z[q^{\frac{1}{2}},q^{-\frac{1}{2}}]$ by the generators $T_i$ satisfying the same braid relations 
$$
T_iT_j...=T_jT_i...,\ i\ne j,
$$
and the deformed quadratic relations
$$
(T_i+1)(T_i-q)=0.
$$ 

For every $w\in W$ we can define the element $T_w=T_{i_1}...T_{i_m}$ 
for every reduced decomposition $w=s_{i_1}....s_{i_m}$. This is independent of the reduced decomposition since any two of them can be related by using only the braid relations. Moreover, it is easy to see that the elements $T_w$ span $H_q(W)$, since 
any non-reduced product of $T_i$ can be expressed via shorter products by using the braid and quadratic relations for $T_i$. Moreover, we have 

\begin{proposition} $T_w,w\in W$ are linearly independent, so they form a basis 
of $H_q(W)$. Thus $H_q(W)$ is a free $\Bbb Z[q^{\frac{1}{2}},q^{-\frac{1}{2}}]$-module of rank $|W|$. 
\end{proposition}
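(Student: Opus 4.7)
The plan is to prove linear independence by constructing a faithful representation of $H_q(W)$ on a free $\Bbb Z[q^{1/2},q^{-1/2}]$-module of rank $|W|$ in which the elements $T_w$ act as distinct basis vectors. Set $R := \Bbb Z[q^{1/2},q^{-1/2}]$ and let $M$ be the free $R$-module with basis $\{e_w : w \in W\}$. I would define operators $L_i \in \End_R(M)$ for each simple reflection by
$$
L_i\, e_w := \begin{cases} e_{s_i w}, & \ell(s_i w) > \ell(w), \\ q\, e_{s_i w} + (q-1)\, e_w, & \ell(s_i w) < \ell(w). \end{cases}
$$
The goal is to show that these $L_i$ satisfy the defining relations of $H_q(W)$, which yields a homomorphism $\rho: H_q(W) \to \End_R(M)$. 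Once that is done, a straightforward induction on $\ell(w)$ using a reduced decomposition $w = s_{i_1}\cdots s_{i_m}$ shows that $\rho(T_w)\, e_1 = e_w$. Hence the images $\rho(T_w)$ are linearly independent, so the $T_w$ themselves are linearly independent in $H_q(W)$; combined with the already-established spanning property, this gives the basis statement.

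The first relation to check is the quadratic relation $(L_i + 1)(L_i - q) = 0$. This reduces to a two-case calculation on each basis vector $e_w$, depending on whether $\ell(s_i w)$ is bigger or smaller than $\ell(w)$. In the first case one computes $L_i^2 e_w = L_i e_{s_i w} = q\, e_w + (q-1)\, e_{s_i w}$ (since $\ell(s_i \cdot s_i w) < \ell(s_i w)$), and then $(L_i^2 - (q-1) L_i - q)\, e_w = 0$ is immediate. The second case is symmetric.

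The main obstacle will be verifying the braid relations $L_i L_j L_i \cdots = L_j L_i L_j \cdots$ (each side a product of $m_{ij}$ factors) when $i \neq j$. My approach is to reduce this to a rank-two computation: fix $i \neq j$ and decompose $W$ into right cosets of the dihedral subgroup $W_{ij} = \langle s_i, s_j \rangle$, writing each $w \in W$ uniquely as $w = u \cdot v$ with $u$ a minimal-length coset representative and $v \in W_{ij}$, so that $\ell(w) = \ell(u) + \ell(v)$. A standard fact from Coxeter group theory says that for any $v \in W_{ij}$ and any simple reflection $s \in \{s_i, s_j\}$, one has $\ell(s \cdot uv) - \ell(uv)$ equal to $\ell(s \cdot v) - \ell(v)$ computed in $W_{ij}$. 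Using this, the action of $L_i$ and $L_j$ on the span of $\{e_{uv} : v \in W_{ij}\}$ mimics the action on the free rank-$|W_{ij}|$ module for $W_{ij}$ itself. This reduces the braid check to the dihedral case $I_2(m)$ with $m \in \{2,3,4,6\}$ (and $\infty$ is irrelevant here since $W$ is finite), where it becomes a finite explicit computation: one can verify by direct calculation that both sides of the braid identity, when applied to each of the $2m$ basis vectors of the dihedral module, yield the same expression. I expect the $m = 3, 4, 6$ cases to be the lengthiest step but entirely mechanical.

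Finally, to conclude: the existence of $\rho$ forces $T_w \mapsto \rho(T_w)$ and, as noted, $\rho(T_w) e_1 = e_w$ by induction using that any reduced word $w = s_{i_1} \cdots s_{i_m}$ satisfies $\ell(s_{i_k} \cdot s_{i_{k+1}} \cdots s_{i_m}) > \ell(s_{i_{k+1}} \cdots s_{i_m})$ at each step. Linear independence of the $T_w$ in $H_q(W)$ follows, and the $R$-module rank is exactly $|W|$.
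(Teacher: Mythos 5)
Your strategy---linear independence via a faithful module of rank $|W|$---and your left action $L_i$ are the same as the paper's left action $T_i$ on the module $V$ with basis $X_w$. The crucial difference is how the braid relations are verified. The paper introduces a second, \emph{commuting} right action of the $T_i$ on the same free module, observes that $X_w = X_1 T_w$, and uses the commutativity to reduce every relation-check to the single vector $X_1$, where the braid relation becomes the trivial identity $X_{s_is_j\cdots} = X_{s_js_i\cdots}$. You instead decompose $W$ relative to the dihedral parabolic $W_{ij}$ and reduce the braid relation to an explicit rank-two computation. Both routes are legitimate; the paper's trades a separate (case-analytic) verification that the two actions commute for a complete avoidance of explicit braid calculations, while yours is more self-contained but requires the finite checks for $m=2,3,4,6$.

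However, your coset decomposition is set up on the wrong side, and the ``standard fact'' you invoke is false as you have stated it. Since $L_i$ multiplies on the \emph{left}, the $W_{ij}$-factor must sit on the \emph{left} of $w$: you should decompose $W$ into right cosets $W_{ij}\backslash W$ and write $w = v\cdot u$ with $v \in W_{ij}$ and $u$ the minimal-length representative of $W_{ij}w$. Then $s_i w = (s_i v)u$, the span of $\{e_{vu} : v\in W_{ij}\}$ for fixed $u$ is stable under $L_i,L_j$, and the identity $\ell(s_i vu) - \ell(vu) = \ell(s_i v) - \ell(v)$ does hold because $\ell(v'u) = \ell(v') + \ell(u)$ for every $v' \in W_{ij}$. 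With your decomposition $w = u\cdot v$, $v\in W_{ij}$, the product $s_i w = (s_i u)v$ changes the $u$-factor, the proposed subspaces are not preserved, and the length identity fails: e.g.\ in $W=S_4$ with $W_{ij}=\langle s_1,s_2\rangle$, taking $u=s_2s_3$, $v=1$, $s=s_2$ gives $\ell(s\cdot uv)-\ell(uv) = \ell(s_3)-\ell(s_2s_3) = -1$, whereas $\ell(sv)-\ell(v)=+1$. Once the decomposition is flipped, the reduction to the dihedral case and the induction establishing $\rho(T_w)e_1 = e_w$ are sound.
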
 

\begin{proof} 
Let $V$ be the free $\Bbb Z[q^{\frac{1}{2}},q^{-\frac{1}{2}}]$-module with basis 
$X_w,w\in W$. Define a left action of the free algebra with generators $T_i$ on $V$ by 
$$
T_iX_w=X_{s_iw}
$$
if $\ell(s_iw)=\ell(w)+1$ and 
$$
T_iX_w=(q-1)X_w+qX_{s_iw}
$$
if $\ell(s_iw)=\ell(w)-1$. 
We claim that this action factors through $H_q(W)$.  
To show this, define a right action of the same free algebra on $V$ by 
$$
X_wT_i=X_{ws_i}
$$
if $\ell(ws_i)=\ell(w)+1$ and 
$$
X_wT_i=(q-1)X_w+qX_{ws_i}
$$
if $\ell(ws_i)=\ell(w)-1$. It is easy to check by a direct computation 
that these two actions commute: 
\begin{equation}\label{commu}
(T_iX_w)T_j=T_i(X_wT_j).
\end{equation} 
Also the elements $X_1T_w$ clearly span $V$. 
Thus to prove the relations of $H_q(W)$ for the left action, it suffices 
to check them on $X_1$, which is straightforward. 

Since $T_wX_1=X_w$ are linearly independent, it follows that $T_w$ are linearly independent, as claimed.  
\end{proof} 

\begin{exercise} Check identity \eqref{commu}. 
\end{exercise} 

The quadratic relation for $T_i$ implies that it is invertible in the Hecke algebra, with inverse 
$$
T_i^{-1} = q^{-1}(T_i + 1 -q).
$$ 
These inverses satisfy the relation $(T_i^{-1} + 1)(T_i^{-1}-q^{-1}) = 0$ (obtained by multiplying the quadratic relation for $T_i$ by $-T_i^{-2}q^{-1}$), and also the braid relations. It follows that the Hecke algebra has an involutive automorphism $D$ that sends $q^{\frac{1}{2}}$ to $q^{-\frac{1}{2}}$ and each $T_i$ to $T_i^{-1}$. More generally one has $D ( T_w ) = T_{w^{-1}}^{- 1}$. 

\subsection{The Bruhat order} 
Recall that the partial {\bf Bruhat order} on $W$ is defined as follows: $y\le w$ if a reduced decomposition of $y$ can be obtained from a reduced decomposition of $w$ by crossing out some $s_i$; thus 
$y\le w$ implies that $\ell(y)\le \ell(w)$, and if the equality holds then $y=w$. Moreover, 
if $\ell(w)=\ell(y)+1$ then $y<w$ iff $y=y_1y_2$ and $w=y_1s_iy_2$ for some $i$, where 
$\ell(y)=\ell(y_1)+\ell(y_2)$. In this case we say that $w$ {\bf covers} $y$, and 
$y\le w$ iff there exists a sequence $y=x_0<x_1<...<x_m=w$ such that $x_{j+1}$ covers $x_j$ for all $j$ (here $m=\ell(w)-\ell(y)$).  

\begin{exercise} Show that if $y\le w$ then for any dominant $\lambda\in P$, $w\lambda\preceq y\lambda$, and 
the converse holds if $\lambda$ is regular (i.e., $W_\lambda=1$). 
\end{exercise} 

\begin{example} For type $A_1$ the Bruhat order is the covering relation $1<s$. For 
type $A_2$ the covering relations are 
$$
1<s_1,s_2<
s_1s_2,s_2s_1<s_1s_2s_1=s_2s_1s_2.
$$ 
\end{example} 

\subsection{Kazhdan-Lusztig polynomials} 
\begin{theorem} There exist unique polynomials $P_{y,w}\in \Bbb Z[q]$ 
such that 

(a) $P_{y,w}=0$ unless $y\le w$, and $P_{w,w}=1$; 

(b) If $y<w$ then $P_{y,w}$ has degree at most $\frac{\ell(w)-\ell(y)-1}{2}$; 

(c) The elements 
$$
C_w:=q^{-\frac{\ell(w)}{2}}\sum_y P_{y,w}(q)T_y\in H_q(W)
$$
satisfy $D(C_w)=C_w$. 
\end{theorem}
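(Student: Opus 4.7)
The strategy, following Kazhdan--Lusztig's original approach, is to separate uniqueness from existence, with the $R$-polynomials (the expansion coefficients of $D(T_w)$) as the common tool.

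\emph{Uniqueness.} First I would expand $D(T_w)=T_{w^{-1}}^{-1}$ in the basis $\{T_y\}$ by induction on $\ell(w)$, using $T_i^{-1} = q^{-1}T_i + q^{-1}-1$. This produces polynomials $R_{y,w}(q)\in\Bbb Z[q]$ with $R_{w,w}=1$, $\deg R_{y,w}=\ell(w)-\ell(y)$, and
$$
D(T_w) \;=\; q^{-\ell(w)}\sum_{y\le w} R_{y,w}(q)\,T_y.
$$
Substituting $C_w = q^{-\ell(w)/2}\sum_y P_{y,w}(q)T_y$ into $D(C_w)=C_w$ and comparing coefficients of $T_x$ yields
$$
P_{x,w}(q) - q^{\ell(w)-\ell(x)}P_{x,w}(q^{-1}) \;=\; \sum_{x<y\le w} q^{\ell(w)-\ell(y)}\,R_{x,y}(q)\,P_{y,w}(q^{-1}),
$$
a system triangular in $x$ with respect to the Bruhat order. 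Starting from $P_{w,w}=1$ and working downward, the right-hand side is already determined at stage $x$, and the map $P\mapsto P(q) - q^{\ell(w)-\ell(x)}P(q^{-1})$ is injective on polynomials of degree $\le (\ell(w)-\ell(x)-1)/2$. One verifies the right-hand side lies in its image, so $P_{x,w}$ is pinned down uniquely.

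\emph{Existence.} I would argue by induction on $\ell(w)$. The base case is $C_e := 1$. For $w\ne e$, pick a simple reflection $s=s_i$ with $sw<w$, and consider $C_s:=q^{-1/2}(T_s+1)$; a direct check using $D(T_s)=T_s^{-1} = q^{-1}T_s + q^{-1}-1$ gives $D(C_s)=C_s$. Then $C':=C_s\,C_{sw}$ is $D$-invariant as a product of $D$-invariants. Expanding $C'$ via the rules $T_sT_y=T_{sy}$ if $sy>y$ and $T_sT_y=(q-1)T_y+qT_{sy}$ if $sy<y$ shows $C'=q^{-\ell(w)/2}T_w + \sum_{y<w}c_y T_y$ for explicit Laurent coefficients $c_y$; the $T_w$-coefficient is already correct. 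The $c_y$ may violate the degree bound (b), but only for $y$ with $sy<y$, and only in a single extremal monomial. I would then set
$$
C_w \;:=\; C' \;-\; \sum_{\substack{y<w\\ sy<y}} \mu_y\, C_y,
$$
where $\mu_y\in\Bbb Z$ is the coefficient of the offending top power in $c_y$. Since each $C_y$ is $D$-invariant, so is $C_w$, and the correction enforces (b) at each such $y$.

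\emph{Main obstacle.} The delicate step is verifying that the correction is self-consistent. Specifically: (i) each $\mu_y$ is a genuine integer rather than a general element of $\Bbb Z[q^{\pm 1/2}]$, which follows from a parity check that only a single monomial in the $T_y$-coefficient of $(T_s+1)C_{sw}$ can produce excess degree; and (ii) subtracting $\mu_y C_y$ does not create new violations at lower $z<y$, which requires that the degrees of the $P_{z,y}$ appearing in $C_y$ are bounded by $(\ell(y)-\ell(z)-1)/2$, so that $q^{(\ell(w)-\ell(y))/2}P_{z,y}$ still satisfies the bound $(\ell(w)-\ell(z)-1)/2$ at $z$. This combinatorial bookkeeping---ultimately resting on properties of the Bruhat order and on parities of length differences---is the technical heart of the argument and the main place where the integrality of $P_{y,w}$ becomes nontrivial.
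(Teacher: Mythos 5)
Your uniqueness argument matches the paper exactly: define the $R$-polynomials by expanding $T_{w^{-1}}^{-1}$, translate $D(C_w)=C_w$ into a recursion that is triangular in the Bruhat order, and observe that the two sides of the resulting equation for $P_{x,w}$ live in disjoint ranges of powers of $q^{1/2}$, so $P_{x,w}$ is pinned down by downward induction.

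For existence you diverge from the paper. The paper does not separate existence from uniqueness at all: it reads existence off the same recursion, by noting that the right-hand side is a Laurent polynomial in $q^{1/2}$ with no constant term, which therefore splits uniquely into a "strictly positive powers" part and a "strictly negative powers" part, and these two parts must be the two terms on the left. (Strictly speaking this also requires checking that the two parts are swapped, up to sign, by $D$ — equivalently that the right-hand side is $D$-anti-invariant, which follows from $D^2=\mathrm{id}$ and the corresponding identity for the $R$-polynomials; the paper leaves this implicit.) You instead take the original Kazhdan--Lusztig route: construct $C_w$ as $C_sC_{sw}$ minus integer multiples $\mu_yC_y$, using the multiplicative structure of $H_q(W)$ and already-constructed $D$-invariant elements. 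This is more work — the bookkeeping you flag in your "main obstacle" paragraph really is the crux, and needs to be carried out carefully: the degree bound for $sy<y$ can be saturated by exactly one monomial, whose coefficient is an integer by a parity argument, and subtracting $\mu_yC_y$ cannot spoil the bound at lower $z$ because the degrees of $P_{z,y}$ already satisfy (b). But it pays off: it proves integrality of the $P_{y,w}$ directly, it produces the $\mu$-function (which is itself an important piece of Kazhdan--Lusztig combinatorics), and it gives the multiplication rule for the $C_w$ basis as a by-product. The paper's route is shorter but less constructive, since it has to recover these facts after the fact from the recursion. Both are standard and correct; they differ genuinely in what they take as the mechanism of existence.

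One small thing to double-check in your write-up: you state the recursion with the roles of $q$ and $q^{-1}$ slightly different from the paper's display (the paper has $q^{(\ell(w)-\ell(x))/2}P_{x,w}(q^{-1}) - q^{(\ell(x)-\ell(w))/2}P_{x,w}(q)$ on the left and $R_{x,y}(q^{-1})P_{y,w}(q)$ on the right). The two normalizations are related by applying $D$, but you should be consistent so that the "disjoint ranges of powers" observation is literally true with the sign conventions you choose.
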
 

\begin{proof} 
Let $y=s_{i_1}...s_{i_l}$ be a reduced decomposition of $y$. Then we have 
$$
T_{y^{-1}}^{-1}=\prod_{j=1}^l T_{i_j}^{-1}=q^{-\ell(y)}\prod_{j=1}^l (T_{i_j} + 1 -q).
$$
Thus there exist unique polynomials $R_{x,y}\in \Bbb Z[q]$ such that 
$$
D(T_y)=T_{y^{-1}}^{-1}=\sum_x q^{-\ell(x)}R_{x,y}(q^{-1})T_x,
$$
with $R_{x,y}=0$ unless  $x=y$ (in which case $R_{x,y}(q)=1$) or $\ell(x)<\ell(y)$. It is easy to check that $R_{x,y}$ can be computed using the following recursive rules: for a simple reflection $s$, 
$$
R_{x,y}=R_{sx,sy},\ sx<x,sy<y;
$$
$$
R_{x,y}=(q-1)R_{x,sy}+qR_{sx,sy}, sx>x,sy<y.
$$
(we have $R_{x,1}=\delta_{x,1}$ and for $y\ne 1$ there is always $i$ such that $s_iy<y$). 
This implies by induction in $\ell(y)$ that $R_{x,y}=0$ unless $x\le y$. Indeed, 
if $x':=sx<x,y':=sy<y$ then $R_{x,y}=R_{x',y'}$, so if this is nonzero then by the induction assumption $x'\le y'$, hence $sx'\le sy'$, i.e., $x\le y$.  
On the other hand, if $sx>x$, $sy<y$ and $R_{x,y}\ne 0$ then either $R_{x,sy}\ne 0$ or $R_{sx,sy}\ne 0$, hence either 
$x\le sy$ or $sx\le sy$. But 
each one of the inequalities $x\le sy$, $sx\le sy$
implies $x\le y$. 

We also see by induction that $\deg R_{x,y}\le \ell(y)-\ell(x)$. 

Now it is easy to compute that the condition that $D(C_w)=C_w$ is equivalent to the recursion 
$$ 
q^{\frac {\ell (w)-\ell (x)}{2}}P_{x,w}(q^{-1})-
q^{\frac{\ell (x)-\ell (w)}{2}}
P_{x,w}(q)=
$$
$$
\sum _{x<y}
(-1)^{\ell (x)+\ell (y)}
q^{\frac{-\ell(x)+2\ell (y)-\ell (w)}{2}}R_{x,y}(q^{-1})P_{y,w}(q).
 $$
 We can now see that this recursion has a unique solution $P_{x,w}$ with required properties, as
the two terms on the left are supposed to be polynomials in $q^{\frac{1}{2}}$ and $q^{-\frac{1}{2}}$ without constant terms. 
\end{proof} 

The elements $C_w$ form a basis of the Hecke algebra called the {\bf Kazhdan-Lusztig} basis, and the polynomials 
$P_{y,w}$ are called the {\bf Kazhdan-Lusztig polynomials}. 

\subsection{Kazhdan-Lusztig conjecture} 
The Kazhdan-Lusztig conjecture (now a theorem) is: 

\begin{theorem} (i) $P_{y,w}$ has non-negative coefficients. 
 
(ii) The multiplicity $[M_{y\bullet \lambda}:L_{w\bullet \lambda}]$ 
equals $P_{y,w}(1)$. 
\end{theorem}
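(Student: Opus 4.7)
The plan is to follow the Beilinson-Bernstein strategy: translate the multiplicity question into a geometric problem on the flag variety $G/B$, then read off the answer from intersection cohomology. Fix $\lambda \in P_+$ regular (the singular case reduces to this via translation functors, cf.\ the projective functors of Sections 22--25). First I would establish the Beilinson-Bernstein localization theorem: the global sections functor gives an equivalence between the category of quasi-coherent $\mathcal{D}_\lambda$-modules on $G/B$ (where $\mathcal{D}_\lambda$ is the sheaf of $\lambda$-twisted differential operators) and $U(\g)$-modules with infinitesimal character $\chi_\lambda$. Under this equivalence, the block $\mathcal{O}_{\chi_\lambda}$ corresponds to the full subcategory of $B$-equivariant (hence holonomic with regular singularities) $\mathcal{D}_\lambda$-modules smooth along the Schubert stratification $G/B = \bigsqcup_{w \in W} X_w^\circ$, where $X_w^\circ = BwB/B$ and $X_w = \overline{X_w^\circ}$.

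Next I would identify the relevant simple and standard objects under this dictionary. Writing $j_w : X_w^\circ \hookrightarrow G/B$ for the locally closed embedding, the Verma module $M_{w\bullet\lambda}$ corresponds to the standard $\mathcal{D}$-module $(j_w)_! \mathcal{O}_{X_w^\circ}$ (twisted appropriately), and the simple quotient $L_{w\bullet\lambda}$ corresponds to the simple $\mathcal{D}$-module $(j_w)_{!*} \mathcal{O}_{X_w^\circ}$, which is the minimal extension. Passing through the Riemann-Hilbert correspondence (an equivalence between regular holonomic $\mathcal{D}$-modules and perverse sheaves on $G/B$), these go to the standard perverse sheaf $j_{w,!} \underline{\mathbb{C}}_{X_w^\circ}[\ell(w)]$ and the intersection cohomology sheaf $IC(X_w)$ respectively. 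The multiplicity $[M_{y\bullet\lambda} : L_{w\bullet\lambda}]$ thereby becomes the multiplicity of $IC(X_w)$ as a composition factor of the standard perverse sheaf supported on $X_y$, which by the local-global principle for perverse sheaves equals $\sum_i \dim \mathcal{H}^{-i}(IC(X_w))_{x_y}$ for any point $x_y \in X_y^\circ$.

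Then I would invoke the Kazhdan-Lusztig geometric identity, which asserts that the Poincaré polynomial of local intersection cohomology stalks
\[
\widetilde{P}_{y,w}(q) := \sum_i \dim \mathcal{H}^{-\ell(w) + 2i}(IC(X_w))_{x_y}\, q^i
\]
coincides with the combinatorially defined Kazhdan-Lusztig polynomial $P_{y,w}$. To prove this I would verify that the $\widetilde{P}_{y,w}$ satisfy the defining recursion for $P_{y,w}$: the bound on degree follows from Gabber's purity/vanishing theorem for IC sheaves on Schubert varieties (degree strictly less than $\frac{\ell(w)-\ell(y)}{2}$ for $y<w$), and the self-duality $D(C_w)=C_w$ reflects Verdier self-duality of $IC(X_w)$. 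Uniqueness of the KL basis then forces $\widetilde{P}_{y,w} = P_{y,w}$. With this identification, both conclusions of the theorem follow: non-negativity of coefficients of $P_{y,w}$ is automatic from $\dim \mathcal{H}^{-i}(IC) \ge 0$, and the specialization $P_{y,w}(1)$ computes the total dimension of the stalk, which equals the desired multiplicity.

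The main obstacle is the localization theorem itself, together with the nontrivial geometric input at the end: decomposition theorem / Gabber purity to guarantee that $IC(X_w)$ has cohomology concentrated in even degrees relative to the appropriate shift, and the verification that the standard perverse sheaves $j_{w,!}\underline{\mathbb{C}}[\ell(w)]$ correspond to Verma modules (not just up to numerical invariants but as actual objects). Setting up twisted $\mathcal{D}$-modules, $B$-equivariance, and the intertwining with the Harish-Chandra action of $Z(\g)$ via the Harish-Chandra isomorphism is substantial technical overhead, and the purity statement underlying non-negativity is the deepest geometric ingredient, relying on the theory of weights for mixed $\ell$-adic or mixed Hodge structures on Schubert varieties.
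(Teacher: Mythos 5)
The paper does not prove this theorem. Immediately before stating it, the text reads: ``By now several proofs of this conjecture are known, but they are complicated and beyond the scope of this course. However, let us give the statement of this result.'' So there is no proof in the paper against which to compare your argument line by line.

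That said, your outline is a correct account of the Beilinson--Bernstein/Brylinski--Kashiwara proof, and it identifies all the essential steps: localization, the Riemann--Hilbert correspondence, the dictionary between category $\O_{\chi_\lambda}$ and $B$-equivariant twisted perverse sheaves on $G/B$ stratified by Schubert cells, the identification of Verma and simple highest weight modules with standard and IC perverse sheaves respectively, the matching of Kazhdan--Lusztig polynomials with local Poincar\'e polynomials of $IC(X_w)$, and the reliance on purity as the deepest geometric input. Two points deserve emphasis. First, the localization theorem as the paper states it (Theorem 29.4) applies to antidominant weights; with your choice of $\lambda \in P_+$ regular you must either dualize or replace $\lambda$ by an antidominant representative, and keeping track of which of $j_!$ or $j_*$ gives Verma (rather than dual Verma) under this convention is exactly the kind of sign issue that has historically caused confusion. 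Second, purity does double duty in the argument: you invoke it for the degree bound $\deg P_{y,w}\le \frac{\ell(w)-\ell(y)-1}{2}$, but it is also needed at the step where you pass from the composition multiplicity (computed as a local Euler characteristic of stalks in the Grothendieck group) to an honest sum of stalk dimensions. Without parity vanishing the Grothendieck-group computation would give an alternating sum, not $P_{y,w}(1)$. Your ``local-global principle'' phrasing slides over this, and it is worth making the two separate uses of purity explicit, since parity vanishing is what simultaneously proves non-negativity (part (i)) and converts signed multiplicities into unsigned ones (part (ii)).
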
 

The polynomials $P_{y,w}$ have the property that if $y\le w$ then $P_{y,w}(0)=1$, so if in addition $\ell(w)-\ell(y)\le 2$ then $P_{y,w}(q)=1$ (indeed, it has to be a polynomial of degree $0$). Also if $w=w_0$ then
$P_{y,w}=1$ for all $y$. 

\begin{example} For type $A_2$ ($\g=\mathfrak{sl}_3$) we have the following decompositions in the Grothendieck group of $\mathcal O_{\chi_\lambda}$ (where we 
abbreviate $s_{i_1}...s_{i_k}\cdot \lambda$ as $i_1...i_k$: 
\begin{gather*}
M_{121}=L_{121}\\
M_{12}=L_{12}+L_{121}\\
M_{21}=L_{21}+L_{121}\\
M_1=L_1+L_{12}+L_{21}+L_{121}\\
M_2=L_2+L_{12}+L_{21}+L_{121}\\
M_\emptyset=L_\emptyset+L_1+L_2+L_{12}+L_{21}+L_{121}.
\end{gather*}
\end{example} 

\begin{exercise} Compute the Cartan matrix of the category $\mathcal O_{\chi_\lambda}$ for $\g=\mathfrak{sl}_3$ for regular weights $\lambda$. 
\end{exercise} 

\section{\bf Projective functors - I} 

\subsection{Projective functors and projective $\theta$-functors.} 

Let $\Rep(\g)_f$ be the category of $\g$-modules in which the center $Z(\g)$ of $U(\g)$ acts through its finite-dimensional quotient. We have 
$$
\Rep(\g)_f=\oplus_{\theta\in \h^*/W}\Rep(\g)_\theta,
$$ 
where 
$\Rep(\g)_\theta$ is the category of modules with generalized infinitesimal character $\theta$. 
Recall that for a finite-dimensional $\g$-module $V$ we have an exact functor
$F_V: \Rep(\g)\to \Rep(\g)$ given by $X\mapsto V\otimes X$ (e.g., $F_{\Bbb C}={\rm Id}$), and that if 
$M$ has infinitesimal character $\chi_\lambda$ then 
$$
F_V(M)=(V\otimes U_{\chi_\lambda})\otimes_{U_{\chi_\lambda}}M.
$$
Recall also that the infinitesimal characters occurring in the left $\g$-module $V\otimes U_{\chi_\lambda}$ are $\chi_{\lambda+\beta}$ for $\beta\in P(V)$ (Corollary \ref{cechar}); thus 
the infinitesimal characters occurring in $F_V(M)$ belong to the same set. 
It follows that 
$$
F_V(\Rep(\g)_{\chi_\lambda})\subset \oplus_{\beta\in P(V)}\Rep(\g)_{\chi_{\lambda+\beta}},
$$ 
hence $F_V$ maps $\Rep(\g)_f$ to itself. Finally note that 
$F_{V^*}$ is both right and left adjoint to $F_V$. 

\begin{definition} A {\bf projective functor} is an endofunctor of $\Rep(\g)_f$ 
which is isomorphic to a direct summand in $F_V$ for some $V$. 
\end{definition} 

\begin{example} For $\theta\in \h^*/W$ let $\Pi_\theta: \Rep(\g)_f\to \Rep(\g)_\theta$ be the projection. Then ${\rm Id}=\oplus_{\theta\in \h^*/W}\Pi_\theta$, hence $\Pi_\theta$ is a projective functor. 
\end{example}

It is easy to see that projective functors form a category which is closed under taking compositions, direct summands and finite direct sums, and every projective functor admits a left and right adjoint which are also projective functors (we'll see that they are isomorphic). It is also clear that every projective functor $F$ has a decomposition 
$$
F=\oplus_{\theta,\chi\in \h^*/W}\Pi_\chi\circ F\circ \Pi_\theta.
$$
Finally, projective functors obviously map category $\mathcal O$ to itself and by Proposition \ref{proje1}(i) send projectives of this category to projectives. 

For an infinitesimal character $\theta: Z(\g)\to \Bbb C$ let $\Rep(\g)_\theta^n\subset \Rep(\g)_
\theta$ be the subcategory of modules annihilated by $(\Ker \theta)^n$. In other words, 
$\Rep(\g)_\theta^n$ is the category of left modules over the algebra 
$$
U_\theta^{(n)}:=U(\g)/(\Ker \theta)^nU(\g).
$$ 
Every $M\in\Rep(\g)_\theta$ is the nested union of submodules $M_n\subset M$
of elements killed by $({\rm Ker}\theta)^n$, and $M_n\in \Rep(\g)_\theta^n$. Note that $U_\theta^{(1)}=U_\theta$ and 
$\Rep(\g)_\theta^1$ is the category of modules with infinitesimal character $\theta$. 

For a projective functor $F$ denote by $F(\theta)$ the restriction of $F$ to $\Rep(\g)_\theta^1$. 

\begin{definition} A {\bf projective $\theta$-functor} is a direct summand in 
$F_V(\theta)$.
\end{definition} 

For example, if $F$ is a projective functor then $F(\theta)$ is a projective $\theta$-functor. 

\begin{theorem}\label{projiso} Let $F_1,F_2$ be projective $\theta$-functors for $\theta=\chi_\lambda$. Let 
$$
i_\lambda: \Hom(F_1,F_2)\to \Hom(F_1(M_{\lambda-\rho}),F_2(M_{\lambda-\rho})).
$$
Then $i_\lambda$ is an isomorphism. 
\end{theorem}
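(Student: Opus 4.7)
The plan is to combine the Yoneda lemma with the Duflo--Joseph theorem (Corollary \ref{iso}) to give a purely algebraic description of the Hom spaces involved.  By definition $F_1, F_2$ are direct summands of $F_{V_1}(\theta), F_{V_2}(\theta)$ for some finite-dimensional $\g$-modules $V_1, V_2$; both $\Hom(F_1, F_2)$ and $\Hom(F_1(M_{\lambda-\rho}), F_2(M_{\lambda-\rho}))$ respect direct sums and idempotent splittings, while $i_\lambda$ is functorial in $F_1, F_2$.  It therefore suffices to prove the theorem in the case $F_i = F_{V_i}(\theta)$.

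Identifying $\Rep(\g)_\theta^1$ with the category of left $U_\theta$-modules (for which $U_\theta$ itself is a free rank-one projective generator), the Yoneda lemma provides a natural bijection between natural transformations $F_{V_1}(\theta) \to F_{V_2}(\theta)$ and $(\g, U_\theta)$-bimodule maps $V_1 \otimes U_\theta \to V_2 \otimes U_\theta$: in one direction $\eta \mapsto \eta_{U_\theta}$, and in the other direction $\phi$ is sent to the natural transformation whose $M$-component is $\eta_M := \phi \otimes_{U_\theta} \text{id}_M$ (using the canonical identification $(V_i \otimes U_\theta) \otimes_{U_\theta} M \cong V_i \otimes M$).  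By the Duflo--Joseph theorem (Corollary \ref{natmap}) we have canonical bimodule isomorphisms $V_i \otimes U_\theta \cong \Hom_{\rm fin}(M_{\lambda-\rho}, V_i \otimes M_{\lambda-\rho})$.  Writing $M := M_{\lambda-\rho}$, the final and key step is to establish a natural bijection
$$\Hom_{\g\text{-bimod}}(\Hom_{\rm fin}(M, V_1 \otimes M),\, \Hom_{\rm fin}(M, V_2 \otimes M)) \;\cong\; \Hom_\g(V_1 \otimes M, V_2 \otimes M)$$
via $\psi \mapsto (f \mapsto \psi \circ f)$.  For surjectivity, given a bimodule map $\phi$ set $\psi(v \otimes m) := \phi(\hat v)(m)$, where $\hat v \in \Hom_{\rm fin}(M, V_1 \otimes M)$ is the map $m \mapsto v \otimes m$; $\g$-linearity of $\psi$ is forced by the Leibniz-type identity $a \cdot \hat v = \widehat{av} + \hat v \cdot a$ in $\Hom_{\rm fin}(M, V_1 \otimes M)$ (where the two dots denote the left and right bimodule actions, given respectively by post-composition on $V_1 \otimes M$ and pre-composition on $M$), combined with the bimodule linearity of $\phi$.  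Injectivity is immediate because the vectors $\hat v(m) = v \otimes m$ span $V_1 \otimes M$.  A direct check confirms this bijection is compatible with the specialization $\phi \mapsto \phi \otimes_{U_\theta} \text{id}_M$, so the composite of the three bijections sends $\eta$ to $\eta_M = i_\lambda(\eta)$, completing the proof.

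The main obstacle lies in the final step: one has to carefully reconcile the $\g$-bimodule structure on $\Hom_{\rm fin}(M, V_i \otimes M)$ (post-composition on the left, pre-composition on the right) with the bimodule structure on $V_i \otimes U_\theta$ transported via Duflo--Joseph, and verify that post-composition with a $\g$-linear $\psi$ is genuinely bimodule-linear.  This compatibility rests on the single Leibniz identity $a \cdot \hat v = \widehat{av} + \hat v \cdot a$ and on correctly identifying the natural evaluation $(V_i \otimes U_\theta) \otimes_{U_\theta} M \to V_i \otimes M$ under Duflo--Joseph; careful bookkeeping of conventions for the left, right, and adjoint $\g$-actions is essential to avoid a sign error that would silently break the argument.
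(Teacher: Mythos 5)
Your proof is correct in essence, but it takes a genuinely different (and longer) route than the paper's. The paper first performs a clean reduction using the adjunction $F_{V_1^*} \dashv F_{V_1}$: setting $V := V_1^* \otimes V_2$ one has $\Hom(F_{V_1}(\theta), F_{V_2}(\theta)) \cong \Hom(\operatorname{Id}(\theta), F_V(\theta))$, which by the Eilenberg--Watts argument equals $\Hom_{(U(\g),U_\theta)\text{-bimod}}(U_\theta, V\otimes U_\theta) = (V\otimes U_\theta)^{\g_{\rm ad}}$, after which the Duflo--Joseph isomorphism restricted to $\g_{\rm ad}$-invariants finishes the proof in one stroke. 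You skip that reduction, keep both $V_1$ and $V_2$ around, and consequently need an extra ``key step'': the bijection
$$
\Hom_{\g\text{-bimod}}(\Hom_{\rm fin}(M, V_1 \otimes M), \Hom_{\rm fin}(M, V_2 \otimes M)) \cong \Hom_\g(V_1\otimes M, V_2\otimes M).
$$
This step is true and your verification is mostly sound (your Leibniz identity $a\cdot\hat v = \widehat{av} + \hat v\cdot a$ is the heart of it, and it correctly forces $\g$-linearity of $\psi$), but you omit one point in the surjectivity argument: after constructing $\psi$ from $\phi$ and checking $\g$-linearity, you still need to confirm that $(\psi\circ -) = \phi$ on \emph{all} of $\Hom_{\rm fin}(M,V_1\otimes M)$, not just on the elements $\hat v$. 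This does follow --- both $\phi$ and $\psi\circ -$ are right $U_\theta$-module maps (the right action being precomposition), and the $\hat v$ generate $\Hom_{\rm fin}(M,V_1\otimes M)$ as a right $U_\theta$-module because under Duflo--Joseph $\hat v$ corresponds to $v\otimes 1$, and $V_1\otimes 1$ generates $V_1\otimes U_\theta$ --- but you should say so. Finally, what you call ``Yoneda'' is really the Eilenberg--Watts correspondence between right-exact functors and bimodules; the paper states it in that form without naming it. In short, your approach buys you nothing but costs you the extra bijection; the paper's adjunction trick absorbs all of that bookkeeping into a single application of Duflo--Joseph.
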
 

\begin{proof} It suffices to assume $F_j=F_{V_j}(\theta)$, $j=1,2$. Let $V=V_1^*\otimes V_2$. Then $\Hom(F_1,F_2)=\Hom({\rm Id}(\theta),F_V(\theta))$ and 
$$
\Hom(F_1(M_{\lambda-\rho}),F_2(M_{\lambda-\rho}))=\Hom_\g(M_{\lambda-\rho},V\otimes M_{\lambda-\rho}).
$$ 
Thus it suffices to show that the natural map 
$$
i_\lambda: \Hom({\rm Id}(\theta), F_V(\theta))\to \Hom_\g(M_{\lambda-\rho},V\otimes M_{\lambda-\rho})
$$
is an isomorphism. 

Recall that for associative unital algebras $A,B$, a right exact functor 
$F: A-{\rm mod}\to B-{\rm mod}$ has the form 
$F(X)=F(A)\otimes_A X$, where $F(A)$ is the corresponding 
$(B,A)$-bimodule. Thus if $F_1,F_2$ are two such functors then 
$\Hom(F_1,F_2)\cong \Hom_{(B,A)-{\rm bimod}}(F_1(A),F_2(A))$. 
Applying this to $A=U_\theta$ and $B=U(\g)$, we get 
$$
\Hom({\rm Id}(\theta),F_V(\theta))=\Hom_{(U(\g),U_\theta)-{\rm bimod}}(U_\theta,V\otimes U_\theta)=(V\otimes U_\theta)^{\g_{\rm ad}}.
$$
Moreover, upon this identification the map $i_\lambda$ becomes the natural map 
$$
i_\lambda: (V\otimes U_{\chi_\lambda})^{\g_{\rm ad}}\to \Hom(M_{\lambda-\rho},V\otimes M_{\lambda-\rho})^{\g_{\rm ad}}.
$$
But this map is an isomorphism by the Duflo-Joseph theorem, as it is obtained by restricting the Duflo-Joseph isomorphism
$$
U_{\chi_\lambda}\cong \Hom_{\rm fin}(M_{\lambda-\rho},M_{\lambda-\rho})
$$ 
to the multiplicity space of $V^*$. 
\end{proof} 

\subsection{Lifting projective $\theta$-functors.} 

\begin{proposition}\label{lif} (i) If $F_1,F_2$ are projective functors then 
every morphism $\phi: F_1(\theta)\to F_2(\theta)$ lifts to a morphism 
$\widehat\phi: F_1|_{\Rep(\g)_\theta}\to F_2|_{\Rep(\g)_\theta}$. 

(ii) If $F_1=F_2$ and $\phi^2=\phi$ then we can choose $\widehat \phi$ so that $\widehat\phi^2=\widehat\phi$.

(iii) If $\phi$ is an isomorphism then so is $\widehat \phi$. 
\end{proposition}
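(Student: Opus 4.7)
The plan is to reduce to the case $F_j=F_{V_j}$, generalize Theorem \ref{projiso} from $\Rep(\g)_\theta^1$ to each ``nilpotent thickening'' $\Rep(\g)_\theta^n$ of modules annihilated by $(\Ker\theta)^n$, use Kostant's theorem (Theorem \ref{Kos1}) to identify morphism spaces explicitly, and then pass to the inverse limit over $n$. The reduction step is straightforward: if $F_j$ is a summand of $F_{V_j}$ cut out by natural transformations $p_j,\iota_j\in\End(F_{V_j})$ (these are defined on all of $\Rep(\g)_f$, hence restrict compatibly to each $\Rep(\g)_\theta^n$), then $\phi$ extends by zero to $\widetilde\phi:=\iota_2\circ\phi\circ p_1$, and any lift $\widehat{\widetilde\phi}$ of $\widetilde\phi$ yields $\widehat\phi:=p_2\circ\widehat{\widetilde\phi}\circ\iota_1$ lifting $\phi$. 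So I may assume $F_j=F_{V_j}$.

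For the main computation, the bimodule argument of Theorem \ref{projiso} applies verbatim with $U_\theta$ replaced by $U_\theta^{(n)}:=U(\g)/(\Ker\theta)^nU(\g)$, yielding
\[
\Hom\bigl(F_{V_1}|_{\Rep(\g)_\theta^n},F_{V_2}|_{\Rep(\g)_\theta^n}\bigr)\;\cong\;(V\otimes U_\theta^{(n)})^{\g_{\rm ad}},\qquad V:=V_1^*\otimes V_2,
\]
with transition maps induced by the quotients $U_\theta^{(n)}\twoheadrightarrow U_\theta^{(m)}$. By Kostant's theorem, $U(\g)\cong Z(\g)\otimes_{\Bbb C}H$ as $(Z(\g),\g_{\rm ad})$-modules for a $\g_{\rm ad}$-invariant complement $H$ of $Z(\g)_+U(\g)$; reducing modulo $(\Ker\theta)^n$ gives $U_\theta^{(n)}\cong Z_n\otimes H$ where $Z_n:=Z(\g)/(\Ker\theta)^n$, and since $\g_{\rm ad}$ acts trivially on the central factor,
\[
(V\otimes U_\theta^{(n)})^{\g_{\rm ad}}\;\cong\;Z_n\otimes_{\Bbb C}(V\otimes H)^{\g_{\rm ad}},
\]
a free $Z_n$-module of rank $\dim V[0]$. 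Taking the inverse limit (which computes $\Hom$ on the union $\Rep(\g)_\theta=\bigcup_n\Rep(\g)_\theta^n$) gives
\[
\Hom\bigl(F_{V_1}|_{\Rep(\g)_\theta},F_{V_2}|_{\Rep(\g)_\theta}\bigr)\;\cong\;\widehat{Z}_\theta\otimes_{\Bbb C}(V\otimes H)^{\g_{\rm ad}},
\]
where $\widehat{Z}_\theta=\varprojlim Z_n$ is the completion at $\Ker\theta$; the restriction to $\Rep(\g)_\theta^1$ is the reduction modulo the maximal ideal $\widehat{\mathfrak m}\subset\widehat{Z}_\theta$, which is surjective and split by $1\in\widehat{Z}_\theta$. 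This proves (i).

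For (ii), with $F_1=F_2$ the above identifies $\End(F_1|_{\Rep(\g)_\theta})\cong\widehat{Z}_\theta\otimes_{\Bbb C}\End(F_1(\theta))$ as $\widehat{Z}_\theta$-algebras, with restriction being reduction modulo $\widehat{\mathfrak m}$. Since $\widehat{Z}_\theta$ is a complete local Noetherian ring (hence Henselian), idempotents lift through $\widehat{\mathfrak m}\otimes\End(F_1(\theta))$: starting from any lift $\widehat\phi_0$ supplied by (i), successively correct modulo $\widehat{\mathfrak m}^k$ (e.g.\ using $e\mapsto 3e^2-2e^3$, which fixes $e^2-e$ through a square-zero ideal) and take the limit in the $\widehat{\mathfrak m}$-adic topology. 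For (iii), lift $\phi^{-1}$ to $\widehat\psi$ by (i); the compositions $\widehat\phi\widehat\psi$ and $\widehat\psi\widehat\phi$ have the form $\Id+\nu$ with $\nu$ in the kernel of restriction. On any $M\in\Rep(\g)_\theta^n$, the element $\widehat{\mathfrak m}^n$ acts through $(\Ker\theta)^n\cdot M=0$, so $\nu_M$ is nilpotent of order at most $n$ and $(\Id+\nu)_M$ has the naturally defined inverse $\sum_{k\ge 0}(-\nu_M)^k$ (a finite geometric series). These assemble into an inverse to $\Id+\nu$, so $\widehat\phi$ is an isomorphism.

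The main obstacle is the bookkeeping in the depth-$n$ generalization of Theorem \ref{projiso}: one must check that the bimodule computation survives verbatim when $U_\theta$ is replaced by the non-semisimple algebra $U_\theta^{(n)}$, and that Kostant's $\g_{\rm ad}$-equivariant tensor splitting $U(\g)\cong Z(\g)\otimes H$ descends cleanly to each quotient $U_\theta^{(n)}$. Once the identification with $Z_n\otimes(V\otimes H)^{\g_{\rm ad}}$ is established, the remaining statements reduce to formal commutative algebra over a complete local Noetherian base together with the finite-order nilpotency of $\widehat{\mathfrak m}$ acting on objects of $\Rep(\g)_\theta^n$.
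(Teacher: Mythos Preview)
Your argument is correct and follows essentially the same route as the paper: identify $\Hom(F_1|_{\Rep(\g)_\theta^n},F_2|_{\Rep(\g)_\theta^n})$ with $(V\otimes U_\theta^{(n)})^{\g_{\rm ad}}$, observe that the transition maps are surjective, and pass to the inverse limit; then lift idempotents (respectively invertibles) through the resulting square-zero extensions. Your invocation of Kostant's theorem to write $(V\otimes U_\theta^{(n)})^{\g_{\rm ad}}\cong Z_n\otimes(V\otimes H)^{\g_{\rm ad}}$ is a pleasant addition that makes the $\widehat Z_\theta$-module structure explicit and slightly streamlines (ii) and (iii) compared to the paper's bare step-by-step induction.

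One slip worth flagging: in (ii) you assert that $\End(F_1|_{\Rep(\g)_\theta})\cong\widehat{Z}_\theta\otimes_{\Bbb C}\End(F_1(\theta))$ holds \emph{as $\widehat{Z}_\theta$-algebras}. This is unjustified and generally false --- the Kostant splitting $U(\g)\cong Z(\g)\otimes H$ is only a splitting of $(Z(\g),\g_{\rm ad})$-modules, not of algebras, so the product of two elements of $H$ inside $U_\theta^{(n)}$ acquires nontrivial $Z_n$-components. Fortunately your argument does not actually use this: the successive $e\mapsto 3e^2-2e^3$ correction only needs that $A:=\End(F_1|_{\Rep(\g)_\theta})$ is a $\widehat{Z}_\theta$-algebra which is finite as a $\widehat{Z}_\theta$-module, $\widehat{\mathfrak m}$-adically complete, with $A/\widehat{\mathfrak m}A\cong\End(F_1(\theta))$, all of which you have established. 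Replace ``as $\widehat Z_\theta$-algebras'' by ``as $\widehat Z_\theta$-modules'' and the proof stands.
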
 

\begin{proof} (i) It suffices to show that 
there exist morphisms 
$$
\phi_n: F_1|_{\Rep(\g)_\theta^n}\to F_2|_{\Rep(\g)_\theta^n}
$$ 
such that $\phi_n$ restricts to $\phi_{n-1}$ and $\phi_1=\phi$; then $\widehat \phi$ is the projective limit of $\phi_n$. As before, we may assume without loss of generality that $F_1={\rm Id}$ and $F_2=F_V$. As explained in the proof of Theorem \ref{projiso}, we have 
$$
\Hom(F_1|_{\Rep(\g)_\theta^n},F_2|_{\Rep(\g)_\theta^n})=
$$
$$
\Hom_{(U(\g),U_\theta^{(n)})-{\rm bimod}}(U_\theta^{(n)},V\otimes U_\theta^{(n)})=
(V\otimes U_\theta^{(n)})^{\g_{\rm ad}}. 
$$
This implies the statement, as the map $U_\theta^{(n)}\to U_\theta^{(n-1)}$ is onto and 
$V\otimes U_\theta^{(n)}$ is a semisimple $\g_{\rm ad}$-module. 

(ii) Let $F$ be a direct summand in $F_V$. Let $p: F_V\to F_V$ be the projection to $F$. Let $A:=\End(F(\theta))=p{\rm End}(F_V(\theta))p$ and $\phi\in A$. 
Let $F^n(\theta)$ be the restriction of $F$ to $\Rep(\g)_\theta^n$, so 
that $F^1(\theta)=F(\theta)$. We have 
$$
A_n:={\rm End}(F^n(\theta))=p{\rm End}(F_V^n(\theta))p=p({\rm End V}\otimes U_\theta^{(n)})^{\g_{\rm ad}} p.
$$
So we have a chain of surjective homomorphisms 
$$
...\to A_n\to A_{n-1}\to ...\to A_1=A 
$$
and our job is to show that $\phi$ admits a chain of lifts 
$$
...\mapsto \phi_n\mapsto \phi_{n-1}\mapsto...\mapsto \phi_1=\phi
$$
such that $\phi_n\in A_n$ and $\phi_n^2=\phi_n$. 

To this end, note that the kernel $I$ of the surjection $A_n\to A_{n-1}$ satisfies 
$I^2=0$, so $I$ is a left and right module over $A_n/I=A_{n-1}$. So we can construct the desired chain of lifts by induction in $n$ as follows. 
Pick any lift $e_*$ of $e_0:=\phi_{n-1}$. 
Then $e_*-e_*^2=a\in I$, and $e_0a=ae_0$.
We look for an idempotent $e$ in the form $e=e_*+b$, $b\in I$.
The equation $e^2=e$ is then equivalent to
$$
e_0b+be_0-b=a.
$$
Set $b=(2e_0-1)a$. Then
$$
e_0b+be_0-b=2e_0a+(1-2e_0)a=a,
$$
as desired. Now we can set $\phi_n=e$.  

(iii) If $\phi: F_1(\theta)\to F_2(\theta)$ is an isomorphism then it has the inverse $\psi: F_2(\theta)\to F_1(\theta)$ such that
$\phi\circ \psi=1$, $\psi\circ\phi=1$. Let $\widehat \phi=(\phi_n)$ be a lift of $\phi$. Our job is to show that $\phi_n$ are isomorphisms for all $n$, which yields (iii). 
We prove it by induction in $n$. 

The base is trivial, so we just need to do the induction step from $n-1$ to $n$. By the induction assumption, $\phi_{n-1}$ is invertible with $\phi_{n-1}^{-1}=\psi_{n-1}$. 
Let $\psi_n$ be a lift of $\psi_{n-1}$ and consider the composition $\psi_n\circ \phi_n$ in the corresponding algebra $A_n$. Let $I$ be the kernel of the map $A_n\to A_{n-1}$. Then $\psi_n\circ \phi_n=1+a$ where $a\in I$. Since $I^2=0$, setting $\psi_n':=(1-a)\circ \psi_n$, we get $\psi_n'\circ \phi_n=1$. Similarly we can construct $\psi_n''$ such that $\phi_n\circ \psi_n''=1$. Thus  $\psi_n'=\psi_n''$ is the inverse of $\phi_n$. 
This completes the induction step. 
\end{proof} 

\begin{corollary}\label{projfuu} (i) Let $F_1,F_2$ be projective functors. Then:
any isomorphism 
$F_1(M_{\lambda-\rho})\cong F_2(M_{\lambda-\rho})$ lifts to an isomorphism 
$$
F_1|_{\Rep(\g)_{\chi_\lambda}}\to F_2|_{\Rep(\g)_{\chi_\lambda}};
$$ 

(ii) Let $F$ be a projective functor. Then any decomposition $F(M_{\lambda-\rho})=\oplus_i M_i$ can be lifted 
to a decomposition $F=\oplus_i F_i$ where $F_i$ are projective functors 
and $F_i(M_{\lambda-\rho})=M_i$; 

(iii) Every projective $\theta$-functor is of the form $F(\theta)$ for a projective functor $F$. 
\end{corollary}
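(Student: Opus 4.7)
The plan is to derive all three parts from Theorem \ref{projiso}, which identifies $\Hom$ between projective $\theta$-functors with $\Hom$ between their values on $M_{\lambda-\rho}$, combined with Proposition \ref{lif}, which provides the lifting mechanism from the level of $\theta$-functors up to $\Rep(\g)_{\chi_\lambda}$.

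For part (i), I would first invoke Theorem \ref{projiso} with $\theta = \chi_\lambda$ to promote the isomorphism $\phi : F_1(M_{\lambda-\rho}) \to F_2(M_{\lambda-\rho})$ to a morphism $\widetilde{\phi} : F_1(\theta) \to F_2(\theta)$ of projective $\theta$-functors. Applying the same bijection to $\phi^{-1}$ yields $\widetilde{\psi}$ in the opposite direction, and the uniqueness in $i_\lambda$ forces $\widetilde{\phi}\widetilde{\psi}$ and $\widetilde{\psi}\widetilde{\phi}$ to be identity morphisms (since their values on $M_{\lambda-\rho}$ are the identity). Hence $\widetilde{\phi}$ is an isomorphism of projective $\theta$-functors, and Proposition \ref{lif}(iii) lifts it to the required isomorphism $\widehat{\phi} : F_1|_{\Rep(\g)_{\chi_\lambda}} \to F_2|_{\Rep(\g)_{\chi_\lambda}}$.

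For part (ii), let $p_i \in \End(F(M_{\lambda-\rho}))$ be the orthogonal idempotents attached to the decomposition $\oplus_i M_i$. By Theorem \ref{projiso} each $p_i$ is the value at $M_{\lambda-\rho}$ of a unique morphism $\widetilde{p}_i \in \End(F(\theta))$, and the identities $p_i^2 = p_i$, $p_i p_j = 0$ for $i \neq j$, and $\sum_i p_i = 1$ transfer to $\widetilde{p}_i$ by the same uniqueness statement applied to the compositions. The heart of the argument is then to lift the $\widetilde{p}_i$ to orthogonal idempotents $\widehat{p}_i \in \End(F|_{\Rep(\g)_{\chi_\lambda}})$ summing to the identity. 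I would proceed inductively: apply Proposition \ref{lif}(ii) to lift $\widetilde{p}_1$ to an idempotent $\widehat{p}_1$; then $\Image(1 - \widehat{p}_1)$ is a direct summand of $F$ and hence still a projective functor (being a direct summand of the same $F_V$), so Proposition \ref{lif}(ii) applies inside its endomorphism ring to lift $\widetilde{p}_2$ to an idempotent $\widehat{p}_2$ automatically orthogonal to $\widehat{p}_1$. Iterating in shrinking corner rings produces the full orthogonal system, and setting $F_i := \Image(\widehat{p}_i)$ gives the desired decomposition with $F_i(M_{\lambda-\rho}) = M_i$; each $F_i$ is a projective functor by part (iii).

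For part (iii), present a projective $\theta$-functor $\mathcal{F}$ as the image of an idempotent $p \in \End(F_V(\theta))$ for some finite dimensional $V$. Proposition \ref{lif}(ii) lifts $p$ to an idempotent $\widehat{p} \in \End(F_V|_{\Rep(\g)_\theta})$, whose image $\widetilde{F}$ is a functor on $\Rep(\g)_\theta$ with $\widetilde{F}(\theta) = \mathcal{F}$. To obtain an endofunctor of $\Rep(\g)_f$, define $F := \widetilde{F} \circ \Pi_\theta$; this is a direct summand of $F_V \circ \Pi_\theta$, which is a composition of projective functors and hence itself projective, so $F$ is a projective functor satisfying $F(\theta) = \mathcal{F}$. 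The main technical obstacle throughout is the simultaneous lifting of orthogonal idempotents in (ii), since Proposition \ref{lif}(ii) only lifts one idempotent at a time; the subtlety is to verify at each stage of the induction that the residual summand $\Image(1 - \widehat{p}_1 - \cdots - \widehat{p}_{k-1})$ is still a projective functor, so that the lifting proposition can be applied inside its endomorphism corner ring and the resulting lifts remain orthogonal.
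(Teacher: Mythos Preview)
Your proposal is correct and follows essentially the same route as the paper: Theorem \ref{projiso} transfers statements about morphisms of $\theta$-functors to statements about their values on $M_{\lambda-\rho}$, and Proposition \ref{lif} provides the lifts to $\Rep(\g)_\theta$. Two small remarks: in (ii) your appeal to ``part (iii)'' to conclude that each $F_i$ is a projective functor is misplaced --- the correct reason is simply that $F_i$ (after precomposing with $\Pi_\theta$) is a direct summand of the projective functor $F\circ\Pi_\theta$; and the paper organises the logic slightly differently by deducing (iii) from (ii) (decompose $F_V(M_{\lambda-\rho}) = H(M_{\lambda-\rho})\oplus H'(M_{\lambda-\rho})$, apply (ii), then invoke Theorem \ref{projiso}) rather than proving (iii) independently, but your direct idempotent-lifting argument for (iii) is equally valid and amounts to the same computation.
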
 

\begin{proof} 
(i) follows from Proposition \ref{lif}(i),(iii) and Theorem \ref{projiso}. 

(ii) follows from Proposition \ref{lif}(ii). 

To prove (iii), let $H$ be a projective $\theta$-functor, so 
$H\oplus H'=F_V(\theta)$. Thus $H(M_{\lambda-\rho})\oplus H'(M_{\lambda-\rho})=F_V(M_{\lambda-\rho})$. 
So by (ii) there is a projective functor $F$ with 
$ F(\theta)(M_{\lambda-\rho}) \cong F(M_{\lambda-\rho})\cong H(M_{\lambda-\rho})$. Thus $H\cong F(\theta)$ by Theorem \ref{projiso}.  
\end{proof} 

\subsection{Decomposition of projective functors} 

\begin{proposition}\label{projfuu2} (i) Each projective functor $F$ is a direct sum of indecomposable projective functors. Moreover, for $F\circ \Pi_\theta$ this sum is finite.  

(ii) If $F=F\circ \Pi_{\chi_\lambda}$ for dominant $\lambda$ is an indecomposable projective functor then $F(M_{\lambda-\rho})=P_{\mu-\rho}$ for some $\mu\in \h^*$. 
\end{proposition}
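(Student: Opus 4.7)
The plan is to reduce both parts to the Krull--Schmidt decomposition of a projective object in $\mathcal O$, then transport the resulting decomposition across the lifting machinery of Corollary~\ref{projfuu}. Since any projective functor is a direct summand of some $F_V$ and $F_V$ commutes with arbitrary direct sums, one has $F=\bigoplus_{\theta\in\h^*/W}F\circ\Pi_\theta$, so for (i) it will suffice to show that each summand $F\circ\Pi_\theta$ is a finite direct sum of indecomposable projective functors.

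For a fixed $\theta$ I would choose a dominant $\lambda\in\h^*$ with $\chi_\lambda=\theta$, as provided by Corollary~\ref{domchar}, and then study the object $F(M_{\lambda-\rho})$. It is a direct summand of $V\otimes M_{\lambda-\rho}$, which by Corollary~\ref{proje1}(ii) is projective in $\mathcal O$; hence so is $F(M_{\lambda-\rho})$. Since every object of $\mathcal O$ has finite length, it decomposes as a finite direct sum $F(M_{\lambda-\rho})=\bigoplus_{i=1}^N P_{\mu_i-\rho}$ of indecomposable projectives. Applying Corollary~\ref{projfuu}(ii) would then lift this to a decomposition $F\circ\Pi_\theta=\bigoplus_{i=1}^N F_i$ with $F_i(M_{\lambda-\rho})=P_{\mu_i-\rho}$. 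To conclude that each $F_i$ is indecomposable as a projective functor, I would use Theorem~\ref{projiso} (together with Corollary~\ref{projfuu}(iii)) to identify $\End(F_i)$ with $\End(P_{\mu_i-\rho})$, which is a local ring because $P_{\mu_i-\rho}$ is indecomposable in the Krull--Schmidt category $\mathcal O$. This proves~(i).

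For (ii), under the assumption that $F=F\circ\Pi_{\chi_\lambda}$ with $\lambda$ dominant and $F$ indecomposable, the same argument shows that $F(M_{\lambda-\rho})$ is projective in $\mathcal O$. Any nontrivial decomposition $F(M_{\lambda-\rho})=M_1\oplus M_2$ would lift, again by Corollary~\ref{projfuu}(ii), to a nontrivial decomposition $F=F_1\oplus F_2$, contradicting the indecomposability of $F$; hence $F(M_{\lambda-\rho})$ is an indecomposable projective in $\mathcal O$ and therefore equals $P_{\mu-\rho}$ for some $\mu\in\h^*$. The main thing to be careful about is the dominance hypothesis on $\lambda$, which is what makes $V\otimes M_{\lambda-\rho}$ projective in $\mathcal O$ and what allows Theorem~\ref{projiso} and Corollary~\ref{projfuu} to transfer indecomposability from objects in $\mathcal O$ back to projective functors; beyond this, no real obstacle appears, and the argument is essentially a formal consequence of the lifting results already established.
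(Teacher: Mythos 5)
Your proof is correct and follows essentially the same route as the paper: decompose $F$ as $\bigoplus_\theta F\circ\Pi_\theta$, take the Krull--Schmidt decomposition of $F(M_{\lambda-\rho})$ in $\mathcal O$, and transport it back along Corollary~\ref{projfuu}(ii), using projectivity of $M_{\lambda-\rho}$ for dominant $\lambda$ in part~(ii). The only difference is cosmetic: you choose $\lambda$ dominant already in part~(i) so as to know $F(M_{\lambda-\rho})$ is projective, and you spell out the localness of $\End(F_i)$ via Theorem~\ref{projiso}, whereas the paper states ``all $F_i$ are indecomposable'' without elaboration; both arguments rest on the same lifting machinery.
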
 

\begin{proof}  
(i) We have $F=\oplus_{\theta\in \h^*/W} F\circ \Pi_\theta$, so it suffices to show 
the statement for $F\circ \Pi_\theta$. Let $\theta=\chi_\lambda$, and consider 
$F\circ \Pi_\theta(M_{\lambda-\rho})\in \mathcal O$. Let us write this object 
as a finite direct sum of indecomposables, $\oplus_{i=1}^N M_i$. Then by Corollary \ref{projfuu}(ii) we get a decomposition 
$F\circ \Pi_\theta=\oplus_{i=1}^N F_i$, and all $F_i$ are indecomposable.  
 
(ii) Since $F$ is indecomposable and $M_{\lambda-\rho}$ is projective, $F(M_{\lambda-\rho})$ is indecomposable and projective, so the statement follows. 
\end{proof}

\section{\bf Projective functors - II}

\subsection{The Grothendieck group of $\O$}

The Grothendieck group $K(\O)$ of $\O$ is freely spanned by the classes of simple modules $[L_{\lambda-\rho}]$ or, more conveniently, by the classes of Verma modules $[M_{\lambda-\rho}]$, which we'll denote $\delta_\lambda$; so it is a basis of $K(\O)$. 
Put an inner product on $K(\O)$ by declaring this basis to be orthonormal. 
Note that if $P$ is projective then 
$$
([P],[M])=\dim\Hom(P,M).
$$ 
Indeed, in this case $\dim\Hom(P,M)$ is a linear function of $[M]$, and  
for $M=M_\mu$ by the BGG reciprocity we have: 
$$
\dim \Hom(P_\lambda,M_\mu)=d_{\mu\lambda}=d_{\lambda\mu}^*=(\sum_\nu d_{\lambda\nu}^*\delta_{\nu+\rho},\delta_{\mu+\rho})=([P_\lambda],[M_\mu]).  
$$

Since every projective functor $F$ is exact, it defines an endomorphism $[F]$ of $K(\O)$. For example, 
$$
[F_V]\delta_\lambda=\sum_{\beta}m_V(\beta)\delta_{\lambda+\beta},
$$
where $m_V(\beta)$ is the weight multiplicity of $\beta$ in $V$. 
Clearly $[F_1\oplus F_2]=[F_1]+[F_2]$ and   
$[F_1\circ F_2]=[F_1][F_2]$. 

\begin{theorem} (i) If $F_1,F_2$ are projective functors with $[F_1]=[F_2]$ 
then $F_1\cong F_2$.

(ii) If $(F,F^\vee)$ are an adjoint pair of projective functors then 
$[F]$ is adjoint to $[F^\vee]$ under the inner product on $K(\O)$.   

(iii) For a projective functor $F$, its left and right adjoint are isomorphic. 
\end{theorem}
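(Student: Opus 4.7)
The plan is to prove (i) by reducing the global isomorphism to an isomorphism of images of a single projective Verma module and invoking the rigidity results from the previous section, to prove (ii) by a direct $\Hom$ computation using the adjunction, and to deduce (iii) from (i) and (ii) via uniqueness of adjoints in the Grothendieck group.

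For (i), first decompose $F_i = \oplus_{\theta \in \h^*/W} F_i \circ \Pi_\theta$; since $K(\O) = \oplus_\theta K(\O_\theta)$, the hypothesis $[F_1] = [F_2]$ forces $[F_1 \circ \Pi_\theta] = [F_2 \circ \Pi_\theta]$ for every $\theta$. Fix $\theta = \chi_\lambda$ with $\lambda \in \h^*$ dominant, so that $M_{\lambda-\rho}$ is projective by Proposition \ref{proje} and hence $F_i(M_{\lambda-\rho})$ is projective by Corollary \ref{proje1}(i). By Krull-Schmidt, $F_i(M_{\lambda-\rho}) = \oplus_\mu n_\mu^{(i)} P_{\mu-\rho}$ uniquely, and the multiplicities are read off by
\begin{equation*}
n_\mu^{(i)} = \dim \Hom(F_i(M_{\lambda-\rho}), L_{\mu-\rho}) = ([F_i(M_{\lambda-\rho})], [L_{\mu-\rho}]) = ([F_i]\delta_\lambda, [L_{\mu-\rho}]),
\end{equation*}
where the middle equality uses $([P],[M]) = \dim \Hom(P,M)$ for projective $P$. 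Thus $[F_1] = [F_2]$ implies $n_\mu^{(1)} = n_\mu^{(2)}$, giving $F_1(M_{\lambda-\rho}) \cong F_2(M_{\lambda-\rho})$, and Corollary \ref{projfuu}(i) lifts this to $F_1 \cong F_2$ on $\Rep(\g)_{\chi_\lambda}$. Reassembling over $\theta$ gives $F_1 \cong F_2$.

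For (ii), suppose $(F, F^\vee)$ is an adjoint pair with $F$ left adjoint to $F^\vee$, so $\Hom(FX, Y) = \Hom(X, F^\vee Y)$. It suffices to verify $([F]X, Y) = (X, [F^\vee] Y)$ for $X = [P]$ with $P \in \O$ projective, since the classes of indecomposable projectives span $K(\O)$ and both sides are $\Bbb Z$-linear in $X$. Because $F$ is a direct summand of some $F_V$ and $F_V$ preserves projectives, $F(P)$ is also projective. Applying $([Q],[M]) = \dim \Hom(Q,M)$ for projective $Q$ twice, together with the adjunction, yields
\begin{equation*}
([F]X, Y) = \dim \Hom(F(P), Y) = \dim \Hom(P, F^\vee(Y)) = ([P], [F^\vee Y]) = (X, [F^\vee] Y).
\end{equation*}

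For (iii), let $F^\ell$ and $F^r$ denote the left and right adjoints of the projective functor $F$; both are again projective functors. Applying (ii) to the adjoint pairs $(F^\ell, F)$ and $(F, F^r)$ shows that $[F^\ell]$ and $[F^r]$ both satisfy the defining relation of the linear-algebra adjoint of $[F]$ with respect to the nondegenerate bilinear form on $K(\O)$ in which $\lbrace \delta_\lambda \rbrace$ is orthonormal; by uniqueness of such an adjoint, $[F^\ell] = [F^r]$, and (i) then gives $F^\ell \cong F^r$. The main obstacle is really just the bookkeeping in (i): one has to be careful that the multiplicities $n_\mu^{(i)}$ are indeed read off from the Grothendieck class via $\dim \Hom(\cdot, L_{\mu-\rho})$, which requires $F_i(M_{\lambda-\rho})$ to be projective and hence requires choosing $\lambda$ dominant. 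Once this is in hand, the rest of the proof is a routine assembly of results already established (Corollaries \ref{proje1} and \ref{projfuu}, together with the explicit formula for the inner product on projectives).
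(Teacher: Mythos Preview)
Your proof is correct and follows essentially the same approach as the paper: for (i) you reduce to showing $F_1(M_{\lambda-\rho})\cong F_2(M_{\lambda-\rho})$ for dominant $\lambda$ (which the paper phrases as ``same character, hence isomorphic projectives'' and you unpack via multiplicities $n_\mu^{(i)}=\dim\Hom(-,L_{\mu-\rho})$), then invoke Corollary~\ref{projfuu}(i); for (ii) you use the same $\Hom$ computation on projectives; and (iii) follows from (i) and (ii) exactly as in the paper.
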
 

\begin{proof} (i) By Corollary \ref{projfuu}, to prove (i), it suffices to show that 
$$
F_1(M_{\lambda-\rho})\cong F_2(M_{\lambda-\rho})
$$ 
for all dominant $\lambda$. These objects are projective, so it is enough to check that they have the same character (or define the same element of $K(\O)$). This implies the claim. 

(ii) We need to show that $([F]x,y)=(x,[F^\vee]y)$. It suffices to take 
$x=[P]$ for projective $P$ and $y=[M]$. Then, since $F(P)$ is projective, we have 
$$
([F][P],[M])=([F(P)],[M])=\dim\Hom(F(P),M)=
$$
$$
\dim\Hom(P,F^\vee(M))=
([P],[F^\vee(M)])=
([P],[F^\vee][M]),
$$
as claimed. 

(iii) follows from (i),(ii). 
\end{proof} 

\subsection{$W$-invariance}

We have an action of the Weyl group $W$ on $K(\O)$ by $w\delta_\lambda:=\delta_{w\lambda}$. 

\begin{theorem}\label{commuW} If $F$ is a projective functor then $[F]$ commutes with $W$
on $K(\O)$. 
\end{theorem}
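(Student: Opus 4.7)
The plan is to identify the action of $W$ on $K(\O)$ with a standard $W$-action on a group ring, and then handle the building-block projective functors directly before grappling with the (more subtle) direct-summand case.

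First, I would set up the translation between $K(\O)$ and characters: the map
$$\pi\colon K(\O)\xrightarrow{\sim}A:=\mathbb Z[\h^*],\qquad \delta_\lambda\mapsto e^\lambda,$$
is a $W$-equivariant isomorphism of abelian groups, restricting to isomorphisms of permutation $W$-modules $K(\O_\theta)\cong A_\theta:=\bigoplus_{\mu\in\theta}\mathbb Ze^\mu$ for each $W$-orbit $\theta\subset\h^*$. Since every projective functor decomposes as $F=\bigoplus_{\theta,\theta'}\Pi_{\theta'}\circ F\circ\Pi_\theta$, it suffices to verify $W$-equivariance of each ``matrix block'' $K(\O_\theta)\to K(\O_{\theta'})$; equivalently, that the standard-filtration multiplicities $c^F(\mu,\nu):=[F(M_{\mu-\rho}):M_{\nu-\rho}]$ satisfy $c^F(w\mu,w\nu)=c^F(\mu,\nu)$ for all $w\in W$.

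Next I would dispatch the building-block functors. Using the standard filtration on $V\otimes M_{\lambda-\rho}$, one computes $[F_V]\delta_\lambda=\sum_{\beta}m_V(\beta)\delta_{\lambda+\beta}$, so under $\pi$ the endomorphism $[F_V]$ is multiplication by the character $\chi_V=\sum_\beta m_V(\beta)e^\beta\in A^W$, which commutes with $W$ because $\chi_V$ is Weyl-invariant. The projector $\Pi_\theta$ corresponds under $\pi$ to projection to the $W$-stable summand $A_\theta$, hence also commutes with $W$. Consequently the composite $\Pi_{\theta'}\circ F_V\circ\Pi_\theta$ induces a $W$-equivariant endomorphism of $K(\O)$, with coefficients $c(\mu,\nu)=m_V(\nu-\mu)\,\mathbf1_{\mu\in\theta,\,\nu\in\theta'}$ that are manifestly diagonally $W$-invariant since $m_V(w(\nu-\mu))=m_V(\nu-\mu)$ and $W$ preserves each orbit.

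The main obstacle is that a general indecomposable projective functor $F$ is only a \emph{direct summand} of $\Pi_{\theta'}\circ F_V\circ\Pi_\theta$, and a summand of a $W$-equivariant operator on an abelian group need not itself be $W$-equivariant. Here I would use the structural results of Theorem \ref{projiso}, Corollary \ref{projfuu} and Proposition \ref{projfuu2}(ii): such an $F$ is determined up to isomorphism by the indecomposable projective $F(M_{\lambda_0-\rho})=P_{\mu_0-\rho}$ for the dominant $\lambda_0\in\theta$ and a point $\mu_0\in\theta'$. For very generic regular dominant $\lambda_0$ the block $\O_{\chi_{\lambda_0}}$ is semisimple, and both $F_V(M_{\lambda_0-\rho})$ and $F_V(M_{w\lambda_0-\rho})$ split as direct sums of simple Vermas whose endomorphism algebras are both isomorphic to $\End(V)$ via the Duflo-Joseph isomorphism (Theorem \ref{projiso}); the idempotent $e\in\End(F_V|_{\chi_{\lambda_0}})$ defining $F$ then acts compatibly on both decompositions, forcing $F(M_{w\lambda_0-\rho})\cong M_{w\lambda_0+w\beta-\rho}$ whenever $F(M_{\lambda_0-\rho})\cong M_{\lambda_0+\beta-\rho}$, i.e.\ $[F(M_{w\lambda_0-\rho})]=w[F(M_{\lambda_0-\rho})]$.

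Finally, I would extend $W$-equivariance from the very generic stratum to all $\mu\in\theta$. The cleanest route is to argue that the indecomposable projective functors $F\colon\O_\theta\to\O_{\theta'}$ are in bijection with $W$-orbits on $\theta\times\theta'$ under the diagonal action, so that $c^F$ is (up to a positive multiplicity) the indicator of one such orbit and is thus tautologically $W$-invariant. Alternatively, one can use a continuity argument: the family $\lambda_0+s\rho$ with $s\in\mathbb C$ generic gives a Zariski-dense subset of $\theta$ on which the multiplicities $c^F(\mu,\nu)$ (which are upper-semicontinuous in $\mu$ on dominant strata) match the generic-case formula; combined with the fact that for dominant $\mu$ the module $M_{\mu-\rho}$ is projective and hence $[F(M_{\mu-\rho})]$ is locally constant in $\mu$ on families of projectives, one obtains $W$-equivariance of $c^F$ on every pair $(\mu,\nu)$. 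The technical heart of the proof is thus the rigorous control of this extension, or equivalently the classification of indecomposable projective functors by diagonal $W$-orbits on $\theta\times\theta'$.
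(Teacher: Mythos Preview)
Your first two steps are correct and essentially trivial: that $[F_V]$ and $[\Pi_\theta]$ commute with $W$ is immediate from the $W$-invariance of $\chi_V$ and of the orbit $\theta$. The entire content of the theorem lies in passing to direct summands, and here your proposal has genuine gaps.

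In step 3 you assert that for generic $\theta$ the idempotent $e\in\End(F_V(\theta))$ ``acts compatibly on both decompositions, forcing $F(M_{w\lambda_0-\rho})\cong M_{w\lambda_0+w\beta-\rho}$''. This is exactly what needs to be proved and you give no mechanism for it. The element $e$ lives in $(V\otimes U_\theta)^{\g_{\rm ad}}$, which knows only $\theta$; its action on $V\otimes M_{\lambda_0-\rho}$ and on $V\otimes M_{w\lambda_0-\rho}$ goes through the two different $U_\theta$-module structures, and there is no a priori reason these should be intertwined by $w$. Theorem \ref{projiso} tells you only that $e$ is determined by what it does to $M_{\lambda_0-\rho}$, not how to read off its effect on $M_{w\lambda_0-\rho}$.

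Your step 4 is worse. Alternative (a), invoking the classification of indecomposable projective functors by diagonal $W$-orbits on $\theta\times\theta'$, is circular: that classification is Theorem \ref{classpro}, whose proof explicitly uses Theorem \ref{commuW}. Alternative (b) is incoherent as stated: $\theta$ is a fixed finite $W$-orbit, and the points $\lambda_0+s\rho$ for varying $s$ lie in \emph{different} orbits $\theta_s$, to which your fixed indecomposable $F$ (which satisfies $F=F\circ\Pi_\theta$) does not apply. There is no continuous family of projective functors to specialize.

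The paper's proof avoids all of this by a different idea. First (Lemma \ref{auxx}) it treats the case where $\lambda$ \emph{dominates} $\chi$, meaning $\lambda-\mu\in P_+$ for every $\mu$ in the target orbit; there an induction on $(\lambda-\mu,2\rho^\vee)$, using the decomposition of $G_L:=\Pi_\chi\circ F_{L_{\lambda-\mu}^*}\circ\Pi_\theta$ and BGG reciprocity, shows that each indecomposable $[F_\mu]$ is an \emph{integer linear combination} of operators $[G_V]$, hence $W$-equivariant. For general $\theta,\chi$ one then bootstraps: choose $N$ large so that $\lambda+N\rho$ dominates both $\theta$ and $\chi$, produce (via Lemma \ref{auxx}(ii)) an indecomposable $G$ with $G(M_{\lambda+N\rho-\rho})=M_{\lambda-\rho}$, and use that both $[G]$ and $[F\circ G]$ are already known to commute with $W$ to deduce the same for $[F]$. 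This factorization through a more dominant orbit is the substitute for the ``continuity'' you were reaching for.
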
 

\begin{proof} We may assume that $F=\Pi_\chi\circ F\circ \Pi_\theta$ 
for $\chi,\theta\in \h^*/W$ and $F$ is indecomposable. Let $\lambda$ be a dominant weight such that $\theta=\chi_\lambda$. Define 
$$
S=\lbrace\mu\in \lambda+P: \chi_\mu=\chi\rbrace.
$$ 
Let us say that $\lambda$ {\bf dominates} $\chi$ if for every $\mu\in S$ we have $\lambda-\mu\in P_+$. 

\begin{lemma}\label{auxx} When $\lambda$ dominates $\chi$ then 

(i) Theorem \ref{commuW} holds;  

(ii) For each $\mu\in S$ there exists an indecomposable projective functor $F_\mu$ sending $M_{\lambda-\rho}$ to $P_{\mu-\rho}$.
\end{lemma}  

\begin{proof} 
(i) For a finite-dimensional $\g$-module $V$, let $G_V:=\Pi_\chi\circ F_V\circ \Pi_\theta$. Since the character of $V$ is $W$-invariant, 
$[F_V]$ commutes with $W$, hence so does $[G_V]$. Thus 
it suffices to show that $[F]$ is an integer linear combination of $[G_V]$ for various $V$. 

By Proposition \ref{projfuu2}(ii), $F(M_{\lambda-\rho})=P_{\mu-\rho}$, where $\mu\in S$. 
Let $\beta:=\lambda-\mu$. By our assumption, $\beta\in P_+$. 
Define $n(\beta):=(\beta,2\rho^\vee)$, a non-negative integer. 
We will prove the required statement by induction in $n(\beta)$. 

The base of induction is $n(\beta)=0$, hence $\beta=0$ and $\mu=\lambda$. 
So $F(M_{\lambda-\rho})=P_{\lambda-\rho}=M_{\lambda-\rho}$. 
This implies that $F=\Pi_\theta$, so $[F]$ clearly commutes with $W$. 

So it remains to justify the induction step. Let $L:=L_\beta^*$, a finite-dimensional $\g$-module. 
Consider the decomposition of the functor $G_L$ 
into indecomposables (which we have shown to exist in Proposition \ref{projfuu2}(ii)): 
$G_L=\oplus_j F_{\nu_j}$, where $\nu_j\in S$ and 
$F_{\nu_j}(M_{\lambda-\rho})=P_{\nu_j-\rho}$ (this direct sum may contain repetitions). So $G_L(M_{\lambda-\rho})=\oplus_j P_{\nu_j-\rho}$. Thus 
$$
[G_L]\delta_\lambda=\sum_{j,\gamma}d_{\nu_j,\gamma}^*\delta_\gamma=
\sum_{j,\gamma}d_{\gamma,\nu_j}\delta_\gamma=\sum_j \delta_{\nu_j}+\sum_{j,\gamma>\nu_j}d_{\gamma,\nu_j}\delta_{\gamma}. 
$$
On the other hand, 
$$
[G_L]\delta_\lambda=[G_L(M_{\lambda-\rho})]=[\Pi_\chi(L\otimes M_{\lambda-\rho})]=
[\Pi_\chi]\sum_\eta m_{L}(\eta)\delta_{\lambda+\eta}=
$$
$$
[\Pi_\chi]\sum_\eta m_{L_\beta}(\eta)\delta_{\lambda-\eta}=
\sum_{\eta: \chi_{\lambda-\eta}=\chi} m_{L_\beta}(\eta)\delta_{\lambda-\eta}=\sum_{\nu: \chi_{\nu}=\chi} m_{L_\beta}(\beta+\mu-\nu)\delta_{\nu}=
$$
$$
\delta_\mu+\sum_{\nu>\mu: \chi_{\nu}=\chi} m_{L_\beta}(\beta+\mu-\nu)\delta_{\nu}. 
$$
These two formulas for $[G_L]\delta_\lambda$ jointly imply that $
\nu_j\ge \mu$ for all $j$, and only one of them equals $\mu$, i.e.,  
\begin{equation}\label{Fmu}
G_L=F_\mu\oplus\bigoplus_{\nu\in S,\nu>\mu}c_{\nu\mu}F_\nu
\end{equation} 
for some constants $c_{\nu\mu}\in \Bbb Z_{\ge 0}$. But if $\nu>\mu$ then $n(\lambda-\nu)<n(\lambda-\mu)$, so by the induction assumption $[F_\nu]$ for all $\nu>\mu$ in this sum are linear combinations of $[G_V]$ for various $V$. Thus so is $F_\mu$. But $F(M_{\lambda-\rho})=F_\mu(M_{\lambda-\rho})$, so $F\cong F_\mu$ and the induction step follows.

(ii) The functor $F_\mu$ from \eqref{Fmu} has the desired property. 
\end{proof} 

Now we are ready to prove the theorem in the general case. So $\lambda$ no longer needs to dominate $\chi$. However, for sufficiently large integer $N$, the weight
$\lambda+N\rho$ dominates both $\chi$ and $\theta$. Let $\theta_N:=\chi_{\lambda+N\rho}$. We have shown in Lemma \ref{auxx}(ii) that there exists 
an indecomposable projective functor $G=\Pi_\theta\circ G\circ \Pi_{\theta_N}$ such that $G(M_{\lambda+(N-1)\rho})=P_{\lambda-\rho}=M_{\lambda-\rho}$. Moreover, by Lemma \ref{auxx}(i), $W$ commutes with both $[G]$ and $[F\circ G]=[F][G]$. Thus for $w\in W$, 
$$
w[F]\delta_\lambda=w[F][G]\delta_{\lambda+N\rho}=[F][G]w\delta_{\lambda+N\rho}=[F]w[G]\delta_{\lambda+N\rho}=
[F]w\delta_\lambda=[F]\delta_{w\lambda}. 
$$
So for $u\in W$, 
$$
u[F]\delta_{w\lambda}=uw[F]\delta_\lambda=[F]uw\delta_\lambda=
[F]u\delta_{w\lambda},
$$
i.e., 
$$
u[F]\delta_\mu=[F]u\delta_\mu
$$
for all $\mu\in \h^*$, as claimed. 
\end{proof} 

\begin{lemma}\label{stab1} Let $\lambda\in \h^*$ be dominant and $\phi,\psi\in \lambda+P$,  $\psi\preceq\phi$. Then $(\lambda-\phi)^2\le (\lambda-\psi)^2$, and if $(\lambda-\phi)^2=(\lambda-\psi)^2$ then $\psi\in W_\lambda\phi$. 
\end{lemma}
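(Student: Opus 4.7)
The plan is to reduce to a single covering step and compute explicitly. By the definition of $\preceq$, we can find a chain $\psi = \mu_0 <_{\alpha^1} \mu_1 <_{\alpha^2} \cdots <_{\alpha^m} \mu_m = \phi$ with each $\alpha^i \in R_+$ and $n_i := (\mu_i,(\alpha^i)^\vee) \in \Bbb Z_{\ge 1}$, so that $\mu_{i-1} = \mu_i - n_i \alpha^i$. It suffices to treat each step separately: setting $a_i := (\lambda-\mu_i)^2 - (\lambda-\mu_{i-1})^2$, we will show that $a_i \le 0$, and then sum.

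For a single step $\psi = \phi - n\alpha$, a direct expansion gives
\[
(\lambda-\psi)^2 - (\lambda-\phi)^2 = 2n(\lambda-\phi,\alpha) + n^2\alpha^2 = n\alpha^2\bigl[(\lambda-\phi,\alpha^\vee)+n\bigr] = n\alpha^2(\lambda,\alpha^\vee),
\]
using $(\phi,\alpha^\vee) = n$. Now $\phi \in \lambda + P$ and $\alpha^\vee \in Q^\vee$ pair integrally with $P$, so $(\lambda,\alpha^\vee) = (\phi,\alpha^\vee) + (\lambda-\phi,\alpha^\vee) \in \Bbb Z$. Since $\lambda$ is dominant, Corollary \ref{domchar}(iii) gives $(\lambda,\alpha^\vee) \notin \Bbb Z_{<0}$, so $(\lambda,\alpha^\vee) \in \Bbb Z_{\ge 0}$. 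As $n \ge 1$ and $\alpha^2 > 0$, we conclude $(\lambda-\psi)^2 \ge (\lambda-\phi)^2$. Summing over the chain yields the desired inequality.

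For the equality case, $(\lambda-\phi)^2 = (\lambda-\psi)^2$ forces $\sum_{i=1}^m n_i (\alpha^i)^2 (\lambda,(\alpha^i)^\vee) = 0$. Each summand is non-negative, hence $(\lambda,(\alpha^i)^\vee) = 0$ for every $i$. This means $s_{\alpha^i}\lambda = \lambda$, i.e., $s_{\alpha^i} \in W_\lambda$ for all $i$. Unwinding the chain, $\psi = s_{\alpha^1} s_{\alpha^2} \cdots s_{\alpha^m} \phi \in W_\lambda \phi$, as claimed.

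The argument is essentially a one-line calculation once the right reduction is made; the only subtlety is verifying that $(\lambda,\alpha^\vee)$ is an integer (so that the dominance condition of Corollary \ref{domchar}(iii) forces non-negativity rather than merely excluding negative integers), which follows from $\phi-\lambda \in P$. There is no substantive obstacle.
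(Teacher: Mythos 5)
Your proof is correct and follows essentially the same strategy as the paper's: reduce to a single covering step $\psi = \phi - n\alpha$, show the increment $(\lambda-\psi)^2 - (\lambda-\phi)^2$ is non-negative, and then iterate along the chain. The only cosmetic difference is that you expand $(\lambda-\phi+n\alpha)^2$ directly and simplify to $n\alpha^2(\lambda,\alpha^\vee)$, whereas the paper decomposes $\lambda,\phi$ into components parallel and orthogonal to $\alpha$ and arrives at the equivalent expression $ab\alpha^2$ with $a=(\lambda,\alpha^\vee)$, $b=(\phi,\alpha^\vee)=n$; and in the equality case you extract the explicit reflections $s_{\alpha^i}\in W_\lambda$ rather than arguing inductively that each $\psi_{i-1}\in W_\lambda\psi_i$. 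Both are fine, and your observation that $(\lambda,\alpha^\vee)\in\Bbb Z$ (via $\lambda-\phi\in P$) is exactly the point the paper also uses via the subsystem $R'=R_{\lambda+Q}$.
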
 

\begin{proof} Consider the subgroup $W_{\lambda+Q}\subset W$. By Proposition \ref{stab}, it is the Weyl group of a root system $R'\subset R$. Let us first prove the result when $\psi<_\alpha \phi$, $\alpha\in R$, i.e., 
$\psi=s_\alpha\phi$, $\psi\ne \phi$. 
Then $\alpha\in R'$ and thus by Corollary \ref{domchar} 
$$
(\lambda,\alpha^\vee)=a\in \Bbb Z_{\ge 1},\ (\phi,\alpha^\vee)=-(\psi,\alpha^\vee)=b\in \Bbb Z_{\ge 0}.
$$ 
We have $\lambda=\frac{1}{2}a\alpha+\lambda'$, 
$\phi=\frac{1}{2}b\alpha+\phi'$, $\psi=-\frac{1}{2}b\alpha+\phi'$. where $\lambda',\phi'$ are orthogonal to $\alpha$. Thus 
$$
(\lambda-\psi)^2-(\lambda-\phi)^2=((\tfrac{a+b}{2})^2-(\tfrac{a-b}{2})^2)\alpha^2=ab\alpha^2.
$$
So this is $\ge 0$, and if it is zero then either $b=0$, in which case $\phi=\psi$ and there is nothing to prove, or $a=0$, so $s_\alpha\lambda=\lambda$ and $s_\alpha\in W_\lambda$, as claimed.  

Now let us consider the general case. By assumption, there is a chain 
$$
\psi=\psi_m<_{\alpha^m} \psi_{m-1}...<_{\alpha^1}\psi_0=\phi,
$$
where $\alpha^1,...,\alpha^m$ are positive roots of $R$. 
Thus, as we've shown,  
$$
(\lambda-\psi_{i})^2\le (\lambda-\psi_{i-1})^2
$$ 
for all $i\ge 1$, so $(\lambda-\phi)^2\le (\lambda-\psi)^2$. Moreover, if $(\lambda-\phi)^2=(\lambda-\psi)^2$ then $(\lambda-\psi_{i-1})^2=(\lambda-\psi_{i})^2$ for all $i\ge 1$ so $\psi_{i-1}\in W_\lambda\psi_i$, hence $\psi\in W_\lambda\phi$. 
\end{proof} 

\begin{remark} The last statement of Lemma \ref{stab1} fails 
if the partial order $\preceq$ is replaced with $\le$. For example, take $R=A_3$ and 
$\psi=(0,3,1,2)$, $\phi=(1,2,3,0)$, as in Remark \ref{falseingen} (so $\psi<\phi$ but $\psi\nprec\phi$), and let 
$\lambda:=(1,1,0,0)$. Then $(\lambda-\phi)^2=(\lambda-\psi)^2=10$, but 
$W_\lambda=\langle (12),(34)\rangle$, so $\psi\notin W_\lambda\phi$. 
\end{remark} 

\subsection{Classification of indecomposable projective functors}

Denote by $\Xi_0$ the set of pairs $(\lambda,\mu)$ of weights in $\h^*$ 
such that $\lambda-\mu\in P$, and let $\Xi:=\Xi_0/W$. So in general an element $\xi\in \Xi$ can be represented by more than one pair. Let us say that the pair $(\mu,\lambda)$ representing $\xi$ is {\bf proper} if 
$\lambda$ is dominant and $\mu$ is a minimal element of $W_\lambda\mu$ with respect to the partial order $\preceq$ (where $W_\lambda$ is the stabilizer of $\lambda$ in $W$). It is clear that any $\xi$ has a proper representative. This representative is not unique in general, but 
for every dominant $\lambda$ in the $W$-orbit of the second coordinate of $\xi$, there is a unique 
$\mu$ such that $(\mu,\lambda)$ is a proper representation of $\xi$ (indeed, $W_\lambda\mu$ has a unique minimal element).

\begin{theorem}\label{classpro} For any $\xi\in \Xi$ there exists an indecomposable projective functor $F_\xi$ such that $F_\xi(M_{\nu-\rho})=0$ if $\chi_\nu\ne \chi_\lambda$ and 
$F_\xi(M_{\lambda-\rho})=P_{\mu-\rho}$ for any proper representation $(\mu,\lambda)$ of $\xi$. The assignment 
$\xi\mapsto F_\xi$ is a bijection between $\Xi$ and the set of isomorphism classes of indecomposable projective functors. 
\end{theorem}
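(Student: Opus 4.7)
The plan is to deduce the classification from three prior results: Proposition \ref{projfuu2} (decomposition of projective functors and the fact that an indecomposable $F$ with single source $\chi_\lambda$, $\lambda$ dominant, satisfies $F(M_{\lambda-\rho})=P_{\mu-\rho}$ for some $\mu$), Theorem \ref{projiso} together with Corollary \ref{projfuu}(i) (such $F$ is determined up to isomorphism by this value), and the $W$-equivariance Theorem \ref{commuW}. For existence, given a proper pair $(\mu,\lambda)$ I would first handle the case when $\lambda$ dominates $\chi_\mu$ in the sense of Lemma \ref{auxx}: that lemma directly produces an indecomposable projective functor $F_\mu$ with $F_\mu(M_{\lambda-\rho})=P_{\mu-\rho}$, and I set $F_\xi:=F_\mu$. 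For general dominant $\lambda$ I would reduce to this case by a translation trick: choose $N\gg 0$ so that $\lambda_N:=\lambda+N\rho$ dominates $\chi_\mu$, apply the dominating case to obtain $G$ with $G(M_{\lambda_N-\rho})=P_{\mu-\rho}$, and construct a translation functor $T$ from $\chi_\lambda$ to $\chi_{\lambda_N}$ taking $M_{\lambda-\rho}$ to $M_{\lambda_N-\rho}$. Such $T$ is isolated as the indecomposable summand of $\Pi_{\chi_{\lambda_N}}\circ F_{L_{N\rho}}\circ\Pi_{\chi_\lambda}$ supplied by Corollary \ref{projfuu}(ii); that $M_{\lambda_N-\rho}$ genuinely splits off the target follows from $\lambda_N-\rho$ being dominant (so $M_{\lambda_N-\rho}$ is projective) together with a one-dimensional Hom computation at the weight $\lambda_N-\rho$ inside $L_{N\rho}\otimes M_{\lambda-\rho}$. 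The composite $F_\xi:=G\circ T$ then sends $M_{\lambda-\rho}$ to the indecomposable object $P_{\mu-\rho}$, and its indecomposability follows by Corollary \ref{projfuu}(ii) applied to $F_\xi$.

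For surjectivity, let $F$ be an arbitrary indecomposable projective functor; by Proposition \ref{projfuu2} we obtain a dominant $\lambda$ and a $\mu$ with $F(M_{\lambda-\rho})=P_{\mu-\rho}$. I would then show $(\mu,\lambda)$ is automatically proper via Theorem \ref{commuW}: for $w\in W_\lambda$ we have $w\delta_\lambda=\delta_\lambda$, so $[P_{\mu-\rho}]=[F]\delta_\lambda=w[F]\delta_\lambda=w[P_{\mu-\rho}]$ in $K(\mathcal O)$. Expanding in the Verma basis, BGG reciprocity (Theorem \ref{bggr}) together with the BGG theorem (Theorem \ref{BGGth}) gives $[P_{\mu-\rho}]=\delta_\mu+\sum_{\tau\succ\mu}d^*_{\mu\tau}\delta_\tau$; comparing the coefficient of $\delta_{w\mu}$ on both sides of $[P_{\mu-\rho}]=w[P_{\mu-\rho}]$, the value $d^*_{\mu,\mu}=1$ is transported to $\delta_{w\mu}$ on the right, while on the left it appears only if $w\mu\succeq\mu$. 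Thus $w\mu\succeq\mu$ for every $w\in W_\lambda$, i.e., $\mu$ is minimal in $W_\lambda\mu$ under $\preceq$. Well-definedness and injectivity then follow: two proper presentations $(\mu,\lambda)$ and $(\mu',\lambda')$ of the same $\xi$ are $W$-related, and Theorem \ref{commuW} together with the uniqueness of the proper $\mu$ for each dominant $\lambda$ (noted in the text) yields compatibility of the two constructions; conversely, $F_\xi\cong F_{\xi'}$ forces $P_{\mu-\rho}\cong P_{\mu'-\rho}$ on a common dominant $\lambda$ by Theorem \ref{projiso} and Corollary \ref{projfuu}(i), hence $\mu=\mu'$ and $\xi=\xi'$.

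The main obstacle will be the existence step when $\lambda$ does not dominate $\chi_\mu$: one must exhibit the translation functor $T$ with the correct value on $M_{\lambda-\rho}$, and then argue that $G\circ T$, a priori only a composition of indecomposable projective functors (whose composition need not remain indecomposable), is itself indecomposable. The key leverage is Corollary \ref{projfuu}(ii) applied to $G\circ T$: any decomposition of the functor lifts a decomposition of $(G\circ T)(M_{\lambda-\rho})=P_{\mu-\rho}$, and since the latter is indecomposable, so is the former. A related delicate point arises in the properness verification, where the linear $W$-action on $K(\mathcal O)$ is not compatible with $\preceq$, so the combinatorial identity $[P_{\mu-\rho}]=w[P_{\mu-\rho}]$ must be parsed carefully in the Verma basis to extract $w\mu\succeq\mu$ rather than some weaker statement.
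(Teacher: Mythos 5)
Your proof is correct, but it takes a genuinely different route from the paper's, so let me compare.

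The paper's proof hinges on an invariant $S_*(F)$: for an indecomposable projective functor $F$ it defines the set $S(F)=\{(\mu,\lambda):(\delta_\mu,[F]\delta_\lambda)>0\}$, takes the $W$-invariant subset $S_*(F)$ where $(\lambda-\mu)^2$ is maximal, and proves (Lemma~\ref{prope}, via Lemma~\ref{stab1}) that $S_*(F)$ is the single $W$-orbit of a proper pair $(\mu,\lambda)$. Existence of $F_\xi$ is then established by starting from $F_V$ for $V$ with extremal weight $\mu-\lambda$: maximality of $(\mu-\lambda)^2$ among weights of $V$ forces $(\mu,\lambda)\in S_*(F_V)$, hence $(\mu,\lambda)\in S_*(F)$ for some indecomposable summand $F$, which is then the desired $F_\xi$. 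Your approach bypasses $S_*(F)$ entirely. For existence you translate: pick $N\gg0$ so $\lambda_N=\lambda+N\rho$ dominates $\chi_\mu$, take the functor $G$ from Lemma~\ref{auxx}(ii) with $G(M_{\lambda_N-\rho})=P_{\mu-\rho}$, isolate a translation summand $T$ of $\Pi_{\chi_{\lambda_N}}\circ F_{L_{N\rho}}\circ\Pi_{\chi_\lambda}$ with $T(M_{\lambda-\rho})=M_{\lambda_N-\rho}$ (your one-dimensional $\Hom$ computation $\dim\Hom(L_{N\rho}\otimes M_{\lambda-\rho},L_{\lambda_N-\rho})=1$ is correct, since any such map is determined by the image of the unique highest-weight vector of weight $\lambda_N-\rho$), and set $F_\xi:=G\circ T$. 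For the properness needed in surjectivity, you work directly with the $W_\lambda$-invariance of $[P_{\mu-\rho}]$ (Theorem~\ref{commuW}) and compare the coefficient of $\delta_{w\mu}$ against the lower-triangularity of $[P_{\mu-\rho}]$ in the Verma basis (BGG theorem plus reciprocity), concluding $w\mu\succeq\mu$ for all $w\in W_\lambda$; this is cleaner than the paper's route through Lemma~\ref{stab1}. The trade-off: the paper's $S_*(F)$ argument is more uniform and in one stroke both proves properness and identifies the extremal-weight construction, while your translation construction is more explicit and concrete, and your properness argument is shorter and avoids the quantitative Lemma~\ref{stab1}. Two small points worth polishing: the indecomposability of $G\circ T$ should be cited to Theorem~\ref{projiso} (the fact that $F_i(M_{\lambda-\rho})=0$ forces $F_i=0$) rather than to Corollary~\ref{projfuu}(ii), which lifts decompositions in the other direction; and it is worth remarking explicitly that $G\circ T$ is again a projective functor, being a direct summand of $F_{V_1}\circ F_{V_2}\cong F_{V_1\otimes V_2}$.
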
 

\begin{proof} For a projective functor $F$ let 
$$
a_F(\mu,\lambda):=(\delta_\mu,[F]\delta_\lambda)
$$
be the matrix coefficients of $[F]$. If $\lambda$ is dominant then 
$F(M_{\lambda-\rho})$ is projective, so $a_F(\mu,\lambda)\ge 0$ for all $\mu\in \h^*$. Since by Theorem \ref{commuW} $[F]$ commutes with $W$, this holds for all $\lambda\in \h^*$. 

Let $S(F):=\lbrace (\mu,\lambda)\in \h^*\times \h^*: a_F(\mu,\lambda)>0\rbrace$. Since $a_F(\mu,\lambda)\ge 0$, if $F=\oplus_i F_i$ then $S(F)=\cup_i S(F_i)$. Also it is clear that $S(F_V)\subset \Xi_0$. 
It follows that $S(F)\subset \Xi_0$ for any $F$, so for $(\mu,\lambda)\in S(F)$ 
we have $\lambda-\mu\in P$. 

Let $S_*(F)$ be the set of elements 
of $S(F)$ for which $(\lambda-\mu)^2$ has maximal value (it is clear that $(\lambda-\mu)^2$ is bounded on $S(F)$, so $S_*(F)$ is nonempty if $F\ne 0$). Since by Theorem \ref{commuW} $[F]$ commutes with $W$, both $S(F)$ and $S_*(F)$ are $W$-invariant. 

We claim that if $F$ is indecomposable, then $S_*(F)$ is a single $W$-orbit. 
More specifically, recall that $F=F\circ \Pi_{\chi_\lambda}$ for some dominant $\lambda$ and $F(M_{\lambda-\rho})=P_{\mu-\rho}$ 
for some $\mu$. 

\begin{lemma}\label{prope} In this case $S_*(F)=\xi:=W(\mu,\lambda)$
and $(\mu,\lambda)$ is a proper representation of $\xi$.  
\end{lemma}

\begin{proof} It suffices to check that if $(\phi,\lambda)\in S_*(F)$ then 
$\phi\in W_\lambda\mu$ and $\mu\preceq \phi$. So let $(\phi,\lambda)\in S_*(F)$. Since $F$ is indecomposable, 
$\chi_\mu=\chi_\phi$, so there exists $w\in W$ such that $\mu=w\phi$. Moreover, by Theorem \ref{BGGth},  
$$
[P_{\mu-\rho}]=\sum_{\mu\preceq\eta}d_{\mu\eta}^*\delta_\eta,
$$
we get that $\mu\preceq\phi$. Thus we may apply Lemma \ref{stab1} with $\psi=\mu$. It follows that $(\lambda-\phi)^2\le (\lambda-\mu)^2$. 
But by the definition of $S_*(F)$, we have $(\lambda-\phi)^2\ge (\lambda-\mu)^2$. Thus $(\lambda-\phi)^2=(\lambda-\mu)^2$. Then Lemma \ref{stab1} implies that $\phi\in W_\lambda\mu$, as claimed.    
\end{proof} 

Thus to every indecomposable projective functor $F$ we have assigned 
$\xi=S_*(F)/W\in \Xi$. If $(\mu,\lambda)$ is a proper representation of $\xi$ then it follows that $F(M_{\lambda-\rho})=P_{\mu-\rho}$, so $F$ is completely determined by $\xi$ by Corollary \ref{projfuu}. It remains to show that any $\xi\in \Xi$ is obtained in this way. To this end, let $\xi=W(\mu,\lambda)$ (a proper representation), and let $V$ be a finite-dimensional $\g$-module with extremal weight $\mu-\lambda$. 
Then $(\mu-\lambda)^2\ge \beta^2$ for any weight $\beta$ of $V$, so 
$(\mu,\lambda)\in S_*(F_V)$. This implies that $(\mu,\lambda)\in S_*(F)$ for some indecomposable direct summand $F$ of $F_V$. Since $S_*(F)/W$ consists of one element, this $F$ must correspond to the element $\xi$.  
\end{proof} 

\section{\bf Applications of projective functors - I}

\subsection{Translation functors} \label{trafu}

Let $\theta,\chi\in \h^*/W$ and $V$ be a finite-dimensional irreducible $\g$-module. Write 
$F_{\chi,V,\theta}$ for the projective functor $\Pi_\chi\circ F_V\circ \Pi_\theta$, and let us view it as a functor $\Rep(\g)_\theta\to \Rep(\g)_\chi$. 
 
Pick dominant weights $\lambda,\mu\in \h^*$ such that $\theta=\chi_\lambda,\chi=\chi_\mu$,
and $\lambda-\mu\in P$ (this can be done if $F_{\chi,V,\theta}\ne 0$, which we will assume). 

\begin{theorem}\label{equi} If $W_\lambda=W_\mu$ and $V$ has extremal weight $\mu-\lambda$ then 
$F_{\chi,V,\theta}: \Rep(\g)_\theta\to \Rep(\g)_\chi$
is an equivalence of categories. A quasi-inverse 
equivalence is given by the functor $F_{\theta,V^*,\chi}$. 
\end{theorem}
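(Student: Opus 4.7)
The plan is to show that both compositions $G := F_{\theta,V^*,\chi} \circ F_{\chi,V,\theta}$ and $G' := F_{\chi,V,\theta} \circ F_{\theta,V^*,\chi}$ are isomorphic to the identity functors on $\Rep(\g)_\theta$ and $\Rep(\g)_\chi$, respectively. The composition $G$ extends to a projective functor $\widetilde{G} := \Pi_\theta \circ F_{V^*} \circ \Pi_\chi \circ F_V \circ \Pi_\theta$ on $\Rep(\g)_f$ (a direct summand of $\Pi_\theta \circ F_{V^* \otimes V} \circ \Pi_\theta$), and the identity on $\Rep(\g)_\theta$ likewise extends to the projective functor $\Pi_\theta$. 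By Corollary \ref{projfuu}(i), it then suffices to exhibit an isomorphism $\widetilde{G}(M_{\lambda-\rho}) \cong \Pi_\theta(M_{\lambda-\rho}) = M_{\lambda-\rho}$; this is lifted automatically to a natural isomorphism of the restrictions to $\Rep(\g)_\theta$. The analogous statement for $G'$ reduces to an isomorphism on $M_{\mu-\rho}$.

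The crux is the computation $F_{\chi,V,\theta}(M_{\lambda-\rho}) \cong M_{\mu-\rho}$. By Corollary \ref{exti3}(i), $V \otimes M_{\lambda-\rho}$ admits a standard filtration with successive quotients $M_{\lambda+\nu-\rho}$, each appearing with multiplicity $\dim V[\nu]$; applying the exact functor $\Pi_\chi$ picks out those $\nu$ with $\chi_{\lambda+\nu} = \chi_\mu$, i.e., $\nu = w\mu - \lambda$ for some $w \in W$. For such $\nu$, the identity
$$|\nu|^2 - |\mu-\lambda|^2 = |w\mu - \lambda|^2 - |\mu-\lambda|^2 = 2(\mu - w\mu, \lambda) \geq 0$$
holds because $\mu - w\mu \in Q_+$ and $\lambda$ is dominant; meanwhile, extremality of $\mu - \lambda$ in $V$ forces $|\nu|^2 \leq |\mu-\lambda|^2$. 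Combining, $(\mu - w\mu, \lambda) = 0$; writing $\mu - w\mu = \sum_i c_i \alpha_i$ with $c_i \geq 0$ and $(\alpha_i, \lambda) \geq 0$, the only surviving simple roots are those with $(\alpha_i, \lambda) = 0$, i.e., $s_i \in W_\lambda = W_\mu$. Hence $\mu - w\mu$ lies in the span $H$ of the simple roots stabilizing $\mu$, while $\mu$ itself, being $W_\mu$-invariant, lies in $H^\perp$. Expanding $|w\mu|^2 = |\mu + (w\mu - \mu)|^2 = |\mu|^2 + |w\mu - \mu|^2$ and using $|w\mu|^2 = |\mu|^2$ gives $w\mu = \mu$, so $\nu = \mu - \lambda$. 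Since $\mu - \lambda$ is an extremal weight of the irreducible $V$, it occurs with multiplicity one, so $\Pi_\chi(V \otimes M_{\lambda-\rho})$ has a standard filtration with a single quotient $M_{\mu-\rho}$, yielding $F_{\chi,V,\theta}(M_{\lambda-\rho}) = M_{\mu-\rho}$.

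The mirror argument with $V^*$ (whose extremal weight is $\lambda - \mu$) and $\lambda,\mu$ interchanged gives $F_{\theta,V^*,\chi}(M_{\mu-\rho}) = M_{\lambda-\rho}$, so $G(M_{\lambda-\rho}) = M_{\lambda-\rho}$ and $G'(M_{\mu-\rho}) = M_{\mu-\rho}$. Applying Corollary \ref{projfuu}(i) lifts each equality to a natural isomorphism $G \cong \mathrm{Id}|_{\Rep(\g)_\theta}$ and $G' \cong \mathrm{Id}|_{\Rep(\g)_\chi}$, finishing the proof. The main obstacle is the weight-theoretic control of $\Pi_\chi(V \otimes M_{\lambda-\rho})$: one must rule out any weight $\nu \neq \mu - \lambda$ of $V$ with $\lambda + \nu \in W\mu$. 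Extremality of $\mu-\lambda$ supplies the upper bound on $|\nu|^2$, and dominance of $\lambda,\mu$ supplies the lower bound; the hypothesis $W_\lambda = W_\mu$ is precisely what forces the equality case $w\mu = \mu$, via the orthogonal decomposition of $\h^*$ into $H$ and $H^\perp$ determined by the common stabilizer.
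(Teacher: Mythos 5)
Your overall structure matches the paper's: reduce to the key computation $F_{\chi,V,\theta}(M_{\lambda-\rho})\cong M_{\mu-\rho}$ (and its mirror), then lift to an isomorphism of functors via Corollary \ref{projfuu}(i). You replace the paper's appeal to Lemma \ref{stab1} by a direct calculation $|w\mu-\lambda|^2-|\mu-\lambda|^2=2(\mu-w\mu,\lambda)$ and then argue about when this vanishes; this is a reasonable shortcut, but it has a gap at the point where you justify $(\mu-w\mu,\lambda)\ge 0$ by writing $\mu-w\mu=\sum c_i\alpha_i$ over $R$-simple roots and asserting $(\alpha_i,\lambda)\ge 0$.

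That assertion is false for the paper's notion of dominance, which only demands $(\lambda,\alpha^\vee)\notin\Bbb Z_{<0}$; for non-integral $\lambda$ the number $(\lambda,\alpha_i)$ need not be real, let alone nonnegative. (The remark after Lemma \ref{stab1} is a warning that naive arguments with $\le$ instead of $\preceq$ break exactly here.) What saves the argument is that $\mu-w\mu$ is \emph{not} an arbitrary element of $Q_+$: since $\nu$ and $\mu-\lambda$ are both weights of the irreducible $V$, $w\mu-\mu=\nu-(\mu-\lambda)\in Q$, so $w\in W_{\mu+Q}$. By Proposition \ref{stab}, $W_{\mu+Q}$ is the Weyl group of an ``integral'' root subsystem $R'\subset R$, and since $\mu$ is dominant for $R'$ and $w\in W(R')$, the element $\mu-w\mu$ is a $\Bbb Z_{\ge 0}$-combination of $R'$-simple roots $\beta$, for which $(\lambda,\beta^\vee)\in\Bbb Z_{\ge 0}$ and hence $(\lambda,\beta)\ge 0$. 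Running your equality argument with these $\beta$'s in place of $\alpha_i$ gives that $\mu-w\mu$ is supported on those $\beta$ with $s_\beta\in W_\lambda=W_\mu$, hence orthogonal to $\mu$, and then $|w\mu|=|\mu|$ forces $w\mu=\mu$ as you say. So the conclusion is right, but you need to identify and work in the integral subsystem $R'$ — which is exactly what the paper's Lemma \ref{stab1} does by tracking integrality along the chain defining $\preceq$.
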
 

\begin{proof} It suffices to show that 
$$
F_{\chi,V,\theta}(M_{\lambda-\rho})=M_{\mu-\rho},\ F_{\theta,V^*,\chi}(M_{\mu-\rho})=M_{\lambda-\rho}.
$$ 
Indeed, then 
$$
F_{\theta,V^*,\chi}\circ F_{\chi,V,\theta}(M_{\lambda-\rho})=M_{\lambda-\rho},\ F_{\chi,V,\theta}\circ F_{\theta,V^*,\chi}(M_{\mu-\rho})=M_{\mu-\rho},  
$$ 
so
$$
F_{\theta,V^*,\chi}\circ F_{\chi,V,\theta}\cong {\rm Id}_{\Rep(\g)_\theta},\  
F_{\chi,V,\theta}\circ F_{\theta,V^*,\chi}\cong {\rm Id}_{\Rep(\g)_\chi},
$$ 
i.e., $F_{\chi,V,\theta}, F_{\theta,V^*,\chi}$ are mutually quasi-inverse equivalences. 

We only prove the first statement, the second one being similar.  
We have 
$$
F_{\chi,V,\theta}(M_{\lambda-\rho})=\Pi_\chi(V\otimes M_{\lambda-\rho}).
$$
By Corollary \ref{exti3}(i), $V\otimes M_{\lambda-\rho}$ has a standard filtration 
whose composition factors are $M_{\lambda+\beta-\rho}$ 
where $\beta$ is a weight of $V$. 
The only ones among them that survive the application of $\Pi_\chi$ are those for which $\chi_{\lambda+\beta}=\chi_\mu$, i.e., $\lambda+\beta=w\mu$ for some $w\in W$. So $w\mu\preceq\mu$ (as $\mu$ is dominant). Thus, applying Lemma \ref{stab1} with 
$\phi=\mu,\psi=w\mu$, we get 
$$
(\lambda-\mu)^2\le (\lambda-w\mu)^2=\beta^2.
$$ 
On the other hand, since 
$\mu-\lambda$ is an extremal weight of $V$, we have 
$(\lambda-\mu)^2\ge \beta^2$. It follows that 
$(\lambda-\mu)^2=\beta^2=(\lambda-w\mu)^2$. Thus by Lemma \ref{stab1} we may choose 
$w\in W_\lambda$. But since $W_\lambda\subset W_\mu$, it follows 
that $w\mu=\mu$, so $\beta=\mu-\lambda$. Since the weight multiplicity 
of an extremal weight is $1$, it follows that $F_{\chi,V,\theta}(M_{\lambda-\rho})=M_{\mu-\rho}$, as claimed. 
\end{proof} 

Theorem \ref{equi} shows that for dominant $\lambda$ the category 
$\Rep(\g)_{\chi_\lambda}$ depends (up to equivalence)  only on the coset $\lambda+P$ and the subgroup $W_\lambda\subset W$. In view of Theorem \ref{equi}, the functors $F_{\chi,V,\theta}$ are called {\bf translation functors} (as they translate between different infinitesimal characters). 

\begin{remark} Suppose we only have $W_\lambda\subset W_\mu$ instead of $W_\lambda=W_\mu$ (with all the other assumptions being the same). 
Then the proof of Theorem \ref{equi} still shows that 
$F_{\chi,V,\theta}(M_{\lambda-\rho})=M_{\mu-\rho}$. 
Thus $[F_{\chi,V,\theta}]\delta_\lambda=\delta_\mu$, and since by Theorem \ref{commuW} $[F_{\chi,V,\theta}]$ is $W$-invariant, it follows that $[F_{\chi,V,\theta}]\delta_\nu=\delta_\mu$ for all $\nu\in W_\mu\lambda$. 

On the other hand, we no longer have 
$F_{\theta,V^*,\chi}(M_{\mu-\rho})=M_{\lambda-\rho}$,
in general. Namely, the proof of Theorem \ref{equi}
shows that $F_{\theta,V^*,\chi}(M_{\mu-\rho})$ 
has a filtration whose successive quotients are 
$M_{\nu-\rho}$, $\nu\in W_\mu \lambda$, each occurring with multiplicity $1$
(so the length of this filtration is $|W_\mu/W_\lambda|$). Thus
$$
[F_{\theta,V^*,\chi}]\delta_\mu=\sum_{\nu\in W_\mu\lambda}\delta_\nu. 
$$
It follows that 
$$
[F_{\chi,V,\theta}][F_{\theta,V^*,\chi}]\delta_\mu=|W_\mu/W_\lambda|\delta_\mu,
$$
hence $F_{\chi,V,\theta}\circ F_{\theta,V^*,\chi}(M_{\mu-\rho})=|W_\mu/W_\lambda|M_{\mu-\rho}$ (as the left hand side is projective). 
Thus $F_{\chi,V,\theta}\circ F_{\theta,V^*,\chi}\cong |W_\mu/W_\lambda|{\rm Id}$. 
\end{remark} 

\begin{remark} Let $\C\subset \Rep(\g)$ be a full subcategory invariant under all $F_V$ and $\Pi_\theta$, and $\C_\theta:=\Pi_\theta\C=\C\cap \Rep(\g)_\theta$. Then Theorem \ref{equi} implies that if $W_\lambda=W_\mu$ then 
the functors $F_{\chi,V,\theta},F_{\theta,V^*,\chi}$ are mutually 
quasi-inverse equivalences between $\C_\theta$ and $\C_\chi$. Interesting examples of this include: 

1. $\C=\mathcal O$. In this case we obtain that for dominant $\lambda$ the category 
$\O_{\chi_\lambda}$ up to equivalence depends only on $\lambda+P$ and the stabilizer $W_\lambda$. In particular, for regular dominant integral $\lambda$ all these categories are equivalent. 

2. $\C$ is the category of $\g$-modules which are locally finite and semisimple with respect to a reductive Lie subalgebra $\mathfrak k\subset \mathfrak \g$. If $\mathfrak{k}$ is the fixed subalgebra of an involution of $\g$, this category contains the category of $(\g_{\Bbb R},K)$-modules for any connected compact group $K$ such that ${\rm Lie}K=\mathfrak k$. Namely, it is just the subcategory of modules that integrate to $K$. 
\end{remark} 

\subsection{Two-sided ideals in $U_\theta$ and submodules of Verma modules}

Let $\theta=\chi_\lambda$ for dominant $\lambda$. Let $\Omega_\theta$ denote the lattice 
of two-sided ideals in $U_\theta$ (i.e., the set of two-sided ideals equipped with the operations of sum and intersection). Likewise, let $\Omega(\lambda)$ be the lattice of submodules of $M_{\lambda-\rho}$. We have a map $\nu: \Omega_\theta\to \Omega(\lambda)$ given by 
$\nu(J)=JM_{\lambda-\rho}$. It is clear that $\nu$ preserves inclusion and arbitrary sums. 

\begin{theorem}\label{lattice} (i) $I\subset J$ iff $\nu(I)\subset \nu(J)$. In particular, $\nu$ is injective.

(ii) The image of $\nu$ is the set of submodules of $M_{\lambda-\rho}$ 
which are quotients of direct sums of $P_{\mu-\rho}$ where $\chi_\mu=\chi_\lambda$, $\mu\preceq \lambda$ and $\mu\preceq W_\lambda\mu$. 

(iii) If $\lambda$ is regular (i.e., $W_\lambda=1$) then $\nu$ is an isomorphism of lattices. 
\end{theorem}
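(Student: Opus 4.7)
The strategy is to interpret $\nu$ as the left adjoint in a Galois connection. Set $\mathrm{Ideal}(N) := \{x \in U_\theta : xM_{\lambda-\rho} \subset N\}$ for a submodule $N \subset M_{\lambda-\rho}$; this is easily seen to be a two-sided ideal satisfying $\nu(J) \subset N \iff J \subset \mathrm{Ideal}(N)$, so $\nu$ preserves arbitrary sums and is monotone. Assertion (i) is then equivalent to the statement that every two-sided ideal $J$ is Galois-closed, i.e. $J = \mathrm{Ideal}(JM_{\lambda-\rho}) =: \widetilde{J}$.

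To prove $\widetilde{J} = J$, I apply the functor $H_\lambda = \Hom_{\mathrm{fin}}(M_{\lambda-\rho},-)$, which is exact because $\lambda$ is dominant (Proposition \ref{Hlaexact}), to the short exact sequence
\[ 0 \to JM_{\lambda-\rho} \to M_{\lambda-\rho} \to M_{\lambda-\rho}/JM_{\lambda-\rho} \to 0. \]
By the Duflo-Joseph theorem (Corollary \ref{iso}), $H_\lambda(M_{\lambda-\rho}) = U_\theta$ as bimodules, and under this identification $H_\lambda(JM_{\lambda-\rho})$ coincides with $\widetilde{J}$. Thus (i) reduces to injectivity of the induced bimodule map $U_\theta/J \hookrightarrow H_\lambda(M_{\lambda-\rho}/JM_{\lambda-\rho})$. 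This injectivity is the central technical obstacle; my plan is to verify it one $\g_{\mathrm{ad}}$-isotypic component at a time, using Kostant's decomposition of $U_\theta$ together with the Verma filtration on $V \otimes M_{\lambda-\rho}$ (Corollary \ref{exti3}) to control how $\g_{\mathrm{ad}}$-finite elements of $\widetilde{J}$ act on $M_{\lambda-\rho}$.

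For (ii), the submodule $JM_{\lambda-\rho}$ has finite length and hence admits a projective cover $\bigoplus_\mu P_{\mu-\rho}^{\oplus c_\mu} \twoheadrightarrow JM_{\lambda-\rho}$ in $\mathcal{O}_\theta$. Each occurring $\mu$ satisfies $\chi_\mu = \chi_\lambda$ and $\mu \preceq \lambda$, the latter by Theorem \ref{BGGth} applied to the composition factors of $M_{\lambda-\rho}$. The more delicate condition $\mu \preceq W_\lambda \mu$ I plan to extract from the classification of indecomposable projective functors (Theorem \ref{classpro}): proper representatives $(\mu,\lambda) \in \Xi$ are characterized precisely by $W_\lambda$-minimality of $\mu$ under $\preceq$, and the corresponding $F_\xi$ sends $M_{\lambda-\rho}$ to $P_{\mu-\rho}$. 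The two-sidedness of $J$, as opposed to mere left-sidedness, is what forces only such $L_{\mu-\rho}$ to occur in the head of $JM_{\lambda-\rho}$. Conversely, for any $\mu$ satisfying the three conditions, a nonzero map $P_{\mu-\rho} = F_\xi(M_{\lambda-\rho}) \to M_{\lambda-\rho}$ realizes a submodule as $J_\mu M_{\lambda-\rho}$, where $J_\mu$ is the two-sided ideal generated under Duflo-Joseph by the corresponding $\g_{\mathrm{ad}}$-finite endomorphism of $M_{\lambda-\rho}$.

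Part (iii) follows from (i) and (ii). For regular $\lambda$ the group $W_\lambda$ is trivial, so the $W_\lambda$-minimality condition is vacuous, and (ii) identifies the image of $\nu$ with all submodules $N \subset M_{\lambda-\rho}$ admitting a surjection $\bigoplus_\mu P_{\mu-\rho}^{\oplus c_\mu} \twoheadrightarrow N$ with $\mu \preceq \lambda$. But every submodule $N$ has such a cover by taking the $\mu$'s to range over the head of $N$, each satisfying $\mu \preceq \lambda$ automatically via BGG. Hence $\nu$ is surjective; combined with the injectivity from (i) it is a bijection of posets, which the Galois-connection structure automatically upgrades to a lattice isomorphism.
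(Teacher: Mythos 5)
Your reduction of part (i) is clean and correct as far as it goes: $\nu$ has a right adjoint $N\mapsto\mathrm{Ideal}(N)$, and since $J\subset\mathrm{Ideal}(\nu(J))=:\widetilde J$ is automatic, (i) is equivalent to $\widetilde J\subset J$; applying the exact functor $H_\lambda$ to the short exact sequence and invoking Duflo-Joseph correctly identifies $\widetilde J$ with $H_\lambda(\nu(J))$ inside $U_\theta$. But you stop there, declaring the injectivity of $U_\theta/J\to H_\lambda(M_{\lambda-\rho}/\nu(J))$ to be the ``central technical obstacle'' and offering only a plan (``verify it one $\g_{\rm ad}$-isotypic component at a time, using Kostant's decomposition together with the Verma filtration''). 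That step \emph{is} the theorem, and the proposed ingredients do not suffice: counting multiplicities via Kostant controls the sizes of isotypic components but gives no handle on \emph{which} $\g_{\rm ad}$-finite endomorphisms of $M_{\lambda-\rho}$ land in $J$. The missing tool is Theorem~\ref{projiso}, that a morphism of projective $\theta$-functors $F_1\to F_2$ is the same as a morphism $F_1(M_{\lambda-\rho})\to F_2(M_{\lambda-\rho})$. The paper realizes an arbitrary two-sided $J$ as $\mathrm{Im}(\phi_{U_\theta})$ for a natural transformation $\phi\colon F\to{\rm Id}_\theta$ from a projective $\theta$-functor, shows $\nu(J)=\mathrm{Im}(\phi_{M_{\lambda-\rho}})$, and then uses projectivity of $F(M_{\lambda-\rho})$ plus Theorem~\ref{projiso} to lift the inclusion $\mathrm{Im}(\phi_{M_{\lambda-\rho}})\subset\mathrm{Im}(\phi'_{M_{\lambda-\rho}})$ to a factorization $\phi'\circ\alpha=\phi$ \emph{of functors}, which forces the inclusion of the ideals. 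Your Galois-connection framing is equivalent to this (indeed, $\widetilde J\subset J$ amounts to injectivity of the unit of the $T_\lambda\dashv H_\lambda$ adjunction, i.e.\ to full faithfulness of $T_\lambda$ on $HC^1_\theta$, which is Theorem~\ref{equivaa} and itself rests on Theorem~\ref{projiso}), so you would be circular or forced to reprove Theorem~\ref{projiso} from scratch.

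Part (ii) has the same gap. You correctly note that a projective cover of $\nu(J)$ involves only $P_{\mu-\rho}$ with $\chi_\mu=\chi_\lambda$ and $\mu\preceq\lambda$ (BGG), but then assert without argument that ``the two-sidedness of $J$ is what forces only such $L_{\mu-\rho}$ to occur in the head.'' The content of the paper's proof is precisely that $\nu(J)$ is a quotient of $F(M_{\lambda-\rho})$ for $F$ a projective $\theta$-functor (rather than of an arbitrary projective in $\O$): by Corollary~\ref{projfuu}(iii) and Theorem~\ref{classpro}, $F(M_{\lambda-\rho})$ is a direct sum of $P_{\mu-\rho}$ with $(\mu,\lambda)$ a proper representative, which is exactly the constraint $\mu\preceq W_\lambda\mu$. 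Without first establishing that $\nu(J)$ arises as an image of a morphism of projective $\theta$-functors, there is no mechanism forcing the $W_\lambda$-minimality condition. Part (iii) then follows from (i) and (ii) as you indicate, once they are actually proved. In short: you have the right scaffolding and have identified the right pressure points, but the core of both (i) and (ii) — the lifting argument via Theorem~\ref{projiso} — is absent.
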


\begin{proof} 
(i) Let $F$ be a projective $\theta$-functor, and $\phi: F\to {\rm Id}_\theta$ 
a morphism of functors $\Rep(\g)_\theta^1\to \Rep(\g)$. Let 
$M(\phi,F)$ be the image of the map $\phi_{M_{\lambda-\rho}}: F(M_{\lambda-\rho})\to M_{\lambda-\rho}$ and $J(\phi,F)$ 
the image of $\phi_{U_\theta}: F(U_\theta)\to U_\theta$. 
Note that $\phi_{U_\theta}$ is a morphism of $(U(\g),U_\theta)$-bimodules, 
so $J(\phi,F)$ is a subbimodule of $U_\theta$, i.e., a 2-sided ideal. Let 
$a: U_\theta\to M_{\lambda-\rho}$ be the surjection given by $a(u)=uv_{\lambda-\rho}$. 
Then by functoriality of $\phi$
$$
a\circ \phi_{U_\theta}=\phi_{M_{\lambda-\rho}}\circ a.
$$ 
Hence
$$
\nu(J(\phi,F))=J(\phi,F)M_{\lambda-\rho}=J(\phi,F)v_{\lambda-\rho}=a(J(\phi,F))=
$$
$$
{\rm Im}(a\circ \phi_{U_\theta})=
{\rm Im}(\phi_{M_{\lambda-\rho}}\circ a)={\rm Im}(\phi_{M_{\lambda-\rho}})=M(\phi,F).
$$

Let us show that any 2-sided ideal $J$ in $U_\theta$ is of the form $J(\phi,F)$ for some $F$, 
$\phi$. Since $U_\theta$ is Noetherian, $J$ is generated by some finite-dimensional subspace $V\subset J$ which can be chosen $\g_{\rm ad}$-invariant. Then by Frobenius reciprocity the $\g_{\rm ad}$-morphism $\iota: V\to U_\theta$ can be lifted to a morphism of $(U(\g),U_\theta)$-bimodules $\widehat\phi: V\otimes U_\theta=F_V(U_\theta)\to U_\theta$, i.e., to a functorial morphism 
$\phi: F_V(\theta)\to {\rm Id}_\theta$. It is clear that then $J=J(\phi,F)$.  
 
We are now ready to prove (i), i.e., that $M(\phi,F)\subset M(\phi',F')$ implies $J(\phi,F)\subset J(\phi',F')$. Since $F(M_{\lambda-\rho})$, 
$F'(M_{\lambda-\rho})$ are projective, the inclusion $M(\phi,F)\hookrightarrow M(\phi',F')$ lifts to 
a map $\widetilde \alpha: F(M_{\lambda-\rho})\to 
F'(M_{\lambda-\rho})$, i.e., $\phi_{M_{\lambda-\rho}}'\circ \widetilde \alpha=\phi_{M_{\lambda-\rho}}$. But by Theorem \ref{projiso}, morphisms of projective $\theta$-functors are the same as morphisms of the images of $M_{\lambda-\rho}$ under these functors. Thus there is 
$\alpha: F\to F'$ which maps to $\widetilde \alpha$ and such that $\phi'\circ \alpha=\phi$. Hence 
$$
J(\phi,F)={\rm Im}(\phi_{U_\theta})\subset {\rm Im}(\phi_{U_\theta}')=J(\phi',F'), 
$$
and (i) follows. 

(ii) The proof of (i) implies that the image of $\nu$ consists exactly of the submodules $M(\phi,F)$.
Such a submodule is the image of $F(M_{\lambda-\rho})$ under a morphism. But $F$ is a projective $\theta$-functor, so by Corollary \ref{projfuu}(iii), it is of the form $\widetilde F(\theta)$, where 
$\widetilde F$ is a projective functor. Also by Theorem \ref{classpro}, $\widetilde F$ is a direct sum of $F_\xi$, 
so $F(M_{\lambda-\rho})$ is a direct sum of $P_{\mu-\rho}$, where
$(\mu,\lambda)$ is a proper representation of $\xi$. Thus $\mu\preceq \lambda$ 
and $\mu\preceq W_\lambda\mu$. Conversely, if for such $\mu$ we have a homomorphism 
$\gamma: P_{\mu-\rho}=F_\xi(M_{\lambda-
\rho})\to M_{\lambda-\rho}$ then $\gamma=\phi_{M_{\lambda-\rho}}$ where 
$\phi: F_\xi(\theta)\to {\rm Id}_\theta$. So ${\rm Im}(
\gamma)=\nu(J(\phi,F_\xi(\theta)))$. Since $\nu$ preserves sums, (ii) follows. 

(iii) Every submodule of $M_{\lambda-\rho}$ is a quotient of a direct sum of $P_{\mu-\rho}$ with $\chi_\mu=\chi_\lambda,\mu\le \lambda$. Hence by Corollary \ref{domchar} $\mu\preceq \lambda$, as $\lambda$ is dominant. (This also follows from Theorem \ref{BGGth}). 
So if $W_\lambda=1$ then by (ii) $\nu$ is surjective, hence bijective by (i). Since $I\cap J$ is the largest of all ideals contained both in $I$ and in $J$ and similarly for submodules, $\nu$ also preserves intersections by (i). Thus $\nu$ is an isomorphism of lattices.
\end{proof} 

\begin{corollary} Let $\theta=\chi_\lambda$ where $\lambda$ is dominant. 
If $M_{\lambda-\rho}$ is irreducible then $U_\theta$ is a simple algebra. Conversely, if $U_\theta$ is simple then $M_{\mu-\rho}$ is irreducible for all $\mu$ with $\chi_\mu=\theta$. 
\end{corollary}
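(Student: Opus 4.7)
The first implication I will handle directly from Theorem \ref{lattice}(i): when $M_{\lambda-\rho}$ is irreducible, its submodule lattice $\Omega(\lambda)$ collapses to $\{0, M_{\lambda-\rho}\}$, and since $\nu: \Omega_\theta \to \Omega(\lambda)$ is injective with $\nu(0) = 0$ and $\nu(U_\theta) = U_\theta\cdot v_{\lambda-\rho} = M_{\lambda-\rho}$, the only two-sided ideals of $U_\theta$ are $0$ and $U_\theta$.

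For the converse my plan has two stages: first prove that $M_{\lambda-\rho}$ itself is irreducible, then bootstrap to every $\mu \in W\lambda$ via the Shapovalov determinant. Suppose $U_\theta$ is simple and, toward a contradiction, that $M_{\lambda-\rho}$ has a proper nonzero submodule $N$ with quotient $Q$. Since $\lambda$ is dominant, Proposition \ref{Hlaexact} makes $H_\lambda$ exact, and Corollary \ref{iso} identifies $H_\lambda(M_{\lambda-\rho})$ with $U_\theta$ as a bimodule. Applying $H_\lambda$ to $0 \to N \to M_{\lambda-\rho} \to Q \to 0$ therefore yields
$$
0 \to H_\lambda(N) \to U_\theta \to H_\lambda(Q) \to 0,
$$
so $H_\lambda(N)$ is a two-sided ideal of $U_\theta$, and I just need to exhibit both $H_\lambda(N)$ and $H_\lambda(Q)$ as nonzero, for then $H_\lambda(N)$ is a proper nonzero ideal and simplicity is contradicted. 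The non-vanishing of $H_\lambda(Q)$ is cheap: the projection $M_{\lambda-\rho}\twoheadrightarrow Q$ is a nonzero $\g$-homomorphism, hence a nonzero finite-type map. For $H_\lambda(N)$, I will exploit that $M_{\lambda-\rho}$ is free over $U(\n_-)$ to obtain a Verma submodule of $N$ generated by a singular vector, then apply Exercise \ref{dimhom}(ii) to descend to a simple Verma submodule $M_{\eta-\rho} \subset N$; since $M_{\eta-\rho}^\vee \cong M_{\eta-\rho}$ by simplicity, Exercise \ref{intertw}(ii) combined with Frobenius reciprocity gives
$$
\Hom_\g(V \otimes M_{\lambda-\rho}, M_{\eta-\rho}) \cong V^*[\lambda-\eta]
$$
for every finite-dimensional $\g$-module $V$. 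Because $\eta \preceq \lambda$ puts $\lambda-\eta \in Q_+$, I can choose $V$ so that $V^*$ has $\lambda-\eta$ as a weight, which produces a nonzero finite-type map $M_{\lambda-\rho}\to M_{\eta-\rho}$; composing with the inclusion $M_{\eta-\rho}\hookrightarrow N$ delivers the desired nonzero element of $H_\lambda(N)$.

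Once $M_{\lambda-\rho}$ is known to be irreducible, the Shapovalov determinant formula (Exercise \ref{Shapova}) translates this into $(\lambda,\alpha^\vee)\notin \Bbb Z_{\ge 1}$ for every $\alpha\in R_+$; together with dominance $(\lambda,\alpha^\vee)\notin \Bbb Z_{<0}$ this puts every such pairing in $\{0\}\cup(\Bbb C\setminus \Bbb Z)$. For any $\mu = w\lambda \in W\lambda$ and $\alpha \in R_+$, one has $(\mu,\alpha^\vee)=(\lambda,w^{-1}\alpha^\vee)=\pm(\lambda,\beta^\vee)$ for some $\beta\in R_+$, which still lies in $\{0\}\cup(\Bbb C\setminus \Bbb Z)$ and hence not in $\Bbb Z_{\ge 1}$, so Shapovalov immediately delivers irreducibility of $M_{\mu-\rho}$.

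The hard part will be the non-vanishing of $H_\lambda(N)$. Everything else is a clean combination of exactness of $H_\lambda$, the Duflo-Joseph isomorphism, and Shapovalov, but producing an explicit nontrivial intertwiner out of $V\otimes M_{\lambda-\rho}$ into $M_{\eta-\rho}$ requires both the structural fact that $N$ contains a simple Verma and a judicious choice of $V$ with the correct weight of $V^*$.
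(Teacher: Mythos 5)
Your argument is correct, and the direct implication matches the paper exactly (Theorem~\ref{lattice}(i) applied to the two-element lattice $\{0,M_{\lambda-\rho}\}$). For the converse you take a genuinely different route. The paper observes that if $M_{\mu-\rho}$ is reducible for some $\mu$ with $\chi_\mu=\theta$, one may apply the Duflo--Joseph isomorphism (Corollary~\ref{iso}) directly at the possibly non-dominant weight $\mu$: picking a simple Verma submodule $M_{\mu'-\rho}\subsetneq M_{\mu-\rho}$ (via Exercise~\ref{dimhom}(ii)), the subspace $\Hom_{\rm fin}(M_{\mu-\rho},M_{\mu'-\rho})\subset\Hom_{\rm fin}(M_{\mu-\rho},M_{\mu-\rho})=U_\theta$ is a two-sided ideal, proper since it misses $1$, and nonzero by the same computation $\Hom_\g(M_{\mu-\rho},V\otimes M_{\mu'-\rho}^\vee)\cong V[\mu-\mu']$ that you use. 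You instead stay at the dominant $\lambda$ throughout, applying the exact functor $H_\lambda$ to $0\to N\to M_{\lambda-\rho}\to Q\to 0$ and checking that $H_\lambda(N)$ and $H_\lambda(Q)$ are both nonzero; this handles $M_{\lambda-\rho}$, but then forces a second step -- the Shapovalov bootstrap -- to pass from $M_{\lambda-\rho}$ to the remaining $M_{\mu-\rho}$, $\mu\in W\lambda$. Your bootstrap is correct (dominance kills $\Bbb Z_{<0}$ and irreducibility kills $\Bbb Z_{\ge 1}$, and this is $W$-stable up to sign), but it is avoidable: nothing in Proposition~\ref{injee} or Corollary~\ref{iso} requires dominance, so the paper's one-step construction of the ideal at the reducible weight is shorter and also bypasses your appeal to exactness of $H_\lambda$ (Proposition~\ref{Hlaexact}). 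What your version buys is a slightly more structural picture -- it realizes the ideal as the value of an exact bimodule-valued functor on the submodule $N$ -- at the cost of the extra Shapovalov argument.
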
 

\begin{proof} The direct implication follows from Theorem \ref{lattice}. 
For the reverse implication, suppose for some distinct $\mu_1,\mu_2\in W\lambda$, we have 
$M_{\mu_1-\rho}\hookrightarrow M_{\mu_2-\rho}$ and $M_{\mu_1-\rho}$ is simple. Then in view of the Duflo-Joseph theorem we have 
an inclusion 
$$
J:=\Hom_{\rm fin}(M_{\mu_2-\rho},M_{\mu_1-\rho})\hookrightarrow \Hom_{\rm fin}(M_{\mu_2-\rho},M_{\mu_2-\rho})=U_\theta,
$$
and $J$ is a proper 2-sided ideal (as it does not contain $1$) which is not zero 
(as $M_{\mu_1-\rho}\cong M_{\mu_1-\rho}^\vee$ and hence 
for a finite-dimensional $\g$-module $V$, 
$\Hom(M_{\mu_2-\rho},V\otimes M_{\mu_1-\rho})\cong V[\mu_2-\mu_1]$).
\end{proof} 

Using the determinant formula for the Shapovalov form, this gives an explicit description
of the locus of $\theta\in \h^*/W$ where $U_\theta$ is simple. 

\section{\bf Applications of projective functors - II} 

\subsection{Duflo's theorem on primitive ideals in $U_\theta$} 

Recall that a {\bf prime ideal} in a commutative ring $R$ is a proper ideal $I$ such that if $xy\in I$ then $x\in I$ or $y\in I$. This definition is not good for noncommutative rings: for example, 
the zero ideal in the matrix algebra ${\rm Mat}_n(\Bbb C)$, $n\ge 2$, would not be prime, even though this algebra is simple; so  ${\rm Mat}_n(\Bbb C)$ would have no prime ideals at all. However, the definition can be reformulated so that it works well for noncommutative rings. 

\begin {definition} A proper 2-sided ideal $I$ in a (possibly non-commutative) ring $R$ is {\bf prime} if whenever the product $XY$ of two 2-sided ideals $X,Y\subset R$ 
is contained in $I$, either $X$ or $Y$ must be contained in $I$. 
\end{definition} 

Note that for commutative rings this coincides with the usual definition. Indeed, if $I$ is prime in the noncommutative sense and if $xy\in I$ then $(x)(y)\subset I$, so $(x)\subset I$ or $(y)\subset I$, i.e. $x$ or $y$ is in $I$. Conversely, if $I$ is prime in the commutative sense and 
$X,Y$ are not contained in $I$ then there exist $x\in X,y\in Y$ not in $I$, so $xy\notin I$, i.e., $XY$ is not contained in $I$. But in the noncommutative case the two definitions differ, e.g. $0$ is clearly a prime ideal (in the noncommutative sense) in any simple algebra, e.g. in the matrix algebra  ${\rm Mat}_n(\Bbb C)$.  

A ring $R$ is called {\bf prime} if $0$ is a prime ideal in $R$. For example, if $R$ 
is an integral domain then it is prime, and the converse holds if $R$ is commutative. 
On the other hand, there are many noncommutative prime rings which are not domains, e.g. simple rings, such as the matrix algebras ${\rm Mat}_n(\Bbb C),n\ge 2$. Also it is clear that an ideal $I\subset R$ is prime iff the ring $R/I$ is prime (thus every maximal ideal is prime, so prime ideals always exist). If moreover $R/I$ is a domain, one says that $I$ is {\bf completely prime}.  

Another important notion is that of a {\bf primitive ideal}. 

\begin{definition} An ideal $I\subset R$ is {\bf primitive} if it is the annihilator of a simple $R$-module $M$.  
\end{definition} 

It is easy to see that every primitive ideal $I$ is prime: if $X,Y$ are 2-sided ideals in $R$ and $XY\subset I$ then $XYM=0$, so if $Y$ is not contained in $I$ then $YM\ne 0$. Thus $YM=M$ (as $M$ is simple), hence $XM=XYM=0$, so $X\subset I$. Also for a commutative ring a primitive ideal is the same thing as a maximal ideal. Indeed, if $I$ is maximal then $R/I$ is a field, so a simple $R$-module, and $I$ is the annihilator of $R/I$. Conversely, if $I$ is primitive and is the annihilator of a simple module $M$ then $M=R/J$ is a field and $I=J$, so $I$ is maximal. 

\begin{exercise} Show that every maximal ideal in a unital ring is primitive, and give a counterexample to the converse. 
\end{exercise} 

We see that in general a prime ideal need not be primitive, e.g. the zero ideal in $\Bbb C[x]$. Nevertheless, for $U_\theta$ we have the following remarkable theorem due to M. Duflo:  

\begin{theorem} Every prime ideal $J\subset U_\theta$ is primitive and moreover is the annihilator of a simple highest weight module $L_{\mu-\rho}$, where $\chi_\mu=\theta$.
\end{theorem}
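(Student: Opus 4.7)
The plan is to exploit the order-reflecting correspondence $\nu\colon \Omega_\theta \to \Omega(\lambda)$ of Theorem \ref{lattice} in order to translate the prime ideal $J$ into a submodule of a dominant Verma module, and then to run a short composition-series argument that uses primeness. The key observation is that injectivity of $\nu$ alone (part (i) of Theorem \ref{lattice}) is enough, so there is no need to treat the singular and regular cases separately.

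First I would fix $\lambda \in \h^*$ dominant with $\chi_\lambda = \theta$ and set $N := JM_{\lambda-\rho} = \nu(J)$ and $\overline M := M_{\lambda-\rho}/N$. Since $J$ is a proper ideal, applying Theorem \ref{lattice}(i) to the strict inclusion $J \subsetneq U_\theta$ gives $N = \nu(J) \subsetneq \nu(U_\theta) = M_{\lambda-\rho}$, so $\overline M \ne 0$; as $\overline M \in \mathcal O$, it has finite length. Next I would verify that $\mathrm{Ann}_{U_\theta}(\overline M) = J$: the inclusion $J \subset \mathrm{Ann}(\overline M)$ is tautological, and conversely if $J' \subset U_\theta$ is any two-sided ideal killing $\overline M$ then $J' M_{\lambda-\rho} \subset N$, i.e.\ $\nu(J') \subset \nu(J)$, so Theorem \ref{lattice}(i) forces $J' \subset J$.

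Now choose a composition series $0 = \overline M_0 \subset \overline M_1 \subset \cdots \subset \overline M_n = \overline M$ with $\overline M_i / \overline M_{i-1} \cong L_{\mu_i - \rho}$, and set $I_i := \mathrm{Ann}_{U_\theta}(L_{\mu_i - \rho})$. Since $I_k$ sends $\overline M_k$ into $\overline M_{k-1}$, a descending induction shows that the product $I_1 I_2 \cdots I_n$ annihilates $\overline M$, and hence lies in $J$. Applying primeness of $J$ inductively to the factorization $I_1 \cdot (I_2 \cdots I_n)$ yields some index $k$ with $I_k \subset J$. On the other hand $J$ kills $\overline M$, hence every subquotient of $\overline M$, so $J \subset \mathrm{Ann}(L_{\mu_k - \rho}) = I_k$; thus $J = I_k$, and $\chi_{\mu_k} = \theta$ because $L_{\mu_k - \rho}$ has central character $\theta$.

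There is no serious obstacle, which is perhaps the striking feature of the argument: the whole proof rests on the order-reflecting correspondence $\nu$, and that in turn rests on the Duflo--Joseph theorem. The one point where care is genuinely needed is the identification $\mathrm{Ann}(\overline M) = J$, since this is exactly where the surjectivity-style content of Theorem \ref{lattice}(i) (i.e.\ $\nu(J') \subset \nu(J) \Rightarrow J' \subset J$, not merely injectivity of $\nu$) is decisive. Once that step is in place, primeness does the rest, and the resulting $L_{\mu_k - \rho}$ is a simple highest weight module whose annihilator is exactly $J$, proving both that $J$ is primitive and that it has the advertised form.
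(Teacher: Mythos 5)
Your proof is correct and follows the paper's argument essentially verbatim: quotient $M_{\lambda-\rho}$ by $\nu(J)$, take a composition series, form the product $I_1\cdots I_n$ of annihilators of the simple constituents, translate $\nu(I_1\cdots I_n)\subset\nu(J)$ back to $I_1\cdots I_n\subset J$ via Theorem \ref{lattice}(i), and finish with primeness. The only cosmetic difference is that you package the translation step as the clean identity $\mathrm{Ann}(\overline M)=J$ before bringing in the composition series; incidentally, your citation of Theorem \ref{lattice}(i) there is the intended one (the paper's reference to Theorem \ref{equi}(i) at the corresponding step appears to be a typo).
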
 

\begin{proof} The module $M:=M_{\lambda-\rho}/\nu(J)$ has finite length, so let us endow it with a filtration by submodules $F_k=F_kM$ with simple successive quotients $L_1,...,L_n$ ($L_k=F_k/F_{k-1}$). Let $I_k\subset U_\theta$ be the annihilators of $L_k$. Since $JM=0$, 
we have $J\subset I_k$ for all $k$. Also $I_kF_k\subset F_{k-1}$, so 
$I_1...I_nM=0$, hence $I_1...I_nM_{\lambda-\rho}\subset JM_{\lambda-\rho}$. 
By Theorem \ref{lattice}(i), this implies that $I_1....I_n\subset J$. 
Since $J$ is prime, this means that there exists $m$ such that $I_m\subset J$. 
Then $J=I_m$, i.e. $J$ is the annihilator of $L_m$. But $L_m=L_{\mu-\rho}$ for some $\mu$ such that 
$\chi_\mu=\chi_\lambda=\theta$. 
\end{proof} 

Note that the choice of $\mu$ is not unique, for example, for $J=0$ and generic $\theta$, any of the $|W|$ possible choices of $\mu$ is good. In fact, the proof of Duflo's theorem shows that for every dominant $\lambda$ such that $\theta=\chi_\lambda$, we can choose $\mu\in W\lambda$ such that $\mu\preceq \lambda$. 

\subsection{Classification of simple Harish-Chandra bimodules} \label{classsim}

Denote by $HC_\theta^n$ the category of Harish-Chandra bimodules over $\g$ 
annihilated on the right by the ideal $({\rm Ker}\theta)^n$. These categories form a nested sequence; denote the corresponding nested union by $HC_\theta$. Recall that we have a direct sum decomposition 
$HC=\oplus_{\theta\in \h^*/W} HC_\theta$. This implies that every simple Harish-Chandra bimodule belongs 
to $HC_\theta^1$ for some infinitesimal character $\theta$. 

Recall also that for a finite-dimensional $\g$-module $V$, in $HC_\theta^1$ we have the object 
$V\otimes U_\theta$. Moreover, this object is projective: for $Y\in HC_\theta^1$ we have 
$$
\Hom(V\otimes U_\theta,Y)=\Hom_{\g-{\rm bimod}}(V\otimes U(\g),Y)=\Hom_{\g_{\rm ad}}(V,Y),
$$
which is an exact functor since $Y$ is a locally finite (hence semisimple) $\g_{\rm ad}$-module. 
Finally, since $Y$ is a finitely generated bimodule locally finite under $\g_{\rm ad}$, there exists a finite-dimensional $\g_{\rm ad}$-submodule $V\subset Y$ that generates $Y$ as a bimodule. Then the homomorphism 
$$\widehat i: V\otimes U(\g)\to Y$$ corresponding to $i: V\hookrightarrow Y$ is surjective and factors through the module $V\otimes U_\theta$. Thus $Y$
is a quotient of $V\otimes U_\theta$. Thus we have 

\begin{lemma}\label{enopro} The abelian category $HC_\theta^1$ has enough projectives. 
\end{lemma}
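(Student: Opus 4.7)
The plan is to produce, for every object $Y\in HC_\theta^1$, a projective object $P\in HC_\theta^1$ together with a surjection $P\twoheadrightarrow Y$. The candidates for $P$ are the bimodules of the form $V\otimes U_\theta$ for finite dimensional $\g$-modules $V$, with the left $\g$-action being the diagonal one and the right $\g$-action acting only on $U_\theta$.

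First, I would verify that $V\otimes U_\theta$ is projective in $HC_\theta^1$. This is an essentially formal computation: for any $Y\in HC_\theta^1$, Frobenius reciprocity for induction from $\g_{\rm ad}$ to the bimodule category (or, equivalently, viewing a bimodule map out of $V\otimes U(\g)$ as determined by its restriction to $V\otimes 1$) yields a natural isomorphism
\[
\Hom_{HC_\theta^1}(V\otimes U_\theta,Y)\cong \Hom_{\g_{\rm ad}}(V,Y).
\]
Since $Y$ is Harish-Chandra, it is locally finite and hence semisimple under $\g_{\rm ad}$, so the functor $Y\mapsto \Hom_{\g_{\rm ad}}(V,Y)$ is exact. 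Thus $V\otimes U_\theta$ is projective.

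Second, I would construct the surjection. Take $Y\in HC_\theta^1$. As a Harish-Chandra bimodule, $Y$ is finitely generated as a bimodule and locally finite under $\g_{\rm ad}$, so any finite generating set is contained in a finite dimensional $\g_{\rm ad}$-submodule $V\subset Y$. The inclusion $i\colon V\hookrightarrow Y$ extends by the right $U(\g)$-module structure to a bimodule map $\widehat{i}\colon V\otimes U(\g)\to Y$, which is surjective by the choice of $V$. Because $Y$ lies in $HC_\theta^1$, it is annihilated on the right by $\Ker\theta$, so $\widehat i$ factors through a surjection $V\otimes U_\theta\twoheadrightarrow Y$, yielding the desired projective cover-like map.

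The argument is essentially fully contained in the paragraph preceding the statement, so there is no real obstacle; the only point to be careful about is that the Hom computation really takes place inside $HC_\theta^1$ (not just in the category of all $\g$-bimodules), which is automatic since both source and target are annihilated on the right by $\Ker\theta$, and that the generating submodule $V$ can indeed be chosen $\g_{\rm ad}$-invariant (replace any finite generating set by the $\g_{\rm ad}$-submodule it generates, which is finite dimensional by local finiteness).
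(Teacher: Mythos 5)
Your proof is correct and follows exactly the argument the paper gives in the paragraph preceding the lemma: you show $V\otimes U_\theta$ is projective via the adjunction $\Hom(V\otimes U_\theta,Y)\cong\Hom_{\g_{\rm ad}}(V,Y)$ and semisimplicity of the $\g_{\rm ad}$-action, then choose a finite dimensional $\g_{\rm ad}$-stable generating subspace $V\subset Y$ to produce the surjection $V\otimes U_\theta\twoheadrightarrow Y$. The two small points you flag at the end (that the Hom computation stays in $HC_\theta^1$, and that the generating set can be enlarged to a finite dimensional $\g_{\rm ad}$-submodule) are indeed the only things to check, and your handling of them is fine.
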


We also note that this category has finite-dimensional Hom spaces. Indeed, 
if $Y_1,Y_2\in HC_\theta^1$ then $Y_1$ is a quotient of $V\otimes U_\theta$ for some $V$, so $\Hom(Y_1,Y_2)\subset \Hom(V\otimes U_\theta,Y_2)=\Hom_{\g_{\rm ad}}(V,Y_2)$, which is finite-dimensional. Finally, note that this category is Noetherian: any nested sequence of subobjects 
of an object stabilizes. 

It thus follows from the Krull-Schmidt theorem that in $HC_\theta^1$, every object of $HC_\theta^1$ is uniquely a finite direct sum of indecomposables, and from Proposition \ref{genera} the indecomposable projectives and the simples of $HC_\theta^1$ are labeled by the same index set. It remains to describe this labeling set. 

\begin{theorem}\label{classi} The simples (and indecomposable projectives) in $HC_\theta^1$ are labeled by 
the set $\Xi_\theta$ of $\xi\in \Xi$ admitting a proper representation $(\mu,\lambda)$ with $\chi_\lambda=\theta$, via $\xi\in \Xi_\theta\mapsto \bold L_\xi,\bold P_\xi$. Namely, if $\xi=(\mu,\lambda)$ is such a proper representation then 
$\bold P_\xi$ is the unique indecomposable projective in $HC_\theta^1$ such that $\bold P_\xi\otimes_{U(\g)}M_{\lambda-\rho}=P_{\mu-\rho}$. 
\end{theorem}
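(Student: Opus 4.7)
The plan is to identify the abelian category of projective objects in $HC_\theta^1$ with the category of projective $\theta$-functors, and then import the classification given by Theorem \ref{classpro}. The key construction is the assignment $F \mapsto F(U_\theta)$: for any projective $\theta$-functor $F$ (a direct summand of some $F_V(\theta)$), the bimodule $F(U_\theta)$ is a direct summand of $F_V(U_\theta) = V \otimes U_\theta$, hence lies in $HC_\theta^1$ and is projective there (as $V\otimes U_\theta$ is projective, as observed in the discussion preceding the theorem). I want to show this assignment is an equivalence onto the full subcategory of projectives in $HC_\theta^1$, under which indecomposable objects match.

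\textbf{Fully faithful.} Every $X \in \Rep(\g)_\theta^1$ is a $U_\theta$-module, so $F(X) \cong F(U_\theta)\otimes_{U_\theta} X$ canonically; consequently, a natural transformation $\eta: F_1\to F_2$ is determined by its component $\eta_{U_\theta}$, and conversely any bimodule map $F_1(U_\theta)\to F_2(U_\theta)$ extends uniquely to a natural transformation. Combined with Theorem \ref{projiso} (which identifies morphisms of projective $\theta$-functors with morphisms on $M_{\lambda-\rho}$), this gives a natural isomorphism
$$\Hom(F_1,F_2) \cong \Hom_{(U(\g),U_\theta)\text{-bimod}}(F_1(U_\theta),F_2(U_\theta)).$$
For essential surjectivity, take $Y \in HC_\theta^1$ projective; by the argument preceding Lemma \ref{enopro}, $Y$ is a quotient of some $V\otimes U_\theta$, and projectivity splits this to realize $Y$ as a direct summand, corresponding to an idempotent $e \in \End_{\text{bimod}}(V\otimes U_\theta)$. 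Under the identification of the previous display, $e$ matches an idempotent endomorphism of $F_V(\theta)$, and by Corollary \ref{projfuu}(ii), its image is a projective $\theta$-functor $F$ with $F(U_\theta) \cong Y$.

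\textbf{Classification and simples.} Under this equivalence, indecomposable projective objects in $HC_\theta^1$ correspond bijectively to indecomposable projective $\theta$-functors. By Corollary \ref{projfuu}(iii) and Theorem \ref{classpro}, the latter are exactly the $F_\xi$ for those $\xi \in \Xi$ admitting a proper representation $(\mu,\lambda)$ with $\chi_\lambda = \theta$. Setting $\bold P_\xi := F_\xi(U_\theta)$, one has
$$\bold P_\xi \otimes_{U(\g)} M_{\lambda - \rho} = \bold P_\xi \otimes_{U_\theta} M_{\lambda - \rho} = F_\xi(M_{\lambda - \rho}) = P_{\mu - \rho},$$
as desired. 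Finally, $HC_\theta^1$ is Noetherian with finite-dimensional Hom spaces and enough projectives (Lemma \ref{enopro}), so Proposition \ref{genera} applies: the indecomposable projectives $\bold P_\xi$ are projective covers of a corresponding family of simples $\bold L_\xi$, and every simple arises this way.

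\textbf{Main obstacle.} The technical heart is the equivalence between projective $\theta$-functors and projectives in $HC_\theta^1$, and in particular the fully faithful step $\Hom(F_1,F_2)\cong \Hom_{\text{bimod}}(F_1(U_\theta),F_2(U_\theta))$, which leverages Theorem \ref{projiso} and the tensor-formula $F(X)=F(U_\theta)\otimes_{U_\theta}X$; once this and the idempotent-lifting in Corollary \ref{projfuu}(ii) are in place, the bimodule classification is reduced to the already-proved functorial one, and Proposition \ref{genera} handles the passage from projectives to simples.
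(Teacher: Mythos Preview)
Your proof is correct and takes essentially the same approach as the paper: both identify indecomposable projectives in $HC_\theta^1$ with indecomposable projective $\theta$-functors via the formula $F(X)\cong F(U_\theta)\otimes_{U_\theta}X$ (equivalently, $(V\otimes U_\theta)\otimes_{U_\theta}Y=F_V(\theta)(Y)$), import the classification of Theorem~\ref{classpro}, and invoke Proposition~\ref{genera} to pass to simples. One minor remark: in the essential-surjectivity step you do not need Corollary~\ref{projfuu}(ii), since any direct summand of $F_V(\theta)$ is a projective $\theta$-functor by definition.
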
 

\begin{proof} Every indecomposable projective is a direct summand of $V\otimes U_\theta$. But 
$(V\otimes U_\theta)\otimes_{U(\g)}Y=F_V(\theta)(Y)$. Thus from the classification of projective functors (Theorem \ref{classpro}) it follows that the indecomposable summands of $V\otimes U_\theta$ are $\bold P_\xi$ such that $\bold P_\xi\otimes_{U(\g)}\text{--}\cong F_\xi(\theta)$. 
\end{proof} 

\begin{corollary} Objects in $HC_\theta^1$, hence in $HC_\theta$ and $HC$, have finite length.
\end{corollary}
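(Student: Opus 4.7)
The plan is to reduce finite length of a general $Y \in HC_\theta^1$ to finite length of the indecomposable projectives $\bold P_\xi$, and then to bound the length of each $\bold P_\xi$ via the identification $\bold P_\xi \otimes_{U(\g)} M_{\lambda-\rho} \cong P_{\mu-\rho}$ together with the known finite length of objects of $\mathcal O$.

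First I will reduce to the $\bold P_\xi$. As noted before Lemma~\ref{enopro}, every $Y \in HC_\theta^1$ is a quotient of some $V \otimes U_\theta$ with $V$ finite-dimensional. The projective functor $F_V \circ \Pi_\theta$ decomposes as a finite direct sum of indecomposable projective $\theta$-functors by Proposition~\ref{projfuu2}(i), and by Theorem~\ref{classpro} these summands are of the form $F_\xi(\theta)$. Evaluating at $U_\theta$ and invoking Theorem~\ref{classi} yields a finite-sum decomposition $V \otimes U_\theta \cong \bigoplus_\xi \bold P_\xi^{n_\xi}$, so it suffices to prove that each $\bold P_\xi$ has finite length.

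Second I will show that $\sum_{\eta \in \Xi} \dim \Hom_{HC}(\bold P_\eta, \bold P_\xi) < \infty$. Morphisms of bimodules $\bold P_\eta \to \bold P_\xi$ are the same as natural transformations $F_\eta(\theta) \to F_\xi(\theta)$, which by Theorem~\ref{projiso} are determined by their values at $M_{\lambda-\rho}$. Hence, with $(\mu_\eta,\lambda)$ and $(\mu_\xi,\lambda)$ the proper representatives sharing right coordinate $\lambda$,
$$
\Hom_{HC}(\bold P_\eta, \bold P_\xi) \;\cong\; \Hom_{\mathcal O}(P_{\mu_\eta-\rho}, P_{\mu_\xi-\rho}),
$$
whose dimension equals the multiplicity $[P_{\mu_\xi-\rho} : L_{\mu_\eta-\rho}]$. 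This is finite, and nonzero for only finitely many $\mu_\eta$ because $P_{\mu_\xi-\rho}$ has finite length in $\mathcal O$. Since $\eta \mapsto \mu_\eta$ is injective at fixed dominant $\lambda$, the total sum is bounded by $\mathrm{length}_{\mathcal O}(P_{\mu_\xi-\rho}) < \infty$.

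Third I will deduce finite length of $\bold P_\xi$ from this bound together with Noetherianity of $HC_\theta^1$ (noted before Lemma~\ref{enopro}). The idea is to form any strictly descending chain $\bold P_\xi = N_0 \supsetneq N_1 \supsetneq \ldots$ with simple subquotients $N_{i-1}/N_i \cong \bold L_{\eta_i}$, taking maximal proper subobjects at each step. Projectivity lifts the surjection $\bold P_{\eta_i} \twoheadrightarrow \bold L_{\eta_i}$ to a map $f_i \colon \bold P_{\eta_i} \to N_{i-1} \hookrightarrow \bold P_\xi$; projecting $\bold P_\xi \twoheadrightarrow \bold P_\xi/N_{i_1}$ then shows that whenever $\eta_{i_1} = \cdots = \eta_{i_n} = \eta$ with $i_1 < \cdots < i_n$, the maps $f_{i_1}, \ldots, f_{i_n}$ are linearly independent in $\Hom(\bold P_\eta, \bold P_\xi)$. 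Thus the chain terminates after at most $\sum_\eta \dim \Hom(\bold P_\eta, \bold P_\xi)$ steps. I expect this final abelian-category step to be the main obstacle, since it is the only place where I must leverage projectivity together with the $\Hom$-finiteness of Step~2; but given that bound the argument is routine. The passage from $HC_\theta^1$ to $HC_\theta$ is then immediate via the filtration of $Y \in HC_\theta^n$ by $Y \cdot (\ker\theta)^k$ (whose successive quotients lie in $HC_\theta^1$), and the passage to $HC$ follows by decomposing a finitely generated Harish-Chandra bimodule into its finitely many right central character components.
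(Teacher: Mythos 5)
Your proof is correct, and it is essentially a fleshed-out version of the paper's argument; the main difference lies in \emph{how} you achieve the finiteness needed to run the counting argument in Step 3. The paper uses the decomposition $HC_\theta^1=\oplus_\chi HC_{\chi,\theta}^1$ by left generalized central character and observes (via Theorem \ref{classi}) that each summand has only finitely many simple objects, then appeals to Proposition \ref{genera}; this is terse, since Proposition \ref{genera} as stated does not assert finite length — one still needs the abelian-category lemma ``Noetherian $+$ enough projectives $+$ finite-dimensional Homs $+$ finitely many simples $\Rightarrow$ finite length,'' which is exactly the content of your Step 3 (lift each simple subquotient $\bold L_{\eta_i}$ to a map $\bold P_{\eta_i}\to \bold P_\xi$ and observe linear independence). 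You replace the paper's left-central-character decomposition with a direct estimate: fix $\xi$ and bound $\sum_\eta \dim\Hom_{HC}(\bold P_\eta,\bold P_\xi)$ using the identification $\Hom_{HC}(\bold P_\eta,\bold P_\xi)\cong\Hom_{\O}(P_{\mu_\eta-\rho},P_{\mu_\xi-\rho})$ from Theorem \ref{projiso}, Proposition \ref{genera}(ii), and the already-established finite length of objects of $\O$. Both routes are valid and deliver the same bound; the paper's route is slicker once the left-character decomposition is in hand, while yours is more self-contained and makes the underlying projective-cover counting argument explicit — which is probably worth doing, since the paper's appeal to Proposition \ref{genera} alone could mislead a reader into thinking finite length is automatic, when in fact the Remark following that proposition gives a counterexample when there are infinitely many simples.
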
 

\begin{proof} Recall that $HC_\theta^1=\oplus_\chi HC_{\chi,\theta}^1$, the decomposition according to left generalized infinitesimal characters. By  Theorem \ref{classi}, each subcategory $HC_{\chi,\theta}^1$ 
has finitely many simple objects. Thus the statement for $HC_\theta^1$ follows from Proposition \ref{genera}. 
Now, every object of $HC_\theta^n$ has a filtration of length $n$ by powers of ${\rm Ker}\theta$, and the successive quotients lie in $HC_\theta^1$, which implies the corollary. 
\end{proof}

\subsection{Equivalence between category $\mathcal O$ and category of Harish-Chandra bimodules}

Let $\theta=\chi_\lambda$ where $\lambda$ is dominant. Let $\mathcal O_{\lambda+P}$ be the full subcategory of $\O$ consisting of modules with weights in $\lambda+P$.  Define the functor 
$$
T_\lambda: HC_\theta^1\to \O_{\lambda+P}
$$
given by $T_\lambda(Y)=Y\otimes_{U(\g)}M_{\lambda-\rho}$. Also 
let $\O(\lambda)$ be the full subcategory of $\O_{\lambda+P}$ of modules $M$ which admit a presentation 
$$
Q_1\to Q_0\to M\to 0,
$$
where $Q_0,Q_1$ are direct sums of $P_{\mu-\rho}$ with $\mu\in \lambda+P$ and $\mu\preceq W_\lambda\mu$. 
 
Note that the functor $T_\lambda$ is left adjoint to the functor $H_\lambda$ defined in Subsection \ref{funHla}: 
$H_\lambda(X)={\rm Hom}_{\rm fin}(M_{\lambda-\rho},X)$. Indeed, 
$$
\Hom(T_\lambda(Y),X)=\Hom(Y\otimes_{U(\g)}M_{\lambda-\rho},X)=
$$
$$
\Hom(Y,\Hom(M_{\lambda-\rho},X))=\Hom(Y,\Hom_{\rm fin}(M_{\lambda-\rho},X))=\Hom(Y,H_\lambda(X)). 
$$

\begin{theorem}\label{equivaa} (J. Bernstein -- S. Gelfand) (i) If $\lambda$ is a regular weight then the functor  $T_\lambda$ is an equivalence of categories, with quasi-inverse $H_\lambda$. 

(ii) In general, $T_\lambda$ is fully faithful and defines an equivalence 
$$
HC_\theta^1\cong \O(\lambda),
$$ 
with quasi-inverse $H_\lambda$. 
\end{theorem}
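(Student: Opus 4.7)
The plan is to prove simultaneously that the unit $\eta_Y: Y \to H_\lambda T_\lambda(Y)$ of the $(T_\lambda, H_\lambda)$-adjunction is an isomorphism for all $Y \in HC_\theta^1$ (giving full faithfulness of $T_\lambda$) and that the counit $\epsilon_M: T_\lambda H_\lambda(M) \to M$ is an isomorphism for all $M \in \mathcal{O}(\lambda)$ (identifying the essential image and exhibiting $H_\lambda$ as a quasi-inverse). The strategy is to check both statements on appropriate projective generators and then to propagate by exact-sequence arguments, using that $T_\lambda = (-) \otimes_{U(\g)} M_{\lambda-\rho}$ is right exact while $H_\lambda$ is exact for dominant $\lambda$ by Proposition~\ref{Hlaexact}.

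The key calculation is on the ``free'' bimodules $V \otimes U_\theta$, where $V$ is a finite-dimensional $\g$-module. Here $T_\lambda(V \otimes U_\theta) = V \otimes M_{\lambda-\rho}$, and by Corollary~\ref{natmap} the natural map $V \otimes U_\theta \to H_\lambda(V \otimes M_{\lambda-\rho})$ is an isomorphism of bimodules; unwinding definitions, this map is precisely the adjunction unit $\eta_{V \otimes U_\theta}$. Now every indecomposable projective $\bold{P}_\xi \in HC_\theta^1$ is a summand of some $V \otimes U_\theta$ (Theorem~\ref{classi}), and by the construction of that theorem $T_\lambda(\bold{P}_\xi) = P_{\mu-\rho}$ whenever $(\mu,\lambda)$ is a proper representative of $\xi$. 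Matching the Krull--Schmidt decomposition of $H_\lambda(V \otimes M_{\lambda-\rho}) = V \otimes U_\theta$ against the one provided by the classification of projective functors (Theorem~\ref{classpro}) via the rigidity of Theorem~\ref{projiso}, I obtain $H_\lambda(P_{\mu-\rho}) \cong \bold{P}_\xi$ and conclude that both $\eta_{\bold{P}_\xi}$ and $\epsilon_{P_{\mu-\rho}}$ are isomorphisms for every proper $(\mu,\lambda)$.

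To propagate to arbitrary $Y \in HC_\theta^1$, I use Lemma~\ref{enopro} to pick a projective presentation $Q_1 \to Q_0 \to Y \to 0$ with each $Q_i$ a direct sum of $\bold{P}_\xi$'s. Applying $H_\lambda T_\lambda$ yields a right-exact sequence (since $T_\lambda$ is right exact and $H_\lambda$ is exact), and the naturality of $\eta$ together with the five-lemma forces $\eta_Y$ to be an isomorphism. Dually, for $M \in \mathcal{O}(\lambda)$ the defining presentation $Q_1' \to Q_0' \to M \to 0$ has each $Q_i'$ a direct sum of $P_{\mu-\rho}$'s with $(\mu,\lambda)$ proper, so the same diagram chase applied to $T_\lambda H_\lambda$ shows $\epsilon_M$ is an isomorphism. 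Right exactness of $T_\lambda$ also gives immediately that the essential image of $T_\lambda$ lies in $\mathcal{O}(\lambda)$, completing part~(ii).

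Part~(i) is then a direct consequence: when $\lambda$ is regular dominant we have $W_\lambda = 1$, so the condition $\mu \preceq W_\lambda \mu$ is vacuous and $\mathcal{O}(\lambda)$ coincides with all of $\mathcal{O}_{\lambda+P}$, since every object of the latter admits a projective presentation by the $P_{\mu-\rho}$ with $\mu \in \lambda+P$. The main obstacle I anticipate is the compatibility step in paragraph~two: one must verify that the Corollary~\ref{natmap} isomorphism really coincides with the tensor--hom adjunction unit, and that the Krull--Schmidt summands $\bold{P}_\xi \subset V \otimes U_\theta$ correspond under $H_\lambda$ to the Krull--Schmidt summands $P_{\mu-\rho} \subset V \otimes M_{\lambda-\rho}$ indexed by the same proper pair $(\mu,\lambda)$. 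Establishing this matching is exactly where the full strength of the projective functor machinery (Theorems~\ref{projiso}, \ref{classpro}, and \ref{classi}) is required.
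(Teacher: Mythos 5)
Your proposal is correct and is essentially the paper's argument unfolded: the paper packages the propagation step into an abstract categorical lemma (Proposition~\ref{abcat}), then verifies its hypotheses using Theorem~\ref{projiso} (full faithfulness on projectives) and Theorem~\ref{classpro} (to identify the image as $\O(\lambda)$), while you run the adjunction unit/counit directly through projective presentations and the five lemma, with Corollary~\ref{natmap} playing the role of Theorem~\ref{projiso}. One small simplification available to you: once $\eta_{\bold P_\xi}$ is known to be an isomorphism, the triangle identity $\epsilon_{T_\lambda(\bold P_\xi)}\circ T_\lambda(\eta_{\bold P_\xi})=\mathrm{id}$ immediately forces $\epsilon_{P_{\mu-\rho}}$ to be an isomorphism, making the Krull--Schmidt matching of $H_\lambda(P_{\mu-\rho})$ with $\bold P_\xi$ unnecessary.
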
  

\begin{remark} Note that if $\lambda$ is not regular then the subcategory $\mathcal O(\lambda)\subset \O$ need not be closed under taking subquotients (even though it is abelian by Theorem \ref{equivaa}). 
Also the functor $T_\lambda$ (and thus the inclusion $\O(\lambda)\hookrightarrow \O$)
need not be (left) exact. So if $f: X\to Y$ is a morphism in $\O(\lambda)$ then 
its kernels in $\O(\lambda)$ and in $\O$ may differ, and in particular the latter may not belong 
to $\O(\lambda)$. See Example \ref{nonex}. 
\end{remark} 

\begin{proof} (i) is a special case of (ii), so let us prove (ii). To this end, we'll use the following general 
fact. 

\begin{proposition}\label{abcat} 
Let $\A,\B$ be abelian categories such that $\A$ has enough projectives and 
$T: \A\to \B$ a right exact functor which maps projectives to projectives.
Suppose that $T$ is fully faithful on projectives, i.e., for any projectives $P_0,P_1\in \A$, the natural map 
$\Hom(P_1,P_0)\to \Hom(T(P_1),T(P_0))$ is an isomorphism. Then $T$ is fully faithful, and defines an equivalence of $\A$ onto the subcategory of objects $Y\in \B$ which admit a presentation 
$$
T(P_1)\to T(P_0)\to Y\to 0
$$
for some projectives $P_0,P_1\in \A$.   
\end{proposition}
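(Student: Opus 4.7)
The plan is to establish full faithfulness of $T$ by bootstrapping the hypothesis from projectives to arbitrary objects, and then to verify essential surjectivity onto the subcategory described. The key input throughout is that every $X \in \A$ admits a projective presentation $P_1 \to P_0 \to X \to 0$, and right exactness of $T$ yields the exact sequence $T(P_1) \to T(P_0) \to T(X) \to 0$ in $\B$.

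The bootstrap proceeds in two steps. First I would show that for $P \in \A$ projective and $Y \in \A$ arbitrary, the comparison map $\Hom_\A(P,Y) \to \Hom_\B(T(P), T(Y))$ is an isomorphism. Choose a projective presentation $Q_1 \to Q_0 \to Y \to 0$. Since $P$ is projective in $\A$, and $T(P)$ is projective in $\B$ by the preservation hypothesis, applying $\Hom_\A(P,-)$ and $\Hom_\B(T(P),-)$ respectively yields parallel right exact sequences
\[ \Hom_\A(P, Q_1) \to \Hom_\A(P, Q_0) \to \Hom_\A(P, Y) \to 0, \]
\[ \Hom_\B(T(P), T(Q_1)) \to \Hom_\B(T(P), T(Q_0)) \to \Hom_\B(T(P), T(Y)) \to 0. \]
The two leftmost vertical comparison maps are isomorphisms by hypothesis (both arguments projective), so a four-lemma argument forces the rightmost one to be an isomorphism as well. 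Second, for arbitrary $X, Y \in \A$, pick a projective presentation $P_1 \to P_0 \to X \to 0$. Left exactness of $\Hom_\A(-, Y)$ and of $\Hom_\B(-, T(Y))$ applied to the right exact sequence $T(P_1) \to T(P_0) \to T(X) \to 0$ yields parallel left exact sequences whose middle and right terms are identified by the previous step, forcing $\Hom_\A(X, Y) \cong \Hom_\B(T(X), T(Y))$.

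For essential surjectivity onto the described subcategory: given $Y \in \B$ admitting a presentation $T(P_1) \xrightarrow{\varphi} T(P_0) \to Y \to 0$, full faithfulness on projectives lifts $\varphi$ to a morphism $g \colon P_1 \to P_0$ in $\A$. Setting $X := \coker(g)$, right exactness of $T$ gives $T(X) \cong \coker T(g) \cong Y$. Conversely, any $T(X)$ with $X \in \A$ admits such a presentation, obtained by applying $T$ to a projective presentation of $X$, so $T$ does land in this subcategory.

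The main obstacle is the full faithfulness bootstrap. Attempting it in one shot using projective presentations of both $X$ and $Y$ simultaneously forces a cumbersome diagram chase on a $3 \times 3$ grid of Hom groups where neither row nor column is fully exact; interposing the intermediate case $(P, Y)$ with $P$ projective is the clean way to proceed, because it lets each of the three hypotheses on $T$ (right exactness, preservation of projectives, full faithfulness on projectives) play its role in isolation and only requires standard exactness of $\Hom$ from one projective argument at a time.
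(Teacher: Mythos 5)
Your proof is correct and, while it reaches the same conclusion, organizes the argument differently from the paper. The paper proves faithfulness and fullness separately, by hand: for faithfulness it lifts $a\colon X\to X'$ to a chain map of projective presentations, exploits $T(a)=0$ together with projectivity of $T(P_0)$ to produce a factorization of $a_0$ through $P_1'$, and then uses faithfulness on projectives to push that factorization back to $\A$ and conclude $a=0$; for fullness it runs an analogous lifting-and-descent argument in reverse. Your bootstrap --- first establish $\Hom_\A(P,Y)\xrightarrow{\sim}\Hom_\B(T(P),T(Y))$ for $P$ projective and $Y$ arbitrary by comparing the right exact sequences obtained from $\Hom(P,-)$ and $\Hom(T(P),-)$, then establish the case of general source $X$ by comparing the left exact sequences obtained from $\Hom(-,Y)$ and $\Hom(-,T(Y))$ --- packages both faithfulness and fullness into two applications of the (four/five) lemma and uses each hypothesis in isolation, exactly where its exactness property is needed. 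This is more modular and arguably easier to verify than the paper's explicit chases, at the modest cost of introducing an auxiliary statement. Both arguments are elementary and rest on the same three ingredients (right exactness, preservation of projectives, full faithfulness on projectives), so this is a difference of organization rather than of mathematical content; your handling of essential surjectivity matches the paper's.
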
 

\begin{proof} We first show that $T$ is faithful. Let $X,X'\in \A$ and $a: X\to X'$. Pick presentations 
$$
P_1\to P_0\to X\to 0,\ P_1'\to P_0'\to X'\to 0.
$$
We have maps $p_0: P_0\to X$, $p_0': P_0'\to X'$, $p_1: P_1\to P_0$, $p_1': P_1'\to P_0'$.
There exist morphisms $a_0: P_0\to P_0'$, $a_1: P_1\to P_1'$ such that $(a_1,a_0,a)$ 
is a morphism of presentations. 

Suppose $T(a)=0$. Then $T(p_0')T(a_0)=0$. Thus $Y:={\rm Im}T(a_0)\subset 
{\rm Ker}T(p_0')={\rm Im}T(p_1')$. 
Thus the map $T(a_0): T(P_0)\to Y$ lifts to $b: T(P_0)\to T(P_1')$ 
such that $T(a_0)=T(p_1')b$. Since $T$ is full on projectives, we have $b=T(c)$ for some $c: P_0\to P_1'$, 
so $T(a_0)=T(p_1')T(c)=T(p_1'c)$. Since $T$ is faithful on projectives, this implies that $a_0=p_1'c$. 
Thus ${\rm Im}a_0\subset {\rm Im}p_1'={\rm Ker}p_0'$. It follows that $p_0'a_0=0$, hence $ap_0=0$. But $p_0$ is an epimorphism, hence $a=0$, as claimed. 

Now let us show that $T$ is full. Let $X,X'\in \A$ and $b: T(X)\to T(X')$.
The functor $T$ maps the above presentations of $X,X'$ into presentations 
of $T(X),T(X')$ (as it is right exact and maps projectives to projectives):
$$
T(P_1)\to T(P_0)\to T(X)\to 0,\ T(P_1')\to T(P_0')\to T(X')\to 0,
$$
and we can find $b_0: T(P_0)\to T(P_0'), b_1: T(P_1)\to T(P_1')$ such that 
$(b_1,b_0,b)$ is a morphism of presentations. Since $T$ is fully faithful on projectives, 
there exist $a_0,a_1$ such that $T(a_0)=b_0,T(a_1)=b_1$ and $a_0p_1=p_1'a_1$. 
Thus $a_0$ maps ${\rm Im}p_1={\rm Ker}p_0$ into ${\rm Im}p_1'={\rm Ker}p_0'$. This implies that $a_0$ descends to $a: X\to X'$, and $T(a)T(p_0)=T(p_0')b_0$. Hence $(T(a)-b)T(p_0)=0$, so since $T(p_0)$ is an epimorphism we get $T(a)=b$, as claimed. 

If $Y\in {\rm Im}(T)$ then $Y=T(X)$ where $X$ has presentation 
$$
P_1\to P_0\to X\to 0.
$$ 
Thus $Y$ has presentation 
$$
T(P_1)\to T(P_0)\to Y\to 0.
$$ 
Conversely, if $Y$ has such a presentation as a cokernel of a morphism $f: T(P_1)\to T(P_0)$ then $f=T(g)$ where $g: P_1\to P_0$, and $Y=T({\rm Coker}(g))$, which proves the last claim of the proposition. 
\end{proof} 

Now we are ready to prove Theorem \ref{equivaa}. By Lemma \ref{enopro}, $HC_\theta^1$ has enough projectives. Also the functor $T_\lambda$ is right exact, as it is given by tensor product. Further, if $P$ is projective then $\Hom(T_\lambda(P),Y)=\Hom(P,H_\lambda(Y))$
is exact in $Y$ since $H_\lambda$ is exact by 
Proposition \ref{Hlaexact} and $P$ is projective. Thus $T_\lambda(P)$ is projective. Finally, the fact that $T_\lambda$ is fully faithful on 
projectives was one of the main results about projective functors (Theorem \ref{projiso}). 
So Proposition \ref{abcat} applies to $\A=HC_\theta^1$, $\B=\O_{\lambda+P}$, $T=T_\lambda$. Moreover, the image of $T_\lambda$ is precisely the category $\O(\lambda)$ by the classification of projective functors (Theorem \ref{classpro}).

For an equivalence of categories, a right adjoint is a quasi-inverse. Thus $H_\lambda$ 
is quasi-inverse of $T_\lambda$, as claimed. The theorem is proved. 
\end{proof} 

\begin{corollary}\label{realiz} Every Harish-Chandra bimodule $M$ with right infinitesimal character $\theta$ is realizable as $\Bbb V^{\rm fin}$ where $\Bbb V$ is a (not necessarily unitary) admissible representation of the complex simply connected group $G$ corresponding to $\g$ 
on a Fr\'echet space. 
\end{corollary}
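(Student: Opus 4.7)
The plan is to embed $M$ as a sub-bimodule of a finite direct sum of objects of the form (finite dimensional)$\otimes$(principal series), and then take an $L^2$-closure inside the ambient Hilbert representation.

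First I would record the Hilbert realization of principal series. By Proposition \ref{prinser1}, for $\lambda-\mu\in P$ one has $\mathbf{M}(\lambda,\mu)\cong C^\infty_{\lambda-\rho,\mu-\rho}(G/B)^{\rm fin}$ as a bimodule, realizing $\mathbf{M}(\lambda,\mu)$ as the $K$-finite part of an admissible Fr\'echet representation of $G$ (viewed as a real Lie group with maximal compact $K=G_c$). Since $G/B\cong G_c/H_c$ carries a $G_c$-invariant density, the $L^2$-completion $\Bbb V_{\lambda,\mu}:=L^2_{\lambda-\rho,\mu-\rho}(G/B)$ is a genuine Hilbert representation, admissible because its $K$-finite part (which by Corollary \ref{bije} equals $\mathbf{M}(\lambda,\mu)$) is an admissible bimodule. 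For any finite dimensional $\g$-module $V'$, the tensor product $V'\otimes \Bbb V_{\lambda,\mu}$ is likewise an admissible Hilbert representation with $K$-finite part $V'\otimes\mathbf{M}(\lambda,\mu)$.

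Next I would embed $M$ into a finite direct sum of such objects. Choose $\lambda$ dominant with $\theta=\chi_\lambda$; after splitting $M$ by generalized left central character and adjusting each summand inside its $W$-orbit via Corollary \ref{cechar}(iii), I may assume the relevant weight differences lie in $P$. Set $N:=T_\lambda(M)\in\mathcal O_{\lambda+P}$, so that the fully faithfulness of $T_\lambda$ in Theorem \ref{equivaa}(ii) gives $M\cong H_\lambda(N)$. Since $N$ has finite length in $\mathcal O$ and $\mathcal O$ has enough injectives (the injective hulls being the dual projectives $P_{\mu-\rho}^\vee$), there is an embedding $N\hookrightarrow\bigoplus_{i=1}^k P_{\mu_i-\rho}^\vee$. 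By Corollary \ref{enoughpro}(i), each $P_{\mu_i-\rho}$ is a direct summand of $V_i\otimes M_{\nu_i-\rho}$ for some finite dimensional $V_i$ and dominant $\nu_i$; dualizing in $\mathcal O$ presents $P_{\mu_i-\rho}^\vee$ as a direct summand of $V_i^\vee\otimes M_{\nu_i-\rho}^\vee$. Applying the exact functor $H_\lambda$ (Proposition \ref{Hlaexact}) together with Proposition \ref{tensv} to commute $H_\lambda$ with the finite dimensional tensor factor, $H_\lambda(P_{\mu_i-\rho}^\vee)$ becomes a direct summand of $V_i^\vee\otimes\mathbf{M}(\lambda,\nu_i)$, and composing inclusions yields a sub-bimodule embedding
\[
M\hookrightarrow\bigoplus_{i=1}^k V_i^\vee\otimes\mathbf{M}(\lambda,\nu_i).
\]

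Finally, the right-hand side is the $K$-finite part of the admissible Hilbert representation $\Bbb W:=\bigoplus_{i=1}^k V_i^\vee\otimes\Bbb V_{\lambda,\nu_i}$ of $G$, so setting $\Bbb V:=\overline M\subset\Bbb W$ (the Hilbert-norm closure of $M$) produces a closed $G$-subrepresentation whose $K$-finite part is exactly $M$ by Corollary \ref{bije}, giving the desired realization $M=\Bbb V^{\rm fin}$. The main technical obstacle will be bookkeeping around the full $\g$-bimodule structure (equivalently, the action of both holomorphic and antiholomorphic directions inside the complexified Lie algebra of $G$ viewed as a real group): one must verify that the passage from the Fr\'echet to the $L^2$ realization preserves continuity of the bimodule action (not merely of $G_c$-equivariance), that the $\g$-bimodule identification $H_\lambda(V_i^\vee\otimes M_{\nu_i-\rho}^\vee)=V_i^\vee\otimes\mathbf{M}(\lambda,\nu_i)$ is canonical, and that bimodule direct summands correspond canonically to closed Hilbert subrepresentations through the bijection of Corollary \ref{bije}.
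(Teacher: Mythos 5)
Your proposal is correct and takes essentially the same route as the paper: both arguments embed $M$ into a finite direct sum of objects of the form $V\otimes\bold M(\lambda,\mu)$ with $\mu$ dominant (the paper does this via injectives of $HC_\theta^1$ directly, while you pass through $N=T_\lambda(M)$, embed into injectives of $\mathcal O$, and come back by the exact functor $H_\lambda$ — these are the same step translated across the Bernstein--Gelfand equivalence of Theorem \ref{equivaa}), and then close $M$ in the resulting admissible Hilbert representation built from $L^2$-realizations of principal series, invoking Corollary \ref{bije}. The technical caveats you flag at the end (bimodule structure surviving the Fr\'echet-to-$L^2$ passage, naturality of the tensor identification, and the bimodule/subrepresentation correspondence) are indeed handled by Propositions \ref{prinser1}, \ref{tensv} and Corollary \ref{bije} as you cite them.
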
 
  
\begin{proof} Let us prove the statement if $\theta=\chi_\lambda$ where $\lambda$ 
is a regular dominant weight (the general proof is similar). 

We have seen in Subsection \ref{funHla} that $H_\lambda(M_{\mu-\rho}^\vee)$ is the principal series module 
$\bold M(\lambda,\mu)={\rm Hom}_{\rm fin}(M_{\lambda-\rho},M_{\mu-\rho}^\vee)$. Thus by Theorem \ref{equivaa} $\bold M(\lambda,\mu)$ is injective in $HC_\theta^1$ if $\mu$ is dominant (since $M_{\mu-\rho}$ is projective, hence $M_{\mu-\rho}^\vee$ is injective). Moreover, since every indecomposable projective in $\O_{\lambda+P}$ is a direct summand of $V\otimes M_{\mu-\rho}$ 
for some dominant $\mu$ and finite-dimensional $\g$-module $V$, it follows that every indecomposable injective is a direct summand in $V\otimes M_{\mu-\rho}^\vee$ for some $V$ and dominant $\mu$. 
Hence by Theorem \ref{equivaa}, every indecomposable injective in $HC_\theta^1$ is a direct summand in $V\otimes \bold M(\lambda,\mu)$ for some $V$ and dominant $\mu$. Thus any $Y\in HC^1_\theta$ is contained in a direct sum of objects $V\otimes \bold M(\lambda,\mu)$ for finite-dimensional $V$ and dominant $\mu$. Since principal series modules $\bold M(\lambda,\mu)$ are realizable in a Fr\'echet space 
by Proposition \ref{prinser1}, we are done by Corollary \ref{bije}.
\end{proof}  
 
\begin{exercise} (i) Generalize the proof of Corollary \ref{realiz} to non-regular dominant weights $\lambda$. 

(ii) Generalize Corollary \ref{realiz} to any Harish-Chandra bimodule with {\it generalized} infinitesimal character $\theta$, and then to any Harish-Chandra bimodule. 

{\bf Hint.} Recall that $C^\infty_{\lambda,\mu}(G/B)$ is the space of smooth functions $F$ on $G$
which satisfy the differential equations 
$$
(R_b-\lambda(b))F=(R_{\overline b}-\mu(\overline b))F=0
$$ 
for $b\in \b$ and $\overline b\in \overline \b$ (here $R_b$ is the vector field corresponding to the right translation by $b$). Now for $N\ge 1$ consider the space $C^\infty_{\lambda,\mu,N}(G/B)$ of smooth functions $F$ on $G$ satisfying the differential equations 
$$
(R_b-\lambda(b))^NF=(R_{\overline b}-\mu(\overline b))^NF=0. 
$$ 
(Note that $C^\infty_{\lambda,\mu,1}(G/B)=C^\infty_{\lambda,\mu}(G/B)$.) 
Show that  $C^\infty_{\lambda,\mu,N}(G/B)$ are admissible representations of $G$ on Fr\'echet spaces. Then mimic the proof of Corollary \ref{realiz} using these instead of $C^\infty_{\lambda,\mu}(G/B)$. 
\end{exercise} 

\section{\bf Representations of $SL_2(\Bbb C)$}

\subsection{Harish-Chandra bimodules for $\mathfrak{sl}_2(\Bbb C)$}

Let us now work out the simplest example, $\g=\mathfrak{sl}_2(\Bbb C)$. 
In this case $\h^*=\Bbb C$, $P=\Bbb Z$, $\chi_\lambda=\lambda^2$. 
So by Theorem \ref{classi}, irreducible Harish-Chandra bimodules $\bold L_\xi$ are parametrized 
by pairs $\xi=(\mu,\lambda)$ of complex numbers such that $\lambda-\mu$ is an integer, 
modulo the map $(\mu,\lambda)\mapsto (-\mu,-\lambda)$,  and we may (and will) assume that $(\mu,\lambda)$ 
is a proper representation of $\xi$, i.e., $\lambda\notin \Bbb Z_{<0}$ and if $\lambda=0$ then $\mu\in \Bbb Z_{\le 0}$. Let us describe these bimodules 
in terms of the principal series bimodules $\bold M(\lambda,\mu)$. 

\begin{proposition} (i) The principal series bimodule $\bold M(\lambda,\mu)$ is irreducible and isomorphic 
to $\bold M(-\lambda,-\mu)$ unless $\lambda,\mu$ are nonzero integers of the same sign.  
Otherwise such bimodules are pairwise non-isomorphic. 

(ii) If $\lambda,\mu$ are both nonzero integers of the same sign then $\bold M(\lambda,\mu)$ is indecomposable and has a finite-dimensional constituent $L_{|\lambda|-1}^*\otimes L_{|\mu|-1}$, which is a submodule if $\lambda>0$ and quotient if $\lambda<0$. The other composition factor is $\bold M(\lambda,-\mu)\cong \bold M(-\lambda,\mu)$, which is irreducible. 

(iii) If $\xi=(\mu,\lambda)$ is a proper representation with $\lambda\notin \Bbb Z_{\ge 1}$ then $\bold L_\xi=\bold M(\lambda,\mu)$. If $\xi=(\mu,\lambda)$ where $\lambda\in \Bbb Z_{\ge 1}$ then $\bold L_\xi=L_{\lambda-1}^*\otimes L_{\mu-1}$ if $\mu\ge 1$ and $\bold L_\xi=\bold M(\lambda,\mu)$ if $\mu\le 0$. 
\end{proposition}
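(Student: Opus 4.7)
The strategy is to transport $\bold M(\lambda,\mu)$ into category $\mathcal{O}$ via the Bernstein--Gelfand equivalence (Theorem \ref{equivaa}), where the $\mathfrak{sl}_2$ structure is completely explicit. Recall that for $\mathfrak{sl}_2$ the Verma module $M_{\nu-1}$ is irreducible iff $\nu \notin \Bbb Z_{\ge 1}$, while for $\nu \in \Bbb Z_{\ge 1}$ it fits into a non-split short exact sequence $0 \to M_{-\nu-1} \to M_{\nu-1} \to L_{\nu-1} \to 0$ with $M_{-\nu-1}$ irreducible and $L_{\nu-1}$ the $\nu$-dimensional irreducible; dually $M_{\nu-1}^\vee$ has $L_{\nu-1}$ as unique simple submodule and $M_{-\nu-1}$ as quotient. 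For each $\lambda$ I fix the unique dominant representative $\widetilde\lambda \in \{\lambda,-\lambda\}$ (so $\widetilde\lambda \notin \Bbb Z_{<0}$). When $\lambda \ne 0$, this $\widetilde\lambda$ is also regular, and Theorem \ref{equivaa}(i) yields an equivalence $T_{\widetilde\lambda}\colon HC_{\chi_\mu,\chi_\lambda}^1 \xrightarrow{\sim} \mathcal{O}_{\widetilde\lambda+\Bbb Z} \cap \mathcal{O}_{\chi_\mu}$ with quasi-inverse $H_{\widetilde\lambda}$; since $H_{\widetilde\lambda}(M_{\mu-1}^\vee) = \bold M(\widetilde\lambda,\mu)$ by definition, this equivalence identifies $\bold M(\widetilde\lambda,\mu)$ with $M_{\mu-1}^\vee \in \mathcal{O}$.

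Irreducibility of $\bold M(\widetilde\lambda,\mu)$ is then immediate when $\mu \notin \Bbb Z_{\ge 1}$, since $M_{\mu-1}^\vee = M_{\mu-1}$ is simple. Running the analogous argument with the roles of $\lambda,\mu$ interchanged (using the dominant representative $\widetilde\mu$ on the left side via the opposite-bimodule symmetry) shows that $\bold M(\lambda,\mu)$ is irreducible whenever $\lambda \notin \Bbb Z_{\ge 1}$ as well. Combining, $\bold M(\lambda,\mu)$ is simple except possibly when $\lambda,\mu$ are both positive integers, or by $W$-symmetry both negative integers. In the irreducible cases, Theorem \ref{classi} (applied to the $W$-orbit of the pair $(\mu,\lambda)$) says the simple with right central character $\chi_\lambda$ and left $\chi_\mu$ is unique, forcing $\bold M(\lambda,\mu) \cong \bold M(-\lambda,-\mu)$; when $\lambda,\mu$ are nonzero integers of the same sign, the two bimodules will differ precisely in having the finite-dimensional constituent appear as submodule versus quotient, giving the required non-isomorphism.

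For $\lambda,\mu \in \Bbb Z_{\ge 1}$ I apply the exact functor $H_\lambda$ (Proposition \ref{Hlaexact}) to the sequence above, obtaining
$$
0 \to H_\lambda(L_{\mu-1}) \to \bold M(\lambda,\mu) \to \bold M(\lambda,-\mu) \to 0.
$$
The quotient $\bold M(\lambda,-\mu)$ is irreducible by the previous paragraph (as $-\mu \notin \Bbb Z_{\ge 1}$) and isomorphic to $\bold M(-\lambda,\mu)$ by the same uniqueness of simples. The submodule $H_\lambda(L_{\mu-1}) = \Hom_{\rm fin}(M_{\lambda-1},L_{\mu-1})$ I compute directly: by Frobenius reciprocity the multiplicity of the $\g_{\rm ad}$-type $L_c$ equals $\dim\Hom_\g(M_{\lambda-1}, L_c \otimes L_{\mu-1})$, which by Clebsch--Gordan equals $1$ precisely for $|\lambda-\mu| \le c \le \lambda+\mu-2$ with $c \equiv \lambda+\mu \pmod{2}$ and vanishes otherwise; this matches the decomposition of $L_{\lambda-1}^* \otimes L_{\mu-1}$, and since both are bimodules of the same finite dimension they coincide. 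Indecomposability of $\bold M(\lambda,\mu)$ is inherited from $M_{\mu-1}^\vee$ via the equivalence. The case $\lambda,\mu \in \Bbb Z_{\le -1}$ is handled by the analogous argument with $\widetilde\lambda = -\lambda$ (applied to the opposite bimodule, or equivalently via the duality of Exercise \ref{dualmod}), which swaps sub and quotient so that the finite-dimensional constituent appears as a quotient. Part (iii) is then a direct bookkeeping match: the proper representations $\xi = (\mu,\lambda)$ enumerated as in Theorem \ref{classi} correspond to $\bold M(\lambda,\mu)$ when the latter is irreducible, and to $L_{\lambda-1}^* \otimes L_{\mu-1}$ when both $\lambda,\mu \in \Bbb Z_{\ge 1}$. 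The main obstacle will be the non-regular case $\lambda = 0$, where Theorem \ref{equivaa}(i) fails and one must instead invoke (ii): the functor $T_0$ is only fully faithful onto $\mathcal{O}(0) \cap \mathcal{O}_{\chi_\mu}$, and one must verify that $\bold M(0,\mu)$ corresponds to the unique proper representation $\xi = (-|\mu|,0)$, whose image is the irreducible Verma $M_{-|\mu|-1}$ and indeed lies in $\mathcal{O}(0)$.
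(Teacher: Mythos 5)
Your strategy is essentially the same as the paper's: transport $\bold M(\lambda,\mu)$ into category $\mathcal O$ via the Bernstein--Gelfand equivalence, read off irreducibility from the structure of $M_{\mu-1}^\vee$, and produce the short exact sequence by exactness of $H_\lambda$. The map $a:\bold M(n,m)\to\bold M(n,-m)$ the paper constructs is precisely $H_n$ applied to the surjection $M_{m-1}^\vee\twoheadrightarrow M_{-m-1}$, so your route through the SES and the Frobenius-reciprocity/Clebsch--Gordan computation of $H_\lambda(L_{\mu-1})$ is just a more explicit rephrasing of the same step.

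There are, however, three genuine gaps you should close. First, the claim that ``the simple with right central character $\chi_\lambda$ and left $\chi_\mu$ is unique'' is false in general: when $2\lambda\in\Bbb Z$ (and $\lambda,\mu\ne 0$) there are two $W$-orbits $W(\mu,\lambda)$ and $W(\mu,-\lambda)$ with those characters, hence two simples. The correct argument for $\bold M(\lambda,\mu)\cong\bold M(-\lambda,-\mu)$ is that both $(\mu,\lambda)$ and $(-\mu,-\lambda)$ are \emph{proper} representations of the \emph{same} $\xi=W(\mu,\lambda)$ when $\lambda\notin\Bbb Z$ (both $\pm\lambda$ are dominant), so both expressions realize $\bold L_\xi$; one can also distinguish the two $\xi$'s by the $K$-type $|\lambda-\mu|$ versus $|\lambda+\mu|$. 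Second, for $\lambda,\mu\in\Bbb Z_{\le -1}$ you invoke a duality $\bold M(\lambda,\mu)^\vee\cong\bold M(-\lambda,-\mu)$ without establishing it; the paper instead constructs an explicit injection $b:\bold M(n,-m)\hookrightarrow\bold M(-n,-m)$ by precomposing with the Verma embedding $M_{-n-1}\hookrightarrow M_{n-1}$, uses the common $K$-type \eqref{sameK} to identify the cokernel as $L_{n-1}^*\otimes L_{m-1}$, and appeals again to Theorem~\ref{equivaa} for non-splitness; if you want the duality route you must first verify $\bold M(\lambda,\mu)^\vee\cong\bold M(-\lambda,-\mu)$ (e.g.\ by matching central character, $K$-type, and composition series). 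Third, the $\lambda=0$ case is flagged but left unresolved; in the paper it is settled by observing that $\Ker a=H_0(L_{\mu-1})=\Hom_{\rm fin}(M_{-1},L_{\mu-1})=0$, since $M_{-1}$ is irreducible and infinite-dimensional, which forces $\bold M(0,\mu)\cong\bold M(0,-\mu)$ by comparing $K$-types.
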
 

\begin{proof} (i),(ii) Consider first the case when $\lambda$ and $\mu$ are both non-integers. Then 
the weights $\pm\lambda$ are dominant and $M_{\pm \mu-1}^\vee$ are simple, so by Theorem \ref{equivaa} $\bold M(\lambda,\mu)$ is also simple and isomorphic to $\bold M(-\lambda,-\mu)$. 

Now suppose $\lambda,\mu$ are integers. Recall that $\bold M(\lambda,\mu)$ 
decomposes over the diagonal copy of $\g$ as 
\begin{equation}\label{sameK}
\bold M(\lambda,\mu)=\oplus_{j\ge 0}L_{|\lambda-\mu|+2j}. 
\end{equation} 
If $\lambda=0$ and $\mu\ge 0$, then the equivalence $T_\lambda=T_0$ maps $\bold M(0,\pm \mu)$ to $M_{\pm \mu-1}^\vee$. So if $\mu=0$, we have a simple bimodule $\bold M(0,0)$. 
On the other hand, if $\mu>0$, we have two bimodules $\bold M(0,-\mu),\bold M(0,\mu)$ and a natural map 
$$
a: \bold M(0,\mu)=\Hom_{\rm fin}(M_{-1},M_{\mu-1}^\vee)\to \bold M(0,-\mu)=\Hom_{\rm fin}(M_{-1},M_{-\mu-1}^\vee)
$$ 
induced by the surjection $M_{\mu-1}^\vee\to M_{-\mu-1}^\vee$. 
The kernel of this map is 
${\rm Ker}a=\Hom_{\rm fin}(M_{-1},L_{\mu-1})=0$, which implies 
that $a$ is an isomorphism (as the $K$-type of the bimodules $\bold M(0,\mu), \bold M(0,-\mu)$ is the same by \eqref{sameK}). So we have the simple bimodule $\bold M(0,\mu)=\bold M(0,-\mu)$. If $\mu=0, \lambda\ne 0$, the situation is similar, as $\lambda$ and $\mu$ play a symmetric role. 
 
It remains to consider the situation when $\lambda,\mu\in \Bbb Z\setminus 0$. So let $n,m$ be positive integers. By Theorem \ref{equivaa}, the bimodule $\bold M(n,-m)$ is simple, as it corresponds 
to the simple module $M_{-m-1}^\vee$. Similarly, $\bold M(-n,m)$ is simple. 
Now, we have homomorphisms
$$
a: \bold M(n,m)\to \bold M(n,-m), b: \bold M(n,-m)\to \bold M(-n,-m).
$$
Since  $\bold M(n,-m)$ is simple and $a\ne 0$, it is surjective, so in view of \eqref{sameK} we have a short exact sequence 
$$
0\to L_{n-1}^*\otimes L_{m-1}\to \bold M(n,m)\to \bold M(n,-m)\to 0.
$$
Similarly, since $b\ne 0$, it is injective, so 
in view of \eqref{sameK} we have a short exact sequence 
$$
0\to \bold M(n,-m)\to \bold M(-n,-m)\to L_{n-1}^*\otimes L_{m-1}\to 0.
$$
Moreover, these sequences are not split 
by Theorem \ref{equivaa}. This proves (i),(ii). 

(iii) follows immediately from (i),(ii). 
The proposition is proved. 
\end{proof} 

\begin{example} \label{nonex} 
One may also describe explicitly the projectives $\bold P_\xi$. 
As an example let us do so for $\xi=(-1,0)$. 
Consider the tensor product $L_1\otimes U_0$, which is a projective object. 
We have $(L_1\otimes U_0)\otimes_{U_0}M_{-1}=L_1\otimes M_{-1}=P_{-2}$, the big projective object with composition series $[M_{-2},\Bbb C,M_{-2}]$. 
Thus $L_1\otimes U_0=\bold P_\xi$.
Over the diagonal copy of $\g$ we have 
$$
\bold P_\xi=L_1\otimes U_0=L_1\otimes (L_0\oplus L_2\oplus...)=2L_1\oplus 2L_3\oplus...
$$
Thus we have a short exact sequence 
\begin{equation}\label{shor}
0\to \bold L_\xi\to \bold P_\xi\to \bold L_\xi\to 0,
\end{equation}
where $\bold L_\xi=\bold M(0,-1)=\bold M(0,1)$, which is not split. 

This shows that the functor $T_\lambda=T_0$ is not exact in this case. Indeed, 
$T_0(\bold L_\xi)=M_0^\vee$ ($M_0^\vee\in \mathcal O(0)$ with presentation $P_{-2}\to P_{-2}\to M_0^\vee\to 0$ and $H_0(M_0^\vee)=\bold M(0,1)$), so the image of \eqref{shor} 
under $T_0$ is the sequence 
$$
0\to M_0^\vee\to P_{-2}\to M_0^\vee\to 0,
$$
which is not exact in the leftmost nontrivial term (the cohomology is $\Bbb C$). 
This sequence is, however, exact in the category $\mathcal O(0)$, which 
has just two indecomposable objects $M_0^\vee$ and $P_{-2}$ (so $\O(0)$
is not closed under taking subquotients and the inclusion $\mathcal O(0)\hookrightarrow \O$
is not exact). 
\end{example} 
 
\subsection{Representations of $SL_2(\Bbb C)$.}

Let us now consider representations of $G=SL_2(\Bbb C)$. We have $\g=\mathfrak{sl}_2(\Bbb C)$, $K=SU(2)$. We have already classified the irreducible Harish-Chandra (bi)modules and shown that the only ones are finite-dimensional modules and principal series modules. Moreover, we realized the principal series module $\bold M(\lambda,\mu)$ as the space of $K$-finite vectors in the space of smooth functions  
$F: G\to \Bbb C$ such that 
$$
F(gb)=F(g)t(b)^{\lambda-\mu}|t(b)|^{2\mu-2},\ b\in B,
$$
where $B\subset G$ is the subgroup of upper triangular matrices. Thus, similarly to the real case, setting $\mu-\lambda=m\in \Bbb Z$, we may represent $\bold M(\lambda,\mu)$ as the space of polynomial tensor fields on $\Bbb C\Bbb P^1=S^2$ 
of the form 
$$
\omega=\phi(u)(du)^{\frac{m}{2}}|du|^{1-\mu},
$$
and we have an admissible realization of $\bold M(\lambda,\mu)$ by the vector space $C^\infty_{\lambda-1,\mu-1}(G/B)$ 
of smooth tensor fields of the same form. The (right) action of the group $G$ on this space is 
given by 
$$
\left(\phi \circ \begin{pmatrix} a & b\\ c & d \end{pmatrix}\right)(u)=\phi\left(\frac{au+b}{cu+d}\right)(cu+d)^{-m}|cu+d|^{2\mu-2}.
$$
 We may also upgrade this realization to a Hilbert space realization by completing it with respect to the inner product 
 $$
 \norm{\omega}^2=\int_{S^2}|\phi(u)|^2dA,
 $$
where $dA$ is the rotation-invariant probability measure on $S^2$. However, this inner product is not $G$-invariant, in general; 
it is only $G$-invariant if ${\rm Re}\mu=\frac{m}{2}$, i.e., $\mu=\frac{m}{2}+s$, $s\in i\Bbb R$. This shows that 
the $(\g,K)$-modules $\bold M(-\frac{m}{2}+s,\frac{m}{2}+s)$ are unitary and irreducible for any imaginary $s$, with the Hilbert space completion being $L^2_{-\frac{m}{2}+s-1,\frac{m}{2}+s-1}(G/B)$ -- the {\bf unitary principal series}.  

Also the trivial representation is obviously unitary. 
Are there any other unitary irreducible representations? Clearly, they cannot be finite-dimensional. However, the answer is yes. To find them, let us first 
determine which $\bold M(\lambda,\mu)$ are Hermitian. 
It is easy to show that this happens whenever $\lambda^2=\overline \mu^2$, i.e., $\lambda=\pm \overline\mu$. If $\lambda=-\overline \mu$, we get $2{\rm Re}\mu=m$, so $\mu=\frac{m}{2}+s$, $\lambda=-\frac{m}{2}+s$, $s\in i\Bbb R$, exactly as above. On the other hand, if $\lambda=\overline \mu$ then we get 
$\mu-\overline\mu=m$, which implies that $m=0$, i.e., $\lambda=\mu\in \Bbb R$. In this case by Theorem \ref{classi} the module $\bold M(\mu,\mu)$ is irreducible if and only if $\mu\notin \Bbb Z$. Thus we see that 
for $0<|\mu|<1$, this module is unitary, as we have a continuous family of simple Hermitian modules $X(c):=\bold M(\sqrt{c},\sqrt{c})$ for $c\in (-\infty,1)$, and these modules are in the unitary principal series for $c\le 0$. This family of unitary modules for $c>0$ ($0<|\mu|<1$) is called the {\bf complementary series}; it is analogous 
to the complementary series in the real case. 

It remains to consider the intervals $m<|\mu|<m+1$ for $m\in \Bbb Z_{\ge 1}$. If $\bold M(\mu,\mu)$ is unitary 
for at least one point in such interval, then it is so for the whole interval (since nondegenerate Hermitian forms on the multiplicity spaces of $SU(2)$-modules cannot change signature in a continuous family), and taking the limit $\mu\to m+1$, we see that $L_{m+1}^*\otimes L_{m+1}$, which is a composition factor of $\bold M(m+1,m+1)$, would have to be unitary, which it is not. This shows that we have no unitary modules in these intervals. Thus we obtain the following result. 

\begin{theorem} (Gelfand-Naimark)
The irreducible unitary representations of $SL_2(\Bbb C)$ are Hilbert space completions 
of the following unitary Harish-Chandra modules: 

$\bullet$ Unitary principal series $\bold M(-\frac{m}{2}+s,\frac{m}{2}+s)$, $m\in \Bbb Z$, $s\in i\Bbb R$; 

$\bullet$ Complementary series $\bold M(s,s)$, $0<|s|<1$;

$\bullet$ The trivial representation $\Bbb C$. 

Here $\bold M(-\frac{m}{2}+s,\frac{m}{2}+s)\cong \bold M(\frac{m}{2}-s,-\frac{m}{2}-s)$, $\bold M(s,s)=\bold M(-s,-s)$ and there are no other isomorphisms. 
\end{theorem}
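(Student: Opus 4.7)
The plan is to combine Harish-Chandra's globalization theorem with his admissibility theorem, reducing the classification of irreducible unitary representations of $G=SL_2(\Bbb C)$ to determining which of the simple Harish-Chandra bimodules for $\mathfrak{sl}_2(\Bbb C)$ (already classified in the preceding subsection) carry a positive-definite invariant Hermitian form. The candidates are the finite-dimensional irreducibles $L_m^*\otimes L_n$ and the principal series $\bold M(\lambda,\mu)$, modulo the identification $\bold M(\lambda,\mu)\cong \bold M(-\lambda,-\mu)$ valid whenever the bimodule is irreducible. Among the finite-dimensional ones only the trivial is unitary: a nontrivial finite-dimensional unitary representation of a noncompact simple Lie group would exhibit $G$ as a subgroup of a compact group, a contradiction.

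Next I would determine the Hermitian locus within the principal series. Using that the Casimir of $\mathfrak{sl}_2(\Bbb C)$ acts on $\bold M(\lambda,\mu)$ by the scalar $\lambda^2$ from the left and $\mu^2$ from the right, and that the Hermitian dual swaps the two sides and conjugates these scalars, the Hermitian dual of $\bold M(\lambda,\mu)$ is $\bold M(\bar\mu,\bar\lambda)$. Combined with the classification of simples, a nonzero invariant Hermitian form exists precisely when $\lambda=\pm\bar\mu$. The integrality constraint $m:=\mu-\lambda\in\Bbb Z$ converts these into $\mu=\lambda\in\Bbb R$ (with $m=0$) and $\mu=m/2+s$, $\lambda=-m/2+s$ with $s\in i\Bbb R$, respectively. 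The second case is the unitary principal series, whose positivity is exhibited by the manifestly $G$-invariant $L^2$ inner product on the half-density realization $L^2_{\lambda-1,\mu-1}(G/B)$.

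For the remaining real family $\bold M(\mu,\mu)$, $\mu\in \Bbb R\setminus\Bbb Z$, each $K$-type $L_{2j}$ occurs with multiplicity one, so any invariant Hermitian form is determined up to a real scalar by real numbers $c_j(\mu)$, rational in $\mu$ and computable by a short recursion from the matrix coefficients of $e$ and $f$ between adjacent $K$-types. Unitarity of $\bold M(\mu,\mu)$ is equivalent to all $c_j(\mu)$ having the same sign, which is an open condition on the irreducible locus. Since $\mu=0$ belongs to the unitary principal series, this condition propagates across the entire interval $-1<\mu<1$, yielding the complementary series.

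The hard part will be excluding the intervals $m<|\mu|<m+1$ for $m\geq 1$. The approach is by contradiction: if $\bold M(\mu_0,\mu_0)$ were unitary for some $\mu_0$ in such an interval then the positive-definite form persists throughout $(m,m+1)$ and, after suitable rescaling, passes to a nonzero positive-semidefinite invariant form on the reducible but indecomposable bimodule $\bold M(m+1,m+1)$ in the limit $\mu\to m+1$. Its radical is an invariant submodule, hence either zero, the finite-dimensional socle $L_m^*\otimes L_m$, or the whole module; the first contradicts indecomposability via complete reducibility of unitary modules, and the last is excluded by the rescaling. The genuinely delicate case, where the radical is the socle, must be ruled out by a more explicit analysis of the $c_j(\mu)$ at the reducibility point, producing an actual sign change of some $c_j$ across $\mu=m+1$ for $m\geq 1$ (it is the absence of such a sign change that singles out the complementary series at $m=0$); this explicit sign check, in the spirit of the analogous computation for $SL_2(\Bbb R)$ in Section \ref{sl2R}, is the main computational obstacle. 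Once this is done, the claimed non-isomorphisms among the listed unitary modules are immediate from the classification of simple Harish-Chandra bimodules.
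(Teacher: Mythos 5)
Your plan coincides with the paper's: reduce to unitary Harish-Chandra bimodules, determine the Hermitian locus $\lambda=\pm\overline\mu$ via the Casimir, obtain the unitary principal series from the manifestly invariant $L^2$ pairing and the complementary series by propagating positivity from the unitary point $s=0$, and exclude the intervals $m<|\mu|<m+1$ for $m\ge 1$ by pushing the positive form to the boundary of the interval. The place to comment is this last step, which you rightly single out as the one requiring care.

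The paper's phrase ``taking the limit $\mu\to m+1$, we see that [the finite-dimensional constituent] would have to be unitary'' glosses over precisely the dichotomy you articulate, and your ``genuinely delicate case'' is in fact the one that occurs. Any invariant Hermitian form on $\bold M(m+1,m+1)$ corresponds, up to complex conjugation, to a morphism in $\Hom_{\g-{\rm bimod}}(\bold M(m+1,m+1),\bold M(-(m+1),-(m+1)))$; this space is one-dimensional, and the unique nonzero map factors through the quotient $\bold M(m+1,-(m+1))$ and lands in the socle of the target. Hence every nonzero invariant form on $\bold M(m+1,m+1)$ has radical exactly the socle $L_m^*\otimes L_m$, the limiting form vanishes on it, and what survives is just the unitary principal series form at $s=0$ on the quotient -- no contradiction. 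So the limit argument alone does not close the case, and the quantitative step you propose is a genuine necessity, not an optional verification: one must track the sign of the form across the reducibility points, showing that in a polynomial normalization the form has a simple zero on the lowest $K$-type at $\mu=1$, hence a sign change there, which rules out positivity on $(1,2)$ and, iterating, on all subsequent intervals. This is the $SL_2(\Bbb C)$ analogue of the formula $(ev,ev)=\bigl((\tfrac{n+1}{2})^2-c\bigr)(v,v)$ that settles $SL_2(\Bbb R)$ in Section~9. With that computation supplied, your argument is complete, and it makes explicit a step the paper leaves tacit.
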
 

\begin{exercise} Compute the map $M\mapsto M^\vee$ from Exercise \ref{dualmod} 
on the set of irreducible Harish-Chandra modules for $SL_2(\Bbb R)$ and $SL_2(\Bbb C)$. 
\end{exercise} 

\begin{exercise}\label{comserC} The following exercise is the complex analog of Exercise \ref{comserR}. 

(i) Show that for $-1<s<0$ the formula 
$$
(f,g)_s:=\int_{\Bbb C^2}f(y)\overline{g(z)}|y-z|^{-2s-2}dyd\overline ydzd\overline z
$$
defines a positive definite inner product on the space $C_0(\Bbb C)$ 
of continuous functions $f: \Bbb C\to \Bbb C$ 
with compact support ({\it Hint}: pass to Fourier transforms). 

(ii) Deduce that if $f$ is a measurable function on $\Bbb C$ then 
$$
0\le (f,f)_s\le \infty,
$$ 
so measurable functions $f$ with $(f,f)_s<\infty$ 
modulo those for which $(f,f)_s=0$ form a Hilbert space $\mathcal H_s$ 
with inner product $(,)_s$, which is the completion of $C_0(\Bbb C)$ under $(,)_s$.  

(iii) Let us view $\mathcal H_s$ as the space of 
tensor fields $f(y)|dy|^{1-s}$, where $f$ is as in (ii).  
Show that the complementary series unitary representation $\widehat{\bold M}(s,s)$ of 
$SL_2(\Bbb C)$ may be realized in $\mathcal H_s$ 
with $G$ acting naturally on such tensor fields.
\end{exercise} 

\section{\bf Geometry of complex semisimple Lie groups} 

\subsection{The Borel-Weil theorem} Let $G$ be a simply connected semisimple complex Lie group with Lie algebra $\g$ and a Borel subgroup $B$ generated by a maximal torus $T\subset G$ 
and the 1-parameter subgroups $\exp(te_i)$, $i\in \Pi$. Given an integral weight $\lambda\in P$, we can define the corresponding algebraic (in particular, holomorphic) line bundle $\mathcal L_\lambda$ on the flag variety $G/B$. Namely, the total space $T(\mathcal L_\lambda)$ of $\mathcal L_\lambda$ is $(G\times \Bbb C)/B$, where $B$ acts by 
$$
(g,z)b=(gb,\lambda(b)^{-1}z),
$$
and the line bundle $\mathcal L_\lambda$ is defined by the projection $\pi: T(\mathcal L_\lambda)\to G/B$ to the first component. So this bundle is $G$-equivariant, i.e., $G$ acts on $T(\mathcal L_\lambda)$ by left multiplication preserving the projection map $\pi$. We also see that smooth sections of $\mathcal L_\lambda$ are smooth functions 
$F: G\to \Bbb C$ such that 
$$
(g,F(g))b=(gb,F(gb)),\ b\in B,
$$
which yields 
$$
F(gb)=\lambda(b)^{-1}F(g),\ b\in B.
$$

It follows that the space of smooth sections 
$\Gamma_{C^\infty}(G/B,\mathcal L_\lambda)$
coincides with the admissible $G$-module $C^\infty_{-\lambda,0}(G/B)$,
realizing the principal series module $\bold M(-\lambda+1,1)=\Hom_{\rm fin}(M_{-\lambda},M_0^\vee)$. 

\begin{remark} Recall that $H^2(G/B,\Bbb Z)=P$.
It is easy to check that the first Chern class $c_1(\mathcal L_\lambda)$ equals $\lambda$. This motivates the minus sign in the definition of $\mathcal L_\lambda$.
\end{remark}  
 
\begin{example} Let $G=SL_2(\Bbb C)$, so that $B$ is the subgroup of upper triangular matrices with determinant $1$ and  $G/B=\Bbb C\Bbb P^1$. 
Then sections of $\mathcal L_m$ are functions 
$F: G\to \Bbb C$ such that $F(gb)=t(b)^{-m}F(g)$, where 
$t(b)=b_{11}$. Thus $\mathcal L_m\cong \mathcal O(-m)$.  
\end{example} 

Let us now consider holomorphic sections of $\mathcal L_\lambda$. 
The space $V_\lambda$ of such sections is a proper subrepresentation
of $C^\infty_{-\lambda,0}(G/B)$, namely the subspace where the left copy of $\g$ 
(acting by antiholomorphic vector fields) acts trivially. Thus $V_\lambda^{\rm fin}=
\Hom_{\rm fin}(M_{-\lambda},\Bbb C)\subset \Hom_{\rm fin}(M_{-\lambda},M_0^\vee)$, and 
$V_\lambda=V_\lambda^{\rm fin}$ since $V_\lambda^{\rm fin}$ is finite-dimensional. It follows that $V_\lambda^{\rm fin}=0$ unless
$\lambda\in -P_+$, and in the latter case $V_\lambda=L_{-\lambda}^*=L_\lambda^-=
L_{w_0\lambda}$, the 
finite-dimensional representation of $G$ with lowest weight $\lambda$. Thus we obtain

\begin{theorem}\label{bw} (Borel-Weil) Let $\lambda\in P$. If $\lambda\in P_+$ then we have an isomorphism of $G$-modules 
$$
\Gamma(G/B,\mathcal L_{-\lambda})\cong L_\lambda^*.
$$
If $\lambda\notin P_+$ then $\Gamma(G/B,\mathcal L_{-\lambda})=0$. 
\end{theorem}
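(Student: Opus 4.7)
The plan is to deduce the theorem by combining the geometric realization of sections of $\mathcal{L}_{-\lambda}$ with the Harish-Chandra bimodule machinery developed in Section 19. First I would identify smooth sections of $\mathcal{L}_{-\lambda}$ with functions $F\in C^\infty(G)$ satisfying $F(gb)=\lambda(b)F(g)$ for $b\in B$; by the discussion preceding the theorem statement, this is exactly $C^\infty_{\lambda,0}(G/B)$, whose $K$-finite vectors realize the principal series bimodule $\bold M(\lambda+1,1)=\Hom_{\rm fin}(M_\lambda,M_0^\vee)$.

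Next I would cut out the holomorphic sections $V_{-\lambda}:=\Gamma(G/B,\mathcal{L}_{-\lambda})$ as the subspace annihilated by all antiholomorphic left-invariant vector fields. Since $G$ is a complex Lie group, the complexified real Lie algebra splits as $\g_\Bbb R\otimes \Bbb C\cong \g\oplus \g$ into holomorphic and antiholomorphic copies; under the identification of the admissible realization with $\Hom_{\rm fin}(M_\lambda,M_0^\vee)$, one of these two factors acts via the target $M_0^\vee$ and the other via the source $M_\lambda$. Vanishing of the antiholomorphic copy therefore forces the image of $\Phi\colon M_\lambda\to M_0^\vee$ to lie in $(M_0^\vee)^\g$. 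The $\g$-invariants of $M_0^\vee$ are exactly the socle $L_0=\Bbb C$: any invariant vector is killed by $\n_+$, hence lies in a highest weight line, and the weight considerations combined with $\n_-$-invariance force it into the trivial submodule. So $V_{-\lambda}^{K-{\rm fin}}\cong \Hom_{\rm fin}(M_\lambda,\Bbb C)$ as $\g$-modules under the remaining (holomorphic) action.

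Now I would compute $\Hom_{\rm fin}(M_\lambda,\Bbb C)$: this is the maximal $\g$-locally-finite subspace of $M_\lambda^*$, which by duality corresponds to the maximal finite dimensional $\g$-quotient of $M_\lambda$. If $\lambda\in P_+$, the unique such quotient is $L_\lambda$, so $\Hom_{\rm fin}(M_\lambda,\Bbb C)\cong L_\lambda^*$. If $\lambda\notin P_+$, then $M_\lambda$ admits no nonzero finite dimensional quotient (the only candidate would be $L_\lambda$, which is infinite dimensional), so $\Hom_{\rm fin}(M_\lambda,\Bbb C)=0$. Finally, since $G/B$ is a projective variety and $\mathcal{L}_{-\lambda}$ an algebraic line bundle, $V_{-\lambda}$ is finite dimensional by Serre's theorem and therefore equals its $K$-finite part, completing the identification.

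The main obstacle I anticipate is the second step: carefully identifying, under the bimodule isomorphism $\bold M(\lambda+1,1)^{(\infty)}\cong C^\infty_{\lambda,0}(G/B)$, which of the two $\g$-factors acts as antiholomorphic vector fields. This requires tracing the real structure $\g_\Bbb R\otimes \Bbb C\cong \g\oplus \g$ through the explicit formulas in Proposition 19.2 and its proof, and matching the $(1,0)$/$(0,1)$ decomposition of left-invariant vector fields on $G$ with the left/right bimodule structure. Once this identification is pinned down, the remaining steps are purely algebraic and draw directly on results already established in the text.
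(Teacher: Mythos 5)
Your proposal matches the paper's own argument essentially step for step: realize smooth sections of $\mathcal L_{-\lambda}$ as the principal series $\Hom_{\rm fin}(M_\lambda,M_0^\vee)$, cut out holomorphic sections as the subspace where the antiholomorphic copy of $\g$ acts trivially (yielding $\Hom_{\rm fin}(M_\lambda,\Bbb C)$), and identify this with $L_\lambda^*$ when $\lambda\in P_+$ and $0$ otherwise via the unique finite dimensional quotient of $M_\lambda$. The only cosmetic difference is that you close the argument by invoking Serre's finiteness theorem for coherent cohomology to get $V_{-\lambda}=V_{-\lambda}^{\rm fin}$, whereas the paper observes directly that the $K$-finite part is finite dimensional and hence, being dense and closed, is all of $V_{-\lambda}$.
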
 

\begin{example} Let $G=SL_2(\Bbb C)$. Then Theorem \ref{bw} says that 
$$
\Gamma(\Bbb C\Bbb P^1,\O(m))\cong L_m=\Bbb C^{m+1}
$$ 
as representations of $G$. 
\end{example} 

More generally, suppose $\lambda\in P$ and $(\lambda,\alpha_i^\vee)=0$, $i\in S$ for a subset $S\subset \Pi$ of the set of simple roots. Then we have a parabolic subgroup $P_S\subset G$ 
generated by $B$ and also $\exp(tf_i)$ for $i\in S$, and $\lambda$ extends to a $1$-dimensional representation of $P_S$. Thus we can define the line bundle $\mathcal L_{\lambda,S}$ 
on the partial flag variety $G/P_S$ in the same way as $\mathcal L_\lambda$, and we have 
$\mathcal L_\lambda=p_S^*\mathcal L_{\lambda,S}$, where $p_S: G/B\to G/P_S$ 
is the natural projection. 

Note that any holomorphic section of 
$\mathcal L_\lambda$ is just a function when restricted to a fiber $F\cong P_S/B$ 
of the fibration $p_S$ (a compact complex manifold), so by the maximum principle it must be constant. It follows that 
$\Gamma(G/B,\mathcal L_\lambda)=\Gamma(G/P_S,\mathcal L_{\lambda,S})$. Thus we get 

\begin{corollary}\label{bw1} Let $\lambda\in P$ with $(\lambda,\alpha_i^\vee)=0$, $i\in S$. Then 
$$
\Gamma(G/P_S,\mathcal L_{-\lambda,S})\cong L_\lambda^*.
$$
if $\lambda\in P_+$, 
otherwise $\Gamma(G/P_S,\mathcal L_{-\lambda,S})=0$. 
\end{corollary}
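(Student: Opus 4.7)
The plan is to reduce directly to Theorem \ref{bw} via the holomorphic fibration $p_S: G/B \to G/P_S$ (well defined because $B \subset P_S$), with fiber $F = P_S/B$, a compact connected complex manifold (the flag variety of the semisimple part of the Levi of $P_S$).

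First I would verify that $\mathcal L_{-\lambda} \cong p_S^*\mathcal L_{-\lambda,S}$. By construction $p_S^*\mathcal L_{-\lambda,S}$ is the $G$-equivariant line bundle on $G/B$ associated to the restriction to $B$ of the character $-\lambda$ of $P_S$. Since this restriction is the original character $-\lambda$ of $B$, we recover $\mathcal L_{-\lambda}$. Note that the extension of $-\lambda$ from $B$ to $P_S$ exists precisely because $(\lambda,\alpha_i^\vee)=0$ for $i \in S$, which is exactly the hypothesis that makes $\mathcal L_{-\lambda,S}$ defined in the first place.

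The main step is then to show that pullback gives an isomorphism
$$
p_S^*: \Gamma(G/P_S,\mathcal L_{-\lambda,S}) \xrightarrow{\sim} \Gamma(G/B,\mathcal L_{-\lambda}).
$$
Injectivity is immediate from surjectivity of $p_S$. For surjectivity, let $s \in \Gamma(G/B,\mathcal L_{-\lambda})$. The restriction $\mathcal L_{-\lambda}|_F$ is holomorphically trivial (again because $-\lambda$ extends to a character of $P_S$, so the associated bundle on $P_S/B$ is trivial), so $s|_F$ is a holomorphic function on the compact connected complex manifold $F$, hence constant by the maximum principle. Therefore $s$ is constant along fibers of $p_S$, and using a local trivialization of $p_S$ one checks that $s$ descends to a holomorphic section $\bar s$ of $\mathcal L_{-\lambda,S}$ satisfying $p_S^*\bar s = s$.

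Combining this identification with Theorem \ref{bw} yields the corollary: if $\lambda \in P_+$ then $\Gamma(G/P_S,\mathcal L_{-\lambda,S}) \cong \Gamma(G/B,\mathcal L_{-\lambda}) \cong L_\lambda^*$, and otherwise both sides vanish, with the $G$-action clearly preserved by $p_S^*$. There is no serious obstacle here; the only content beyond formal manipulation of associated bundles is the triviality of $\mathcal L_{-\lambda}|_F$, which is precisely where the hypothesis $(\lambda,\alpha_i^\vee)=0$ for $i \in S$ enters.
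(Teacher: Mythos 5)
Your proposal is correct and follows essentially the same approach as the paper: identify $\mathcal L_{-\lambda}$ with $p_S^*\mathcal L_{-\lambda,S}$, observe that a holomorphic section of $\mathcal L_{-\lambda}$ restricts to a holomorphic function on each compact fiber $P_S/B$ (trivial bundle there) and hence is constant by the maximum principle, and conclude $\Gamma(G/P_S,\mathcal L_{-\lambda,S})\cong\Gamma(G/B,\mathcal L_{-\lambda})$, at which point Theorem \ref{bw} finishes the job. You have merely spelled out a few steps (injectivity of $p_S^*$, descent of the section) that the paper leaves implicit.
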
 

\begin{example} Let $G=SL_n(\Bbb C)=SL(V)$, $V=\Bbb C^n$, and $P_S\subset G$ be the subgroup of matrices $b$ such that $b_{r1}=0$ for $r>1$ (this corresponds to $S=\lbrace 2,...,n-1\rbrace$). Then $G/P_S=\Bbb C\Bbb P^{n-1}=\Bbb PV$. The condition $(\lambda,\alpha_i^\vee)=0$, $i\in S$ means that $\lambda=m\omega_1$, and in this case $\mathcal L_{m,S}=\O(-m)$. So Corollary \ref{bw1} 
says that 
$$
\Gamma(\Bbb PV,\O(m))=L_{m\omega_{n-1}}=S^mV^*
$$
for $m\ge 0$, and zero for $m<0$. This is also clear from elementary considerations, as 
by definition $\Gamma(\Bbb PV,\O(m))$ is the space of homogeneous polynomials on $V$ 
of degree $m$.  
\end{example} 

In fact, for $\lambda\in P_+$ we can construct an isomorphism $L_\lambda^*\cong \Gamma(G/B,\mathcal L_{-\lambda})$ explicitly as follows. Let $v_\lambda$ be a highest weight vector of $L_{\lambda}$,
$\ell\in L_{\lambda}^*$, and $F_\ell(g):=(\ell,gv_\lambda)$. Then 
$$
F_\ell(gb)=\lambda(b)F_\ell(g),\ b\in B. 
$$
Thus the assignment $\ell\to F_\ell$ defines a linear map 
$L_\lambda^*\to \Gamma(G/B,\mathcal L_{-\lambda})$ 
which is easily seen to be an isomorphism. 

This shows that the bundle $\mathcal L_{-\lambda}$ is {\bf globally generated}, i.e., 
for every $x\in G/B$ there exists $s\in \Gamma(G/B,\mathcal L_{-\lambda})$ such that $s(x)\ne 0$. 
In other words, we have a regular map $i_\lambda: G/B\to \Bbb PL_\lambda$ defined as follows. 
For $x\in G/B$, choose a basis vector $u$ of the fiber of $\mathcal L_{-\lambda}$ at $x$ and define
$i_\lambda(x)\in L_\lambda$ by the equality
$$
s(x)=i_\lambda(x)(s)u
$$
 for $s\in \Gamma(G/B,\mathcal L_{-\lambda})\cong L_\lambda^*$. Then $i_\lambda(x)$ is well defined (does not depend on the choice of $u$) up to scaling and is nonzero, so gives rise to a well defined element of the projective space $\Bbb PL_\lambda$. Another definition of 
this map is 
$$
i_\lambda(x)=x(\Bbb Cv_\lambda).
$$
This shows that $i_\lambda$ is an embedding when $\lambda$ is regular, i.e., in this case 
the line bundle $\mathcal L_\lambda$ is {\bf very ample}. On the other hand, if 
$\lambda$ is not necessarily regular and $S$ is the set of $j$ such that $(\lambda,\alpha_j^\vee)=0$ 
then $i_\lambda: G/P_S\to \Bbb PL_\lambda$ is an embedding, so the bundle 
$\mathcal L_{-\lambda,S}$ over the partial flag variety $G/P_S$ is very ample. 

\begin{example} Let $G=SL_n(\Bbb C)$ and $\lambda=\omega_k$. 
Then $S=[1,n-1]\setminus k$, so $P_S\subset G$ is the subgroup
of matrices with $g_{ij}=0$, $i>k,j\le k$ and $G/P_S$ is the Grassmannian ${\rm Gr}(k,n)$ 
of $k$-dimensional subspaces in $\Bbb C^n$. In this case $L_\lambda=\wedge^k\Bbb C^n$, so 
$i_\lambda$ is the Pl\"ucker embedding ${\rm Gr}(k,n)\hookrightarrow \Bbb P(\wedge^k \Bbb C^n)$. 
\end{example} 

\subsection{The Springer resolution} Recall that a {\bf resolution of singularities} of an irreducible algebraic variety $X$ is a morphism $p: Y\to X$ from a smooth variety $Y$ that is proper (for example, projective\footnote{Recall that a morphism $f: X\to Y$ is said to be {\bf projective} if $f=\pi\circ \widetilde f$ where $\widetilde f: X\to Z\times Y$ is a closed embedding for some projective variety $Z$ and $\pi: Z\times Y\to Y$ is the projection to the second component.}) and birational. Hironaka proved in 1960s that any variety over a field of characteristic zero has a resolution of singularities. However, it is not unique and this theorem does not provide a nice explicit construction of a resolution. 

A basic example of a singular variety arising in Lie theory is the nilpotent cone $\mathcal{N}$ of a semisimple Lie algebra $\g$. This variety turns out to admit a very explicit equivariant resolution called the {\bf Springer resolution}, which plays an important role in representation theory. 

To define the Springer resolution, consider the cotangent bundle $T^*\mathcal F$ of the flag variety $\mathcal F$ of $G$. Recall that $\mathcal F$ is the variety of Borel subalgebras $\b\subset \g$. For $\b\in \mathcal F$, we have an isomorphism $\g/\b\cong T_\b\mathcal F$ defined by the action of $G$. Thus $T^*\mathcal F$ can be viewed as the set of pairs $(\b,x)$, where $x\in (\g/\b)^*$. Note that $(\g/\b)^*\cong \b^\perp$ under the Killing form, and $\b^\perp=[\b,\b]$ is the maximal nilpotent subalgebra of $\b$. Thus $T^*\mathcal F$ is the variety of pairs $(\b,x)$ where $\b\in \mathcal F$ is a Borel subalgebra of $\g$ and $x\in \b$ is a nilpotent element. 

Now we can define the {\bf Springer map} $p: T^*\mathcal F\to \mathcal N$ given by $p(\b,x)=x$. Note that this map is $G$-equivariant, so its fibers over conjugate elements of $\N$ are isomorphic. 
 
\begin{theorem} The Springer map $p$ is birational and projective, so it is a resolution of singularities. 
\end{theorem}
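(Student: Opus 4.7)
The plan is to handle projectivity by a direct factorization argument and to reduce birationality to an analysis of the fiber of $p$ over a single regular nilpotent element.

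For projectivity, I would use the identification of $T^*\mathcal{F}$ with the closed subvariety $\{(\b, x) \in \mathcal{F} \times \g : x \in [\b,\b]\}$ of $\mathcal{F} \times \g$ given in the excerpt. Since every element of $[\b,\b]$ is nilpotent, $p$ factors as the closed embedding $T^*\mathcal{F} \hookrightarrow \mathcal{F} \times \N$ followed by the projection $\mathcal{F} \times \N \to \N$, and the latter is projective because $\mathcal{F}$ is a projective variety. Hence $p$ is projective, in particular proper.

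For birationality, I would first observe that $T^*\mathcal{F}$ and $\N$ are both irreducible of dimension $|R|$ (using $\dim \mathcal{F} = |R_+|$ for the former and Proposition \ref{irrre} for the latter), and that $p$ is surjective since every nilpotent element of $\g$ can be conjugated into $\n_+ \subset \b_+$. Since $\N_{\rm reg} = O_e$ is open and dense in $\N$ (Proposition \ref{irrre}) and forms a single $G$-orbit, and since $p$ is $G$-equivariant, it suffices to show that the set-theoretic fiber $p^{-1}(e)$ consists of a single point. Granted this, $p$ restricts to a proper morphism with finite fibers over $\N_{\rm reg}$, hence a finite morphism; a finite, set-theoretically bijective morphism onto a smooth (hence normal) irreducible variety is automatically an isomorphism in characteristic zero, since generic étaleness forces the degree to equal the generic geometric fiber cardinality (namely one), making $p$ birational, and Zariski's main theorem then upgrades a finite birational morphism onto a normal variety to an isomorphism.

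The set-theoretic analysis of $p^{-1}(e)$ proceeds as follows. If $(\b, e) \in p^{-1}(e)$ with $\b = \Ad(g)\b_+$, then $\Ad(g^{-1})e$ is a regular nilpotent element of $\b_+$. I would invoke the standard fact that nilpotent elements of the solvable algebra $\b_+$ all lie in $\n_+$: writing $x = h + n$ with $h \in \h$ and $n \in \n_+$, the operator $\ad(x)$ is upper-triangular on $\g$ in the $\ad(h)$-weight filtration with diagonal entries $\alpha(h)$ for roots $\alpha$, so nilpotence forces $h = 0$. By Corollary \ref{den}, $\Ad(g^{-1})e = \Ad(b)e$ for some $b \in B_+$, so $g \in Z_G(e) \cdot B_+$. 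Lemma \ref{prisl2} then shows that $\mathfrak{z}_\g(e)$ is spanned by highest-weight vectors of the principal-$\mathfrak{sl}_2$ summands of $\g$, all of strictly positive $2\rho^\vee$-weight, so $\mathfrak{z}_\g(e) \subset \n_+$ and $Z_G(e)^\circ \subset B_+$; the finite component group of $Z_G(e)$ is central in $G$ and therefore also lies in $B_+$. Consequently $\b = \Ad(g)\b_+ = \b_+$.

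The main obstacle I anticipate is the passage from set-theoretic to scheme-theoretic fiber uniqueness. The route sketched above (Zariski's main theorem plus generic étaleness in characteristic zero) is the cleanest, but an alternative is to verify directly that the differential $dp$ at $(\b_+, e)$ is an isomorphism by computing its image as $\ad(e)(\b_+) + \ad(e)(\g/\b_+) = \ad(e)(\g) = T_e\N$ — a dimension count of $|R|$ on both sides makes $dp$ an isomorphism, so $p$ is étale at $(\b_+, e)$, which together with set-theoretic bijectivity along $O_e$ yields the same conclusion.
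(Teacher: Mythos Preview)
Your projectivity argument matches the paper's. For the fiber over $e$, however, you take a genuinely different route. The paper first uses the dimension count $\dim T^*\mathcal F=\dim\mathcal N$ together with surjectivity to conclude that $p^{-1}(e)$ is finite, and then observes that the $\Bbb C^\times$-action $t\mapsto \Ad(t^{\rho^\vee})$ on this finite set must be trivial; hence $\rho^\vee$ normalizes every $\b\in p^{-1}(e)$, so $\h\subset\b$ (as $\rho^\vee$ is regular semisimple), and then $e_i=[\omega_i^\vee,e]\in\b$ forces $\b=\b_+$. This avoids any analysis of $Z_G(e)$.

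Your centralizer approach can also be made to work, but as written it has a gap at the step ``By Corollary~\ref{den}, $\Ad(g^{-1})e=\Ad(b)e$.'' Corollary~\ref{den} identifies $\Ad(B_+)e$ with the set of elements of $\n_+$ having all simple-root coefficients nonzero; it does \emph{not} say that every regular element of $\n_+$ lies in this set. You need the converse: if $x\in\n_+$ has $c_{\alpha_i}(x)=0$ then $x$ is not regular. (Quick fix: then $x$ lies in the nilradical $\mathfrak u_i$ of the minimal parabolic $\mathfrak p_i\supset\b_+$, and since $\mathfrak u_i$ is an ideal in $\mathfrak p_i$ one gets $\ad(x)(\mathfrak p_i)\subset\mathfrak u_i$, whence $\dim(\ker\ad(x)\cap\mathfrak p_i)\ge(r+|R_+|+1)-(|R_+|-1)=r+2>r$.) Your assertion that the component group of $Z_G(e)$ is central in $G$ also requires justification; it follows from the uniqueness part of Jacobson--Morozov (Exercise~\ref{nilor}(ix)) but is not immediate. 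The paper's torus-action trick bypasses both issues and needs less structural input. On the other hand, your discussion of the passage from a set-theoretic singleton fiber to genuine birationality is more explicit than the paper's, which simply stops once $|p^{-1}(e)|=1$ is established.
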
 

\begin{proof} To show that $p$ is birational, it suffices to prove that if $e\in \mathcal{N}$ is regular, the Borel subalgebra $\b$ containing $e$ is unique. To this end, note that $\dim T^*\mathcal F=2\dim \mathcal F=\dim \N$ and the map $p$ is surjective (as any nilpotent element is contained in a Borel subalgebra). Thus $p$ is generically finite, i.e., $p^{-1}(e)$ is a finite set, and our job is to show that it consists of one element. 

We may fix a decomposition $\g=\n_+\oplus \h\oplus \n_-$ and 
assume that $e=\sum_{i=1}^re_i$. Then we have $[\rho^\vee,e]=e$, so 
the group $\lbrace t^{\rho^\vee},t\ne 0\rbrace\cong \Bbb C^\times$ acts on $p^{-1}(e)$ 
(as any Borel subalgebra containing $e$ also contains $te$). Since $p^{-1}(e)$ is finite, this action must be trivial. Thus $\rho^\vee$ normalizes every $\b\in p^{-1}(e)$, hence is contained in every such $\b$. But $\rho^\vee$ is regular, so is contained in a unique Cartan subalgebra, namely $\h$. Since every semisimple element in a Borel subalgebra $\b\subset \g$ is contained in a Cartan subalgebra sitting inside $\b$, it follows that $\h\subset \b$ for all $\b\in p^{-1}(e)$. Thus $[\omega_i^\vee,e]=e_i\in \b$ for all $i$. It follows that $\b=\b_+:=\h\oplus \n_+$, i.e., $|p^{-1}(e)|=1$, as claimed. 

Now let us show that $p$ is projective. Let $\widetilde p: T^*\mathcal F\to \mathcal F\times \mathcal N$ be the map defined by $\widetilde p(\b,x)=(\b,x)$. This is clearly a closed embedding (the image is defined by the equation $x\in \b$). But $p=\pi\circ \widetilde p$ where $\pi: \mathcal F\times \mathcal N\to \mathcal N$ is the projection to the second component. 
Thus $p$ is projective, as claimed. 
\end{proof} 

\begin{remark} The preimage $p^{-1}(e)$ for $e\in \N$ is called the {\bf Springer fiber}. If $e$ is not regular, $p^{-1}(e)$ has positive dimension. It is a projective variety, which is in general singular, reducible and has complicated structure, but it plays an important role in representation theory.
\end{remark} 

\begin{example} Let $\g=\mathfrak{sl}_2$. Then $\N$ is the usual quadratic cone 
$yz+x^2=0$ in $\Bbb C^3$, and $T^*\mathcal F=T^*\Bbb CP^1$ is the blow-up 
of the vertex in this cone. 
\end{example} 

\subsection{The symplectic structure on coadjoint orbits} 

Recall that a smooth real manifold, complex manifold or algebraic variety $X$ is {\bf symplectic} if it is equipped with a nondegenerate closed 2-form $\omega$. It is clear that in this case $X$ has even dimension. 

\begin{theorem}\label{KK} (Kirillov-Kostant) Let $G$ be a connected real or complex Lie group or complex algebraic group. Then every $G$-orbit in $\g^*$ has a natural symplectic structure. 
\end{theorem}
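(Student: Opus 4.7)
The plan is to produce the symplectic form by a natural pointwise formula at the Lie algebra level, and then verify smoothness, nondegeneracy, and closedness using $G$-equivariance and the Jacobi identity. Throughout, I use the coadjoint action $(g,f)\mapsto g\cdot f$ defined by $\langle g\cdot f, y\rangle=\langle f,\Ad(g^{-1})y\rangle$.

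First I would fix $f\in\g^*$ and let $\O:=G\cdot f$ be its orbit. The infinitesimal action gives a linear map $\g\to T_f\O$, $x\mapsto \xi_x(f):=\ad^*(x)f$, where $\langle\ad^*(x)f,y\rangle=-\langle f,[x,y]\rangle$. Since $\O$ is a homogeneous space, this map is surjective, and its kernel is precisely the stabilizer Lie algebra
$$\g^f=\{x\in\g:f([x,y])=0\ \forall y\in\g\},$$
so one gets a canonical identification $T_f\O\cong \g/\g^f$. I would then \emph{define}
$$\omega_f(\bar x,\bar y):=f([x,y]),\qquad x,y\in\g.$$
The definition of $\g^f$ shows simultaneously that $\omega_f$ descends to $\g/\g^f$ (well-definedness) and that its radical is $\g^f$, i.e.\ $\omega_f$ is nondegenerate on $T_f\O$. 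Varying $f\in\O$, the formula $\omega(\xi_x,\xi_y)(f)=f([x,y])$ is polynomial in $f$ and the fundamental vector fields $\xi_x$ span the tangent space everywhere, so $\omega$ is a smooth $2$-form on $\O$.

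The $G$-invariance of $\omega$ is an immediate check: pushing $x,y$ by $\Ad(g)$ and $f$ by $g\cdot$, one has
$$\omega_{g\cdot f}(\xi_{\Ad(g)x},\xi_{\Ad(g)y})=(g\cdot f)\bigl(\Ad(g)[x,y]\bigr)=f([x,y])=\omega_f(\xi_x,\xi_y),$$
so $\omega$ is invariant under the (transitive) $G$-action.

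The main obstacle will be closedness $d\omega=0$. For this I would apply the Cartan formula to the fundamental vector fields $\xi_x,\xi_y,\xi_z$ (which span $T\O$ at every point):
$$d\omega(\xi_x,\xi_y,\xi_z)=\sum_{\text{cyc}}\xi_x\cdot\omega(\xi_y,\xi_z)\;-\;\sum_{\text{cyc}}\omega([\xi_x,\xi_y],\xi_z).$$
Using the explicit formula $\omega(\xi_y,\xi_z)(f)=f([y,z])$, the derivative along $\xi_x$ evaluates to $\langle\ad^*(x)f,[y,z]\rangle=-f([x,[y,z]])$, while the bracket relation $[\xi_x,\xi_y]=\xi_{[x,y]}$ for the coadjoint action gives $\omega([\xi_x,\xi_y],\xi_z)(f)=f([[x,y],z])$. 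Both cyclic sums reduce to $\pm\sum_{\text{cyc}}f([x,[y,z]])$, which vanishes by the Jacobi identity, yielding $d\omega=0$. The real technical care needed is in fixing sign conventions for $\ad^*$, for the fundamental vector field map $x\mapsto\xi_x$, and for the commutator $[\xi_x,\xi_y]$, so that the two cyclic sums combine correctly against Jacobi; once those conventions are set consistently, closedness is a one-line computation and nondegeneracy plus smoothness follow from the Lie-algebraic construction above, completing the proof that $(\O,\omega)$ is symplectic.
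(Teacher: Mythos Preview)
Your proposal is correct and follows essentially the same approach as the paper: define $\omega_f(x,y)=f([x,y])$, identify its kernel with the stabilizer subalgebra to get nondegeneracy on $T_f\O\cong\g/\g^f$, and deduce closedness from the Cartan formula on fundamental vector fields, reducing to the Jacobi identity. The only cosmetic difference is that the paper first observes $\omega$ is $G$-invariant and uses this to kill the Lie-derivative terms in Cartan's formula (so only the bracket terms survive), whereas you compute both cyclic sums explicitly and watch them cancel; either way the computation collapses to $\sum_{\mathrm{cyc}}f([x,[y,z]])=0$.
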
 

\begin{proof} Let $O$ be a $G$-orbit in $\g^*$ and $f\in O$. Then $T_fO=\g/\g_f$ where $\g_f$ is the set of $x\in \g$ such that $f([x,y])=0$ for all $y\in \g$. Define a skew-symmetric bilinear form $\omega_f: \g\times \g\to \Bbb C$ 
given by $\omega_f(y,z)=f([y,z])$. It is clear that ${\rm Ker}\omega_f=\g_f$, so 
$\omega_f$ defines a nondegenerate form on $\g/\g_f=T_fO$. This defines a nondegenerate $G$-invariant differential $2$-form $\omega$ on $O$. 

It remains to show that $\omega$ is closed. Let $L_x$ be the vector field on $O$ 
defined by the action of $x\in \g$; thus $L_{[x,y]}=[L_x,L_y]$. It suffices to show that 
for any $x,y,z\in \g$ we have $d\omega(L_x,L_y,L_z)=0$.  
By Cartan's differentiation formula we have 
$$
d\omega(L_x,L_y,L_z)={\rm Alt}(L_x\omega(L_y,L_z)-\omega([L_x,L_y],L_z)),
$$
where Alt denotes the sum over cyclic permutations of $x,y,z$. Since $\omega$ is $G$-invariant, this yields 
$$
d\omega(L_x,L_y,L_z)(f)={\rm Alt}(\omega(L_y,L_{[x,z]}))(f)=f({\rm Alt}([y,[x,z]])),
$$
which vanishes by the Jacobi identity. 
\end{proof} 

\begin{corollary}\label{cooo1} The singular locus of the nilpotent cone $\N$ has codimension $\ge 2$. 
\end{corollary}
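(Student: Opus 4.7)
The plan is to show that the singular locus $\mathcal{N}_{\mathrm{sing}}$ is contained in $\mathcal{N}\setminus O_e$, where $O_e=\mathrm{Ad}(G)e$ is the regular nilpotent orbit, and then to show that every nilpotent orbit other than $O_e$ has codimension at least $2$ in $\mathcal{N}$ using the evenness of orbit dimensions supplied by Theorem \ref{KK}.

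First, by Proposition \ref{irrre}(i), $O_e$ is open and dense in $\mathcal{N}$. As a homogeneous space for $G$, it is smooth, so $O_e\subset \mathcal{N}_{\mathrm{sm}}$ and hence $\mathcal{N}_{\mathrm{sing}}\subset \mathcal{N}\setminus O_e$. By Exercise \ref{nilor}(xi), $\mathcal{N}$ is a finite union of adjoint orbits, so $\mathcal{N}\setminus O_e$ is a finite union of nilpotent orbits $O_1,\dots,O_N$ distinct from $O_e$, and therefore
\[
\dim(\mathcal{N}\setminus O_e)=\max_{i}\dim O_i.
\]

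Next, identify $\g\cong\g^*$ as $G$-modules via the Killing form (which is nondegenerate because $\g$ is semisimple); this turns each adjoint orbit $O_i\subset\g$ into a coadjoint orbit in $\g^*$. By the Kirillov--Kostant theorem (Theorem \ref{KK}), every coadjoint orbit carries a nondegenerate symplectic form, so $\dim O_i$ is even for each $i$. On the other hand, $\dim\mathcal{N}=|R|=2|R_+|$ is also even, and since $O_e$ is open and dense, any other orbit $O_i$ satisfies $\dim O_i<\dim\mathcal{N}$; combined with evenness this forces $\dim O_i\le\dim\mathcal{N}-2$.

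Combining the two steps, $\dim \mathcal{N}_{\mathrm{sing}}\le \dim(\mathcal{N}\setminus O_e)\le \dim\mathcal{N}-2$, so $\mathcal{N}_{\mathrm{sing}}$ has codimension $\ge 2$ in $\mathcal{N}$. The only nontrivial input is the evenness of orbit dimensions, which is immediate from Theorem \ref{KK}; the rest consists of assembling results proved earlier in the text (irreducibility and density of $O_e$, finiteness of nilpotent orbits, parity of $\dim\mathcal N$). I do not foresee any serious obstacle.
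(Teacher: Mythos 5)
Your proof is correct and follows exactly the same route as the paper's (the paper's version is just a one-line sketch): finiteness of nilpotent orbits (Exercise \ref{nilor}(xi)), evenness of orbit dimensions via Theorem \ref{KK}, and the fact that $O_e$ is open, dense, and smooth. The additional detail you supply — identifying $\g\cong\g^*$ via the Killing form, noting $\dim\mathcal N$ is even, and observing that the singular locus lies in $\mathcal N\setminus O_e$ — is exactly the unpacking the paper leaves implicit.
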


\begin{proof} This follows since $\mathcal N$ has finitely many orbits (Exercise \ref{nilor}) and by Theorem \ref{KK} they all have even dimension. 
\end{proof} 

\begin{corollary}\label{cooo2} $\N$ is normal (i.e., the algebra $\O(\N)$ is integrally closed in its quotient field). 
\end{corollary}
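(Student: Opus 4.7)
The plan is to invoke Serre's normality criterion: an affine Noetherian scheme is normal if and only if it satisfies conditions $R_1$ (regular in codimension one) and $S_2$ (every local ring has depth at least $\min(\dim,2)$). Thus it suffices to verify $R_1$ and $S_2$ for $\mathcal N$, after which normality will follow from the already established irreducibility (Proposition \ref{irrre}(iii)) and reducedness (Proposition \ref{redu}).

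For $R_1$, I would simply quote Corollary \ref{cooo1}: the singular locus of $\mathcal N$ has codimension $\ge 2$, so in particular every codimension-one point of $\mathcal N$ is regular, which is exactly the content of $R_1$. This is the easy half.

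For $S_2$, the key observation is that $\mathcal N$ is a complete intersection in $\g$. Indeed, by Kostant's theorem (Section \ref{kothe}) the generators $p_1,\dots,p_r\in (S\g)^\g$ form a regular sequence in $S\g$ of length $r=\operatorname{rank}\g=\operatorname{codim}_{\g}\mathcal N$, so $\mathcal N=\operatorname{Spec}(S\g/(p_1,\dots,p_r))$ is a complete intersection in the sense of Remark \ref{remci}. Any complete intersection in a regular scheme is Cohen-Macaulay (this follows from the standard fact that a regular sequence cuts down depth by its length, applied to the regular local rings $(S\g)_{\mathfrak m}$), and Cohen-Macaulay schemes satisfy $S_n$ for all $n$, in particular $S_2$.

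Combining $R_1$ and $S_2$ and applying Serre's criterion yields that the local rings of $\mathcal N$ at all points are integrally closed integral domains; since $\mathcal N$ is irreducible and reduced, this is equivalent to $\mathcal O(\mathcal N)=(S\g)_0$ being integrally closed in its field of fractions, i.e.\ $\mathcal N$ is normal. The whole argument is short once Serre's criterion is accepted; the substantive inputs from the paper are exactly (a) the complete intersection property (from Kostant) and (b) the codimension-two bound on the singular locus (from the Kirillov--Kostant theorem plus finiteness of nilpotent orbits). I do not expect a genuine obstacle; the main point to be careful about is to cite cleanly that a complete intersection in a smooth variety is Cohen-Macaulay, rather than trying to reprove $S_2$ by hand.
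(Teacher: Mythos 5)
Your proof is correct and takes essentially the same route as the paper. The paper disposes of the corollary in one line by citing Hartshorne, Chapter II, Proposition 8.23 (a complete intersection whose singular locus has codimension $\ge 2$ is normal), which is exactly the package you have unpacked: $R_1$ from Corollary \ref{cooo1}, $S_2$ from the Cohen-Macaulay property of complete intersections, and Serre's criterion to close. The only difference is that you have spelled out the inside of the cited proposition rather than quoting it.
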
 

\begin{proof} This follows from Corollary \ref{cooo1} since $\N$ is a complete intersection and any complete intersection whose singular locus has codimension $\ge 2$ is necessarily normal (\cite{H}, Chapter II, Prop. 8.23).  
\end{proof} 

\subsection{The algebra of functions on $T^*\mathcal F$}

We will first recall some facts about normal algebraic varieties. 

\begin{proposition}\label{normm} Let $Y$ be an irreducible normal algebraic variety. Then 

(i) (\cite{Eis}, Proposition 11.5) The singular locus of $Y$ has codimension $\ge 2$. 

(ii) (\cite{Eis}, Proposition 11.4) If $U\subset Y$ is an open subset and $Y\setminus U$ has codimension $\ge 2$ then any regular function $f$ on $U$ extends to a regular function on $Y$. In particular, any regular function on the smooth locus of $Y$ extends to a regular function on $Y$. 

(iii) Zariski main theorem (\cite{H}, Corollary III.11.4). If $X$ is irreducible and $p: X\to Y$ is a proper birational morphism then 
fibers of $p$ are connected. 
\end{proposition}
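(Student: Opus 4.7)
The statement collects three standard facts about normal varieties, all cited to Eisenbud and Hartshorne. Although the paper defers to references, here is how I would sketch independent proofs of each part, working locally and reducing to commutative algebra wherever possible.

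For part (i), the plan is to reduce to the following local statement: a Noetherian normal local domain $(R,\mathfrak{m})$ of Krull dimension one is regular (in fact a DVR). Given this, the singular locus of $Y$ can contain no point $y$ with $\dim \mathcal{O}_{Y,y}=1$, i.e., no generic point of a codimension $1$ subvariety, which is exactly what is needed. To prove the local claim, I would take a nonzero $x\in \mathfrak{m}$, note that $\mathfrak{m}$ is the unique minimal prime over $(x)$ by Krull's principal ideal theorem, so $\mathfrak{m}^n\subset (x)$ for some $n$; then pick $y\in \mathfrak{m}^{n-1}\setminus (x)$ and consider $t:=x/y\in \mathrm{Frac}(R)$. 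One checks that $t^{-1}\mathfrak{m}\subset R$: if it were contained in $\mathfrak{m}$ then $t^{-1}$ would be integral over $R$ by Cayley-Hamilton applied to the action on a finitely generated faithful module, contradicting normality since $t^{-1}\notin R$. Hence $t^{-1}\mathfrak{m}=R$, so $\mathfrak{m}=(t)$ is principal and $R$ is regular.

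For part (ii), the key identity to establish is the ``algebraic Hartogs'' formula $\mathcal{O}(Y)=\bigcap_{\mathrm{ht}(\mathfrak{p})=1}\mathcal{O}_{Y,\mathfrak{p}}$ inside $K(Y)$, valid for any Noetherian normal integral domain. This reduces to showing that if $r\in K(Y)$ lies in every height-$1$ localization, then its ``denominator ideal'' $I:=\{a\in \mathcal{O}(Y):ar\in \mathcal{O}(Y)\}$ is not contained in any height-$1$ prime, hence equals $\mathcal{O}(Y)$; the standard trick is that otherwise $I$ would be contained in an associated prime of a principal ideal of a denominator of $r$, and by part (i) such associated primes have height $1$. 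Granted this identity, a regular function on $U$ is a priori an element of $K(Y)$ (taking $Y$ affine without loss of generality) and lies in $\mathcal{O}_{Y,\mathfrak{p}}$ for every height-$1$ $\mathfrak{p}$, since all such $\mathfrak{p}$ correspond to points of $U$ by the codimension hypothesis. Therefore it lies in $\mathcal{O}(Y)$.

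For part (iii), I would invoke Stein factorization: the proper morphism $p:X\to Y$ factors as $X\xrightarrow{q} Y':=\mathbf{Spec}_Y(p_*\mathcal{O}_X)\xrightarrow{\pi} Y$, where $q$ has geometrically connected fibers and $\pi$ is finite. The goal is to show $\pi$ is an isomorphism, which reduces to showing $p_*\mathcal{O}_X=\mathcal{O}_Y$. Since $p$ is proper, $p_*\mathcal{O}_X$ is a coherent $\mathcal{O}_Y$-algebra, so $\pi$ is finite; since $p$ is birational, $\pi$ is birational as well, so $\pi_*\mathcal{O}_{Y'}$ is a coherent $\mathcal{O}_Y$-subalgebra of the constant sheaf $K(Y)$ on an open set. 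By finiteness, every section of $\pi_*\mathcal{O}_{Y'}$ is integral over $\mathcal{O}_Y$, and by normality of $Y$ any such integral element of $K(Y)$ already lies in $\mathcal{O}_Y$. Thus $\pi_*\mathcal{O}_{Y'}=\mathcal{O}_Y$, $\pi$ is an isomorphism, and the fibers of $p$ coincide with those of $q$, hence are connected.

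The main obstacle across all three parts is the first, where one must get the normality-implies-DVR statement in dimension $1$ cleanly: everything else is formal manipulation of sheaves and ideals, but this step genuinely uses the definition of integral closure via a determinantal trick and is easy to bungle. Once (i) is in hand, (ii) and (iii) become routine applications of normality in the form ``integral $+$ rational $\Rightarrow$ regular.''
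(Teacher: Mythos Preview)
The paper does not prove this proposition at all: it simply quotes the three statements with citations to Eisenbud and Hartshorne, and moves on. So there is nothing to compare your argument against. Your sketches are the standard textbook proofs and are essentially correct.

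One small point on part (ii): when you write that the associated primes of a principal ideal have height $1$ ``by part (i),'' that is not quite the right attribution. Part (i) is the $R_1$ condition (localizations at height-$1$ primes are regular); what you actually need here is the $S_2$ condition (no embedded primes in $R/(a)$ for a nonzerodivisor $a$), which is the other half of Serre's criterion for normality. Concretely: writing $r=b/a$, your denominator ideal is $I=((a):b)=\mathrm{Ann}_R(\bar b)$ for $\bar b\in R/(a)$; the minimal primes over $I$ are associated primes of the submodule $R\bar b\subset R/(a)$, hence associated primes of $R/(a)$, hence height $1$ by $S_2$ together with Krull's principal ideal theorem. Once you phrase it that way the argument is clean. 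Parts (i) and (iii) are fine as written.
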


\begin{proposition}\label{leee} Let $Y$ be an irreducible normal affine algebraic variety and 
$p: X\to Y$ be a resolution of singularities. Then the homomorphism
$p^*: \O(Y)\to \O(X)$ is an isomorphism. 
\end{proposition}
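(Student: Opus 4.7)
The plan is to prove injectivity and surjectivity of $p^*$ separately. Injectivity is immediate: since $p$ is proper and birational, its image is closed and contains a dense open subset, hence $p$ is surjective, so $p^*\colon \O(Y)\to \O(X)$ is injective.

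For surjectivity, given $f\in\O(X)$, I will consider the morphism $q:=(p,f)\colon X\to Y\times\mathbb A^1$. Because $p$ is proper and the projection $\mathrm{pr}_1\colon Y\times\mathbb A^1\to Y$ is separated, the standard factorization lemma implies $q$ is proper, so the image $Z:=q(X)$ is a closed irreducible subvariety of $Y\times\mathbb A^1$ (irreducible since $X$, being a resolution of the irreducible $Y$, is itself irreducible). Let $\pi\colon Z\to Y$ be induced by $\mathrm{pr}_1$; its inverse (once shown to exist as a morphism), composed with $\mathrm{pr}_2\colon Z\to\mathbb A^1$, will produce the desired $g\in\O(Y)$ with $p^*g=f$. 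To show $\pi$ is an isomorphism, I will verify three things: (a) $\pi$ is proper with single-point fibers, hence finite; (b) $\pi$ is birational; (c) a finite birational morphism onto a normal variety is an isomorphism. For (a), the key point is that $f$ is constant on each set-theoretic fiber $p^{-1}(y)$: by Proposition~\ref{normm}(iii) this fiber is connected, and on each of its irreducible components (which are proper irreducible varieties) the restriction of $f$ defines a morphism to $\mathbb A^1$ whose image is a proper irreducible subset, hence a point; connectedness forces these constants to agree across components. Hence $q(p^{-1}(y))$ is a single point, which equals $\pi^{-1}(y)$, and a proper morphism with finite fibers is finite. For (b), over the dense open $U\subset Y$ where $p$ is an isomorphism, the regular section $y\mapsto (y,f(p^{-1}(y)))$ of $\pi$ exhibits $\pi|_{\pi^{-1}(U)}$ as an isomorphism. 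For (c), $\pi_*\O_Z$ is a coherent $\O_Y$-module generically equal to $\O_Y$ and integral over $\O_Y$, so the normality hypothesis gives $\pi_*\O_Z=\O_Y$.

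The main subtlety I anticipate is the constancy-on-fibers argument, since scheme-theoretic fibers of $p$ may be non-reduced or reducible. The cleanest way to bypass this is to work at the level of set-theoretic (reduced) fibers throughout: Proposition~\ref{normm}(iii) gives connectedness of the reduced fiber, the component-by-component argument uses only that each component is a reduced irreducible proper variety, and the definition of $g$ only requires the \emph{value} of $f$ at each point of $Y$. An alternative route would be to define $g$ set-theoretically on $Y$, note that it is regular on the open set $U$ where $p$ is an isomorphism, and extend using Proposition~\ref{normm}(ii); however, one would still need to argue regularity off a codimension-$2$ closed set, which essentially recovers the graph argument. I prefer the graph formulation because it packages the extension cleanly as a finite-birational-to-normal statement and makes the role of normality transparent.
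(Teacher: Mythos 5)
Your proof is correct and, for the surjectivity step, takes a genuinely different route from the paper's. Both begin the same way: Proposition~\ref{normm}(iii) gives connectedness of the proper fibers of $p$, so $f$ is constant on each fiber and descends to a set-theoretic function on $Y$. The paper then treats this as a rational function $h$, asserts that $h$ is regular on the smooth locus ``as it is defined at all points,'' and extends it across the codimension-$\ge 2$ singular locus via Proposition~\ref{normm}(i),(ii). You instead form the closed image $Z=q(X)\subset Y\times\mathbb A^1$ and show that $\pi:Z\to Y$ is finite and birational, hence an isomorphism by normality. Thus you invoke normality through integral closedness rather than through codimension-$2$ extension; what this buys is that the step the paper leaves to the reader (that a rational function set-theoretically defined at every point is actually regular on the smooth locus) is replaced by an airtight finite-birational-to-normal statement, with the graph supplying exactly the needed control over the locus where $p$ is not an isomorphism. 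One detail worth making explicit in your step (a): the projection $Y\times\mathbb A^1\to Y$ is not proper, so properness of $\pi$ is not inherited by restriction to $Z$; rather it holds because $p=\pi\circ q$ is proper and $q:X\to Z$ is surjective, so $\pi$ is universally closed, and being also separated and of finite type it is proper; combined with the single-point fibers this gives the finiteness you use.
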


\begin{proof} It is clear that $p^*$ is injective, so we only need to show it is surjective. 
Let $f\in \O(X)$. Since every fiber of $p$ is proper, and also connected due to normality of $Y$ by Proposition \ref{normm}(iii), $f$ is constant along this fiber. So $f=h\circ p$ for $h: Y\to \Bbb C$ a rational function. It remains to show that $h$ is regular. We know that $h$ is regular on the smooth locus of $Y$ (as it is defined at all points of $Y$). Thus the result follows from the normality of $Y$ and Proposition \ref{normm}(i),(ii). 
\end{proof} 

\begin{theorem}\label{isograd} Let $p: T^*\mathcal F\to \N$ be the Springer resolution. Then the map $p^*: \O(\N)\to \O(T^*\mathcal F)$
is an isomorphism  of graded algebras. 
\end{theorem}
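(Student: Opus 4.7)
The plan is to observe that the statement is essentially a direct application of Proposition \ref{leee}, once we verify its hypotheses for the Springer resolution $p\colon T^*\mathcal{F}\to\mathcal{N}$, and then promote the algebra isomorphism to a graded isomorphism by exhibiting a compatible $\mathbb{C}^*$-action on both sides.

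First, I would check that $\mathcal{N}$ satisfies the hypotheses of Proposition \ref{leee}. It is affine by definition (it is $\mathrm{Spec}(S\mathfrak{g})_0$), irreducible by Proposition \ref{irrre}(iii), and normal by Corollary \ref{cooo2}. Next, I would note that $p$ is a resolution of singularities, as already established in this section. Applying Proposition \ref{leee} directly yields that $p^*\colon \mathcal{O}(\mathcal{N})\to\mathcal{O}(T^*\mathcal{F})$ is an isomorphism of algebras. The subtle input here — and perhaps the only nontrivial point — is the normality of $\mathcal{N}$, which in turn relies on $\mathcal{N}$ being a complete intersection with singular locus of codimension $\ge 2$ (Corollary \ref{cooo1}), the latter following from the Kirillov–Kostant theorem together with the finiteness of the number of nilpotent orbits.

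It then remains to check that $p^*$ respects the grading. The grading on $\mathcal{O}(\mathcal{N})$ comes from the $\mathbb{C}^\times$-action on $\mathfrak{g}^*\cong\mathfrak{g}$ by scaling (which preserves $\mathcal{N}$ since it is a cone). The grading on $\mathcal{O}(T^*\mathcal{F})$ comes from the canonical $\mathbb{C}^\times$-action on the cotangent bundle by scaling along the fibers. Both actions are algebraic, and the Springer map $p(\mathfrak{b},x)=x$ is manifestly $\mathbb{C}^\times$-equivariant: indeed $p(\mathfrak{b},tx)=tx=t\cdot p(\mathfrak{b},x)$. Consequently $p^*$ intertwines the two $\mathbb{C}^\times$-actions on functions, and hence preserves the weight (i.e.\ degree) decomposition. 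Combined with the algebra isomorphism above, this gives the claimed isomorphism of graded algebras.

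The main conceptual point — which I would flag as the ``hard part'' even though it is already provided — is the input that $\mathcal{N}$ is normal; without this, one could only conclude that $p^*$ identifies $\mathcal{O}(\mathcal{N})$ with its integral closure inside $\mathcal{O}(T^*\mathcal{F})$. Given normality, the two additional ingredients needed (connectedness of fibers via Zariski's Main Theorem, and extension of regular functions across a codimension-$2$ subset) are packaged into Proposition \ref{leee}, so no further estimates or explicit computations are required.
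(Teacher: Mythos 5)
Your proof is correct and follows the same route as the paper: cite the normality of $\mathcal{N}$ (Corollary \ref{cooo2}) and apply Proposition \ref{leee}. The explicit check of $\mathbb{C}^\times$-equivariance to justify that the isomorphism is graded is a detail the paper leaves implicit, and is a welcome addition.
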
 

\begin{proof} This follows from Proposition \ref{leee} and the normality of $\N$ (Corollary \ref{cooo2}). 
\end{proof} 

\section{\bf D-modules - I} 

We would now like to formulate the Beilinson-Bernstein localization theorems.
We first review generalities about differential operators and $D$-modules. 

\subsection{Differential operators} 

Let $\bold k$ be an algebraically closed field of characteristic zero. Let $X$ be a smooth affine algebraic variety over $\bold k$. 
Let $\O(X)$ be the algebra of regular functions on $X$. Following Grothendieck, we define inductively  
the notion of a {\it differential operator of order (at most) $N$ on $X$}. Namely, a differential operator of order $-1$ is zero, 
and a $k$-linear operator $L: \O(X)\to \O(X)$ is a differential operator of order $N\ge 0$ if 
for all $f\in \O(X)$, the operator $[L,f]$ is a differential operator of order $N-1$.

Let $D_N(X)$ denote the space of differential operators of order $N$. We have 
$$
0=D_{-1}(X)\subset \O(X)=D_0(X)\subset D_1(X)\subset...\subset D_N(X)\subset...
$$
and $D_i(X)D_j(X)\subset D_{i+j}(X)$, which implies that the nested union $D(X):=\cup_{i\ge 0}D_i(X)$ 
is a filtered algebra.

\begin{definition} $D(X)$ is called {\bf the algebra of differential operators} on $X$. 
\end{definition} 

\begin{exercise}\label{ex1} Prove the following statements. 

1. $[D_i(X),D_j(X)]\subset D_{i+j-1}(X)$ for $i,j\ge 0$. In particular, $[,]$ makes
$D_1(X)$ a Lie algebra naturally isomorphic to ${\rm Vect}(X)\ltimes \O(X)$, where ${\rm Vect}(X)$ is the Lie algebra of vector fields on $X$.  

2. Suppose $x_1,...,x_n\in \O(X)$ are regular functions such that $dx_1,...,dx_n$ form a basis in each cotangent space to $X$ (for every $p\in X$ this can always be achieved by replacing $X$ by an affine neighborhood of $p$). 
Let $\partial_1,...,\partial_n$ be the corresponding vector fields. For $\bold m=(m_1,...,m_n)\in \Bbb Z_{\ge 0}^n$, let $|\bold m|:=\sum_{i=1}^n m_i$ and $\partial^{\bold m}:=\partial_1^{m_1}...\partial_n^{m_n}$. 
Then $D_N(X)$ is a free finite rank $\O(X)$-module (under left multiplication) with basis $\lbrace\partial^{\bold m}\rbrace$ with $|\bold m|\le N$, 
and $D(X)$ is a free $\O(X)$-module with basis $\lbrace\partial^{\bold m}\rbrace$ for all $\bold m$. 

3. One has $\gr D(X)=\oplus_{i\ge 0}\Gamma(X,S^iTX)=\O(T^*X)$. In particular, $D(X)$ is left and right Noetherian. 

4. $D(X)$ is generated by $\O(X)$ and elements $L_v$, $v\in {\rm Vect}(X)$ (depending linearly on $v$), with defining relations
\begin{equation}\label{rela} 
[f,g]=0,\ [L_v,f]=v(f),\ L_{fv}=fL_v,\ [L_v,L_w]=L_{[v,w]}, 
\end{equation}
where $f,g\in \O(X), v,w\in {\rm Vect}(X)$.

5. If $U\subset X$ is an affine open set then the multiplication map 
$\O(U)\otimes_{\O(X)}D(X)\to D(U)$ is a filtered isomorphism. 
\end{exercise}

\subsection{$D$-modules}

\begin{definition} A
{\bf left} (respectively, {\bf right}) {\bf $D$-module} on $X$ is a left (respectively, right) $D(X)$-module. 
\end{definition} 

\begin{example} 1. $\O(X)$ is an obvious example of a left $D$-module on $X$. 
Also, $\Omega(X)$ (the space of top differential forms on $X$) 
is naturally a right $D$-module on $X$, via $\rho(L)=L^*$ (the adjoint differential operator to $L$
with respect to the ``integration pairing'' between functions and top forms).
More precisely, $f^*=f$ for $f\in \O(X)$, and $L_v^*$ is the action of the vector field $-v$ 
on top forms (by Lie derivative). Finally, $D(X)$ is both a left and a right $D$-module on $X$.  

2. Suppose $\bold k=\Bbb C$, and $f$ is a holomorphic function defined on some open set in $X$ (in the usual topology). 
Then $M(f):=D(X)f$ is a left $D$-module. 
We have a natural surjection $D(X)\to M(f)$
whose kernel is the left ideal generated by the linear differential equations satisfied by $f$. 
E.g. 
$M(1)=\O(X)=D(X)/D(X){\rm Vect}(X)$, $M(x^s)=D(\Bbb C)/D(\Bbb C)(x\partial-s)$ if $s\notin \Bbb Z_{\ge 0}$, $M(e^x)=D(\Bbb C)/D(\Bbb C)(\partial -1)$. 
Similarly, if $\xi$ is a distribution (e.g., a measure) then $\xi\cdot D(X)$ 
is a right $D$-module. For instance, $\delta\cdot D(\Bbb C)= D(\Bbb C)/x D(\Bbb C)$, where $\delta$ is the delta-measure on the line.    
\end{example} 

\begin{exercise}
Show that if $X$ is connected then $\O(X)$ is a simple $D(X)$-module. Deduce that for any nonzero regular function $f$ on $X$, $M(f)=\O(X)$.
\end{exercise}

\subsection{$D$-modules on non-affine varieties} 

Now assume that $X$ is a smooth variety which is not necessarily affine. Recall 
that a {\bf quasicoherent sheaf} on $X$ is a sheaf $M$ of $\O_X$-modules (in Zariski topology) such that 
for any affine open sets $U\subset V\subset X$ the restriction map induces an isomorphism of $\O(U)$-modules $\O(U)\otimes_{\O(V)}M(V)\cong M(U)$. Exercise \ref{ex1}(5) implies that there exists a canonical quasicoherent sheaf of algebras $D_X$ on $X$ such that $\Gamma(U,D_X)=D(U)$ for any affine open set $U\subset X$. This sheaf is called the {\bf sheaf of differential operators on $X$}.

\begin{definition} A {\bf left} (respectively, {\bf right}) {\bf $D$-module} on $X$ is a quasicoherent 
sheaf of left (respectively, right) $D_X$-modules. The categories of left (respectively, right)
$D$-modules on $X$ (with obviously defined morphisms) are denoted by ${\mathcal{M}}_l(X)$ and ${\mathcal{M}}_r(X)$.    
\end{definition}

It is clear that these are abelian categories. 
We will mostly use the category ${\mathcal{M}}_l(X)$ and denote it shortly by $\M(X)$. 

Note that if $X$ is affine, this definition is equivalent to the previous one (by taking global sections). 

As before, the basic examples are $\O_X$ (a left $D$-module), $\Omega_X$ (a right $D$-module), $D_X$ (both a left and a right $D$-module). 

We see that the notion of a $D$-module on $X$ is local. For this reason, many questions about $D$-modules are local and reduce to the case of affine varieties. 

\subsection{Connections} The definition of a $D_X$-module can be reformulated in terms of connections on an $\O_X$-module. Namely, in differential geometry we have a theory of connections on vector bundles. An algebraic vector bundle on $X$ is the same thing as a coherent, locally free $\O_X$-module. It turns out that the usual definition of a connection, when written algebraically, 
makes sense for any $\O_X$-module (i.e., quasicoherent sheaf), not necessarily coherent or locally free. 

Namely, let $X$ be a smooth variety and $\Omega^i_X$ be the $\O_X$-module of differential $i$-forms on $X$.  

\begin{definition} A {\bf connection} on an $\O_X$-module $M$ 
is a $\bold k$-linear morphism of sheaves $\nabla: M\to M\otimes_{\O_X}\Omega^1_X$ 
such that 
$$
\nabla(fm)=f\nabla(m)+m\otimes df
$$ 
for local sections $f$ of $\O_X$ and $m$ of $M$.
\end{definition} 

Thus for each $v\in {\rm Vect}(X)$ we have the operator of covariant derivative $\nabla_v: M\to M$ given on local sections by $\nabla_v(m):=\nabla(m)(v)$. 

\begin{exercise} Let $X$ be an affine variety. Show that the operator $m\mapsto ([\nabla_v,\nabla_w]-\nabla_{[v,w]})m$ is $\O(X)$-linear in $v,w,m$.  
\end{exercise} 

Given a connection $\nabla$ on $M$, define the $\O_X$-linear map 
$$
\nabla^2: M\to M\otimes_{\O_X}\Omega^2_X
$$ 
given on local sections by 
$$
\nabla^2(m)(v,w):=([\nabla_v,\nabla_w]-\nabla_{[v,w]})m.
$$
This map is called the {\bf curvature} of $\nabla$. We say that $\nabla$ is {\bf flat} 
if its curvature vanishes: $\nabla^2=0$. 

\begin{proposition}
A left $D_X$-module is the same thing as an $\O_X$-module with a flat connection. 
\end{proposition}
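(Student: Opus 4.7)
The plan is to construct functors in both directions between the category of left $D_X$-modules and the category of $\O_X$-modules with flat connection, and verify they are mutually inverse. Since both notions are local in nature (a connection is defined by a sheaf morphism to $M\otimes_{\O_X}\Omega^1_X$, and a $D_X$-module structure is determined by its restrictions to affine opens), it suffices to work on an affine open $U\subset X$ and then check that the constructions are compatible with restriction to smaller affine opens, hence glue to yield functors on the whole category.

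So first I would fix an affine open $U\subset X$ and, given a left $D(U)$-module $M$, define $\nabla\colon M\to M\otimes_{\O(U)}\Omega^1(U)$ by $\nabla_v(m) := L_v\cdot m$ for $v\in {\rm Vect}(U)$. The Leibniz rule $\nabla(fm)=f\nabla(m)+m\otimes df$ follows directly from the relation $[L_v,f]=v(f)$ in $D(U)$, and $\O(U)$-linearity in $v$ follows from $L_{fv}=fL_v$. Flatness $\nabla^2=0$ is then exactly the relation $[L_v,L_w]=L_{[v,w]}$. Conversely, given $(M,\nabla)$ with $\nabla$ flat, I would define a $D(U)$-action by letting $f\in\O(U)$ act by multiplication and $L_v$ act by $\nabla_v$. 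To see this extends to a well-defined action of $D(U)$, I invoke the presentation of $D(U)$ by generators $\O(U)$ and $\lbrace L_v\rbrace_{v\in{\rm Vect}(U)}$ with the relations \eqref{rela} from Exercise \ref{ex1}(4); each relation translates respectively into: $M$ is an $\O(U)$-module, the Leibniz rule for $\nabla$, $\O(U)$-linearity of $\nabla$ in $v$, and flatness. These two constructions are manifestly inverse on the level of objects, and on morphisms both sides pick out the $\O_X$-linear maps commuting with the action of each $L_v$, i.e., commuting with $\nabla$.

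To globalize, I would observe that for an inclusion of affine opens $U\subset V$, the multiplication isomorphism $\O(U)\otimes_{\O(V)}D(V)\cong D(U)$ of Exercise \ref{ex1}(5) ensures the $D$-module structure is determined by restriction, and the connection $\nabla$ restricts naturally because $\Omega^1_X$ is a quasicoherent sheaf. Thus both assignments are compatible with restriction and define inverse equivalences of the corresponding sheaf-theoretic categories over $X$.

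The main obstacle, modest as it is, is verifying that the two sets of data really are interdefinable via the stated formulas — concretely, that the flatness identity matches the bracket relation and that the quasicoherence/$\O_X$-module axioms on $M$ match the $\O_X$-linearity axioms for $\nabla$. Once the generators-and-relations presentation from Exercise \ref{ex1}(4) is invoked, this is a direct translation, and no further analytic or geometric input is required; the construction is entirely formal.
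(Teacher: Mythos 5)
Your proposal is correct and follows essentially the same route as the paper: both use the generators-and-relations presentation of $D(U)$ from Exercise~\ref{ex1}(4), set $\nabla_v := L_v$, and identify the flatness condition with the relation $[L_v,L_w]=L_{[v,w]}$. The paper leaves the local-to-global gluing implicit, whereas you spell it out, but that is the only difference.
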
 

\begin{proof} Given an $\O_X$-module $M$ with a flat connection $\nabla$, we extend the $\O_X$-action to a $D_X$-action by $\rho(L_v)=\nabla_v$. 
The first three relations of \eqref{rela} then hold for any connection, while the last relation holds due to flatness of $\nabla$. 
Conversely, the same formula can be used to define a flat connection $\nabla$ on any $D_X$-module $M$. 
\end{proof} 

\begin{exercise} Show that if a left $D$-module $M$ on $X$ is $\O$-coherent (i.e. a coherent sheaf on $X$) then it is locally free, i.e., is a vector bundle with a 
flat connection, and vice versa.   
\end{exercise} 

\subsection{Direct and inverse images} 

Let $\pi: X\to Y$ be a morphism of smooth affine varieties. This morphism gives rise to a homomorphism 
$\pi^*: \O(Y)\to \O(X)$, making $\O(X)$ an $\O(Y)$-module, and a morphism of vector bundles $\pi_*: TX\to \pi^*TY$. 
This induces a map on global sections $\pi_*: {\rm Vect}(X)\to \O(X)\otimes_{\O(Y)}{\rm Vect}(Y)$. 

Define
$$
D_{X\to Y}=\O(X)\otimes_{\O(Y)}D(Y).
$$
This is clearly a right $D(Y)$-module. Let us show that it also has a commuting left $D(X)$-action. 
The left action of $\O(X)$ is obvious, so it remains to construct a flat connection. Given a vector field 
$v$ on $X$, let 
\begin{equation}\label{e2}
\nabla_v(f\otimes L)=v(f)\otimes L+f\pi_*(v)L,\ f\in \O(X),\ L\in D(Y), 
\end{equation}
where we view $\pi_*(v)$ as an element of $D_{X\to Y}$. This is well defined since for $a\in \O(Y)$ one has 
$[\pi_*(v),a]=v(a)\otimes 1$. 

\begin{exercise} Show that this defines a flat connection on $D_{X\to Y}$. 
\end{exercise} 

Now we define the {\bf inverse image functor} $\pi^\bullet: {\mathcal{M}}_l(Y)\to {\mathcal{M}}_l(X)$ by 
$$
\pi^\bullet(N)=D_{X\to Y}\otimes_{D(Y)}N
$$ 
and the {\it direct image functor}
$\pi_\bullet: {\mathcal{M}}_r(X)\to {\mathcal{M}}_r(Y)$ by 
$$
\pi_\bullet(M)=M\otimes_{D(X)}D_{X\to Y}.
$$ 
Note that at the level of quasicoherent sheaves, $\pi^\bullet$ is the usual inverse image. 

These functors are right exact and compatible with compositions. Also by definition, $D_{X\to Y}=\pi^\bullet(D(Y))$. 

Note that $\pi^\bullet(N)=\O(X)\otimes_{\O(Y)}N$ as an $\O(X)$-module (i.e., the usual pullback of $\O$-modules), 
with the connection defined by the formula similar to \eqref{e2}: 
$$
\nabla_v(f\otimes m)=v(f)\otimes m+f\nabla_{\pi_*(v)}(m),\ f\in \O(X),\ m\in M.
$$
This means that the definition of $\pi^\bullet$ is local both on $X$ and on $Y$. 
On the contrary, the definition of $\pi_\bullet$ is local only on $Y$ but not on $X$. 
For example, if $Y$ is a point and $\dim X=d$ then 
$\pi_\bullet\Omega_X=H^d(X,\bold k)$, the algebraic de Rham cohomology of $X$ of degree $d$. 

Thus we can use the same definition locally to define $\pi^\bullet$ for any morphism of smooth varieties, and $\pi_\bullet$ for an affine morphism (i.e. such that $\pi^{-1}(U)$ is affine for any affine open set $U\subset Y$), for example, a closed embedding. These functors are right exact, so one can also consider the corresponding derived functors 
$\pi_*:=L\pi_\bullet$ and $L\pi^\bullet$. In fact, it is convenient to define 
$\pi^!:=L\pi^\bullet[d]$ where $d:=\dim X-\dim Y$ (assuming that $X,Y$ have pure dimension).   

On the other hand, due to the non-local nature of direct image with respect to $X$ the correct functor $\pi_*$ for a non-affine morphism is not the derived functor of anything and can be defined only in the derived category. 

\section{\bf The Beilinson-Bernstein Localization Theorem} 

\subsection{The Beilinson-Bernstein localization theorem for the zero infinitesimal character} 

Let $\g$ be a complex semisimple Lie algebra and $U_0$ be the maximal quotient of $U(\g)$ 
corresponding to the infinitesimal character $\chi_\rho=\chi_{-\rho}$ of the trivial representation of $\g$. Recall that 
${\rm gr}(U_0)=\O(\N)$. Let $G$ be the corresponding simply connected complex group and $\mathcal F$ 
the flag variety of $G$; thus $\mathcal F\cong G/B$ for a Borel subgroup $B\subset G$. Let $D(\mathcal F)$
be the algebra of global differential operators on $\mathcal F$; it is clear that ${\rm gr}D(\mathcal F)\subset \O(T^*\mathcal F)$. Also, we have a natural filtration-preserving action map 
$a: U(\g)\to D(\mathcal F)$, induced by the Lie algebra homomorphism 
$\g\to {\rm Vect}(\mathcal F)$. 

\begin{theorem}\label{bebe} (Beilinson-Bernstein, \cite{BB}) (i) The homomorphism \linebreak $a: U(\g)\to D(\mathcal F)$ factors through a homomorphism $a_0: U_0\to D(\mathcal F)$.

(ii) One has ${\rm gr}(a_0)=p^*$ where $p$ is the Springer map $T^*\mathcal F\to \N$. 

(iii) ${\rm gr}D(\mathcal F)=\O(T^*\mathcal F)$ and $a_0$ is an isomorphism. 
\end{theorem}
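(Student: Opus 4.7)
The plan is to prove the three assertions by first doing all the work at the level of associated graded algebras and then lifting to the filtered level. The conceptual input is the identification of the moment map for the Hamiltonian $G$-action on $T^*\mathcal F$ with the Springer map $p$. Concretely, for a point $(\mathfrak b, x) \in T^*\mathcal F$, with $x \in (\g/\mathfrak b)^* \cong \mathfrak b^\perp$, the moment map sends $(\mathfrak b, x)$ to the functional $v \mapsto x(v) \in \g^*$; the Killing form identifies $\mathfrak b^\perp$ with the nilradical of $\mathfrak b$ and turns this moment map into $p$. The principal symbol of the vector field on $\mathcal F$ generated by $v \in \g$ is then the fiberwise linear function $(\mathfrak b, x) \mapsto x(v)$, so the filtered map $a : U(\g) \to D(\mathcal F)$ has associated graded equal to $p^* = \mu^*: S\g \to \mathcal O(T^*\mathcal F)$.

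This immediately gives part (ii) once (i) is in hand, and it gives essentially all of (iii). Indeed, by Theorem~\ref{isograd} the map $p^*: \mathcal O(\mathcal N) \to \mathcal O(T^*\mathcal F)$ is an isomorphism of graded algebras, so ${\rm gr}(a)$ factors through the quotient $\mathcal O(\mathcal N) = {\rm gr}(U_0)$ and its image is all of $\mathcal O(T^*\mathcal F)$; combined with the automatic inclusion ${\rm gr}\, D(\mathcal F) \subset \mathcal O(T^*\mathcal F)$, this forces ${\rm gr}\, D(\mathcal F) = \mathcal O(T^*\mathcal F)$ and exhibits ${\rm gr}(a_0)$ as the isomorphism $p^*$. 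A standard minimal-degree argument then upgrades ${\rm gr}(a_0)$ isomorphism to $a_0$ isomorphism: for injectivity, a nonzero $u \in U_0$ of minimal filtration degree in $\ker a_0$ would have nonzero principal symbol mapping to $0$, contradicting injectivity of $p^*$; surjectivity is proved dually by lifting symbols and inducting on the filtration degree.

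What remains is part (i), and this is where I would put the most care. The strategy is to first prove $D(\mathcal F)^G = \mathbb C$, and then observe that the image of $Z(\g)$ in $D(\mathcal F)$ is automatically $G$-invariant, hence scalar; the specific scalar is then pinned down by evaluating on $\Gamma(\mathcal F, \mathcal O_{\mathcal F}) = \mathbb C$, which is the trivial $\g$-module and therefore has central character $\chi_\rho$ in the paper's normalization (since $L_0 = L_{\rho - \rho}$). To prove $D(\mathcal F)^G = \mathbb C$: a nonzero $G$-invariant differential operator of positive order would have a nonzero $G$-invariant principal symbol in $\mathcal O(T^*\mathcal F)^G_{>0}$. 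Via the graded isomorphism $p^*$ from Theorem~\ref{isograd}, this equals $\mathcal O(\mathcal N)^G_{>0}$. But Kostant's theorem describes $\mathcal N$ as ${\rm Spec}$ of the coinvariant algebra of $S\g$, and the $G$-invariants in the space of $\g$-harmonic polynomials sit only in degree zero (i.e., $\mathcal O(\mathcal N)^G = \mathbb C$), yielding the desired contradiction.

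The hardest point will be the correct book-keeping in the last paragraph: making sure the grading on $\mathcal O(T^*\mathcal F)$ by fiber degree matches the grading on $\mathcal O(\mathcal N)$ inherited from $S\g$ under $p^*$, and verifying that $\mathcal O(\mathcal N)^G$ is concentrated in degree zero via the decomposition $S\g = \mathcal H \otimes (S\g)^G$ furnished by Kostant. The moment-map computation and the lifting from graded to filtered are by contrast largely formal once the Springer resolution and Kostant's theorem are in hand.
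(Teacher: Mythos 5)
Your proposal is correct, and for parts (ii) and (iii) it follows essentially the same route as the paper: identify ${\rm gr}(a)$ with $p^*$ via the moment map (the paper simply asserts this after checking degrees $0$ and $1$; you spell out the moment map picture, which is a sound way to see it), then invoke Theorem~\ref{isograd} and the standard filtered-versus-graded bootstrap to get the isomorphism.

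Where you genuinely depart from the paper is in part (i). The paper argues directly: since $z\in Z(\g)$ kills the trivial representation, its image in the Verma module $M_0 = U(\g)/U(\g)\b$ vanishes, so $z\in U(\g)\b$; then for a rational function $f$ on $G$ that is right $B$-invariant, $R_z f = \sum_i R_{c_i} R_{b_i} f = 0$. You instead establish the invariant-theoretic fact $D(\mathcal F)^G=\Bbb C$ — using ${\rm gr}\,D(\mathcal F)^G \hookrightarrow \O(T^*\mathcal F)^G \cong \O(\N)^G = \Bbb C$, the last equality being a consequence of Kostant's theorem (or, more simply, of $\N$ having a dense $G$-orbit) — and then observe that $a(z)$ is $G$-invariant, hence a scalar, and that scalar is read off by acting on $\Gamma(\mathcal F,\O_\mathcal F)=\Bbb C$. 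Both arguments are valid. Your route is arguably more structural: it isolates the cleaner intermediate statement $D(\mathcal F)^G=\Bbb C$, which is useful independently. The paper's route is more self-contained (it needs only the annihilator of the trivial module and elementary manipulations of invariant vector fields, not the full strength of Theorem~\ref{isograd} or Kostant's freeness theorem, which you invoke for part (i) in addition to parts (ii)--(iii)). In the end both approaches rest on the same circle of ideas, but you have effectively traded a short ad hoc computation for a reusable vanishing statement about invariant differential operators on the flag variety.
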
 

\begin{proof} (i) Let $z\in Z(\g)$ be an element acting by zero in the trivial representation of $\g$. 
Our job is to show that for any rational function $f\in \Bbb C(\mathcal F)$ we have 
$a(z)f=0$. Writing $\mathcal F$ as $G/B$, we may view $f$ as a rational function on $G$ 
such that $f(gb)=f(g)$, $b\in B$. The function $a(z)f$ on $G$ is the result of action on $f$ of the right-invariant differential operator $L_z$ corresponding to $z$: $a(z)f=L_zf$. Since $z$ is central, this operator is also left-invariant: $L_z=R_z$. Since $z$ acts by zero on the trivial representation, using the Harish-Chandra isomorphism, we may write $z$ as $\sum_i c_ib_i$, where $b_i\in \b:={\rm Lie}(B)$ and $c_i\in U(\g)$. 
Thus $R_z=\sum_i R_{c_i}R_{b_i}$. But $R_{b_i}f=0$ since $f$ is invariant under right translations by $B$. Thus $R_zf=0$ and we are done.

(ii) It suffices to check the statement in degrees $0$ and $1$, where it is straightforward. 

(iii) The statement follows from (i), (ii) and the fact that $p^*$ is an isomorphism (Theorem \ref{isograd}).   
\end{proof} 

The isomorphism $a_0$ gives rise to two functors: the functor 
of global sections $\Gamma: \M(\mathcal F)\to D(\mathcal F)-{\rm mod}\cong U_0-{\rm mod}$ 
and the functor of localization ${\rm Loc}: U_0-{\rm mod}\cong D(\mathcal F)-{\rm mod}\to 
\M(\mathcal F)$ given by ${\rm Loc}(M)(U):=D(U)\otimes_{D(\mathcal F)}M$ for an affine open set 
$U\subset \mathcal F$. Note that by definition the functor ${\rm Loc}$ is left adjoint to $\Gamma$. 

The following theorem is a starting point for the geometric representation theory of semisimple Lie algebras (in particular, for the original proof of the Kazhdan-Lusztig conjecture). 

\begin{theorem}\label{locth} (Beilinson-Bernstein localization theorem, \cite{BB}) The functors $\Gamma$ and ${\rm Loc}$ 
are mutually inverse equivalences. Thus the category $U_0-{\rm mod}$ is canonically equivalent 
to the category of $D$-modules on the flag variety $\mathcal F$. 
\end{theorem}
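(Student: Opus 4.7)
The plan is to show $(\mathrm{Loc}, \Gamma)$ is an equivalence by verifying that both the adjunction unit $\eta: \mathrm{Id} \to \Gamma \circ \mathrm{Loc}$ and counit $\varepsilon: \mathrm{Loc} \circ \Gamma \to \mathrm{Id}$ are natural isomorphisms. By Theorem~\ref{bebe}(iii), $\mathrm{Loc}(U_0) = D_{\mathcal F}$ and $\Gamma(D_{\mathcal F}) = U_0$, so $\eta_{U_0}$ is the identity and $\eta$ is iso on every free $U_0$-module. To propagate this to all of $U_0$-mod, and to obtain $\varepsilon$ is iso, I must establish two pillars: (A) exactness of $\Gamma$ on $\M(\mathcal F)$ (``$D$-affineness of $\mathcal F$''), and (B) conservativity of $\Gamma$, i.e., $\Gamma(M) = 0$ implies $M = 0$. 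Granting (A), a five-lemma argument applied to a presentation $U_0^{\oplus J} \to U_0^{\oplus I} \to N \to 0$ shows $\eta_N$ is an iso for every $N$; granting (A) and (B), $\varepsilon_M$ is an iso since the kernel and cokernel of $\varepsilon_M$ lie in $\ker \Gamma = 0$.

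For (A), reduce via filtered colimits to showing $H^i(\mathcal F, M) = 0$ for $i > 0$ and $M \in \M(\mathcal F)$ coherent. Every coherent $D$-module admits a \emph{good filtration} $F_\bullet M$ by coherent $\O_{\mathcal F}$-subsheaves compatible with the order filtration of $D_{\mathcal F}$; the associated graded $\mathrm{gr}\, M$ is coherent over $\mathrm{gr}\, D_{\mathcal F} = \pi_* \O_{T^*\mathcal F}$ (for $\pi: T^*\mathcal F \to \mathcal F$ the projection) and so lifts to a coherent sheaf $\widetilde{\mathrm{gr}\, M}$ on $T^*\mathcal F$. The long exact sequences from $0 \to F_{k-1} \to F_k \to \mathrm{gr}_k M \to 0$, combined with $M = \varinjlim F_k$, reduce the problem to the vanishing of $H^i(T^*\mathcal F, \widetilde{\mathrm{gr}\, M})$ for $i > 0$. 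Since the Springer resolution $p: T^*\mathcal F \to \N$ is proper and $\N$ is affine, this in turn is equivalent to $R^i p_* \widetilde{\mathrm{gr}\, M} = 0$ for $i > 0$.

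The main obstacle is precisely this vanishing, which fails for arbitrary coherent sheaves on $T^*\mathcal F$ and so requires the extra structure inherited from a $D$-module. The cleanest route is a positivity argument combined with translation functors. For a regular dominant $\lambda \in P_+$ the line bundle $\mathcal L_\lambda$ is ample on $\mathcal F$, and Theorem~\ref{bw} gives $H^i(\mathcal F, \mathcal L_{-\mu}) = 0$ for $\mu \in P_+$ and $i > 0$. Applying Serre vanishing to each coherent piece $F_k$, together with compatibility of the good filtration with ${\otimes}\, \mathcal L_{N\lambda}$, one deduces $H^i(\mathcal F, M \otimes \mathcal L_{N\lambda}) = 0$ for $N \gg 0$. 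Tensoring by $\mathcal L_{N\lambda}$ is the geometric shadow of the translation functor between central characters $\chi_\rho$ and $\chi_{N\lambda+\rho}$ from Section~\ref{trafu}, which by Theorem~\ref{equi} is an equivalence of categories since $\rho$ and $N\lambda+\rho$ are both regular dominant; transporting vanishing across this equivalence establishes (A).

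For (B), if $M \ne 0$ is coherent, choose $\lambda$ dominant and $N \gg 0$ such that the twist $M \otimes \mathcal L_{N\lambda}$ is globally generated (Serre, applied to any coherent $\O_{\mathcal F}$-submodule $F_k \ne 0$ in a good filtration of $M$); then $\Gamma(\mathcal F, M \otimes \mathcal L_{N\lambda}) \ne 0$. Pulling back through the translation-functor equivalence of Theorem~\ref{equi} yields $\Gamma(\mathcal F, M) \ne 0$, contradicting the hypothesis. This proves (B), and combined with (A) completes the proof that $\mathrm{Loc}$ and $\Gamma$ are mutually inverse equivalences.
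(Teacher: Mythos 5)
The overall architecture of your argument is right, and it matches the standard Beilinson--Bernstein strategy: reduce the theorem to (A) exactness of $\Gamma$ on $\M(\mathcal F)$ and (B) conservativity of $\Gamma$, then use a five-lemma on free presentations to get the unit isomorphism and the exactness-plus-conservativity argument to kill the kernel and cokernel of the counit. Likewise, the ingredients you name (good filtrations, the Springer resolution $p:T^*\mathcal F\to\mathcal N$ over the affine base $\mathcal N$, relative Serre vanishing, translation functors) are exactly the right ones.

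The genuine gap is in the phrase ``transporting vanishing across this equivalence establishes (A)'' (and the parallel step in (B)). What is actually needed is a commuting square: $H^i(\mathcal F, M\otimes\mathcal L_{-\mu})\cong T\bigl(H^i(\mathcal F,M)\bigr)$, where $T$ is the translation functor and $\mu\in P_+$ is large. Theorem~\ref{equi} gives an equivalence of abelian categories of $\g$-modules; it says nothing about how that equivalence interacts with the sheaf-cohomology functor $H^i(\mathcal F,-)$, which lives on the $D$-module side. Establishing the compatibility is precisely the key lemma of Beilinson--Bernstein, and it is the only place where the (anti)dominance of the central character $\chi_{-\rho}$ actually enters. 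Concretely: for $V=L_\mu^*$, the $\O_{\mathcal F}$-module $V\otimes M$ with its $G$-equivariant connection is filtered by $B$-stable subbundles with graded pieces $\mathcal L_\nu\otimes M$, $\nu\in P(V)$; one checks that among the generalized $Z(\g)$-central characters of $\Gamma(V\otimes M)=V\otimes\Gamma(M)$, the character $\chi_{-\mu-\rho}$ is contributed \emph{only} by the piece $\mathcal L_{-\mu}\otimes M$, because $\chi_{\nu-\rho}=\chi_{-\mu-\rho}$ forces $\nu=-\mu$ when $-\rho$ is regular antidominant. This exhibits $\mathcal L_{-\mu}\otimes M$ as a direct summand of $V\otimes M$ inside the $\g$-equivariant category, and since $V\otimes-$ and idempotent projections commute with $H^i$, the square commutes. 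Without this lemma your argument is circular, since identifying $\otimes\,\mathcal L_{-\mu}$ with the translation functor on global sections implicitly presupposes the equivalence you are trying to prove. Two smaller corrections: for $\lambda\in P_+$ regular the ample bundle is $\mathcal L_{-\lambda}$, not $\mathcal L_\lambda$ (by Theorem~\ref{bw}, $\mathcal L_\lambda$ then has no global sections), and the vanishing of higher cohomology of $\mathcal L_{-\mu}$ for $\mu\in P_+$ is Borel--Weil--Bott, not Theorem~\ref{bw}. Finally, Serre vanishing applied termwise to a good filtration produces a bound $N_k$ that a priori grows with $k$; obtaining one twist $\mathcal L_{-\mu}$ killing $H^{>0}$ of all of $M$ genuinely requires passing to $T^*\mathcal F$ and using properness of $p$ over the affine $\mathcal N$, so your first reduction through the Springer resolution should be retained rather than abandoned as a ``dead end.''
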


We will not give a proof of this theorem here. 

Theorem \ref{locth} motivates the following definition. 

\begin{definition} A smooth algebraic variety $X$ is said to be \linebreak {\bf D-affine} 
if the global sections functor $\Gamma: \M(X)\to D(X)-{\rm mod}$ 
is an equivalence (hence ${\rm Loc}$ is its inverse). 
\end{definition} 

It is clear that any affine variety is $D$-affine. Also we have 

\begin{corollary} Partial flag varieties of semisimple algebraic groups are $D$-affine. 
\end{corollary}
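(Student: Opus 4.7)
The plan is to deduce $D$-affineness of $G/P$ from the already-established $D$-affineness of $G/B$ by using the natural projection $\pi\colon G/B\to G/P$. Recall that $D$-affineness of a smooth variety $X$ amounts to two conditions: (a) $H^i(X,M)=0$ for all $i>0$ and all $M\in\M(X)$, and (b) any $M\in\M(X)$ with $\Gamma(X,M)=0$ is zero. Since Theorem \ref{locth} already gives these two properties for $\mathcal F=G/B$, the task is to transfer them across $\pi$.

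First I would record the geometric properties of $\pi$. The map $\pi$ is smooth, projective and surjective, with fibers isomorphic to $P/B$, which is itself a (full) flag variety of the Levi of $P$. A standard computation (via the Bruhat decomposition, or as a consequence of Borel--Weil applied to $P$) shows $H^i(P/B,\O)=0$ for $i>0$ and $H^0(P/B,\O)=\Bbb C$. By proper base change this yields $R\pi_*\O_{G/B}=\O_{G/P}$.

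Next, for $M\in\M(G/P)$, form the $D$-module inverse image $\pi^! M\in\M(G/B)$, whose underlying $\O_{G/B}$-module is $\pi^*M$. By the projection formula together with $R\pi_*\O_{G/B}=\O_{G/P}$, one obtains
$$R\Gamma(G/B,\pi^!M)=R\Gamma(G/P,R\pi_*\pi^*M)=R\Gamma(G/P,M\otimes^L R\pi_*\O_{G/B})=R\Gamma(G/P,M).$$
Applying $D$-affineness of $G/B$ to $\pi^!M$, the left side is concentrated in degree $0$; hence $H^i(G/P,M)=0$ for $i>0$, giving condition (a). For condition (b), suppose $\Gamma(G/P,M)=0$. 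Then $\Gamma(G/B,\pi^!M)=0$, so by $D$-affineness of $G/B$ we get $\pi^!M=0$, i.e. $\pi^*M=0$ as a quasicoherent sheaf; since $\pi$ is faithfully flat this forces $M=0$.

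The only step requiring any care is the application of the projection formula and the identification $R\pi_*\O_{G/B}=\O_{G/P}$; this is where the special structure of $\pi$ (smooth proper morphism whose fibers are flag varieties with trivial higher cohomology of $\O$) is essential, and is the main geometric input beyond Theorem \ref{locth}. Everything else is a formal transfer argument.
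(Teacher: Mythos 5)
Your proof is correct, but it takes a genuinely different route from the paper, which states the corollary without any argument: the implicit intent there is that the (omitted) proof of the Beilinson--Bernstein localization theorem for $\mathcal F=G/B$ runs verbatim for any $G/P$. Your argument instead \emph{deduces} $D$-affineness of $G/P$ from that of $G/B$ via the smooth projective faithfully flat morphism $\pi\colon G/B\to G/P$, using $R\pi_*\O_{G/B}=\O_{G/P}$ (the fibers $P/B$ are flag varieties of the reductive Levi of $P$, so $H^{>0}(P/B,\O)=0$ and $H^0=\Bbb C$) together with the projection formula to transfer cohomology vanishing, and faithful flatness to transfer conservativity of $\Gamma$; this has the virtue of being self-contained modulo the full flag case and not requiring one to re-open the Beilinson--Bernstein proof. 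Two minor remarks: the phrase ``proper base change'' is slightly off --- what you want is cohomology and base change for coherent sheaves, or simply that $\pi$ is Zariski-locally a product $U\times P/B$ because $B\subset P$ admits Zariski-local sections of $G\to G/P$; and the equivalence of $D$-affineness with your conditions (a) and (b) is itself a nontrivial theorem of Beilinson--Bernstein that is not recorded in the paper, so it should be cited as such rather than recalled as if definitional --- though relying on it is reasonable given that the paper also omits the proof of the localization theorem itself.
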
 

\subsection{Twisted differential operators and $D$-modules}  

We would now like to generalize the localization theorem to nonzero infinitesimal characters. 
To do so, we have to replace usual differential operators and $D$-modules by twisted ones. 

Let $T$ be an algebraic torus with character lattice $P:=\Hom(T,\Bbb C^\times)$ and $\widetilde X$ be a principal $T$-bundle over a smooth algebraic variety $X$ (with $T$ acting on the right). In this case, given $\lambda\in P$, we can define the line bundle $\mathcal L_\lambda$ on $X$ whose total space is $\widetilde X\times_T\Bbb C_\lambda$, where $\Bbb C_\lambda$ is the 1-dimensional representation of $T$ corresponding to $\lambda$, and we can consider the sheaf $D_{\lambda,X}$ 
of differential operators acting on local sections of $\mathcal L_\lambda$ (rather than functions). 

Moreover, unlike the bundle $\mathcal L_\lambda$, the sheaf $D_{\lambda,X}$ makes sense 
not just for $\lambda\in P$ but more generally for $\lambda\in P\otimes_{\Bbb Z}\Bbb C$. Namely, 
assuming for now that $\lambda\in P$, we may think of rational sections of $\mathcal L_\lambda$ 
as rational functions $F$ on $\widetilde X$ such that $F(yt)=\lambda(t)^{-1}F(y)$ for $y\in \widetilde X$. 
A differential operator $D$ on $\widetilde X$ may be applied to such a function, and if $\xi\in \mathfrak{t}:={\rm Lie}(T)$ then the first order differential operator $R_\xi-\lambda(\xi)$ acts by zero: $(R_\xi-\lambda(\xi))F=0$. 
Thus given an affine open set $U\subset X$ with preimage $\widetilde U\subset \widetilde X$,  
the space 
$$
D_\lambda(U):=(D(\widetilde U)/D(\widetilde U)(R_\xi-\lambda(\xi),\xi\in \mathfrak{t}))^T
$$ 
is naturally an associative algebra  which acts on rational sections of $\mathcal L_\lambda$ (check it!). Moreover, 
it is easy to check that $D_\lambda(U)=D_{\lambda,X}(U)$. Now it remains to note that 
the definition of $D_\lambda(U)$ does not use the integrality of $\lambda$, thus makes sense 
for all $\lambda\in P\otimes_{\Bbb Z}\Bbb C$. 

Thus for any $\lambda\in P\otimes_{\Bbb Z}\Bbb C$ we obtain a quasicoherent sheaf of algebras $D_{\lambda,X}$ on $X$ which is called the sheaf of {\bf $\lambda$-twisted differential operators}. If $\lambda=0$, this sheaf coincides with the sheaf $D_X$ of usual differential operators, and in general it has very similar properties, for example ${\rm gr}(D_{\lambda,X}(U))=\O(T^*U)$ for any affine open set $U\subset X$. A quasicoherent sheaf on $X$ with the structure of a (left or right) $D_{\lambda,X}$-module is called a (left or right) {\bf $\lambda$-twisted $D$-module} on $X$. For example, if $\lambda\in P$ then $\mathcal L_\lambda$ is a left $D_{\lambda,X}$-module. The category of such modules is denoted by $\M^\lambda(X)$ (of course, it depends on the principal bundle $\widetilde X$ but we do not indicate it in the notation). Note that for $\beta\in P$ 
we have an equivalence $\M^\lambda(X)\cong \M^{\lambda+\beta}(X)$ defined by tensoring with $\mathcal L_\beta$. 

\begin{example} Let $\mathcal L$ be a line bundle on $X$ 
and $c\in \bold k$. Let $\widetilde X$ be the subset of nonzero vectors 
in the total space of $\mathcal L$. We have a natural action of $T:=\bold k^\times$ 
on $\widetilde X$ by dilations, and $c$ defines a character of ${\rm Lie}(T)$. 
Thus we can define the sheaf $D_{c,L,X}$ of twisted differential 
operators on $X$, and if $c\in \Bbb Z$ then $D_{c,L,X}=D_X(L^{\otimes c})$ 
is the sheaf of differential operators on $L^{\otimes c}$. For example, 
if $\Omega_X$ is the canonical bundle of $X$ then $D_{1,\Omega,X}=D_X(\Omega)$
is naturally isomorphic to the sheaf of usual differential operators with opposite multiplication, $D_X^{\rm op}$. 
\end{example} 

Thus tensoring with $\Omega$ defines a canonical equivalence 
$$
\M_l(X)\cong \M_r(X)
$$ 
(i.e., the sheaf $D_X$ is Morita equivalent, although not in general isomorphic, to $D_X^{\rm op}$). We may therefore not distinguish between these categories any more, identifying them by this equivalence, and can use left or right $D$-modules depending 
on what is more convenient. 

\subsection{The localization theorem for non-zero infinitesimal characters} 
We are now ready to generalize the localization theorem to non-zero infinitesimal characters. 
Let $U_\lambda$ be the maximal quotient of $U(\g)$ corresponding to  
the infinitesimal character $\chi_{\lambda-\rho}$. Recall that ${\rm gr}(U_\lambda)=\O(\N)$. 

Let $\widetilde{\mathcal F}:=G/[B,B]$. We have a right action of $T:=B/[B,B]$ 
on this variety by $y\mapsto yt$, defining the structure of a principal $T$-bundle 
$\widetilde{\mathcal F}\to \mathcal F$. Thus for every $\lambda\in P\otimes_{\Bbb Z}\Bbb C=\h^*$ 
we have a sheaf of $\lambda$-twisted differential operators $D_{\lambda,\mathcal F}=D_\lambda$ on $\mathcal F$. 
For example, if $\lambda\in P$ then $D_\lambda$ is the sheaf of differential operators
acting on sections of the line bundle $\mathcal L_\lambda$ appearing in the Borel-Weil theorem (Theorem \ref{bw}).
Let $D_\lambda(\mathcal F)$
be the algebra of global $\lambda$-twisted differential operators on $\mathcal F$; it is clear that ${\rm gr}D_\lambda(\mathcal F)\subset \O(T^*\mathcal F)$. Also, we have a natural filtration-preserving action map $a: U(\g)\to D_\lambda(\mathcal F)$.

\begin{theorem} (Beilinson-Bernstein) (i) The map 
$$
a: U(\g)\to D_\lambda(\mathcal F)
$$ 
factors through a map $a_\lambda: U_\lambda\to D_\lambda(\mathcal F)$.

(ii) One has ${\rm gr}(a_\lambda)=p^*$ where $p$ is the Springer map $T^*\mathcal F\to \N$. 

(iii) ${\rm gr}D_\lambda(\mathcal F)=\O(T^*\mathcal F)$ and $a_\lambda$ is an isomorphism. 
\end{theorem}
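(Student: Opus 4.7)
The plan is to parallel the proof of Theorem \ref{bebe} for the untwisted case, adapting each of its three parts to the $\lambda$-twisted setting. The essential inputs -- the Harish-Chandra isomorphism, the PBW decomposition, and the fact that the Springer pullback $p^*\colon \O(\mathcal N)\to \O(T^*\mathcal F)$ is an isomorphism (Theorem \ref{isograd}) -- apply unchanged; only the centrality computation in (i) needs genuine modification, to account for the $\rho$-shift between the twist parameter and the infinitesimal character.

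For part (i), I would realize local sections of a $\lambda$-twisted $D$-module as functions $F$ on $\widetilde{\mathcal F}=G/[B,B]$ carrying the appropriate $T$-weight. For $z\in Z(\g)$ with $\chi_{\lambda-\rho}(z)=0$, centrality forces $L_z=R_z$, so the action reduces to a right-invariant computation. Writing $z=HC(z)+z_+$ via the PBW triangular decomposition with $z_+\in U(\g)\n_+$, the term $z_+$ annihilates the weighted functions (right multiplication by $\n_+$ kills them), while $HC(z)\in S\h$ acts as the scalar obtained by evaluating the Harish-Chandra polynomial at the $T$-weight. The definition $D_\lambda(U)=(D(\widetilde U)/D(\widetilde U)(R_\xi-\lambda(\xi)))^T$ is calibrated so that, after incorporating the $\rho$-shift coming from the half-sum of positive roots, this scalar equals $HC(z)(\lambda-\rho)=\chi_{\lambda-\rho}(z)=0$, yielding the factorization $a_\lambda\colon U_\lambda\to D_\lambda(\mathcal F)$.

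For part (ii), the symbol of $a_\lambda(x)$ for $x\in\g$ is determined entirely by the first-order behaviour of the left-invariant $\g$-action on $\mathcal F$, and the twist modifies only the zeroth-order correction, leaving the principal symbol intact. Thus ${\rm gr}_1(a_\lambda)|_\g={\rm gr}_1(a_0)|_\g$, and since $\O(\mathcal N)={\rm gr}(U_\lambda)$ is generated by the image of $\g$, we conclude ${\rm gr}(a_\lambda)=p^*$ by comparison with Theorem \ref{bebe}(ii). Part (iii) then follows formally: the identification ${\rm gr}(D_{\lambda,\mathcal F}(U))=\O(T^*U)$ is a local statement valid verbatim for twisted differential operators, giving ${\rm gr}D_\lambda(\mathcal F)\subset \O(T^*\mathcal F)$; combining with (ii) and the fact that $p^*$ is an isomorphism forces ${\rm gr}(a_\lambda)$ to be surjective onto $\O(T^*\mathcal F)$, hence ${\rm gr}D_\lambda(\mathcal F)=\O(T^*\mathcal F)$ and $a_\lambda$ itself is a filtered isomorphism.

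The main obstacle throughout is part (i): one must verify that the implicit $\rho$-shift in the normalization of $D_\lambda$ exactly matches the $\rho$-shift in the parametrization $U_\lambda=U_{\chi_{\lambda-\rho}}$. Once this calibration is secured, the elegant PBW-plus-centrality argument of the $\lambda=0$ case transplants with essentially no modification, and parts (ii) and (iii) drop out by purely formal considerations from the untwisted Beilinson-Bernstein statement.
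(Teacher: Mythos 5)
Your overall strategy matches the paper's, which simply remarks that the proof is parallel to the untwisted Theorem~\ref{bebe}. Parts (ii) and (iii) are handled exactly as the paper does: the symbol computation is local and unaffected by the twist (check in degrees $0$ and $1$), and then (iii) drops out from ${\rm gr}D_\lambda(\mathcal F)\subset\O(T^*\mathcal F)$ together with (ii) and Theorem~\ref{isograd}. Those two parts of your proposal are fine.

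The gap is in part (i), and it is precisely the point you yourself flag as ``the main obstacle'' but then leave open. Your computation, carried out literally, gives the following: for $z\in Z(\g)$, writing $z=HC(z)+z_+$ with $z_+\in U(\g)\n_+$ and using right-$N_+$-invariance of the pulled-back sections on $\widetilde{\mathcal F}$, one finds that $a(z)$ acts by the scalar $HC(z)$ evaluated at the $T$-weight of the section, and you assert that after ``incorporating the $\rho$-shift'' this equals $HC(z)(\lambda-\rho)=\chi_{\lambda-\rho}(z)$. But ``incorporating the $\rho$-shift'' is not an operation — it is a nontrivial identity that you never establish, and it genuinely fails without further input: $HC(z)$ evaluated at the na\"ive $T$-weight and $HC(z)(\lambda-\rho)$ differ unless one keeps careful track of two sources of shift. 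One is that ``$L_z=R_z$ for $z\in Z(\g)$'' (which you repeat from the paper) is only true up to the principal anti-automorphism of $U(\g)$, and this twists the resulting central character; the other is the normalization discrepancy between the twist parameter $\lambda$ of $D_\lambda$ and the highest weight parameter $\lambda-\rho$ of $U_\lambda$, which ultimately reflects the fact that the canonical bundle of $G/B$ is $\mathcal L_{\pm 2\rho}$. A concrete test your sketch should pass but does not currently address: for $\lambda\in -P_+$, $\Gamma(\mathcal F,\mathcal L_{-\lambda})=L_\lambda^*$, so $a(z)$ acts on global sections by $\chi_{L_\lambda^*}(z)$, and you must check that this equals $\chi_{\lambda-\rho}(z)$; tracking both $\rho$-shifts carefully is exactly what closes this, and it is the only content of (i) beyond the untwisted argument. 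So the proposal is a correct outline, but the one substantive step that distinguishes the twisted case is acknowledged rather than proved.
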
 

\begin{proof} The proof is completely parallel to the proof of Theorem \ref{bebe}.  
\end{proof} 

As in the untwisted case, the isomorphism $a_\lambda$ gives rise to two functors: the functor 
of global sections 
$$
\Gamma: \M^\lambda(\mathcal F)\to D_\lambda(\mathcal F)-{\rm mod}\cong U_\lambda-{\rm mod}
$$ 
and the functor of localization 
$$
{\rm Loc}: U_\lambda-{\rm mod}\cong D_\lambda(\mathcal F)-{\rm mod}\to 
\M_\lambda(\mathcal F)
$$ 
given by ${\rm Loc}(M)(U):=D_\lambda(U)\otimes_{D_\lambda(\mathcal F)}M$ for an affine open set 
$U\subset \mathcal F$. Moreover, as before, ${\rm Loc}$ is left adjoint to $\Gamma$. 

Let us say that $\lambda\in \h^*$ is {\bf antidominant} if $-\lambda$ is dominant (cf. Subsection \ref{domwei}).  

\begin{theorem}\label{bebe2}(Beilinson-Bernstein localization theorem) If $\lambda$ is antidominant then the functors $\Gamma$ and ${\rm Loc}$ are mutually inverse equivalences. Thus the category $U_\lambda-{\rm mod}$ is canonically equivalent to the category of $D_\lambda$-modules on the flag variety $\mathcal F$. 
\end{theorem}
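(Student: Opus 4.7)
Following the classical Beilinson--Bernstein strategy, I would reduce the theorem to two properties of the global sections functor $\Gamma\colon \mathcal{M}^\lambda(\mathcal F) \to U_\lambda\text{-mod}$:
(A) \emph{exactness}, equivalently $H^i(\mathcal F, M) = 0$ for all $i > 0$ and every $M \in \mathcal{M}^\lambda(\mathcal F)$; and
(B) \emph{faithfulness}, meaning $\Gamma(M) = 0$ forces $M = 0$. Granted (A) and (B), the equivalence follows formally from the adjunction $\mathrm{Loc} \dashv \Gamma$: since $\mathrm{Loc}(U_\lambda) = D_\lambda$ and $\Gamma(D_\lambda) = U_\lambda$ via the isomorphism $a_\lambda$, the unit $\eta_{U_\lambda}$ is an isomorphism; property (A) makes $\Gamma \circ \mathrm{Loc}$ right exact and preserving of arbitrary direct sums, so $\eta_M$ is an isomorphism for every $M$ presentable by direct sums of $U_\lambda$, i.e.\ for every $M$. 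For the counit $\epsilon_N\colon \mathrm{Loc}\,\Gamma(N) \to N$, let $K$ and $C$ be its kernel and cokernel; applying the exact $\Gamma$ and using that $\eta$ is an isomorphism forces $\Gamma(K) = \Gamma(C) = 0$, whence $K = C = 0$ by (B).

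For (A), I would first reduce to coherent $M$ via the commutation of higher cohomology with filtered colimits on the Noetherian $\mathcal F$. A coherent $D_\lambda$-module admits a good filtration $\{F^p M\}$ by coherent $\mathcal{O}_{\mathcal F}$-submodules whose associated graded $\gr M$ is a coherent sheaf on $T^*\mathcal F$. Twisting with a large power of $\mathcal L_\mu$ for $\mu$ antidominant regular, whose line bundle is very ample by the Borel--Weil embedding $\mathcal F \hookrightarrow \mathbb P(L_{-\mu})$ from Corollary \ref{bw1}, Serre vanishing yields $H^i(\mathcal F, \gr M \otimes \mathcal L_{N\mu}) = 0$ for $i > 0$ and $N \gg 0$, and the filtration spectral sequence propagates this vanishing to $M \otimes \mathcal L_{N\mu}$ itself. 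The remaining step is to descend from such heavily twisted modules back to $M$, using that the tensor product $- \otimes \mathcal L_{N\mu}$ intertwines categories $\mathcal{M}^\lambda(\mathcal F)$ and $\mathcal{M}^{\lambda + N\mu}(\mathcal F)$, combined with translation-functor arguments at the level of $U_\lambda$-modules; the antidominance of $\lambda$ ensures that no cohomology is gained in this descent.

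For (B), I would argue by Noetherian induction on the support $Z$ of $M \ne 0$, using the Bruhat stratification $\mathcal F = \bigsqcup_{w \in W} BwB/B$ by Schubert cells, each an affine space. By Kashiwara's theorem, a $D$-module set-theoretically supported on a smooth closed subvariety is the direct image of a $D$-module there, which provides the inductive framework. The base case is the open dense cell, which is affine and hence trivially $D$-affine. For the induction step, one considers the short exact sequence $0 \to M_Y \to M \to M|_U \to 0$ where $U$ is the open stratum of $\supp M$ and $Y$ its complement, applies exactness from (A), and uses antidominance of $\lambda$ to produce nonzero global sections from the restriction to $U$ via a lift along the vanishing cohomology of $M_Y$.

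\textbf{Main obstacle.} The deepest ingredient is property (A), specifically the descent from a sufficiently antidominant twist $\lambda + N\mu$ back to an arbitrary antidominant $\lambda$. While Serre vanishing on the projective $\mathcal F$ is classical, translating it into vanishing for twisted $D$-modules requires a delicate interplay between good filtrations, the $\mathcal{O}$-module structure, and the differential operator structure. A further subtlety arises when $\lambda$ is antidominant but not regular, so that $\mathcal L_\lambda$ is only semi-ample: one reduces to the corresponding partial flag variety $G/P_S$ (with $S$ determined by the stabilizer of $\lambda$) and proves a version of the localization theorem there first, then transports it back via the projection $\mathcal F \to G/P_S$.
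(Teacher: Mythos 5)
The paper does not prove this theorem — it explicitly defers the proof after the untwisted version (Theorem \ref{locth}: ``We will not give a proof of this theorem here''), so there is no internal argument to compare against. Judged on its own, your architecture is the standard Beilinson--Bernstein one: establish exactness of $\Gamma$ (your (A)) and conservativity (your (B)), then deduce the equivalence from the adjunction $\mathrm{Loc}\dashv\Gamma$ and $\Gamma(D_\lambda)\cong U_\lambda$. Your execution of that formal last step is correct.

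The genuine gap is in passing from Serre vanishing to (A). Tensoring by $\mathcal L_{N\mu}$ moves $M$ from $\M^\lambda(\mathcal F)$ to the \emph{different} category $\M^{\lambda+N\mu}(\mathcal F)$, and ``translation-functor arguments'' cannot simply be invoked to carry the vanishing back: translation functors act on $U(\g)$-modules, and knowing how $\Gamma$ behaves with respect to them is part of what you are proving, so the appeal as written is circular. The device that actually does the work, and which you gesture at under ``main obstacle'' without naming, is the following: take the finite-dimensional irreducible $G$-module $V$ having $N\mu$ as an extremal weight. Then $V\otimes_{\Bbb C}M$ \emph{remains} a $D_\lambda$-module, and the $B$-stable weight filtration of $V$ induces a $G$-equivariant $\O_{\mathcal F}$-module filtration of $V\otimes M$ whose subquotients are $\mathcal L_\gamma\otimes M$ for $\gamma$ running over the weights of $V$. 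Decomposing the cohomology of this filtration by generalized infinitesimal characters of the $\g$-action — which by Corollary \ref{cechar} lie among $\chi_{\lambda+\gamma-\rho}$ — and using antidominance of $\lambda$ via a norm comparison in the spirit of Lemma \ref{stab1}, one shows the extremal piece $\mathcal L_{N\mu}\otimes M$ splits off inside the appropriate isotypic block; this is what allows Serre vanishing for $\mathcal L_{N\mu}\otimes M$ to propagate to $M$ by an induction on the weight poset. Without this, (A) is not established. Two smaller issues: your exact sequence in (B) should read $0\to M_Y\to M\to j_*j^!M$ with the last arrow not surjective in general, so the induction has to be set up around left exactness rather than a short exact sequence; and your closing reduction to $G/P_S$ is misguided — for integral antidominant $\lambda$ the weight $\lambda-\rho$ is automatically regular, the theorem holds on the full flag variety for all antidominant $\lambda$ (e.g.\ $\lambda=0$ is exactly Theorem \ref{locth}), and passing to $G/P_S$ changes the target category rather than rescuing the argument.
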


\begin{remark} 1. As explained above, for $\beta\in P$ we have an equivalence $\M^\lambda(\mathcal F)\cong \M^{\lambda+\beta}(\mathcal F)$ defined by tensoring with $\mathcal L_\beta$. On the other side of the Beilinson-Bernstein equivalence this corresponds to translation functors defined in Subsection \ref{trafu}. 

2. The first statement of Theorem \ref{bebe2} fails if $\lambda$ is not assumed antidominant. Indeed, if 
$\lambda$ is integral but not antidominant then by the Borel-Weil theorem (Theorem \ref{bw}) $\Gamma(\mathcal F,\mathcal L_\lambda)=0$, so the functor $\Gamma$ is not faithful. The second statement of Theorem \ref{bebe2} also fails if $\lambda\in P$ and $\lambda-\rho$ is not regular. 

For example, for $\g=\mathfrak{sl}_2$ and $\lambda\in \Bbb Z$, the localization theorem holds 
for $\lambda\le 0$. For $\lambda\ge 2$ the first statement fails but we still have an equivalence 
$\M^\lambda(\mathcal F)\cong U_\lambda-{\rm mod}$ (as $U_\lambda\cong U_{-\lambda+2}$), albeit not given by $\Gamma$. But for $\lambda=1$ there is no such equivalence at all; in fact, one can show that 
the category $U_\lambda-{\rm mod}$, unlike $\M^\lambda(\mathcal F)$, has infinite cohomological dimension. 
\end{remark} 
 
\section{\bf D-modules - II} 

We would now like to explain how the Beilinson-Bernstein localization theorem can be used to  classify various kinds of irreducible representations of $\g$. For this we will need to build up a bit more background on $D$-modules. 

\subsection{Support of a quasicoherent sheaf} 
Let $M$ be a quasicoherent sheaf on a variety $X$, and $Z\subset X$ a closed subvariety. 
We will say that $M$ is {\bf supported} on $Z$ if for any affine open set $U\subset X$, 
regular function $f\in \O(U)$ vanishing on $Z$, and $v\in M(U)$, there exists $N\in \Bbb Z_{\ge 0}$ 
such that $f^Nv=0$.  The {\bf support} ${\rm Supp}(M)$ is then defined as the intersection of all closed subvarieties $Z\subset X$ such that $M$ is supported on $Z$. So $M$ is supported on $Z$ iff the support of $M$ is contained in $Z$. 
 
In particular, we can talk about support of a (left or right, possibly twisted) $D$-module on a smooth variety $X$.  
The category of $D$-modules on $X$ supported on $Z$ will be denoted by $\M_Z(X)$. 

\begin{example} It is easy to see that $\Bbb C[x,x^{-1}]$ is a left $D$-module on $\Bbb A^1$, and $\Bbb C[x]$ is its submodule. These modules have full support $\Bbb A^1$. On the other hand, consider the quotient $\delta_0:=\Bbb C[x,x^{-1}]/\Bbb C[x]$.\footnote{In analysis $\delta_0$ arises as the $D$-module generated by the $\delta$-function at zero, which motivates the notation.} It is clear that $\delta_0$ has a basis $v_i=x^{-i}$, $i\ge 1$, with $xv_i=v_{i-1}$, $xv_1=0$, $\partial v_i=-iv_{i+1}$. Thus the support of $\delta_0$ is $\lbrace 0\rbrace$. 
\end{example} 

\subsection{Restriction to an open subset}

Recall that if $\A$ is an abelian category and $\B\subset \A$ a Serre subcategory (i.e., a full subcategory closed under taking subquotients and extensions) then one can form the quotient 
category $\A/\B$ with the same objects as $\A$, but with $\Hom_{\A/\B}(X,Y)$ being the direct limit 
of $\Hom_\A(X',Y/Y')$ over $X'\subset X$ and $Y'\subset Y$ such that $X/X',Y'\in \B$. One can show that $\A/\B$ is an abelian category. The natural functor 
$F: \A\to \A/\B$ is then called the {\bf Serre quotient functor}. This functor is essentially surjective, its kernel is $\B$, and it maps simple objects to simple objects or zero. Thus $F$ defines a bijection between simple objects of $\A$ not contained in $\B$ and simple objects of $\A/\B$. 

For example, 
if $X$ is a variety, $Z\subset X$ a closed subvariety, ${\rm Qcoh}(X)$ the category of quasicoherent sheaves on $X$ and ${\rm Qcoh}_Z(X)$ the full subcategory of sheaves supported on $Z$ 
then ${\rm Qcoh}(X)/{\rm Qcoh}_Z(X)\cong {\rm Qcoh}(X\setminus Z)$. 
The corresponding Serre quotient functor is the restriction $M\mapsto M|_{X\setminus Z}$. 

Now assume that $X$ is smooth. Let $j: X\setminus Z\hookrightarrow X$ be the open embedding. Then we have a {\bf restriction functor} on $D$-modules 
$$
j^!: \M(X)\to \M(X\setminus Z)
$$ 
which is the usual restriction functor at the level of sheaves; it is also called the {\bf inverse image} or {\bf pull-back} functor, since it is a special case of the inverse image functor defined above. Thus $j^!(M)=0$ if and only if $M$ is supported on $Z$ and the functor $j^!$ is a Serre quotient functor which induces an equivalence $\M(X)/\M_Z(X)\cong \M(X\setminus Z)$.

The functor $j^!$ has a right adjoint {\bf direct image (or push-forward) functor} 
$$
j_*: \M(X\setminus Z)\to \M(X),
$$ 
which is just the sheaf-theoretic direct image (=push-forward). Namely, for an affine open $U\subset X$,  $j_*M(U):=M(U\setminus Z)$ regarded as a module over $D(U)\subset D(U\setminus Z)$. While the functor $j^!$ is exact, the functor 
$j_*$ is only left exact, in general (as so is the push-forward functor for sheaves). 
In particular, $j_*$ is {\bf not} the (right exact) direct image defined above since the morphism $j$ is not affine, in general; rather it is the zeroth cohomology of the full direct image functor defined 
on the derived category of $D$-modules, which we will not discuss here. They do agree, however, when $j$ is affine (e.g., when $Z$ 
is a hypersurface).

In particular, $j^!$ defines a bijection between isomorphism classes of simple $D_X$-modules which are not supported on $Z$ and simple $D_{X\setminus Z}$-modules, given by $M\mapsto j^!M$. 

The inverse map is defined as follows. Given $L\in \M(X\setminus Z)$, consider the $D$-module $j_*L$. Since $j_*$ is right adjoint to $j^!$, the module $j_*L$ does not contain nonzero submodules supported on $Z$. Now define $j_{!*}L$ to be the intersection of all submodules $N$ of $j_*L$ such that 
$j_*L/N$ is supported on $Z$. This gives rise to a functor $j_{!*}: \M(X\setminus Z)\to \M(X)$ 
(not left or right exact in general). Then if $L$ is irreducible, so is $j_{!*}L$, and $j^!j_{!*}L\cong L$, while for $M\in \M(X)$ irreducible and not supported on $Z$ we have $j_{!*}j^!M\cong M$. The functor $j_{!*}$ is called the {\bf Goresky-MacPherson extension} or {\bf minimal (or intermediate) extension} functor. 

\begin{proposition}\label{irrsup} The support of an irreducible $D$-module is irreducible. 
\end{proposition}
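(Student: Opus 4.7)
The plan is to argue by contradiction: assume $M$ is irreducible with support $Z$, and suppose $Z = Z_1 \cup Z_2$ with $Z_i \subsetneq Z$ closed in $X$. The strategy is to use irreducibility to kill a canonical $D$-submodule of sections supported on $Z_1$, then deduce from this that every local section of $M$ must be supported on $Z_2$, contradicting the definition of $Z$ as the support.

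The first step is a preliminary lemma: for any closed subvariety $W \subset X$, the subsheaf $\Gamma_W(M) \subset M$ of local sections annihilated by some power of the ideal $I_W$ is a $D$-submodule. Only stability under the vector field action needs checking. If $f \in I_W$ satisfies $f^k m = 0$, then for any vector field $v$ one computes
$$
f^{k+1}(vm) = v(f^{k+1} m) - (k+1) f^k v(f)\, m = -(k+1)\, v(f)\, f^k m = 0.
$$
Combined with the Noetherian hypothesis (so $I_W$ is finitely generated), this yields a single $N$ with $I_W^N(vm)=0$. Applying the lemma with $W = Z_1$ and invoking irreducibility, $\Gamma_{Z_1}(M)$ is either $0$ or $M$; in the latter case $M$ is supported on $Z_1 \subsetneq Z$, contradicting the definition of $Z = \text{Supp}(M)$. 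So $\Gamma_{Z_1}(M) = 0$.

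To finish, I would take any local section $m$ on an affine open. By assumption some $I_Z^k$ kills $m$, and since $I_Z \supset I_{Z_1} I_{Z_2}$, the telescoping identity
$$
(f_1 \cdots f_k)\, g^k = (f_1 g)(f_2 g) \cdots (f_k g) \in (I_{Z_1} I_{Z_2})^k \subset I_Z^k,\qquad f_j \in I_{Z_1},\ g \in I_{Z_2},
$$
shows that $g^k m$ lies in $\Gamma_{Z_1}(M) = 0$ for every $g \in I_{Z_2}$. Using finite generation of $I_{Z_2}$ and the pigeonhole principle, one finds $N$ with $I_{Z_2}^N m = 0$, so $m$ is supported on $Z_2$; hence $M$ is supported on $Z_2 \subsetneq Z$, contradiction.

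The main subtlety will be the first step — checking that $\Gamma_W(M)$ is stable under all of $D_X$, not just $\O_X$ — and, to a lesser extent, the commutative algebra manipulation linking $I_Z^k$ to products of powers of $I_{Z_1}$ and $I_{Z_2}$. The whole argument is local and algebraic, so it transfers verbatim to twisted $D$-modules, since $D_{\lambda,X}$ is also generated locally by $\O_X$ and vector fields.
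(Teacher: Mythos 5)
Your proof is correct, but it takes a genuinely different route from the paper's. The paper chooses $Z_1$ to be an irreducible component and $Z_2$ the union of the rest, removes the intersection $Y = Z_1\cap Z_2$, and then observes that on $X^\circ = X\setminus Y$ the two pieces of the support become disjoint closed sets, so $M|_{X^\circ}$ splits as a direct sum of the parts supported on $Z_1^\circ$ and $Z_2^\circ$; since restriction to an open set is a Serre quotient, this contradicts irreducibility of $M$. You instead stay on all of $X$ and work inside $M$: the key lemma is that the torsion subsheaf $\Gamma_{Z_1}(M)$ (sections killed by a power of $I_{Z_1}$) is a $D$-submodule, established by the computation showing $f^{k}m=0 \Rightarrow f^{k+1}(vm)=0$ together with Noetherianity of $\O(U)$. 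Irreducibility then leaves only $\Gamma_{Z_1}(M)=0$ or $\Gamma_{Z_1}(M)=M$; the latter places $\mathrm{Supp}(M)\subset Z_1\subsetneq Z$, and the former, via the factorization $(f_1\cdots f_k)g^k=(f_1g)\cdots(f_kg)\in I_Z^k$, forces $g^k m=0$ for every $g\in I_{Z_2}$ and every local section $m$, placing $\mathrm{Supp}(M)\subset Z_2\subsetneq Z$. Either way, a contradiction. Your argument is more elementary and purely algebraic — it avoids the open-restriction formalism and, as you note, transfers verbatim to twisted $D$-modules; the paper's approach is more geometric and yields the extra conclusion that $M|_{X^\circ}$ actually decomposes as a direct sum. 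One small remark: once you have $g^k m=0$ for all $g\in I_{Z_2}$, you already have exactly the definition of $m$ being supported on $Z_2$ used in the paper, so the final pigeonhole step to conclude $I_{Z_2}^N m = 0$ is not needed.
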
 

\begin{proof} Let $M$ be a $D_X$-module with support $Z$. Assume that 
$Z$ is reducible: $Z=Z_1\cup Z_2$ where $Z_1$ is an irreducible component of $Z$ and $Z_2$ the union of all the other components. Let $Y=Z_1\cap Z_2$, a proper subset in $Z_1$ and $Z_2$. 
Let $Z^\circ=Z\setminus Y$, $Z_i^\circ=Z_i\setminus Y$ and $X^\circ=X\setminus Y$. Then $Z^\circ=Z_1^\circ\cup Z_2^\circ$ is disconnected: $Z_1^\circ,Z_2^\circ$ are closed nonempty subsets of $Z^\circ$ and $Z_1^\circ\cap Z_2^\circ=\emptyset$. Let $M_1,M_2$ be the sums of all subsheaves of 
$M|_{X^\circ}$ which are killed by localization away from $Z_1^\circ$, respectively $Z_2^\circ$. It is easy to show that $M_i$ are nonzero submodules of $M|_{X^\circ}$ and $M|_{X^\circ}\cong M_1\oplus M_2$. Thus $M|_{X^\circ}$ is reducible and hence so is $M$. 
\end{proof} 

\subsection{Kashiwara's theorem} 

Let $X$ be a smooth variety and $Z\subset X$ a smooth closed subvariety with closed embedding $i: Z\hookrightarrow X$. For $M\in \M(X)$ define $M_Z$ to be the sheaf on $X$ whose sections on an affine open set $U\subset X$ are the vectors in $M(U)$ annihilated by regular functions on $U$ vanishing on $Z$. Thus the $\O(U)$-action on $M_Z(U)$ factors through $\O(Z\cap U)$. Also it is easy to see that $M_Z(U)$ depends only on $Z\cap U$, i.e., it gives rise to a quasicoherent sheaf 
$i^\dagger M$ on $Z$ with sections 
$$
i^\dagger M(V):=M_Z(U)
$$
for affine open $U\subset X$ such that $V=Z\cap U$. 
Moreover, if $v$ is a vector field on $U$ tangent to $V$ then 
$v$ preserves the ideal of $V$, hence acts naturally on $i^\dagger M(V)$. Furthermore, 
the action of $v$ on this space depends only on the vector field on $V$ induced by $v$. Thus 
$i^\dagger M(V)$ carries an action of the Lie algebra ${\rm Vect}(V)$. Together with the action of $\O(V)$, this defines an action of $D(V)$ on $i^\dagger M(V)$. We conclude that $i^\dagger M$ is naturally a $D_Z$-module. Thus we have defined a left exact functor 
$$
i^\dagger: \M(X)\to \M(Z).
$$ 
It is called the {\bf shifted inverse image} functor. This terminology is motivated by the following exercise.

\begin{exercise} Show that $i^\dagger=L^d i^\bullet$ and $i^\bullet=R^di^\dagger$, where $L^d,R^d$ are the $d$-th left, respectively right derived functors and $d=\dim X-\dim Z$. 
\end{exercise} 

\begin{theorem}\label{kashth} (Kashiwara) The functor $i^\dagger$ is an equivalence of categories $\M_Z(X)\to \M(Z)$. 
\end{theorem}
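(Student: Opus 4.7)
The statement is Zariski-local on $X$, since both categories and the functor $i^\dagger$ are defined locally. So I would first cover $X$ by affine opens and, shrinking further, assume $X$ is affine and that the ideal of $Z$ in $\mathcal{O}(X)$ is generated by regular functions $t_1,\dots,t_d$ whose differentials are linearly independent at every point of $Z$, and that these extend to an \'etale coordinate system $(t_1,\dots,t_d,y_1,\dots,y_{n-d})$ on $X$. Factoring $i$ as a chain of codimension-one closed embeddings $Z = Z_d \subset Z_{d-1} \subset \cdots \subset Z_0 = X$ with $Z_j = \{t_j = 0\} \subset Z_{j-1}$, and using the evident decomposition $i^\dagger = i_d^\dagger \cdots i_1^\dagger$, the general case follows by induction from the codimension-one case $X = \mathbb{A}^1 \times Z$ with $Z = \{t = 0\}$ and $t$ the coordinate on $\mathbb{A}^1$.

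In this local model I would construct the candidate quasi-inverse functor $i_+ \colon \mathcal{M}(Z) \to \mathcal{M}_Z(X)$ by setting
$$i_+ N := \bigoplus_{k \geq 0} N \cdot e_k,$$
with $D_Z$ acting on each summand $N\cdot e_k$ as it does on $N$, $\partial_t \cdot (n \cdot e_k) := n \cdot e_{k+1}$, $t \cdot (n \cdot e_0) := 0$, and $t \cdot (n \cdot e_k) := -k\,(n \cdot e_{k-1})$ for $k \geq 1$. A direct check using the Weyl relation $[\partial_t, t] = 1$ confirms that $i_+ N$ is a well-defined $D_X$-module, that $t$ acts locally nilpotently (so $i_+ N \in \mathcal{M}_Z(X)$), and that $n \mapsto n\cdot e_0$ identifies $N$ with $i^\dagger i_+ N$. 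This yields a natural isomorphism $i^\dagger \circ i_+ \cong \mathrm{id}$.

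The main obstacle is proving the converse: for any $M \in \mathcal{M}_Z(X)$, setting $M_0 := i^\dagger M = \ker(t\colon M \to M)$, the canonical map
$$\bigoplus_{k \geq 0} \partial_t^k M_0 \longrightarrow M$$
must be a $\mathbb{C}$-linear bijection (from which compatibility with the whole $D_X$-action is automatic by the Weyl relations, so that the resulting map $i_+ i^\dagger M \to M$ is a $D_X$-module isomorphism). Linear independence would follow from an Euler-eigenvalue computation: using $[\partial_t, t] = 1$ one gets $t\partial_t m_0 = -m_0$ for $m_0 \in M_0$, and by induction the operator $\theta := t\partial_t$ acts on $\partial_t^k M_0$ by the scalar $-(k+1)$, so these subspaces lie in distinct generalized eigenspaces. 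For surjectivity, given $m \in M$ with $t^{k+1} m = 0$ (such $k$ exists by the support hypothesis), the element $m_k := \tfrac{(-1)^k}{k!} t^k m$ lies in $M_0$, and the identity $t^k \partial_t^k m_k = (-1)^k k!\, m_k$ (proved by induction from $t\partial_t = \partial_t t - 1$ on $M_0$) yields $t^k(m - \partial_t^k m_k) = 0$. Descending on the nilpotence order $k$ expresses $m$ as an element of $\sum_{j \leq k} \partial_t^j M_0$, completing the proof of the equivalence.
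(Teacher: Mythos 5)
Your proposal is correct and is the standard proof of Kashiwara's theorem; the paper explicitly omits the proof ("The proof is not difficult, but we will skip it"), so there is no paper argument to compare against. Your reduction to the local codimension-one model, the construction of the quasi-inverse $i_+N \cong N\otimes\delta_0$, the eigenvalue analysis of the Euler operator $\theta = t\partial_t$ (acting by $-(k+1)$ on $\partial_t^k M_0$), and the descent-on-nilpotence-order surjectivity argument are all exactly the standard route, and your computations check out. Two small expository points you would want to make precise in a written-up version: (a) the global statement does not immediately reduce to the affine local statement without a word about why -- the cleanest way is to observe that $i_*$ (the paper's notation for your $i_+$) is a globally defined left adjoint to $i^\dagger$, so the unit and counit can be tested on an affine cover; (b) after passing to \'etale coordinates $(t,y_1,\dots,y_{n-1})$ near $Z$, the model is not literally a product $\mathbb{A}^1\times Z$, so one should say explicitly that the $D_Z$-action on $M_0$ comes from the subalgebra of $D_X$ preserving the ideal $(t)$, that $\partial_t$ normalizes this subalgebra, and verify $D_X$-linearity of the counit on these generators rather than appealing to a product decomposition of $D_X$. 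Neither is a gap in the underlying idea, only in what would need to be spelled out.
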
  

The proof is not difficult, but we will skip it (see \cite{HTT}). 

The inverse of the functor $i^\dagger$ is called the {\bf direct image} functor and denoted $i_*: \M(Z)\to \M_Z(X)$, as it is a special case of the direct image functor defined above for affine morphisms.  
If we view $i_*$ as a functor $\M(Z)\to \M(X)$ then it has both left and right adjoint, which are $i^!$ and $i^\dagger$, respectively. 

Let us give a prototypical example. 

\begin{example} Let $X=\Bbb A^1$, $Z=\lbrace  0\rbrace$. Then $\M(Z)={\rm Vect}$
and $i_*(V)=V\otimes \delta_0$. So in this case Kashiwara's theorem reduces to the claim 
that $\Ext^1(\delta_0,\delta_0)=0$. 
\end{example} 
 
\begin{remark} We note that the above formalism and results extend in a straightforward manner to the case of twisted $D$-modules.   
\end{remark}  
 
\subsection{Equivariant $D$-modules}

Let $X$ be an algebraic variety with an action of an affine algebraic group $G$. Let us review the notion of a $G$-{\bf equivariant quasicoherent sheaf} on $X$. Roughly speaking, this is a quasicoherent sheaf $\mathcal{E}$ on $X$ equipped with a system of isomorphisms $\phi_g: g(\mathcal E)\cong \mathcal E, g\in G$ such that $\phi_{gh}=\phi_g\circ g(\phi_h)$ and $\phi_g$ depends on $g$ algebraically. To give a formal definition, note that the group structure gives us a multiplication map $m:G\times G\twoheadrightarrow G,$ and the action of $G$ gives us a map $\rho:G\times X\twoheadrightarrow X$. We have a commutative diagram
$$
\xymatrix{
& G\times G\times X \ar[dl]^{m\times\operatorname{id}}_{} \ar[dr]_{\operatorname{id}\times\rho} & \\
G\times X \ar[dr]^{\rho} && G\times X \ar[dl]_{\rho}\\
& X &.
}
$$

\begin{definition}
A $G$-{\bf equivariant quasicoherent sheaf} on $X$ is a quasicoherent sheaf $\mathcal{E}$ on $X$ equipped with an isomorphism 
$$
\phi: \rho^*\mathcal{E}\cong \O_G\boxtimes\mathcal{E}
$$ 
making the following diagram commutative:

\centerline{
\xymatrixcolsep{5pc}
\xymatrix{
(\operatorname{id}\times\rho)^*\rho^*\mathcal{E}\ar@{=}[d]\ar[r]^-{(\operatorname{id}\times\rho)^*\phi}&(\operatorname{id}\times\rho)^*(\O_G\boxtimes\mathcal{E})\ar[r]&\O_G\boxtimes\rho^*\mathcal{E}\ar[d]^-{\O_G\boxtimes\phi}\\
(m\times\operatorname{id})^*\rho^*\mathcal{E}\ar[r]^-{(m\times\operatorname{id})^*\phi}&(m\times\operatorname{id})^*(\O_G\boxtimes\mathcal{E})\ar[r]&\O_G\boxtimes \O_G\boxtimes\mathcal{E}\\
}}
\end{definition}

Thus $\phi$ comprises all the isomorphisms $\phi_g$, which therefore satisfy the equality $\phi_{gh}=\phi_g\circ g(\phi_h)$ and depend on $g$ algebraically.  

We now wish to define the notion of a $G$-{\bf equivariant $D_X$-module}. To this end, recall
that for any $D_X$-module $\mathcal E$, the quasicoherent sheaf $\rho^*\mathcal E$ carries a natural structure of a $D_{G\times X}$-module (the $D$-module inverse image).  We now make the following definition. 

\begin{definition}
A {\bf weakly $G$-equivariant $D$-module} on $X$ is a $D_X$-module $\mathcal{E}$ with a $G$-equivariant quasicoherent sheaf structure, where $\phi$ is $D_X$-linear.
\end{definition} 

Note that if $\mathcal E$ is a weakly equivariant $D_X$-module then we have two (in general, different) actions of $\mathfrak{g}=\operatorname{Lie}(G)$ on $\mathcal{E}.$ First of all, the $G$-action on $X$ gives us maps $\mathfrak{g}\to\operatorname{Vect}(X)\to D(X),$ and so the $D$-module structure on $\mathcal{E}$ gives us  a $\mathfrak{g}$-action $x\mapsto b_0(x)$ on $\mathcal{E}$. Note that this action does not depend on the choice of the weakly equivariant structure $\phi.$

On the other hand, we have a $\mathfrak{g}$-action on $\O_G\boxtimes\mathcal{E}$ coming from the $G$-action on $G\times X$ given by $g\cdot(h,x)=(gh,x).$ Translating this along $\phi$, we get a $\mathfrak{g}$-action on $\rho^*\mathcal{E}$. Restricting to $1\times X$, this gives us another $\mathfrak{g}$-action $x\mapsto b_\phi(x)$ on $\mathcal{E}.$

\begin{definition}
A (strongly) $G$-{\bf equivariant $D_X$-module} is a weakly $G$-equivariant $D_X$-module where these two $\mathfrak{g}$-actions agree: $b_\phi=b_0$ (or, equivalently, where $\phi$ is $D_{G\times X}$-linear.)
\end{definition} 

In general, since $[b_0(x),L]=[b_\phi(x),L]$ for $L\in D_X$, 
the operator $\rho_\phi(x):=b_\phi(x)-b_0(x)$ is a $D$-module endomorphism of $\mathcal E$. 
Moreover, it is easy to see that $\rho_\phi$ is a Lie algebra homomorphism 
$\g\to \End(\mathcal E)$. In particular, if $\mathcal E$ is irreducible then 
by Dixmier's lemma (Lemma \ref{Dixlemm}), $\End(\mathcal E)=\Bbb C$, so $\rho_\phi$ is just a character of $\g$. 
Thus if $\g=[\g,\g]$ is perfect (for example, semisimple) then every weakly $G$-equivariant irreducible $D_X$-module is actually (strongly) $G$-equivariant.   

\begin{remark}
A given $D_X$-module may have many weakly $G$-equivariant structures, but if $G$ is connected, then it can only have one $G$-equivariant structure. This is because the $\mathfrak{g}$-action on $\mathcal{E}$ is determined by the map $\mathfrak{g}\to D(X)$ and this action can be integrated to a $G$-equivariant structure in a unique way (recall that we always work over a field of characteristic $0$.)

Furthermore, any $D_X$-linear map of $G$-equivariant $D_X$-modules is automatically compatible with the $G$-action. This is because such a map is necessarily $\mathfrak{g}$-linear, which implies that it is in fact $G$-linear. These two facts combined show that the category of $G$-equivariant $D_X$-modules is a full subcategory of the category of $D_X$-modules. Stated another way, $G$-equivariance of a $D_X$-module is a property, not a structure.
\end{remark}

\begin{example}\label{previex}
Consider the case where $X$ is a point. Then $D_X\cong \Bbb C$ and so a $D_X$-module is just a vector space. A weakly $G$-equivariant $D_X$-module is then simply a locally algebraic representation of $G$. This representation gives a $G$-equivariant structure if and only if $\mathfrak{g}$ acts by $0$, i.e.,  the connected component of the identity $G_0\subset G$ acts trivially. Thus a $G$-equivariant $D_X$-module is just a representation of the component group $G/G_0$. Conversely, any locally algebraic representation $V$ of $G$ gives rise to a weakly $G$-equivariant $D$-module on $X$ which is equivariant  iff $G_0$ acts trivially on $V$, so that $V$ is a representation of $G/G_0$. 
\end{example} 

\begin{example}\label{homspace} Let $X=G/H$, where $G$ is an algebraic group and $H$ a closed subgroup of $G$. 
Then we claim that a $G$-equivariant $D_X$-module is the same thing as an $H$-equivariant 
$D$-module on a point, i.e., a representation of the component group $H/H_0$. Indeed, given an $H/H_0$-module $V$, we can define a $G$-equivariant vector bundle 
$$
(G\times V)/H\to X=G/H,
$$ 
where 
$H$ acts on $G\times V$ via $(g,v)h=(gh,h^{-1}v)$. Note that this can be written as 
$\frac{(G/H_0)\times V}{H/H_0}$ (as $H_0$ acts on $V$ trivially). This shows that 
this vector bundle has a natural flat connection, i.e. is a  $D_X$-module $L(X,V)$, which is clearly 
$G$-equivariant. The assignment $V\mapsto L(X,V)$ 
is the desired equivalence. In the case $H=G$, this reduces to Example \ref{previex}. 
\end{example} 

\begin{exercise} (i) Define the algebraic group $L:=G\times_{G/G_0}H/H_0$ of pairs 
$(g,h)$, $g\in G$, $h\in H/H_0$ which map to the same element of $G/G_0$; thus we have a short exact sequence 
$$
1\to G_0\to L\to H/H_0\to 1.
$$
Show that the category of weakly $G$-equivariant $D$-modules on $G/H$ 
is naturally equivalent to the category of representations of $L$, such that the 
subcategory of strongly $G$-equivariant $D$-modules 
is identified with the subcategory of representations of $L$ pulled back from the second factor $H/H_0$ (i.e., those with trivial action of $G_0$), and the subcategory of modules of the form $\O(G/H)\otimes V$ 
where $V$ is a $G$-module is identified with the category of representations of $L$ pulled back from the first factor $G$.

(ii) Let $\Delta: H\to L$ be the map defined by $\Delta(h)=(h,h)$. 
Show that the forgetful functor from weakly $G$-equivariant $D$-modules on $G/H$ 
to $G$-equivariant quasicoherent sheaves on $G/H$ corresponds to the pullback functor $\Delta^*$. 
\end{exercise} 

\begin{exercise}\label{nateq} Let $X$ be a smooth variety with an action of an affine algebraic group $G$ and $H\subset G$ be a closed subgroup. Show that the category of $H$-equivariant $D$-modules on $X$ is naturally equivalent to the category of $G$-equivariant $D$-modules on $X\times G/H$ with diagonal action of $G$ (note that when $X$ is a point, this reduces to 
Example \ref{homspace}). 
\end{exercise}

\begin{exercise} Let $X$ be a principal $G$-bundle over a smooth variety $Y$. 
Show that the category of $G$-equivariant $D_X$-modules is naturally equivalent to 
the category of $D_Y$-modules. Namely, given a $G$-equivariant $D_X$-module $M$, 
for an affine open set $U\subset Y$ let $\widetilde U$ be the preimage of $U$ in $X$ 
and let $\overline M(U):=M(\widetilde U)^G$. Then $\overline M$ is a $D_Y$-module, and the assignment $M\mapsto \overline M$ is a desired equivalence.  
\end{exercise} 

The notion of a weakly equivariant $D$-module often arises in the following setting. Let $T$ be an algebraic torus and let $\widetilde{X}$ be a principal $T$-bundle over $X$. 

\begin{definition} 
A {\bf monodromic $D$-module over $X$} (with respect to the bundle $\widetilde{X}\twoheadrightarrow X$) is a weakly $T$-equivariant $D_{\widetilde{X}}$-module.
\end{definition} 

\begin{example} A monodromic $D$-module over $X$ with $\rho_\phi=\lambda\in {\rm Lie}(T)^*$ 
is the same thing as a $\lambda$-twisted $D$-module on $X$, i.e., a $D_{\lambda,X}$-module. 
\end{example} 
 
\begin{proposition}\label{daffine}
Assume that $X$ is a $D$-affine variety and that $K$ is an affine algebraic group acting on $X$. Let $D(X)$ be the ring of global sections of $D_X.$ Then the category of $K$-equivariant $D_X$-modules is equivalent to the category of $D(X)$-modules $M$ endowed with a locally finite $K$-action whose differential coincides with the action of ${\rm Lie}(K)$ on $M$ coming from the map ${\rm Lie}(K)\to D(X).$
\end{proposition}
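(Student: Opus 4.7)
The plan is to upgrade the $D$-affineness equivalence $\Gamma: \M(X) \simeq D(X)\text{-mod}$, with quasi-inverse $\mathrm{Loc}$, into an equivalence respecting the two flavors of $K$-action data. One constructs a functor $F$ from $K$-equivariant $D_X$-modules to the target category, and an inverse $G$; by $D$-affineness, $\Gamma \circ \mathrm{Loc} \cong \mathrm{id}$ and $\mathrm{Loc} \circ \Gamma \cong \mathrm{id}$ at the underlying level, so the content is only to match the equivariant structures.

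For $F$, given an equivariant $D_X$-module $(\mathcal{E}, \phi)$, I would set $M := \Gamma(X, \mathcal{E})$. Since $K$ is affine, the projection $p_2: K \times X \to X$ is affine, so $\Gamma(K \times X, \rho^*\mathcal{E}) = \O(K) \otimes M$ and $\Gamma(K \times X, \O_K \boxtimes \mathcal{E}) = \O(K) \otimes M$ canonically. Applying $\Gamma$ to $\phi$ produces an $\O(K)$-comodule structure $M \to \O(K) \otimes M$, and the cocycle condition on $\phi$ translates into coassociativity, giving a genuine locally finite algebraic $K$-action on $M$. The compatibility of $\phi$ with the $D_{K \times X}$-action forces the $K$-action on $M$ to be compatible with $D(X)$ under the conjugation $K$-action on $D(X)$, and the strong equivariance condition $b_\phi = b_0$ unpacks precisely to the statement that the differential of the $K$-action equals the map $\mathrm{Lie}(K) \to D(X) \to \mathrm{End}(M)$.

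For the inverse $G$, given $M$ with a locally finite $K$-action compatible with $D(X)$, I would set $\mathcal{E} := \mathrm{Loc}(M)$. The $K$-action, being locally finite and algebraic, corresponds to a coaction $\mu: M \to \O(K) \otimes M$. Tensoring this with $D_X$ over $D(X)$ produces a morphism of quasicoherent sheaves $\mu_{\mathcal{E}}: \rho^*\mathcal{E} \to \O_K \boxtimes \mathcal{E}$ on $K \times X$. Using invertibility of the $K$-action (via the antipode of $\O(K)$) one checks this is an isomorphism, and coassociativity of $\mu$ gives the cocycle identity defining an equivariant structure. The differential compatibility hypothesis forces the resulting weak equivariance to be strong.

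The main obstacle is verifying that the construction of $G$ actually yields a $D_{K \times X}$-module isomorphism on all of $K \times X$, since $K \times X$ is not known to be $D$-affine and one cannot simply invoke an equivalence there. I would handle this by covering $K \times X$ by affine opens $K \times U$ with $U \subset X$ affine, using $D(K \times U) = D(K) \otimes D(U)$ and the local form of $D$-affineness $D(U) \otimes_{D(X)} M = \Gamma(U, \mathcal{E})$ to reduce every required identity to an algebraic identity for the coaction $\mu$ on $M$, which holds by hypothesis. Gluing over this cover, and using that $\Gamma$ is faithful on each affine open, yields the global equivariant structure. With both functors constructed, the fact that they are mutually quasi-inverse follows from $D$-affineness together with the observation that both the forward and backward passages are tautological on the underlying $D$-module and $D(X)$-module data.
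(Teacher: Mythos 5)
The paper poses this proposition as an exercise and supplies no written proof, so there is no textual argument to compare against; your proposal is, however, the natural and expected solution, transporting the equivariance data across the $D$-affineness equivalence by taking global sections along the affine morphisms $K\times X\to X$, using the projection formula to identify both $\Gamma(K\times X,\rho^*\mathcal E)$ and $\Gamma(K\times X,\O_K\boxtimes\mathcal E)$ with $\O(K)\otimes M$, and then checking the cocycle and strong-equivariance conditions become coassociativity and the Lie-algebra compatibility. One small elision worth making explicit: $\rho$ itself (not just $p_2$) is an affine morphism, since $\rho=p_2\circ\sigma$ where $\sigma(k,x)=(k,kx)$ is an automorphism of $K\times X$ over $K$, and it is via this shear $\sigma$ that the global sections of $\rho^*\mathcal E$ acquire the twisted $\O(K)$-comodule structure giving the coaction on $M$. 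With that caveat, the argument is correct, and your strategy of verifying the localization step locally over an affine cover $\{K\times U\}$ of $K\times X$ using $D(K\times U)=D(K)\otimes D(U)$ is the right way to sidestep the absence of $D$-affineness for $K\times X$.
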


\begin{exercise} Prove Proposition \ref{daffine}. 
\end{exercise} 

In particular, by the Beilinson-Bernstein localization theorem, Proposition \ref{daffine} applies to $X=\mathcal F\cong G/B$ and $K$ a closed subgroup of $G$, and moreover it extends to the case of $\lambda$-twisted differential operators on $\mathcal F$ for antidominant $\lambda\in \h^*$. Thus we get 

\begin{corollary}\label{equica} If $\lambda\in \h^*$ is antidominant then the functors $\Gamma,{\rm Loc}$ restrict to mutually inverse equivalences between the category of $(\g,K)$-modules with infinitesimal character $\chi_{\lambda-\rho}$ and the category of $K$-equivariant $D_\lambda$-modules on $\mathcal F$. 
\end{corollary}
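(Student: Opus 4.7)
The plan is to assemble the corollary by combining three ingredients: the Beilinson-Bernstein localization equivalence (Theorem \ref{bebe2}), the extension of Proposition \ref{daffine} to the twisted setting, and a careful matching of the compatibility conditions on the two sides.

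First, I would unpack the definition of a $(\g,K)$-module $M$ with central character $\chi_{\lambda-\rho}$: it is a $\g$-module on which $Z(\g)$ acts by $\chi_{\lambda-\rho}$ (so that $M$ is actually a $U_\lambda$-module) together with a locally finite algebraic $K$-action whose derivative agrees with the restriction of the $\g$-action to $\mathfrak{k}={\rm Lie}(K)\subset\g$. Since $\lambda$ is antidominant, Theorem \ref{bebe2} provides mutually inverse equivalences $\Gamma:\M^\lambda(\mathcal F)\leftrightarrows U_\lambda\text{-mod}:{\rm Loc}$. Thus it suffices to check that, under these functors, $(\g,K)$-modules with infinitesimal character $\chi_{\lambda-\rho}$ correspond exactly to $K$-equivariant objects of $\M^\lambda(\mathcal F)$.

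Next, I would invoke (and verify) the twisted version of Proposition \ref{daffine}: since $\mathcal F$ is $D_\lambda$-affine by Theorem \ref{bebe2}, the functor $\Gamma$ identifies $K$-equivariant $D_\lambda$-modules on $\mathcal F$ with $D_\lambda(\mathcal F)$-modules endowed with a locally finite algebraic $K$-action whose differential coincides with the action of $\mathfrak{k}$ coming from the algebra map $\mathfrak{k}\to D_\lambda(\mathcal F)$ (induced by the $K$-action on $\mathcal F$). The proof of this extension is identical to Proposition \ref{daffine}, since it only uses $D$-affineness of $X$ plus the standard correspondence between algebraic group actions and their differentials.

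Finally, the crucial matching step: the homomorphism $\mathfrak{k}\to D_\lambda(\mathcal F)$ used on the geometric side factors through the restriction to $\mathfrak{k}$ of the Beilinson-Bernstein map $a_\lambda:U_\lambda\xrightarrow{\sim} D_\lambda(\mathcal F)$. Consequently, the condition that the differential of the $K$-action on $\Gamma(M)$ agrees with the $\mathfrak{k}$-action coming from $D_\lambda(\mathcal F)$ translates precisely to the condition that it agrees with the restriction of the $\g$-action, which is exactly the $(\g,K)$-compatibility axiom. Combining this matching with the two equivalences yields the corollary.

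The main obstacle I expect is the bookkeeping in this last matching step. One must carefully track that the two a priori different $\mathfrak{k}$-actions on $\Gamma(M)$ — one coming from differentiating the $K$-equivariant structure on the $D_\lambda$-module, the other from the action of $\mathfrak{k}\subset\g$ via $a_\lambda$ — literally coincide (not merely up to a character, as could in principle arise from weak versus strong equivariance). This is where one uses that $\mathfrak{k}=[\mathfrak{k},\mathfrak{k}]$ for semisimple $\g$ (or, more directly, that the $K$-equivariant structure on $\mathcal F$ is strong by construction, since it comes from an honest group action lifting through $\widetilde{\mathcal F}$), so no twist by a character can appear. Once this is in place, the two equivalences compose to give the required equivalence, with $\Gamma$ and ${\rm Loc}$ as quasi-inverses.
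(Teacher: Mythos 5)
Your proposal follows exactly the route the paper intends (the paper states the corollary as an immediate consequence of Theorem \ref{bebe2} and a twisted version of Proposition \ref{daffine}), and your fleshed-out matching of the two $\mathfrak{k}$-actions via $a_\lambda$ is correct. One small slip: $\mathfrak{k}$ is not perfect in general (e.g.\ $\mathfrak{k}\cong\mathfrak{so}(2)_{\Bbb C}$ is abelian for $G_{\Bbb R}=SL_2(\Bbb R)$), so the claim ``$\mathfrak{k}=[\mathfrak{k},\mathfrak{k}]$'' is wrong; but this is harmless since, as you note, there is no weak-versus-strong issue to resolve here --- both sides of the equivalence impose the strong compatibility condition by definition, so it suffices to observe that the map $\mathfrak{k}\to D_\lambda(\mathcal F)$ coming from the $K$-action factors through $a_\lambda\circ(\mathfrak{k}\hookrightarrow U_\lambda)$.
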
  

\section{\bf Applications of D-modules to representation theory} 
 
\subsection{Classification of irreducible equivariant $D$-modules for actions with finitely many orbits}

\begin{theorem}\label{orb} Let $X$ be a smooth variety and $K$ a connected algebraic group acting on $X$ with finitely many orbits. Then there are finitely many irreducible $K$-equivariant $D$-modules on $X$. Namely, they are parametrized by pairs $(O,V)$ where $O$ is an orbit of $K$ on $X$ and $V$ is an irreducible representation of the component group $H/H_0$ of the stabilizer $H:=K_x$ for $x\in O$, $(O,V)\mapsto M(O,V)$.  
\end{theorem}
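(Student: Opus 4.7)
The plan is to induct on orbit closures and reduce the classification on $X$ to the classification on a single orbit, where Example \ref{homspace} is directly available. Throughout, I use that since $K$ is connected, a $K$-equivariant structure on a $D$-module is unique when it exists (it is determined by the Lie algebra action that is already built into the $D$-module structure), so all the functors below automatically preserve equivariance and the equivariance conditions reduce to $K_x/K_x^0$-equivariance on orbits.

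Let $M$ be an irreducible $K$-equivariant $D_X$-module. First I would show that $\mathrm{Supp}(M)$ is the closure $\overline O$ of a single $K$-orbit $O$: by Proposition \ref{irrsup} the support is irreducible, and it is obviously $K$-stable, so it is a $K$-invariant irreducible closed subvariety of $X$; since there are only finitely many orbits, any such subvariety is the closure of an orbit (the orbit $O$ of maximal dimension contained in it is open in $\overline O$, and the union of the other orbits is a proper closed subvariety, hence not the whole $\overline O$). Let $Z := \overline O\setminus O$, which is closed in $X$ as a finite union of lower-dimensional orbit closures, and set $U := X\setminus Z$, an open $K$-stable subvariety in which $O$ sits as a smooth closed subvariety.

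Next I would apply Kashiwara's theorem (Theorem \ref{kashth}) in the $K$-equivariant setting to the closed embedding $i\colon O\hookrightarrow U$: it gives an equivalence between $K$-equivariant $D_U$-modules supported on $O$ and $K$-equivariant $D_O$-modules. The restriction $j^!M$ of $M$ to $U$ (along the open embedding $j\colon U\hookrightarrow X$) is nonzero (else $M$ would be supported on $Z\subsetneq \overline O$, contradicting $\mathrm{Supp}(M)=\overline O$), it is supported on $O$ because $\mathrm{Supp}(M)\cap U=O$, and it remains irreducible because $j^!$ is a Serre quotient functor. Hence $j^!M\cong i_*N$ for a unique irreducible $K$-equivariant $D_O$-module $N$. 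Since $K$ acts transitively on $O\cong K/H$ with $H=K_x$, Example \ref{homspace} identifies such an $N$ with an irreducible representation $V$ of the component group $H/H_0$.

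To construct the inverse, given a pair $(O,V)$ I would form the irreducible $K$-equivariant $D_O$-module $L(O,V)$ corresponding to $V$ via Example \ref{homspace}, push it forward to $U$ by $i_*$ (an equivalence onto $D$-modules supported on $O$ by Kashiwara), and then take the Goresky--MacPherson minimal extension $M(O,V) := j_{!*}(i_*L(O,V))$ to $X$. This is irreducible by the general property of $j_{!*}$ on simples, and it inherits a canonical $K$-equivariant structure since for connected $K$ equivariance is a property, not extra data, and is preserved under $i_*$ and $j_{!*}$. The assignments $M\mapsto (O,V)$ and $(O,V)\mapsto M(O,V)$ are mutually inverse because $j^!j_{!*}=\mathrm{id}$ on irreducibles and an irreducible $M$ is recovered from $j^!M$ by the same minimal extension, so we obtain the claimed bijection; finiteness is then immediate since there are finitely many orbits and for each orbit only finitely many irreducible representations of the finite group $H/H_0$.

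The main technical point will be the verification that Kashiwara's equivalence and the minimal extension functor $j_{!*}$ are compatible with the $K$-equivariant structure, and that $j_{!*}$ indeed sends irreducibles to irreducibles with $j^!j_{!*}=\mathrm{id}$. Both facts are standard in $D$-module theory; the first reduces, for connected $K$, to the observation that the $\mathfrak k$-action is already determined on $U$ and on $O$ by the ambient $D$-module structures, while the second uses that $j_*(i_*L(O,V))$ contains no nonzero submodule supported on $Z$ so that $j_{!*}(i_*L(O,V))$ is well-defined and simple.
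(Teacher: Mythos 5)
Your proposal is correct and follows essentially the same argument as the paper: irreducible support is an orbit closure $\overline O$ by Proposition \ref{irrsup}, restrict to the open set $U=X\setminus(\overline O\setminus O)$ where $O$ is closed, apply the $K$-equivariant Kashiwara theorem to reduce to equivariant $D$-modules on $O\cong K/K_x$ (Example \ref{homspace}), and recover $M$ as the minimal extension $j_{!*}$. You spell out a few points the paper leaves implicit (why an irreducible $K$-stable closed subvariety is an orbit closure, why $j^!M$ stays simple, and the mutual-inverse verification), but the structure of the argument is identical.
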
  

\begin{proof} Let $M$ be an irreducible $K$-equivariant $D$-module on $X$. Then by Proposition \ref{irrsup}, the support $Z$ of $M$ is irreducible. Thus $Z=\overline O$ for a single orbit $O$ of $K$. Let $Z_0=\overline O\setminus O$, and $U=X\setminus Z_0$. Then $U$ is a $K$-stable open subset of $X$ and $O$ is closed in $U$. Also $M|_U$ is a simple $D_U$-module supported on $O$. Let $i: O\hookrightarrow U$ be the closed embedding. By Kashiwara's theorem (Theorem \ref{kashth}) $i^\dagger M$ is a simple $K$-equivariant $D$-module on $O$. Thus by Example \ref{homspace} $i^\dagger M=L(O,V)$ for some irreducible representation $V$ of the component group of the stabilizer $K_x$, $x\in O$. Also it is clear that $L(O,V)$ gives rise to a simple $K$-equivariant $D$-module on $X$, namely, $M(O,V):=j_{!*}i_*L(O,V)$, where $j: U\hookrightarrow X$ is the open embedding. This proves the theorem. 
\end{proof} 
 
\begin{remark} Theorem \ref{orb} can be extended in a straightforward way to weakly equivariant $D$-modules. In this case, recall that the weakly equivariant structure on an irreducible $D$-module $M$ defines a character 
$\rho:  \mathfrak{k}\to \Bbb C$, where $\mathfrak{k}={\rm Lie}K$. Theorem \ref{orb} 
then holds with the only change: rather than being a representation of $H/H_0$, 
$V$ now needs to be a representation of $H$ in which ${\rm Lie}(H)$ acts by the character $\rho$. 
The proof is analogous to the case $\rho=0$.

In particular, this applies to the case of twisted $D$-modules. In this case we have a principal $T$-bundle $p: \widetilde X\to X$ and a character $\lambda\in \mathfrak{t}^*$, $\mathfrak{t}={\rm Lie}(T)$. 
Suppose $K$ acts on $X$ preserving this bundle; i.e., it acts on $\widetilde X$ and commutes with $T$. 
So we have a $K\times T$-action on $\widetilde X$ and a $K$-equivariant $\lambda$-twisted $D$-module on $X$ is just a weakly $K\times T$-equivariant $D$-module on $\widetilde X$ with $\rho({\rm k},t):=\lambda(t)$. Now, for every $K$-orbit $O$ on $X$, we have the stabilizer $K_x$, $x\in O$, and a homomorphism $\xi_x: K_x\to T$ defined by the condition that $(g,\xi_x(g))$ acts trivially on $p^{-1}(x)$ for $g\in K_x$. This defines a character 
$\lambda_x=\lambda\circ d\xi_x$ of ${\rm Lie}(K_x)$, 
and the simple $K$-equivariant $D_\lambda$-modules on $X$ are 
$M(O,V)$ where $V$ is an irreducible representation of $K_x$ with ${\rm Lie}(K_x)$ acting by 
the character $\lambda_x$. 
\end{remark} 

\subsection{Classification of irreducible Harish-Chandra modules}  

Let $G_{\Bbb R}$ be a connected real semisimple algebraic group, $K_{\Bbb R}\subset G_{\Bbb R}$ a maximal compact subgroup, $G,K\subset G$ their complexifications. By Corollary \ref{equica}, if $\lambda$ is antidominant then the Beilinson-Bernstein equivalence restricts to an equivalence between the category of $(\g,K)$-modules with infinitesimal character $\chi_{\lambda-\rho}$ and the category of $K$-equivariant $D_\lambda$-modules on $\mathcal F=G/B$.  

\begin{proposition}\label{finma} The group $K$ acts on $\mathcal F$ with finitely many orbits. 
\end{proposition}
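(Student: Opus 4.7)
My plan is to reduce the claim to standard structural facts about the Cartan involution $\theta: \g \to \g$ whose fixed algebra is $\kf := \operatorname{Lie}(K)$, and then invoke the classical finiteness of $K$-conjugacy classes of $\theta$-stable Cartan subalgebras. Identifying $\mathcal F$ with the variety of Borel subalgebras of $\g$, a $K$-orbit on $\mathcal F$ is the same as a $K$-conjugacy class of Borel subalgebras, so it suffices to bound the number of such classes.

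The first step is to show every Borel subalgebra $\b \subset \g$ is $K$-conjugate to one containing a $\theta$-stable Cartan subalgebra of $\g$. For this I would consider $\b \cap \theta(\b)$, a $\theta$-stable solvable algebraic subgroup whose reductive quotient is a torus. Any two Borels share at least one Cartan, so $\b \cap \theta(\b)$ contains a Cartan of $\g$; the structure theory of $\theta$-stable reductive groups (applied to a Levi of $\b \cap \theta(\b)$, which has a compact real form whose maximal torus complexifies to the desired $\theta$-stable Cartan) then yields a $\theta$-stable Cartan $\h \subset \b \cap \theta(\b) \subset \b$ after replacing $\b$ by an appropriate $K$-conjugate.

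The second step is to fix representatives $\h_1, \ldots, \h_n$ of the $K$-conjugacy classes of $\theta$-stable Cartan subalgebras of $\g$ and to assert $n < \infty$. Writing $\g = \kf \oplus \p$ for the Cartan decomposition, a $\theta$-stable Cartan decomposes as $\h = \mathfrak t \oplus \mathfrak a$ with $\mathfrak t \subset \kf$ and $\mathfrak a \subset \p$. The noncompact part $\mathfrak a$ is a toral subspace of $\p$; by a theorem of Kostant–Rallis, its $K$-conjugacy class is determined by a finite combinatorial datum (a subset of restricted roots up to the little Weyl group). For each choice of $\mathfrak a$, the compact part $\mathfrak t$ is a Cartan of the reductive subalgebra $\kf \cap Z_\g(\mathfrak a)$, so up to $N_K(\mathfrak a)$-action there are only finitely many choices; this is the content of the classical Kostant–Sugiura finiteness theorem for $\theta$-stable Cartans.

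Combining Steps 1 and 2: every $K$-orbit of Borels has a representative of the form $\b \supset \h_i$ for some $i$, and for each fixed $i$ the Borels of $\g$ containing $\h_i$ form a $W(\g, \h_i)$-torsor of cardinality $|W|$, on which $N_K(\h_i)$ acts. Quotienting gives at most $|W|$ orbits per choice of $i$, and summing over the finitely many $i$ proves the proposition. The main obstacle is the Kostant–Rallis/Sugiura finiteness in Step 2, which is the genuinely nontrivial structural input; Step 1 is the key geometric reduction, while Step 3 is merely counting.
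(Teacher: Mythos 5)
The paper explicitly declines to prove Proposition \ref{finma}, so there is no argument in the text to compare against; you are on your own here. Your overall strategy is the standard one (essentially Matsuki's argument, also found in Vogan's book and the IAS-Park City lectures \cite{KT}): reduce to finiteness of $K$-conjugacy classes of $\theta$-stable Cartan subalgebras of $\g$, and then count Borels containing a fixed Cartan, which is a $|W|$-element set. Steps 2 and 3 are sound, though the attribution in Step 2 is a little off: the result you want — that there are finitely many $K$-conjugacy classes of $\theta$-stable Cartan subalgebras of $\g$, with split parts classified by strongly orthogonal sets of (restricted) roots — is due to Kostant and to Sugiura; the Kostant--Rallis theorem is about $K$-orbits on the nilpotent cone of $\p$ and harmonics in $\Bbb C[\p]$, which is not the input needed here. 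You cite both names, so this is a bibliographic slip rather than a mathematical error.

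The real soft spot is the justification inside Step 1. After observing that $\mathfrak l:=\b\cap\theta(\b)$ is a $\theta$-stable solvable algebraic subalgebra containing a Cartan subalgebra of $\g$, you claim to find a $\theta$-stable Cartan inside $\b$ by ``applying the structure theory of $\theta$-stable reductive groups to a Levi of $\mathfrak l$, which has a compact real form whose maximal torus complexifies to the desired $\theta$-stable Cartan.'' That sentence does not produce a $\theta$-stable Cartan: the Levi of $\mathfrak l$ is just an (a priori arbitrary, not $\theta$-stable) Cartan $\mathfrak c$ of $\g$, and ``having a compact real form'' says nothing about compatibility with $\theta$. The correct argument is: let $\mathfrak n$ be the nilradical of $\mathfrak l$ and $N=\exp\mathfrak n$; since any two Levis of $\mathfrak l$ are conjugate by $N$, there is $n\in N$ with $\theta(\mathfrak c)=\mathrm{Ad}(n)\mathfrak c$, and uniqueness of $n$ (the stabilizer of $\mathfrak c$ in $N$ is trivial, as $\mathfrak c$ is self-normalizing in $\g$ and meets $\mathfrak n$ trivially) forces $\theta(n)=n^{-1}$. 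Taking the unipotent square root $m=n^{1/2}$, one checks $\theta(m)=m^{-1}$ and hence $\mathrm{Ad}(m)\mathfrak c$ is $\theta$-stable and lies in $\mathfrak l\subset\b$. In particular no passage to a $K$-conjugate of $\b$ is needed at this stage — $\b$ itself already contains a $\theta$-stable Cartan — so your phrasing ``is $K$-conjugate to one containing\dots'' is an unnecessary weakening. With that replacement Step 1 is correct, and the proof as a whole goes through.
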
  

We will not give a proof of this proposition. For the proof and description of the set of orbits, see \cite{RS}. 

Proposition \ref{finma} along with Theorem \ref{orb} allows us to classify irreducible $(\g,K)$-modules (i.e., Harish-Chandra modules) for a regular infinitesimal character (the general case can be handled similarly). 

Namely, let $H\subset B\subset G$ be a maximal torus and Borel subgroup of $G$; so $H\cong B/[B,B]$. Note that $K\times H$ acts on $\widetilde{\mathcal F}=G/[B,B]$. So for a $K$-orbit $O$ on $\mathcal F=G/B$ and $x\in O$, we have a homomorphism $\xi_x: K_x\to H$ such that $(g,\xi_x(g))$ acts trivially on the fiber over $x$ in $\widetilde{\mathcal F}$ for $g\in K_x$. 

Let $\chi$ be a regular infinitesimal character for $\g$ and $\lambda$ be an antidominant weight with $\chi=\chi_{\lambda-\rho}$ (note that it always exists). 

\begin{theorem}\label{irrehc} Irreducible $(\g,K)$-modules with (pure) infinitesimal character $\chi$ are $\pi(O,V)$ 
where $O$ is a $K$-orbit on $\mathcal F$ and $V$ an irreducible representation of $K_x$, $x\in O$ such that ${\rm Lie}(K_x)$ acts via the character $\lambda_x$. 
Namely, $\pi(O,V)$ corresponds to $M(O,V)$ under the Beilinson-Bernstein equivalence.  
\end{theorem}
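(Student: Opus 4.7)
The plan is to simply combine the three main ingredients already established in the excerpt: the equivariant form of the Beilinson--Bernstein localization theorem, the finiteness of $K$-orbits on $\mathcal F$, and the orbit-by-orbit classification of equivariant $D$-modules. No new analytic or geometric input should be required.

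First, since $\lambda$ is antidominant and $\chi = \chi_{\lambda-\rho}$ is the chosen central character, I would invoke Corollary \ref{equica}: the functors $\Gamma$ and $\mathrm{Loc}$ are mutually inverse equivalences between the category of $(\g,K)$-modules with central character $\chi$ and the category of $K$-equivariant $D_\lambda$-modules on $\mathcal F$. In particular, this equivalence induces a bijection on isomorphism classes of irreducibles. So the problem is reduced to classifying irreducible $K$-equivariant $D_\lambda$-modules on $\mathcal F$.

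Next, I would apply Proposition \ref{finma}, which says that $K$ acts on $\mathcal F$ with finitely many orbits, together with the twisted version of Theorem \ref{orb} described in the remark following it. This immediately gives that the irreducible $K$-equivariant $D_\lambda$-modules on $\mathcal F$ are exactly the Goresky--MacPherson extensions $M(O,V)$, indexed by pairs $(O,V)$ where $O$ is a $K$-orbit on $\mathcal F$ and $V$ is an irreducible representation of the stabilizer $K_x$ (for $x\in O$) on which $\mathrm{Lie}(K_x)$ acts through the character $\lambda_x$ defined via the homomorphism $\xi_x\colon K_x \to H$. One then sets $\pi(O,V) := \Gamma(\mathcal F, M(O,V))$.

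The only subtle point, which I would write out carefully, is the identification of the character by which $\mathrm{Lie}(K_x)$ must act. Here I would unwind the definition of a $K$-equivariant $D_\lambda$-module as a weakly $K\times H$-equivariant $D$-module on $\widetilde{\mathcal F} = G/[B,B]$ with $\rho_\phi(\mathrm{k},t)=\lambda(t)$; restricting the weak equivariance to the stabilizer of a preimage $\widetilde x$ of $x$ in $\widetilde{\mathcal F}$, the relation $(g,\xi_x(g))\cdot \widetilde x = \widetilde x$ forces the $\mathrm{Lie}(K_x)$-action on the fiber to coincide with $\lambda \circ d\xi_x = \lambda_x$, matching the parametrization in the statement. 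The main obstacle, if any, is this bookkeeping with the twist; everything else is a direct assembly of previously proved results.
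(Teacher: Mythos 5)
Your proposal matches the paper's argument exactly: the paper also derives the theorem by combining Corollary \ref{equica} (equivariant Beilinson--Bernstein localization for antidominant $\lambda$), Proposition \ref{finma} (finiteness of $K$-orbits on $\mathcal F$), and the twisted version of Theorem \ref{orb} explained in the remark following it, including the same identification of the character $\lambda_x = \lambda\circ d\xi_x$ via the homomorphism $\xi_x\colon K_x\to H$ on the principal $H$-bundle $\widetilde{\mathcal F}\to\mathcal F$. The only thing worth making explicit in your write-up is the standing assumption (stated in the paragraph preceding the theorem) that the central character $\chi$ is regular, so that an antidominant $\lambda$ with $\chi=\chi_{\lambda-\rho}$ exists and the localization is a genuine equivalence.
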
 

\begin{example} Let $G_{\Bbb R}=SL_2(\Bbb R)$. Let $\lambda\in \Bbb C$, $\lambda\notin \Bbb Z_{>0}$ and set $\chi=\chi_{\lambda-1}$ (so $\chi\ne \chi_0$). 
In this case $\mathcal F=\Bbb C\Bbb P^1$
is the Riemann sphere, and $K=\Bbb C^\times$ acts by ${\rm k}\circ z:={\rm k}^2z$. 
Thus we have three orbits: $0,\infty$, and $\Bbb C^\times$. 
For the orbit $\Bbb C^\times$ we have $K_x=\Bbb Z/2$, so we have two irreducible 
representations $V=\Bbb C_\pm$, which generically correspond to principal series representations $\pi(\Bbb C^\times,V_{\pm})=P_\pm(1-\lambda)$ (see Section \ref{sl2R}). The other two orbits have a connected stabilizer, and $\lambda_x=\pm\lambda$. 
Thus for such orbits representations exist only for $\lambda\in \Bbb Z_{\le 0}$. 
It is easy to see that these are exactly the discrete series representations $M_{\lambda-2}^+, M_{-\lambda+2}^-$. Also for such points one of the principal series representations is reducible 
($P_+(1-\lambda)$ for even $\lambda$ and $P_-(1-\lambda)$ for odd $\lambda$) and $\pi(\Bbb C^\times,V_+)$, respectively $\pi(\Bbb C^\times,V_-)$ is actually the finite-dimensional representation $L_{-\lambda}$. Namely, in the Grothendieck group we have 
$[P_\pm(1-\lambda)]=[M_{\lambda-2}^+]+[M_{-\lambda+2}^-]+[L_{-\lambda}]$, while 
$P_\mp(1-\lambda)$ is simple. Thus we have four irreducible representations in this case. 

Note that this agrees with our classification of irreducible representations of $SL_2(\Bbb R)$ for regular infinitesimal characters discussed in Section \ref{sl2R}. 
\end{example} 

\begin{example} Let $G$ be a simply connected complex semisimple group regarded as a real group. Then its maximal compact subgroup is $G_c$, so its complexification is $G$, and $G_{\Bbb C}=G\times G$, so that the inclusion $(G_c)_{\Bbb C}=G\hookrightarrow G_{\Bbb C}=G\times G$
is the diagonal embedding. The flag variety is $\mathcal F\times \mathcal F=G/B\times G/B$. 
Thus Harish-Chandra bimodules with infinitesimal character $(\chi_{\mu-\rho},\chi_{\lambda-\rho})$ 
for antidominant $\lambda,\mu$ are $\pi(O,V)$ where 
$O$ runs over orbits of $G$ on $G/B\times G/B$ and $V$ 
over appropriate representations of isotropy groups. Note that 
orbits of $G$ on $G/B\times G/B$ are in a natural bijection with orbits of $B$ on $G/B$, which are the Schubert cells 
$C_w$ labeled by $w\in W$. One can check that the condition for existence of $V$ on the orbit $C_w$ 
is that $\lambda-w\mu$ is integral, and then $V$ is unique (as the isotropy groups are connected in this case). Thus we find that the irreducible Harish-Chandra bimodules with such infinitesimal character are labeled by elements $w$ such that $\lambda-w\mu\in P$, which agrees with the classification 
we obtained in Subsection \ref{classsim}.  
\end{example} 

\begin{exercise} Classify irreducible Harish-Chandra modules for $SL_3(\Bbb R)$ with a regular infinitesimal character. 

{\bf Hint.} Classify orbits of $SO_3(\Bbb C)$ on $SL_3(\Bbb C)/B$. This is equivalent to classification of flags in a 3-dimensional complex inner product space $E$ under the action of $SO(E)$. Then classify possible representations $V$ of the isotropy group for each orbit. 
\end{exercise} 

\begin{remark} The $K$-orbits on $G/B$ can be classified in explicit combinatorial terms. 
Together with Theorem \ref{irrehc}, this leads to an alternative proof, using the localization theorem, of the {\bf Langlands classification} 
of irreducible Harish-Chandra modules (obtained by Langlands in 1973 by a different method, 8 years before the localization theorem was proved, \cite{La}). This classification requires a serious separate discussion which is beyond the scope of these notes. 
\end{remark} 

\subsection{Applications to category $\mathcal O$} 

Let us now see how this approach allows us to study category $\O$ for a semisimple Lie algebra $\g$. 

Consider the category $\C$ of weakly $B\times B$-equivariant finitely generated $D$-modules on $G$ which are equivariant under $[B,B]\times [B,B]$ (it is easy to see that such modules have finite length). Thus for $M\in \C$, we have a homomorphism $\rho: \h\oplus \h\to \End(M)$, so $M=\oplus_{\mu,\lambda} M(\mu,\lambda)$ where $M(\mu,\lambda)$ is the generalized eigenspace 
for $\h\oplus \h$ with eigenvalue $(\mu,\lambda)\in \h^*\times \h^*$. Thus we have a decomposition 
$\C=\oplus_{\mu,\lambda}\C_{\mu,\lambda}$. 

Let $\C_{[\mu],\lambda},\C_{\mu,[\lambda]},\C_{[\mu],[\lambda]}$ 
be the full subcategories of $\C_{\mu,\lambda}$ consisting of objects on which the eigenvalues in square brackets are pure (without Jordan blocks). Thus we have 
$$
\C_{\mu,\lambda}\supset \C_{[\mu],\lambda},\C_{\mu,[\lambda]}\supset \C_{[\mu],[\lambda]}
$$ 
and all simple objects of $\C_{\mu,\lambda}$ are contained in $\C_{[\mu],[\lambda]}$. 
These objects are labeled by Bruhat cells $BwB\subset G$, $w\in W$ and representations $V$ 
of the isotropy group satisfying an appropriate condition. As before, the condition for $V$ to exist is 
that $\lambda-w\mu\in P$, thus $\C_{\mu,\lambda}=0$ unless $\lambda-w\mu\in P$ for some $w\in W$. 

We also see that $\C_{\lambda,\mu}\cong \C_{\lambda+\beta,\mu+\gamma}$ 
for $\beta,\gamma\in P$, and the same applies to its subcategories. 

Let us now try to describe these categories representation-theoretically. 
To this end, note that we may interpret $\C_{[\mu],[\lambda]}$ as the category 
of weakly $B$-equivariant $D_\lambda$-modules on $G/B$ with (pure) equivariance character $\mu$. 
So if $\lambda$ is antidominant, we get that $\C_{[\mu],[\lambda]}$ is 
equivalent to the full subcategory $\O_{[\mu],[\lambda]}$
 of the category $\O_{\chi_{\lambda-\rho}}$ of objects with pure infinitesimal character $\chi_{\lambda-\rho}$ 
and weights in $\mu+P$. Similarly, 
$\C_{\mu,\lambda},\C_{[\mu],\lambda},\C_{\mu,[\lambda]}$
are equivalent to 
$\O_{\mu,\lambda},\O_{[\mu],\lambda},\O_{\mu,[\lambda]}$, where the corresponding (infinitesimal) character is pure if square brackets are present and generalized if not. 

Now note that flipping left and right, we get equivalences 
$\C_{\lambda,\mu}\cong \C_{\mu,\lambda}$, 
$\C_{[\lambda],\mu}\cong \C_{\mu,[\lambda]}$,
$\C_{\lambda,[\mu]}\cong \C_{[\mu],\lambda}$,
$\C_{[\lambda],[\mu]}\cong \C_{[\mu],[\lambda]}$. 
If $\lambda,\mu$ are both antidominant, this yields equivalences of representation categories
$\O_{\lambda,\mu}\cong \O_{\mu,\lambda}$, 
$\O_{[\lambda],\mu}\cong \O_{\mu,[\lambda]}$,
$\O_{\lambda,[\mu]}\cong \O_{[\mu],\lambda}$,
$\O_{[\lambda],[\mu]}\cong \O_{[\mu],[\lambda]}$. 
While the first equivalence is easy to see 
representation theoretically using translation functors, the others are not. 
They are clear from geometry but somewhat mysterious from the viewpoint of representation theory
(although they can be understood using the Bernstein-Gelfand equivalence between category $\O$ and the category of Harish-Chandra bimodules, Theorem \ref{equivaa}). 

\begin{example} If $\lambda,\mu\in P$, these categories are independent of $\lambda,\mu$. 
Namely, let $\O_0$ be the category $\O$ for the trivial generalized infinitesimal character, and $\widetilde \O_0$ be its Serre closure (the category of modules admitting a finite filtration whose successive quotients are in $\O_0$; i.e. the action of $\h$ is not necessarily diagonalizable but is only assumed locally finite). We may also define the category $\O_0^*$ of modules in $\widetilde \O_0$ which have pure infinitesimal character, and $\overline \O_0\subset \O_0$ of modules with both pure infinitesimal character and diagonalizable action of $\h$. Then the above four categories are exactly $\widetilde \O_0,\O_0,\O_0^*,\overline \O_0$. 
In particular, we obtain an equivalence $\O_0\cong \O_0^*$ which is not obvious representation-theoretically. 
\end{example} 

Finally, we note that Exercise \ref{nateq} applied to $X=G/B$ and $H=B$
gives a transparent geometric proof of Theorem \ref{equivaa}.

\end{document}